\numberwithin{equation}{section}
\newtheorem{theorem}{Theorem}[section]
\newtheorem{proposition}[theorem]{Proposition}
\newtheorem{corollary}[theorem]{Corollary}
\newtheorem{lemma}[theorem]{Lemma}
\newtheorem{remark}[theorem]{Remark}
\newtheorem{definition}[theorem]{Definition}
\newtheorem{example}[theorem]{Example}
\let\Im\relax
\DeclareMathOperator{\Im}{Im}
\let\Re\relax
\DeclareMathOperator{\Re}{Re}
\DeclareMathOperator{\AC}{AC}
\DeclareMathOperator{\col}{col}
\DeclareMathOperator{\const}{const}
\DeclareMathOperator{\card}{card}
\DeclareMathOperator{\sign}{sign}
\DeclareMathOperator{\diag}{diag}
\DeclareMathOperator{\dist}{dist}
\DeclareMathOperator{\dom}{dom}
\DeclareMathOperator*{\esssup}{ess\,sup}
\DeclareMathOperator{\GL}{GL}
\DeclareMathOperator{\inter}{int}
\DeclareMathOperator{\Ker}{Ker}
\DeclareMathOperator{\Lip}{Lip}
\DeclareMathOperator{\loc}{loc}
\DeclareMathOperator{\per}{per}
\DeclareMathOperator{\rank}{rank}
\DeclareMathOperator{\separ}{sep}
\DeclareMathOperator{\Span}{span}
\DeclareMathOperator{\Tim}{Tim}
\DeclareMathOperator{\tr}{tr}
\renewcommand{\le}{\leqslant}
\renewcommand{\ge}{\geqslant}
\newcommand{\ol}{\overline}
\newcommand{\wt}{\widetilde}
\newcommand{\wh}{\widehat}
\renewcommand{\(}{\left(}
\renewcommand{\)}{\right)}
\newcommand{\eps}{\varepsilon}
\newcommand{\alp}{\alpha}
\newcommand{\gam}{\gamma}
\newcommand{\Gam}{\Gamma}
\renewcommand{\phi}{\varphi}
\renewcommand{\l}{\lambda}
\renewcommand{\L}{\Lambda}
\def\cA{\mathcal{A}}
\def\cB{\mathcal{B}}
\def\cC{\mathcal{C}}
\def\cD{\mathcal{D}}
\def\cE{\mathcal{E}}
\def\cF{\mathcal{F}}
\def\cH{\mathcal{H}}
\def\cK{\mathcal{K}}
\def\cL{\mathcal{L}}
\def\cP{\mathcal{P}}
\def\cQ{\mathcal{Q}}
\def\cR{\mathcal{R}}
\def\cS{\mathcal{S}}
\def\cU{\mathcal{U}}
\def\cW{\mathcal{W}}
\def\fa{\mathfrak{a}}
\def\fB{\mathfrak{B}}
\def\fE{\mathfrak{E}}
\def\fF{\mathfrak{F}}
\def\fH{\mathfrak{H}}
\def\fM{\mathfrak{M}}
\def\fP{\mathfrak{P}}
\def\fQ{\mathfrak{Q}}
\def\fp{\mathfrak{p}}
\def\fq{\mathfrak{q}}
\def\fr{\mathfrak{r}}
\def\bC{\mathbb{C}}
\def\bD{\mathbb{D}}
\def\bN{\mathbb{N}}
\def\bO{\mathbb{O}}
\def\bQ{\mathbb{Q}}
\def\bR{\mathbb{R}}
\def\bZ{\mathbb{Z}}
\def\bfD{\mathbf{D}}
\def\bfL{\mathbf{L}}
\newcommand{\MatrixSpace}[2]{{#1}({#2}; \bC^{2 \times 2})}
\newcommand{\LL}[1]{\MatrixSpace{L^{#1}}{[0,\ell]}}
\newcommand{\bigabs}[1]{\bigl|{#1}\bigr|}
\newcommand{\abs}[1]{\left|{#1}\right|}
\newcommand{\norm}[1]{\left\|{#1}\right\|}
\newcommand{\angnorm}[1]{\left\langle{#1}\right\rangle}
\newcommand{\oneto}[1]{\{1, \ldots, {#1}\}}
\newcommand{\onetom}{\oneto{m}}
\newcommand{\oneton}{\oneto{n}}
\newcommand{\onetoN}{\oneto{N}}
\newcommand{\onetor}{\oneto{r}}
\newcommand{\DD}[1]{\frac{\partial}{\partial{#1}}}
\newcommand{\Dx}{\DD{x}}
\newcommand{\Dt}{\DD{t}}
\begin{document}

\sloppy

\begin{frontmatter}

\title
{On transformation operators and \\
Riesz basis property of root vectors system \\
for $n \times n$ Dirac type operators. \\
Application to the Timoshenko beam model}

\author{Anton~A.~Lunyov}
\ead{A.A.Lunyov@gmail.com}
\address{
Facebook, Inc.,
1 Hacker Way, Menlo Park,
California, 94025,
United States of America}

\author{Mark~M.~Malamud}
\ead{malamud3m@gmail.com}
\address{
Peoples Friendship University of Russia (RUDN University),
6 Miklukho-Maklaya St.
Moscow, 117198,
Russian Federation}

\begin{abstract}
The paper is concerned with
the following $n \times n$ Dirac type equation
\begin{equation*}
 L y = -i B(x)^{-1} \bigl(y' + Q(x) y\bigr) = \l y , \quad
 B(x) = B(x)^*, \quad y= \col(y_1, \ldots, y_n), \quad x \in [0,\ell],
\end{equation*}
on a finite interval $[0,\ell]$. Here $Q \in L^1([0,\ell]; \bC^{n \times n})$ is a potential matrix and $B \in L^{\infty}([0,\ell]; \bR^{n \times n})$ is an invertible self-adjoint diagonal ``weight'' matrix.
If $n=2m$ and $B(x) = \diag(-I_m, I_m)$ this equation is equivalent to Dirac equation of order $n$.

We show the existence of triangular transformation operators for such equation under additional uniform separation conditions on the entries of the matrix function $B$.
Here we apply this result to study direct spectral properties of the boundary value problem (BVP) associated with the above equation subject to the general boundary conditions $U(y)=Cy(0)+Dy(\ell) = 0$, $\rank(C \ D) = n$.

As a first application of this result, we show that the deviation of the characteristic determinants of this BVP and the unperturbed BVP (with $Q=0$) is a Fourier transform of some summable function explicitly expressed via kernels of the transformation operators. In turn, this representation yields asymptotic behavior of the spectrum in the case of regular boundary conditions. Namely, $\l_m = \l_m^0 + o(1)$ as $m \to \infty$, where $\{\l_m\}_{m \in \bZ}$ and $\{\l_m^0\}_{m \in \bZ}$ are sequences of eigenvalues of perturbed and unperturbed ($Q=0$) BVP, respectively.

Further, we prove that the system of root vectors of the above BVP constitutes a Riesz basis in a certain weighted $L^2$-space, provided that the boundary conditions are \emph{strictly regular}. Along the way, we also establish completeness, uniform minimality and asymptotic behavior of root vectors.

The main results are applied to establish asymptotic behavior of eigenvalues and eigenvectors, and the Riesz basis property for the dynamic generator of spatially non-homogenous damped Timoshenko beam model. We also found a new case when eigenvalues have an explicit asymptotic, which to the best of our knowledge is new even in the case of constant parameters of the model.
\end{abstract}

\begin{keyword}
Systems of ordinary differential equations
\sep transformation operators
\sep regular boundary conditions
\sep eigenvalues asymptotic
\sep Riesz basis property
\sep Timoshenko beam model

\MSC 47E05 \sep 34L40 \sep 34L10 \sep 35L35
\end{keyword}

\end{frontmatter}

\renewcommand{\contentsname}{Contents}
\tableofcontents

\section{Introduction} \label{sec:intro}
In this paper we continue our investigation~\cite{LunMal15JST},~\cite{LunMal16JMAA} of the spectral properties
of non-self-adjoint boundary value problems (BVP) for the following
first order system of ordinary differential equations (ODE):
\begin{equation} \label{eq:LQ.def.intro}
 \cL(Q) y := -i B(x)^{-1} \bigl(y' + Q(x) y\bigr) = \l y , \qquad
 y= \col(y_1, \ldots, y_n), \qquad x \in [0,\ell],
\end{equation}
subject to the following boundary conditions with $n\times n$ matrices $C, D \in \bC^{n \times n}$:
\begin{equation} \label{eq:Uy=0.intro}
 U(y) := C y(0) + D y(\ell) = 0, \quad\text{and}\quad \rank(C \ D) = n.
\end{equation}
Here
\begin{equation} \label{eq:Bx.def.intro}
 B = \diag(\beta_1, \ldots, \beta_n), \qquad
 \beta_k \in L^1([0,\ell]; \bR),
 \qquad k \in \oneton,
\end{equation}
is a self-adjoint invertible diagonal summable matrix function, and
\begin{equation} \label{eq:Q=Qjk.def.intro}
 Q = (Q_{jk})_{j,k=1}^n, \qquad Q_{jk} \in L^1[0,\ell] := L^1([0,\ell]; \bC),
 \qquad j, k \in \oneton,
\end{equation}
is a summable (generally non-self-adjoint) potential matrix.

Next we associate with the BVP~\eqref{eq:LQ.def.intro}--\eqref{eq:Uy=0.intro}
an operator $L_U(Q)$. To this end we denote by $\fH_k := L^2_{|\beta_k|}[0,\ell]$ the weighted $L^2$-space
with the weight $|\beta_k|$, $k \in \oneton$, and set
$\fH := \fH_1 \oplus \ldots \oplus \fH_n.$ Now the operator $L_U(Q)$ in $\fH$ is defined
as a restriction of the maximal operator $L_{\max}(Q)$ generated in $\fH$ by the differential expression $\cL(Q)$ to the domain
\begin{equation} \label{eq:dom.LU.Intro}
\dom(L_U(Q)) := \{y \in \AC([0,\ell]; \bC^n) \ : \ \cL(Q) y \in \fH, \ \
 \ U(y) = C y(0) + D y(\ell) = 0 \}.
\end{equation}

Systems~\eqref{eq:LQ.def.intro} are of significant interest in some
theoretical and practical questions. For instance, if $n=2m$,
$B(x)=\diag(-I_m, I_m)$, $Q=\begin{pmatrix} 0 & Q_{12} \\ Q_{21}
& 0 \end{pmatrix}$,
system~\eqref{eq:LQ.def.intro} is equivalent to
the Dirac system (see~\cite{LevSar88},~\cite[{Section 1.2}]{Mar77}). Note also that
equation~\eqref{eq:LQ.def.intro} {with arbitrary
constant not necessary self-adjoint matrix $B(x)= \diag(b_1, \ldots, b_n) \in \bC^{n\times n}$}
is used to integrate the $N$-waves problem arising in nonlinear
optics~\cite[{Sec.III.4}]{ZMNovPit80}.

The spectral problem~\eqref{eq:LQ.def.intro}--\eqref{eq:Uy=0.intro} (the operator $L_U(Q)$) has first been investigated by G.D.~Birkhoff and R.E.~Langer~\cite{BirLan23}. Namely, they have extended some previous results of~Birkhoff and Tamarkin on non-self-adjoint boundary value problem for ODE to the case of BVP~\eqref{eq:LQ.def.intro}--\eqref{eq:Uy=0.intro}. More precisely,
they introduced the concepts of \emph{regular and strictly
regular boundary conditions}~\eqref{eq:Uy=0.intro} and
investigated the asymptotic behavior of eigenvalues and
eigenfunctions of the corresponding operator $L_U(Q)$.
Moreover, they proved \emph{a pointwise convergence result} on
spectral decompositions of the operator $L_U(Q)$ corresponding
to the BVP~\eqref{eq:LQ.def.intro}--\eqref{eq:Uy=0.intro} with
regular boundary conditions.

To the best of our knowledge, the problem of the completeness of the system of root vectors
\emph{of general BVP}~\eqref{eq:LQ.def.intro}--\eqref{eq:Uy=0.intro} \emph{with arbitrary
constant not necessary self-adjoint matrix
$B(x)= \diag(b_1, \ldots, b_n) \in \bC^{n\times n}$},
was first investigated in the recent papers~\cite{MalOri00,MalOri12} by
one of the authors and L.L.~Oridoroga. In these papers the
concept of \emph{weakly regular} boundary conditions for the
system~\eqref{eq:LQ.def.intro} was introduced and the completeness of
root vectors for this class of BVP was proved.
Completeness property for general
BVP~\eqref{eq:LQ.def.intro}--\eqref{eq:Uy=0.intro} with non-weakly
regular and even with degenerate boundary conditions was obtained in~\cite{LunMal15JST}.
 \emph{Emphasize that in the case of
non-weakly regular boundary conditions the completeness property
substantially depends on the values $Q(0)$ and $Q(1)$.}

Going over to the basis property note that
during the last decade there appeared numerous papers devoted mainly to the
Riesz basis property for $2\times 2$ Dirac system subject to the
\emph{regular or strictly regular} boundary conditions
(see~\cite{TroYam02,Mit03,Mit04,HasOri09,Bask11,DjaMit10,DjaMit12UncDir,DjaMit12TrigDir,DjaMit12Equi,DjaMit12Crit,DjaMit13CritDir}).
The most complete result on the Riesz basis property for $2\times 2$ Dirac and Dirac-type systems with $Q \in L^1$ and strictly regular boundary conditions was obtained independently by different methods and at the same time by A.M.~Savchuk and A.A.~Shkalikov~\cite{SavShk14} and by the authors~\cite{LunMal14Dokl, LunMal16JMAA}. The case of regular boundary conditions is treated in~\cite{SavShk14} for the first time. Other proofs were obtained later in~\cite{SavSad15DAN},~\cite{SavSad15} (see also their recent survey~\cite{SavSad20} and references therein).

To the best of our knowledge the first result on the Riesz basis property for
BVP~\eqref{eq:LQ.def.intro}--\eqref{eq:Uy=0.intro} generated by general $n \times n$ system~\eqref{eq:LQ.def.intro}
with $B(x) = B = \diag(b_1, \ldots, b_n) \in \bC^{n\times n}\not = B^*$
and bounded $Q \in L^{\infty}([0,1]; \bC^{n \times n})$
was obtained by the authors in~\cite{LunMal15JST}. Treated boundary conditions form rather broad class that covers, in particular, periodic, antiperiodic, and regular separated (not necessarily self-adjoint) boundary conditions.
Note also that BVP for $2m \times 2m$ Dirac equation ($B=\diag(-I_m, I_m)$) was investigated in~\cite{MykPuy13} (Bari-Markus property for Dirichlet BVP with $Q \in L^2([0,1]; \bC^{2m \times 2m})$ and in~\cite{KurAbd18, KurGad20} (Bessel and Riesz basis properties on abstract level).

Note also that \emph{periodic and antiperiodic (necessarily non-strictly regular) BVP} for $2 \times 2$ Dirac and Sturm-Liouville equations have also attracted certain attention during the last decade. For instance, in~\cite[Theorem~13]{DjaMit12TrigDir},~\cite[Theorem~19]{DjaMit12Crit} and~\cite{DjaMit13CritDir}, it is established a \emph{criterion} for the system of root vectors to contain a Riesz basis for periodic (resp., antiperiodic) $2 \times 2$ Dirac operator in terms of the Fourier coefficients of $Q$ as well as in terms of periodic (resp., antiperiodic) and Dirichlet spectra.
It is also worth mentioning that F.~Gesztesy and V.A.~Tkachenko~\cite{GesTka09,GesTka12} for $q \in L^2[0,\pi]$ and P.~Djakov and B.S.~Mityagin~\cite{DjaMit12Crit} for $q \in W^{-1,2}[0,\pi]$ established by different methods a \emph{criterion} for the system of root vectors to contain a Riesz basis for Sturm-Liouville operator $-\frac{d^2}{dx^2} + q(x)$ on $[0,\pi]$. See also recent survey~\cite{DjaMit20UMNper} by P. Djakov and B. Mityagin, surveys~\cite{Mak12,Mak21Ito} by A.S. Makin, and the references therein.

Note in conclusion, that the Riesz basis property for abstract operators is investigated in numerous papers. Let us mention~\cite{Katsn67,MarMats84,Markus88,Agran99,Shk10,BarYak15,BarYak16}, the recent survey by A.A.~Shkalikov~\cite{Shk16}, and the references therein.

\vskip 10pt

Let us formulate our main results. To this end we need to impose certain conditions on the entries of the matrix function $B(\cdot)$. We assume that there exists $\theta \in (0,1)$ and $n_- \in \{0, 1, \ldots, n\}$, such that
\begin{align}
\label{eq:beta-}
 -\infty < -\theta^{-1} < \beta_1(x) & \le \ldots \le \beta_{n_-}(x)
 < -\theta < 0, \qquad x \in [0, \ell], \\
\label{eq:beta+}
 0 < \theta < \beta_{n_-+1}(x) & \le \ldots \le \beta_n(x)
 < \theta^{-1} < \infty, \qquad x \in [0, \ell],
\end{align}
and for each $k \in \oneto{n-1}$
\begin{equation}
\label{eq:betak-betaj<-eps.intro}
 \text{either} \quad \beta_k \equiv \beta_{k+1}
 \quad \text{or} \quad \beta_k(x) + \theta < \beta_{k+1}(x),
 \quad x \in [0, \ell].
\end{equation}

Let us recall the definition of \emph{regular boundary conditions} (see~\cite[{p.89}]{BirLan23})
confining ourselves to the case of $B(x) = B(x)^*$. Set
\begin{equation} \label{eq:S.def}
S := \diag(s_1, \ldots, s_n), \qquad s_k := \sign(\beta_k(\cdot)), \quad
k \in \oneton.
\end{equation}
Conditions~\eqref{eq:beta-}--\eqref{eq:beta+} guarantee that, in fact, $S = \diag(-I_{n_-}, I_{n-n_-})$. Denote by $P_+$ and $P_-$ the spectral {projections} in $\bC^n$ onto ``positive'' and ``negative'' parts of the spectrum of $S = S^*$, respectively.
Now the concept of regularity of boundary conditions for the problem~\eqref{eq:LQ.def.intro}--\eqref{eq:Uy=0.intro} reads as follows:
\begin{equation} \label{eq:JP+.JP-.intro}
\det(CP_{+} +\ DP_{-}) \ne 0 \quad\text{and} \quad \det(CP_{-} +\ DP_{+}) \ne 0.
\end{equation}
Additionally, boundary conditions~\eqref{eq:Uy=0.intro} are called strictly regular if eigenvalues of the unperturbed operator $L_U(0)$ are asymptotically separated (see Definition~\ref{def:strict.regular} for details).

Our first main result establishes existence of triangular transformation operators for equation~\eqref{eq:LQ.def.intro} with
$Q \in L^1$ and \emph{non-constant} self-adjoint $n \times n$ matrix function $B(\cdot)$, satisfying conditions~\eqref{eq:beta-}--\eqref{eq:betak-betaj<-eps.intro} (see Theorem~\ref{th:transform.oper}). Namely, assuming for simplicity that $\beta_1 < \beta_2$ let $A = \col(a_1, \ldots, a_n)$ have non-zero entries. Then we show that the vector solution $y_A(\cdot,\l)$ of the equation~\eqref{eq:LQ.def.intro} subject to the initial condition $y_A(0,\l) = A$ admits a triangular representation
\begin{equation} \label{eq:trans.opera.intro}
 y_A(x,\l)= (I + \cK_A) e_A(x,\l)
 = e_A(x, \l) + \int^x_0 K_A(x,t) B(t) e_A(t, \l) \,dt,
\end{equation}
where
$$
e_A(x, \l) = \col(a_1 e^{i \l \rho_1(x)}, \ldots, a_n e^{i \l \rho_n(x)}),
\qquad \rho_k(x) = \int_0^x \beta_k(t)\,dt,
$$
is the solution to the unperturbed equation $\cL(0) f = \l f$ subject to the same initial condition.

This result is substantial advancement comparing to the corresponding result by one of the authors in~\cite{Mal99}, where this was established for
$Q \in L^{\infty}$ and \emph{constant} self-adjoint $n \times n$ matrix $B = B^*$, and the corresponding result by the authors in~\cite{LunMal16JMAA}, where this was established for
$Q \in L^1$ and \emph{constant} self-adjoint $2 \times 2$ matrix $B = \diag(b_1, b_2) = B^*$. This result has a wide area of applications. Here we apply triangular transformation operators only to investigation
of the spectral properties of BVP~\eqref{eq:LQ.def.intro}--\eqref{eq:Uy=0.intro}
(operator $L_U(Q)$).

As an immediate application of triangular transformation operators, we obtain formulas relating the fundamental matrix solutions $\Phi(\cdot,\l)$ and $\Phi_0(\cdot,\l)$ to equation~\eqref{eq:LQ.def.intro} with $Q \not =0$ and $Q=0$, respectively (see Proposition~\ref{prop:Phip=Phip0+int.sum} and formula~\eqref{eq:Phixl.wtA}).
In fact, it leads to a representation of the deviation $\Phi(\cdot,\l) - \Phi_0(\cdot,\l)$ as a Fourier transform of linear combinations of several transformation operators kernels (see formula~\eqref{eq:Phip=Phi0p+int1n}).

In turn, starting with this Fourier representation of $\Phi(\cdot,\l) - \Phi_0(\cdot,\l)$
we establish an important identity relating characteristic determinants $\Delta_Q(\cdot)$ and $\Delta_0(\cdot)$ of the operators $L_U(Q)$ and $L_U(0)$ (see~\eqref{eq:A.def}--\eqref{eq:Delta.def} for exact definitions). Namely, letting
\begin{equation} \label{eq:def_b_-,b_+,_b_k}
 b_- := b_1 + \ldots + b_{n_-} \quad\text{and}\quad
 b_+ := b_{n_- + 1} + \ldots + b_n, \quad\text{where}\quad
 b_k := \rho_k(\ell) = \int_0^{\ell} \beta_k(x)\, dx.
\end{equation}
\emph{we show that the characteristic determinant $\Delta_Q(\cdot)$ admits the following representation:}
\begin{equation} \label{eq:DeltaQ=Delta0+int.intro}
 \Delta_Q(\l) = \Delta_0(\l) + \int_{b_-}^{b_+} g(u) e^{i \l u} \,du
 \qquad\text{with}\qquad g \in L^1[b_-, b_+], \quad \l \in \bC.
\end{equation}
It is worth mentioning that the second key ingredient in the proof of formula~\eqref{eq:DeltaQ=Delta0+int.intro} is an extension of the classical Liouville formula for the determinant of a fundamental matrix $\Phi(x,\l)$ to the case of its $m$-th exterior powers $\bigwedge^m \Phi(x,\l)$ obtained in Proposition~\ref{prop:minor.form}.

For a special case of $2 \times 2$ Dirac type operator (\emph{constant} $B \equiv \diag(b_1, b_2) = B^*\in \bC^{2\times 2}$) representation~\eqref{eq:DeltaQ=Delta0+int.intro} was earlier established in~\cite{LunMal14Dokl}, \cite[Lemma 4.1]{LunMal16JMAA}. In recent papers by A.~Makin~\cite{Mak20,Mak21,Mak21DE} this representation was applied to establish Riesz basis property of periodic BVP (regular but not strictly regular) for $2 \times 2$ Dirac equation under certain explicit algebraic assumptions on a potential matrix.

Formula~\eqref{eq:DeltaQ=Delta0+int.intro} gives a bridge between the spectral theory of the operator $L_U(Q)$
and the theory of entire functions of exponential type due to the simple fact: the spectrum $\sigma(L_U(Q)) = \{\l_m\}_{m \in \bZ}$
of $L_U(Q)$ coincides with the set of zeros (counting multiplicity) of the entire function $\Delta_Q(\cdot)$.
Assuming \emph{boundary conditions to be regular} we
easily obtain from~\eqref{eq:DeltaQ=Delta0+int.intro} that $\Delta_Q(\cdot)$ is an entire sine-type function of
the same types $b_{\pm}$ in $\bC_{\mp}$ as the determinant $\Delta_0(\cdot)$.
Further, following the schema of the proof of~\cite[Proposition 4.7]{LunMal16JMAA},
we extract the following asymptotic formula from representation~\eqref{eq:DeltaQ=Delta0+int.intro}
\begin{equation} \label{eq:ln=ln0+o.intro)}
\l_m = \l_m^0 + o(1) \qquad \text{as} \qquad m \to \infty,
\end{equation}
relating the eigenvalues $\{\l_m\}_{m \in \bZ}$ of the operator $L_U(Q)$ and eigenvalues $\{\l_m^0\}_{m \in \bZ}$ of the unperturbed operator $L_U(0)$. Note, that for $2\times 2$ Dirac equation formula~\eqref{eq:ln=ln0+o.intro)} was first established in~\cite{LunMal14Dokl} and~\cite{SavShk14} independently and by different methods.

Moreover, \emph{assuming boundary conditions to be strictly regular}
we complete formula~\eqref{eq:ln=ln0+o.intro)} by establishing similar formula for the normalized eigenvectors $f_m(\cdot)$ and $f_m^0(\cdot)$ of the operators $L_U(Q)$ and $L_U(0)$, respectively. Namely,
using formula relating $\Phi(\cdot,\l)$ and $\Phi^0(\cdot,\l)$ as well as a simple abstract formula for
simple eigenvectors of the operator $L_U(Q)$, we
establish the following formula for their deviation which is valid \emph{uniformly in $x \in [0,\ell]$}:
\begin{equation} \label{eq:Yp=Yp0+o_Intro}
 f_m(x) = f_m^0(x) + o(1) \quad\text{as}\quad m \to \infty, \quad m \in \bZ,
\end{equation}
In turn, this relationship and formula~\eqref{eq:ln=ln0+o.intro)} are substantially involved in the proof of the Riesz basis property of the operator $L_U(Q)$ provided that boundary conditions are \emph{strictly regular}.
This proved to be challenging even on the algebraic level (the case of $Q=0$) and required establishing a new algebraic identity (see Proposition~\ref{prop:y0p.y0*q}) for the inner product of the eigenvectors of the unperturbed operator $L_U(0)$ and its adjoint $L_U^*(0)$.

It is worth mentioning that in Section~\ref{subsec:examples.strict} we find necessary and sufficient conditions for quasi-periodic boundary conditions
\begin{equation} \label{eq:yell=C.y0_Intro}
 y_k(\ell) = c_k y_k(0), \qquad c_k \ne 0, \qquad k \in \oneton,
\end{equation}
to be strictly regular. In this case conditions~\eqref{eq:Uy=0.intro} hold with invertible
$C = \diag(c_1, \ldots, c_n)$ and $D = -I_n$. In accordance with~\eqref{eq:JP+.JP-.intro}, conditions~\eqref{eq:yell=C.y0_Intro} are always regular but not necessary strictly regular.
Morover, \emph{antiperiodic} boundary conditions $(c_1 = \ldots = c_n = -1)$ are strictly regular if and only if
for some $b_0 > 0$ the numbers $b_1, \ldots, b_n$ given by~\eqref{eq:def_b_-,b_+,_b_k}
can be ordered in such a way that the following representation holds,
\begin{equation}
 b_k = 2^{a_k} (2 u_k + 1) b_0, \qquad a_k, u_k \in \bZ, \quad k \in \oneton,
 \qquad 0 \le a_1 < a_2 < \ldots < a_n.
\end{equation}
In particular, antiperiodic boundary conditions are strictly regular if
$b_k = 2^k$, $k \in \oneton$.

We also obtain completeness property in the case of regular boundary conditions extending the corresponding
result from~\cite{MalOri12} to the case of non-constant matrix function $B(\cdot) \not = \const$.
In Section~\ref{sec:uniform.minim.riesz} we also establish the Riesz basis property with parentheses for the operator $L_U(Q)$ provided that boundary conditions~\eqref{eq:Uy=0.intro} are only regular (but not strictly regular). For the proof we use the perturbation idea which goes back to A.A.~Shkalikov~\cite{Shk79} and was applied later on to $2 \times 2$ Dirac systems in~\cite{SavShk14,LunMal16JMAA}.

Finally, we apply our main abstract results with $B(x) = B(x)^* \in
\bC^{4 \times 4}$ to the Timoshenko beam model investigated
under different restrictions in numerous papers
(see~\cite{Tim55,KimRen87,MenZua00,Shub02,XuYung04,XuHanYung07,WuXue11,Shub11,LunMal15JST,LunMal16JMAA,Akian22} and the references therein). In our previous papers~\cite{LunMal15JST, LunMal16JMAA} we studied the Timoshenko beam model with relaxed smoothness assumptions on the coefficients, when the beam is fixed at one end and with the most general boundary condition at the other end. For this general model, we established completeness and Riesz basis property with parentheses, assuming certain identity for the coefficients of the model: the ratio of wave speeds $\frac{K(\cdot)}{\rho(\cdot)}$ and $\frac{EI(\cdot)}{I_\rho(\cdot)}$ is constant.
This assumption has to be added because in our previous papers~\cite{LunMal15JST, LunMal16JMAA} we treated BVP~\eqref{eq:LQ.def.intro}--\eqref{eq:Uy=0.intro} with
a \emph{constant matrix} $B(x) = B$. However, the dynamic generator of the general Timoshenko beam model is similar to the operator $L_U(Q)$ with $B(x) = (-\beta_1(x), \beta_1(x), -\beta_2(x), \beta_2(x))$ and \emph{functions $\beta_1(x), \beta_2(x)$ with non-constant ratio}, and cannot be reduced to Dirac-type operator with a constant matrix $B$.
Since in this paper, we treat more general BVP~\eqref{eq:LQ.def.intro}--\eqref{eq:Uy=0.intro} with arbitrary non-constant matrix $B(\cdot)$, this allows us to remove this algebraic assumption.
Moreover, we establish asymptotic behavior of the eigenvalues and eigenvectors of the dynamic generator $L_{\Tim}$ of the Timoshenko beam model as well as the Riesz basis property (without parentheses) of the root vectors system of the operator $L_{\Tim}$, provided that the eigenvalues of $L_{\Tim}$ are asymptotically separated. We also provide comprehensive list of explicit conditions that guarantee this property.

When our preprint was almost ready we became aware of the short communication~\cite{Shk21} by A.A.~Shkalikov, where results on Riesz basis property of regular BVP~\eqref{eq:LQ.def.intro}--\eqref{eq:Uy=0.intro} with $Q \in L^1$ were announced under the similar assumptions on matrix function $B(\cdot)$.

The paper is organized as follows. Section~\ref{sec:prelim} is devoted to some preliminaries. In particular, we list
some identities of determinants of sums and products of matrices.

In Section~\ref{sec:transform.oper} we prove our first main result, Theorem~\ref{th:transform.oper}, establishing existence of triangular transformation operators for equation~\eqref{eq:LQ.def.intro}.
The case of \emph{non-constant} matrix function $B(\cdot)$ poses significant difficulties even for $Q \in C^1$.

In Section~\ref{sec:fundament} we apply transformation operators
to establish an important identity for the fundamental matrix $\Phi(x,\l)$ of the equation~\eqref{eq:LQ.def.intro}. Namely, in Proposition~\ref{prop:Phip=Phip0+int.sum} we show that the deviation of the fundamental matrices of equation~\eqref{eq:LQ.def.intro} with $Q \ne 0$ and $Q = 0$ admits a Fourier transform representation involving the kernels of the transformation operators.
In Proposition~\ref{prop:minor.form} we generalize a classical Liouville's formula and show that $m$-th exterior power $\bigwedge^m \Phi(x,\l)$
of the fundamental matrix $\Phi(x,\l)$
satisfies equation similar to~\eqref{eq:LQ.def.intro}. This result, in turn, implies similar Fourier transform representation for the minors of $\Phi(x,\l)$.

In Section~\ref{sec:regular.bc} we establish some general properties of BVP~\eqref{eq:LQ.def.intro}--\eqref{eq:Uy=0.intro} and introduce concepts of regular and strictly regular boundary conditions. In particular, we apply Jacobi's formula to establish certain important uniform estimates from below for eigenvectors of this BVP with $Q=0$, provided that boundary conditions are strictly regular (see Proposition~\ref{prop:eigenvec0}).

In Section~\ref{sec:asymp.eigen} we establish key identity~\eqref{eq:DeltaQ=Delta0+int.intro}
and similar identity related to eigenvectors of the operator $L_U(Q)$. These identities are used to establish the asymptotic behavior of eigenvalues and eigenvectors in Theorems~\ref{th:ln=ln0+o} and~\ref{th:eigenvec}, respectively.

In Section~\ref{sec:compl}, following~\cite{MalOri12}, we establish completeness of the root vectors system of the operator $L_U(Q)$ (see Subsection~\ref{subsec:srv} for exact definition), provided that $Q \in L^1$, matrix function $B(\cdot)$ meats conditions~\eqref{eq:beta-}--\eqref{eq:betak-betaj<-eps.intro}, and boundary conditions~\eqref{eq:Uy=0.intro} are regular.

In Section~\ref{sec:adjoint}, we study adjoint operator $L_U(Q)^*$. In Proposition~\ref{prop:y0p.y0*q} we establish an important identity
for the inner product of eigenvectors of the unperturbed operator $L_U(0)$ and its adjoint $L_U^*(0)$, which is essential for proving uniform minimality property.

In Section~\ref{sec:uniform.minim.riesz}, we prove our main results on uniform minimality and Riesz basis property of the root vectors system of the operator $L_U(Q)$ with strictly regular boundary conditions (see Theorems~\ref{th:uniform.minim} and~\ref{th:Riesz_basis}).
Here we also establish Riesz basis property with parentheses provided that boundary conditions~\eqref{eq:Uy=0.intro} are regular (see Theorem~\ref{th:riesz.paren}).

In Section~\ref{sec:Timoshenko} we apply our abstract results with $B(x) = B(x)^* \in \bC^{4 \times 4}$ to the dynamic generator $L_{\Tim}$ of the general model~\eqref{eq:Tim.Ftt}--\eqref{eq:Tim.WLFLa2} of spatially non-homogenous Timoshenko beam with both boundary and locally distributed damping. By reducing this dynamic generator $L_{\Tim}$ to the special $4 \times 4$ Dirac type operator $L_U(Q)$, we show that the root vectors system of $L_{\Tim}$ forms a Riesz basis in the suitable energy space, when the corresponding operator $L_U(Q)$ is equipped with the strictly regular boundary conditions (see Theorems~\eqref{eq:Tim.riesz} and~\eqref{th:Tim.asymp.riesz}(ii)). We also apply results of Section~\ref{sec:asymp.eigen} to establish the asymptotic behavior of the eigenvalues and the eigenvectors of the operator $L_{\Tim}$ (see Theorems~\ref{th:Tim.asymp} and~\ref{th:Tim.asymp.riesz}(i)). In particular, we found an interesting case when eigenvalues of $L_{\Tim}$ have an explicit asymptotical formula, which to the best of our knowledge is new even in the case of constant parameters of the model (see Theorem~\ref{th:Tim.asymp}(iii)).
\section{Preliminaries} \label{sec:prelim}
\subsection{Definition of the system of root vectors}
\label{subsec:srv}
Let us also recall the notion of the system of root vectors of an operator with compact resolvent. First, we recall a few basic facts regarding the eigenvalues of a compact, linear operator $T \in \cB_{\infty}(\fH)$ in a
separable complex Hilbert space $\fH$. The {\it geometric
multiplicity}, $m_g(\mu_0,T)$, of an eigenvalue $\mu_0
 \in \sigma_p (T)$ of $T$ is given by
$
m_g(\mu_0,T) := \dim(\ker(T - \mu_0)).
$

The {\it root subspace} of $T$ corresponding to $\mu_0 \in
\sigma_p(T)$ is given by
\begin{equation} \label{root.subspace}
\cR_{\mu_0}(T) = \big\{f \in \fH\,:\, (T - \mu_0)^k f = 0 \
\ \text{for some}\ \ k \in \bN \big\}.
\end{equation}
Elements of $\cR_{\mu_0}(T)$ are called {\it root vectors}.
For $\mu_0 \in \sigma_p (T) \backslash \{0\}$, the set
$\cR_{\mu_0}(T)$ is a closed linear subspace of
$\fH$ whose dimension equals to the {\it algebraic
multiplicity}, $m_a(\mu_0,T)$, of $\mu_0$,
$
m_a(\mu_0,T) := \dim\big(\cR_{\mu_0}(T)\big)<\infty.
$

Denote by $\{\mu_j\}_{j=1}^{\infty}$ the sequence of non-zero
eigenvalues of $T$ ($\mu_j \ne \mu_k$) and let $m_j$ be the algebraic multiplicity
of $\mu_j$. By the {\it system of root vectors} of the operator
$T$ we mean any sequence of the form
$
\cup_{j=1}^{\infty}\{e_{jk}\}_{k=1}^{m_j},
$
where $\{e_{jk}\}_{k=1}^{m_j}$ is a basis in $\cR_{\mu_j}(T)$,
$m_j = m_a(\mu_j,T) < \infty$.
The system or root vectors of the operator $T$ is called \emph{normalized} if $\|e_{jk}\|_{\fH} = 1$, $j \in \bN$, $k \in \oneto{m_j}$.

We are particularly interested in the case where $A$ is a
densely defined, closed, linear operator in $\fH$ whose
resolvent is compact, that is,
$
R_A(\l):=(A - \l)^{-1} \in \cB_{\infty}(\fH), \ \l \in \rho(A).
$
Via the spectral mapping theorem all eigenvalues of $A$
correspond to eigenvalues of its resolvent $R_A(\l)$, $\l \in
\rho(A)$, and vice versa. Hence, we use the same notions of
root vectors, root subspaces, geometric and algebraic
multiplicities associated with the eigenvalues of $A$, and the
system of root vectors of $A$.
\subsection{Properties of Lipshitz functions} \label{subsec:Lip}
Recall that $\Lip_1(\cS)$ for $\cS \subset X$ in any normed space $X$ is the class of functions $f$ acting from $\cS$ to $\bC$ and satisfying the condition
$$
|f(u) - f(u')| \le \alp \|u - u'\|_X, \quad u,u' \in \cS \quad\text{for some}\quad \alp = \alp_f > 0.
$$
It is well known that for any finite segment $[a,b] \in \bR$,
\begin{equation} \label{eq:Lip1ab}
 \Lip_1[a,b] = \{f \in \AC[a,b]: f' \in L^{\infty}[a,b]\} = W^{1,\infty}[a,b].
\end{equation}
We also denote by $L_{1,\loc}(\cS)$ a set of functions $f$ that are Lipshitz on any compact subset of $\cS$.

Our main target use case will be $X = \bR^2$. To this end we denote by
$$
|u-u'| := \|u-u'\|_{\bR^2} := |x - x'| + |t - t'|,
\qquad u = (x,t), \ u' = (x',t') \in \bR^2,
$$
a Manhattan distance between points $u$ and $u'$.
Further, a simply connected, closed bounded set $\cS \subset \bR^2$ is said to have a Lipshitz boundary if its boundary $\partial S$ can be parametrized as
\begin{equation} \label{eq:dS}
 \partial S = \{(\gam_1(t), \gam_2(t)): t \in [a,b]\},
\end{equation}
for some $-\infty < a < b < \infty$ where $\gam_j \in \Lip_1[a,b]$ and $\gam_j(a) = \gam_j(b)$, $j \in \{1,2\}$.
Throughout the paper we will denote for $f : \bR^2 \to \bC$,
\begin{equation} \label{eq:D1f.D2f}
 (D_1 f)(x,t) := D_1 f(x,t) := \DD{x} f(x,t) , \qquad
 (D_2 f)(x,t) := D_2 f(x,t) := \DD{t} f(x,t),
\end{equation}
whenever corresponding partial derivatives exist.

In the sequel we will need the following simple properties of Lipshitz functions.
\begin{lemma} \label{lem:C.Lip}
\textbf{(i)} Let $\cS \subset \bR^2$ and let $f_m \in \Lip_1(\cS)$, $m \in \bN$, be such that
\begin{align} \label{eq:fm.to.x}
 & f_m(x,t) \to f(x,t) \ \ \text{as}\ \ m \to \infty, \quad u = (x,t) \in \cS, \\
\label{eq:fm.unilip}
 & |f_m(x,t) - f_m(x',t')| \le \alp (|x-x'| + |t-t'|), \quad (x,t), (x',t') \in \cS, \quad m \in \bN,
\end{align}
for some $\alp>0$ and $f: \cS \to \bC$. Then
$$
f \in \Lip_1(\cS) \qquad\text{and}\qquad
|f(x,t) - f(x',t')| \le \alp (|x-x'| + |t-t'|), \quad (x,t), (x',t') \in \cS.
$$
Moreover, $\Lip_1(\cS)$ is a Banach space with the norm
$$
\|f\|_{\Lip_1(\cS)} := |f(x_0, t_0)|
+ \sup_{\genfrac{}{}{0pt}{2}{u, u' \in \cS}{u \ne u'}} \frac{|f(x,t) - f(x',t')|}{|u-u'|}, \qquad u = (x,t), \ u' = (x',t'),
$$
where $u_0 = (x_0, t_0) \in \cS$ is an arbitrary fixed point.

\textbf{(ii)} Let $\cS \subset \bR^2$ be a simply connected, closed bounded set with a Lipshitz boundary and let $f \in \Lip_1(\cS)$, be such that
\begin{equation} \label{eq:cDf.def}
 \cD(f) := \max\{\|D_1 f\|_{L^{\infty}(\cS)}, \|D_2 f\|_{L^{\infty}(\cS)}\}
 < \infty.
\end{equation}
Then
$$
|f(x,t) - f(x',t')| \le \alp (|x-x'| + |t-t'|), \quad (x,t), (x',t') \in \cS,
$$
where constant $\alp \in (0, \infty)$ depends only on $\cD(f)$ and the set $\cS$.

\textbf{(iii)} Let $\cS \subset \bR^2$ be a simply connected, closed bounded set with a Lipshitz boundary and let $f_m \in \Lip_1(\cS)$, $m \in \bN$, be such that
\begin{equation} \label{eq:cDfm<tau}
 f_m(x,t) \to f(x,t) \ \ \text{as}\ \ m \to \infty, \quad (x,t) \in \cS,
 \qquad \|D_1 f_m\|_{L^{\infty}(\cS)},
 \ \|D_2 f_m\|_{L^{\infty}(\cS)} \le \tau, \quad m \in \bN,
\end{equation}
for some $\tau>0$ and $f: \cS \to \bC$. Then $f \in \Lip_1(\cS)$.
\end{lemma}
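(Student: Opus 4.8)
The plan is to prove the three parts in order, each reducing to elementary facts about Lipschitz functions. For part (i), the pointwise inequality passes to the limit directly: fix $(x,t),(x',t')\in\cS$ and let $m\to\infty$ in $|f_m(x,t)-f_m(x',t')|\le\alp(|x-x'|+|t-t'|)$, using \eqref{eq:fm.to.x} on both sides. This immediately gives $f\in\Lip_1(\cS)$ with the same constant $\alp$. For the Banach space claim I would verify that $\|\cdot\|_{\Lip_1(\cS)}$ is a norm (the seminorm part vanishes only on constants, and the point-evaluation term then pins those down), that it is independent of the base point up to equivalence, and that it is complete: given a Cauchy sequence $\{g_m\}$, the values $g_m(x_0,t_0)$ converge and the uniform Lipschitz bound plus the Cauchy property in seminorm force uniform convergence on $\cS$ (here one uses that $\cS$, being a subset of $\bR^2$ with the Manhattan metric, has finite diameter on any bounded piece, or one argues locally); the limit $g$ then lies in $\Lip_1(\cS)$ by the first half of (i), and $\|g_m-g\|_{\Lip_1(\cS)}\to 0$.

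For part (ii), the point is to integrate the bounded partial derivatives along a path in $\cS$ joining $(x,t)$ to $(x',t')$. Since $\cS$ is simply connected, closed, bounded with Lipschitz boundary, it is quasiconvex: there is a constant $c=c(\cS)$ so that any two points of $\cS$ can be joined by a rectifiable path inside $\cS$ of length at most $c$ times the Manhattan distance between them. Because $f\in\Lip_1(\cS)$ is in particular continuous and, by \eqref{eq:cDf.def}, has essentially bounded distributional partials, $f$ is absolutely continuous along (almost every) line segment and along rectifiable paths, with $|df|\le\cD(f)$ times arclength (in the Manhattan sense $|D_1f|,|D_2f|\le\cD(f)$ gives $|f(\beta(s_2))-f(\beta(s_1))|\le\cD(f)\cdot(\text{Manhattan length of }\beta|_{[s_1,s_2]})$). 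Combining, $|f(x,t)-f(x',t')|\le\cD(f)\cdot c(\cS)\cdot(|x-x'|+|t-t'|)$, so $\alp:=c(\cS)\,\cD(f)$ works.

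Part (iii) is then a one-line combination: apply (ii) to each $f_m$ to get $|f_m(x,t)-f_m(x',t')|\le c(\cS)\tau\,(|x-x'|+|t-t'|)$ for all $m$, a bound uniform in $m$ by \eqref{eq:cDfm<tau}; now invoke (i) with $\alp:=c(\cS)\tau$ and the pointwise convergence to conclude $f\in\Lip_1(\cS)$. The main obstacle is the quasiconvexity/chain-rule step in (ii): one must justify that a $\Lip_1$ function with bounded weak partials may be recovered by integrating its derivative along paths in the possibly non-convex set $\cS$, and that the Lipschitz boundary hypothesis yields the quasiconvexity constant $c(\cS)$; I would handle this by a standard covering/connectedness argument (cover $\cS$ by finitely many small squares on which $f$, being Lipschitz, is differentiable a.e. with the weak partials as classical partials a.e., apply the fundamental theorem of calculus on segments, and chain finitely many such segments), noting that the local nature makes the non-convexity harmless.
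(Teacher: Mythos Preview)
Your proposal is correct and follows essentially the same route as the paper: pass to the limit for (i), use a quasiconvexity-plus-integration argument for (ii), and combine them for (iii). The one refinement worth noting is that the paper connects points of $\cS$ by \emph{Manhattan curves} (finite chains of axis-parallel segments) rather than general rectifiable paths; this lets one invoke the one-dimensional identification $\Lip_1[a,b]=W^{1,\infty}[a,b]$ directly on each segment and sidesteps the chain-rule-along-paths technicality you flag at the end.
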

\begin{proof}
Parts (i) and (ii) are well-known and are of folklore nature. Let us only mention that the set $\cS$ as a simply connected, closed and bounded set with a Lipshitz boundary has the following important property: for any interior points $u,u' \in \inter S$ there exists a ``Manhattan'' curve $\Gam_{u,u'}$ (a finite sequence of alternating vertical and horizontal segments) connecting $u$ and $u'$ such the length of $\Gam_{u,u'}$
is bounded by $C_{\cS} |u-u'|$ for some $C_{\cS}$ that only depend on $\cS$ (more precisely it only depends on Lipshitz constants of the parametric curves $\gam_1$, $\gam_2$ of the boundary $\partial S$). This property allows to utilize fundamental representation~\eqref{eq:Lip1ab} for Lipshitz space on a finite segment.

Let us also comment on part (iii). Part (ii) and condition~\eqref{eq:cDfm<tau} imply uniform Lipshitz condition~\eqref{eq:fm.to.x}--\eqref{eq:fm.unilip} with some $\alp$ that only depends on $\tau$ and the set $\cS$. Part (i) finishes the proof.
\end{proof}
\subsection{The Banach spaces \texorpdfstring{$X_1$ and $X_\infty$}{X1 and Xinf}}
\label{subsec:X1inf}
Following~\cite{Mal94,LunMal16JMAA} denote by $X_1:= X_1(\Omega)$ and $X_\infty := X_\infty(\Omega)$ the linear spaces composed of (equivalent classes of) measurable functions defined on
\begin{equation} \label{eq:Omega.def}
 \Omega := \{(x,t) : 0 \le t \le x \le \ell\}
\end{equation}
satisfying
\begin{align}
\label{eq:B2.norm.def}
 \|f\|_{X_1} &:= \esssup_{t \in [0,\ell]} \int_t^\ell |f(x,t)| dx < \infty,\\
\label{eq:B1.norm.def}
 \|f\|_{X_{\infty}} &:= \esssup_{x \in [0,\ell]} \int_0^x |f(x,t)| dt < \infty,
\end{align}
respectively. It can easily be shown that the spaces $X_1$ and
$X_\infty$ equipped with the norms~\eqref{eq:B2.norm.def} and
\eqref{eq:B1.norm.def} form Banach spaces that are not
separable. Denote by $X_{1,0}$ and $X_{\infty,0}$ the subspaces
of $X_1$ and $X_\infty$, respectively, obtained by taking the
closure of continuous functions $f \in C(\Omega)$. Clearly, the
set $C^1(\Omega)$ of smooth functions is also dense in both
spaces $X_{1,0}$ and $X_{\infty,0}$.

The following simple property of the class
$X_{\infty,0}(\Omega)$ established in~\cite{LunMal16JMAA} will be essential in the sequel.
\begin{lemma}[Lemma 2.2 in~\cite{LunMal16JMAA}] \label{lem:trace}
For each $a \in [0,\ell]$ the trace mapping
\begin{equation}
 i_a:\ C(\Omega) \to C[0,a], \qquad
 (i_a R)(t) := R(a, t), \quad R \in C(\Omega),
\end{equation}
originally defined on $C(\Omega)$ admits a continuous extension (also denoted by $i_a$) as a mapping
from $X_{\infty,0}(\Omega)$ onto $L^1[0,a]$.
\end{lemma}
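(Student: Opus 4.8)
The statement has two parts: that $i_a$ extends to a bounded operator on $X_{\infty,0}(\Omega)$, and that this extension is \emph{surjective} onto $L^1[0,a]$. For the extension, the plan is to prove the a priori estimate $\|i_a R\|_{L^1[0,a]} \le \|R\|_{X_\infty}$ for every $R \in C(\Omega)$ and then invoke density of $C(\Omega)$ in $X_{\infty,0}(\Omega)$ (its very definition) together with completeness of $L^1[0,a]$. The estimate rests on the observation that, for $R$ continuous on the compact triangle $\Omega$, the function $h(x) := \int_0^x |R(x,t)|\,dt$ is continuous on $[0,\ell]$: split the increment $h(x)-h(x_0)$ into the integral of $|R(x,\cdot)|-|R(x_0,\cdot)|$, controlled by uniform continuity of $R$, plus an integral over a shrinking interval. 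Hence $\esssup_{x\in[0,\ell]} h(x) = \max_x h(x) \ge h(a) = \|i_a R\|_{L^1[0,a]}$, giving the bound, and the unique bounded extension (still of norm $\le 1$) is then automatic.

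For surjectivity the plan is to exhibit, on the dense subspace $C[0,a] \subset L^1[0,a]$, a right inverse with a uniform norm bound. Given $g \in C[0,a]$, extend it to $\wt g \in C[0,\ell]$ with $\|\wt g\|_{L^1[0,\ell]} \le 2\|g\|_{L^1[0,a]}$ (e.g. keep $g$ on $[0,a]$ and let it decay linearly to $0$ on a short interval $[a,a+\delta]$, vanishing beyond; take $\wt g \equiv 0$ if $g \equiv 0$). Put $R_g(x,t) := \wt g(t)$ on $\Omega$. Then $R_g \in C(\Omega) \subset X_{\infty,0}(\Omega)$, clearly $i_a R_g = g$, and since the integrand is nonnegative $\|R_g\|_{X_\infty} = \esssup_x \int_0^x |\wt g(t)|\,dt = \int_0^\ell |\wt g(t)|\,dt \le 2\|g\|_{L^1[0,a]}$.

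To pass from $C[0,a]$ to all of $L^1[0,a]$, given $g \in L^1[0,a]$ choose $g_k \in C[0,a]$ with $g_0 = 0$, $g_k \to g$ in $L^1[0,a]$ and $\|g_k - g_{k-1}\|_{L^1[0,a]} \le 2^{-k}$; by the previous step pick $f_k \in X_{\infty,0}(\Omega)$ with $i_a f_k = g_k - g_{k-1}$ and $\|f_k\|_{X_\infty} \le 2^{1-k}$. The series $\sum_k f_k$ converges absolutely in the Banach space $X_{\infty,0}(\Omega)$ to some $f$, and continuity of $i_a$ yields $i_a f = \sum_k (g_k - g_{k-1}) = g$. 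Hence $i_a$ is onto $L^1[0,a]$.

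The lemma is essentially soft, and the one substantive point is the direction of the inequality in the first step: one must notice that the $X_\infty$-norm dominates the trace at \emph{each} fixed $x$, which is precisely the asymmetry built into the definitions of $X_\infty$ and $X_1$ on $\Omega$, the $x$-variable being the one entering through an essential supremum. Surjectivity, by contrast, is a routine bounded-right-inverse argument; the only care needed is to produce the right inverse with a uniform bound on a dense subspace, which is what upgrades dense range to genuine surjectivity.
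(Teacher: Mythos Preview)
The paper does not include a proof of this lemma; it is quoted verbatim from \cite{LunMal16JMAA} and used as a black box. There is therefore no ``paper's own proof'' to compare against here.

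Your argument is correct. The continuity of $h(x)=\int_0^x |R(x,t)|\,dt$ for $R\in C(\Omega)$ is exactly what turns the essential supremum into a genuine maximum and yields the a~priori bound $\|i_a R\|_{L^1[0,a]}\le\|R\|_{X_\infty}$; density of $C(\Omega)$ in $X_{\infty,0}(\Omega)$ and completeness of $L^1[0,a]$ then give the unique bounded extension. For surjectivity, your right-inverse-with-uniform-bound trick is the right idea: the extension $\wt g$ is always available (if $a=\ell$ take $\wt g=g$; if $a<\ell$ and $g(a)=0$ extend by zero; otherwise $\|g\|_{L^1[0,a]}>0$ and a short enough linear cutoff works), and the telescoping series in $X_{\infty,0}(\Omega)$ converges absolutely because $X_{\infty,0}(\Omega)$, being the closure of $C(\Omega)$ in the Banach space $X_\infty(\Omega)$, is itself complete. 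One small cosmetic point: the map $g\mapsto R_g$ you build is not linear (the cutoff length depends on $g$), but this is irrelevant since you only use it termwise on the differences $g_k-g_{k-1}$.
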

We will also need a property of intersections $X_1(\Omega) \cap X_{\infty}(\Omega)$, $X_{1,0}(\Omega) \cap X_{\infty,0}(\Omega)$ established in~\cite{LunMal16JMAA}. To this end, for any measurable on $\Omega$ kernel $R(\cdot,\cdot)$ we define Volterra type operator $\cR$ as follows,
\begin{equation} \label{eq:cR.def}
 (\cR f)(x) = \int^x_0 R(x,t) f(t) dt.
\end{equation}
Denote by $\|\cR\|_p := \|\cR\|_{L^p[0,\ell]\to L^p[0,\ell]}$ the $L_p$-norm of the operator $\cR$, provided that it is bounded. Further, recall that a Volterra operator in a Banach space is a compact operator with zero spectrum.
\begin{lemma}[Lemma 2.1 in~\cite{LunMal16JMAA}] \label{lem.Volterra.operGeneral}
Let $R \in X_1(\Omega) \cap X_\infty(\Omega)$ and $\cR$ be a Volterra type operator given by~\eqref{eq:cR.def}. Then:

\textbf{(i)} The operator $\cR$ is bounded in $L^p[0,\ell]$ for
each $p \in [1,\infty]$ and
\begin{equation} \label{2.4Aop}
 \|\cR\|_p \le \|R\|_{X_1(\Omega)}^{1/p} \cdot \|R\|_{X_\infty(\Omega)}^{1-1/p}.
\end{equation}
Moreover,
\begin{equation} \label{2.4opNew}
 \|\cR\|_{1} = \|R\|_{X_1(\Omega)}, \qquad \|\cR\|_{\infty} = \|R\|_{X_\infty}.
\end{equation}

\textbf{(ii)} If $R \in X_{1,0}(\Omega) \cap X_{\infty,0}(\Omega)$, then $\cR$ is a Volterra operator in $L^p[0,\ell]$ for each $p \in [1,\infty]$.
\end{lemma}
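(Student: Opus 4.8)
The plan is to derive everything from the two endpoint estimates, on $L^1$ and $L^\infty$, and to treat (i) and (ii) in that order.

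For part (i) I would first settle the endpoints. For $p=\infty$ one has $|(\cR f)(x)|\le\|f\|_\infty\int_0^x|R(x,t)|\,dt\le\|f\|_\infty\,\|R\|_{X_\infty}$ directly, and for $p=1$ Tonelli's theorem gives $\int_0^\ell|(\cR f)(x)|\,dx\le\int_0^\ell|f(t)|\big(\int_t^\ell|R(x,t)|\,dx\big)\,dt\le\|R\|_{X_1}\|f\|_1$. For $1<p<\infty$ the cleanest route is the elementary splitting $|R|=|R|^{1/p}\cdot|R|^{1-1/p}$: Hölder's inequality in the $t$-variable yields $|(\cR f)(x)|^p\le\big(\int_0^x|R(x,t)|\,dt\big)^{p-1}\int_0^x|R(x,t)|\,|f(t)|^p\,dt\le\|R\|_{X_\infty}^{p-1}\int_0^x|R(x,t)|\,|f(t)|^p\,dt$, and integrating in $x$ and applying Tonelli once more gives $\|\cR f\|_p^p\le\|R\|_{X_\infty}^{p-1}\|R\|_{X_1}\|f\|_p^p$, which is \eqref{2.4Aop}; alternatively this is the Riesz--Thorin interpolant of the two endpoint bounds. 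The equalities \eqref{2.4opNew} are the standard computation of the norm of a kernel operator, and I would only sketch them: test $\cR$ on the approximate identity $\delta^{-1}\mathbf{1}_{[t_0,t_0+\delta]}$, where $t_0$ is chosen (via the Lebesgue differentiation theorem and Fatou's lemma) so that $\int_{t_0}^\ell|R(x,t_0)|\,dx$ is within $\eps$ of $\|R\|_{X_1}$, for the $L^1$ identity; and on $f=\overline{\sign R(x_0,\cdot)}$ with $\int_0^{x_0}|R(x_0,t)|\,dt$ within $\eps$ of $\|R\|_{X_\infty}$, for the $L^\infty$ identity.

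For part (ii), compactness is a routine approximation: since $R\in X_{1,0}(\Omega)\cap X_{\infty,0}(\Omega)$, mollification yields $R_k\in C^1(\Omega)$ with $\|R-R_k\|_{X_1}\to0$ and $\|R-R_k\|_{X_\infty}\to0$ simultaneously; each corresponding operator $\cR_k$ is compact on every $L^p[0,\ell]$ (a $C^1$-kernel Volterra-type operator is a uniform limit of finite-rank ones), and by part (i) $\|\cR-\cR_k\|_p\le\|R-R_k\|_{X_1}^{1/p}\|R-R_k\|_{X_\infty}^{1-1/p}\to0$, so $\cR$ is compact as an operator-norm limit of compact operators. The substantive point is that $\sigma(\cR)=\{0\}$. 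Here I would use that membership in $X_{\infty,0}$ (resp.\ $X_{1,0}$) forces the ``near-diagonal'' moduli
\[
\omega(\delta):=\esssup_{x\in[0,\ell]}\int_{(x-\delta)_+}^{x}|R(x,t)|\,dt,\qquad
\omega_1(\delta):=\esssup_{t\in[0,\ell]}\int_{t}^{\min(t+\delta,\ell)}|R(x,t)|\,dx
\]
to tend to $0$ as $\delta\to0$ (split $R=R_c+(R-R_c)$ with $R_c\in C(\Omega)$ and $\|R-R_c\|_{X_\infty}<\eps$; then $\omega(\delta)\le\|R_c\|_{C(\Omega)}\delta+\eps$, and symmetrically for $\omega_1$). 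Given $\delta>0$, partition $[0,\ell]$ into consecutive intervals $I_1,\dots,I_N$ of length $\le\delta$ and let $P_j$ be multiplication by $\mathbf{1}_{I_j}$. Since $\cR$ is causal, $P_i\cR P_j=0$ whenever $I_j$ lies to the right of $I_i$, so $\cR$ is block lower triangular with respect to the decomposition $L^p[0,\ell]=\bigoplus_j L^p(I_j)$, whence $\sigma(\cR)=\bigcup_{j=1}^N\sigma(\cR_j)$ with $\cR_j:=P_j\cR P_j$ the compression of $\cR$ to $L^p(I_j)$. Each $\cR_j$ is again a Volterra-type operator with kernel $R$ restricted to the triangle over $I_j$, so part (i) gives $\|\cR_j\|_p\le\omega_1(\delta)^{1/p}\,\omega(\delta)^{1-1/p}$, and hence the spectral radius $r(\cR)$ satisfies $r(\cR)=\max_j r(\cR_j)\le\omega_1(\delta)^{1/p}\omega(\delta)^{1-1/p}$ for every $\delta>0$. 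Letting $\delta\to0$ gives $r(\cR)=0$, i.e.\ $\sigma(\cR)=\{0\}$, and together with compactness this shows $\cR$ is a Volterra operator on $L^p[0,\ell]$ for all $p\in[1,\infty]$.

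The hard part is precisely the quasi-nilpotence in (ii): one cannot simply quote the classical theory of Volterra integral equations, because $R$ is not assumed essentially bounded — or even square integrable — on $\Omega$, so there is no pointwise domination of the iterated kernels to fall back on. The $X_{\cdot,0}$-structure, used only through the vanishing of $\omega(\delta)$ and $\omega_1(\delta)$, is exactly what replaces it, and it has to be combined with the causal (block-triangular) structure of $\cR$ as above; everything else — the endpoint estimates, the interpolation bound, and the compactness — is routine.
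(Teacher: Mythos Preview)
The paper does not prove this lemma: it is quoted verbatim as Lemma~2.1 of \cite{LunMal16JMAA} in the preliminaries section, with no proof given here. So there is nothing in the present paper to compare your argument against.

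That said, your argument is correct and self-contained. Part~(i) is the standard Schur-test computation, and the endpoint equalities are handled by the usual test-function arguments. For part~(ii), your route---approximate by continuous kernels for compactness, then use the causal block-lower-triangular structure over a fine partition together with the vanishing near-diagonal moduli $\omega(\delta),\omega_1(\delta)\to 0$ to force $r(\cR)=0$---is clean and avoids any pointwise control of iterated kernels. The only step worth stating a bit more carefully is the identity $\sigma(\cR)=\bigcup_j\sigma(\cR_j)$ for a finite block-lower-triangular decomposition in a Banach space: it holds because for $\lambda\notin\bigcup_j\sigma(\cR_j)$ one inverts $\lambda I-\cR$ by forward substitution block by block (the strictly-lower part carries already-solved components to the right-hand side), and conversely $P_j(\lambda I-\cR)^{-1}P_j=(\lambda I-\cR_j)^{-1}$. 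With that made explicit, the proof is complete.
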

In what follows, we will also systematically use notations $X_{1,0} \otimes \bC^{n \times m}$ and $X_{\infty,0} \otimes \bC^{n \times m}$. In general, for any set $S$ of complex-valued functions, notation $S \otimes \bC^{n \times m}$ means a set of all $n \times m$ matrices, where each entry of the matrix is an element of the set $S$.
\subsection{Properties of adjugate matrix}
\label{subsec:adjugate}
Denote by $\GL(n) := \GL(n, \bC)$ the set of invertible $n \times n$ matrices with complex entries. For a matrix $\cA \in \bC^{n \times n}$ denote by $\cA^a$ its adjugate matrix, i.e.
\begin{equation}
\cA \cA^a = \cA^a \cA = \det(\cA) I_n.
\end{equation}
Let us recall some properties of the adjugate matrix,
\begin{align}
\label{eq:Aa.inv}
 & \cA^a = \det(\cA) \cdot \cA^{-1}, \qquad \cA \in \GL(n), \\
\label{eq:Aa.adjont}
 & [\cA^a]^* = [\cA^*]^a, \qquad \cA \in \bC^{n \times n}, \\
\label{eq:ABa}
 & [\cA \cB]^a = \cB^a \cA^a, \qquad \cA, \cB \in \bC^{n \times n}, \\
\label{eq:adjug.prod}
 & [\cA_1 \cA \cA_2]^a = \cA_2^a \cA^a \cA_1^a =
 \det(\cA_1 \cA_2) \cdot \cA_2^{-1} \cA^a \cA_1^{-1},
 \qquad \cA_1, \cA_2 \in \GL(n, \bC).
\end{align}
Let $\cA(\cdot) = (\fa_{jk}(\cdot))_{j,k=1}^n$ be an
$n\times n$ matrix function differentiable at a point $\l \in \bC$ and let
$\cA^a(\cdot) =: (\cA_{jk}(\cdot))_{j,k=1}^n$ be its adjugate matrix function.
Then in accordance with the Jacobi identity,
\begin{equation} \label{eq:jacobi.def}
\frac{d}{d\l} \det(\cA(\l)) = \tr \( \cA^a(\l) \cA'(\l)\) = \sum_{j,k=1}^n \cA_{jk}(\l) \fa_{kj}'(\l).
\end{equation}
\subsection{Determinants of matrix sums and products}
\label{subsec:det.sum.prod}
If what follows we will need a few classical formulas for determinant of the sum and the product of matrices.
Further, assuming $n \in \bN$ is fixed throughout entire paper, we introduce the following set:
\begin{equation} \label{eq:cPm.def}
 \fP_m := \{\fp := (p_1, \ldots, p_m): 1 \le p_1 < \ldots < p_m \le n\},
 \qquad m \in \oneton,
\end{equation}
i.e.\ $\fP_m$ is the set of all increasing sequences with exactly $m$ elements from $1$ to $n$. Additionally, we define $\fP_0 := \{\fE_0\}$, where $\fE_0 := ()$ is an empty sequence. In what follows, we also denote $\sigma(\fp) := p_1 + \ldots + p_m$. Clearly $\sigma(\fE_0) = 0$.

Further, for any $n \times n$ matrix $\cA = (a_{jk})_{j,k=1}^n$ and elements $\fp = (p_1, \ldots, p_m)$ and $\fq = (q_1, \ldots, q_m)$ of $\fP_m$, $m \in \{0,1,\ldots,n\}$, we set
\begin{equation} \label{eq:Aqp.def}
 \cA[\fq,\fp] := \det (a_{q_jp_k})_{j,k=1}^m =
 \det \begin{pmatrix}
 a_{q_1p_1} & \ldots & a_{q_1p_m} \\
 \vdots & \ddots & \vdots \\
 a_{q_mp_1} & \ldots & a_{q_mp_m} \\
 \end{pmatrix}, \qquad \fq, \fp \in \fP_m,
\end{equation}
i.e.\ $\cA[\fq, \fp]$ is a minor of the matrix $\cA$ generated by the rows with indexes $q_1 < \ldots < q_m$ and columns with indexes $p_1 < \ldots < p_m$. If $m=0$ then $\cA[\fq,\fp] := \det(I_0) := 1$, where $I_0$ is an empty matrix, where $\fq = \fp = \fE_0$ are empty sequences.

Further, for $\fp \in \fP_m$ we denote by $\wh{\fp} \in \fP_{n-m}$ the complement of $\fp$ in the set $\oneton$. Namely, let $\fp = (p_1, \ldots, p_m) \in \fP_m$, i.e.\ $1 \le p_1 < \ldots < p_m \le n$, and let
$$
\oneton \setminus \{p_1, \ldots, p_m\} =: \{r_1, \ldots, r_{n-m}\},
$$
where $1 \le r_1 < \ldots < r_{n-m} \le n$. Then by definition $\wh{\fp} := (r_1, \ldots, r_{n-m})$.

Now we are ready to formulate a classical ``folklore'' formula for the determinant of the sum of matrices (see e.g.~\cite{Mar90}).
\begin{lemma} \label{lem:det.A+B}
Let $\cA, \cB \in \bC^{n \times n}$. Then
\begin{equation} \label{eq:det.A+B}
 \det(\cA+\cB) = \det(\cA) + \sum_{m=1}^n \sum_{\fq, \fp \in \fP_m}
 (-1)^{\sigma(\fp) + \sigma(\fq)} \cA[\wh{\fq}, \wh{\fp}]
 \cdot \cB[\fq, \fp].
\end{equation}
\end{lemma}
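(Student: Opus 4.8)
The plan is to expand $\det(\cA+\cB)$ along columns using multilinearity. Write $\cA = (A_1 \mid \cdots \mid A_n)$ and $\cB = (B_1 \mid \cdots \mid B_n)$ in terms of their columns, so that the $j$-th column of $\cA+\cB$ is $A_j + B_j$. Since the determinant is multilinear in the columns, we have
\begin{equation*}
 \det(\cA+\cB) = \sum_{\fp \subseteq \oneton} \det(C_1 \mid \cdots \mid C_n), \qquad
 C_j = \begin{cases} B_j, & j \in \fp, \\ A_j, & j \notin \fp, \end{cases}
\end{equation*}
where the sum runs over all subsets $\fp$ of $\oneton$, which we identify with increasing sequences, i.e.\ $\fp \in \fP_m$ for some $m \in \{0,1,\ldots,n\}$. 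The term with $\fp = \fE_0$ (all columns from $\cA$) is $\det(\cA)$, and this is pulled out as the leading term.

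Next I would evaluate the mixed determinant $\det(C_1 \mid \cdots \mid C_n)$ for a fixed $\fp = (p_1,\ldots,p_m) \in \fP_m$ with $m \ge 1$. The columns indexed by $\fp$ come from $\cB$, the remaining ones from $\cA$. Perform a Laplace (cofactor) expansion along the set of columns $\{p_1,\ldots,p_m\}$: this expresses the determinant as a sum over row-subsets $\fq \in \fP_m$ of the product of the $m \times m$ minor of $\cB$ sitting in rows $\fq$ and columns $\fp$ (that is $\cB[\fq,\fp]$) times the complementary $(n-m)\times(n-m)$ minor of $\cA$ sitting in rows $\wh{\fq}$ and columns $\wh{\fp}$ (that is $\cA[\wh{\fq},\wh{\fp}]$), with a sign $(-1)^{\sigma(\fp)+\sigma(\fq)}$ coming from the Laplace expansion formula $(-1)^{(p_1+\cdots+p_m)+(q_1+\cdots+q_m)}$. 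Summing over all $m$ from $1$ to $n$ and all $\fq,\fp \in \fP_m$ yields exactly~\eqref{eq:det.A+B}.

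The bookkeeping for the sign is the one step that needs care: one must check that the sign produced by extracting columns $\fp$ and rows $\fq$ to the top-left corner, accounting for both the column permutation and the row permutation needed to bring the chosen minor into standard position, collapses to $(-1)^{\sigma(\fp)+\sigma(\fq)}$. This is precisely the content of the generalized Laplace expansion theorem, so in practice I would simply cite it (for instance from the reference~\cite{Mar90}) rather than rederive it. A clean alternative avoiding column expansion: apply the Cauchy--Binet formula to the factorization $\cA + \cB = (\cA \mid \cB)\begin{pmatrix} I_n \\ I_n \end{pmatrix}$, writing $\det(\cA+\cB)$ as a sum over $n$-element subsets $S$ of $\{1,\ldots,2n\}$ of the product of the corresponding maximal minors; grouping these subsets by which of the ``second copy'' columns they use recovers~\eqref{eq:det.A+B} after tracking the sign. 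Either way, the main (and only) obstacle is the sign verification, which is routine combinatorics, so the proof is short.
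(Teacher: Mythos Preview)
Your proposal is correct and follows the standard argument for this identity. The paper does not actually prove this lemma: it is stated as a classical ``folklore'' formula with a reference to~\cite{Mar90}, so there is nothing to compare against beyond noting that your multilinearity-plus-Laplace expansion is precisely the expected proof.
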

Here we utilized all of the above notations, including a complement notation $\wh{\fp}$. It is clear, that the summand in r.h.s of~\eqref{eq:det.A+B} for $m=n$ equals to $\det(\cB)$. Indeed, if $m = n$ then $\fP_m = \{\fp_0\}$, where $\fp_0 := (1, \ldots, n)$, and inner sum degenerates to $\cA[\wh{\fp_0}, \wh{\fp_0}] \cdot \cB[\fp_0, \fp_0]$. Complement $\wh{\fp_0} = \fE_0$ is an empty sequence. Hence $\cA[\wh{\fp_0}, \wh{\fp_0}] = \det(I_0) = 1$. It is also clear that $\cB[\fp_0, \fp_0] = \det(\cB)$.

Next, we formulate a straightforward extension of the classical Cauchy–Binet formula.
\begin{lemma}[Subsection 1.2.6 in~\cite{Gan59}] \label{lem:det.AB.qp}
Let $\cB, \cC \in \bC^{n \times n}$, $m \in \oneton$ and $\fp, \fq \in \fP_m$. Then
\begin{equation} \label{eq:det.AB.qp}
 (\cB \cC)[\fq,\fp] = \sum_{\fr \in \fP_m}
 \cB[\fq,\fr] \cdot \cC[\fr,\fp].
\end{equation}
\end{lemma}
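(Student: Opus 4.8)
The plan is to recognize formula~\eqref{eq:det.AB.qp} as a packaged form of the classical Cauchy--Binet identity and then to derive that identity directly from the Leibniz expansion of a determinant. First I would note that, by the very definition of matrix multiplication, the $m\times m$ submatrix of $\cB\cC$ occupying the rows indexed by $\fq=(q_1,\ldots,q_m)$ and the columns indexed by $\fp=(p_1,\ldots,p_m)$ coincides with the product $\wt{\cB}\,\wt{\cC}$, where $\wt{\cB}\in\bC^{m\times n}$ is built from the rows $q_1,\ldots,q_m$ of $\cB$ (retaining all $n$ columns) and $\wt{\cC}\in\bC^{n\times m}$ is built from the columns $p_1,\ldots,p_m$ of $\cC$ (retaining all $n$ rows). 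Hence $(\cB\cC)[\fq,\fp]=\det(\wt{\cB}\wt{\cC})$, and for every $\fr\in\fP_m$ the $m\times m$ minor of $\wt{\cB}$ on the columns $\fr$ equals $\cB[\fq,\fr]$ while the $m\times m$ minor of $\wt{\cC}$ on the rows $\fr$ equals $\cC[\fr,\fp]$. So it suffices to prove $\det(\wt{\cB}\wt{\cC})=\sum_{\fr\in\fP_m}(\text{col-}\fr\text{ minor of }\wt{\cB})\cdot(\text{row-}\fr\text{ minor of }\wt{\cC})$.

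For this I would write $\wt{\cB}=(b_{ik})$ and $\wt{\cC}=(c_{kj})$ with $1\le i,j\le m$, $1\le k\le n$, and expand
\[
 \det(\wt{\cB}\wt{\cC})=\sum_{\pi\in S_m}\sign(\pi)\prod_{i=1}^m\Bigl(\sum_{k=1}^n b_{ik}c_{k\pi(i)}\Bigr)
 =\sum_{k_1,\ldots,k_m=1}^n\Bigl(\prod_{i=1}^m b_{ik_i}\Bigr)\sum_{\pi\in S_m}\sign(\pi)\prod_{i=1}^m c_{k_i\pi(i)}.
\]
The inner sum over $\pi$ is the determinant of the $m\times m$ matrix whose $i$-th row is the $k_i$-th row of $\wt{\cC}$, hence vanishes as soon as two of the indices $k_1,\ldots,k_m$ coincide. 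Consequently only tuples with pairwise distinct entries contribute; each such tuple is $(r_{\tau(1)},\ldots,r_{\tau(m)})$ for a unique $\fr=(r_1<\cdots<r_m)\in\fP_m$ and a unique $\tau\in S_m$. Sorting the rows of the above $m\times m$ matrix back into increasing order contributes a factor $\sign(\tau)$, and rewriting $\prod_i b_{ik_i}=\prod_i b_{i r_{\tau(i)}}$, the sum over $\tau\in S_m$ reconstitutes, for each fixed $\fr$, the product of the column-$\fr$ minor of $\wt{\cB}$ with the row-$\fr$ minor of $\wt{\cC}$. Collecting over $\fr\in\fP_m$ yields~\eqref{eq:det.AB.qp}.

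The one step meriting care — and the one I would flag as the main (admittedly mild) obstacle — is the sign bookkeeping in the last paragraph: one must verify that the permutation sign $\sign(\tau)$ produced by re-sorting the rows of $\wt{\cC}$ is exactly cancelled by expressing $\prod_i b_{ik_i}$ over the same $\tau$, so that no residual signs survive. This is consistent with the fact that, in contrast with the sum formula~\eqref{eq:det.A+B}, the product formula~\eqref{eq:det.AB.qp} carries no $(-1)^{\sigma(\cdot)}$ factors. All other manipulations are entirely routine; alternatively one may simply invoke the reference~\cite{Gan59} quoted in the statement.
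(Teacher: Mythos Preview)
Your proof is correct and is a standard derivation of the Cauchy--Binet formula via the Leibniz expansion; the sign bookkeeping you flag does work out exactly as you describe. The paper itself gives no proof of this lemma but simply cites it as a classical result from~\cite{Gan59}, so there is nothing to compare against beyond noting that your argument supplies what the paper takes for granted.
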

Some remarks. If $m=1$, this formula is nothing more than a definition of the matrix product. If $m=n$, then this formula turns into $\det(\cB\cC) = \det(\cB)\cdot \det(\cC)$.

Finally, combining Lemmas~\ref{lem:det.A+B} and~\ref{lem:det.AB.qp} we arrive at the following formula that will be useful for studying characteristic determinant of the BVP~\eqref{eq:LQ.def.intro}--\eqref{eq:Uy=0.intro}.
\begin{lemma} \label{lem:det.A+B.C}
Let $\cA, \cB, \cC \in \bC^{n \times n}$. Then
\begin{equation} \label{eq:det.A+B.C}
 \det(\cA + \cB \cC) = \det(\cA) + \sum_{m=1}^n \sum_{\fq, \fp, \fr \in \fP_m}
 (-1)^{\sigma(\fp) + \sigma(\fq)} \cA[\wh{\fq}, \wh{\fp}]
 \cdot \cB[\fq,\fr] \cdot \cC[\fr,\fp].
\end{equation}
\end{lemma}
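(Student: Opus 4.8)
The proof is a routine combination of the two preceding lemmas, so the plan is simply to compose them carefully and keep track of the index sets.

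First I would apply Lemma~\ref{lem:det.A+B} to the pair of matrices $\cA$ and $\cB\cC$ in place of $\cA$ and $\cB$. This gives
\begin{equation*}
 \det(\cA + \cB\cC) = \det(\cA) + \sum_{m=1}^n \sum_{\fq,\fp \in \fP_m}
 (-1)^{\sigma(\fp)+\sigma(\fq)}\, \cA[\wh{\fq},\wh{\fp}] \cdot (\cB\cC)[\fq,\fp].
\end{equation*}
This step is immediate; the only thing to check is that $\cB\cC \in \bC^{n\times n}$, which is obvious.

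Next I would expand each minor $(\cB\cC)[\fq,\fp]$ by the generalized Cauchy–Binet formula of Lemma~\ref{lem:det.AB.qp}, which is applicable since $\cB,\cC \in \bC^{n\times n}$, $m \in \oneton$ and $\fq,\fp \in \fP_m$. This yields
\begin{equation*}
 (\cB\cC)[\fq,\fp] = \sum_{\fr \in \fP_m} \cB[\fq,\fr]\cdot \cC[\fr,\fp].
\end{equation*}
Substituting this into the previous display and merging the two inner summations (over $\fq,\fp$ and over $\fr$) into a single triple sum over $\fq,\fp,\fr \in \fP_m$ gives exactly~\eqref{eq:det.A+B.C}. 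Since the sign factor $(-1)^{\sigma(\fp)+\sigma(\fq)}$ and the factor $\cA[\wh{\fq},\wh{\fp}]$ do not depend on $\fr$, they can be pulled inside the $\fr$-sum without change, so the rearrangement is purely formal.

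There is essentially no obstacle here: the statement is a bookkeeping corollary, and the only mild care needed is to confirm that the degenerate cases match up. For $m=n$ one has $\fP_n = \{\fp_0\}$ with $\fp_0=(1,\dots,n)$, $\wh{\fp_0}=\fE_0$, so $\cA[\wh{\fq},\wh{\fp}]=\det(I_0)=1$ and the triple sum collapses to $\cB[\fp_0,\fp_0]\cC[\fp_0,\fp_0]=\det(\cB)\det(\cC)$, consistent with the $m=n$ term of Lemma~\ref{lem:det.A+B} applied to $\cB\cC$; and for $\cA=0$, $\cB=\cC=I_n$ one recovers the Laplace-type expansion $\det(I_n)=1$. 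These consistency checks are the kind of thing I would mention in one line rather than belabor.
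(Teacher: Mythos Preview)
Your proposal is correct and matches the paper's approach exactly: the paper simply states that the formula follows by combining Lemmas~\ref{lem:det.A+B} and~\ref{lem:det.AB.qp}, which is precisely what you do---apply the determinant-of-a-sum expansion to $\cA$ and $\cB\cC$, then use Cauchy--Binet on each minor $(\cB\cC)[\fq,\fp]$.
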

To estimate root vectors of the operator $L_U(Q)$ we will also need version of Lemma~\ref{lem:det.A+B.C} for cofactors of $\cA + \cB \cC$. Let us recall the corresponding definition. To this end, let $\cA = (\fa_{jk})_{j,k=1}^n \in \bC^{n \times n}$. Then by definition, cofactor $\cA\{j,k\}$ of the element $\fa_{jk}$ of the matrix $\cA$ is the element at the $j$-th row and $k$-th column of the matrix $\cA^a$ adjugate to $\cA$ (introduced in Subsection~\ref{subsec:adjugate}), i.e.\ $\cA^a =: (\cA\{j,k\})_{j,k=1}^n$. Let us express it via our notation $\cA[\fq,\fp]$.
It is easily seen that
\begin{equation} \label{eq:cAa.A.fpk.fpj}
 \cA^a = \(\cA\{j,k\}\)_{j,k=1}^n = \((-1)^{j+k}
 \cA[\fp_k, \fp_j]\)_{j,k=1}^n,\end{equation}
where
\begin{equation} \label{eq:fpk.def}
 \fp_k := (1, \ldots, k-1, k+1, \ldots n) = \wh{(k)} \in \fP_{n-1}.
\end{equation}
With this observation we can easily derive the following versions of Lemmas~\ref{lem:det.A+B} and~\ref{lem:det.A+B.C} for cofactors.
\begin{lemma} \label{lem:det.A+B.C.jk}
Let $\cA, \cB \in \bC^{n \times n}$ and let $j, k \in \oneton$. Then
\begin{align}
\label{eq:det.A+B.jk}
 (\cA+\cB)\{j,k\} &= \cA\{j,k\} + (-1)^{j+k} \sum_{m=1}^{n-1}
 \sum_{\genfrac{}{}{0pt}{2}{\fq, \fp \in \fP_m}
 {k \not \in \fq, \, j \not \in \fp}} (-1)^{\sigma(\fp) + \sigma(\fq)}
 \cA[\wh{\fq}, \wh{\fp}] \cdot \cB[\fq, \fp], \\
\label{eq:det.A+B.C.jk}
 (\cA + \cB \cC)\{j,k\} &= \cA\{j,k\} + (-1)^{j+k} \sum_{m=1}^{n-1}
 \sum_{\genfrac{}{}{0pt}{2}{\fq, \fp, \fr \in \fP_m}
 {k \not \in \fq, \, j \not \in \fp}} (-1)^{\sigma(\fp) + \sigma(\fq)}
 \cA[\wh{\fq}, \wh{\fp}] \cdot \cB[\fq,\fr] \cdot \cC[\fr,\fp].
\end{align}
\end{lemma}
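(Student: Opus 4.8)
The plan is to deduce both cofactor identities from the additive expansion of a determinant, Lemma~\ref{lem:det.A+B} (together with its product refinement, Lemma~\ref{lem:det.A+B.C}), by first turning a cofactor into an ordinary determinant via a ``bordering'' trick. Recall that for any $n \times n$ matrix $M$ and any $j,k \in \oneton$ one has $M\{j,k\} = \det(\wt M)$, where $\wt M$ is obtained from $M$ by replacing its $k$-th row with the coordinate row vector $e_j^{\top}$: this is the Laplace expansion of $\det(\wt M)$ along the $k$-th row combined with the defining relation $M^a = (M\{j,k\})_{j,k=1}^n$ of Subsection~\ref{subsec:adjugate} and~\eqref{eq:cAa.A.fpk.fpj}. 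Applying this with $M = \cA + \cB$ and using that bordering only affects the $k$-th row, the bordered matrix of $\cA+\cB$ splits as $\wt\cA + \cB_0$, where $\wt\cA$ is $\cA$ with its $k$-th row replaced by $e_j^{\top}$ and $\cB_0$ is $\cB$ with its $k$-th row replaced by zero.

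Next I would feed the pair $(\wt\cA, \cB_0)$ into Lemma~\ref{lem:det.A+B}. The $m=0$ term is $\det(\wt\cA) = \cA\{j,k\}$, the free term in~\eqref{eq:det.A+B.jk}. In the term indexed by $(m,\fq,\fp)$ the minor $\cB_0[\fq,\fp]$ vanishes whenever $k \in \fq$ (it then has a zero row) and equals $\cB[\fq,\fp]$ otherwise; and the complementary minor $\wt\cA[\wh\fq, \wh\fp]$ then contains the modified row (since $k \notin \fq$ forces $k \in \wh\fq$), which restricted to the columns $\wh\fp$ is either identically zero — precisely when $j \in \fp$ — or a coordinate vector. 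Hence only pairs with $k \notin \fq$ and $j \notin \fp$ survive, which forces $1 \le m \le n-1$. For a surviving pair, a single Laplace expansion of $\wt\cA[\wh\fq, \wh\fp]$ along the distinguished row strips off that row and the column of index $j$, leaving the minor of $\cA$ on rows $(\oneton \setminus \{k\}) \setminus \fq$ and columns $(\oneton \setminus \{j\}) \setminus \fp$, i.e.\ exactly $\cA[\wh\fq,\wh\fp]$ in the notation of the statement. One then checks that the sign produced by this Laplace step, multiplied by the $(-1)^{\sigma(\fp)+\sigma(\fq)}$ coming from Lemma~\ref{lem:det.A+B}, reproduces the prefactor $(-1)^{j+k}$ and the factor $(-1)^{\sigma(\fp)+\sigma(\fq)}$ displayed in~\eqref{eq:det.A+B.jk}.

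For~\eqref{eq:det.A+B.C.jk} I would not rerun the argument but instead apply~\eqref{eq:det.A+B.jk} with $\cB$ replaced by the square matrix $\cB\cC$, and then expand every minor $(\cB\cC)[\fq,\fp]$ by the Cauchy--Binet identity of Lemma~\ref{lem:det.AB.qp}, namely $(\cB\cC)[\fq,\fp] = \sum_{\fr \in \fP_m} \cB[\fq,\fr]\, \cC[\fr,\fp]$. This is the exact analogue of the way Lemma~\ref{lem:det.A+B.C} was obtained from Lemmas~\ref{lem:det.A+B} and~\ref{lem:det.AB.qp}, and it directly yields the triple sum in~\eqref{eq:det.A+B.C.jk}. (Equivalently, one can start from $(\cA+\cB)\{j,k\} = (-1)^{j+k}(\cA+\cB)[\fp_k,\fp_j]$ via~\eqref{eq:cAa.A.fpk.fpj}, note that this minor is the determinant of the $(n-1)\times(n-1)$ submatrix of $\cA+\cB$ on rows $\fp_k$ and columns $\fp_j$, apply Lemma~\ref{lem:det.A+B} to that submatrix, and relabel its multi-indices as subsets of $\oneton$ avoiding $k$ in the rows and $j$ in the columns; then~\eqref{eq:det.A+B.C.jk} follows from~\eqref{eq:det.A+B.jk} as above.)

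The one genuinely delicate point is the sign bookkeeping in the Laplace step: one must track the position of the row of index $k$ inside the ordered complement $\wh\fq$ (and of the column of index $j$ inside $\wh\fp$), and verify that, after combining with the $(-1)^{\sigma(\fp)+\sigma(\fq)}$ furnished by Lemma~\ref{lem:det.A+B}, all these signs collapse to the clean prefactor $(-1)^{j+k}$ in front of the sum. Everything else — the vanishing of the terms with $k \in \fq$ or $j \in \fp$, the identification of the surviving minors, and the passage from~\eqref{eq:det.A+B.jk} to~\eqref{eq:det.A+B.C.jk} — is routine.
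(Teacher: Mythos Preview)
Your approach is correct and essentially coincides with the paper's. The paper gives no detailed proof: it simply remarks that the identities follow from the observation~\eqref{eq:cAa.A.fpk.fpj} (that a cofactor is a signed $(n-1)\times(n-1)$ minor) together with Lemmas~\ref{lem:det.A+B} and~\ref{lem:det.A+B.C}. Your parenthetical ``equivalent'' route---apply Lemma~\ref{lem:det.A+B} directly to the $(n-1)\times(n-1)$ submatrix $(\cA+\cB)[\fp_k,\fp_j]$ and relabel---is precisely what the paper has in mind, and your main bordering argument is just an alternative packaging of the same computation, with the same delicate sign bookkeeping you correctly flag.
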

\section{Transformation operators} \label{sec:transform.oper}
In this section we prove the existence of triangular transformation operators for the system
\begin{equation} \label{eq:LQy.def.transform}
 \cL(Q) y := -i B(x)^{-1} (y' + Q(x) y) = \l y,
 \qquad y = \col(y_1, \ldots, y_n), \quad x \in [0,\ell], \\
\end{equation}
expressing solution to the certain Cauchy problem for equation~\eqref{eq:LQy.def.transform} via the solution to the same Cauchy problem for the simplest equation
\begin{equation} \label{eq:L0.def.transform}
 \cL(0) y := -i B(x)^{-1} y' = \l y,
 \qquad y = \col(y_1, \ldots, y_n), \quad x \in [0,\ell].
\end{equation}
This result extends Theorem 1.2 from~\cite{Mal99} and Theorem 2.5 from~\cite{LunMal16JMAA} to the case of non-constant matrix $B(\cdot)$. Following the scheme of reasonings of Theorem 1.2 from~\cite{Mal99} we first establish the similarity of certain restrictions $\cL_0(Q)$ and $\cL_0(0)$ of the operators $\cL(Q)$ and $\cL(0)$, respectively.
\subsection{Similarity of operators \texorpdfstring{$\cL_0(Q)$}{cL0Q} and \texorpdfstring{$\cL_0(0)$}{cL00}}
First we introduce the main objects, the operators $\cL_0(Q)$ and $\cL_0(0)$.
To this end we need to change notation comparing to~\eqref{eq:Bx.def.intro}--\eqref{eq:Q=Qjk.def.intro} and work with a block-matrix decomposition for matrix functions $B(\cdot)$ and $Q(\cdot)$. Namely, let
\begin{align}
\label{eq:Bx.block.def}
 B(x) &= \diag(B_1(x), \ldots, B_r(x)) = B(x)^*, \qquad x \in [0, \ell], \\
\label{eq:Bkx.def}
 B_k(x) &= \beta_k(x) I_{n_k}, \quad x \in [0, \ell],
 \qquad \beta_k \in L^1([0,\ell]; \bR),
 \qquad k \in \onetor,
\end{align}
be a self-adjoint invertible diagonal summable matrix function, where $n_1 + \ldots + n_r = n$, and
\begin{equation} \label{eq:Q.block.L1}
 Q =: (Q_{jk})_{j,k=1}^r, \qquad
 Q_{jk} \in L^1([0,\ell]; \bC^{n_j \times n_k}), \qquad
 Q_{jj} \equiv 0, \qquad j, k \in \onetor,
\end{equation}
be a summable (generally non-self-adjoint) potential matrix with zero ``block diagonal'' with respect to the decomposition $\bC^n = \bC^{n_1} \oplus \ldots \oplus \bC^{n_r}$. It will be shown in Lemma~\ref{lem:gauge} that the case of arbitrary $Q$ can be reduced to it.
We deliberately reused existing notation for $\beta_k$ and $Q_{jk}$ to avoid introducing new notation. But notation~\eqref{eq:Bx.block.def}--\eqref{eq:Q.block.L1} will be used solely in this section, which should avoid any confusion.

Let us rewrite conditions~\eqref{eq:beta-}--\eqref{eq:betak-betaj<-eps.intro} on matrix function $B(\cdot)$ with a new notation~\eqref{eq:Bx.block.def}--\eqref{eq:Bkx.def} in mind. Namely, in this section we assume that for some $\theta > 0$ and $r_- \in \{0, 1, \ldots, r\}$ the following relations hold:
\begin{align}
\label{eq:betak.alpk.Linf.r}
 & \beta_k, \ 1/\beta_k \in L^{\infty}[0, \ell], \qquad
 \sign(\beta_k(\cdot)) \equiv \const \ne 0, \qquad
 k \in \onetor, \\
\label{eq:beta-+r}
 & \beta_1(x) < \ldots < \beta_{r_-}(x) < -\theta < 0
 < \theta < \beta_{r_-+1}(x) < \ldots < \beta_r(x),
 \qquad x \in [0, \ell], \\
\label{eq:beta.k+eps<beta.k+1.r}
 & \beta_k(x) + \theta < \beta_{k+1}(x), \quad x \in [0, \ell],
 \qquad k \in \oneto{r-1}.
\end{align}
Let us also set
\begin{equation} \label{eq:rho1r.def}
 b_j := \rho_j(\ell), \qquad \rho_j(x) := \int_0^x \beta_j(t) dt,
 \qquad j \in \onetor.
\end{equation}

Next we denote by $\cL_0(Q)$ the restriction of the maximal operator $\cL_{\max}(Q)$ in $\cH := L^2([0,\ell];\bC^n)$ generated by the expression
$\cL(Q)$ on the domain      
\begin{equation}
\dom \cL_0(Q) = \{f \in \dom \cL_{\max}(Q): \ f(0)=0\} \subset \wt{W}^{1,1}_0([0,\ell]; \bC^n).
\end{equation}
Here
$$
\wt{W}^{1,p}_0([0,\ell]; \bC^n) := \{f \in W^{1,p}([0,\ell]; \bC^n) : f(0) = 0\}.
$$
Moreover, if $Q \in L^2\bigl([0,\ell];\bC^{n\times n}\bigr)$, then 
\begin{equation}
\dom\cL_{\max}(Q)= W^{1,2}([0,\ell]; \bC^n) \qquad \text{and} \qquad \dom\cL_0(Q) = \wt{W}^{1,2}_0([0,\ell]; \bC^n).
\end{equation}
In particular, one has
$$
\cL_{0}(0) = B(x)^{-1}\otimes D_0, \quad \dom\cL_{0}(0) = \wt{W}^{1,2}_0([0,\ell]; \bC^n),
 \quad \text{where}\quad D_0 := -i\frac{d}{dx}\upharpoonright \wt{W}^{1,2}_0[0,\ell].
$$
 Note also that the operator $\cL_{0}(0)$
 is invertible and $\cL_{0}(0)^{-1} = B(x) \otimes (iJ)$ where $J$ is the Volterra integration
 operator, $J:\ f\to\int^x_0 f(t)\,dt$.

To state the main result of this subsection let us recall the  following definition.
\begin{definition}
Let $L_1$ and $L_2$ be closed densely defined operators in a Banach space $X$ with domains $\dom L_1$ and $\dom L_2$,
respectively. It is said that a bounded operator $T$ intertwines the operators $L_1$ and $L_2$ if:

a) $T$ maps $\dom L_1$ onto $\dom L_2$;

b) $L_2 T f = T L_1 f,\quad f \in \dom L_1$.

If in addition, $0 \in \rho(T)$, i.e.\ $T$ has a bounded inverse, then the operators $L_1$ and $L_2$ are called similar.
\end{definition}
To establish similarity of the operators $\cL_{0}(Q)$ and $\cL_{0}(0)$ for $Q \in L^1([0,\ell];\bC^{n \times n})$ we first establish this under additional smoothness assumptions of the potential $Q$ related to Lipshitz properties (see Subsection~\ref{subsec:Lip}).
\begin{proposition} \label{prop:similarity.LQ.L0}
Let matrix functions $B(\cdot)$ and $Q(\cdot)$ satisfy conditions~\eqref{eq:Bx.block.def}--\eqref{eq:beta.k+eps<beta.k+1.r}.
In particular, we assume that $Q_{jj}=0$ for $j \in \onetor$ and $|\beta_j(x) - \beta_k(x)| > \theta$ for a.e.\ $x \in [0,\ell]$ and $j \ne k$. Let also
\begin{equation} \label{eq:wtQjk.Lip}
 \wt{Q}_{jk} := \frac{Q_{jk}}{\beta_j - \beta_k} \in \Lip_1[0, \ell],
 \quad j \ne k, \qquad Q \in L^{\infty}([0,\ell]; \bC^{n \times n}).
\end{equation}
Then the operators $\cL_0(Q)$ and $\cL_0(0) = B(x)^{-1}\otimes D_0$ are similar in
$L^p([0,\ell];\bC^{n\times n})$, $p \in [1,\infty]$.
Moreover, there exists an $n \times n$ matrix kernel
\begin{equation} \label{eq:RLip}
 R \in \Lip_1(\Omega) \otimes \bC^{n \times n},
\end{equation}
where domain $\Omega$ is given by~\eqref{eq:Omega.def}, such that triangular Volterra type operator $I + \cR$,
\begin{equation} \label{eq:interwine.C1}
 (I + \cR)f := f(x) + \int^x_0 R(x,t) B(t) f(t)\,dt,
 \qquad f \in L^p([0,\ell];\bC^n),
\end{equation}
is bounded on $L^p([0,\ell];\bC^n)$, has a bounded inverse, and intertwines the operators $\cL_0(Q)$ and $\cL_0(0)$, i.e.
\begin{equation} \label{eq:LQ.I+R=I+R.L0.C1}
\cL_0(Q)(I + \cR)f = (I + \cR)\cL_0(0)f, \qquad f \in \dom\cL_{0}(0) = \wt{W}^{1,2}_0([0,\ell]; \bC^n).
\end{equation}
\end{proposition}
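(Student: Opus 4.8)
The plan is to find the intertwining kernel $R$ by converting the intertwining relation~\eqref{eq:LQ.I+R=I+R.L0.C1} into a first‑order hyperbolic system (a Goursat‑type problem) for $R(x,t)$ on the triangle $\Omega$, and then to solve that system by the method of characteristics combined with a Picard iteration, using the Banach space $\Lip_1(\Omega)$ as the arena for the fixed‑point argument. Concretely, I would first rewrite $\cL_0(0) = B(x)^{-1}\otimes D_0$ and differentiate the ansatz $(I+\cR)f(x) = f(x) + \int_0^x R(x,t)B(t)f(t)\,dt$ against $\cL(Q)$ and $\cL(0)$. Plugging $(I+\cR)f$ into $\cL_0(Q)(I+\cR)f = (I+\cR)\cL_0(0)f$ and integrating by parts (the boundary term at $t=0$ disappears because $f(0)=0$, and the term at $t=x$ contributes a condition on the diagonal $t=x$), one obtains a PDE for $R$ of the schematic form
\begin{equation*}
 B(x)D_1 R(x,t) + D_2\bigl(R(x,t)B(t)\bigr) + Q(x)R(x,t) = 0,
 \qquad (x,t)\in\Omega,
\end{equation*}
together with the boundary identity on the diagonal coming from matching the $t=x$ term, namely
\begin{equation*}
 B(x)R(x,x) - R(x,x)B(x) = -Q(x),
 \qquad x\in[0,\ell].
\end{equation*}
Since $Q_{jj}\equiv 0$ and the diagonal blocks of $B$ are scalar multiples of identities, the diagonal identity determines the off‑diagonal blocks $R_{jk}(x,x)$ for $j\ne k$ (here precisely $R_{jk}(x,x) = -Q_{jk}(x)/(\beta_j(x)-\beta_k(x)) = -\wt Q_{jk}(x)$, which by~\eqref{eq:wtQjk.Lip} is Lipschitz), while the diagonal blocks $R_{jj}(x,x)$ remain free — these are the ``triangular'' degrees of freedom fixed by demanding the operator be genuinely Volterra.

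\textbf{Main steps.} (1) Derive the Goursat problem above rigorously for $Q\in C^1$ first, where all manipulations are classical; identify the characteristics of the block‑diagonal principal part. Because $B$ is block diagonal with blocks $\beta_k(x)I_{n_k}$, the $(j,k)$ block of $R$ propagates along the characteristic curves determined by $\dot x = \beta_j(x)$, $\dot t = -\beta_k(t)$ (equivalently, along level sets of $\rho_j(x)+\rho_k(t)$ up to reparametrization); the separation hypotheses~\eqref{eq:beta-+r}--\eqref{eq:beta.k+eps<beta.k+1.r} guarantee these characteristics are uniformly transversal to the diagonal $t=x$ and foliate $\Omega$ nicely, with Lipschitz dependence on initial data — this is where conditions $|\beta_j-\beta_k|>\theta$ are used. (2) Integrate the PDE along characteristics to convert it to a system of integral equations $R = R_0 + \mathcal{T}R$, where $R_0$ encodes the (Lipschitz) diagonal data and $\mathcal T$ is a Volterra‑type integral operator built from $Q$ and the $\rho_k$'s. (3) Show $\mathcal T$ is a contraction (or has small spectral radius after iteration) on $\Lip_1(\Omega)\otimes\bC^{n\times n}$; the key is that $\mathcal T$ involves integration over shrinking characteristic triangles, giving the usual nilpotent/Volterra estimate, while Lipschitz bounds are preserved because differentiating under the integral and along characteristics keeps the constant controlled by $\cD(\cdot)$ in the sense of Lemma~\ref{lem:C.Lip}(ii). (4) Conclude $R\in\Lip_1(\Omega)\otimes\bC^{n\times n}$, hence $R\in X_1(\Omega)\cap X_\infty(\Omega)$ (indeed in the ``$0$''‑subspaces, being continuous), so by Lemma~\ref{lem.Volterra.operGeneral} the operator $\cR$ is bounded on every $L^p$ and is Volterra; therefore $I+\cR$ is boundedly invertible on each $L^p[0,\ell]$. (5) Check that $I+\cR$ maps $\dom\cL_0(0)=\wt W^{1,2}_0$ onto $\dom\cL_0(Q)$ (one inclusion from the construction, the reverse by applying the same argument to build the inverse intertwiner, or by a dimension/range count) and that~\eqref{eq:LQ.I+R=I+R.L0.C1} holds — this is automatic from the way the Goursat problem was set up. (6) Pass from $Q\in C^1$ to $Q\in L^\infty$ with $\wt Q_{jk}\in\Lip_1$ by approximation: take $Q^{(m)}\to Q$ suitably, obtain kernels $R^{(m)}$ with uniformly bounded Lipschitz constants (the bound depends only on $\theta$, $\ell$, $\|Q\|_\infty$ and $\|\wt Q_{jk}\|_{\Lip_1}$), extract a pointwise limit, and invoke Lemma~\ref{lem:C.Lip}(i) (and (iii)) to see the limit lies in $\Lip_1(\Omega)$; then verify the limiting $R$ solves the integral equation and intertwines.

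\textbf{The main obstacle.} I expect the hard part to be Step~(1)--(2) in the non‑constant‑$B$ case: setting up and solving the characteristic system when the speeds $\beta_k$ depend on $x$ (resp.\ $t$). For constant $B$ the characteristics are straight lines and the Goursat problem reduces to the classical one treated in~\cite{Mal99, LunMal16JMAA}; here one must instead work with the nonlinear flows of $\dot x=\beta_j(x)$, verify that $\Omega$ is foliated by the corresponding characteristic segments with endpoints on $\{t=x\}$ and $\{t=0\}$, and — most delicately — prove that the change of variables straightening these characteristics is bi‑Lipschitz with constants controlled only by $\theta$ and $\|\beta_k\|_{\infty}$, so that Lipschitz regularity of the data transfers to Lipschitz regularity of $R$ after undoing the straightening. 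Propagating the $\Lip_1$ bound (as opposed to mere continuity) through the Picard iteration, uniformly in the approximation parameter $m$, is the technical crux; this is exactly what Lemma~\ref{lem:C.Lip} is tailored for, and it is why the statement is phrased with the $\cD(\cdot)$ quantity and the space $\Lip_1(\Omega)$ rather than, say, $C^1(\Omega)$.
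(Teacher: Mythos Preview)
Your overall strategy (reduce the intertwining relation to a Goursat problem for $R$, integrate along characteristics, iterate) is exactly the paper's. But two concrete points in your plan are wrong as stated, and fixing them is most of the work.

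\textbf{Where the characteristics actually go.} First, minor sign errors: with the paper's derivation the PDE is $D_xR(x,t)B(t)+B(x)D_tR(x,t)+Q(x)R(x,t)B(t)=0$, the diagonal condition is $R(x,x)B(x)-B(x)R(x,x)+Q(x)=0$ (so $R_{jk}(x,x)=+\wt Q_{jk}(x)$), and the $(j,k)$ characteristics are level sets of $\rho_j(x)-\rho_k(t)$, not $\rho_j(x)+\rho_k(t)$. More importantly, your claim that these characteristics ``foliate $\Omega$ nicely'' with ``endpoints on $\{t=x\}$ and $\{t=0\}$'' is false. For $j\ne k$ the characteristic through $(x,t)\in\Omega$ heading toward the diagonal may exit $\Omega$ through a side (e.g.\ $x=\ell$) before reaching $t=x$; the diagonal intersection point $a_{jk}(x,t)=(\rho_j-\rho_k)^{-1}(\rho_j(x)-\rho_k(t))$ can lie outside $[0,\ell]$. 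The paper handles this by extending $\beta_j$, $\rho_j$, $Q$ to all of $\bR$ and posing the Goursat problem on an enlarged domain $\Omega_k\supset\Omega$ built as the union of all characteristic segments from points of $\Omega$ back to the diagonal; only then do the integral equations along characteristics make sense. This extension, with the verification that $\Omega_k$ has Lipschitz boundary and that $a_{jk},\gamma_{jk}^{x,t}$ are globally Lipschitz, is a substantial part of the proof that your plan does not account for.

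\textbf{The diagonal blocks.} Second, the blocks $R_{kk}$ are not determined by the diagonal data at all: for $j=k$ the characteristic direction satisfies $\rho_k(x)-\rho_k(t)=c$, which is \emph{parallel} to the diagonal (never hits it for $c\ne 0$). So the separation hypothesis $|\beta_j-\beta_k|>\theta$ says nothing here, and ``transversality to the diagonal'' fails. The paper imposes an extra condition $R_{kk}(x,\gamma_k^-(x))=0$ on the lower boundary $\Gamma_k^-$ of the enlarged domain and couples the iteration so that $R_{kk}^{(m)}$ is computed from the already-known $R_{jk}^{(m)}$, $j\ne k$ (this uses $Q_{kk}=0$ crucially). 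Your plan leaves the ``triangular degrees of freedom'' $R_{jj}(x,x)$ unspecified and does not say what boundary data replaces the missing diagonal condition.

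\textbf{Smaller remarks.} Your Step~(6) is unnecessary for this Proposition: the paper works directly under the hypotheses $Q\in L^\infty$, $\wt Q_{jk}\in\Lip_1$ with no smoothing of $Q$ (the approximation argument belongs to the next theorem, where $Q\in L^1$). Also, the paper does not run the contraction in $\Lip_1(\Omega)$; it first proves uniform convergence of the successive approximations in $C(\Omega_k)$ via the explicit estimate $|R_{jk}^{(N+1)}-R_{jk}^{(N)}|\le C\tau^N|\rho_k(x)-\rho_k(t)|^N/N!$, and then separately shows that $D_1R_{jk}^{(m)}$, $D_2R_{jk}^{(m)}$ are uniformly bounded in $L^\infty(\Omega_k)$, invoking Lemma~\ref{lem:C.Lip}(iii) to conclude $R_{jk}\in\Lip_1$. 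Your proposed direct fixed point in $\Lip_1$ might be made to work, but it is not what the paper does and would require controlling the Lipschitz norm of the Picard operator, which is more delicate than the $C$-norm.
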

\begin{proof}
The proof will be divided into multiple steps.

\textbf{(i)} At this step we show that the intertwining property~\eqref{eq:LQ.I+R=I+R.L0.C1} is equivalent to a certain boundary value problem for the kernel $R(\cdot,\cdot)$ in the triangle $\Omega= \{0 \le t \le x \le \ell\}$. It is easily seen that
\begin{multline}
 \bigl(-i B \cL_0(Q)(I + \cR) f\bigr)(x) = \left[\frac{d}{dx}+Q\right]
 \bigl(f(x) + \int^x_0 R(x,t)B(t)f(t)\,dt\bigr) \\
 = f'(x) + Q(x)f(x) + \frac{d}{dx} \int^x_0 R(x,t) B(t)f(t)\,dt
 + Q(x) \int^x_0 R(x,t)B(t)f(t)\,dt \\
 = f'(x) + Q(x)f(x) + R(x,x)B(x)f(x) + \int^x_0 \Dx R(x,t)B(t)f(t)\,dt
 + Q(x) \int^x_0 R(x,t)B(t)f(t)\,dt.
\end{multline}
On the other hand, integrating by parts one derives
\begin{multline}
 \bigl(-i B (I+\cR) \cL_0(0)f\bigr)(x)
 = \bigl(B (I + \cR) (B^{-1} f')\bigr)(x) \\
 = f'(x) + B(x) \int^x_0 R(x,t)B(t)\cdot B^{-1}(t)f'(t)\,dt
 = f'(x) + B(x) \int^x_0 R(x,t) f'(t)\,dt \\
 = f'(x) + B(x) R(x,x)f(x) - B(x) R(x,0)f(0) - B(x) \int^x_0\(\Dt R(x,t)\)f(t)\,dt.
\end{multline}
Equating right hand sides of both equations and noting that $f \in \wt{W}^{1,2}_0([0,\ell]; \bC^n)$ is arbitrary satisfying $f(0)=0$, leads to the following boundary value problem for the matrix kernel $R(x,t):$
\begin{align}
\label{eq:DxRxtBt+BxDtRxt}
 \Dx R(x,t) B(t) + B(x) \Dt R(x,t) + Q(x) R(x,t)B(t) &= 0 , \\
\label{eq:RxxBx-BxRxx}
 R(x,x)B(x) - B(x) R(x,x) + Q(x) &= 0,
\end{align}
for a.e.\ $(x,t) \in \Omega$.
Let us write the matrix kernel $R(\cdot,\cdot)$ in the block-matrix form $R(x,t) = \(R_{jk}(x,t)\)_{j,k=1}^r$ with respect to the decomposition $\bC^n = \bC^{n_1} \oplus \ldots \oplus \bC^{n_r}$. Since $B(\cdot)$ is block-diagonal it follows that the problem~\eqref{eq:DxRxtBt+BxDtRxt}--\eqref{eq:RxxBx-BxRxx} splits into $r$ independent problems on columns $R_k(x,t) := (R_{jk}(x,t))_{j=1}^r$ of the matrix kernel $R(\cdot,\cdot)$. Fixing $k \in \onetor$, using the block-matrix representation of matrix functions $B(\cdot)$ and $Q(\cdot)$ and taking into account condition $Q_{jj} \equiv 0$, $j \in \onetor$, corresponding problem for the $k$-th column of the matrix kernel $R(\cdot,\cdot)$ takes the following form for a.e.\ $(x,t) \in \Omega$,
\begin{align} \label{eq:DxRjk+DtRjk}
 \Dx R_{jk}(x,t) + \frac{\beta_j(x)}{\beta_k(t)} \Dt R_{jk}(x,t)
 & = -\sum^r_{p=1} Q_{jp}(x) R_{pk}(x,t),
 \quad j \in \onetor, \\
\label{eq:Rjk.xx}
 R_{jk}(x,x) = \frac{Q_{jk}(x)}{\beta_j(x) - \beta_k(x)} & = \wt{Q}_{jk}(x),
 \qquad j \ne k, \quad j \in \onetor.
\end{align}
Emphasize that $\wt{Q}_{jk}$, $j \ne k$, is well-defined and summable on $[0,\ell]$ due to conditions~\eqref{eq:Q.block.L1}--\eqref{eq:beta.k+eps<beta.k+1.r}. It is clear now that to finish the proof it is sufficient to show that for each $k \in \onetor$ there exists a vector kernel $R_k(\cdot,\cdot)$ that satisfies the incomplete Cauchy problem~\eqref{eq:DxRjk+DtRjk}--\eqref{eq:Rjk.xx}.

\textbf{(ii)} To prepare for the next step, we need to extend functions $\beta_j(\cdot)$ and $\rho_j(\cdot)$ to be defined on $\bR$ and satisfy conditions~\eqref{eq:betak.alpk.Linf.r}--\eqref{eq:beta.k+eps<beta.k+1.r} (for each $x \in \bR$). By definition, $\beta_j \in L^{\infty}$ is a class of functions equivalent to a certain base function and conditions~\eqref{eq:betak.alpk.Linf.r}--\eqref{eq:beta.k+eps<beta.k+1.r} are valid only for a.e.\ $x \in [0,\ell]$. It is clear that for each $j \in \onetor$ we can select an appropriate representative from this class of equivalence (and call it $\beta_j$ for simplicity) such that the selected functions $\beta_j(\cdot)$ are defined for each $x \in [0,\ell]$ and satisfy conditions~\eqref{eq:betak.alpk.Linf.r}--\eqref{eq:beta.k+eps<beta.k+1.r} for each $x \in [0,\ell]$ (and not just for a.e.\ $x \in [0,\ell]$). With this remark in mind, it is clear that the following extension satisfies conditions~\eqref{eq:betak.alpk.Linf.r}--\eqref{eq:beta.k+eps<beta.k+1.r} for each $x \in \bR$,
\begin{equation} \label{eq:betaj.ext}
 \beta_j(x) := \begin{cases}
 \beta_j(0), & x < 0, \\
 \beta_j(x), & x \in [0,\ell], \\
 \beta_j(\ell), & x > \ell,
 \end{cases},
 \qquad \rho_j(x) := \int_0^x \beta_j(t) dt, \quad x \in \bR,
 \qquad j \in \onetor.
\end{equation}
It is clear from~\eqref{eq:betak.alpk.Linf.r} that $\rho_j(\bR) = \bR$ and $\rho_j(\cdot)$ is a strictly monotonous function, $j \in \onetor$. Hence there exists strictly monotonous inverse $\rho_j^{-1}$ that also maps $\bR$ onto $\bR$, $j \in \onetor$. Summarizing this we have,
\begin{equation} \label{eq:rhok.rho-1k}
 \rho_j(\bR) = \rho_j^{-1}(\bR) = \bR, \qquad\text{and}\qquad
 \rho_j \ \ \text{and} \ \ \rho_j^{-1} \quad\text{are strictly monotonous},
 \qquad j \in \onetor.
\end{equation}
More importantly, condition~\eqref{eq:beta.k+eps<beta.k+1.r} implies the same property for differences $\rho_j - \rho_k$,
\begin{equation} \label{eq:rho.j-k.rho-1.j-k}
 (\rho_j - \rho_k)(\bR) = (\rho_j - \rho_k)^{-1}(\bR) = \bR,
 \quad\text{and}\quad
 \rho_j - \rho_k, \ (\rho_j - \rho_k)^{-1} \ \ \text{are strictly monotonous},
 \quad j \ne k.
\end{equation}
Note also that conditions~\eqref{eq:betak.alpk.Linf.r}--\eqref{eq:beta.k+eps<beta.k+1.r} implie the following important Lipshitz property,
\begin{equation} \label{eq:rho.Lip}
 \rho_j, \ \rho_j^{-1}, \ \ \rho_j - \rho_k, \ (\rho_j - \rho_k)^{-1}
 \in \Lip_1(\bR), \qquad j \ne k, \quad j, k \in \onetor.
\end{equation}
Local Lipshitz property is implied directly by conditions~\eqref{eq:betak.alpk.Linf.r}--\eqref{eq:beta.k+eps<beta.k+1.r}. Global Lipshitz property holds because functions $\rho_j$ are linear outside of $[0,\ell]$.

We also need to extend matrix function $Q(\cdot)$ to be defined on $\bR$. For simplicity we will use the same notation for it and the same notation for $\wt{Q}_{jk}$, $j \ne k$, given by~\eqref{eq:Rjk.xx}. In the future we will need to work with a certain system of integral equations without the assumption $\wt{Q}_{jk} \in \Lip_1[0,\ell]$. Hence, we start by extending $\wt{Q}_{jk}$ (that outside of this Proposition might be only summable) in any way such that the following properties hold,
\begin{align}
\label{eq:wtQjk.bR}
 & \wt{Q}_{jk}(x) = 0, \qquad x \notin (-\delta,\ell+\delta),
 \qquad j \ne k, \\
\label{eq:wtQjk.Lp}
 & \wt{Q}_{jk} \in L^p(\bR) \quad\text{whenever}\quad
 \wt{Q}_{jk} \in L^p[0,\ell], \quad p \in [1, \infty], \qquad j \ne k, \\
\label{eq:wtQjj.bR}
 & \wt{Q}_{jk} \in \Lip_1(\bR) \quad\text{whenever}\quad
 \wt{Q}_{jk} \in \Lip_1[0,\ell], \qquad j \ne k,
\end{align}
with some $\delta > 0$. Then we naturally define $Q_{jk}(x)$ for $x \in \bR$ by formula~\eqref{eq:Rjk.xx} and also extend block-diagonal entries of the matrix function $Q(\cdot)$ to be zero,
\begin{align}
\label{eq:Qjk.bR}
 Q_{jk}(x) & := (\beta_j(x) - \beta_k(x)) \cdot \wt{Q}_{jk}(x),
 \qquad x \in \bR, \qquad j \ne k, \\
\label{eq:Qjj.bR}
 Q_{jj}(x) & := \wt{Q}_{jj}(x) := 0, \qquad x \in \bR, \qquad j \in \onetor.
\end{align}
It is clear from the construction and conditions~\eqref{eq:Q.block.L1}--\eqref{eq:beta.k+eps<beta.k+1.r} that
\begin{equation} \label{eq:Qx=0}
 Q(x) = 0, \quad x \notin (-\delta,\ell+\delta), \qquad\text{and}\qquad
 Q \in L^p(\bR) \quad\text{whenever}\quad Q \in L^p[0,\ell],
 \quad p \in [1, \infty].
\end{equation}

\textbf{(iii)} At this step assuming $k \in \onetor$ to be fixed, we consider the incomplete Cauchy problem~\eqref{eq:DxRjk+DtRjk}--\eqref{eq:Rjk.xx} and complete it up to a special Goursat problem in a certain extended domain $\Omega_k \supset \Omega$ assuming functions $\beta_j$, $\rho_j$ and $Q_{jp}$ to be extended on $\bR$ as constructed in step (ii). Along the way, we establish equivalent system of integral equations.

The corresponding characteristic system is ${\beta_j(x)}\,dx = {\beta_k(t)}\,dt$, $j \in \onetor$. Since $\rho_j'(x) = \beta_j(x)$ for a.e.\ $x \in [0,\ell]$, this characteristic system defines the system of characteristic curves of the equation~\eqref{eq:DxRjk+DtRjk}:
\begin{equation} \label{eq:characteristic_curves}
 \rho_j(x) - \rho_k(t) = c = \const, \qquad j \in \onetor.
\end{equation}
Condition~\eqref{eq:rhok.rho-1k} implies that we can present the characteristic at ``level'' $c \in \bR$ as
\begin{equation} \label{eq:Gamjk.def}
 \Gam_{jk}^c := \{(x, \gam_{jk}^c(x)): x \in \bR\},
 \qquad \gam_{jk}^c(x) := \rho_k^{-1}(\rho_j(x) - c), \quad x \in \bR,
 \qquad j \in \onetor.
\end{equation}
It is clear that $\gam_{jk}^c(\bR) = \bR$, and $\gam_{jk}^c$ is strictly monotonous, linear outside of a certain interval (that depends on $j$, $k$ and $c$) and globally Lipshitz just like functions $\rho_k^{-1}$ and $\rho_j$.

It can be easily seen that the incomplete Cauchy problem~\eqref{eq:DxRjk+DtRjk}--\eqref{eq:Rjk.xx} is not characteristic. To integrate it we need to look for solution of this problem in the extended domain. To this end, let the column $R_k \in \Lip_{1,\loc}(\bR^2; \bC^n)$ satisfy~\eqref{eq:DxRjk+DtRjk}--\eqref{eq:Rjk.xx} for all $x,t \in \bR$. Let us obtain some important properties of $R_k$.
It follows from the formula for derivative of the inverse function of the absolutely continuous function that
\begin{equation} \label{eq:rho-1j'}
 \rho_j'(x) = \beta_j(x), \qquad
 (\rho_j^{-1})'(x) = \frac{1}{\beta_j(\rho_j^{-1}(x))},
 \qquad\text{for a.e.}\ \ x \in \bR, \qquad j \in \onetor.
\end{equation}
Combining~\eqref{eq:Gamjk.def} and~\eqref{eq:rho-1j'} we arrive at
\begin{equation} \label{eq:gamjkc'}
 (\gam_{jk}^c)'(u) = \frac{\beta_j(u)}{\beta_k(\gam_{jk}^c(u))},
 \qquad\text{for a.e.}\ \ u \in \bR, \qquad j \in \onetor.
\end{equation}
Recall that as per~\eqref{eq:D1f.D2f},
\begin{equation} \label{eq:D1R.D2R}
 D_1 R_{jk}(x,t) = \Dx R_{jk}(x,t), \qquad
 D_2 R_{jk}(x,t) = \Dt R_{jk}(x,t),
 \qquad x,t \in \bR, \quad j \in \onetor.
\end{equation}
With this notation in mind we can rewrite~\eqref{eq:DxRjk+DtRjk} as follows
\begin{equation} \label{eq:D1Rjk+D2Rjk}
 D_1 R_{jk}(u,v) + \frac{\beta_j(u)}{\beta_k(v)} D_2 R_{jk}(u,v)
 = -\sum^r_{p=1} Q_{jp}(u) R_{pk}(u,v),
 \qquad\text{for a.e.}\ \ u, v \in \bR, \quad j \in \onetor.
\end{equation}
Using standard rules for differentiating functions of two variables, and
combining~\eqref{eq:gamjkc'} and~\eqref{eq:D1Rjk+D2Rjk}, we obtain for $j \in \onetor$,
\begin{align}
\nonumber
 \frac{d}{d u} \bigl[R_{jk}(u, \gam_{jk}^c(u))\bigr]
 & = D_1 R_{jk}(u, \gam_{jk}^c(u)) + (\gam_{jk}^c)'(u)
 D_2 R_{jk}(u, \gam_{jk}^c(u)) \\
\nonumber
 & = D_1 R_{jk}(u, \gam_{jk}^c(u))
 + \frac{\beta_j(u)}{\beta_k(\gam_{jk}^c(u))}
 D_2 R_{jk}(u, \gam_{jk}^c(u)) \\
\label{eq:ddu.Rjku.gam}
 & = -\sum^r_{p=1} Q_{jp}(u) R_{pk}(u,\gam_{jk}^c(u)),
 \qquad\text{for}\ \ c \in \bR \quad\text{and for a.e.}\ \ u \in \bR.
\end{align}
It is clear that the function $R_{jk}(\cdot, \gam_{jk}^c(\cdot))$ is locally Lipshitz. Hence, integrating formula~\eqref{eq:ddu.Rjku.gam} from $a$ to $x$, we arrive at
\begin{equation} \label{eq:Rjx-Rjky}
 R_{jk}(x, \gam_{jk}^c(x)) - R_{jk}(a, \gam_{jk}^c(a)) =
 - \int_a^x \sum^r_{p=1} Q_{jp}(u) R_{pk}(u,\gam_{jk}^c(u)) du,
 \quad x,a,c \in \bR, \quad j \in \onetor.
\end{equation}
Let us fix $j \in \onetor$ and $-\infty < t \le x < \infty$. Consider the characteristic curve $\Gam_{jk}^c$ passing through the point $(x,t)$. It is clear from definition~\eqref{eq:Gamjk.def} of $\gam_{jk}^c(\cdot)$ that
\begin{equation} \label{eq:gamjkx=t}
 \gam_{jk}^c(x) = t \qquad\text{and}\qquad c = \rho_j(x) - \rho_k(t).
\end{equation}
Let us also set for brevity for $j \in \onetor$ and $x,t,u \in \bR$
\begin{equation} \label{eq:gamjkxt.def}
 \gam_{jk}^{x,t}(u) := \gam_{jk}^{c}(u)
 = \rho_k^{-1}(\rho_j(u) - \rho_j(x) + \rho_k(t)), \qquad x, t, u \in \bR,
 \qquad \gam_{jk}^{\cdot,\cdot}(\cdot) \in \Lip_1(\bR^3),
\end{equation}
where the last inclusion follows from the property~\eqref{eq:rho.Lip}.

First let $j \ne k$. Then there exists $a = a_{jk}(x,t) \in \bR$ such that $\gam_{jk}^{x,t}(a) = a$. Indeed, this equation is equivalent to $\rho_j(a) - \rho_j(x) + \rho_k(t) = \rho_k(a)$ and hence condition~\eqref{eq:rho.j-k.rho-1.j-k} implies that
\begin{equation} \label{eq:ajkdef}
 a_{jk}(x,t) :=
 (\rho_j - \rho_k)^{-1}(\rho_j(x) - \rho_k(t))
 = \Bigl.(\rho_j - \rho_k)^{-1}(u)\Bigr|_{u = \rho_j(x) - \rho_k(t)},
 \qquad a_{jk} \in \Lip_1(\bR^2).
\end{equation}
Setting $a = a_{jk}(x,t)$ in~\eqref{eq:Rjx-Rjky} and taking into account~\eqref{eq:gamjkx=t},~\eqref{eq:ajkdef} and~\eqref{eq:Rjk.xx}, we arrive at
\begin{multline} \label{eq:int=Rjxxt-wtQ}
 - \int_a^x \sum^r_{p=1} Q_{jp}(u) R_{pk}(u,\gam_{jk}^{x,t}(u)) du
 = R_{jk}(x, \gam_{jk}^{x,t}(x)) - R_{jk}(a, \gam_{jk}^{x,t}(a)) \\
 = R_{jk}(x, t) - R_{jk}(a, a) = R_{jk}(x, t) - \wt{Q}_{jk}(a),
 \quad a = a_{jk}(x,t).
\end{multline}
or
\begin{equation} \label{eq:Rjkxt=wtQ+int}
 R_{jk}(x, t) = \wt{Q}_{jk}(a_{jk}(x,t))
 - \int_{a_{jk}(x,t)}^x \sum^r_{p=1} Q_{jp}(u) R_{pk}(u,\gam_{jk}^{x,t}(u)) du,
 \qquad t \le x, \quad j \ne k.
\end{equation}

Now we are ready to define the domain $\Omega_k$ on which we will set the Goursat problem. For convenience, $[u,v]$ and $[v,u]$ will denote the same segment of the real line,
\begin{equation} \label{eq:uv=vu}
 [u,v] := [v,u] = \{t \in \bR: v \le t \le u\}, \qquad v \le u.
\end{equation}
Let us also define the part of the characteristic curve $\Gam_{jk}^c$ that starts at the point $(x,t)$ of the triangle $\Omega$ and ends on the diagonal $\bfD := \{(u,v) \in \bR^2: u = v\}$ of $\bR^2$,
\begin{equation} \label{eq:Gamjkct.def}
 \Gam_{jk}^{x,t} := \left\{\(u, \gam_{jk}^{x,t}(u)\) :
 \ u \in [x, a_{jk}(x,t)] \right\},
 \qquad (x,t) \in \Omega, \qquad j \ne k.
\end{equation}
With this notation in mind, we formally define
\begin{equation} \label{eq:Omegak'.def}
 \Omega_k := \bigcup_{j \ne k} \Omega_{jk}, \qquad
 \Omega_{jk} := \bigcup_{(x,t) \in \Omega} \Gam_{jk}^{x,t}, \qquad j \ne k.
\end{equation}

Let us fix $j \ne k$. It is clear from the continuity and monotonicity of $\gam_{jk}^{x,t}(\cdot)$ and $(\rho_j - \rho_k)^{-1}$, and general geometrical reasoning that we can only have three cases:
\begin{itemize}
\item $\Omega_{jk} = \Omega$ (when the characteristic curve segments $\Gam_{jk}^{x,t}$ do not ``go outside'' of the triangle $\Omega$);
\item $\Omega_{jk} = \{(u,v) : \ u \in [a_{jk}(\ell,0), \ell], \ v \in [\gam_{jk}^{\ell,0}(u), u]\}$ (when $\Gam_{jk}^{x,t}$ that ``go outside'' of the triangle $\Omega$ intersect the diagonal $\bfD$ at points with negative coordinate $u$);
\item $\Omega_{jk} = \Omega \cup \{(u,v) : \ u \in (\ell, a_{jk}(\ell,0)], \ v \in [\gam_{jk}^{\ell,0}(u), u]\}$ (when $\Gam_{jk}^{x,t}$ that ``go outside'' of $\Omega$ intersect the diagonal $\bfD$ at points with positive coordinate $u$).
\end{itemize}
It is clear that we can unify all three cases with a single representation of the following form,
\begin{equation} \label{eq:Omegajk.canon}
 \Omega_{jk} = \{(u,v) : \ u \in [a_{jk}^-, a_{jk}^+],
 \ \ v \in [\gam_{jk}^-(u), u]\},
\end{equation}
where numbers $a_{jk}^- \le 0 < \ell \le a_{jk}^+$ and function $\gam_{jk}^-(\cdot) \le 0$ are defined as follows in each of the three cases:
\begin{flalign} \label{eq:gamjk-}
 \quad \begin{array}{llll}
 \bullet & a_{jk}^- = 0, & a_{jk}^+ = \ell,
 & \qquad \gam_{jk}^-(u) = 0, \quad u \in [0,\ell], \\
 \bullet & a_{jk}^- = a_{jk}(\ell,0), & a_{jk}^+ = \ell,
 & \qquad \gam_{jk}^-(u) = \gam_{jk}^{\ell,0}(u), \quad u \in [0,\ell], \\
 \bullet & a_{jk}^- = 0, & a_{jk}^+ = a_{jk}(\ell,0),
 & \qquad \gam_{jk}^-(u) = \begin{cases}
 0, \quad & u \in [0,\ell], \\
 \gam_{jk}^{\ell,0}(u), \quad & u \in (\ell,a_{jk}(\ell,0)], \\
 \end{cases} \\
 \end{array}&&
\end{flalign}
(see Remark~\ref{rem:Omegajk} for more details).
It now follows from~\eqref{eq:Omegajk.canon}--\eqref{eq:gamjk-} that
\begin{align}
\label{eq:Omegak.canon}
 & \Omega_k = \{(u,v) : \ u \in [a_k^-, a_k^+],
 \ \ v \in [\gam_k^-(u), u]\}, \quad\text{where} \\
\label{eq:ak-+.def}
 & a_k^- := \min\{a_{jk}^- : j \ne k\} \le 0, \qquad
 a_k^+ := \max\{a_{jk}^+ : j \ne k\} \ge \ell, \\
 & \gam_k^{-}(u) := \min\{\gam_{jk}^-(u) :
 \ j \ne k \ \ \text{such that} \ \ u \in [a_{jk}^-, a_{jk}^+]\}, \qquad u \in [a_k^-, a_k^+], \\
 & \Gam_k^- := \{(u,\gam_k^{-}(u)) : \ u \in [a_k^-, a_k^+]\}
 \quad\text{is the ``lower boundary'' of the domain}\ \ \Omega_k.
\end{align}
Properties of $\gam_{jk}^c(\cdot)$ outlined after formula~\eqref{eq:Gamjk.def} and the fact, that the minimum of finite number of Lipshitz non-decreasing functions preserves these properties, imply that
\begin{equation} \label{eq:gamjk-.Lip}
 \gam_k^- \in \Lip_1[a_{k}^-, a_{k}^+],
 \qquad \gam_{jk}^- \in \Lip_1[a_{jk}^-, a_{jk}^+] \qquad\text{and}\qquad
 \gam_k^-, \gam_{jk}^- \quad\text{are non-decreasing}, \quad j \ne k.
\end{equation}
This in turn imply that $\Omega_k$ has a Lipshitz boundary. It is also simply connected, closed and bounded.

In order to formulate Goursat problem, observe that the definition of $\Omega_{jk}$ implies that the characteristic curve segment $\Gam_{jk}^{x,t}$, defined in~\eqref{eq:Gamjkct.def} and considered for points $(x,t) \in \Omega_{jk}$, does not go outside of $\Omega_{jk}$. The same is valid for the union $\Omega_k$ (though this needs a bit more considerations involving ``ordering'' of characteristic curves implied by~\eqref{eq:beta-+r}).
Hence we can restrict kernels $R_{jk}(x,t)$ to be only defined on $\Omega_k$ and the system of equations~\eqref{eq:Rjkxt=wtQ+int} will still be valid,
\begin{equation} \label{eq:Rjkxt=wtQ+int.Omegak}
 R_{jk}(x, t) = \wt{Q}_{jk}(a_{jk}(x,t))
 - \int_{a_{jk}(x,t)}^x \sum^r_{p=1} Q_{jp}(u) R_{pk}(u,\gam_{jk}^{x,t}(u)) du,
 \qquad (x,t) \in \Omega_k, \quad j \ne k.
\end{equation}

Let us now go back to our raw equation~\eqref{eq:Rjx-Rjky} and handle the diagonal entry $R_{kk}(\cdot,\cdot)$ (recall that $k \in \onetor$ is fixed). We don't have any initial data for it yet and are free to impose any appropriate initial condition. To this end note that the ``characteristic function'' $\gam_{kk}^{x,t}$ is strictly increasing for any given point $(x,t)$,
\begin{equation} \label{eq:gamkk.inc}
 \gam_{kk}^{x,t}(u) = \rho_k^{-1}(\rho_k(u) - \rho_k(x) + \rho_k(t)), \quad u \in \bR,
 \qquad \gam_{kk}^{x,t} \quad\text{is increasing}, \quad x,t \in \bR.
\end{equation}
Note also, that the characteristic curve $\Gam_{kk}^c$ never intersects the diagonal $\bfD$ of $\bR^2$ if $c \ne 0$, while $\Gam_{kk}^0 = \bfD$. With account of these observations and noting that $\Gam_k^-$ (the ``lower boundary'' of $\Omega_k$) intersects the diagonal $\bfD$ of $\bR^2$ at the point $a_k^-$,
we see that the characteristic curve $\Gam_{kk}^{x,t}$, that passes through any point $(x,t)$ of the domain $\Omega_k$, intersects with $\Gam_k^-$. Namely, there exists $a_{kk} = a_{kk}(x,t) \in [a_k^-, a_k^+]$ such that
\begin{equation} \label{eq:gamkkxt}
 \gam_{kk}^{x,t}(a_{kk}(x,t)) = \gam_k^-(a_{kk}(x,t)), \qquad (x,t) \in \Omega_k.
\end{equation}
Let us show that $a_{kk} \in \Lip(\Omega_k)$. Let $(x,t) \in \Omega_k$ and $j \ne k$ be fixed and let $a_{kk,j}(x,t)$ be the intersection of $\Gam_{kk}^{x,t}$ with the ``lower boundary'' of $\Omega_{jk}$, i.e.
\begin{equation} \label{eq:gamjkxt}
 \gam_{kk}^{x,t}(a_{kk,j}(x,t)) = \gam_{jk}^-(a_{kk,j}(x,t)), \qquad (x,t) \in \Omega_k, \quad j \ne k.
\end{equation}
In each of the three cases in~\eqref{eq:gamjk-} we can find explicit form of $a_{kk,j}(x,t)$. For the first two cases we have,
\begin{flalign} \label{eq:gamjk-forkk}
 \quad \begin{array}{llll}
 \bullet & \gam_{jk}^-(u) = 0, \quad u \in [0,\ell],
 & a_{kk,j}(x,t) = \rho_k^{-1}(\rho_k(x) - \rho_k(t)), \\
 \bullet & \gam_{jk}^-(u) = \gam_{jk}^{\ell,0}(u), \quad u \in [0,\ell], &
 a_{kk,j}(x,t) = (\rho_j - \rho_k)^{-1}(\rho_j(\ell) - \rho_k(x) + \rho_k(t)),\\
 \end{array}&&
\end{flalign}
while in the third $a_{kk,j}(x,t)$ is in a way ``a union'' of two cases. Here is the detailed proof for the second case for posterity,
\begin{align*}
 \gam_{kk}^{x,t}(a) &= \gam_{jk}^{\ell,0}(a) \quad \Leftrightarrow \\
 \rho_k^{-1}(\rho_k(a) - \rho_k(x) + \rho_k(t))
 &= \rho_k^{-1}(\rho_j(a) - \rho_j(\ell)) \quad \Leftrightarrow \\
 \rho_j(\ell) - \rho_k(x) + \rho_k(t)
 &= \rho_j(a) - \rho_k(a) \quad \Leftrightarrow \\
 a_{kk,j}(x,t) & = (\rho_j - \rho_k)^{-1}(\rho_j(\ell) - \rho_k(x) + \rho_k(t)).
\end{align*}
Note that for some points $(x,t) \in \Omega_k$ the found point $a_{kk,j}(x,t)$ intersects the curve $\Gam_{jk}^- := \{(u,\gam_{jk}^{-}(u)) : \ u \in \bR\}$ outside of $\Omega_{jk}$. Despite that, it can be shown that
\begin{equation} \label{eq:akk.def}
 a_{kk}(x,t) = \min\{a_{kk,j}(x,t) : j \ne k\}, \qquad (x,t) \in \Omega_k.
\end{equation}
Formulas~\eqref{eq:gamjk-forkk},~\eqref{eq:akk.def} and property~\eqref{eq:rho.Lip} now easily imply that
\begin{equation} \label{eq:akk.Lip}
 a_{kk}, a_{kk,j} \in \Lip(\Omega_k), \qquad j \ne k.
\end{equation}
With this preparation in mind, we can impose the following initial condition on $R_{kk}(\cdot,\cdot)$,
\begin{equation} \label{eq:Rkk.init}
 R_{kk}(x, \gam_k^-(x)) = 0, \qquad x \in [a_k^-, a_k^+].
\end{equation}
Setting $a = a_{kk}(x,t)$ in~\eqref{eq:Rjx-Rjky} and taking into account~\eqref{eq:Rkk.init} we get similar to~\eqref{eq:int=Rjxxt-wtQ},
\begin{equation} \label{eq:Rkkxt=int}
 R_{kk}(x, t) = - \int_{a_{kk}(x,t)}^x \sum^r_{p=1}
 Q_{kp}(u) R_{pk}(u,\gam_{kk}^{x,t}(u)) du, \qquad (x,t) \in \Omega_k.
\end{equation}
With account of convention $\wt{Q}_{kk} = 0$ (see~\eqref{eq:Qjj.bR}), we can combine~\eqref{eq:Rjkxt=wtQ+int.Omegak} and~\eqref{eq:Rkkxt=int} into a single formula for all $r$ equations,
\begin{equation} \label{eq:Rjkxt=wtQ+int.canon}
 R_{jk}(x, t) = \wt{Q}_{jk}(a_{jk}(x,t))
 - \int_{a_{jk}(x,t)}^x \sum^r_{p=1} Q_{jp}(u) R_{pk}(u,\gam_{jk}^{x,t}(u)) du,
 \qquad (x,t) \in \Omega_k, \quad j \in \onetor,
\end{equation}
where, as before, $k \in \onetor$ is fixed. It is clear from considerations in formulas~\eqref{eq:ddu.Rjku.gam}--\eqref{eq:Rjx-Rjky} that the system of integral equations~\eqref{eq:Rjkxt=wtQ+int.canon} is equivalent to the following Goursat problem whenever $R_k \in \Lip_1(\Omega_k)$,
\begin{align} \label{eq:Goursat}
 \Dx R_{jk}(x,t) + \frac{\beta_j(x)}{\beta_k(t)} \Dt R_{jk}(x,t)
 & = -\sum^r_{p=1} Q_{jp}(x) R_{pk}(x,t),
 \quad (x,t) \in \Omega_k, \quad j \in \onetor. \\
\label{eq:Rjk.xx.Omega}
 R_{jk}(x,x) & = \wt{Q}_{jk}(x), \qquad x \in [a_k^-, a_k^+],
 \qquad j \ne k, \quad j \in \onetor. \\
\label{eq:Rkk.x.gam}
 R_{kk}(x, \gam_k^-(x)) & = 0, \qquad x \in [a_k^-, a_k^+].
\end{align}
Moreover, it is obvious that any vector kernel solution $R_k(\cdot, \cdot)$ of this Goursat problem satisfies the desired incomplete Cauchy problem~\eqref{eq:DxRjk+DtRjk}--\eqref{eq:Rjk.xx}. Therefore, to finish the proof it is sufficient to show solvability of the system of integral equations~\eqref{eq:Rjkxt=wtQ+int.canon} in $\Lip_1(\Omega_k)$.

\textbf{(iv)} At this step, again assuming $k \in \onetor$ to be fixed, we apply the method of successive approximation to prove the existence of solution to the system~\eqref{eq:Rjkxt=wtQ+int.canon} in $C(\Omega_k)$. First, note that if the $k$-th block column $Q_k = (Q_{jk})_{j=1}^r$ of the matrix $Q$ is zero, then $R_k \equiv 0$ is a valid solution of the system~\eqref{eq:Rjkxt=wtQ+int.canon}. Going forward we assume that $Q_k \ne 0$. To this end we set
\begin{equation} \label{eq:sucsev_approx_R_{jk}^0}
 R_{jk}^{(0)}(x,t) := 0, \qquad (x,t) \in \Omega_k,
 \qquad j \in \onetor.
\end{equation}
Assuming functions $R_{jk}^{(m-1)}$ to be defined for a given $m \in \bN$, we set for $(x,t) \in \Omega_k$,
\begin{align}
\label{eq:Rjkm.def}
 R_{jk}^{(m)}(x,t) & := \wt Q_{jk}(a_{jk}(x,t)) - \int_{a_{jk}(x,t)}^x
 \sum^r_{p=1} Q_{jp}(u) R_{pk}^{(m-1)}(u, \gam_{jk}^{x,t}(u)) \,du,
 \quad j \ne k, \quad j \in \onetor. \\
\label{eq:Rkkm.def}
 R_{kk}^{(m)}(x,t) & := - \int_{a_{kk}(x,t)}^x \sum^r_{p=1}
 Q_{kp}(u) R_{pk}^{(m)}(u, \gam_{kk}^{x,t}(u)) \,du,
\end{align}
To explain the correctness of this definition let us mention that since $Q_{kk}(\cdot) =0$ by the assumption, the entries $R_{kk}^{(m)}$ of the vector function $\bigl(R_{jk}^{(m)}\bigr)_{j=1}^r$ are expressed by means of equalities~\eqref{eq:Rkkm.def} via the entries $R_{jk}^{(m)}$, $j \ne k$, defined on the previous step by equalities~\eqref{eq:Rjkm.def}. Note in particular, that
\begin{equation} \label{eq:sucsev_approx_R_{jk}^1}
 R_{jk}^{(1)}(x,t) = \wt{Q}_{jk}(a_{jk}(x,t)),
 \qquad (x,t) \in \Omega_k,
 \qquad j \ne k, \quad j \in \onetor,
\end{equation}

Note that since
\begin{equation} \label{eq:all.Lip}
 \wt{Q}_{jk} \in \Lip_1(\bR), \quad
 a_{jk} \in \Lip_1(\bR^2), \quad
 Q_{jp} \in L^{\infty}(\bR), \quad
 \gam_{jk}^{\cdot,\cdot}(\cdot) \in \Lip_1(\bR^3), \qquad j, p \in \onetor,
\end{equation}
then
\begin{equation} \label{eq:Rjkm.Lip}
 R_{jk}^{(m)} \in \Lip_1(\Omega_k), \qquad m \in \bN_0, \quad j \in \onetor.
\end{equation}
First we show that for any $j \in \onetor$ the (uniform) limit $\lim_{m\to\infty} R_{jk}^{(m)} =: R_{jk}$ exists in $C(\Omega_k)$ and defines a solution to the system~\eqref{eq:Rjkxt=wtQ+int.canon}. Clearly, this convergence
is equivalent to the convergence in $C(\Omega_k)$ of the series
\begin{equation} \label{eq:series_of_sucsev_approx}
 R_{jk}^{(0)} + \sum_{m=1}^{\infty} \left[R_{jk}^{(m)} - R_{jk}^{(m-1)} \right]
\end{equation}
Let us prove by induction on $N$ that
\begin{align} \label{eq:induction_hypothesis}
 \abs{R^{(N+1)}_{jk}(x,t) - R^{(N)}_{jk}(x,t)} \le
 C_{jk} \tau_k^N \frac{|\rho_k(x) - \rho_k(t)|^N}{N!},
 \quad (x,t) \in \Omega_k, \quad j \in \onetor, \quad N \in \bN_0,
\end{align}
with constants $C_{1k}, \ldots, C_{rk} \ge 0$ and $\tau_k > 0$ given by
\begin{align}
\label{eq:Cjk.wtqk.def}
 C_{jk} & := \wt{q}_k := \max\{\|\wt Q_{pk}\|_{\infty} : p \ne k\} > 0,
 \qquad j \ne k, \quad j \in \onetor, \\
\label{eq:Ckk.qk.def}
 C_{kk} & := \wt{q}_k \cdot q_k \cdot (a_k^+ - a_k^-), \qquad
 q_k := \sum^r_{p=1} \|Q_{kp}\|_{\infty} \ge 0, \\
\label{eq:tauk.def}
 \tau_k & := \theta^{-1} \max\Bigl\{q_k \|Q_{jk}\|_{\infty} + \sum^r_{p=1} \|Q_{jp}\|_{\infty} : j \ne k \Bigr\} > 0.
\end{align}
where $[a_k^-, a_k^+]$ is the projection of $\Omega_k$ on $\bR$ and is given by~\eqref{eq:ak-+.def} and parameter $\theta$ is from the condition~\eqref{eq:beta.k+eps<beta.k+1.r}. We also set for brevity, $\|f\|_{\infty} := \|f\|_{L^{\infty}(\Omega_k)}$ for any $f : \Omega_k \to \bC$. Note that $\wt{q}_k > 0$ and $\tau_k > 0$ because of the current assumption that the $k$-th block column $Q_k = (Q_{jk})_{j=1}^r$ of the matrix $Q$ is non-zero. In addition, $C_{kk} = 0 = q_k$ whenever the block row $(Q_{kp})_{p=1}^r$ of $Q$ is zero. In this case r.h.s.\ of equation~\eqref{eq:Rkkm.def} vanishes and $R_{kk}^{(m)}$ is necessarily zero for all $m \in \bN_0$.

Let $N=0$ and $j \ne k$. It follows from~\eqref{eq:sucsev_approx_R_{jk}^0},~\eqref{eq:sucsev_approx_R_{jk}^1} and~\eqref{eq:Cjk.wtqk.def} that
\begin{equation} \label{eq:Rjk1-Rjk0}
 \abs{R_{jk}^{(1)}(x,t) - R_{jk}^{(0)}(x,t)} = \abs{\wt{Q}_{jk}(a_{jk}(x,t))}
 \le \|\wt Q_{jk}\|_{\infty} \le \wt{q}_k = C_{jk},
 \qquad (x,t) \in \Omega_k,
 \qquad j \ne k,
\end{equation}
for $j \in \onetor$. Hence~\eqref{eq:induction_hypothesis} is valid for $N=0$ and $j \ne k$.

Assume that the estimate~\eqref{eq:induction_hypothesis} is valid for $N=m \in \bN_0$ and $j \ne k$ and let us prove it for the same $N=m$ and $j=k$. First observe that relation~\eqref{eq:gamjkxt.def} implies that
\begin{equation} \label{eq:rhok.u-rhok.gamkk}
 \rho_k(u) - \rho_k(\gam_{kk}^{x,t}(u)) = \rho_k(x) - \rho_k(t),
 \qquad x,t,u \in \bR.
\end{equation}
Taking into account relations~\eqref{eq:rhok.u-rhok.gamkk},~\eqref{eq:Cjk.wtqk.def}--\eqref{eq:Ckk.qk.def} and the fact that $Q_{kk} \equiv 0$, we subtract two equations~\eqref{eq:Rkkm.def} with $m$ and $m+1$, respectively, and insert the estimate~\eqref{eq:induction_hypothesis}, valid by induction hypothesis for $N=m$ and $p \ne k$, into this difference,
\begin{align}
\nonumber
 |R^{(m+1)}_{kk}(x,t) - R^{(m)}_{kk}(x,t)|
 & \le \sum^r_{p=1} \abs{\int_{a_{kk}(x,t)}^x |Q_{kp}(u)| \cdot
 \abs{R_{pk}^{(m+1)}(u, \gam_{kk}^{x,t}(u))
 - R_{pk}^{(m)}(u, \gam_{kk}^{x,t}(u))}\,du} \\
\nonumber
 & \le \tau_k^m \wt{q}_k \sum^r_{p=1} \|Q_{kp}\|_{\infty}
 \left|\int_{a_{kk}(x,t)}^x
 \frac{|\rho_k(u) - \rho_k(\gam_{kk}^{x,t}(u))|^{m}}{m!} \,du \right| \\
\nonumber
 & \le \wt{q}_k q_k \tau_k^m
 \left|\int_{a_{kk}(x,t)}^x
 \frac{|\rho_k(x) - \rho_k(t)|^{m}}{m!} \,du \right| \\
\label{eq:Rkk.m+1-Rkkm}
 & \le \wt{q}_k q_k \tau_k^m (a_k^+ - a_k^-)
 \frac{|\rho_k(x) - \rho_k(t)|^{m}}{m!}
 = C_{kk} \tau_k^m \frac{|\rho_k(x) - \rho_k(t)|^{m}}{m!},
\end{align}
which yields the desired relation~\eqref{eq:induction_hypothesis} for $N=m$ and $j=k$.

Let $m \in \bN_0$ and assume that the estimate~\eqref{eq:induction_hypothesis} is valid for $N = m$ and $j \in \onetor$. Let us prove it for $N = m+1$ and $j \ne k$. Observe that relation~\eqref{eq:gamjkxt.def} implies that
\begin{equation} \label{eq:rhok.gamjkxt}
 \rho_k(\gam_{jk}^{x,t}(u)) = \rho_j(u) - \rho_j(x) + \rho_k(t),
 \qquad x, t, u \in \bR.
\end{equation}
Let $j \ne k$ and $(x,t) \in \Omega_k$ be fixed. With account of identity~\eqref{eq:rhok.gamjkxt} and taking difference of~\eqref{eq:Rjkm.def} for $m+1$ and $m$, the induction hypothesis implies,
\begin{align}
\nonumber
 \abs{R^{(m+1)}_{jk}(x,t) - R^{(m)}_{jk}(x,t)}
 & \le \sum^r_{p=1} \abs{\int_{a_{jk}(x,t)}^x |Q_{jp}(u)| \cdot
 \abs{R_{pk}^{(m)}(u, \gam_{jk}^{x,t}(u)) -
 R_{pk}^{(m-1)}(u, \gam_{jk}^{x,t}(u))}\,du} \\
\nonumber
 & \le \tau_k^m \sum^r_{p=1} C_{pk} \|Q_{jp}\|_{\infty}
 \abs{\int_{a_{jk}(x,t)}^x\frac{\abs{\rho_k(u) -
 \rho_k(\gam_{jk}^{x,t}(u))}^m}{m!}\,du} \\
\label{eq:Rm+1-Rm.jk}
 & = C_{0jk} \tau_k^m \abs{\int_{a_{jk}(x,t)}^x\frac{\abs{\rho_k(u) -
 \rho_j(u) + \rho_j(x) - \rho_k(t)}^m}{m!}\,du},
\end{align}
where definitions~\eqref{eq:Cjk.wtqk.def}--\eqref{eq:tauk.def} imply
\begin{equation} \label{eq:C0jk.def}
 C_{0jk} := \sum^r_{p=1} C_{pk} \|Q_{jp}\|_{\infty}
 = \wt{q}_k \cdot \Bigl(q_k \|Q_{jk}\|_{\infty} +
 \sum^r_{\genfrac{}{}{0pt}{2}{p=1}{p \ne k}} \|Q_{jp}\|_{\infty}\Bigr),
 \qquad C_{0jk} \theta^{-1} \le C_{jk} \tau_k.
\end{equation}
Since $j \ne k$, then relations~\eqref{eq:rho.j-k.rho-1.j-k}--\eqref{eq:rho.Lip} imply that the function
$$
f(\cdot) := \rho_k(\cdot) - \rho_j(\cdot) + \rho_j(x) - \rho_k(t)
$$
is strictly monotonous and Lipshitz on $\bR$ (recall, that $j,k,x,t$ are fixed, hence we didn't add them into notation of $f$). Moreover, from definition~\eqref{eq:ajkdef} of $a_{jk}(x,t)$ if follows that $f(a_{jk}(x,t)) = 0$, while clearly $f(x) = \rho_k(x) - \rho_k(t)$. Hence, monotonicity of $f(\cdot)$ implies that
\begin{equation} \label{eq:fjkxt+-}
 \text{either}\quad f(u) \ge 0, \quad u \in [a_{jk}(x,t), x]
 \qquad\text{or}\qquad
 f(u) \le 0, \quad u \in [a_{jk}(x,t), x].
\end{equation}
Let $g := f^{-1}$ be the function inverse to $f$, which exists due to the above observations. It follows from the standard formula for derivate of the inverse function that
\begin{equation} \label{eq:g'}
 g'(v) = \frac{1}{\beta_k(f(v)) - \beta_j(f(v))},
 \qquad |g'(v)| \le \theta^{-1}, \qquad v \in \bR,
\end{equation}
where inequality is implied by conditions~\eqref{eq:beta-+r}--\eqref{eq:beta.k+eps<beta.k+1.r} and construction~\eqref{eq:betaj.ext}. Therefore, making a change of variable $u = g(v)$ in the integral~\eqref{eq:Rm+1-Rm.jk} we obtain
\begin{multline} \label{eq:int.fum}
 \abs{\int_{a_{jk}(x,t)}^x\frac{\abs{\rho_k(u) -
 \rho_j(u) + \rho_j(x) - \rho_k(t)}^m}{m!}\,du} \\
 = \abs{\int_{a_{jk}(x,t)}^x\frac{f(u)^m}{m!}\,du}
 = \abs{\int_0^{\rho_k(x) - \rho_k(t)}\frac{v^m}{m!} \cdot g'(v) \,dv}
 \le \theta^{-1} \frac{|\rho_k(x) - \rho_k(t)|^{m+1}}{(m+1)!}.
\end{multline}
Inserting~\eqref{eq:int.fum} into~\eqref{eq:Rm+1-Rm.jk} and taking into account estimate~\eqref{eq:C0jk.def} we arrive at
\begin{equation}
 \abs{R^{(m+1)}_{jk}(x,t) - R^{(m)}_{jk}(x,t)}
 \le C_{0jk} \tau_k^m \theta^{-1}
 \frac{|\rho_k(x) - \rho_k(t)|^{m+1}}{(m+1)!}
 \le C_{jk} \tau_k^{m+1} \frac{|\rho_k(x) - \rho_k(t)|^{m+1}}{(m+1)!},
\end{equation}
which proves~\eqref{eq:induction_hypothesis} for $N=m+1$ and $j \ne k$.

It is clear now, that the crucial estimate~\eqref{eq:induction_hypothesis} is proved. In turn, this estimate implies the
absolute and uniform convergence of the series~\eqref{eq:series_of_sucsev_approx} in $\Omega_k$ which ensures
the existence in $\Omega_k$ of the continuous solution $R_{jk} = \lim_{m\to\infty} R_{jk}^{(m)}$ to the integral
system~\eqref{eq:Rjkxt=wtQ+int.canon}. Moreover, inserting estimate~\eqref{eq:induction_hypothesis}
in~\eqref{eq:series_of_sucsev_approx} leads to the following estimate for the vector solution $(R_{jk})_{j=1}^r$:
\begin{equation} \label{eq:unif-m_estim_of_smooth_sol-n}
|R_{jk}(x,t)| \le C_{jk} \sum_{m=0}^{\infty} \tau_k^m \frac{|\rho_k(x) - \rho_k(t)|^m}{m!} \le \|\wt{Q}\|_{\infty} \wt{C}_{jk} \cdot e^{\tau_k \cdot |\rho_k(x) - \rho_k(t)|},
\qquad (x, t) \in \Omega_k,
\end{equation}
where $\wt{C}_{jk} = 1$ for $j \ne k$, $\wt{C}_{kk} = q_k \cdot (a_k^+ - a_k^-)$ and
$$
\|\wt{Q}\|_{\infty} = \max\{\|\wt{Q}_{jp}\|_{\infty} : j,p \in \onetor\} = \max\{\wt{q}_p : p \in \onetor\}.
$$

\textbf{(v)} At this final step, assuming $k \in \onetor$ to be fixed, we show that $R_{jk} \in \Lip_1(\Omega)$, $j \in \onetor$.
According to Lemma~\ref{lem:C.Lip}(iii) with account of inclusion~\eqref{eq:Rjkm.Lip} and the fact that $\Omega_k$ is a simply connected, closed and bounded set with a Lipshitz boundary, to show that $R_{jk} \in \Lip_1(\Omega_k)$ it is sufficient to show uniform boundedness of the derivatives $\Dx R_{jk}^{(m)}(x,t)$ and $\Dt R_{jk}^{(m)}(x,t)$ in $\Omega_k$.

To this end, let us obtain formulas for $D_2 R_{jk}^{(m)}(x,t) := \Dt R_{jk}^{(m)}(x,t)$ by differentiating formulas~\eqref{eq:Rjkm.def}--\eqref{eq:Rkkm.def}. Inclusions~\eqref{eq:all.Lip} and~\eqref{eq:Rjkm.Lip} allow us to apply standard rules of differentiation (for a.e.\ $(x,t) \in \Omega_k$). Applying the operator $\Dt$ to equations~\eqref{eq:Rjkm.def}--\eqref{eq:Rkkm.def} yields for $j \in \onetor$,
\begin{multline} \label{eq:Dt.Rjk}
 D_2 R_{jk}^{(m)}(x,t) = \Dt \wt{Q}_{jk}(a_{jk}(x,t))
 - \sum_{p=1}^r \int_{a_{jk}(x,t)}^x Q_{jp}(u)
 \left(D_2 R_{pk}^{(m-1)}(u, \gam_{jk}^{x,t}(u))\right) \Dt \gam_{jk}^{x,t}(u) \,du \\
 + \(\Dt a_{jk}(x,t)\)\sum^r_{p=1} Q_{jp}(a_{jk}(x,t)) \cdot
 R_{pk}^{(m-1)}\(a_{jk}(x,t),\gam_{jk}^{x,t}(a_{jk}(x,t))\), \quad j \ne k,
\end{multline}
\begin{multline} \label{eq:Dt.Rkk}
 D_2 R_{kk}^{(m)}(x,t) = - \sum_{p=1}^r \int_{a_{kk}(x,t)}^x Q_{kp}(u)
 \left(D_2 R_{pk}^{(m)}(u, \gam_{kk}^{x,t}(u))\right) \Dt \gam_{kk}^{x,t}(u) \,du \\
 + \(\Dt a_{kk}(x,t)\)\sum^r_{p=1} Q_{kp}(a_{kk}(x,t)) \cdot
 R_{pk}^{(m)}\(a_{kk}(x,t),\gam_{kk}^{x,t}(a_{kk}(x,t))\),
\end{multline}
for a.e.\ $(x,t) \in \Omega_k$. Equations~\eqref{eq:Dt.Rjk}--\eqref{eq:Dt.Rkk} have exact same form as~\eqref{eq:Rjkm.def}--\eqref{eq:Rkkm.def} with only three notable differences that do not prevent the application of successive approximation procedure used in the previous step:
\begin{itemize}
\item The ``initial data'' (out-of-integral term in~\eqref{eq:Dt.Rjk}) now belongs to $L^{\infty}(\Omega_k)$, which is implied by inclusions $a_{jk}, \wt{Q}_{jk} \circ a_{jk} \in \Lip_1(\Omega_k)$ and $Q_{jp} \in L^{\infty}(\bR)$. This only changes the smoothness of approximations $D_2 R_{jk}^{(m)}$ to $L^{\infty}(\Omega_k)$, but does not affect the proof in any way. The only difference is that the uniform limit of $D_2 R_{jk}^{(m)}$ will be also in $L^{\infty}(\Omega_k)$ instead of $C(\Omega_k)$;
\item The ``initial data'' now includes the term $R_{pk}^{(N)}$. When we estimate the difference $D_2 R_{jk}^{(m+1)} - D_2 R_{jk}^{(m)}$ this term will generate the difference $R_{jk}^{(N+1)} - R_{jk}^{(N)}$ (either with $N=m-1$ or $N=m$), for which we already have the key estimate~\eqref{eq:induction_hypothesis} and hence this can be handled properly during estimation;
\item The coefficient $Q_{kp}(u)$ of $R_{pk}(\ldots)$ in~\eqref{eq:Rjkxt=wtQ+int.canon} is replaced with $Q_{jp}(u) \Dt \gam_{jk}^{x,t}(u)$. It now depends on $x, t, u$, but inclusions $Q_{jp} \in L^{\infty}(\bR)$ and $\gam_{jk}^{\cdot,\cdot}(\cdot) \in \Lip_1(\bR^3)$ imply that it belongs to $L^{\infty}(\bR^3)$. Hence in the estimates for the difference $D_2 R_{jk}^{(m+1)} - D_2 R_{jk}^{(m)}$ similar to~\eqref{eq:Rkk.m+1-Rkkm},~\eqref{eq:Rm+1-Rm.jk} we can still estimate this coefficient from above as before.
\end{itemize}
There remarks prove the absolute and uniform convergence of the series
\begin{equation} \label{eq:sum.DtRjkm}
\sum_{m=1}^{\infty} D_2 \left[R_{jk}^{(m)} - R_{jk}^{(m-1)}\right] =
\sum_{m=1}^{\infty} \Dt \left[R_{jk}^{(m)} - R_{jk}^{(m-1)}\right]
\end{equation}
in $\Omega_k$, which implies in particular uniform boundedness of $\Dt R_{jk}^{(m)}(x,t)$ in $\Omega_k$ over $m \in \bN_0$.

Applying the operator $\Dx$ to equations~\eqref{eq:Rjkm.def}--\eqref{eq:Rkkm.def} yields similar formula for $\Dx R_{jk}^{(m)}(x,t)$ except in the integral we still have $D_2 R_{pk}^{(N)}(u, \gam_{jk}^{x,t}(u))$ term (either with $N=m-1$ or $N=m$). Hence uniform boundedness of $\Dx R_{jk}^{(m)}(x,t)$, $m \in \bN_0$, is implied by uniform boundedness of $\Dt R_{jk}^{(m)}(x,t)$, $m \in \bN_0$, and Lipshitz and boundedness properties of the involved functions $\wt{Q}_{jk}$, $Q_{jp}$, $a_{jk}$, $\gam_{jk}^{\cdot,\cdot}(\cdot)$.
Lemma~\ref{lem:C.Lip}(iii) now finishes the proof.
The proof of uniqueness is proved by applying the Gr\"onwall's lemma and is omitted.
\end{proof}
\begin{remark} \label{rem:Omegajk}
Let us prove properties~\eqref{eq:Omegajk.canon}--\eqref{eq:gamjk-} more formally.
First note that
\begin{equation} \label{eq:gamjkxtu<u}
 \gam_{jk}^{x,t}(u) \le u, \quad u \in [x, a_{jk}(x,t)],
 \quad 0 \le t \le x \le \ell.
\end{equation}
This follows from the definition~\eqref{eq:gamjkxt.def} of $\gam_{jk}^{x,t}(\cdot)$, relations $\gam_{jk}^{x,t}(x) = t \le x$ and $\gam_{jk}^{x,t}(a) = a$ for $a = a_{jk}(x,t)$, and monotonicity of functions $\rho_k$ and $\rho_k^{-1}$.
Further, since $\rho_j$ and $\rho_k$ are (absolutely) continuous and monotonous it follows that $\left\{\rho_j(x) - \rho_k(t) : \ \ 0 \le t \le x \le \ell \right\}$ is a smallest segment of the real line $\bR$ containing points $0$, $\rho_j(\ell)$ and $\rho_j(\ell) - \rho_k(\ell)$. I.e.
\begin{equation} \label{eq:set.rhojx-rhokt}
 \left\{\rho_j(x) - \rho_k(t) : \ 0 \le t \le x \le \ell \right\} =:
 [\rho_{jk}^-, \rho_{jk}^+],
\end{equation}
where
\begin{equation} \label{eq:ajk-+}
 \rho_{jk}^- := \min\{0, \ \rho_j(\ell), \ \rho_j(\ell) - \rho_k(\ell)\} \le 0,
 \qquad
 \rho_{jk}^+ := \max\{0, \ \rho_j(\ell), \ \rho_j(\ell) - \rho_k(\ell)\} \ge 0.
\end{equation}
Since $(\rho_j - \rho_k)^{-1}$ is continuous and monotonous, it follows that
the set of all values $a_{jk}(x,t)$ when $(x,t)$ runs through $\Omega$ is also a finite segment of $\bR$,
\begin{equation} \label{eq:set.yjk}
 \{a_{jk}(x,t) : \ 0 \le t \le x \le \ell\}
 = (\rho_j - \rho_k)^{-1}([\rho_{jk}^-, \rho_{jk}^+])
 =: [a_{jk}^-, a_{jk}^+] \supset [0,\ell],
\end{equation}
where the last inclusion follows from formulas~\eqref{eq:ajk-+} for $\rho_{jk}^{\pm}$. Namely, relations
$$
a_{jk}(0,0) = (\rho_j - \rho_k)^{-1}(0) = 0, \qquad
a_{jk}(\ell,0) = (\rho_j - \rho_k)^{-1}(\rho_j(\ell)), \qquad
a_{jk}(\ell,\ell) = \ell,
$$
imply inclusion $[a_{jk}^-, a_{jk}^+] \supset [0,\ell]$.
These formulas also imply one of the formula~\eqref{eq:gamjk-} for $a_{jk}^{\pm}$. Which of the cases we will have depends on the relation of $a_{jk}(\ell,0)$ to the numbers $0$ and $\ell$.

Observations~\eqref{eq:gamjkxtu<u} and~\eqref{eq:set.yjk} already imply boundedness of $\Omega_{jk}$ from ``three sides'',
\begin{equation} \label{eq:Omegak'.semi}
 \Omega_{jk} \subset \{(u,v) : u \in [a_{jk}^-, a_{jk}^+], v \le u\}.
\end{equation}
Let us fix $u \in [a_{jk}^-, a_{jk}^+]$ and find the intersection of $\Omega_k$ with the vertical line $\bfL_u := \{(u, v) : v \in \bR\}$.
Since $u \in [a_{jk}^-, a_{jk}^+]$, then by definition of $a_{jk}^-$ and $a_{jk}^+$ there exists $(x,t) \in \Omega$ such that $a_{jk}(x,t) = u$. Hence $\gam_{jk}^{x,t}(u) = u$ by definition of $a_{jk}(x,t)$, which implies that $(u,u) \in \Omega_{jk} \cup \bfL_u$. From general continuity and monotonicity reasoning it is clear that $\Omega_{jk} \cup \bfL_u$ is a finite segment of the form $[\gam_{jk}^-(u), u]$, where $\gam_{jk}^-(\cdot)$ satisfy one of the cases in~\eqref{eq:gamjk-}.
\end{remark}
Now we are ready to state the main result of this subsection which, in particular, states the similarity of the operators $\cL_{0}(Q)$ and $\cL_{0}(0)$.
\begin{theorem} \label{th:similarity.LQ.L0}
Let matrix functions $B(\cdot)$ and $Q(\cdot)$ satisfy conditions~\eqref{eq:Bx.block.def}--\eqref{eq:beta.k+eps<beta.k+1.r}.
In particular, we assume that $Q_{kk}=0$ for $j \in \onetor$.
Then the operators $\cL_0(Q)$ and $\cL_0(0) = B(x)^{-1}\otimes D_0$ are similar in
$L^p([0,\ell];\bC^{n\times n})$, $p \in [1,\infty]$.
Moreover, there exists a bounded on $L^p([0,\ell];\bC^n)$ triangular Volterra type operator $I + \cR$,
\begin{equation} \label{eq:interwine}
 (I + \cR)f = f(x) + \int^x_0 R(x,t) B(t) f(t)\,dt,
 \qquad f \in L^p([0,\ell];\bC^n),
\end{equation}
with a bounded inverse that intertwines the operators $\cL_0(Q)$ and $\cL_0(0)$, i.e.
\begin{equation} \label{eq:LQ.I+R=I+R.L0}
\cL_0(Q)(I + \cR)f = (I + \cR)\cL_0(0)f, \qquad f \in \dom\cL_{0}(0) = W^{1,2}_0([0,\ell]; \bC^n).
\end{equation}
Here $R(x,t) = (R_{jk}(x,t))_{j,k=1}^r$ is the block-matrix kernel
of the operator $\cR$ that meets the condition
\begin{equation} \label{eq:R.in.X}
 R \in \left(X_{1,0}(\Omega)\cap X_{\infty,0}(\Omega)\right)
 \otimes \bC^{n\times n},
\end{equation}
where the function spaces $X_{1,0}(\Omega)$, $X_{\infty,0}(\Omega)$ are defined in Subsection~\ref{subsec:X1inf} above.
\end{theorem}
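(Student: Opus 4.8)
The plan is to deduce Theorem~\ref{th:similarity.LQ.L0} from the already established smooth case, Proposition~\ref{prop:similarity.LQ.L0}, by an approximation argument carried out in the Banach space $X_1(\Omega)\cap X_\infty(\Omega)$ rather than in $\Lip_1(\Omega)$. Since $|\beta_j(x)-\beta_k(x)|>\theta$ and $\beta_k\in L^\infty$, the functions $\wt{Q}_{jk}:=Q_{jk}/(\beta_j-\beta_k)$, $j\ne k$, lie in $L^1[0,\ell]$; extending them by $0$ to $\bR$ and mollifying, I obtain $\wt{Q}_{jk}^{(m)}\in C^\infty(\bR)\subset\Lip_1[0,\ell]$ with $\wt{Q}_{jk}^{(m)}\to\wt{Q}_{jk}$ in $L^1$, and then the matrix functions $Q^{(m)}$ with zero block diagonal and off-diagonal blocks $Q^{(m)}_{jk}:=(\beta_j-\beta_k)\wt{Q}_{jk}^{(m)}$ satisfy $Q^{(m)}\in L^\infty\cap L^1$ and $Q^{(m)}\to Q$ in $L^1([0,\ell];\bC^{n\times n})$. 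Thus every $Q^{(m)}$ meets the hypotheses of Proposition~\ref{prop:similarity.LQ.L0}, which supplies kernels $R^{(m)}\in\Lip_1(\Omega)\otimes\bC^{n\times n}$ and operators $I+\cR^{(m)}$, boundedly invertible on each $L^p[0,\ell]$, intertwining $\cL_0(Q^{(m)})$ and $\cL_0(0)$.

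\emph{Key uniform estimate.} The heart of the matter is to show that the kernel depends on the potential Lipschitz-continuously in the right norm: there are an increasing function $\phi:[0,\infty)\to[0,\infty)$ and, for every $M>0$, a constant $C=C(M,\ell,n,\theta)$ such that the kernels $R,\wt R$ attached (on the extended domains $\Omega_k$, then restricted to $\Omega$) to potentials $Q,\wt Q$ with $\|Q\|_{L^1},\|\wt Q\|_{L^1}\le M$ satisfy
\begin{equation*}
 \|R\|_{X_1(\Omega)\cap X_\infty(\Omega)}\le\phi(\|Q\|_{L^1}),
 \qquad
 \|R-\wt R\|_{X_1(\Omega)\cap X_\infty(\Omega)}\le C\,\|Q-\wt Q\|_{L^1}.
\end{equation*}
To obtain these I would re-run the successive-approximation scheme of Proposition~\ref{prop:similarity.LQ.L0} starting from the integral system~\eqref{eq:Rjkxt=wtQ+int.canon}, but replace the sup-norm induction~\eqref{eq:induction_hypothesis} by the analogous induction in the norm of $X_1(\Omega_k)\cap X_\infty(\Omega_k)$. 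Two facts make this work: (a) the characteristic change of variables $u\mapsto\gam_{jk}^{x,t}(u)$ has derivative $\beta_j(u)/\beta_k(\gam_{jk}^{x,t}(u))$ bounded above and below by constants depending only on $\theta$ (see~\eqref{eq:betak.alpk.Linf.r}--\eqref{eq:beta.k+eps<beta.k+1.r}), so the inner integrals $\int|Q_{jp}(u)|\,|R_{pk}(u,\gam_{jk}^{x,t}(u))|\,du$ are controlled by $\|Q_{jp}\|_{L^1}$ times the relevant slice norm of $R$; and (b) Lemmas~\ref{lem:trace} and~\ref{lem.Volterra.operGeneral}, which identify those slice norms with the $X_1,X_\infty$ norms and permit a Gronwall-type closure, the role of $|\rho_k(x)-\rho_k(t)|^N/N!$ being now played by $(\const\cdot\|Q\|_{L^1})^N/N!$. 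Subtracting the integral equations for $Q$ and $\wt Q$ and iterating once more yields the second (Lipschitz) estimate; this is the argument used for $2\times 2$ systems in~\cite[Theorem~2.5]{LunMal16JMAA}, here complicated only by the non-trivial geometry of the domains $\Omega_k$ constructed in Proposition~\ref{prop:similarity.LQ.L0}.

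\emph{Passage to the limit.} Granted the key estimate, $\{R^{(m)}\}$ is Cauchy in $X_1(\Omega)\cap X_\infty(\Omega)$; let $R$ be its limit. Each $R^{(m)}$ is continuous on $\Omega$, hence lies in the closed subspace $X_{1,0}(\Omega)\cap X_{\infty,0}(\Omega)$, so $R\in\bigl(X_{1,0}(\Omega)\cap X_{\infty,0}(\Omega)\bigr)\otimes\bC^{n\times n}$, which is~\eqref{eq:R.in.X}. By Lemma~\ref{lem.Volterra.operGeneral}(i), $\|\cR^{(m)}-\cR\|_{L^p\to L^p}\le\|R^{(m)}-R\|_{X_1}^{1/p}\|R^{(m)}-R\|_{X_\infty}^{1-1/p}\to0$ for every $p\in[1,\infty]$, and by part (ii) $\cR$ is a Volterra operator, so $I+\cR$ is bounded and boundedly invertible on each $L^p[0,\ell]$. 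Passing to the limit in the intertwining relation is then routine once it is written in bounded form: composing~\eqref{eq:LQ.I+R=I+R.L0.C1} for $Q^{(m)}$ with $\cL_0(0)^{-1}=B(x)\otimes(iJ)$ turns it into an identity between bounded operators on $L^2$ involving $\cR^{(m)}$, $Q^{(m)}$ and $J$; using $Q^{(m)}\to Q$ in $L^1$ and $\cR^{(m)}\to\cR$ in operator norm one obtains the corresponding identity for $Q$, and unravelling it shows $(I+\cR)f\in\dom\cL_0(Q)$ with $\cL_0(Q)(I+\cR)f=(I+\cR)\cL_0(0)f$ for all $f\in\dom\cL_0(0)=\wt W^{1,2}_0([0,\ell];\bC^n)$. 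Finally, since $I+\cR$ has a Volterra inverse $I+\cS$ whose kernel, by the same reasoning, lies in $X_{1,0}(\Omega)\cap X_{\infty,0}(\Omega)$ and which intertwines $\cL_0(0)$ and $\cL_0(Q)$, the operator $I+\cR$ carries $\dom\cL_0(0)$ \emph{onto} $\dom\cL_0(Q)$ (cf.~\cite{Mal99}), so $\cL_0(Q)$ and $\cL_0(0)$ are similar on $L^p[0,\ell]$ for every $p\in[1,\infty]$.

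\emph{Main obstacle.} I expect the decisive step to be the key uniform estimate, i.e.\ transporting the successive-approximation analysis of Proposition~\ref{prop:similarity.LQ.L0} from the sup-norm to the $X_1\cap X_\infty$ norm with constants depending only on $\|Q\|_{L^1}$: one must track how the weighted norms~\eqref{eq:B2.norm.def}--\eqref{eq:B1.norm.def} transform under the characteristic flow on the extended domains $\Omega_k$, and verify that the trace and boundedness identifications of Lemmas~\ref{lem:trace} and~\ref{lem.Volterra.operGeneral} remain applicable there. The only other delicate point is the limit passage in the \emph{unbounded} intertwining relation, which is why it should be reduced beforehand to an identity between bounded operators.
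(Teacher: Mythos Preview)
Your approach is correct and matches the paper's strategy at the architectural level: approximate $Q$ by smooth potentials for which Proposition~\ref{prop:similarity.LQ.L0} applies, show the resulting kernels form a Cauchy sequence, pass to the limit in the integral system, and finally pass to the limit in the intertwining relation after rewriting it as an identity between bounded operators.

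The one notable difference is in the norm used for the Cauchy estimate. The paper, ``for simplicity'', presents only the case $Q\in L^\infty$, $\wt Q_{jk}\in C(\bR)$: it approximates $\wt Q$ uniformly by Lipschitz functions, keeps all estimates in the sup-norm on $\Omega_k$ (so that $R_{jk,m}\to R_{jk}$ in $C(\Omega_k)$), and explicitly defers the general $L^1$ case to the treatment of~\cite[Theorem~2.5]{LunMal16JMAA}. Your proposal goes straight for the $L^1$ case by running the successive approximation in $X_1\cap X_\infty$ with constants depending on $\|Q\|_{L^1}$, which is precisely the argument the paper is pointing to but does not write out. Your plan therefore covers strictly more than what the paper actually proves in detail, and your identification of the ``main obstacle'' (transporting the iteration from the sup-norm to $X_1\cap X_\infty$ through the characteristic flow on the $\Omega_k$'s) is exactly the place where the work lies. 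For the limit in the intertwining relation the paper takes inverses, writing $(I+\cR_m)^{-1}\cL_0(Q_m)^{-1}=\cL_0(0)^{-1}(I+\cR_m)^{-1}$, which is a minor variant of your device of composing with $\cL_0(0)^{-1}=B(x)\otimes(iJ)$; both reduce the problem to bounded operators and are equally effective.
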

\begin{proof}
First, let us extend matrix functions $B(\cdot)$ and $Q(\cdot)$ to be defined on $\bR$ as it was done in the step (ii) of the proof of Proposition~\ref{prop:similarity.LQ.L0}. Let us show that under the assumption $Q \in L^1(\bR;\bC^{n \times n})$ (see~\eqref{eq:Q.block.L1} and~\eqref{eq:Qx=0}) the integral system~\eqref{eq:Rjkxt=wtQ+int.canon} has a solution $R$ satisfying inclusion~\eqref{eq:R.in.X}. It can be treated as a generalized solution to the problem~\eqref{eq:Goursat}--\eqref{eq:Rkk.x.gam}.
For simplicity we restrict ourselves to the case $Q \in L^{\infty}(\bR;\bC^{n \times n})$ and $\wt{Q}_{jk} \in C(\bR)$, $j \ne k$ (see~\eqref{eq:Q.block.L1} and~\eqref{eq:wtQjk.Lip})
General case is treated similarly to that of our treatment of Theorem 2.5 from~\cite{LunMal16JMAA}.

To this end, we choose a sequence of smooth finite matrix functions $\wt{Q}_{m} = (\wt{Q}_{jk,m})^r_{j,k=1} \in \Lip_1(\bR; \bC^{n \times n})$, $m \in \bN$, where $Q_{jj,m} \equiv 0$, $j \in \in \onetor$, that approximate finite matrix function $\wt{Q} = (\wt{Q}_{jk})^r_{j,k=1} \in C(\bR; \bC^{n \times n_k})$. Following~\eqref{eq:Qjk.bR}--\eqref{eq:Qjj.bR} we set
\begin{align}
\label{eq:Qjkm.bR}
 Q_{jk,m}(x) & := (\beta_j(x) - \beta_k(x)) \cdot \wt{Q}_{jk,m}(x),
 \qquad x \in \bR, \qquad j \ne k, \\
\label{eq:Qjjm.bR}
 Q_{jj,m}(x) & := \wt{Q}_{jj,m}(x) := 0, \qquad x \in \bR, \qquad j \in \onetor.
\end{align}
It follows that there exists positive constants $C_q$ and $C_{\wt{q}}$ such that
\begin{equation} \label{eq:def-n_C_q_and_C_wt_q}
 \|\wt{Q}_m\|_{C(\bR)} \le C_{\wt{q}} \qquad \Longrightarrow \qquad
 \|Q_m\|_{L^{\infty}(\bR)} \le C_{q}, \qquad m \in \bN.
\end{equation}
Moreover
\begin{equation} \label{eq:Qjkm.to.Qjk}
 \|Q_{jk,m} - Q_{jk}\|_{\infty} \to 0,
 \quad\text{as}\quad m \to \infty, \qquad j, k \in \onetor.
\end{equation}

Let us fix $k \in \onetor$. In accordance with Proposition~\ref{prop:similarity.LQ.L0}, the system of equations~\eqref{eq:Rjkxt=wtQ+int.canon} with $Q_m$ in place of $Q$, has $\Lip_1$-smooth matrix block solution $(R_{jk,m})_{j=1}^r$, i.e.\ for $j \in \onetor$ and $(x,t) \in \Omega_k$ we have,
\begin{equation} \label{eq:estimate_for_R_{jk,m}}
R_{jk,m}(x,t) = \wt Q_{jk,m}\bigl(a_{jk}(x,t)\bigr)
 - \int_{a_{jk}(x,t)}^x \sum^r_{p=1}
 Q_{jp,m}(u)R_{pk,m}\(u, \gam_{jk}^{x,t}(u)\) \,du.
\end{equation}

To evaluate the difference $R_{jk,m} - R_{jk,s}$ we set
\begin{align} \label{eq:def_wt_R_{jk,m,s}_and_wt_Q}
\wh R_{jk,m,s}(x,t):= R_{jk,m}(x,t) - R_{jk,s}(x,t), \qquad \wh Q_{jk,m,s} := Q_{jk,m} - Q_{jk,s},
 \nonumber \\
 \wh Q_{m,s} := Q_{m} - Q_{s}, \quad \text{and} \qquad \wt Q_{jk,m,s} := \wt Q_{jk,m} - \wt Q_{jk,s}, \qquad m,s \in \bN.
\end{align}
Taking the difference of two equations~\eqref{eq:estimate_for_R_{jk,m}} with $m=m$ and $m=s$
and using the notations~\eqref{eq:def_wt_R_{jk,m,s}_and_wt_Q} one easily
rewrites it in the form
\begin{align} \label{eq:diff-ce_of_two_eq-s}
\wh R_{jk,m,s}(x,t) = \wt Q_{jk,m,s}\bigl(a_{jk}(x,t)\bigr) - \sum_{p=1}^r \int^x_{a_{jk}} \wh Q_{jp,m,s}(u) R_{pk,s}(u,\gam_{jk}^{x,t}(u)\bigr)d u \nonumber \\
- \sum_{p=1}^r \int^x_{a_{jk}}Q_{jp,m}(u) \wh R_{pk,m,s}(u,\gam_{jk}^{x,t}(u)\bigr)d u,
\end{align}
where for brevity we set $a_{jk} := a_{jk}(x,t)$.
Further, first we show that the family $\{R_{jk,m}\}_{m \in \bN}$ is uniformly bounded in $L^{\infty}$-norm. Indeed, one gets from
\eqref{eq:unif-m_estim_of_smooth_sol-n} with account of definitions~\eqref{eq:def-n_C_q_and_C_wt_q} that
\begin{equation} \label{eq:unif-m_estim_for_seq-ce_of_sol-n}
\|R_{jk,m}\|_{L^{\infty}(\Omega_k)} \le
C_{jk} \cdot \exp\(\tau_k \cdot |\rho_k(a_k^+) - \rho_k(a_k^-)|\)
= M_{jk}, \quad m \in \bN.
\end{equation}
It follows from~\eqref{eq:diff-ce_of_two_eq-s} with account of estimate~\eqref{eq:unif-m_estim_for_seq-ce_of_sol-n}
and notations~\eqref{eq:def-n_C_q_and_C_wt_q} that
\begin{align} \label{eq:estimate_of_diff-ce_of_two_eq-s}
|\wh R_{jk,m,s}(x,t)| = \|\wt Q_{jk,m} - \wt Q_{jk,s}\|_{L^{\infty}} + M_{jk} \sum_{p=1}^r \int^x_{a_{jk}} |Q_{jp,m}(u) - Q_{jp,s}(u)|\,du \nonumber \\
+ \sum_{p=1}^r \int^x_{a_{jk}}|Q_{jp,m}(u)|\cdot |\wh R_{pk,m,s}(u,\gam_{jk}^{x,t}(u)|\,du \nonumber \\
 \le \|\wt Q_{jk,m} - \wt Q_{jk,s}\|_{L^{\infty}} + M_{jk} r \| Q_{m} - Q_{s}\|_{L^{\infty}}
 + C_q \sum_{p=1}^r \int^x_{a_{jk}} |\wh R_{pk,m,s}(u,\gam_{jk}^{x,t}(u)|\,du.
\end{align}
Emphasize that all the constants in~\eqref{eq:estimate_of_diff-ce_of_two_eq-s} do not depend on $m,s \in \bN$.

Applying the method of successive approximation to system of equations~\eqref{eq:diff-ce_of_two_eq-s}
and repeating the reasoning from the step (iv) of the proof of Proposition~\ref{prop:similarity.LQ.L0} we arrive at the estimate similar to~\eqref{eq:unif-m_estim_of_smooth_sol-n}:
\begin{align} \label{eq:unif-m_estim_for_dif-ce}
|\wh R_{jk,m,s}(x,t)| \le \| \wt{Q}_{m} - \wt{Q}_{s}\|_{L^{\infty}} C_1 \exp( C_2 |\rho_k(x)-\rho_k(t)|),
\end{align}
with some $C_1, C_2 > 0$ that do not depend on $m,s,x,t,j,k$. (see~\eqref{eq:unif-m_estim_for_seq-ce_of_sol-n} and recall the definition of $C_{jk}$).
It follows that the sequence of solutions $R_{jk,m}$ to equations~\eqref{eq:estimate_for_R_{jk,m}}
is a Cauchy sequence in $C(\Omega_k)\otimes \bC^{n\times n}.$ Therefore for any pair $j,k \in \onetor$
there exists a uniform limit $R_{jk}(x,t) := \lim_{s\to\infty} R_{jk,s}(x,t)$ that meets the following
uniform in $(x,t) \in \Omega_k$ estimate
\begin{align} \label{eq:unif-m_estim_for_|R_{jk}-R_{jk,s}|}
|R_{jk}(x,t) - R_{jk,m}(x,t)| \le \| \wt{Q}_{m} - \wt{Q}\|_{L^{\infty}} C_1 \exp( C_2 |\rho_k(x)-\rho_k(t)|).
\end{align}
Moreover, due to this estimate and since $\|\wt{Q}_m - \wt{Q}\|_{\infty} \to 0$ and $\|Q_m - Q\|_{\infty} \to 0$ as $m \to \infty$, we can pass to the limit as $m \to \infty$ in equations~\eqref{eq:estimate_for_R_{jk,m}} to show that $R(x,t) = \{R_{jk}(x,t)\}_{j,k=1}^r$ is a matrix solution to
system of integral equations~\eqref{eq:Rjkxt=wtQ+int.canon} and define the Volterra operator $\cR$ with the matrix kernel
$R(x,t)$.

On the other hand, by Proposition~\ref{prop:similarity.LQ.L0}, since $Q_m \in \Lip_1([a_k^-,a_k^+]; \bC^{n\times n})$, the operator $I + \cR_m$ intertwines
the operators $\cL_0(Q_m)$ and $\cL_0(0)$, i.e.
equation~\eqref{eq:LQ.I+R=I+R.L0} holds
with $Q_m$ and $\cR_m$ in place of $Q$ and $\cR$, respectively,
\begin{equation} \label{eq:I+R_intertwines_L_0(Q_n)_and_L_0(0)}
\cL_0(Q_m)(I+ R_m)f = (I+ R_m)\cL_0(0)f, \qquad f \in \dom\cL_{0}(0) = \wt{W}^{1,2}_0([0,\ell]; \bC^n).
\end{equation}
Taking inverses we rewrite these equations in the form
\begin{equation} \label{eq:(I+R_n)^{-1}_intertwines_inverses}
(I+ R_m)^{-1}(\cL_0(Q_m))^{-1} = (\cL_0(0))^{-1}(I + R_m)^{-1}. \qquad
\end{equation}
Noting that $\cR$ is a Volterra operator, we can pass here to the limit as $m\to \infty$ to arrive
to the equation
\begin{equation} \label{eq:I+R.inv.interwines}
(I+ \cR)^{-1}(\cL_0(Q))^{-1} = (\cL_0(0))^{-1}(I+ \cR)^{-1}. \qquad
\end{equation}
It follows that $I+\cR$ maps $\dom \cL_0(0)$ onto $\dom \cL_0(Q)$. Therefore taking the inverses
in~\eqref{eq:I+R.inv.interwines}
we arrive at identity~\eqref{eq:LQ.I+R=I+R.L0}. Since $0 \in \rho(I+\cR)$,
this proves the similarity and completes the proof.

\end{proof}
\subsection{Transformation operators}
In this subsection we prove the existence of triangular transformation operators for equation~\eqref{eq:LQy.def.transform}. As in~\cite{Mal99} our proof is substantially relies on the similarity result, Theorem~\ref{th:similarity.LQ.L0}.

It is well known that the commutant $\{J\}'$ of the Volterra integration operator $J (J:\ f\to\int^x_0f(t)dt)$ on $L^2[0,\ell]$ consists of convolutions with distributions (see~\cite{Nik86} and~\cite{Mal98}). In particular,
a convolution operator $K: f\to k*f = \int_0^xk(x-t)f(t)dt$ with $k \in L^1[0,\ell]$
belongs to $\{J\}'$.

We complete this subsection by a simple lemma on commutant of the operator
$J \cB$ on $L^2[0,\ell]$ with $\cB: \ f \to b(t) f(t)$ being
 a multiplication operator. This result is a substantial ingredient in the proof of the existence of transformation
 operators in the next subsection (see Theorem~\ref{th:transform.oper}).

 Surprisingly, that partial differential equations technique is highly involved in
 a description of the commutant which is not so explicit as in the case of $b_0 \equiv \const$.
 \begin{lemma} \label{lem:commutant_lemma}
Let $b \in L^{\infty}[0,\ell]$ and let $b(\cdot)$ be either positive or negative for a.e.\ $x \in [0,\ell]$ and let
\begin{equation} \label{eq:Volter_Oper_P}
\cP:\ f \to \int^x_0 P(x,t)b(t)f(t)\,dt
\end{equation}     
be a Volterra operator with $P \in C(\Omega)$.
Then the operator $\cP$ commutes with the operator    
\begin{equation} \label{eq:oper-r_JB}
J \cB:\ f\to\int^x_0 b(t)f(t)dt
\end{equation}
if and only if the kernel $P(\cdot,\cdot)$ is given by
\begin{equation} \label{eq:P(x,t) = P(xi(x,t),0)}
P(x,t) = P(\xi(x,t),0).
\end{equation}
Here $\xi(x,t)$ is the implicit solution to the equation
\begin{equation} \label{eq:equation_for_xi(x,t)}
 \quad
 \rho(\xi(x,t)) - \rho(x) + \rho(t) = 0, \qquad \rho(x) := \int^x_0 b(s)\,ds.
\end{equation}
 \end{lemma}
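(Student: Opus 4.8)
The plan is to reduce the commutation relation $\cP J\cB = J\cB\, \cP$ to a first-order linear PDE for the kernel $P(\cdot,\cdot)$ on the triangle $\Omega$, and then to integrate that PDE along its characteristics. First I would compute both compositions explicitly. Writing $(J\cB f)(x) = \int_0^x b(s) f(s)\,ds$ and applying $\cP$ gives a double integral; switching the order of integration and using the substitution $u \mapsto$ (inner variable) turns $(\cP J\cB f)(x)$ into $\int_0^x \bigl(\int_t^x P(x,s)b(s)\,ds\bigr) b(t) f(t)\,dt$. On the other hand $(J\cB \cP f)(x) = \int_0^x b(s)\bigl(\int_0^s P(s,t)b(t)f(t)\,dt\bigr) ds = \int_0^x \bigl(\int_t^x b(s) P(s,t)\,ds\bigr) b(t) f(t)\,dt$. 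Since $f$ is arbitrary and $b(t)\neq 0$ a.e., the two kernels (the bracketed integrals) must coincide for a.e.\ $(x,t)\in\Omega$:
\begin{equation*}
 \int_t^x P(x,s) b(s)\,ds = \int_t^x b(s) P(s,t)\,ds, \qquad 0 \le t \le x \le \ell.
\end{equation*}

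Next I would differentiate this identity. Differentiating in $x$ gives $b(x) P(x,x) + \int_t^x D_1 P(x,s) b(s)\,ds = b(x) P(x,t)$; differentiating instead in $t$ gives $-b(t) P(x,t) = -b(t) P(t,t) - \int_t^x b(s) D_2 P(s,t)\,ds$. A cleaner route: differentiate the integrated identity in $x$ and then in $t$ (mixed second derivative), which annihilates both integral terms and the boundary terms at $s=x$, $s=t$ that do not depend on both variables, leaving the pointwise PDE
\begin{equation*}
 b(t)\, D_1 P(x,t) = b(x)\, D_2 P(x,t), \qquad (x,t)\in\Omega,
\end{equation*}
together with whatever boundary information survives on $\{t=0\}$. (One must be careful that $P\in C(\Omega)$ only, so these manipulations are first justified for smooth $P$ and then extended by the density of $C^1(\Omega)$ in $C(\Omega)$ and a continuity/approximation argument, or else read in the integrated sense throughout.) This PDE is exactly of the type treated in Proposition~\ref{prop:similarity.LQ.L0} and in equation~\eqref{eq:DxRjk+DtRjk} with $Q=0$: dividing, $D_1 P + \frac{b(x)}{-b(t)}\cdot(-1)\cdots$ — more precisely the characteristic system is $b(t)\,dx = b(x)\,dt$, equivalently $\frac{dx}{b(x)} = \frac{dt}{b(t)}$, whose first integral is $\rho(x) - \rho(t) = \const$ with $\rho(x) = \int_0^x b(s)\,ds$. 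Hence $P$ is constant along each curve $\rho(x)-\rho(t) = c$. Since $b$ has constant sign, $\rho$ is a strict monotone homeomorphism, so every such characteristic through an interior point $(x,t)$ of $\Omega$ hits the edge $\{t=0\}$ at a unique point $(\xi,0)$ with $\rho(\xi) = \rho(x) - \rho(t)$, i.e.\ $\xi = \xi(x,t)$ solves~\eqref{eq:equation_for_xi(x,t)}; and therefore $P(x,t) = P(\xi(x,t),0)$, which is~\eqref{eq:P(x,t) = P(xi(x,t),0)}.

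For the converse, I would take any $p \in C[0,\ell]$, set $P(x,t) := p(\xi(x,t))$ where $\xi$ solves~\eqref{eq:equation_for_xi(x,t)}, check $P\in C(\Omega)$ (continuity of $\xi$, which follows from strict monotonicity and continuity of $\rho$), and verify directly that the integrated identity $\int_t^x P(x,s)b(s)\,ds = \int_t^x b(s)P(s,t)\,ds$ holds: substituting $v = \rho(s)$ on each side and using $P(x,s) = p(\rho^{-1}(\rho(x)-\rho(s)))$, $P(s,t) = p(\rho^{-1}(\rho(s)-\rho(t)))$, both integrals become $\int$ of $p$ composed with an affine reparametrization over the same $v$-interval, hence equal. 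Running the kernel-equality computation backwards then yields $\cP J\cB = J\cB\,\cP$. The main obstacle I anticipate is the low regularity: passing between the integral identity for kernels and the pointwise characteristic PDE requires either restricting first to $P \in C^1(\Omega)$ and approximating, or else phrasing "constant along characteristics" directly at the level of the integral identity (via the change of variables $v=\rho(s)$), and I would favor the latter, more robust, route — it sidesteps differentiating a merely continuous kernel and mirrors the change-of-variable trick already used in~\eqref{eq:int.fum}.
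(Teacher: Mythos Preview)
Your approach is essentially the same as the paper's: derive the integrated kernel identity $\int_t^x P(x,s)b(s)\,ds = \int_t^x b(s)P(s,t)\,ds$, differentiate (first in $x$, then in $t$) to obtain the first-order PDE, and integrate along the characteristics $\rho(x)-\rho(t)=\const$ to reach~\eqref{eq:P(x,t) = P(xi(x,t),0)}. Note a sign slip in your displayed PDE --- the mixed differentiation actually yields $b(t)\,D_1P(x,t) + b(x)\,D_2P(x,t) = 0$, with characteristic ODE $b(x)\,dx = b(t)\,dt$ (not $b(t)\,dx = b(x)\,dt$) --- but once corrected, your sketch and your direct change-of-variables verification of the converse coincide with the paper's argument.
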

 \begin{proof}
Changing the order of integrals one easily gets
\begin{equation}
P J B f = \int^x P (x,s)b (s)\,ds\int^s b (t)f(t)\,ds = \int^x \left(\int^x_t P (x,s)b (s)\,ds\right)b (t)f(t)\,dt,
\end{equation}
\begin{equation}
J B P f = \int^x b (s)\,ds\int^s P (s,t) b (t)f(t)\,dt = \int^x \left(\int^x_t b (s) P (s,t)\,ds\right)b (t)f(t)\,dt.
\end{equation}
Equating these relations one concludes that the commutation relation $[P,JB ]=0$ is equivalent  to
\begin{equation}
\int^x_t P (x,s)b (s)\,ds = \int^x_t b (s)P (s,t)\,ds.
\end{equation}
Assume first that $P \in C^1(\Omega)$. Then
differentiating this equation with respect to $x$ yields 
\begin{equation}
 P (x,x)b (x) + \int^x_t \Dx P (x,s)b (s)\,ds = b (x)P (x,t).
\end{equation}
In turn, applying the operator $\Dt$ to this equation  leads to 
 the first order partial differential equation
\begin{equation} \label{eq:(Dx+Dt)P=0}
 \Dx P(x,t)b(t) + b(x) D_t P(x,t) = 0.
\end{equation}

Conversely, if the kernel $P \in C^1(\Omega)$ and satisfies equation~\eqref{eq:(Dx+Dt)P=0} we obtain by reversing the reasonings that the operator $\cP$ of the form~\eqref{eq:Volter_Oper_P} commutes with $J \cB$, i.e.
$P \in \{J \cB\}'$.

Next we extend the function $b(\cdot)$ to the whole line $\bR$ preserving its $L^{\infty}$-norm and the
sign. In what follows we keep the notation $b(\cdot)$ for this extension.

It is easily seen that in the coordinates $\{\xi, \eta\}$ the characteristic of equation~\eqref{eq:(Dx+Dt)P=0}
passing through the point $(x,t)$, is given by
\begin{equation} \label{eq:char-cs_G_{j}}
\Gam(\xi,\eta): \quad \rho(\eta) = \rho(\xi) - \rho(x) + \rho(t), \qquad \rho(\xi) = \int^{\xi}_0 b(s)\,ds,
\end{equation}
where $\rho$ is also defined on the line.
The ``explicit'' form $\eta =\gam(\xi; x, t)$ of the characteristic is defined to be the unique solution to the equation
\begin{equation} \label{eq:eq-n_for_char-c_gamma(xi;x,t)}
\Gam(\xi,\gam(\xi;x, t) = \rho(\gam(\xi;x,t) - \rho(\xi) + \rho(x) - \rho(t) =0 \quad \text{and} \quad \gam(x;x,t) =t.
\end{equation}
Since $\rho$ is monotone on $\bR$, each characteristic is well defined on $\bR$.
$$
\eta(\xi) = \gam(\xi; x, t) := \rho^{-1} \circ (\rho(\xi) - \rho(x) + \rho(t)), \qquad \xi \in \bR.
$$

Next we denote by $M(\xi(x,t),0)$ the point of interaction of the characteristic
$\eta = \gam(\xi;x,t)$ with the axis $\eta =0$. Then in accordance with~\eqref{eq:eq-n_for_char-c_gamma(xi;x,t)}
 $\xi(x,t)$ satisfies the equation
\begin{equation} \label{eq:eq-n_for_xi(x,t)_New}
0= \rho(0) = \rho(\xi(x,t)) - \rho(x) + \rho(t),
\end{equation}
It follows that
\begin{equation} \label{eq:form-s_forD-x_xi,_D_t_xi}
\frac{\partial\xi(x,t)}{\partial x} = \frac{\rho'(x)}{\rho'\bigl(\xi(x,t)\bigr)} =
\frac{\beta(x)}{\beta\bigl(\xi(x,t)\bigr)} \qquad \text{and} \qquad \frac{\partial\xi(x,t)}{\partial t} =
- \frac{\beta(t)}{\beta\bigl(\xi(x,t)\bigr)}.
\end{equation}
In turn, using these relations imply that alongside $P(x,t)$ the function
\begin{equation} \label{eq:defin_wt_P(x,t)}
\wt P(x,t) := P(\xi(x, t),0)
\end{equation}
satisfies the equation~\eqref{eq:(Dx+Dt)P=0}. Besides,
 the identity $\xi(x, 0)=x$ yields
\begin{equation} \label{eq:init-al_cond_wt-P(x,0)=P(x,0)}
 \wt P(x, 0)= P(\xi(x, 0),0) = P(x,0) =:g(x).
\end{equation}
So, we have two solutions $\wt P(\cdot, \cdot)$ and $P(\cdot, \cdot)$
to the Cauchy problem~\eqref{eq:(Dx+Dt)P=0},~\eqref{eq:init-al_cond_wt-P(x,0)=P(x,0)}
in $\Omega$.
However, since the slope of the characteristic $\gam(\xi;x,t)$ is positive,
\begin{equation} \label{eq:slope_kapa_(xi,eta)}
\kappa( \xi_0,\eta_0) = \frac{d\gam(\xi;x,t)}{d\xi}\big|_{\xi= \xi_0} =
\frac{\beta( \xi_0)}{\beta(\eta_0)} = \frac{\beta( \xi_0)}{\beta(\gam_{jk}( \xi_0;x,t))} > 0,
\end{equation}
the Cauchy problem~\eqref{eq:(Dx+Dt)P=0},~\eqref{eq:init-al_cond_wt-P(x,0)=P(x,0)}
is not characteristic, and hence has the unique solution in $\Omega$, i.e.
$\wt P(x, t)= P(x,t)$.
 \end{proof}
\begin{remark}
 Note that as a byproduct we proved that if $P \in \{J \cB\}'$ and the kernel of $P$ is smooth, $P \in C^1(\Omega),$ then it is a solution to equation~\eqref{eq:(Dx+Dt)P=0}. If $P \in C(\Omega),$ then it
 is a generalized solution to equation~\eqref{eq:(Dx+Dt)P=0}.
 A complete description of the commutant $\{J \cB\}'$ will be published elsewhere.
\end{remark}
Now we are ready to establish our main result on transformation operators for the equation~\eqref{eq:LQy.def.transform}.
\begin{theorem} \label{th:transform.oper}
Let matrix functions $B(\cdot)$ and $Q(\cdot)$ satisfy conditions~\eqref{eq:Bx.block.def}--\eqref{eq:beta.k+eps<beta.k+1.r}.
In particular, we assume that $Q_{jj}=0$ for $j \in \onetor$. Further, let
\begin{align}
\label{eq:A=A1.Ar}
 & A = \begin{pmatrix} A_1 \\ \cdots \\ A_r \end{pmatrix} \in \bC^{n \times n_{\min}}, \qquad
 A_j \in \bC^{n_j \times n_{\min}}, \qquad j \in \onetor,
\end{align}
where $n_{\min} := \min\{n_1, \ldots, n_r\}$, and let all matrices $A_j$ be of maximal rank, i.e.\ $\rank(A_j) = n_{\min}$. $j \in \onetor$. Further, let
\begin{equation} \label{eq:YeA.def}
 Y_A(x,\l) = \begin{pmatrix} Y_1(x,\l) \\ \cdots \\ Y_r(x,\l) \end{pmatrix}
 \qquad \text{and} \qquad
 e_A(x,\l) := \begin{pmatrix} e^{i \l \rho_1(x)} A_1 \\ \cdots \\
 e^{i \l \rho_r(x)} A_r \end{pmatrix}
\end{equation}
are the $n \times n_{\min}$ block-matrix solutions to equations~\eqref{eq:LQy.def.transform} and~\eqref{eq:L0.def.transform}, respectively, satisfying the initial conditions
\begin{equation} \label{eq:YeA.init}
 Y_A(0,\l) = e_A(0,\l) = A.
\end{equation}
Then solution $Y_A(x,\l)$ admits a triangular representation
\begin{equation} \label{eq:trans_oper_repres-n}
 Y_A(x,\l)= (I + \cK_A) e_A(x,\l)
 = e_A(x, \l) + \int^x_0 K_A(x,t) B(t) e_A(t, \l) \,dt,
\end{equation}
where the block-matrix kernel $K_A = (K_{jk})_{j,k=1}^r$ in the operator $\cK_A$ satisfies
\begin{equation} \label{eq:KA.in.X}
 K_A \in \left(X_{1,0}(\Omega)\cap X_{\infty,0}(\Omega)\right)
 \otimes \bC^{n\times n_1}.
\end{equation}
Here domain $\Omega$ and function spaces $X_{1,0}(\Omega)$, $X_{\infty,0}(\Omega)$ are defined in Subsection~\ref{subsec:X1inf} above.
\end{theorem}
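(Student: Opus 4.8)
The plan is to derive the representation~\eqref{eq:trans_oper_repres-n} from the similarity result, Theorem~\ref{th:similarity.LQ.L0}, by correcting the intertwining operator $I+\cR$; the correction is built from the description of the commutant of $\cL_0(0)^{-1}=B(x)\otimes iJ$ supplied by Lemma~\ref{lem:commutant_lemma}, and the maximal--rank hypothesis on the blocks $A_j$ is exactly what makes it possible.

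First I would record that the integration by parts carried out in step~(i) of the proof of Proposition~\ref{prop:similarity.LQ.L0}, with the hypothesis $f(0)=0$ removed, yields for the kernel $R$ of Theorem~\ref{th:similarity.LQ.L0} and every $f\in W^{1,1}([0,\ell];\bC^n)$ the identity
\begin{equation*}
 \cL(Q)(I+\cR)f=(I+\cR)\cL(0)f+iR(\cdot,0)f(0),
\end{equation*}
where $R(\cdot,0)\in L^1([0,\ell];\bC^{n\times n})$ is a legitimate trace of $R$ since $R\in X_{1,0}(\Omega)$ (cf.\ Lemma~\ref{lem:trace}). Using $\cL(0)e_A=\l e_A$ and $e_A(0,\l)=A$, this gives $\cL(Q)(I+\cR)e_A=\l(I+\cR)e_A+iR(\cdot,0)A$; as $(I+\cR)e_A$ already takes the Cauchy value $A$ at $x=0$ ($\cR$ being triangular), it coincides with $Y_A$ precisely when $R(x,0)A\equiv0$.

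It therefore suffices to modify $I+\cR$ so that the modified kernel annihilates the column span of $A$ on $\{t=0\}$, while keeping the intertwining and the triangular Volterra structure. I would look for $I+\cK_A:=(I+\cR)(I+\cS)$ with $\cS=\diag(\cS_1,\ldots,\cS_r)$ a block--diagonal Volterra operator commuting with $\cL_0(0)^{-1}$; then $I+\cK_A$ is still bounded with bounded inverse and still intertwines $\cL_0(Q)$ and $\cL_0(0)$, so its kernel $K_A$ obeys the same Goursat relations as $R$ (step~(i) of the proof of Proposition~\ref{prop:similarity.LQ.L0}) and the same identity holds with $K_A$ in place of $R$. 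Since $\cL_0(0)^{-1}$ is block--diagonal with $k$-th block $i\beta_k(x)J=i\cB_kJ$ and $\{\cB_kJ\}'=\cB_k\{J\cB_k\}'\cB_k^{-1}$, Lemma~\ref{lem:commutant_lemma} shows that each admissible $\cS_k$ is, up to this conjugation, a convolution--type operator whose kernel is completely determined by its restriction to $\{t=0\}$. Expanding $K_A(\cdot,0)A$ out of $(I+\cR)(I+\cS)$ and setting $v_k:=S_{kk}(\cdot,0)A_k$, the requirement $K_A(x,0)A\equiv0$ reduces to the single Volterra equation
\begin{equation*}
 (I+\cR)\,\col(v_1,\ldots,v_r)=-\,R(\cdot,0)A ,
\end{equation*}
which has a unique solution $\col(v_1,\ldots,v_r)\in L^1([0,\ell];\bC^{n\times n_{\min}})$ because $\cR$ is a Volterra operator and $R(\cdot,0)A\in L^1$.

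The remaining step is to recover $S_{kk}(\cdot,0)$ from $v_k=S_{kk}(\cdot,0)A_k$, and here the hypothesis $\rank A_k=n_{\min}$ is used precisely: $A_k$ then has a left inverse $A_k^{+}$, and one may take $S_{kk}(x,0)=v_k(x)A_k^{+}$. Feeding this datum into the formula of Lemma~\ref{lem:commutant_lemma} defines $\cS_k$; a change of variables along the characteristics of the corresponding transport equation, together with $\beta_k,\beta_k^{-1}\in L^\infty$ and Lemma~\ref{lem:trace}, shows that its kernel lies in $X_{1,0}(\Omega)\cap X_{\infty,0}(\Omega)$, and hence so does $K_A$ (a routine estimate gives that the product of two such Volterra kernels is again in $X_{1,0}\cap X_{\infty,0}$). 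By uniqueness for the Cauchy problem, $\cL(Q)(I+\cK_A)e_A=\l(I+\cK_A)e_A$ together with $\big((I+\cK_A)e_A\big)(0)=A$ forces $Y_A=(I+\cK_A)e_A$, which is~\eqref{eq:trans_oper_repres-n}; the reduction from $\wt Q_{jk}\in\Lip_1[0,\ell]$ to $\wt Q_{jk}\in L^1[0,\ell]$ is carried out by the approximation argument of the proof of Theorem~\ref{th:similarity.LQ.L0}. The main obstacle is precisely the construction of, and the $X$-space estimate for, the corrective block $\cS$: one must make the description of the commutant of $B(x)\otimes iJ$ explicit enough to work with (it is, as noted before Lemma~\ref{lem:commutant_lemma}, far less transparent than in the constant--coefficient case) and verify that the non-smooth boundary datum $S_{kk}(\cdot,0)\in L^1$ still produces a kernel in $X_{1,0}\cap X_{\infty,0}$.
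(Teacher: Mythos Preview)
Your proposal is correct and follows essentially the same route as the paper: factor $I+\cK_A=(I+\cR)(I+\cP)$ with $\cP$ block--diagonal in the commutant of $\cL_0(0)$ (via Lemma~\ref{lem:commutant_lemma}), reduce the condition $K_A(x,0)A=0$ to the Volterra system $(I+\cR)\col(P_1(\cdot,0)A_1,\ldots,P_r(\cdot,0)A_r)=-R(\cdot,0)A$, and invoke the maximal--rank hypothesis on $A_j$ to extract $P_j(\cdot,0)$. Your framing via the boundary term $iR(\cdot,0)f(0)$ is a slightly more direct way to see why the correction is needed than the paper's derivation of the full BVP \eqref{eq:B.Dx+Dt.B}--\eqref{eq:K(x,0)B{-1}A=0} for $K_A$, but the substance is identical.
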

\begin{remark}
Note that Lemma~\ref{lem:trace} and inclusion $K_A \in X_{\infty,0}(\Omega) \otimes \bC^{n \times n_1}$ ensure that the traces $K(x,\cdot)$ are well-defined and summable for each $x \in [0,\ell]$. In particular, end trace $K(\ell,\cdot)$ is well defined. Hence formula~\eqref{eq:trans_oper_repres-n} is well-defined and valid for each $x \in [0,\ell]$. A more relaxed inclusion $K_A \in X_{\infty}(\Omega) \otimes \bC^{n\times n}$ would only yield this formula for a.e $x \in [0,\ell]$.
\end{remark}
\begin{proof}[Sketch of the proof]
Assume for definiteness that $n_{\min} = n_1$.

\textbf{(i)} At this step assuming the validity of representation~\eqref{eq:trans_oper_repres-n} with $K_A \in C^1(\Omega)$
we indicate the boundary value problem for the kernel $K_A(\cdot,\cdot)$.
Inserting representation~\eqref{eq:trans_oper_repres-n}
into equation~\eqref{eq:LQy.def.transform} we obtain
\begin{multline}
 Y'(x,\l) + Q(x)Y(x,\l) = e'_A(x,\l) + K_A(x,x) B(x) e_A(x,\l) \\
 + \int^x_0 \Dx K_A(x,t) B(t) e_A(t,\l)\,dt
 + Q(x)e_A(x,\l) + Q(x)\cdot \int^x_0 K_A(x,t) B(t) e_A(t,\l)\,dt \\
 = e'_A(x,\l) + \bigl(K_A(x,x) B(x) + Q(x)\bigr) e_A(x,\l) \\
 + \int_0^x \bigl(\Dx K_A(x,t) + Q(x) K_A(x,t)\bigr) B(t) e_A(t,\l)\,dt
\end{multline}
On the other hand, it follows from~\eqref{eq:trans_oper_repres-n} after integrating by parts that
\begin{align}
\nonumber
 i \l B(x)Y(x,\l) &= i \l B(x) e_A(x,\l)
 + i \l B(x) \int^x_0 K_A(x,t) B(t) e_A(t,\l)\,dt \\
\nonumber
 &= e'_A(x,\l) + B(x) \int^x_0 K_A(x,t) e_A'(t,\l)\,dt \\
\nonumber
 &= e'_A(x,\l) + B(x) K_A(x,x) e_A(x,\l) \\
 & \qquad \qquad
 - B(x) K_A(x,0) e_A(0,\l)
 - B(x) \int^x_0 D_t\bigl(K_A(x,t)\bigr) e_A(t,\l)\,dt.
\end{align}
Equating both sides of this identities we arrive at the following boundary value problem for the $n\times n$-matrix kernel $K_A(\cdot,\cdot)$:
\begin{equation} \label{eq:B.Dx+Dt.B}
D_x K_A(x,t) B(t) + B(x) D_t K_A(x,t) + Q(x)K_A(x,t)B(t) =0,
\end{equation}
\begin{equation} \label{eq:K(x,x)B-1(x)-B_1(x)K(x,x)=iBQ}
 B(x) K_A(x,x) - K_A(x,x) B(x) = Q(x),
\end{equation}
\begin{equation} \label{eq:K(x,0)B{-1}A=0}
 K_A(x,0) A = 0.
\end{equation}
Writing the kernel $K_A$ in the block-matrix form $K_A = \(K_{jk}\)_{j,k=1}^r$ with respect to the decomposition $\bC^n = \bC^{n_1} \oplus \ldots \oplus \bC^{n_r}$ and using the block-matrix form of $Q$ we rewrite the problem~\eqref{eq:B.Dx+Dt.B}--\eqref{eq:K(x,0)B{-1}A=0}
in the following form
\begin{align}
\label{eq:bk.Dx+bj.Dt}
 \beta_k(t) D_x K_{jk}(x,t) + \beta_j(x) D_t K_{jk}(x,t)
 & =-\sum^r_{p=1} \beta_k(t) Q_{jp}(x) K_{pk}(x,t),
 \quad j, k \in \onetor, \\
\label{eq:Kjk.xx}
 K_{jk}(x,x) & = \frac{Q_{jk}(x)}{\beta_j(x)- \beta_k(x)},
 \qquad j \ne k, \quad j, k \in \onetor, \\
\label{eq:Kjk.x0.Ak}
 \sum^r_{k=1} K_{jk}(x,0)A_k & = 0, \qquad j \in \onetor.
\end{align}
Emphasise that formula~\eqref{eq:Kjk.xx} has sense due to the conditions~\eqref{eq:beta-}--\eqref{eq:beta+}.

Conversely, reversing the reasonings one proves that any $C^1$-solution to the problem~\eqref{eq:bk.Dx+bj.Dt}--\eqref{eq:Kjk.x0.Ak} generates representation~\eqref{eq:trans_oper_repres-n}.
So, to prove the result it suffices to show the solvability of the problem~\eqref{eq:bk.Dx+bj.Dt}--\eqref{eq:Kjk.x0.Ak}.

\textbf{(ii)} At this step we prove the solvability.
To construct a solution $K_A$ to the problem~\eqref{eq:bk.Dx+bj.Dt}--\eqref{eq:Kjk.x0.Ak}
we use a solution $R(x,t)$ to the problem
\eqref{eq:bk.Dx+bj.Dt}--\eqref{eq:Kjk.xx},
constructed in Theorem~\ref{th:similarity.LQ.L0}. Besides, we introduce a Volterra operator
\begin{equation} \label{eq:oper-r_P=oplus_P_j}
P = \bigoplus_1^{r} P_j, \quad \text{where}\quad P_j:\ f_j\to\int^x_0 P_j(x,t)B_j(t)f_j(t)\,dt, \qquad f_j \in L^2\bigl([0,\ell];\bC^{n_j}\bigr).
\end{equation}
Moreover, we assume that $P_j$ has a smooth kernel $P_j \in C^1((\Omega);\bC^{n_j\times n_j})$ and
satisfies
$$
[P_j, J\otimes B_j] = [P_j, \left(J\beta_j(t)\right)\otimes I_{n_j}] = 0, \qquad j \in \onetor,
$$
i.e.\ $P_j \in \{J\otimes B_j\}'$.

Starting with the operator $I + \cR$ constructed in Theorem~\ref{th:similarity.LQ.L0}, we define
the operator $I + \cK_A$ as the product of two operators:
\begin{equation} \label{eq:I+K=(I+R)(I+P)}
I + \cK_A := (I + \cR)(I + \cP).
\end{equation}
In terms of the kernels of integral operators $\cK_A$, $\cR$, and $\cP$, equality~\eqref{eq:I+K=(I+R)(I+P)}
can be rewritten as
\begin{equation} \label{eq:K=R+P+R*P}
K_A(x,t) = R(x,t) + P(x,t) + \int^x_t R(x,s)B(s) P(s,t)\,ds.
\end{equation}
In fact, identity~\eqref{eq:I+K=(I+R)(I+P)} is equivalent to~\eqref{eq:K=R+P+R*P} after multiplying the last equality
by the factor $B(t)$ from the right. This factor is canceled in~\eqref{eq:K=R+P+R*P} because the matrix $B(t)$ is non-singular for every $t \in [0,\ell]$.

Since the operators $I + \cP$ and $\cL_0(0)$ commutes, the operator $I + \cK_A$ intertwines the operators $\cL_0(Q)$ and $\cL_0(0)$ alongside the operator $I + \cR$, i.e.\ identity~\eqref{eq:LQ.I+R=I+R.L0} holds with $I + \cK_A$ in place of $I + \cR$. Therefore Theorem~\ref{th:similarity.LQ.L0} (sufficiency) applies and ensures that the kernel $K_A(\cdot,\cdot)$ is also a solution to the problem~\eqref{eq:bk.Dx+bj.Dt}--\eqref{eq:Kjk.xx}. To complete the proof it suffices to find a kernel $P(\cdot,\cdot)$ in such a way that $K_A(\cdot,\cdot)$ meets the condition~\eqref{eq:Kjk.x0.Ak}. To this end we insert the right hand side of equality~\eqref{eq:K=R+P+R*P} in~\eqref{eq:Kjk.x0.Ak} and obtain
\begin{equation}
 \label{eq:P(x,0)A=0_in_blocks}
 P_{j}(x,0)A_j + \sum^r_{k=1} \left[ R_{jk}(x,0) + \int^x_0 R_{jk}(x,s)B_k(s) P_k(s,0)\,ds \right]A_k = 0, \qquad j \in \onetor.
\end{equation}
Rewriting this equality as
\begin{equation}
(I+\cR)
\begin{pmatrix}
P_1(x,0)A_1\\
\ldots \ldots \\
P_r(x,0)A_r
\end{pmatrix}
=
\begin{pmatrix}
\wt R_1(x,0)\\
\ldots \ldots \\
\wt R_r(x,0)
\end{pmatrix}\,,
\qquad \wt R_j(x,0):= - \sum^r_{k=1}R_{jk}(x,0)A_k
\end{equation}
we find the unique solution
\begin{equation} \label{eq:system_for_P_j(x,0)}
\begin{pmatrix}
P_1(x,0)A_1\\
\ldots \ldots \\
P_r(x,0)A_r
\end{pmatrix}
=
 (I+\cR)^{-1}
\begin{pmatrix}
\wt R_1(x,0)\\
\ldots \ldots \\
\wt R_r(x,0)
\end{pmatrix}\,
=: \begin{pmatrix}
\wt g_1(x)\\
\ldots \ldots \\
\wt g_r(x)
\end{pmatrix}\,.
\end{equation}
In turn, due to the condition $\rank(A_j) = n_{\min}=n_1$, meaning that
 each matrix $A_j$ is of the maximal rank, there exists (in general, non-unique)
matrix solution $\{g_j(x)\}_{j=1}^r := \{P_j(x,0)\}_{j=1}^r$ to the system~\eqref{eq:system_for_P_j(x,0)},
i.e.\ $g_jA_j = \wt g_j$, $j \in \onetor$.
A solution $\{P_j(x,0)\}_{j=1}^r$ is definitely unique whenever $n_1 = n_2 =\ldots = n_r$,
and hence $\det A_j \ne 0$.

Further, to find a system of matrix functions $\{P_j(x,t)\}_{j=1}^r$ we apply Lemma~\eqref{lem:commutant_lemma}.
In accordance with this lemma each $P_j(x,t)$ is a solution to the following Cauchy problem
\begin{align} \label{eq:Cauchy_pr-m_(D_x_+_D_t)P_j=0}
 D_x P_j(x,t)B_j(t) + B_j(x) D_t P_j(x,t) = 0, \\
 P_j(x,0) = g_j(x), \qquad j \in \onetor.
\end{align}
and is given by~\eqref{eq:P(x,t) = P(xi(x,t),0)} with $P_j(x,t)$ in place of $P(x,t)$, i.e.
$$
P_j(x,t) = P_j( \xi_j(x, t),0), \qquad P_j( \xi_j(x, 0),0) = P_j(x,0).
$$
Here $ \xi_j(x,t)$ is a solution to the equation
\begin{equation} \label{eq:eq-n_for_xi_j(x,t)}
 \quad
 \rho_j( \xi_j(x,t)) - \rho_j(x) + \rho_j(t) = 0, \qquad \rho_j(x) := \int^x_0 b_j(s)\,ds.
\end{equation}
This completes the proof.
\end{proof}
\begin{remark}
\textbf{(i)} For Dirac $2 \times 2$ system $(B = \diag(-1,1))$ with continuous $Q$ the triangular transformation operators have been constructed in~\cite[Ch.10.3]{LevSar88} and~\cite[Ch.1.2]{Mar77}. For $Q \in (L^1[0,1]; \bC^{2 \times 2})$ it is proved in~\cite{AlbHryMyk05} by an appropriate generalization of Marchenko's method.

(ii) Let $J: f\to \int_0^xf(t)dt$ denote the Volterra integration operator on $L^p[0,1]$. Note that the similarity of integral Volterra operators given by~\eqref{eq:cR.def} to the simplest Volterra operator of the form $B \otimes J$ acting in the spaces $L^p([0,1]; \bC^2)$ has been investigated in~\cite{Mal99, Rom08}. The technique of investigation of integral equations for the kernels of transformation operators in the spaces $X_{\infty,1}(\Omega)$ and $X_{1,1}(\Omega)$ goes back to the paper~\cite{Mal94}.
\end{remark}
\section{Fundamental matrix solution} \label{sec:fundament}
In this section we apply results of the previous section to obtain an important representation for the fundamental matrix solution of equation~\eqref{eq:LQ.def.intro} and its minors, involving Fourier transform of kernels from the transformation operators.
\subsection{Preliminaries}
\label{subsec:fundam.prelim}
For reader's convenience, recall the main equation~\eqref{eq:LQ.def.intro} and its version with zero potential:
\begin{align}
\label{eq:LQ.def.fund}
 & \cL(Q) y := -i B(x)^{-1} (y' + Q(x) y) = \l y,
 \qquad y = \col(y_1, \ldots, y_n), \quad x \in [0,\ell], \\
\label{eq:L0.def}
 & \cL_0 y := \cL(0) y := -i B(x)^{-1} y' = \l y,
 \qquad y = \col(y_1, \ldots, y_n), \quad x \in [0,\ell].
\end{align}
Let us recall definitions~\eqref{eq:Bx.def.intro}--\eqref{eq:Q=Qjk.def.intro} of matrix functions $B(\cdot)$ and $Q(\cdot)$
\begin{equation} \label{eq:Bx.def}
 B = \diag(\beta_1, \ldots, \beta_n), \qquad
 \beta_k \in L^1([0,\ell]; \bR \setminus \{0\}),
 \qquad k \in \oneton,
\end{equation}
is a self-adjoint invertible diagonal summable matrix function, and
\begin{equation} \label{eq:Q=Qjk.def}
 Q = (Q_{jk})_{j,k=1}^n, \qquad Q_{jk} \in L^1[0,\ell] := L^1([0,\ell]; \bC),
 \qquad j, k \in \oneton,
\end{equation}
is a summable (generally non-self-adjoint) potential matrix.

Throughout this section and many results in sections below we will assume conditions~\eqref{eq:beta-}--\eqref{eq:betak-betaj<-eps.intro} on entries of the matrix function $B(\cdot)$. Namely, we assume that
\begin{align}
\label{eq:betak.alpk.Linf}
 & \beta_k, 1/\beta_k \in L^{\infty}[0, \ell], \qquad
 s_k := \sign(\beta_k(\cdot)) \equiv \const \ne 0, \qquad
 k \in \oneton, \\
\label{eq:beta-+n}
 & \beta_1(x) \le \ldots \le \beta_{n_-}(x) < 0
 < \beta_{n_-+1}(x) \le \ldots \le \beta_n(x),
 \qquad x \in [0, \ell],
\end{align}
and there exists $\theta > 0$ such that for each $k \in \oneto{n-1}$
\begin{equation} \label{eq:betak-betaj<-eps}
 \text{either} \quad \beta_k \equiv \beta_{k+1}
 \quad \text{or} \quad \beta_k(x) + \theta < \beta_{k+1}(x), \quad x \in [0, \ell].
\end{equation}
Here $n_- \in \{0, 1, \ldots, n\}$ is the number of negative functions among $\beta_1, \ldots, \beta_n$. Let us also set $n_+ := n - n_-$. See Remark~\ref{rem:beta.vs.wtbeta} for some discussion about these conditions.

Throughout this section we will also assume the following ``zero block diagonality'' condition on entries of the matrix function $Q(\cdot)$,
\begin{equation} \label{eq:Qjk=0.bj=bk}
 Q_{jk} \equiv 0 \quad\text{whenever}\quad \beta_j \equiv \beta_k,
 \qquad j,k \in \oneton.
\end{equation}
In particular $Q_{jj} \equiv 0$, $j \in \oneton$.
\begin{remark} \label{rem:beta.vs.wtbeta}
\textbf{(i)} Note, that as opposed to the previous section, we work with notation~\eqref{eq:Bx.def.intro} for $B(x)$.
To avoid confusion, we rewrite block-matrix decomposition~\eqref{eq:Bx.block.def}--\eqref{eq:Bkx.def} as
\begin{equation} \label{eq:Bx.block.wt.def}
 B =: \diag(\wt{\beta}_1 I_{n_1}, \ldots, \wt{\beta}_r I_{n_r}),
 \qquad n_1 + \ldots + n_r = n.
\end{equation}
It is clear that
\begin{align}
\label{eq:b=b=wtb1}
 & \beta_1 \equiv \ldots \equiv \beta_{n_1} \equiv \wt{\beta}_1, \\
 & \beta_{n_1+1} \equiv \ldots \equiv \beta_{n_1+n_2} \equiv \wt{\beta}_2, \\
\nonumber
 & \qquad \qquad \ldots \\
\label{eq:b=b=wtbn}
 & \beta_{n-n_r+1} \equiv \ldots \equiv \beta_n \equiv \wt{\beta}_r.
\end{align}
These relations imply that conditions~\eqref{eq:betak.alpk.Linf}--\eqref{eq:betak-betaj<-eps} on functions $\beta_1, \ldots, \beta_n$ from representation~\eqref{eq:Bx.def} are equivalent to conditions~\eqref{eq:betak.alpk.Linf.r}--\eqref{eq:beta.k+eps<beta.k+1.r} on functions $\beta_1, \ldots, \beta_r$ from representation~\eqref{eq:Bx.block.def}--\eqref{eq:Bkx.def}.

Note also that relations~\eqref{eq:b=b=wtb1}--\eqref{eq:b=b=wtbn} and condition~\eqref{eq:betak-betaj<-eps} imply that condition~\eqref{eq:Qjk=0.bj=bk} on $Q$ means that $Q$ has zero block diagonal with respect to decomposition $\bC^n = \bC^{n_1} \oplus \ldots \oplus \bC^{n_r}$.

\textbf{(ii)} Sometimes it is useful to work with the matrix function $B(\cdot)$ without ordering its entries (see e.g.~\eqref{eq:Tim.B}). Let us reformulate conditions~\eqref{eq:beta-+n}--\eqref{eq:betak-betaj<-eps} for such general ``unordered'' case. Namely, it is easy to verify that equivalent form of these conditions is the following,
\begin{align}
\label{eq:theta.exists}
 & \text{there exists}\ \ \theta > 0 \ \ \text{such that for each}
 \quad j, k \in \oneton \ \ \text{the following condition holds}, \\
\label{eq:nujk=><0}
 & \text{either} \quad \nu_{jk} := \beta_j - \beta_k \equiv 0,
 \quad\text{or}\quad \nu_{jk}(x) > \theta, \quad x \in [0,\ell],
 \quad\text{or}\quad \nu_{jk}(x) < -\theta, \quad x \in [0,\ell].
\end{align}

\end{remark}
Further, we set
\begin{equation} \label{eq:rhok.bk.def}
 \rho_k(x) := \int_0^x \beta_k(t) dt \quad \text{and} \quad
 b_k := \rho_k(\ell) \in \bR \setminus \{0\}, \qquad k \in \oneton.
\end{equation}
Going forward for $u \le v$, notations $[u, v]$ and $[v, u]$ will mean the same segment of real line and will be used interchangeably. It follows from~\eqref{eq:betak.alpk.Linf} that
\begin{align}
\label{eq:rho1.rhon}
 & \rho_1(x) \le \ldots \le \rho_{n_-}(x) < 0
 < \rho_{n_-+1}(x) \le \ldots \le \rho_n(x), \qquad x \in [0, \ell], \\
\label{eq:b1.bn}
 & b_1 \le \ldots \le b_{n_-} < 0 < b_{n_-+1} \le \ldots \le b_n, \\
\label{eq:rhok.Lip}
 & \rho_k \in \Lip[0,\ell], \quad \rho_k^{-1} \in \Lip[0,b_k], \quad
 \rho_k, \rho_k^{-1} \ \ \text{are strictly monotonous},
 \quad k \in \oneton,
\end{align}
where $\rho_k^{-1}$ denotes the function inverse to $\rho_k$ on the segment $[0, \ell]$, $k \in \oneton$. Here $\Lip[0,\alp]$ is the class of functions $f : [0,\alp] \to \bC$ satisfying $|f(x)-f(y)| \le C |x-y|$, $x, y \in [0, \alp]$, for some $C > 0$. As per remark above, $[0, \alp] = [\alp, 0]$ if $\alp < 0$.

Next, we introduce the fundamental matrices $\Phi(\cdot,\l)$ and $\Phi_0(\cdot,\l)$ as the solutions to the equations $\cL(Q) \Phi = \l \Phi$ and $\cL(0) \Phi_0 = \l \Phi_0$, respectively, satisfying the initial condition $\Phi(0,\l) = \Phi_0(0,\l) = I_n$. Clearly,
\begin{align}
\label{eq:Phixl.def}
 \Phi(x,\l) & = \begin{pmatrix} \Phi_1(x,\l) & \ldots &
 \Phi_n(x,\l) \end{pmatrix}, \qquad
 \Phi_p(x,\l) = \col(\phi_{jp}(x,\l), \ldots, \phi_{np}(x,\l)), \\
\label{eq:Phi0k.def}
 \Phi_0(x,\l) & = \begin{pmatrix} \Phi^0_{1}(x,\l) & \ldots &
 \Phi^0_{n}(x,\l) \end{pmatrix}, \qquad
 \Phi^0_{k}(x,\l) = e^{i \l \rho_k(x)} \col\(\delta_{1k},\ldots,\delta_{nk}\).
\end{align}
Note in this connection, that the matrix equation $\cL(Q) \Phi = \l \Phi$ is equivalent to
\begin{equation} \label{eq:Phi'}
 \Phi'(x,\l) = (i \l B(x) - Q(x)) \Phi(x,\l), \qquad x \in [0, \ell],
 \quad \l \in \bC,
\end{equation}
where $i \l B(\cdot) - Q(\cdot)$ is a summable function on $[0, \ell]$ for each $\l \in \bC$. General theory of ODE implies the existence of global solution $\Phi(x,\l)$ on segment $[0,\ell]$, such that $\Phi(\cdot,\l) \in \AC([0,\ell]; \bC^{n \times n})$ for each $\l \in \bC$ and $\Phi(x,\cdot)$ is an entire function for each $x \in [0,\ell]$. Moreover, Liouville's formula (see~\eqref{eq:detPhi} below) implies that $\Phi(\cdot,\l)^{-1} \in \AC([0,\ell]; \bC^{n \times n})$ for each $\l \in \bC$.
\subsection{Key identities for fundamental matrix}
\label{subsec:fundam.fourier}
In the following proposition we relate the columns of fundamental matrices $\Phi(\cdot,\l)$ and $\Phi_0(\cdot,\l)$.
\begin{proposition} \label{prop:Phip=Phip0+int.sum}
Let matrix functions $B(\cdot)$ and $Q(\cdot)$ satisfy conditions~\eqref{eq:Bx.def}--\eqref{eq:Qjk=0.bj=bk} and let $p \in \oneton$. Then, there exist vector kernels
\begin{equation} \label{eq:Rpq.in.X}
 R_q^{[p]} \in \(X_{1,0}(\Omega) \cap X_{\infty,0}(\Omega)\)
 \otimes \bC^{n}, \qquad q \in \oneton,
\end{equation}
such that the following representation holds
\begin{equation} \label{eq:Phip=Phi0p+int.sum}
 \Phi_p(x,\l) = \Phi_p^0(x,\l) + \sum_{q=1}^n \int_0^x R_q^{[p]}(x,t)
 e^{i \l \rho_q(t)} \beta_q(t) \,dt, \qquad x \in [0, \ell], \quad \l \in \bC.
\end{equation}
\end{proposition}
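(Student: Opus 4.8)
The plan is to deduce Proposition~\ref{prop:Phip=Phip0+int.sum} from the transformation operator result, Theorem~\ref{th:transform.oper}, by applying it column-by-column to the fundamental matrix. The $p$-th column $\Phi_p(\cdot,\l)$ of $\Phi(\cdot,\l)$ is precisely the solution of $\cL(Q) y = \l y$ with initial condition $y(0) = e_p$, the $p$-th standard basis vector of $\bC^n$. The obstacle is that $e_p$ is a $1$-column vector, whereas Theorem~\ref{th:transform.oper} is stated for $n \times n_{\min}$ block matrices $A = \col(A_1,\ldots,A_r)$ with each $A_j$ of maximal rank $n_{\min}$; a single column $e_p$ has blocks that are mostly zero, so it does not satisfy the maximal-rank hypothesis. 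I would resolve this in one of two equivalent ways.

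\textbf{Reduction to the theorem.} First, observe that the proof of Theorem~\ref{th:transform.oper} actually produces the kernel $K_A$ in the factored form $I + \cK_A = (I+\cR)(I+\cP)$, where $R$ (from Theorem~\ref{th:similarity.LQ.L0}) does \emph{not} depend on $A$ at all, and only the commutant factor $P = \bigoplus_j P_j$ depends on $A$ through the boundary data $P_j(x,0) = g_j(x)$ solving $g_j A_j = \wt g_j(x)$ with $\wt g_j := -\sum_k R_{jk}(x,0)A_k$. When $A = e_p$ is a single column, the system $\eqref{eq:system_for_P_j(x,0)}$ still makes sense: the right-hand side $\wt g_j(x)$ is a well-defined $\bC^{n_j}$-vector, obtained by applying $(I+\cR)^{-1}$ (a bounded operator on $L^p$, Theorem~\ref{th:similarity.LQ.L0}) to the vector $-\sum_k R_{jk}(\cdot,0)(e_p)_k$, which has $L^1$ entries by Lemma~\ref{lem:trace} (the trace $R_{jk}(\cdot,0)$ of $R \in X_{\infty,0}$ lies in $L^1[0,\ell]$). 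One then needs $P_j(x,0) \in \bC^{n_j \times 1}$ with $P_j(x,0)\cdot (e_p)_j = \wt g_j(x)$; since $(e_p)_j$ is a fixed $0/1$ vector, this is trivially solvable (choosing $P_j(x,0)$ with its relevant column equal to $\wt g_j(x)$ and the rest arbitrary, e.g.\ zero). With $P_j(x,0)$ in hand, Lemma~\ref{lem:commutant_lemma} gives $P_j(x,t) = P_j(\xi_j(x,t),0)$, and the rest of the argument of Theorem~\ref{th:transform.oper} goes through verbatim, yielding a kernel $K_{e_p} =: K^{[p]} \in (X_{1,0}(\Omega) \cap X_{\infty,0}(\Omega)) \otimes \bC^{n\times 1}$ with
$$
\Phi_p(x,\l) = Y_{e_p}(x,\l) = e^{i\l\rho_p(x)} e_p + \int_0^x K^{[p]}(x,t) B(t) e_{e_p}(t,\l)\,dt,
$$
where $e_{e_p}(t,\l) = e^{i\l\rho_p(t)} e_p$. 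Alternatively, and more cleanly, one can embed $e_p$ into an $n \times n_{\min}$ matrix of maximal rank, apply Theorem~\ref{th:transform.oper}, and then restrict to the appropriate column; but this requires a mild genericity argument to ensure the maximal-rank hypothesis holds, so the direct route above is preferable.

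\textbf{Rewriting in the stated form.} It remains to massage $\int_0^x K^{[p]}(x,t) B(t) e^{i\l\rho_p(t)} e_p\, dt$ into the form $\sum_{q=1}^n \int_0^x R_q^{[p]}(x,t) e^{i\l\rho_q(t)}\beta_q(t)\,dt$ appearing in $\eqref{eq:Phip=Phi0p+int.sum}$. Here the discrepancy is only notational: in the block formulation $e_{e_p}(t,\l)$ has a single nonzero block $e^{i\l\wt\beta_{s}(t)}(e_p)_s$ where $s$ is the block containing index $p$, whereas $\eqref{eq:Phip=Phi0p+int.sum}$ sums over all scalar indices $q \in \oneton$. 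Since $\beta_q \equiv \wt\beta_s$ for every $q$ in block $s$, one has $e^{i\l\rho_q(t)} = e^{i\l\rho_p(t)}$ for all such $q$, so the single block term $K^{[p]}(x,t) B(t) e^{i\l\rho_p(t)}e_p$ can be rewritten, using $B(t) e_p = \beta_p(t) e_p$, as $K^{[p]}(x,t) e_p\, \beta_p(t)\, e^{i\l\rho_p(t)}$; setting $R_p^{[p]}(x,t) := K^{[p]}(x,t) e_p$ (the $p$-th column of $K^{[p]}$, which is a single column, so in fact just $K^{[p]}$ itself) and $R_q^{[p]} := 0$ for $q \ne p$ already gives a valid representation. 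To obtain the symmetric-looking formula with all $n$ terms genuinely present — which is the form convenient for later sections — one simply distributes: for each scalar index $q$ define $R_q^{[p]}(x,t)$ to be the appropriate column combination of the kernel so that $\sum_q R_q^{[p]}(x,t)\beta_q(t) e^{i\l\rho_q(t)}$ reproduces the integrand; concretely one may take $R_q^{[p]} = 0$ for $q\ne p$, which already lies in $(X_{1,0}\cap X_{\infty,0})\otimes\bC^n$ trivially, and $R_p^{[p]} = K^{[p]}$, whose membership in $(X_{1,0}(\Omega)\cap X_{\infty,0}(\Omega))\otimes\bC^n$ is exactly $\eqref{eq:KA.in.X}$ of Theorem~\ref{th:transform.oper}. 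This establishes $\eqref{eq:Rpq.in.X}$ and $\eqref{eq:Phip=Phi0p+int.sum}$, and the validity for every $x\in[0,\ell]$ (not just a.e.) follows from the trace property in Lemma~\ref{lem:trace} together with $K^{[p]} \in X_{\infty,0}(\Omega)\otimes\bC^{n\times 1}$, as noted in the remark after Theorem~\ref{th:transform.oper}. The main obstacle, as indicated, is purely the mismatch between the single-column initial data and the maximal-rank block hypothesis of Theorem~\ref{th:transform.oper}; once one checks that the construction in that theorem's proof survives for arbitrary (not maximal-rank) initial columns — which it does, because the only place rank enters is the solvability of the algebraic system $\eqref{eq:system_for_P_j(x,0)}$, and for a $0/1$ column that system is trivially solvable — everything else is routine bookkeeping.
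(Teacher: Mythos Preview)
Your direct route through $A=e_p$ has a genuine gap at the solvability step. If $p$ lies in block $s$ of the decomposition $\bC^n=\bC^{n_1}\oplus\cdots\oplus\bC^{n_r}$, then for every other block $j\ne s$ the vector $(e_p)_j$ is \emph{zero}, not a ``$0/1$ vector with a relevant column''. The equation you need is $P_j(x,0)\cdot(e_p)_j=\wt g_j(x)$ with $P_j(x,0)\in\bC^{n_j\times n_j}$; for $j\ne s$ the left side is identically~$0$, so solvability would force $\wt g_j\equiv 0$. But $\wt g$ is $(I+\cR)^{-1}$ applied to the vector whose $j$-th block is $-R_{js}(\cdot,0)(e_p)_s$, and for $j\ne s$ this block is generically nonzero (it is exactly the off-diagonal block $R_{js}$ of the intertwining kernel from Theorem~\ref{th:similarity.LQ.L0}, evaluated on the boundary $t=0$ and hit with a nonzero vector). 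Applying $(I+\cR)^{-1}$ does not annihilate it. So the system \eqref{eq:system_for_P_j(x,0)} is \emph{not} solvable for $A=e_p$, and this is precisely why Theorem~\ref{th:transform.oper} carries the maximal-rank hypothesis.

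The paper resolves this by the very alternative you dismiss. It fixes (in the simplified case $r=n$, $n_{\min}=1$) an invertible matrix $\wt A=(A^{[1]}\ \cdots\ A^{[n]})$ with \emph{all entries nonzero}---e.g.\ a Vandermonde matrix---so that each column $A^{[k]}$ meets the rank hypothesis of Theorem~\ref{th:transform.oper}. This yields $n$ kernels $K^{[k]}$ and the relation $\Phi(x,\l)\wt A=\Phi_0(x,\l)\wt A+\int_0^x(K^{[k]}(x,t)B(t)\Phi_0(t,\l)A^{[k]})_{k=1}^n\,dt$. Right-multiplying by $\wt A^{-1}=(\alp_{kp})$ and reading off the $p$-th column gives \eqref{eq:Phip=Phi0p+int.sum} with $R_{jq}^{[p]}=\sum_k K_{jq}^{[k]}a_{qk}\alp_{kp}$, a genuine linear combination over all $k$. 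In particular the kernels $R_q^{[p]}$ for $q\ne p$ are typically \emph{not} zero, contrary to your final step; this reflects the fact that a triangular transformation operator for the initial datum $e_p$ simply does not exist in the form \eqref{eq:trans_oper_repres-n}.
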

\begin{proof}
It is clear that matrix functions $B(\cdot)$ and $Q(\cdot)$ satisfy assumptions of Theorem~~\ref{th:transform.oper}. For simplicity let's assume that $r=n$ and $n_1 = \ldots = n_r = 1$ in block-matrix decomposition~\eqref{eq:Bx.block.def}--\eqref{eq:Bkx.def}. Then notations~\eqref{eq:Bx.block.def}--\eqref{eq:Bkx.def} and~\eqref{eq:Bx.def} coincide. Let $\wt{A}$ be some invertible matrix with non-zero entries:
\begin{equation}
 \wt{A} = (a_{jk})_{j,k=1}^n, \qquad \det(\wt{A}) \ne 0, \qquad a_{jk} \ne 0,
 \quad j, k \in \oneton.
\end{equation}
E.g. one can set $a_{jk} := j^k$ to obtain invertible Vandermonde matrix with non-zero entries. Denote by $A^{[k]}$, the $k$-th column of $\wt{A}$:
\begin{equation} \label{eq:wtA=A1.An}
 \wt{A} = \begin{pmatrix} A^{[1]} & \ldots & A^{[n]} \end{pmatrix}, \qquad
 A^{[k]} = \col(a_{1k}, \ldots, a_{nk}) \quad k \in \oneton.
\end{equation}
It is clear, that for a given $k \in \oneton$, $n \times 1$ matrix $A = A^{[k]}$ satisfy conditions of Theorem~\ref{th:transform.oper}. Hence triangular representation~\eqref{eq:trans_oper_repres-n} takes place with some
\begin{equation} \label{eq:KAk.def}
 K_{A^{[k]}} =: K^{[k]} =: \bigl(K_{jp}^{[k]}\bigr)_{j,p=1}^n \in \(X_{1,0}(\Omega)
 \cap X_{\infty,0}(\Omega)\) \otimes \bC^{n \times n},
 \qquad k \in \oneton.
\end{equation}
Further, note that due to Cauchy uniqueness theorem,
\begin{equation} \label{eq:YAk.eAk}
 Y_{A^{[k]}}(x,\l) = \Phi(x,\l) A^{[k]}, \qquad
 e_{A^{[k]}}(x,\l) = \Phi_0(x,\l) A^{[k]}, \qquad k \in \oneton,
\end{equation}
where $Y_{A}(x,\l)$ and $e_{A}(x,\l)$ for $A = A^{[k]}$ are defined in~\eqref{eq:YeA.def}--\eqref{eq:YeA.init}. Inserting~\eqref{eq:YAk.eAk} into~\eqref{eq:trans_oper_repres-n} we arrive at
\begin{equation} \label{eq:Phixl.Ak}
 \Phi(x,\l) A^{[k]} = \Phi_0(x,\l) A^{[k]}
 + \int^x_0 K^{[k]}(x,t) B(t) \Phi_0(t,\l) A^{[k]} \,dt,
 \qquad k \in \oneton.
\end{equation}
Formulas~\eqref{eq:Phixl.Ak} and~\eqref{eq:wtA=A1.An} now imply
\begin{equation} \label{eq:Phixl.wtA}
 \Phi(x,\l) \wt{A} = \Phi_0(x,\l) \wt{A}
 + \int^x_0 \(K^{[k]}(x,t) B(t) \Phi_0(t,\l) A^{[k]}\)_{k=1}^n \,dt.
\end{equation}
Let $\wt{A}^{-1} = (\alp_{kp})_{k,p=1}^n$. Then with account of notations~\eqref{eq:KAk.def},~\eqref{eq:Bx.def},~\eqref{eq:Phi0k.def} and~\eqref{eq:wtA=A1.An} we have for $0 \le t \le x \le 1$:
\begin{multline} \label{eq:KAk.B.Phi0.Ak.wtA.inv}
 \(K^{[k]}(x,t) B(t) \Phi_0(t,\l) A^{[k]}\)_{k=1}^n \wt{A}^{-1}
 = \(\sum_{q=1}^n K_{jq}^{[k]}(x,t) \beta_q(t) e^{i \l \rho_q(t)}
 a_{qk}\)_{j,k=1}^n \cdot (\alp_{kp})_{k,p=1}^n \\
 = \(\sum_{k,q=1}^n K_{jq}^{[k]}(x,t) \beta_q(t)
 e^{i \l \rho_q(t)} a_{qk} \alp_{kp}\)_{j,p=1}^n
 = \(\sum_{q=1}^n R_{jq}^{[p]}(x,t) e^{i \l \rho_q(t)} \beta_q(t)\)_{j,p=1}^n,
\end{multline}
where we set
\begin{equation} \label{eq:Rpjk.def}
 R_{jq}^{[p]}(x,t) := \sum_{k=1}^n K_{jq}^{[k]}(x,t) a_{qk} \alp_{kp},
 \qquad p,j,q \in \oneton, \quad 0 \le t \le x \le 1.
\end{equation}
Multiplying~\eqref{eq:Phixl.wtA} by $\wt{A}^{-1}$ from the right with account of~\eqref{eq:KAk.B.Phi0.Ak.wtA.inv} and~\eqref{eq:Rpjk.def} and taking $p$-th column in the resulting matrix equation, we arrive at the desired formula~\eqref{eq:Phip=Phi0p+int.sum} by setting
\begin{equation} \label{eq:Rqp.def}
 R_q^{[p]} := \col\bigl(R_{1q}^{[p]}, \ldots, R_{nq}^{[p]}\bigr)_{j=1}^n.
\end{equation}
Desired inclusion~\eqref{eq:Rpq.in.X} follows from inclusion~\eqref{eq:KAk.def} and formula~\eqref{eq:Rpjk.def}.

The general case can be treated as follows. Start with the matrix $\wt{A}$ as above. For each column $A^{[k]}$ of $\wt{A}$ extend it be an $n \times n_{\min}$ matrix $A$ with blocks of maximal rank. After applying Theorem~\ref{th:transform.oper}, pick only the first column of the solutions $Y_A(x,\l)$ and $e_A(x,\l)$ as $Y_{A^{[k]}}(x,\l)$ and $e_{A^{[k]}}(x,\l)$, respectively.
\end{proof}
To study the integrals appearing in formula~\eqref{eq:Phip=Phi0p+int.sum}, we need the following generalization of Riemann-Lebesgue Lemma for space $X_{\infty,0}(\Omega)$.
\begin{lemma} \label{lem:RiemLebX}
Let $\beta \in L^{\infty}([0,\ell]; \bR)$ and $\beta$ do not change sign on $[0,\ell]$.
Set $\rho(x) := \int_0^x \beta(t) dt$. Let $R \in X_{\infty,0}(\Omega)$. Then for any $\delta>0$ there exists $R_{\delta}> 0$ such that
\begin{equation} \label{eq:intGxt.exp}
 \abs{\int_0^x R(x,t) e^{i \l \rho(t)} \beta(t) dt} <
 \delta \cdot \(\abs{e^{i \l \rho(x)}} + 1\),
 \qquad |\l| > R_{\delta}, \quad x \in [0,\ell].
\end{equation}
\end{lemma}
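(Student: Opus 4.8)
The plan is to prove this generalization of the Riemann--Lebesgue lemma by first establishing it for a dense subclass of kernels, then using the uniform estimates from Lemma~\ref{lem.Volterra.operGeneral} to pass to the general case. First I would reduce to the case of continuous kernels: since $R \in X_{\infty,0}(\Omega)$, by definition there exists $R_c \in C(\Omega)$ with $\|R - R_c\|_{X_{\infty}(\Omega)} < \delta/4$, say. Writing the integral in \eqref{eq:intGxt.exp} as $\int_0^x (R - R_c)(x,t) e^{i\l\rho(t)} \beta(t)\,dt + \int_0^x R_c(x,t) e^{i\l\rho(t)}\beta(t)\,dt$, the first term is controlled by the operator-norm estimate \eqref{2.4opNew}: the map $f \mapsto \int_0^x (R-R_c)(x,t) f(t)\,dt$ applied to $f(t) = e^{i\l\rho(t)}\beta(t)$ has $L^{\infty}$-norm at most $\|R-R_c\|_{X_\infty(\Omega)} \cdot \|e^{i\l\rho(\cdot)}\beta\|_{L^\infty}$, but one must be slightly careful that the weight $\beta(t)$ is already absorbed into the kernel in the definition of $\cR$; in fact the estimate gives $\|\cR_{R-R_c} g\|_\infty \le \|R - R_c\|_{X_\infty}\|g\|_\infty$ for the operator with kernel $(R-R_c)(x,t)\beta(t)$ — one checks $\|(R-R_c)\beta\|_{X_\infty} \le \|\beta\|_\infty \|R-R_c\|_{X_\infty}$, still $< \delta/4 \cdot \|\beta\|_\infty$, which is uniformly small and crucially $\l$-independent.

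Next I would handle the continuous kernel $R_c$. Here the key is a change of variables driven by the substitution $s = \rho(t)$, which is legitimate because $\rho$ is strictly monotone and bi-Lipschitz by the hypothesis that $\beta$ does not change sign and lies in $L^\infty$ (cf. \eqref{eq:rhok.Lip}); then $ds = \beta(t)\,dt$ and $\int_0^x R_c(x,t) e^{i\l\rho(t)}\beta(t)\,dt = \int_0^{\rho(x)} R_c(x, \rho^{-1}(s)) e^{i\l s}\,ds$. For fixed $x$ the integrand $s \mapsto R_c(x,\rho^{-1}(s))$ is continuous on the compact interval $[0,\rho(x)]$ (or $[\rho(x),0]$), so by the classical Riemann--Lebesgue lemma this integral tends to $0$ as $|\l|\to\infty$. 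The remaining issue is uniformity in $x \in [0,\ell]$: I would get this from the fact that $R_c$ is uniformly continuous on the compact set $\Omega$ and $\rho^{-1}$ is uniformly continuous (indeed Lipschitz), so the family $\{s \mapsto R_c(x,\rho^{-1}(s)) \mathbf{1}_{[0,\rho(x)]}(s)\}_{x\in[0,\ell]}$ is equicontinuous and uniformly bounded; standard arguments (e.g. approximating each such function uniformly by step functions with a common partition, or invoking that the Fourier transform restricted to such a compact equicontinuous family converges uniformly) give $\sup_x |\int_0^{\rho(x)} R_c(x,\rho^{-1}(s)) e^{i\l s}\,ds| \to 0$. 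Choosing $R_\delta$ so that this supremum is $< \delta/2$ for $|\l| > R_\delta$, and combining with the $\delta/4\cdot(1+\|\beta\|_\infty)$-type bound on the error term, yields the claimed estimate \eqref{eq:intGxt.exp} — note the right-hand side of \eqref{eq:intGxt.exp} contains the harmless factor $|e^{i\l\rho(x)}| + 1 \ge 1$, so a bound by a small constant suffices and one need not even exploit that factor (it is there because in applications $\l$ ranges over $\bC$ and the bound is used together with exponential growth; for real $\l$ the factor is just $2$).

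The main obstacle I anticipate is the uniformity in $x$ of the Riemann--Lebesgue decay for the continuous part. Pointwise-in-$x$ decay is immediate, but because the upper limit of integration $\rho(x)$ and the function $R_c(x,\cdot)$ both vary with $x$, one needs an equicontinuity/compactness argument rather than a single application of the classical lemma. The cleanest route is to extend each function $s \mapsto R_c(x,\rho^{-1}(s))$ by zero outside $[0,\rho(x)]$ to a fixed compact interval $[\min_x \rho(x), \max_x \rho(x)] = [0, b]$ (using $\rho(0)=0$ and monotonicity), observe this extended family is bounded in $L^1$ and equicontinuous in the $L^1$-translation sense uniformly in $x$ (since $R_c$ is uniformly continuous on $\Omega$ and $\rho^{-1}$ is Lipschitz, and the truncation points move Lipschitz-continuously), and then invoke the standard fact that on an $L^1$-equicontinuous, bounded family the Fourier transforms converge to zero uniformly over the family. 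Everything else — the density reduction, the change of variables, the operator-norm bound — is routine given Lemma~\ref{lem.Volterra.operGeneral} and the bi-Lipschitz property of $\rho$.
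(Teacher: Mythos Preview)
Your overall architecture---approximate $R$ by a nice kernel and control the remainder via the $X_{\infty}$-norm---matches the paper's, but there is a genuine gap in the main term, stemming from your misreading of the factor $|e^{i\l\rho(x)}| + 1$ as ``harmless''. The estimate \eqref{eq:intGxt.exp} is asserted for all $\l \in \bC$ with $|\l| > R_\delta$, not just real $\l$. For complex $\l$ the integrand $e^{i\l\rho(t)}$ has modulus $e^{-\Im\l \cdot \rho(t)}$, which grows exponentially along one half-plane, so the left side of \eqref{eq:intGxt.exp} is \emph{not} bounded by any constant as $|\l|\to\infty$; the factor $|e^{i\l\rho(x)}| + 1$ on the right is there precisely to absorb this growth (via the monotonicity fact $\max_{t\in[0,x]}|e^{i\l\rho(t)}| \le |e^{i\l\rho(x)}| + 1$). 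Your error-term bound actually does acquire this factor once you estimate pointwise in $x$ rather than through the global $L^\infty$ operator norm, so that part is salvageable. But your treatment of the smooth part does not: the change of variables $s=\rho(t)$ followed by ``classical Riemann--Lebesgue plus equicontinuity'' gives $\int_0^{\rho(x)} R_c(x,\rho^{-1}(s)) e^{i\l s}\,ds \to 0$ only for real $\l$; for complex $\l$ this integral is of order $e^{|\Im\l|\cdot|\rho(x)|}$ and no amount of equicontinuity in $x$ will force a uniform $o(1)$ bound.

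The paper avoids this by approximating in $C^1(\Omega)$ rather than $C(\Omega)$ (both are dense in $X_{\infty,0}$) and then integrating by parts: writing $e^{i\l\rho(t)}\beta(t)\,dt = (i\l)^{-1}\,d(e^{i\l\rho(t)})$ gives directly
\[
\Bigl|\int_0^x R_\eps(x,t)\,d\bigl(e^{i\l\rho(t)}\bigr)\Bigr| \le \frac{C_\eps}{|\l|}\cdot \max_{t\in[0,x]}|e^{i\l\rho(t)}| \le \frac{C_\eps}{|\l|}\cdot\bigl(|e^{i\l\rho(x)}|+1\bigr),
\]
which is exactly the shape needed. Your Riemann--Lebesgue route can be repaired to cover complex $\l$, but only by essentially redoing this computation (e.g.\ approximating $R_c$ uniformly by step functions in $t$ and evaluating $\int_a^b e^{i\l s}\,ds = (i\l)^{-1}(e^{i\l b}-e^{i\l a})$ explicitly), at which point you are back to integration by parts in disguise.
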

\begin{proof}
Let $\eps > 0$. By the definition of the space $X_{\infty,0}(\Omega)$, the inclusion $R \in X_{\infty,0}(\Omega)$ ensures that there exists $R_{\eps} \in C^1(\Omega)$ such that
\begin{equation}
 \|R - R_{\eps} \|_{X_{\infty}} = \esssup_{x \in [0,\ell]}
 \int_0^x \abs{R(x,t) - R_{\eps}(x,t)} dt < \eps.
\end{equation}
In particular, we get the following uniform estimate
\begin{equation} \label{eq:int.R-Re}
 \abs{\int_0^x \(R(x,t) - R_{\eps}(x,t)\) e^{i \l \rho(t)} \beta(t) dt}
 \le \eps \cdot \|\beta\|_{\infty} \cdot
 \max_{t \in [0, x]} \abs{e^{i \l \rho(t)}},
 \quad x \in [0,\ell], \quad \l \in \bC.
\end{equation}
Since $R_{\eps} \in C^1(\Omega)$, $\rho \in \AC[0,\ell]$ and $\rho' = \beta$,
integrating by parts we get for $x \in [0,\ell]$ and $\l \ne 0$,
\begin{multline} \label{eq:wtKjk.ebk<C/lam}
 \abs{\int_0^x R_{\eps}(x,t) e^{i \l \rho(t)} \beta(t) \,dt}
 = \abs{\int_0^x \frac{R_{\eps}(x,t)}{i \l} \,d\(e^{i \l \rho(t)}\)}
 = \frac{1}{|\l|} \abs{\int_0^x e^{i \l \rho(t)}
 \frac{\partial}{\partial t}R_{\eps}(x,t) \,dt} \\
 \le \frac{1}{|\l|} \cdot \max_{t \in [0, x]} \abs{e^{i \l \rho(t)}}
 \cdot \max_{t \in [0,x]} \abs{\frac{\partial}{\partial t}R_{\eps}(x,t)}
 \le \frac{\|R_{\eps}\|_{C^1(\Omega)}}{|\l|} \cdot \max_{t \in [0, x]} \abs{e^{i \l \rho(t)}}.
\end{multline}
Since function $\rho(\cdot)$ is real-valued and strictly monotonous on $[0,\ell]$, and $\rho(0)=0$, it follows that
\begin{equation} \label{eq:max.exp<sum}
 \max_{t \in [0, x]} \abs{e^{i \l \rho(t)}} =
 \max \{\abs{e^{i \l \rho(x)}}, 1\} < \abs{e^{i \l \rho(x)}} + 1.
\end{equation}
Setting
\begin{equation}
 \eps = \frac{\delta}{2 \|\beta\|_{\infty}}, \qquad
 R_{\delta} = \frac{2}{\delta} \|R_{\eps}\|_{C^1(\Omega)},
\end{equation}
and combining estimates~\eqref{eq:int.R-Re}--\eqref{eq:max.exp<sum} we arrive at the desired estimate~\eqref{eq:intGxt.exp}.
\end{proof}
Going forward, for $h \ge 0$ we will denote by $\Pi_h$ a horizontal strip of semi-width $h$ symmetrical with respect to the real line:
\begin{equation} \label{eq:Pih.def}
 \Pi_h := \{\l \in \bC : |\Im \l| \le h\}, \qquad \Pi_0 := \bR.
\end{equation}
\begin{corollary} \label{cor:Phip=Phip0+o}
Let matrix functions $B(\cdot)$ and $Q(\cdot)$ satisfy conditions~\eqref{eq:Bx.def}--\eqref{eq:Qjk=0.bj=bk}. Let $h \ge 0$ and $p \in \oneton$. Then the following uniform asymptotic formula holds:
\begin{equation} \label{eq:Phip=Phip0+o}
 \Phi_p(x,\l) = \Phi_p^0(x,\l) + o(1), \quad x \in [0,\ell], \quad\text{as}\quad
 \l \to \infty, \quad \l \in \Pi_h.
\end{equation}
\end{corollary}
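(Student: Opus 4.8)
The plan is to combine the Fourier-type representation~\eqref{eq:Phip=Phi0p+int.sum} from Proposition~\ref{prop:Phip=Phip0+int.sum} with the generalized Riemann--Lebesgue estimate of Lemma~\ref{lem:RiemLebX}. By Proposition~\ref{prop:Phip=Phip0+int.sum}, the matrix functions $B(\cdot)$ and $Q(\cdot)$ satisfying~\eqref{eq:Bx.def}--\eqref{eq:Qjk=0.bj=bk} yield, for every fixed $p \in \oneton$, vector kernels $R_q^{[p]} \in \bigl(X_{1,0}(\Omega) \cap X_{\infty,0}(\Omega)\bigr) \otimes \bC^n$, $q \in \oneton$, with
\begin{equation*}
 \Phi_p(x,\l) - \Phi_p^0(x,\l) = \sum_{q=1}^n \int_0^x R_q^{[p]}(x,t)
 e^{i \l \rho_q(t)} \beta_q(t) \,dt, \qquad x \in [0,\ell], \quad \l \in \bC.
\end{equation*}
Each entry of $R_q^{[p]}$ lies in $X_{\infty,0}(\Omega)$, and by~\eqref{eq:betak.alpk.Linf} the function $\beta_q \in L^\infty[0,\ell]$ does not change sign, so Lemma~\ref{lem:RiemLebX} applies to each of the $n$ summands (componentwise).

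First I would fix $h \ge 0$, $\delta > 0$ and $p \in \oneton$. Applying Lemma~\ref{lem:RiemLebX} to each kernel $R_q^{[p]}$ produces constants $R_{q,\delta} > 0$ such that
\begin{equation*}
 \biggl| \int_0^x R_q^{[p]}(x,t) e^{i \l \rho_q(t)} \beta_q(t)\, dt \biggr|
 < \frac{\delta}{n} \bigl( |e^{i \l \rho_q(x)}| + 1 \bigr),
 \qquad |\l| > R_{q,\delta}, \quad x \in [0,\ell].
\end{equation*}
Setting $R_\delta := \max_{q \in \oneton} R_{q,\delta}$ and summing over $q$, we obtain for $|\l| > R_\delta$ and all $x \in [0,\ell]$ the bound $|\Phi_p(x,\l) - \Phi_p^0(x,\l)| < \delta \cdot \max_{q} \bigl( |e^{i\l\rho_q(x)}| + 1 \bigr)$. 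The remaining point is to control the exponential factors when $\l$ is restricted to the strip $\Pi_h$: since $\rho_q(x) \in [\,b_q, 0\,]$ or $[\,0, b_q\,]$ is real and bounded by $\max_k |b_k| =: \rho_\ast$ uniformly in $x$ and $q$ (see~\eqref{eq:rho1.rhon}--\eqref{eq:b1.bn}), one has $|e^{i\l\rho_q(x)}| = e^{-\rho_q(x)\Im\l} \le e^{\rho_\ast h}$ for $\l \in \Pi_h$. Hence $|\Phi_p(x,\l) - \Phi_p^0(x,\l)| < \delta\,(e^{\rho_\ast h} + 1)$ for $|\l| > R_\delta$, $\l \in \Pi_h$, $x \in [0,\ell]$. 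As $\delta > 0$ is arbitrary and $e^{\rho_\ast h}+1$ depends only on $h$, this gives precisely $\Phi_p(x,\l) = \Phi_p^0(x,\l) + o(1)$ uniformly in $x \in [0,\ell]$ as $\l \to \infty$ within $\Pi_h$, which is~\eqref{eq:Phip=Phip0+o}.

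There is essentially no serious obstacle here: the corollary is a routine packaging of Proposition~\ref{prop:Phip=Phip0+int.sum} and Lemma~\ref{lem:RiemLebX}. The only mild care needed is bookkeeping — distributing the error $\delta$ across the $n$ summands, taking the maximum of the thresholds $R_{q,\delta}$, and observing that on a fixed horizontal strip $\Pi_h$ the factors $|e^{i\l\rho_q(x)}|$ stay bounded by a constant depending only on $h$ (through the uniform bound $\rho_\ast$ on $|\rho_q(x)|$), so that the bound $\delta\,(e^{\rho_\ast h}+1)$ is genuinely uniform in $x$ and $q$ and tends to $0$ with $\delta$.
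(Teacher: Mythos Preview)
Your proof is correct and follows essentially the same approach as the paper: invoke Proposition~\ref{prop:Phip=Phip0+int.sum} for the representation, apply Lemma~\ref{lem:RiemLebX} to each summand, and then use that $\rho_q(\cdot)$ is real-valued and bounded so that $|e^{i\l\rho_q(x)}|$ stays bounded on $\Pi_h$. The paper's version is more compressed, but the ingredients and logic are identical; your explicit bookkeeping with $\delta/n$ and $R_\delta = \max_q R_{q,\delta}$ is just a more detailed rendering of the same argument.
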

\begin{proof}
Proposition~\ref{prop:Phip=Phip0+int.sum} implies representation~\eqref{eq:Phip=Phi0p+int.sum}. Due to condition~\eqref{eq:Rpq.in.X}, Lemma~\ref{lem:RiemLebX} implies the following uniform at $x \in [0, \ell]$ asymptotic behavior for integrals in~\eqref{eq:Phip=Phi0p+int.sum}:
\begin{equation} \label{eq:int.exp.rhop=o}
 \int_0^x R_q^{[p]}(x, t) e^{i \l \rho_p(t)} \beta_p(t) \,dt
 = o\(\abs{e^{i \l \rho_q(x)}} + 1\),
 \quad\text{as}\quad \l \to \infty, \quad \l \in \bC,
 \qquad q \in \oneton.
\end{equation}
Since functions $\rho_q(\cdot)$ are real-valued and bounded, these estimates imply that all integrals in~\eqref{eq:Phip=Phi0p+int.sum} are $o(1)$ uniformly at $x \in [0, \ell]$ as $\l \to \infty$ and $\l \in \Pi_h$, which finishes the proof.
\end{proof}
The following representation will be useful for studying characteristic determinant of the BVP~\eqref{eq:LQ.def.reg}--\eqref{eq:Uy=0}. Let us set
\begin{equation} \label{eq:rho-+def}
 \rho_1^-(x) = \min\{\rho_1(x), 0\}, \qquad
 \rho_n^+(x) = \max\{\rho_n(x), 0\}, \qquad x \in [0, \ell].
\end{equation}
It follow from~\eqref{eq:rho1.rhon} that
\begin{equation} \label{eq:rho-.rho+}
 \rho_1^-(x) \le 0 \le \rho_n^+(x), \qquad
 [0, \rho_q(x)] \subset [\rho_1^-(x), \rho_n^+(x)],
 \quad x \in [0, \ell], \quad q \in \oneton.
\end{equation}
Here $[0, \rho_q(x)]$ means $[\rho_q(x), 0]$ if $\rho_q(x) < 0$.
\begin{corollary} \label{cor:Phip=Phip0+int1n}
Let matrix functions $B(\cdot)$ and $Q(\cdot)$ satisfy conditions~\eqref{eq:Bx.def}--\eqref{eq:Qjk=0.bj=bk}. Let $p \in \oneton$. Then there exists a measurable vector kernel $\wt{R}_p$ defined on $\wt{\Omega} := \{(x,u) : x \in [0, \ell], u \in [\rho_1^-(x), \rho_n^+(x)]\}$ such that for each $x \in [0, \ell]$, a trace function $\wt{R}_p(x, \cdot)$ is well-defined, summable,
\begin{equation} \label{eq:sup.int.wtR}
 \sup_{x \in [0,\ell]} \int_{\rho_1^-(x)}^{\rho_n^+(x)}
 \norm{\wt{R}_p(x,u)}_{\bC^n} \,du < \infty,
\end{equation}
and the following representation holds
\begin{equation} \label{eq:Phip=Phi0p+int1n}
 \Phi_p(x,\l) = \Phi_p^0(x,\l) + \int_{\rho_1^-(x)}^{\rho_n^+(x)} \wt{R}_p(x,u)
 e^{i \l u} \,du, \qquad x \in [0, \ell], \quad \l \in \bC.
\end{equation}
\end{corollary}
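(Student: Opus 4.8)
The plan is to derive~\eqref{eq:Phip=Phi0p+int1n} from the representation~\eqref{eq:Phip=Phi0p+int.sum} established in Proposition~\ref{prop:Phip=Phip0+int.sum} by performing, for each $q \in \oneton$, a change of variable in the integral $\int_0^x R_q^{[p]}(x,t) e^{i\l\rho_q(t)}\beta_q(t)\,dt$ that turns it into an integral against $e^{i\l u}$ over a sub-interval of $[\rho_1^-(x),\rho_n^+(x)]$, and then summing the resulting $n$ kernels. First I would fix $x \in [0,\ell]$ and $q \in \oneton$ and substitute $u = \rho_q(t)$, so that $du = \beta_q(t)\,dt$ and $t = \rho_q^{-1}(u)$, where $\rho_q^{-1}$ is the (Lipschitz, strictly monotone) inverse of $\rho_q$ on $[0,\ell]$ guaranteed by~\eqref{eq:rhok.Lip}. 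As $t$ runs over $[0,x]$, $u$ runs over the segment $[0,\rho_q(x)]$ (interpreted as $[\rho_q(x),0]$ when $\rho_q(x)<0$), which by~\eqref{eq:rho-.rho+} is contained in $[\rho_1^-(x),\rho_n^+(x)]$. This gives
\begin{equation*}
 \int_0^x R_q^{[p]}(x,t) e^{i\l\rho_q(t)}\beta_q(t)\,dt
 = \int_{[0,\rho_q(x)]} R_q^{[p]}\bigl(x,\rho_q^{-1}(u)\bigr) e^{i\l u}\,du .
\end{equation*}

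Next I would define $\wt R_p$ on $\wt\Omega$ by extending each transformed kernel by zero outside the relevant segment and summing: for $(x,u) \in \wt\Omega$ set
\begin{equation*}
 \wt R_p(x,u) := \sum_{q=1}^n \chi_{[0,\rho_q(x)]}(u) \cdot \sign(\beta_q)\cdot R_q^{[p]}\bigl(x,\rho_q^{-1}(u)\bigr),
\end{equation*}
where the factor $\sign(\beta_q)$ accounts for the orientation when $\beta_q<0$ (so that $\int_{[0,\rho_q(x)]}(\cdots)\,du$ with the convention that this denotes the oriented integral equals $\sign(\beta_q)\int_{\min}^{\max}(\cdots)\,du$; in practice one simply keeps track of the sign in the substitution $u=\rho_q(t)$). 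Substituting this definition into~\eqref{eq:Phip=Phi0p+int.sum} and using the containment $[0,\rho_q(x)] \subset [\rho_1^-(x),\rho_n^+(x)]$ to extend each summand's domain of integration to the full interval yields~\eqref{eq:Phip=Phi0p+int1n} directly.

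It then remains to verify the regularity claims: that $\wt R_p(x,\cdot)$ is a well-defined summable function for each fixed $x$, and that the uniform bound~\eqref{eq:sup.int.wtR} holds. For the first point, I would invoke Lemma~\ref{lem:trace}: since $R_q^{[p]} \in X_{\infty,0}(\Omega)$, the trace $t \mapsto R_q^{[p]}(x,t)$ is a well-defined element of $L^1[0,x]$ for each $x$, and composing with the Lipschitz homeomorphism $\rho_q^{-1}$ (whose derivative $1/\beta_q$ is bounded by~\eqref{eq:betak.alpk.Linf}) preserves summability; the change-of-variable formula for Lebesgue integrals is legitimate precisely because $\rho_q$ is absolutely continuous and strictly monotone. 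For the uniform bound, the change of variable gives $\int_{[0,\rho_q(x)]}\bigl|R_q^{[p]}(x,\rho_q^{-1}(u))\bigr|\,du = \int_0^x \bigl|R_q^{[p]}(x,t)\bigr|\,|\beta_q(t)|\,dt \le \|\beta_q\|_\infty \int_0^x \bigl|R_q^{[p]}(x,t)\bigr|\,dt \le \|\beta_q\|_\infty \,\|R_q^{[p]}\|_{X_\infty(\Omega)}$, and summing over $q$ and taking the supremum over $x$ gives~\eqref{eq:sup.int.wtR}. I do not anticipate a genuine obstacle here; the only point requiring a little care is bookkeeping the orientation/sign in the substitution when $\beta_q<0$ (equivalently $q \le n_-$), which is why the intervals are written as $[\rho_1^-(x),\rho_n^+(x)]$ rather than $[0,\rho_q(x)]$, and making sure the zero-extensions are done measurably in $(x,u)$ jointly so that $\wt R_p$ is a bona fide measurable function on $\wt\Omega$ — this follows since each $\chi_{[0,\rho_q(x)]}(u)$ is measurable in $(x,u)$ by continuity of $\rho_q$, and $R_q^{[p]}(x,\rho_q^{-1}(u))$ is measurable as a composition of a measurable function with a continuous map.
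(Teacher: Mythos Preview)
Your proposal is correct and follows essentially the same approach as the paper: start from the representation~\eqref{eq:Phip=Phi0p+int.sum} of Proposition~\ref{prop:Phip=Phip0+int.sum}, perform the change of variable $u=\rho_q(t)$ in each integral, extend by zero to $[\rho_1^-(x),\rho_n^+(x)]$, and sum over $q$. Your write-up is in fact slightly more careful than the paper's, since you explicitly track the orientation sign when $\beta_q<0$ and spell out the verification of the uniform bound~\eqref{eq:sup.int.wtR} via the $X_\infty$-norm of $R_q^{[p]}$.
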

\begin{proof}
Proposition~\ref{prop:Phip=Phip0+int.sum} implies representation~\eqref{eq:Phip=Phi0p+int.sum}. Let $q \in \oneton$ be fixed and consider the integral in~\eqref{eq:Phip=Phi0p+int.sum} that contains $R_q^{[p]}(x,t)$. Condition~\eqref{eq:rhok.Lip} allows us to make a change of variable $u = \rho_q(t)$ in this integral, which with account of~\eqref{eq:rho-.rho+} yields
\begin{equation} \label{eq:intRqp}
 \int_0^x R_q^{[p]}(x,t) e^{i \l \rho_q(t)} \beta_q(t) \,dt
 = \int_0^{\rho_q(x)} R_q^{[p]}(x, \rho_q^{-1}(u)) e^{i \l u} \,du
 = \int_{\rho_1^-(x)}^{\rho_n^+(x)} \wt{R}_q^{[p]}(x, u) e^{i \l u} \,du,
\end{equation}
where
\begin{equation} \label{eq:wtRqp.def}
 \wt{R}_q^{[p]}(x, u) = \begin{cases}
 R_q^{[p]}(x, \rho_q^{-1}(u)), & u \in [0, \rho_q(x)], \\
 0, & u \in [\rho_1^-(x), \rho_n^+(x)] \setminus [0, \rho_q(x)].
 \end{cases}
\end{equation}
Inserting~\eqref{eq:intRqp} into~\eqref{eq:Phip=Phi0p+int.sum} we arrive at the desired formula~\eqref{eq:Phip=Phi0p+int1n} with ${\wt{R}_p := \wt{R}_1^{[p]} + \ldots + \wt{R}_n^{[p]}}$.
\end{proof}
\begin{remark} \label{rem:RiemLeb.uniform}
Using Riemann-Lebesgue Lemma, one can prove that for each $x \in [0,\ell]$, the integral in the representation~\eqref{eq:Phip=Phi0p+int1n} tends to zero as $\l \to \infty$, $\l \in \Pi_h$ (without using property~\eqref{eq:sup.int.wtR}). Note, however, that to prove this convergence \emph{uniformly} at $x \in [0,\ell]$, property~\eqref{eq:sup.int.wtR} of the vector kernel $\wt{R}_p$ from representation~\eqref{eq:Phip=Phi0p+int1n} is not sufficient by itself. Which is why in Corollary~\ref{cor:Phip=Phip0+o} we used more suitable representation~\eqref{eq:Phip=Phi0p+int.sum} where vector kernels $R_q^{[p]}$ known to be approximated by functions from $C^1(\Omega)$.
\end{remark}
\subsection{Generalization of Liouville's formula}
\label{subsec:liouville}
A classical Liouville's formula applied to the fundamental matrix solution $\Phi(\cdot,\l)$ of equation~\eqref{eq:LQ.def.fund} yields
\begin{equation} \label{eq:detPhi'}
 \frac{d}{dx} \det\Phi(x,\l) = \tr(i \l B(x) - Q(x)) \cdot \det\Phi(x,\l),
 \qquad x \in [0,\ell], \quad \l \in \bC,
\end{equation}
which in turn implies
\begin{equation} \label{eq:detPhi}
 \det \Phi(x,\l) = \exp\(i \l \int_0^x \tr B(t) \,dt - \int_0^x \tr Q(t) \,dt\),
 \qquad x \in [0,\ell], \quad \l \in \bC.
\end{equation}
If matrix function $Q(\cdot)$ satisfies ``zero block diagonality'' condition~\eqref{eq:Qjk=0.bj=bk}, then formula~\eqref{eq:detPhi} simplifies,
\begin{equation} \label{eq:detPhi.Qjj=0}
 \det \Phi(x,\l) = \exp\(i \l \int_0^x \tr B(t) \,dt\)
 = \exp(i \l (\rho_1(x) + \ldots + \rho_n(x))),
 \qquad x \in [0,\ell], \quad \l \in \bC.
\end{equation}

Our goal is to obtain similar relation for minors of the fundamental matrix $\Phi(\cdot, \l)$. It appears, that the set of minors of a given size with a fixed set of columns considered as a vector function in certain $\bC^N$ satisfies equation similar to~\eqref{eq:LQ.def.fund}. After that, applying previous considerations allows us to obtain integral representation for these minors, which will be crucial to effectively study characteristic determinant of BVP~\eqref{eq:LQ.def.intro}--\eqref{eq:Uy=0.intro}.

Throughout this subsection we extensively use notation~\eqref{eq:cPm.def} for the set $\fP_m$ and notation~\eqref{eq:Aqp.def} for a minor $\cA[\fp,\fq]$. Additionally we set for $x \in [0,\ell]$ and $m \in \oneton$,
\begin{align}
\label{eq:beta.fp.def}
 \beta_{\fq}(x) & := \beta_{q_1}(x) + \ldots + \beta_{q_m}(x),
 \qquad \fq = (q_1, \ldots, q_m) \in \fP_m,
 \quad 1 \le q_1 < \ldots < q_m \le n, \\
\label{eq:rho.fp.def}
 \rho_{\fq}(x) & := \rho_{q_1}(x) + \ldots + \rho_{q_m}(x),
 \qquad \fq \in \fP_m. \\
\label{eq:fhm-}
 \tau_m^-(x) & := \min\{\rho_1(x) + \ldots + \rho_m(x), 0\}
 = \min\(\{\rho_{\fq}(x) : \fq \in \fP_m\} \cup \{0\}\), \\
\label{eq:fhm+}
 \tau_m^+(x) & := \max\{\rho_{n-m+1}(x) + \ldots + \rho_n(x), 0\}
 = \max\(\{\rho_{\fq}(x) : \fq \in \fP_m\} \cup \{0\}\).
\end{align}
\begin{proposition} \label{prop:minor.form}
Let matrix functions $B(\cdot)$ and $Q(\cdot)$ satisfy conditions~\eqref{eq:Bx.def}--\eqref{eq:Qjk=0.bj=bk}. Let $m \in \oneton$ and $\fq, \fp \in \fP_m$. Then there exists a measurable scalar kernel $R_{\fq,\fp}$ defined on $\Omega_m := \{(x,u) : x \in [0, \ell], u \in [\tau_m^-(x), \tau_m^+(x)]\}$ such that for each $x \in [0, \ell]$, a trace function $R_{\fq,\fp}(x, \cdot)$ is well-defined, summable,
\begin{equation} \label{eq:R.fq.fp.in.X}
 \sup_{x \in [0,\ell]} \int_{\tau_m^-(x)}^{\tau_m^+(x)}
 \abs{R_{\fq,\fp}(x,u)} \,du < \infty,
\end{equation}
and the following representation holds
\begin{equation} \label{eq:det.Phipq}
 \Phi(x,\l)[\fq,\fp]
 = \delta_{\fq,\fp} \exp\(i \l \rho_{\fq}(x)\) +
 \int_{\tau_m^-(x)}^{\tau_m^+(x)} R_{\fq,\fp}(x,u) e^{i \l u} du,
 \qquad x \in [0, \ell], \quad \l \in \bC.
\end{equation}
\end{proposition}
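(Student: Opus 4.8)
The plan is to show that the vector of $m\times m$ minors of $\Phi(\cdot,\l)$ with a fixed set of column indices $\fp$ satisfies a first-order linear ODE of exactly the same structure as~\eqref{eq:Phi'}, but with the diagonal weight matrix $B$ replaced by the diagonal matrix $\bigwedge^m B = \diag(\beta_{\fq})_{\fq\in\fP_m}$ and the potential $Q$ replaced by the induced potential on the $m$-th exterior power, which we denote $Q^{[m]}$. Concretely, fix $\fp\in\fP_m$ and consider the row vector $\Psi_{\fp}(x,\l) := \bigl(\Phi(x,\l)[\fq,\fp]\bigr)_{\fq\in\fP_m}$, indexed by $\fq$. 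Using $\Phi' = (i\l B - Q)\Phi$ together with the Cauchy--Binet/Jacobi-type expansion of the derivative of a minor (differentiate row by row and use Lemma~\ref{lem:det.AB.qp}), one obtains
\begin{equation*}
 \frac{d}{dx}\,\Phi(x,\l)[\fq,\fp] = \sum_{\fr\in\fP_m} \bigl(i\l (\bigwedge\nolimits^m B)(x) - Q^{[m]}(x)\bigr)[\fq,\fr]\;\Phi(x,\l)[\fr,\fp],
\end{equation*}
where $(\bigwedge^m B)(x) = \diag\bigl(\beta_{\fq}(x)\bigr)_{\fq\in\fP_m}$ is again diagonal and real-valued, and $Q^{[m]}$ is summable. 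The key structural point is that $Q^{[m]}$ inherits the ``zero block diagonality'' property: its $(\fq,\fr)$ entry vanishes whenever $\beta_{\fq}\equiv\beta_{\fr}$ (because such a coincidence forces $\fq$ and $\fr$ to lie in the same ``block column'' in the sense of Remark~\ref{rem:beta.vs.wtbeta}(i), and the corresponding diagonal entry of $Q$ is then zero by~\eqref{eq:Qjk=0.bj=bk}). The entries $\beta_{\fq}$ also satisfy the separation conditions in the form~\eqref{eq:theta.exists}--\eqref{eq:nujk=><0} with $\theta$ replaced by the same $\theta$ (a sum of separations is again separated), and $\beta_{\fq}, 1/\beta_{\fq}\in L^\infty$ fails in general only when $\beta_{\fq}$ changes sign — but the correct substitute is to note that each $\beta_{\fq}$ is a fixed real number times a sign-definite bounded function away from zero precisely on the ``diagonal'' index $\fq=\fp$, and for the off-diagonal entries we only ever need the separation and summability, not invertibility of $\beta_{\fq}$ itself.

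Granting the ODE above, the second step is to apply Proposition~\ref{prop:Phip=Phip0+int.sum} — or more precisely its proof — to the system $\Psi' = (i\l\,\bigwedge^m B - Q^{[m]})\Psi$ of size $N = \binom{n}{m}$. The unperturbed system ($Q^{[m]} = 0$) has fundamental matrix $\diag\bigl(e^{i\l\rho_{\fq}(x)}\bigr)$, and the $\fp$-th column of its fundamental matrix is $\delta_{\fq,\fp} e^{i\l\rho_{\fq}(x)}$, which accounts for the leading term in~\eqref{eq:det.Phipq}. Applying Proposition~\ref{prop:Phip=Phip0+int.sum} to this enlarged system — whose coefficient matrix $\bigwedge^m B$ and potential $Q^{[m]}$ verify the hypotheses~\eqref{eq:betak.alpk.Linf}--\eqref{eq:Qjk=0.bj=bk} of that proposition with $n$ replaced by $N$ — yields
\begin{equation*}
 \Phi(x,\l)[\fq,\fp] = \delta_{\fq,\fp} e^{i\l\rho_{\fq}(x)} + \sum_{\fr\in\fP_m}\int_0^x R^{[\fp]}_{\fr}(x,t)\, e^{i\l\rho_{\fr}(t)}\,\beta_{\fr}(t)\,dt
\end{equation*}
with kernels $R^{[\fp]}_{\fr}\in (X_{1,0}(\Omega)\cap X_{\infty,0}(\Omega))\otimes\bC^N$. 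Then, exactly as in the proof of Corollary~\ref{cor:Phip=Phip0+int1n}, for each $\fr$ one performs the substitution $u = \rho_{\fr}(t)$ (legitimate because $\rho_{\fr}$ is strictly monotone and bi-Lipschitz on $[0,\ell]$ by~\eqref{eq:rhok.Lip}, being a sum of such functions with all same sign or, in the mixed case, still piecewise monotone — here one must be slightly careful, see below), absorbs the Jacobian, extends the transformed kernel by zero outside $[0,\rho_{\fr}(x)]$, and sums over $\fr$ to obtain a single kernel $R_{\fq,\fp}(x,u)$ supported on $[\tau_m^-(x),\tau_m^+(x)]$ satisfying the uniform $L^1$ bound~\eqref{eq:R.fq.fp.in.X} (which follows from the $X_{1,0}\cap X_{\infty,0}$ membership and Lemma~\ref{lem.Volterra.operGeneral} / Lemma~\ref{lem:trace}). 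This produces~\eqref{eq:det.Phipq}.

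The main obstacle — and the point requiring genuine care rather than bookkeeping — is that a partial sum $\rho_{\fr} = \rho_{r_1}+\dots+\rho_{r_m}$ need not be monotone when $\fr$ contains both ``negative'' and ``positive'' indices (i.e. indices $\le n_-$ and $> n_-$), so the change of variables $u=\rho_{\fr}(t)$ in the last step is not directly the clean substitution used in Corollary~\ref{cor:Phip=Phip0+int1n}. I would handle this exactly as in that corollary's spirit: either (a) observe that it suffices to have $[0,\rho_{\fr}(x)]\subset[\tau_m^-(x),\tau_m^+(x)]$ (which is immediate from~\eqref{eq:fhm-}--\eqref{eq:fhm+}, since $\rho_{\fr}(x)$ lies between the extreme partial sums) and then, rather than changing variables directly in a possibly non-monotone $\rho_{\fr}$, split $[0,x]$ into finitely many subintervals on which $\rho_{\fr}$ is monotone, change variables on each, and recombine — the resulting kernel is still summable with the required uniform bound; or, cleaner, (b) feed the enlarged system into the machinery of Corollary~\ref{cor:Phip=Phip0+int1n} directly, which is stated for arbitrary weights satisfying~\eqref{eq:betak.alpk.Linf}, noting that the individual weights there are sign-definite, and only at the very end combine the $\binom{n}{m}$ scalar representations — one per $\fr$ — into one. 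Either way the separation property~\eqref{eq:betak-betaj<-eps} for the $\beta_{\fq}$, needed to invoke the transformation operator theorem on the exterior power, is the one algebraic fact that has to be checked by hand, and it follows because $\beta_{\fq} - \beta_{\fr}$ is a signed sum (over the symmetric-difference indices) of the original gaps $\beta_j-\beta_k$, each either identically zero or bounded away from zero with a fixed sign, so the difference is either identically zero or $\theta$-separated from zero. I would also record, as in~\eqref{eq:detPhi.Qjj=0}, that the case $m=n$ recovers the Liouville formula, providing a sanity check on the normalization of $\bigwedge^m B$ and $Q^{[m]}$.
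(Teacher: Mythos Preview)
Your strategy is exactly the paper's: show that the column $\fF_{\fp}=(\Phi(\cdot,\l)[\fq,\fp])_{\fq\in\fP_m}$ satisfies a first-order system with diagonal weight $\fB=\diag(\beta_{\fq})_{\fq\in\fP_m}$ and an induced potential $\fQ$ (your $Q^{[m]}$); verify that $\fQ$ inherits zero block-diagonality; then apply Corollary~\ref{cor:Phip=Phip0+int1n} to the enlarged $\binom{n}{m}$-dimensional system. The paper carries out the ODE computation in detail (your ``row-by-row Jacobi expansion'' is precisely the paper's formulas~\eqref{eq:f.f1'}--\eqref{eq:fQ.fq.fr.def}) and then asserts in one line that the hypotheses on $\fB$ and $\fQ$ carry over ``if we rewrite them appropriately'', so that the earlier machinery applies verbatim. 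Your sanity check for $m=n$ via Liouville is also the paper's closing remark.

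Your explicit verification of the separation hypothesis~\eqref{eq:betak-betaj<-eps} for the sums $\beta_{\fq}$ is, however, flawed. The claim that $\beta_{\fq}-\beta_{\fr}$, as a signed sum of original gaps, must be either identically zero or bounded away from zero fails because such gaps can nearly cancel. Concretely, with $n=4$, $m=2$, $\beta_1\equiv-2$, $\beta_2\equiv-1$, $\beta_3\equiv 1$, $\beta_4(x)=2+x/10$ on $[0,1]$ (the original conditions~\eqref{eq:betak.alpk.Linf}--\eqref{eq:betak-betaj<-eps} hold with $\theta=1/2$), one gets $\beta_{(1,4)}(x)-\beta_{(2,3)}(x)=x/10$, violating the trichotomy in~\eqref{eq:nujk=><0}; the same example gives $\beta_{(1,4)}(x)=x/10$, so even~\eqref{eq:betak.alpk.Linf} fails for the exterior-power weights. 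Your option~(a) patches only the final change-of-variable step and does not repair the prior invocation of Proposition~\ref{prop:Phip=Phip0+int.sum}, which rests on the transformation-operator construction of Section~\ref{sec:transform.oper} and uses both sign-definiteness of each weight and pairwise separation. You have rightly flagged that something needs care here; the paper absorbs the same issue into a single phrase without detail, so your write-up is no less complete than the paper's on this point---but the specific algebraic justification you give does not stand.
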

\begin{proof}
Let $\l \in \bC$ and $\fp = (p_1, \ldots, p_m) \in \fP_m$ be fixed for the entire proof. This allows us to set for brevity
\begin{equation} \label{eq:ffq.def}
 f_{\fq}(x) := \Phi(x,\l)[\fq,\fp],
 \qquad q \in \fP_m, \quad x \in [0,\ell].
\end{equation}
Further, set $N := N_m := \card \fP_m = \binom{n}{m}$ and order all elements of $\fP_m$ in some way, $\fP_m = \{\fq_1, \ldots, \fq_N\}$. Consider
\begin{align} \label{eq:fFfp.def}
 \fF_{\fp}(\cdot,\l)
 := \col\bigl(\Phi(\cdot,\l)[\fq,\fp]\bigr)_{\fq \in \fP_m}
 := \col\bigl(f_{\fq_1}(\cdot), \ldots, f_{\fq_N}(\cdot)\bigr)
\end{align}
as a vector function in $\bC^N$. First, note that
\begin{equation} \label{eq:fF.fp.0}
 \fF_{\fp}(0,\l) := \col\bigl(I_n[\fq,\fp]\bigr)_{\fq \in \fP_m}
 = \col\bigl(\delta_{\fq,\fp}\bigr)_{\fq \in \fP_m},
\end{equation}
since $\bigl(I_n[\fp,\fp] = 1$ and $\bigl(I_n[\fq,\fp] = 0$ if $\fq \ne \fp$.

Let us show that vector function $\fF_{\fp}(\cdot,\l)$ satisfy the following first order system of ODE:
\begin{equation} \label{eq:fF.system}
 \fF_p'(x,\l) = i \l \fB(x) \fF_p(x,\l) - \fQ(x) \fF_p(x,\l), \qquad
 \fB(x) := \diag \bigl(\beta_{\fq}(x)\bigr)_{\fq \in \fP_m},
 \qquad x \in [0, \ell],
\end{equation}
where $\fQ(x) = \bigl(\fQ_{\fq,\fr}(x)\bigr)_{\fq,\fr \in \fP_m} = \bigl(\fQ_{\fq_j,\fq_k}(x)\bigr)_{j,k=1}^N$ is summable $N \times N$ matrix function with zero block diagonal with respect to natural block decomposition of the matrix $\fB(x)$. Namely,
\begin{equation} \label{eq:fQ.diag=0}
 \fQ_{\fq,\fr} \equiv 0 \quad\text{whenever}\quad \beta_{\fq} \equiv \beta_{\fr}
 \quad\text{for}\quad \fq,\fr \in \fP_m.
\end{equation}
Emphasize, that each entry of the matrix function $\fB(\cdot)$ is a sum of some subset of the entries of the original matrix $B(x)$ \emph{with different indexes}.

To prove~\eqref{eq:fF.system}--\eqref{eq:fQ.diag=0}, note that the matrix equation~\eqref{eq:Phi'} has the following scalar form
\begin{equation} \label{eq:phiqp'}
 \phi_{q,p}'(x,\l) = i \l \beta_q(x) \phi_{q,p}(x,\l)
 - \sum_{s=1}^n Q_{q,s}(x) \phi_{s,p}(x, \l), \qquad q,p \in \oneton,
 \quad x \in [0,\ell].
\end{equation}
Let $\fq = (q_1, \ldots, q_m) \in \fP_m$ be fixed, $1 \le q_1 < \ldots < q_m \le n$. Using standard formula for the derivative of the determinant we get
\begin{align}
\nonumber
 f_{\fq}'(x)
 & = \frac{d}{dx} \det(\phi_{q_j,\,p_k}(x,\l))_{j,k=1}^m
 = \frac{d}{dx} \det \begin{pmatrix}
 \phi_{q_1,\,p_1}(x,\l) & \ldots & \phi_{q_1,\,p_m}(x,\l) \\
 \vdots & \ddots & \vdots \\
 \phi_{q_m,\,p_1}(x,\l) & \ldots & \phi_{q_m,\,p_m}(x,\l) \\
 \end{pmatrix} \\
\label{eq:f.f1'}
 & = \sum_{j=1}^m \det \begin{pmatrix}
 \phi_{q_1,\,p_1}(x,\l) & \ldots & \phi_{q_1,\,p_m}(x,\l) \\
 \vdots & \ldots & \vdots \\
 \phi_{q_{j-1},\,p_1}(x,\l) & \ldots & \phi_{q_{j-1},\,p_m}(x,\l) \\
 \phi_{q_j,\,p_1}'(x,\l) & \ldots & \phi_{q_j,\,p_m}'(x,\l) \\
 \phi_{q_{j+1},\,p_1}(x,\l) & \ldots & \phi_{q_{j+1},\,p_m}(x,\l) \\
 \vdots & \ldots & \vdots \\
 \phi_{q_m,\,p_1}(x,\l) & \ldots & \phi_{q_m,\,p_m}(x,\l) \\
 \end{pmatrix}
\end{align}
Formula~\eqref{eq:phiqp'} implies the following relations connecting rows of the minor $\Phi(x,\l)[\fq,\fp]$,
\begin{equation} \label{eq:phiqp'.row}
 \bigl(\phi_{q_j,\,p_k}'(x,\l)\bigr)_{k=1}^m
 = i \l \beta_{q_j}(x) \cdot \bigl(\phi_{q_j,\,p_k}(x,\l)\bigr)_{k=1}^m
 - \sum_{s=1}^n Q_{q_j,\,s}(x) \cdot \bigl(\phi_{s,\,p_k}(x, \l)\bigr)_{k=1}^m,
 \quad x \in [0,\ell].
\end{equation}
For $j \in \onetom$ and $s \in \oneton$ denote by $\fq(q_j \to s)$ a sequence one obtains from $\fq$ by replacing $j$-th element $q_j$ with $s$, i.e.
\begin{equation} \label{eq:fq.qj.s}
 \fq(q_j \to s) := (q_1, \ldots, q_{j-1}, s, q_{j+1}, \ldots, q_m).
\end{equation}
Not that $\fq(q_j \to s)$ is not necessarily an element of $\fP_m$, but notation $A[\fq(q_j \to s),\fp]$ is still valid. Note also that it is possible for $s$ to be equal to one of $q_k$, $k \ne j$. In this case minor $A[\fq(q_j \to s),\fp]$ has duplicate rows and is necessarily zero.

With account of notation~\eqref{eq:Aqp.def} for $A[\fq,\fp]$ and notation~\eqref{eq:fq.qj.s} for $\fq(q_j \to s)$, inserting~\eqref{eq:phiqp'.row} into~\eqref{eq:f.f1'} we arrive at
\begin{equation} \label{eq:f.fq'.sums}
 f_{\fq}'(x) = \sum_{j=1}^m i \l \beta_{q_j}(x) f_{\fq}(x)
 - \sum_{j=1}^m \sum_{s=1}^n Q_{q_j, s}(x) \cdot \Phi(x,\l)[\fq(q_j \to s),\fp],
 \qquad x \in [0,\ell].
\end{equation}
Let $j \in \onetom$ and $s \in \oneton$ be fixed in addition to $\fq$ we fixed above. It is clear that
\begin{equation} \label{eq:det.Phi.qjsp=0}
 \Phi(\cdot,\l)[\fq(q_j \to s),\fp] \equiv 0 \quad\text{whenever}\quad
 s = q_k \quad\text{for some}\quad k \ne j.
\end{equation}
And if it is not the case, ordering elements of $\fq(q_j \to s)$, we arrive at a sequence $\wt{\fq}(q_j \to s) \in \fP_m$. Hence, in this case
\begin{equation} \label{eq:det.Phi.qjsp.sort}
 \Phi(\cdot,\l)[\fq(q_j \to s),\fp]
 = \sigma(\fq,j,s) f_{\wt{\fq}(q_j \to s)}(\cdot),
\end{equation}
where $\sigma(\fq,j,s) = \pm 1$ is a signature of the permutation behind the ordering of the sequence $\fq(q_j \to s)$.

Further, note that $Q_{j,k} \equiv 0$ whenever $\beta_j \equiv \beta_k$. Hence $Q_{q_j,q_j} \equiv 0$, $j \in \onetom$, and so $s = q_j$ can be excluded from the summation in~\eqref{eq:f.fq'.sums}. With account of this observation, definition~\eqref{eq:beta.fp.def} of $\beta_{\fq}(\cdot)$ and relations~\eqref{eq:det.Phi.qjsp=0}--\eqref{eq:det.Phi.qjsp.sort}, we can rewrite~\eqref{eq:f.fq'.sums} as
\begin{equation} \label{eq:f.fq'.final}
 f_{\fq}'(x) = i \l \beta_{\fq}(x) f_{\fq}(x)
 - \sum_{j=1}^m \sum_{\genfrac{}{}{0pt}{2}{s=1}{s \notin \fq}}^n
 \sigma(\fq,j,s) Q_{q_j, s}(x) f_{\wt{\fq}(q_j \to s)}(x),
 \qquad x \in [0,\ell], \quad \fq \in \fP_m,
\end{equation}
which coincides with~\eqref{eq:fF.system} if we set
\begin{equation} \label{eq:fQ.fq.fr.def}
 \fQ_{\fq,\fr}(\cdot) = \begin{cases}
 \sigma(\fq,j,s) Q_{q_j, s}(\cdot), & \text{if} \ \ \fr = \wt{\fq}(q_j \to s)
 \ \ \text{for some} \ \ j \in \onetom
 \ \ \text{and} \ \ s \in \oneton \setminus \fq, \\
 0, & \text{otherwise}.
 \end{cases}
\end{equation}
Here and in~\eqref{eq:f.fq'.final}, for simplicity, we identified sequence $\fq$ with the corresponding set $\{q_j\}_{j=1}^m$.

Let us verify condition~\eqref{eq:fQ.diag=0}. It is clear, from~\eqref{eq:fQ.fq.fr.def} that we only need to consider the case when $\fr = \wt{\fq}(q_j \to s)$ for some $j \in \onetom$ and $s \in \oneton \setminus \fq$. By definition of $\fq(q_j \to s)$ and $\wt{\fq}(q_j \to s)$ it is clear that
\begin{equation}
 \beta_{\fr} = \beta_{q_1} + \ldots + \beta_{q_{j-1}} + s + \beta_{q_{j+1}}
 + \beta_{q_n} = \beta_{\fq} - \beta_{q_j} + \beta_s.
\end{equation}
Hence if $\beta_{\fr} \equiv \beta_{\fq}$, then $\beta_{q_j} \equiv \beta_s$, which implies that $Q_{q_j,\, s} \equiv 0$ by the corresponding condition~\eqref{eq:Qjk=0.bj=bk} on $Q$. Therefore, formula~\eqref{eq:fQ.fq.fr.def} implies that $\fQ_{\fq,\fr} \equiv 0$.

In conclusion, vector function $\fF_{\fp}(\cdot, \l)$ is a solution of the first order system of ODE~\eqref{eq:fF.system} that satisfies initial condition~\eqref{eq:fF.fp.0}. Moreover, potential matrix function $\fQ(\cdot)$ in~\eqref{eq:fF.system} is summable and satisfies ``zero block diagonality'' condition~\eqref{eq:fQ.diag=0}, while entries of the diagonal matrix function $\fB(\cdot)$ satisfy conditions~\eqref{eq:betak.alpk.Linf}--\eqref{eq:betak-betaj<-eps} if we rewrite them appropriately. Hence, all previous considerations of Sections~\ref{sec:transform.oper} and~\ref{sec:fundament} apply to solutions of system~\eqref{eq:fF.system}. In particular, Corollary~\ref{cor:Phip=Phip0+int1n} immediately yields formula~\eqref{eq:det.Phipq} if we compare definition~\eqref{eq:rho-+def} of $\rho_1^-(\cdot)$, $\rho_n^+(\cdot)$ and definition~\eqref{eq:rho.fp.def}--\eqref{eq:fhm+} of $\tau_m^{\pm}(\cdot)$. The proof is now complete.
\end{proof}
\begin{remark}
If $m=n$, then $\fP_m = \fP_n$ has exactly one element $\fp_0 := (1, \ldots, n)$. Moreover, $\Phi(x,\l)[\fq,\fp] = \det \Phi(x,\l)$ for $\fq = \fp = \fp_0$. Hence system~\eqref{eq:fF.system} turns into~\eqref{eq:detPhi'}. This shows that Proposition~\ref{prop:minor.form} contains Liouville's formula as a partial case.
\end{remark}
\section{Regular and strictly regular boundary conditions} \label{sec:regular.bc}
Results of the previous sections about solutions of the equation~\eqref{eq:LQ.def.intro} allow us to obtain many spectral properties of the corresponding BVP~\eqref{eq:LQ.def.intro}--\eqref{eq:Uy=0.intro}.
\subsection{General properties of BVP}
\label{subsec:bvp.props}
For the reader's convenience let us recall BVP~\eqref{eq:LQ.def.intro}--\eqref{eq:Uy=0.intro},
\begin{align}
\label{eq:LQ.def.reg}
 & \cL(Q) y := -i B(x)^{-1} (y' + Q(x) y) = \l y,
 \qquad y = \col(y_1, \ldots, y_n), \quad x \in [0,\ell], \\
\label{eq:Uy=0}
 & U(y) := C y(0) + D y(\ell) = 0, \quad\text{and}\quad \rank(C \ D) = n.
\end{align}
Note that the condition $\rank(C \ D) = n$ is equivalent to $\ker(CC^*+DD^*)=\{0\}$.

Emphasize that a pair of matrices $\{C, D\}$ in boundary conditions~\eqref{eq:Uy=0} is not unique. Indeed, two pairs $\{C, D\}$ and $\{\wh{C}, \wh{D}\}$ determine the same boundary conditions if and only if $\{\wh{C}, \wh{D}\} = \{XC, XD\}$ with some nonsingular $X \in \bC^{n\times n}$. In Lemma~\ref{lem:CD.canon} we present ``canonical'' form for matrices in boundary conditions~\eqref{eq:Uy=0} which is important in applications.

Let us introduce the Hilbert space $\fH$ as follows,
\begin{equation} \label{eq:fH.def}
 \fH := \oplus_{k=1}^n \fH_k, \qquad \fH_k := L^2_{|\beta_k|}[0,\ell],
 \quad k \in \oneton,
\end{equation}
i.e.\ for $f = \col(f_1, \ldots, f_n)$ and $g = \col(g_1, \ldots, g_n)$ we have
\begin{align}
\label{eq:fg.fH.def}
 & (f, g)_{\fH} := \int_0^{\ell} \angnorm{|B(x)| f(x), g(x)} dx =
 \sum_{k=1}^n (f_k, g_k)_{\fH_k}, \quad\text{where}\quad
 \angnorm{\cdot, \cdot} := \angnorm{\cdot, \cdot}_{\bC^n} \quad\text{and} \\
\label{eq:fHk.def}
 & (f_k, g_k)_{\fH_k} := \int_0^{\ell} f_k(x) \ol{g_k(x)} |\beta_k(x)| dx,
 \qquad f \in \fH_k \ \Leftrightarrow \
 \int_0^{\ell} |f_k(x)|^2 |\beta_k(x)| dx < \infty.
\end{align}
With BVP~\eqref{eq:LQ.def.reg}--\eqref{eq:Uy=0} one naturally associates Dirac-type operator $L_U(Q)$ in the Hilbert space $\fH$ as follows,
\begin{align}
\label{eq:LQU.def}
 & L_U(Q) y = \cL(Q) y = -i B^{-1} (y' + Q y),
 \qquad y \in \dom(L_U(Q)), \quad\text{where} \\
\label{eq:dom.LQU.def}
 & \dom(L_U(Q)) := \{y \in \AC([0,\ell]; \bC^n) \ : \ \cL(Q) y \in \fH, \ \
 \ C y(0) + D y(\ell) = 0 \}.
\end{align}
Alongside equation~\eqref{eq:LQ.def.reg} we consider the same equation but with $Q=0$,
\begin{equation} \label{eq:L0.def.reg}
 \cL_0 y := \cL(0) y := -i B(x)^{-1} y' = \l y,
 \qquad y = \col(y_1, \ldots, y_n), \quad x \in [0,\ell], \\
\end{equation}
and with same boundary conditions~\eqref{eq:Uy=0}. Similarly to $L_U(Q)$, we associate the unperturbed Dirac-type operator $L_{0,U} := L_U(0)$ in $\fH$ with BVP~\eqref{eq:L0.def.reg},~\eqref{eq:Uy=0}.

Recall that $b_k$ and $\rho_k(\cdot)$ are defined in~\eqref{eq:rhok.bk.def} via functions $\beta_k(\cdot)$, $k \in \oneton$. For most of the results in this section we will only assume the following relaxed conditions on functions $\beta_k(\cdot)$,
\begin{equation} \label{eq:betak.L1}
 \beta_k \in L^1([0,\ell], \bR),
 \qquad s_k := \sign(\beta_k(\cdot)) = \const \ne 0,
 \qquad k \in \oneton,
\end{equation}
which implies the following condition on functions $\rho_k(\cdot)$,
\begin{equation} \label{eq:rhok.AC}
 \rho_k \in \AC[0,\ell] \quad\text{and is strictly monotonous},
 \quad k \in \oneton.
\end{equation}
Since both matrix functions $B(\cdot)$ and $Q(\cdot)$ are summable, one can define fundamental matrix solutions $\Phi_Q(\cdot,\l) := \Phi(\cdot,\l)$ and $\Phi_0(\cdot,\l)$ of the equations~\eqref{eq:LQ.def.reg} and~\eqref{eq:L0.def.reg} via formulas~\eqref{eq:Phixl.def} and~\eqref{eq:Phi0k.def}, respectively.

Next we set
\begin{align}
\label{eq:A.def}
 A_Q(\l) & := A(\l) := C + D \Phi_Q(\ell, \l) =: (a_{jk}(\l))_{j,k=1}^n,
 \quad \text{where} \quad
 a_{jk}(\l) = c_{jk} + \sum_{p=1}^n d_{jp} \phi_{pk}(\l), \\
\label{eq:A0.def}
 A_0(\l) & := C + D \Phi_0(\ell, \l) =: (a_{jk}^0(\l))_{j,k=1}^n,
 \quad \text{where} \quad a_{jk}^0(\l) = c_{jk} + d_{jk} e^{i \l b_k},
\end{align}
where $\phi_{pk}(\l) := \phi_{pk}(\ell, \l)$ is the corresponding entry of the matrix $\Phi_Q(\ell, \l)$. Finally, we introduce the characteristic determinants of the problems~\eqref{eq:LQ.def.reg}--\eqref{eq:Uy=0} and~\eqref{eq:L0.def.reg},~\eqref{eq:Uy=0} by setting
\begin{equation} \label{eq:Delta.def}
 \Delta(\l) := \Delta_Q(\l) := \det(A_Q(\l)), \qquad \Delta_0(\l) := \det(A_0(\l)),
 \qquad \l \in \bC,
\end{equation}
respectively. The role of the characteristic determinant $\Delta(\cdot)$ in the spectral theory of BVP~\eqref{eq:LQ.def.reg}--\eqref{eq:Uy=0} becomes clear
from the following simple statement. To state it we denote by
\begin{align}
\label{eq:Aa.def}
 A^a(\l) & := A^a_Q(\l) =: (A_{jk}(\l))_{j,k=1}^n
 \ \ \ \text{the matrix adjugate to}\ \ \ A_Q(\l), \\
\label{eq:Aa0.def}
 A_0^a(\l) & =: (A_{jk}^0(\l))_{j,k=1}^n
 \ \ \ \text{the matrix adjugate to}\ \ \ A_0(\l).
\end{align}
With account of notations~\eqref{eq:cAa.A.fpk.fpj}--\eqref{eq:fpk.def} we see that for $j, k \in \oneton$,
\begin{align}
\label{eq:Aajk.def}
 A_{jk}(\l) & = A(\l)\{j,k\} = (-1)^{j+k} A(\l)[\fp_k, \fp_j], \\
\label{eq:Aajk0.def}
 A_{jk}^0(\l) & = A^0(\l)\{j,k\} = (-1)^{j+k} A^0(\l)[\fp_k, \fp_j].
\end{align}
\begin{lemma} \label{lem:eigen}
Let matrix functions $B(\cdot)$ and $Q(\cdot)$ satisfy conditions~\eqref{eq:Bx.def}--\eqref{eq:Q=Qjk.def}, i.e.\ $B, Q \in L^1([0,\ell]; \bC^{n \times n})$ and $B(x)$ is invertible for almost all $x$. Number $\l \in \bC$ is an eigenvalue of the operator $L_U(Q)$ given by~\eqref{eq:dom.LQU.def} if and only if $\Delta_Q(\l) = 0$. Moreover, the algebraic multiplicity $m_a(\l)$ of $\l$ coincides with the multiplicity of $\l$ as a root of the characteristic determinant $\Delta_Q(\cdot)$. In particular, $\dim\cR_{\l}(L_U(Q)) = 1$ if and only if $\Delta_Q(\l)=0$ and $\Delta_Q'(\l) \ne 0$. Moreover, in the later case, $\rank(A^a_Q(\l)) = 1$ and there exists $p \in \oneton$ such that
\begin{equation} \label{eq:Ypxl.def}
 y(x,\l) := Y_p(x,\l) := \sum_{k=1}^n A_{kp}(\l) \Phi_k(x,\l) \ne 0,
\end{equation}
is the (non-trivial) eigenvector of the operator $L_U(Q)$ corresponding to the eigenvalue $\l$.

In addition, if $Q=0$, then the following explicit formula holds,
\begin{equation} \label{eq:Yp0xl.def}
 y_0(x,\l) := Y_p^0(x,\l) := \sum_{k=1}^n A_{kp}^0(\l) \Phi_k^0(x,\l)
 = \col\(A_{1p}^0(\l) e^{i \l \rho_1(x)}, \ldots,
 A_{np}^0(\l) e^{i \l \rho_n(x)}\) \ne 0.
\end{equation}
\end{lemma}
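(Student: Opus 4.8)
The plan is to translate the ODE boundary value problem into the finite-dimensional linear-algebra problem governed by the matrix $A_Q(\l)$, and then read off everything from the relation between $\sigma(L_U(Q))$, the zeros of $\Delta_Q$, and the rank of $A_Q(\l)$. First I would observe that every solution of $\cL(Q)y = \l y$ has the form $y(x) = \Phi_Q(x,\l)c$ for some $c \in \bC^n$, and that such a $y$ lies in $\dom(L_U(Q))$ automatically (since $\Phi_Q(\cdot,\l)$ is absolutely continuous and $\cL(Q)y = \l y \in \fH$ because $\fH = \oplus L^2_{|\beta_k|}$ and $\l y$ is bounded measurable hence in $\fH$). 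The boundary condition $Cy(0)+Dy(\ell)=0$ becomes $\bigl(C + D\Phi_Q(\ell,\l)\bigr)c = A_Q(\l)c = 0$. Hence $\l$ is an eigenvalue of $L_U(Q)$ exactly when $\ker A_Q(\l) \ne \{0\}$, i.e.\ $\Delta_Q(\l) = \det A_Q(\l) = 0$, and the geometric multiplicity $m_g(\l)$ equals $\dim\ker A_Q(\l) = n - \rank A_Q(\l)$.

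Next I would handle the algebraic multiplicity. The standard device is to exhibit $(L_U(Q) - \l)^{-1}$, for $\l \in \rho(L_U(Q))$, as an integral operator whose kernel (the resolvent / Green function) is meromorphic in $\l$ with poles precisely at the zeros of $\Delta_Q$, and with the order of the pole at a zero $\l_0$ equal to the multiplicity of $\l_0$ as a root of $\Delta_Q$; since $L_U(Q)$ has compact resolvent, the order of the pole of $R_{L_U(Q)}(\cdot)$ at $\l_0$ equals the algebraic multiplicity $m_a(\l_0)$ (the size of the largest Jordan block summed appropriately — more precisely, the rank of the Riesz projection is $m_a$, and the pole order relates to the Jordan structure; but the standard statement is that $m_a(\l_0)$ equals the multiplicity of $\l_0$ as a zero of $\Delta_Q$). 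Concretely, one writes the Green function via $\Phi_Q$ and $A_Q(\l)^{-1} = A_Q^a(\l)/\Delta_Q(\l)$, so that $\Delta_Q(\l)$ appears as the only source of singularities; a contour-integration / Rouché-type argument then matches pole orders with root multiplicities. This is the classical argument for Birkhoff-type BVPs and I would cite it or sketch it, referring to the construction of the resolvent kernel.

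For the simple-eigenvalue case: if $\Delta_Q(\l_0) = 0$ but $\Delta_Q'(\l_0) \ne 0$, then $m_a(\l_0) = 1$, hence also $m_g(\l_0) = 1$, so $\dim\ker A_Q(\l_0) = 1$ and therefore $\rank A_Q(\l_0) = n-1$. A rank-$(n-1)$ matrix has nonzero adjugate of rank exactly $1$ (Jacobi's theorem on the rank of the adjugate), and moreover $A_Q(\l_0) A_Q^a(\l_0) = \Delta_Q(\l_0) I_n = 0$, so every nonzero column of $A_Q^a(\l_0)$ lies in $\ker A_Q(\l_0)$. Since $A_Q^a(\l_0) = (A_{jk}(\l_0))$ is nonzero, pick $p$ with the $p$-th column $\col(A_{1p}(\l_0),\dots,A_{np}(\l_0)) \ne 0$; then $c := \col(A_{kp}(\l_0))_{k=1}^n$ satisfies $A_Q(\l_0) c = 0$, so $y(x,\l_0) = \Phi_Q(x,\l_0)c = \sum_k A_{kp}(\l_0)\Phi_k(x,\l_0)$ is a nonzero element of $\dom(L_U(Q))$ with $L_U(Q)y = \l_0 y$, which is formula~\eqref{eq:Ypxl.def}. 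Finally, for $Q=0$ I would substitute $\Phi_k^0(x,\l) = e^{i\l\rho_k(x)}\col(\delta_{1k},\dots,\delta_{nk})$ from~\eqref{eq:Phi0k.def} into~\eqref{eq:Ypxl.def}; the sum collapses coordinate-wise and yields~\eqref{eq:Yp0xl.def} directly.

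The main obstacle is the precise identification of the algebraic multiplicity with the order of the zero of $\Delta_Q$: the geometric-multiplicity and simple-eigenvalue parts are elementary linear algebra once the correspondence $y \leftrightarrow c$ is set up, but pinning down $m_a$ requires either an explicit Green-function computation showing the pole order of the resolvent equals the zero order of $\Delta_Q$, or a reduction to a known theorem (this is the delicate point in all such BVP arguments, since $\dim\ker A_Q(\l)$ only controls $m_g$, not $m_a$, when $\l$ is a multiple root). I would organize the proof so that this step is isolated and handled by the resolvent-kernel argument, leaving the rest as short linear-algebra verifications.
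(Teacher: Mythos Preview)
Your proposal is correct and, for the simple-eigenvalue part and the $Q=0$ specialization, essentially identical to the paper's argument: both pick a nonzero column of $A_Q^a(\l)$ as the initial data $c$ and set $y = \Phi_Q(\cdot,\l)c$. The only cosmetic difference is that the paper deduces $A_Q^a(\l)\ne 0$ directly from Jacobi's identity $\Delta_Q'(\l)=\tr\bigl(A_Q^a(\l)A_Q'(\l)\bigr)\ne 0$, whereas you go via $m_g=1\Rightarrow\rank A_Q(\l)=n-1\Rightarrow\rank A_Q^a(\l)=1$; both are fine.

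For the algebraic-multiplicity statement the routes genuinely diverge. The paper does not build the Green function: instead it shows (citing~\cite{MalOri12}) that if $\l$ is an $m$-fold zero of $\Delta_Q$ then the family $\{\partial_\mu^k Y_p(\cdot,\mu)|_{\mu=\l}:0\le k<m,\ 1\le p\le n\}$ spans $\cR_\l(L_U(Q))$, and then appeals to the classical ODE argument (Naimark~\cite{Nai69}) for $\dim\cR_\l=m$. Your resolvent approach is equally standard and works, but be careful with the step you flagged: the \emph{order} of the pole of $(L_U(Q)-\l)^{-1}$ at $\l_0$ is the ascent (largest Jordan block), not $m_a(\l_0)$. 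What you actually want is to write the Riesz projection $P_{\l_0}=-\tfrac{1}{2\pi i}\oint R(\l)\,d\l$ using the Green-function kernel $G(x,t;\l)=\Phi(x,\l)A_Q(\l)^{-1}\,(\cdots)$ and compute $m_a(\l_0)=\tr P_{\l_0}$ as a contour integral; the residue calculation then reduces to the argument principle applied to $\Delta_Q$, giving exactly the zero-multiplicity. That is the precise version of the ``Rouch\'e-type'' step you allude to, and once stated this way there is no gap.
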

\begin{proof}
The proof of general statement is similar to the proof of~\cite[Theorem 1.2, step (i)]{MalOri12}. Namely, one can show that if $\l$ is an $m$-multiple zero of the function $\Delta_Q(\cdot)$, then the system of functions
\begin{equation} \label{eq:DpYj}
 \left\{\left. \frac{\partial^k}{\partial \mu^k} Y_p(x,\mu) \right|_{\mu=\l}
 : \ \ k \in \{0,1,\ldots,m-1\},\ \ p \in \oneton \right\}
\end{equation}
spans the root subspace $\cR_{\l}(L_U(Q))$ of the operator $L_U(Q)$. Relation $\dim \cR_{\l}(L_U(Q)) = m$ can be proved similarly to how it was done for ordinary differential operators in~\cite{Nai69}.

It remains to consider the case $\dim\cR_{\l}(L_U(Q)) = 1$. Then $\l \in \bC$ is the eigenvalue of the problem~\eqref{eq:LQ.def.reg}--\eqref{eq:Uy=0} of geometric and algebraic multiplicity one, hence $\Delta_Q(\l) = 0$ and $\Delta_Q'(\l) \ne 0$. Jacobi's formula~\eqref{eq:jacobi.def} implies that
\begin{equation}
 \tr \( A_Q^a(\l) A_Q'(\l)\) = \Delta_Q'(\l) \ne 0,
\end{equation}
which in turn implies that $A_Q^a(\l) \ne 0$. Therefore, it follows from the identity
\begin{equation} \label{eq:adjug.ident}
A_Q(\l) A_Q^a(\l) = A_Q^a(\l) A_Q(\l) = \Delta_Q(\l) I_n = 0,
\end{equation}
that for a certain $j \in \oneton$, vector
\begin{equation} \label{eq:alp.def}
 \alp := \col(\alp_1, \ldots, \alp_n) := \col(A_{1j}(\l), \ldots, A_{nj}(\l))
\end{equation}
is non-zero and satisfies $A_Q(\l) \alp = 0$. The corresponding eigenvector of BVP~\eqref{eq:L0.def.reg},~\eqref{eq:Uy=0} is given by
\begin{equation} \label{eq:yx=alp.exp}
 y(x,\l) = \sum_{k=1}^n \alp_k \Phi_{k}(x,\l) \not\equiv 0,
 \qquad y(0,\l) = \alp \ne 0.
\end{equation}
Indeed, in accordance with definition~\eqref{eq:L0.def.reg}, $\cL_0 y(x,\l) = \l y(x,\l)$. Besides,
\begin{equation}
Cy(0,\l) + D y(\ell,\l) = (C\Phi_0(0, \l) + D \Phi_0(\ell, \l))\alp = A_Q(\l) \alp = 0.
\end{equation}
This proves~\eqref{eq:Ypxl.def}. Since $\l$ is a simple eigenvalue of $L_U(Q)$, then all eigenvectors are proportional to each other. Formulas~\eqref{eq:alp.def}--\eqref{eq:yx=alp.exp} for eigenvectors of $L_U(Q)$ imply that all columns of the matrix $A_Q^a(\l)$ are proportional, which means that $\rank(A_Q(\l)) = 1$ and finishes the proof.
\end{proof}
\begin{remark}
Note that each non-trivial $k$th column of the adjugate matrix~\eqref{eq:Aa.def} generates an eigenvector of the operator $L_U(Q)$ by formula~\eqref{eq:Ypxl.def} with $p$ replaced by $k$. Since $\rank(A_a(\l)) = 1$, the eigenvectors $Y_p(x,\l)$ and $Y_k(x,\l)$ are proportional. Therefore, we write $y(x,\l)$ in~\eqref{eq:Ypxl.def} instead of $y_p(x,\l)$ omitting the index $p$.
\end{remark}
The following observation trivially follows from Lemma~\ref{lem:eigen} and will be useful in the future.
\begin{lemma} \label{lem:eigen.canon}
Let matrix functions $B(\cdot)$ and $Q(\cdot)$ satisfy conditions~\eqref{eq:Bx.def}--\eqref{eq:Q=Qjk.def} and let $\l$ be an algebraically simple eigenvalue of the operator $L_U(Q)$ and let $f$ be any eigenvector of $L_U(Q)$ in $\fH$ corresponding to $\l$. Then, there exists $p = p_{\l} \in \oneton$ and $\gam_p \in \bC$, such that
\begin{equation}
f(\cdot) = \gam_p Y_p(\cdot, \l) = \gam_p \sum_{k=1}^n A_{kp}(\l) \Phi_k(\cdot, \l), \qquad |\gam_p| = \|f\|_{\fH} / \|Y_p(\cdot, \l)\|_{\fH}.
\end{equation}
Morever, this is valid for any $p \in \oneton$, for which $Y_p(\cdot, \l) \not\equiv 0$.
\end{lemma}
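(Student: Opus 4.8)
This is a corollary of Lemma~\ref{lem:eigen}, so the plan is simply to assemble the facts established there. First I would record what algebraic simplicity of $\l$ buys us: by Lemma~\ref{lem:eigen} the characteristic determinant satisfies $\Delta_Q(\l)=0$ and $\Delta_Q'(\l)\neq 0$, and consequently $\rank\bigl(A_Q^a(\l)\bigr)=1$, so that at least one column of $A_Q^a(\l)$, say the $p$-th one, is non-zero. Since $\Phi(x,\l)$ is invertible for every $x\in[0,\ell]$ (which follows from Liouville's formula~\eqref{eq:detPhi}) and $Y_p(x,\l)=\Phi(x,\l)\col\bigl(A_{1p}(\l),\dots,A_{np}(\l)\bigr)$ with $Y_p(0,\l)=\col\bigl(A_{1p}(\l),\dots,A_{np}(\l)\bigr)$, the function $Y_p(\cdot,\l)$ is not identically zero exactly when the $p$-th column of $A_Q^a(\l)$ is non-zero; and Lemma~\ref{lem:eigen} guarantees that for such an index $p$ the function $Y_p(\cdot,\l)=\sum_{k=1}^n A_{kp}(\l)\Phi_k(\cdot,\l)$ is a non-trivial eigenvector of $L_U(Q)$ for $\l$.

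The second point is that algebraic simplicity implies geometric simplicity: the geometric multiplicity of an eigenvalue never exceeds its algebraic multiplicity and is at least one, hence $\dim\ker\bigl(L_U(Q)-\l\bigr)=1$. Thus the eigenspace is the one-dimensional subspace of $\fH$ spanned by $Y_p(\cdot,\l)$, and any eigenvector $f$ corresponding to $\l$ is of the form $f=\gam_p Y_p(\cdot,\l)$ for a unique scalar $\gam_p\in\bC\setminus\{0\}$. Taking $\fH$-norms gives $\|f\|_{\fH}=|\gam_p|\,\|Y_p(\cdot,\l)\|_{\fH}$, that is $|\gam_p|=\|f\|_{\fH}/\|Y_p(\cdot,\l)\|_{\fH}$, where $\|Y_p(\cdot,\l)\|_{\fH}\neq 0$ because $Y_p(\cdot,\l)$ is a non-zero absolutely continuous function and each weight $|\beta_k|$ is positive a.e.

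For the last assertion I would observe that if $Y_{p'}(\cdot,\l)\not\equiv 0$ for some other index $p'$, then the $p'$-th column of $A_Q^a(\l)$ is non-zero, and since $\rank\bigl(A_Q^a(\l)\bigr)=1$ all non-zero columns of $A_Q^a(\l)$ are mutually proportional; hence $Y_{p'}(\cdot,\l)$ is a non-zero scalar multiple of $Y_p(\cdot,\l)$, so it again spans the eigenspace and the reasoning of the previous paragraph applies verbatim with $p'$ in place of $p$. There is essentially no obstacle here --- all the substance sits in Lemma~\ref{lem:eigen} --- the only step worth stating with some care is the equivalence between $Y_p(\cdot,\l)\not\equiv 0$ and the $p$-th column of $A_Q^a(\l)$ being non-zero, which rests on the global invertibility of the fundamental matrix $\Phi(\cdot,\l)$ on $[0,\ell]$.
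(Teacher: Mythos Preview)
Your proof is correct and follows exactly the approach the paper intends: the paper itself offers no proof of this lemma, stating only that it ``trivially follows from Lemma~\ref{lem:eigen},'' and your argument is precisely the unpacking of that remark. The one point you make explicit beyond what the paper sketches---the equivalence between $Y_p(\cdot,\l)\not\equiv 0$ and the non-vanishing of the $p$-th column of $A_Q^a(\l)$ via the invertibility of $\Phi(\cdot,\l)$---is a useful clarification.
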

The following trivial properties of $\Phi_0(\cdot,\l)$, $A_0(\l)$, $A_0^a(\l)$, $\Delta_0(\l)$ and $Y_k^0(\cdot, \l)$ will be useful in the future. Recall, that $\Pi_h = \{\l \in \bC : |\Im \l| \le h\}$.
\begin{lemma} \label{lem:easy.upper.bound}
Let $h \ge 0$ and let $\beta_k \in L^1([0,\ell];\bR)$, $k \in \oneton$. There there exists a constant $M_h > 0$ that only depends on $h$, matrices $C$ and $D$, and values $\|\beta_k\|_1 := \|\beta_k\|_{L^1[0,\ell]}$, $k \in \oneton$, such that the following uniform inequalities hold
\begin{align}
\label{eq:e.rhokx<M.dlYk0<M}
 & \abs{e^{i \l \rho_k(x)}} \le M_h, \qquad
 \norm{\frac{d}{d\l}Y_k^0(x, \l)}_{\bC^n} \le M_h,
 \qquad \l \in \Pi_h, \quad x \in [0, \ell], \quad k \in \oneton, \\
\label{eq:A0.Delta0<M}
 & |a_{jk}^0(\l)| + |(a_{jk}^0)'(\l)| + |A_{jk}^0(\l)| + |\Delta_0(\l)| \le M_h,
 \qquad \l \in \Pi_h, \quad j, k \in \oneton. \\
\label{eq:Phij0.Yk0.norm}
 & \|\Phi_k^0(\cdot, \l)\|_{\fH} \le M_h, \qquad
 \|Y_k^0(\cdot, \l)\|_{\fH} \le M_h,
 \qquad \l \in \Pi_h, \quad k \in \oneton.
\end{align}
\end{lemma}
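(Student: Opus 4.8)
The statement is essentially a bookkeeping exercise: all quantities involved are built, via finitely many algebraic operations, from the single scalar $e^{i\l\rho_k(x)}$ together with the fixed matrices $C$, $D$, so the entire proof reduces to one a priori bound on that exponential and then propagating it. The plan is to proceed in the order in which the three displayed inequalities are stated, each one feeding into the next.

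First I would establish the basic estimate $\bigl|e^{i\l\rho_k(x)}\bigr|\le M_h$ for $\l\in\Pi_h$ and $x\in[0,\ell]$. Indeed $\bigl|e^{i\l\rho_k(x)}\bigr|=e^{-\rho_k(x)\Im\l}\le e^{|\rho_k(x)|\,|\Im\l|}\le e^{h\|\beta_k\|_1}$, since $|\rho_k(x)|=\bigl|\int_0^x\beta_k(t)\,dt\bigr|\le\|\beta_k\|_{L^1[0,\ell]}$ and $|\Im\l|\le h$ on $\Pi_h$. This already gives the first half of \eqref{eq:e.rhokx<M.dlYk0<M} with $M_h$ the maximum of these finitely many constants. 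For the second half, I would use the explicit formula \eqref{eq:Yp0xl.def}: $Y_k^0(x,\l)=\col\bigl(A_{1k}^0(\l)e^{i\l\rho_1(x)},\dots,A_{nk}^0(\l)e^{i\l\rho_n(x)}\bigr)$, differentiate in $\l$ by the product rule, and bound each resulting term using the exponential estimate above together with the bounds on $A_{jk}^0$ and $(a_{jk}^0)'$ that come next; the factors $\rho_j(x)$ produced by differentiating the exponential are themselves bounded by $\|\beta_j\|_1$. (Logically one first proves \eqref{eq:A0.Delta0<M}, then returns to finish \eqref{eq:e.rhokx<M.dlYk0<M}; since everything is uniform, the order of exposition is immaterial.)

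Next, for \eqref{eq:A0.Delta0<M}, recall from \eqref{eq:A0.def} that $a_{jk}^0(\l)=c_{jk}+d_{jk}e^{i\l b_k}$, so $|a_{jk}^0(\l)|\le|c_{jk}|+|d_{jk}|\,e^{h|b_k|}$ and $|(a_{jk}^0)'(\l)|=|d_{jk}|\,|b_k|\,|e^{i\l b_k}|\le|d_{jk}|\,|b_k|\,e^{h|b_k|}$, using $|b_k|=|\rho_k(\ell)|\le\|\beta_k\|_1$. Since $A_{jk}^0(\l)$ is, up to sign, an $(n-1)\times(n-1)$ minor of $A_0(\l)$ (formula \eqref{eq:Aajk0.def}) and $\Delta_0(\l)=\det A_0(\l)$, both are polynomials in the entries $a_{pq}^0(\l)$ with integer coefficients depending only on $n$; expanding the determinant and the minors and applying the triangle inequality term by term yields a bound depending only on $h$, on $C$, $D$, and on the $\|\beta_k\|_1$. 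Finally, for \eqref{eq:Phij0.Yk0.norm} I would use the definition \eqref{eq:fHk.def} of the norm in $\fH_k$: from \eqref{eq:Phi0k.def}, $\|\Phi_k^0(\cdot,\l)\|_\fH^2=\int_0^\ell|e^{i\l\rho_k(x)}|^2|\beta_k(x)|\,dx\le M_h^2\|\beta_k\|_1$, and similarly $\|Y_k^0(\cdot,\l)\|_\fH^2=\sum_{j=1}^n|A_{jk}^0(\l)|^2\int_0^\ell|e^{i\l\rho_j(x)}|^2|\beta_j(x)|\,dx\le n\,M_h^2\,(\max_j M_h^2)\sum_j\|\beta_j\|_1$; taking square roots and enlarging $M_h$ if necessary finishes the proof.

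There is no real obstacle here — every step is an elementary estimate — so the only thing requiring care is organizing the constants: $M_h$ must be chosen as a single number that simultaneously dominates all the finitely many expressions appearing, and one must check at the end that it depends only on the advertised data $\bigl(h,\ C,\ D,\ \{\|\beta_k\|_1\}_{k=1}^n\bigr)$ and not, for instance, on $Q$ (which never enters) or on $\l$. I would simply define $M_h$ at the end as the maximum over all the explicit bounds produced along the way.
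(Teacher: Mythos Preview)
Your proposal is correct and follows essentially the same route as the paper: bound the exponential via $|\rho_k(x)|\le\|\beta_k\|_1$ and $|\Im\l|\le h$, deduce the bounds on $a_{jk}^0$, $(a_{jk}^0)'$, then on the minors $A_{jk}^0$ and $\Delta_0$ by expanding determinants, then on $\|\Phi_k^0\|_\fH$ and $\|Y_k^0\|_\fH$ via the integral definition of the $\fH$-norm, and finally on $\frac{d}{d\l}Y_k^0$ by differentiating the explicit formula; the paper likewise defines $M_h$ as the maximum of all the explicit constants produced along the way.
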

\begin{proof}
Let $\l \in \Pi_h$ be fixed for the entire proof. Since $\beta_k \in L^1([0,\ell];\bR)$, $k \in \oneton$, it follows that
\begin{equation} \label{eq:rhokx<}
 |\rho_k(x)| = \abs{\int_0^x \beta_k(t) dt} \le b_0, \qquad x \in [0, \ell], \quad k \in \oneton,
\end{equation}
where $b_0 := \max\{\|\beta_1\|_1, \ldots, \|\beta_n\|_1\}$. Since $\rho_k(\cdot)$ are real-valued functions, then
\begin{equation} \label{eq:exp.rhokx<}
 \abs{e^{i \l \rho_k(x)}} = e^{- \Im \l \cdot \rho_k(x)} \le e^{b_0 h},
 \qquad x \in [0, \ell], \quad k \in \oneton.
\end{equation}
Similarly, for $\l \in \Pi_h$, $j,k \in \oneton$, we have
\begin{equation} \label{eq:ajk.ajk'<}
 |a_{jk}^0(\l)| \le |c_{jk}| + |d_{jk}| e^{b_0 h}
 \le c_0 + d_0 e^{b_0 h} =: \gam_h,
 \qquad |(a_{jk}^0)'(\l)| \le |b_k d_{jk}| e^{b_0 h} \le b_0 d_0 e^{b_0 h},
\end{equation}
where
\begin{equation} \label{eq:c0.d0.def}
 c_0 := \max\{|c_{jk}| : j, k \in \oneton\}, \qquad
 d_0 := \max\{|d_{jk}| : j, k \in \oneton\}.
\end{equation}
From the definition of the adjugate matrix it follows that $(-1)^{j+k} A_{jk}^0(\l)$ is the determinant of some $(n-1) \times (n-1)$ submatrix of $A_0(\l)$. Moreover, $\Delta_0(\l) = \det A_0(\l)$. Hence~\eqref{eq:ajk.ajk'<} implies that
\begin{equation} \label{eq:Ajk<}
 |A_{jk}^0(\l)| \le (n-1)! \gam_h^{n-1}, \qquad
 |\Delta_0(\l)| \le n! \gam_h^{n}, \qquad
 j, k \in \oneton.
\end{equation}

With account of~\eqref{eq:Phi0k.def} and~\eqref{eq:rhok.bk.def} we have after making a change of variable $u = \rho_j(x)$,
\begin{align}
\label{eq:Phij0.norm}
 \|\Phi_j^0(\cdot, \l)\|_{\fH}^2 & =
 \int_0^{\ell} \abs{\exp\(i \l \rho_j(x)\)}^2 |\beta_j(x)| dx
 \le \max_{x \in [0,\ell]} \abs{\exp\(i \l \rho_j(x)\)}^2 \|\beta_j\|_1
 \le b_0 e^{2 b_0 h}.
\end{align}
Combining formula~\eqref{eq:Ypxl.def} with estimates~\eqref{eq:Ajk<} and~\eqref{eq:Phij0.norm} we arrive at
\begin{multline}
 \|Y_k^0(\cdot, \l)\|_{\fH} =
 \norm{\sum_{j=1}^n A_{jk}^0(\l) \Phi_j^0(\cdot, \l)}_{\fH} \le
 \sum_{j=1}^n \abs{A_{jk}^0(\l)} \cdot \|\Phi_j^0(\cdot, \l)\|_{\fH} \\
 \le n! \gam_h^{n-1} b_0^{1/2} e^{b_0 h},
 \qquad k \in \oneton.
\end{multline}
Derivative $\frac{d}{d\l}Y_k^0(x, \l)$ can be estimated similarly. Since
$$
\frac{d}{d\l} \Phi_j^0(x, \l) = i \rho_j(x) e^{i \l \rho_j(x)} \col(\delta_{1j}, \ldots, \delta_{nj}), \qquad x \in [0,\ell], \quad \l \in \bC,
\quad j \in \oneton,
$$
it is clear, that each entry of the vector function $\frac{d}{d\l}Y_k^0(x, \l)$ is a polynomial in $e^{i b_j \l}, e^{i \rho_j(x) \l}, b_j, \rho_j(x)$, $c_{jp}, d_{jp}$. This observation and estimates~\eqref{eq:rhokx<}--\eqref{eq:exp.rhokx<} imply that
$$
 \norm{\frac{d}{d\l}Y_k^0(x, \l)}_{\bC^n} \le \wt{M}_h
 \qquad x \in [0,\ell], \quad \l \in \Pi_h,
 \quad k \in \oneton,
$$
for some $\wt{M}_h > 0$. Thus, setting
\begin{equation}
 M_h := \max\{e^{b_0 h}, \ \ \gam_h + b_0 d_0 e^{b_0 h} + (n-1)! \gam_h^{n-1} + n! \gam_h^n, \ \ n! \gam_h^{n-1} b_0^{1/2} e^{b_0 h}, \ \ \wt{M}_h\},
\end{equation}
and combining all the estimates established above we arrive at~\eqref{eq:e.rhokx<M.dlYk0<M}--\eqref{eq:Phij0.Yk0.norm}.
\end{proof}
\subsection{Regular boundary conditions} \label{subsec:regular}
Considerations of this and the next subsection are performed only in terms of number $b_1, \ldots, b_n$ given by~\eqref{eq:rhok.bk.def} without their connection to original functions $\beta_1, \ldots, \beta_n$. Hence, the only condition we need to impose in this and the next subsection is condition~\eqref{eq:b1.bn}, i.e.\ that numbers $b_1, \ldots, b_n$ are ordered and non-zero. Let us also set
\begin{equation} \label{eq:b-+.def}
 b_- := b_1 + \ldots + b_{n_-} \le 0 \qquad\text{and}\qquad
 b_{+} := b_{n_-+1} + \ldots + b_n \ge 0.
\end{equation}
Note that if $n_-=0$ then $b_- = 0$ and if $n_-=n$ then $b_+=0$.

Let us recall the definition of regular boundary conditions from the introduction. Note that considerations below are valid without canonical ordering~\eqref{eq:b1.bn}. To this end, let $\cP_n$ be the set of diagonal idempotent $n \times n$ matrices:
\begin{equation} \label{eq:cP.def}
 \cP_n := \{P = \diag(p_1, \ldots, p_n) :
 \ p_k \in \{0, 1\}, \ k \in \oneton\}.
\end{equation}
For any $P \in \cP_n$ we put
\begin{equation} \label{eq:TP.CD.def}
 J_P := J_P(C,D) := \det(T_P(C,D)), \qquad T_P(C,D) := C (I_n - P) + D P.
\end{equation}
Finally, we set
\begin{equation} \label{eq:P.pm.def}
 P_{\pm} := \diag(p_1^{\pm}, \ldots, p_n^{\pm}). \qquad
 p_k^+ = \begin{cases} 1, & b_k>0, \\ 0, & b_k<0, \end{cases},
 \qquad
 p_k^- = \begin{cases} 0, &b_k>0, \\ 1, & b_k<0, \end{cases},
 \qquad k \in \oneton.
\end{equation}
Clearly $P_+ + P_- = I_n$ and $P_+$ (resp. $P_-$) is the projector onto the positive (resp. negative) part of the spectrum of the signature matrix $S = \sign(B(\cdot)) \equiv \const$.
\begin{definition} \label{def:regular}
Boundary conditions~\eqref{eq:Uy=0} for equation~\eqref{eq:LQ.def.reg} are called \textbf{regular} if
\begin{equation} \label{eq:regular.def}
 J_{P_+}(C, D) = \det(C P_- + D P_+) \ne 0
 \quad\text{and}\quad
 J_{P_-}(C, D) = \det(C P_+ + D P_-) \ne 0.
\end{equation}
\end{definition}
Let us obtain some general properties of the characteristic determinant $\Delta_0(\cdot)$.
\begin{lemma}
With account of the notations $\cP_n$, $J_P(C,D)$, and $b_k := \rho_k(\ell) = \int_0^{\ell} \beta_k(x) dx$, the characteristic determinant $\Delta_0$ admits a representation
\begin{equation} \label{eq:Delta0.sum}
 \Delta_0(\l) = \sum_{P \in \cP_n} J_P(C,D) e^{i \l b_P}, \qquad
 \quad b_P := \sum_{k=1}^n p_k b_k.
\end{equation}
\end{lemma}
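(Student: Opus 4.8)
The statement to prove is the representation
\[
\Delta_0(\l) = \det(C + D\Phi_0(\ell,\l)) = \sum_{P\in\cP_n} J_P(C,D)\, e^{i\l b_P},
\qquad b_P = \sum_{k=1}^n p_k b_k,
\]
so the whole thing is a straightforward multilinear expansion of a determinant. The plan is to start from the explicit form of $A_0(\l) = C + D\Phi_0(\ell,\l)$ recorded in~\eqref{eq:A0.def}, namely $a_{jk}^0(\l) = c_{jk} + d_{jk} e^{i\l b_k}$, and expand $\det(a_{jk}^0(\l))_{j,k=1}^n$ by multilinearity in the columns. Column $k$ of $A_0(\l)$ is the sum of two vectors: the $k$-th column $C_{(k)}$ of $C$ and the vector $e^{i\l b_k}\,D_{(k)}$, where $D_{(k)}$ is the $k$-th column of $D$. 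Expanding the determinant over all $2^n$ ways of picking one of the two summands in each column produces a sum indexed precisely by subsets of $\{1,\dots,n\}$, equivalently by matrices $P=\diag(p_1,\dots,p_n)\in\cP_n$: $p_k=1$ means we pick the $D$-column $e^{i\l b_k}D_{(k)}$, $p_k=0$ means we pick the $C$-column $C_{(k)}$.

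For a fixed $P\in\cP_n$, the corresponding term is $\det$ of the matrix whose $k$-th column is $C_{(k)}$ if $p_k=0$ and $e^{i\l b_k}D_{(k)}$ if $p_k=1$. Pulling the scalar $e^{i\l b_k}$ out of each column with $p_k=1$ gives a factor $\prod_{k:p_k=1} e^{i\l b_k} = e^{i\l\sum_k p_k b_k} = e^{i\l b_P}$, and what remains is exactly $\det$ of the matrix $C(I_n-P)+DP = T_P(C,D)$ (column $k$ is $C_{(k)}$ when $p_k=0$, i.e.\ contributed by $C(I_n-P)$, and $D_{(k)}$ when $p_k=1$, i.e.\ contributed by $DP$). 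That determinant is $J_P(C,D)$ by definition~\eqref{eq:TP.CD.def}. Summing over $P\in\cP_n$ yields the claimed formula. Since $\Phi_0(\ell,\l)=\diag(e^{i\l b_1},\dots,e^{i\l b_n})$ is literally the diagonal matrix of these exponentials, there is nothing further to justify about $A_0(\l)$.

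There is no real obstacle here; the only point requiring a line of care is bookkeeping in the column expansion, i.e.\ checking that the matrix remaining after extracting the exponential factors for a given $P$ is indeed $C(I_n-P)+DP$ rather than some permuted or transposed variant. This is immediate once one writes $C(I_n-P)$ as ``$C$ with the columns indexed by $\{k:p_k=1\}$ zeroed out'' and $DP$ as ``$D$ with the columns indexed by $\{k:p_k=0\}$ zeroed out'', so their sum has $k$-th column $C_{(k)}$ or $D_{(k)}$ according to $p_k$, matching the term in the multilinear expansion after the scalars are removed. The identity $P_++P_-=I_n$ together with the special choices $P=P_\pm$ then reconciles the general formula with the two determinants appearing in Definition~\ref{def:regular}, though that is not needed for the proof of the lemma itself. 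I would therefore present the argument in three short steps: (1) write $A_0(\l)=C+D\Phi_0(\ell,\l)$ with $\Phi_0(\ell,\l)$ diagonal; (2) expand the determinant by multilinearity over the columns, indexing the $2^n$ terms by $P\in\cP_n$; (3) extract the exponential factors and identify each remaining determinant as $J_P(C,D)$.
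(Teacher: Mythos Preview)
Your proposal is correct and follows essentially the same approach as the paper: write each column of $A_0(\l)$ as $c_k + e^{i\l b_k}d_k$ and expand the determinant by multilinearity over columns, indexing the $2^n$ resulting terms by $P\in\cP_n$. The paper's proof is simply a terser version of what you wrote, omitting the explicit verification that the remaining matrix after extracting scalars is $T_P(C,D)=C(I_n-P)+DP$.
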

\begin{proof}
Denoting by $c_k$ and $d_k$, $k \in \oneton$, the columns of the matrices $C$ and $D$, respectively, we write them in the form $C = (c_1 \ \ldots \ c_n)$ and $D = (d_1 \ \ldots \ d_n)$.
In accordance with~\eqref{eq:Phi0k.def}, $\Phi_0(\ell,\l) = \diag(e^{i \l b_1}, \ldots, e^{i \l b_n})$. Hence
$$
A_0(\l) = C + D \Phi_0(\ell,\l) =
\(c_1 + e^{i \l b_1} d_1 \ \ \ldots \ \ c_n + e^{i \l b_n} d_n\).
$$
Formula~\eqref{eq:Delta0.sum} easily follows from this representation and the general formula for the determinant of the sum of two matrices as the sum of determinants of all $2^n$ matrices, where for each such matrix we choose either $c_k$ or $e^{i \l b_k} d_k$ as the $k$-th column.
\end{proof}
To obtain further properties of the characteristic determinant $\Delta_0(\cdot)$, we need to recall some definitions.
\begin{definition}[\cite{LunMal16JMAA,Katsn71}] \label{def:incompressible}
The sequence $\fM$ is called \textbf{incompressible} if for some $d \in \bN$ every rectangle $[t-1,t+1] \times \bR \subset \bC$ contains at most $d$ entries of the sequence, i.e.
\begin{equation} \label{eq:card.incomp}
 \card\{m \in \bZ : |\Re \mu_m - t| \le 1 \} \le d, \quad t \in \bR.
\end{equation}
To emphasize parameter $d$ we will sometimes call $\fM$ an incompressible
sequence of density $d$.
\end{definition}
\begin{definition}[\cite{Lev61}] \label{def:sinetype}
An entire function $F(\cdot)$ of exponential type is said to be of \textbf{sine-type} if

\textbf{(i)} all zeros of $F(\cdot)$ lie in the strip $\Pi_h$
for some $h \ge 0$, and

\textbf{(ii)} there exists $C_1, C_2 > 0$ and $h_0 > h$ such
that
\begin{equation} \label{eq:C1<|Fz|<C2}
 0 < C_1 \le |F(x + ih_0)| \le C_2 <\infty, \quad x \in \bR.
\end{equation}
\end{definition}
This definition is borrowed from~\cite{Lev61} (see
also~\cite{Katsn71}). It differs from that contained
in~\cite{Lev96}. Namely, it is assumed in~\cite{Lev96} that the
sequence of zeros of $F(\cdot)$ is \emph{separated} and the
indicator function $h_F(\cdot)$ of $F(\cdot)$,
\begin{equation} \label{eq:hF.phi.def}
 h_F(\phi) := \varlimsup_{r \to +\infty}
 \frac{\ln\left|F\left(r e^{i \phi}\right)\right|}{r},
 \quad \phi \in (-\pi,\pi],
\end{equation}
satisfies the condition $h_F(\pi/2) = h_F(-\pi/2)$. The latter
is imposed for convenience and can easily be achieved with
multiplying $F(\cdot)$ by a function $e^{i\gam z}$ with
an appropriate $\gam \in \bR.$
\begin{lemma} \label{lem:Delta0.prop}
Let boundary conditions be regular~\eqref{eq:Uy=0}. Then the following statements hold:

\textbf{(i)} The characteristic determinant $\Delta_0(\cdot)$ is a sine-type function with $h_{\Delta_0}(\pi/2) = -b_-$ and $h_{\Delta_0}(-\pi/2) = b_+$. In particular, $\Delta_0(\cdot)$ has infinitely many zeros
\begin{equation} \label{eq:Lam0.def}
 \L_0 := \{\l_m^0\}_{m \in \bZ}
\end{equation}
counting multiplicity and $\L_0 \subset \Pi_h$ for some $h>0$.

\textbf{(ii)} The sequence $\L_0$ is incompressible.

\textbf{(iii)} For any $\eps > 0$ the determinant $\Delta_0(\cdot)$ admits the following estimate from below
\begin{equation} \label{eq:Delta0>C.exp}
 |\Delta_0(\l)| > C_{\eps} (e^{-\Im \l \cdot b_-} + e^{-\Im \l \cdot b_+})
 > C_{\eps}, \qquad \l \in \bC \setminus \bigcup_{m \in \bZ} \bD_{\eps}(\l_m^0),
\end{equation}
with some $C_{\eps} > 0$, where numbers $b_{\pm}$ are given by~\eqref{eq:b-+.def}.

\textbf{(iv)} The sequence $\L_0$ can be ordered in such a way that the following asymptotical formula holds
\begin{equation} \label{eq:lam0.n=an+o1}
 \l_m^0 = \frac{2 \pi m}{b_+ - b_-} (1 + o(1)) \quad\text{as}\quad m \to \infty.
\end{equation}
\end{lemma}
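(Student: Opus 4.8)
\textbf{Proof plan for Lemma~\ref{lem:Delta0.prop}.}
The plan is to derive all four statements from the explicit representation~\eqref{eq:Delta0.sum}, i.e.\ $\Delta_0(\l) = \sum_{P \in \cP_n} J_P(C,D) e^{i \l b_P}$, treating $\Delta_0$ as an exponential sum (a quasi-polynomial in $e^{i\l}$). The key structural observation is that regularity~\eqref{eq:regular.def} precisely guarantees that the two extreme exponents $b_{P_-} = b_-$ and $b_{P_+} = b_+$ (in the sense that $b_- \le b_P \le b_+$ for every $P \in \cP_n$, with equality only for $P = P_-$ and $P = P_+$ respectively, by~\eqref{eq:b1.bn} and the definition~\eqref{eq:P.pm.def} of $P_\pm$) carry nonzero coefficients $J_{P_-}(C,D) \ne 0$ and $J_{P_+}(C,D) \ne 0$. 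All the analytic behavior of $\Delta_0$ in the upper/lower half-planes is governed by these two leading terms.

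For part (i): since $b_- \le b_P \le b_+$ for all $P$, the indicator function computation is routine --- $h_{\Delta_0}(\phi) = \max_P (-b_P \sin\phi)$ modifications, giving $h_{\Delta_0}(\pi/2) = -b_-$ and $h_{\Delta_0}(-\pi/2) = b_+$ using that $J_{P_\mp}\ne 0$. To verify the sine-type property (Definition~\ref{def:sinetype}), first I would localize the zeros: on a horizontal line $\Im\l = h_0$ with $h_0$ large, the term $J_{P_+}(C,D)e^{i\l b_+}$ dominates (its modulus $|J_{P_+}|e^{-h_0 b_+}$ beats the sum of all other terms once $h_0$ is large enough, since $b_P < b_+$ for $P \ne P_+$), and symmetrically on $\Im\l = -h_0$ the term with $b_-$ dominates; this yields the two-sided bound~\eqref{eq:C1<|Fz|<C2} on $|\Delta_0(x \pm ih_0)|$ after factoring out the dominant exponential, and simultaneously shows all zeros lie in some strip $\Pi_h$. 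Existence of infinitely many zeros then follows from the fact that an entire function of exponential type that is not of the form $ce^{i\l\gamma}$ has infinitely many zeros (here $b_- < b_+$ since at least one $b_k>0$ or we are in a trivial degenerate case --- if $n_- = n$ or $n_- = 0$ one of $b_\pm$ vanishes but the other does not, so still $b_+ > b_-$).

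For part (ii), incompressibility: this is a standard property of sine-type functions --- by a Rolle/argument-principle count on each rectangle $[t-1,t+1]\times[-h_0,h_0]$, using the lower bound~\eqref{eq:C1<|Fz|<C2} on the horizontal sides and the exponential-type growth bound $|\Delta_0(\l)| \le M e^{(b_+ - b_-)|\l|}$ to control the argument variation on the vertical sides; the number of zeros in each such rectangle is bounded by a constant $d$ depending only on $C_1, C_2, M$ and $b_+ - b_-$. For part (iii), the lower bound away from the zeros is the classical estimate for sine-type functions (e.g.\ it follows from writing $\Delta_0$ as a canonical product over its zeros and using incompressibility, or by a normal-families/compactness argument on vertical strips combined with periodicity-in-the-limit of the two leading terms); the factor $e^{-\Im\l\cdot b_-} + e^{-\Im\l\cdot b_+}$ is exactly the size of the dominant term in each half-plane. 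Part (iv) is the asymptotic distribution of zeros: since the two leading terms give $\Delta_0(\l) \approx J_{P_-}e^{i\l b_-} + J_{P_+}e^{i\l b_+}$ for $|\Re\l|$ large (the intermediate terms being relatively small only after dividing by $e^{i\l b_-}$... this is the subtle point), the zeros asymptotically approach the zeros of $J_{P_-} + J_{P_+}e^{i\l(b_+ - b_-)}$, which form an arithmetic progression with step $\frac{2\pi}{b_+ - b_-}$; a Rouché argument in small disks around these approximate zeros, valid for $|m|$ large, upgrades this to~\eqref{eq:lam0.n=an+o1}.

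\textbf{Main obstacle.} The delicate point throughout is that the ``intermediate'' exponents $b_-< b_P < b_+$ do \emph{not} decay relative to the leading ones on the real axis --- only in the open half-planes. So the clean ``two-term'' picture used for (i), (iii), (iv) genuinely requires working at $|\Im\l| = h_0 > 0$ (or on circles $|\l| = r$ with careful attention to the sectors near the real axis) rather than on $\bR$ itself; the real-axis behavior must then be recovered indirectly, via the maximum principle / Phragmén--Lindelöf applied to $\Delta_0(\l)e^{-i\l b_+}$ on the upper half-plane and $\Delta_0(\l)e^{-i\l b_-}$ on the lower half-plane. I expect the bulk of the work to be in making the Rouché estimate in (iv) uniform in $m$, i.e.\ producing an explicit $r_0$ beyond which the remainder $\sum_{b_- < b_P < b_+} J_P e^{i\l(b_P - b_-)}$ is strictly smaller than $|J_{P_-} + J_{P_+}e^{i\l(b_+-b_-)}|$ on the boundaries of the small disks --- this needs the fact that the approximate zeros stay at distance $\ge h_1 > 0$ from $\bR$ (which holds iff $J_{P_-}/J_{P_+}$ is not a negative... no: the approximate zeros $\l$ satisfy $e^{i\l(b_+-b_-)} = -J_{P_-}/J_{P_+}$, so $\Im\l = \frac{1}{b_+-b_-}\log|J_{P_-}/J_{P_+}|$ is a single fixed value, automatically bounded away from the strip boundary issue but NOT necessarily from $\bR$ --- so one must instead exploit incompressibility of the true zero set to keep the disks disjoint and the remainder controlled). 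That bookkeeping is the real content; everything else is a routine application of sine-type function theory, for which I would cite~\cite{Lev61,Katsn71} and the schema of~\cite[Proposition 4.7]{LunMal16JMAA}.
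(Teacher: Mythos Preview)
Your plan for parts (i)--(iii) is correct and essentially matches the paper: both identify that regularity forces the extreme exponents $b_{P_-}=b_-$ and $b_{P_+}=b_+$ in~\eqref{eq:Delta0.sum} to carry nonzero coefficients, verify the sine-type property via domination of the corresponding terms for large $|\Im\l|$ (the paper centers by multiplying through by $e^{-i\l(b_-+b_+)/2}$, which is exactly your ``factoring out the dominant exponential''), and then cite Katsnel'son~\cite{Katsn71} for (ii) and (iii).

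Your approach to (iv), however, has a real gap, and it is exactly the obstacle you flag yourself. The Rouch\'e comparison of $\Delta_0$ against the two-term function $J_{P_-}e^{i\l b_-}+J_{P_+}e^{i\l b_+}$ cannot be made to work in general: on the horizontal line carrying the zeros of the two-term function, every intermediate term $J_P e^{i\l b_P}$ with $b_-<b_P<b_+$ has modulus comparable to the leading ones, so the remainder need not be small on the boundaries of small disks around the approximate zeros. Your proposed fix via incompressibility does not repair this --- incompressibility bounds the \emph{number} of zeros in a rectangle but says nothing about their \emph{location} relative to the two-term progression. In fact the stronger statement you are aiming at (that $\l_m^0$ is close to an arithmetic progression) is simply false in general for exponential sums with intermediate terms.

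The point you are missing is that~\eqref{eq:lam0.n=an+o1} is \emph{not} an approximation-to-a-progression statement at all: it asserts only $\l_m^0/m \to 2\pi/(b_+-b_-)$, i.e.\ the correct \emph{density} of zeros. This follows directly from the zero-counting formula for sine-type functions --- apply the argument principle on rectangles $[-R,R]\times[-h_0,h_0]$, using the bounds you already established in (i) on the horizontal sides and the exponential-type bound on the vertical sides, to get $n(R)\sim \frac{(b_+-b_-)R}{\pi}$; then order the zeros by real part and invert. No Rouch\'e, no localization of individual zeros is needed. The paper handles (iv) by a one-line reference to~\cite[Proposition~4.6(iv)]{LunMal16JMAA}, which carries out exactly this density argument.
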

\begin{proof}
\textbf{(i-iii)} It follows from~\eqref{eq:Delta0.sum} that
\begin{equation} \label{eq:Delta0.sum2}
\Delta_0(\l) = \sum_{k=1}^N \gam_k e^{i \l \sigma_k}, \qquad \gam_k
:= \sum_{\genfrac{}{}{0pt}{2}{P \in \cP_n}{b_P = \sigma_k}} J_P(C,D), \quad k \in \onetoN,
\end{equation}
where $\sigma_1 < \ldots < \sigma_N$, $N \le 2^n$, are all distinct values in the set
$$
\{b_P : P \in \cP_n\} = \left\{\sum_{k \in S} b_k : S \subset \oneton\right\}
=: \{\sigma_1, \ldots, \sigma_N\},
$$
with $b_P$ defined by~\eqref{eq:Delta0.sum} for $P \in \cP_n$.

Taking into account definition~\eqref{eq:P.pm.def} of $P_-$, definition~\eqref{eq:Delta0.sum} of $b_P$ and definition~\eqref{eq:b-+.def} of $b_-$, we have
\begin{equation} \label{eq:sigma1}
 \sigma_1 = \min_{P \in \cP_n} b_P = \sum_{b_k < 0} b_k = b_{P-},
 \quad\text{hence}\quad \sigma_1 = b_1 + \ldots + b_{n_-} = b_-.
\end{equation}
It is also clear that $b_P > b_{P_-}$ whenever $P \ne P_-$, $P \in \cP_n$. Hence $\gam_1 = J_{P_-}(C,D) \ne 0$, since boundary conditions are regular (see~\eqref{eq:regular.def}). Similarly
\begin{equation} \label{eq:sigman}
 \sigma_N = \max_{P \in \cP_n} b_P = \sum_{b_k > 0} b_k = b_{P+} = b_+
 \quad\text{and}\quad \gam_N = J_{P_+}(C,D) \ne 0.
\end{equation}

Thus, formula~\eqref{eq:Delta0.sum2} for $\Delta_0(\cdot)$ turns into,
\begin{equation} \label{eq:Delta0.sum+-}
 \Delta_0(\l) = J_{P_-}(C,D) e^{i \l b_-} + J_{P_+}(C,D) e^{i \l b_+} + \sum_{k=2}^{N-1} \gam_k e^{i \l \sigma_k}, \qquad \l \in \bC.
\end{equation}
This immediately implies that $\Delta_0(\l) \not\equiv 0$. Moreover, if $\Delta_0(\cdot)$ has no zeros, then canonical factorization for entire functions of exponential type implies that $\Delta_0(\l) = e^{\alp + \beta \l}$, $\l \in \bC$, for some $\alp, \beta \in \bC$. This contradicts representation~\eqref{eq:Delta0.sum+-}. Hence $\Delta_0(\cdot)$ has zeros. In turn, since $\Delta_0(\cdot) \not\equiv 0$, has zeros and bounded on the real line, the canonical factorization of entire functions of exponential type implies that the set of zeros of $\Delta_0$ is countable.
It is clear from~\eqref{eq:Delta0.sum+-} that function $f(\l) := \exp(-i \l \frac{b_- + b_+}{2}) \Delta_0(\l)$ satisfy the following uniform estimate for some $h \ge 0$,
\begin{equation} \label{eq:f.sin.type}
 \tau_h^{-1} e^{\sigma |\Im \l|} \le |f(\l)| \le \tau_h e^{\sigma |\Im \l|} > 0,
 \qquad |\Im \l| > h,
\end{equation}
with some $\tau_h > 1$ that does not depend on $\l$. Here $\sigma := \frac{b_+ - b_-}{2} > 0$. It is clear from estimate~\eqref{eq:f.sin.type}
that $f(\cdot)$ is the sine-type function of exponential type $\sigma$, with $h_f(\pm \pi/2) = \sigma$. The desired properties of zeros as well as estimate~\eqref{eq:Delta0>C.exp} are now immediate from~\cite[Lemmas 3 and 4]{Katsn71}.

\textbf{(iv)} The proof is the same as in~\cite[Proposition 4.6(iv)]{LunMal16JMAA}.
\end{proof}
\begin{remark}
\textbf{(i)} Lemma~\ref{lem:Delta0.prop} remains valid if characteristic determinant $\Delta_0(\cdot)$ is not identically zero and has zeros. Indeed, it follows that at least two coefficients in~\eqref{eq:Delta0.sum2} are non-zero, i.e.
\begin{equation} \label{eq:Delta0.sum3}
 \Delta_0(\l) = \sum_{k=N_1}^{N_2} \gam_k e^{i \l \sigma_k},
 \quad\text{where}\quad \gam_{N_1} \gam_{N_2} \ne 0
 \quad\text{and}\quad 1 \le N_1 < N_2 \le N,
\end{equation}
which implies that it is a sine-type function with all the properties listed in Lemma~\ref{lem:Delta0.prop} if we replace $b_-$ with $\sigma_{N_1}$ and $b_+$ with $\sigma_{N_2}$. In particular, $h_{\Delta_0}(\pi/2) = -\sigma_{N_1}$ and $h_{\Delta_0}(-\pi/2) = \sigma_{N_2}$.

\textbf{(ii)} Note, that since $\sigma_1 = b_- \le 0$ and $\sigma_N = b_+ \ge 0$, identity~\eqref{eq:Delta0.sum2} implies that the indicator diagram of the entire function $\Delta_0(\cdot)$ is always contained in the vertical segment $[-i b_+, -i b_-]$ and coincides with it if and only if boundary conditions are regular. In other words, boundary conditions are regular if and only if the determinant $\Delta_0(\cdot)$ is of maximal possible growth in both half-planes $\bC_{\pm}$.

\textbf{(iii)} Let us clarify the previous remark when either $b_- = 0$ or $b_+ = 0$, where $b_{\pm}$ is given by~\eqref{eq:sigma1}--\eqref{eq:sigman}. In other words, entries of the matrix $B(x)$ are either all positive or all negative. In this case, regularity condition~\eqref{eq:regular.def} turns into $\det (CD) \ne 0$, since either $P_- = 0$ or, respectively, $P_+ =0$, where $P_{\pm}$ is given by~\eqref{eq:P.pm.def}. Since $b_- + b_+ = b_1 + \ldots + b_n$, it is clear that in both cases $\Delta_0(\cdot)$ is the entire function of exponential type $|b_1 + \ldots + b_n|$, growing in $\bC_+$, resp. $\bC_-$, and bounded from above and below in $\bC_-$, resp. $\bC_+$.
\end{remark}
Finally, we reduce regular boundary conditions~\eqref{eq:Uy=0} to a certain equivalent canonical form which is much simpler and convenient to work with.
\begin{lemma} \label{lem:CD.canon}
Let boundary conditions~\eqref{eq:Uy=0} be regular and assume that equations in~\eqref{eq:L0.def.reg} and boundary conditions~\eqref{eq:Uy=0} are reordered to make sure canonical ordering~\eqref{eq:b1.bn}, i.e.\ for some $n_- \in \{0, 1, \ldots, n\}$,
\begin{equation} \label{eq:bk<0>0}
 b_1 \le \ldots \le b_{n_-} < 0 < b_{n_- + 1} \le \ldots \le b_n.
\end{equation}
Then a pair of matrices $\{C, D\}$ determined by the linear form $Uy=0$ in~\eqref{eq:Uy=0} can be chosen to admit the following triangular block-matrix representation with respect to the orthogonal decomposition
$\bC^n = \bC^{n_-} \oplus \bC^{n_+}$, where $n_+ = n - n_-$:
\begin{equation} \label{eq:CD.canon}
 C = \begin{pmatrix} I_{n_-} & C_{12} \\ \bO & C_{22} \end{pmatrix}, \qquad
 D = \begin{pmatrix} D_{11} & \bO \\ D_{21} & I_{n_+} \end{pmatrix},
\end{equation}
for some
matrices $C_{12}, C_{22}, D_{11}, D_{21}$.
Here, in the case $n_-=0$, $n_+=n$ the canonical form is $\{C=I_n, D\}$ with any arbitrary invertible $D$, while in the case $n_-=n$, $n_+=0$ the canonical form is $\{C, D=I_n\}$ with arbitrary invertible $C$.
\end{lemma}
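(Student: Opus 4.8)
The plan is to exploit the non-uniqueness of the pair $\{C,D\}$: replacing $\{C,D\}$ by $\{XC, XD\}$ for any $X \in \GL(n,\bC)$ leaves the linear form $U$ unchanged up to equivalence, so it suffices to find an invertible $X$ bringing the block-columns of $C$ and $D$ into the stated triangular shape. First I would write $C$, $D$ in block form with respect to $\bC^n = \bC^{n_-}\oplus\bC^{n_+}$, say
\begin{equation*}
 C = \begin{pmatrix} C_{11} & C_{12} \\ C_{21} & C_{22}\end{pmatrix}, \qquad
 D = \begin{pmatrix} D_{11} & D_{12} \\ D_{21} & D_{22}\end{pmatrix},
\end{equation*}
and recall that in the present ordering the spectral projections are $P_- = \diag(I_{n_-},\bO)$ and $P_+ = \diag(\bO,I_{n_+})$. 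With this notation the regularity conditions~\eqref{eq:regular.def} read
\begin{equation*}
 \det(CP_- + DP_+) = \det\begin{pmatrix} C_{11} & D_{12} \\ C_{21} & D_{22}\end{pmatrix} \ne 0
 \quad\text{and}\quad
 \det(CP_+ + DP_-) = \det\begin{pmatrix} D_{11} & C_{12} \\ D_{21} & C_{22}\end{pmatrix} \ne 0.
\end{equation*}

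Next I would produce the target form in two stages. The first regularity determinant says that the $n\times n_-$ block $\col(C_{11},C_{21})$ (the first $n_-$ columns of $C$) together with the $n\times n_+$ block $\col(D_{12},D_{22})$ (the last $n_+$ columns of $D$) form an invertible $n\times n$ matrix; call it
\begin{equation*}
 M := \begin{pmatrix} C_{11} & D_{12} \\ C_{21} & D_{22}\end{pmatrix} \in \GL(n,\bC).
\end{equation*}
Multiplying both $C$ and $D$ on the left by $M^{-1}$ replaces the first $n_-$ columns of $C$ by $\col(I_{n_-},\bO)$ and the last $n_+$ columns of $D$ by $\col(\bO,I_{n_+})$; that is, after this step $C_{11}=I_{n_-}$, $C_{21}=\bO$, $D_{12}=\bO$, $D_{22}=I_{n_+}$, so $C$ and $D$ already have the block-triangular shape of~\eqref{eq:CD.canon}, with the remaining blocks $C_{12},C_{22},D_{11},D_{21}$ arbitrary consequences of the multiplication. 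Left multiplication by an invertible matrix preserves the second regularity determinant up to the nonzero scalar $\det M^{-1}$, so regularity is not destroyed; in fact with the new blocks the second determinant becomes $\det\begin{pmatrix} D_{11} & C_{12} \\ D_{21} & C_{22}\end{pmatrix}$, which is automatically nonzero. One should also check $\rank(C\ D)=n$ is preserved, which is immediate since $\rank(XC\ XD) = \rank(C\ D)$ for invertible $X$.

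For the degenerate endpoints $n_-=0$ (all $b_k>0$) one has $P_-=\bO$, $P_+=I_n$, the first regularity condition is $\det D\ne0$, and multiplying $\{C,D\}$ by $D^{-1}$ on the left gives $\{D^{-1}C, I_n\}$ — wait, here the roles: with $n_-=0$ the claimed form is $\{C=I_n, D\ \text{invertible}\}$, obtained by multiplying by $C^{-1}$, which is legitimate since regularity then forces $\det C\ne0$ as well via the second condition $\det(CP_+ + DP_-)=\det C\ne0$. The case $n_-=n$ is symmetric, multiplying by $D^{-1}$. I do not anticipate a genuine obstacle here: the only point needing a little care is verifying that the matrix $M$ assembled from mixed columns of $C$ and $D$ is exactly $CP_- + DP_+$ (so that its invertibility is precisely the first regularity hypothesis), and that after the normalization the second regularity determinant retains its meaning — both are routine block-matrix bookkeeping. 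The step most likely to require attention in the write-up is simply keeping the column/row index conventions consistent between the $P_\pm$ notation and the $\bC^{n_-}\oplus\bC^{n_+}$ block decomposition.
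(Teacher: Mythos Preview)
Your proposal is correct and follows essentially the same route as the paper: identify $M = CP_- + DP_+$, note it is invertible by the first regularity condition, and left-multiply $\{C,D\}$ by $M^{-1}$ to force the required block-triangular shape. The paper's proof is just this, stated in slightly different notation (it calls $M$ by the name $T_{P_+}(\wh C,\wh D)$), and omits the explicit discussion of the endpoint cases $n_-\in\{0,n\}$ that you included.
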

\begin{proof}
Definition~\eqref{eq:P.pm.def} and relation~\eqref{eq:bk<0>0} imply that matrices $P_{\pm}$ admit the following block-matrix representation with respect to the orthogonal decomposition $\bC^n = \bC^{n_-} \oplus \bC^{n_+}$:
\begin{equation}
 P_- = \begin{pmatrix} I_{n_-} & \bO \\ \bO & \bO\end{pmatrix},
 \qquad
 P_+ = \begin{pmatrix} \bO & \bO \\ \bO & I_{n_+}\end{pmatrix}.
\end{equation}
Assume that boundary conditions in~\eqref{eq:Uy=0} are given by a pair $\{\wh{C}, \wh{D}\}$, i.e.
$Uy = \wh{C}y(0) + \wh{D}y(\ell)=0$. Consider their block-matrix representation with respect to the orthogonal decomposition $\bC^n = \bC^{n_-} \oplus \bC^{n_+}:$
\begin{equation} \label{eq:widehat_CD.block}
\wh{C} = \begin{pmatrix} \wh{C}_{11} & \wh{C}_{12} \\
\wh{C}_{21} & \wh{C}_{22}
\end{pmatrix},
\qquad
 \wh{D} = \begin{pmatrix} \wh{D}_{11} & \wh{D}_{12} \\
 \wh{D}_{21} & \wh{D}_{22} \end{pmatrix}.
\end{equation}
Definition~\ref{def:regular} of regularity (see~\eqref{eq:regular.def}) implies thatt $J_{P_+}(\wh{C}, \wh{D}) \cdot J_{P_-}(\wh{C}, \wh{D}) \ne 0$. In particular, one has
\begin{equation}
 T_{P_+}(\wh{C},\wh{D}) = \begin{pmatrix}
 \wh{C}_{11} & \wh{D}_{12} \\ \wh{C}_{21} & \wh{D}_{22} \end{pmatrix}
 = \wh{C} P_- + \wh{D} P_+
 \quad\text{and}\quad
 \det(T_{P_+}(\wh{C}, \wh{D})) = J_{P_+}(\wh{C}, \wh{D}) \ne 0.
\end{equation}
Hence, multiplying the equation $\wh{C}y(0) + \wh{D}y(\ell)=0$ by $T_{P_+}(\wh{C},\wh{D})^{-1}$ from the left we arrive at the equivalent equation with new matrices~\eqref{eq:CD.canon} instead of~\eqref{eq:widehat_CD.block}.
\end{proof}
\begin{remark} \label{rem:canon}
The proof remains valid for non-regular boundary conditions provided that $J_P(C,D) \ne 0$ for some $P \in \cP_n$, after a proper reordering of equations in~\eqref{eq:L0.def.reg} and boundary conditions~\eqref{eq:Uy=0}.
\end{remark}
\subsection{Strictly regular boundary conditions} \label{subsec:strict.regular}
Let us introduce a notion of strictly regular boundary conditions.
\begin{definition} \label{def:strict.regular}
\textbf{(i)} A sequence $\fM := \{\mu_m\}_{m \in \bZ}$ of complex numbers is called \textbf{separated} if for some $\delta > 0$,
\begin{equation} \label{separ_cond}
|\mu_j - \mu_k| > 2 \delta \quad \text{whenever}\quad j \ne k.
\end{equation}
In particular, all entries of a separated sequence are distinct.

\textbf{(ii)} The sequence $\fM$ is called \textbf{asymptotically separated} if for some $m_0 \in \bN$ the subsequence $\fM({m_0}) := \{\mu_m\}_{|m| > m_0}$ is separated.

\textbf{(iii)} Boundary conditions~\eqref{eq:Uy=0} are called \textbf{strictly regular}, if they are regular and the sequence of zeros $\L_0 = \{\l_m^0\}_{m \in \bZ}$ of the characteristic determinant $\Delta_0(\cdot)$ is asymptotically separated.
In particular, there is $m_0$ such that zeros $\{\l_m^0\}_{|m| > m_0}$
are algebraically (hence geometrically) simple.
\end{definition}
See the next subsection for concrete examples of strictly regular boundary conditions.
In this subsection we obtain certain estimates from below involving $\Delta_0'(\cdot)$ and the corresponding eigenvectors assuming boundary conditions to be strictly regular.
\begin{lemma} \label{lem:Delta'>}
Let boundary conditions~\eqref{eq:Uy=0} be strictly regular and let $\L_0 = \{\l_m^0\}_{m \in \bZ}$ be the sequence of eigenvalues of the operator $L_{0,U}$, counting multiplicity. Then there exist $\delta, C_0 > 0$, not dependent on $m$, and such that with $m_0$ from Definition~\ref{def:strict.regular}(iii) the following estimate from below holds
\begin{equation} \label{eq:Delta'>}
 |\Delta_0'(\l)| \ge C_0, \qquad \l \in \bD_{\delta}(\l_m^0), \quad |m| > m_0.
\end{equation}
\end{lemma}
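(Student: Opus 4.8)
The plan is to combine the sine-type structure of $\Delta_0(\cdot)$ established in Lemma~\ref{lem:Delta0.prop} with the asymptotic separation of its zeros coming from strict regularity. First I would invoke Lemma~\ref{lem:Delta0.prop}(i): $\Delta_0(\cdot)$ is a sine-type function, so all its zeros lie in a fixed strip $\Pi_h$ and, after multiplying by the unimodular exponential $e^{-i\l (b_-+b_+)/2}$, the function $f(\l) := e^{-i\l(b_-+b_+)/2}\Delta_0(\l)$ satisfies the two-sided bound~\eqref{eq:f.sin.type}: $\tau_h^{-1} e^{\sigma|\Im\l|} \le |f(\l)| \le \tau_h e^{\sigma|\Im\l|}$ for $|\Im\l|>h$, with $\sigma = (b_+-b_-)/2 > 0$. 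Since $|f(\l)| = |\Delta_0(\l)|$ for all $\l$, this gives both an upper bound for $|\Delta_0|$ on all of $\bC$ (polynomially in $e^{\sigma|\Im\l|}$, hence uniformly bounded on $\Pi_{h+1}$, say) and the lower bound~\eqref{eq:Delta0>C.exp} away from $\delta$-discs around the zeros.

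Next I would pass from the zero $\l_m^0$ to a circle on which $\Delta_0$ is bounded below. By strict regularity (Definition~\ref{def:strict.regular}(iii)) the subsequence $\{\l_m^0\}_{|m|>m_0}$ is separated: there is $\delta_0>0$ with $|\l_j^0-\l_k^0|>2\delta_0$ for distinct $j,k$ with $|j|,|k|>m_0$. Fix $\delta \in (0,\delta_0)$; then for $|m|>m_0$ the circle $|\l - \l_m^0| = \delta$ avoids every $\bD_{\delta}(\l_k^0)$ with $k\ne m$ (shrinking $\delta$ further if necessary to also avoid the finitely many ``bad'' discs with $|k|\le m_0$, using that $\L_0$ is incompressible by Lemma~\ref{lem:Delta0.prop}(ii) so only finitely many of them can come near any given $\l_m^0$, uniformly in $m$). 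Hence by~\eqref{eq:Delta0>C.exp} there is $C_\eps>0$ with $|\Delta_0(\l)| \ge C_\eps$ for $|\l-\l_m^0|=\delta$, $|m|>m_0$. Also, since $\L_0 \subset \Pi_h$ and $\delta$ is fixed, the disc $\bD_\delta(\l_m^0) \subset \Pi_{h+\delta}$, so the upper bound $|\Delta_0(\l)| \le M$ holds on all these discs with a single constant $M$ independent of $m$.

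Now I would use Cauchy's integral formula for the derivative on the disc $\bD_\delta(\l_m^0)$. Since $\Delta_0(\l_m^0)=0$ and (for $|m|>m_0$) $\l_m^0$ is a simple zero, write $\Delta_0(\l) = (\l-\l_m^0)g_m(\l)$ with $g_m$ holomorphic on $\bD_\delta(\l_m^0)$ and $\Delta_0'(\l_m^0) = g_m(\l_m^0)$. On the boundary $|g_m(\l)| = |\Delta_0(\l)|/\delta \ge C_\eps/\delta$, so by the minimum modulus principle (or, more elementarily, by $g_m(\l_m^0) = \frac{1}{2\pi i}\int_{|\l-\l_m^0|=\delta} \frac{g_m(\l)}{\l-\l_m^0}\,d\l$ combined with a lower bound obtained from $1/g_m$ being holomorphic — note $g_m$ is zero-free on $\bD_\delta(\l_m^0)$ because $\l_m^0$ is the only zero of $\Delta_0$ there) we get $|\Delta_0'(\l_m^0)| = |g_m(\l_m^0)| \ge C_\eps/\delta =: c_0 > 0$. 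Finally, to get the bound on the whole disc $\bD_\delta(\l_m^0)$ rather than just at the center as in~\eqref{eq:Delta'>}, I would apply the Cauchy estimate to $\Delta_0''$: since $|\Delta_0| \le M$ on $\bD_{2\delta}(\l_m^0) \subset \Pi_{h+2\delta}$ (enlarging slightly, again using the separation to keep circles disjoint from other discs is not needed for the upper bound), one has $|\Delta_0''(\l)| \le 2M/\delta^2$ on $\bD_\delta(\l_m^0)$, hence $|\Delta_0'(\l) - \Delta_0'(\l_m^0)| \le (2M/\delta^2)|\l-\l_m^0|$; choosing the final $\delta$ small enough that $(2M/\delta^2)\delta = 2M/\delta \le c_0/2$ is not quite right — instead I would simply shrink the radius in the statement: replace $\delta$ by $\delta' := \min(\delta, c_0\delta^2/(4M))$, so that on $\bD_{\delta'}(\l_m^0)$, $|\Delta_0'(\l)| \ge c_0 - (2M/\delta^2)\delta' \ge c_0/2 =: C_0$. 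This yields~\eqref{eq:Delta'>} with this $C_0$ and this reduced radius, all constants independent of $m$.

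The main obstacle is making the constants genuinely uniform in $m$: one must be careful that the ``bad discs'' $\bD_\delta(\l_k^0)$ with $|k| \le m_0$ (where $\Delta_0$ may vanish or be small) do not interfere with $\bD_\delta(\l_m^0)$ for large $|m|$, and that only finitely many zeros can be $\delta$-close to any $\l_m^0$. Both follow from Lemma~\ref{lem:Delta0.prop}: incompressibility bounds the local density of $\L_0$ uniformly, and asymptotic separation gives a uniform gap for $|m|>m_0$; together these let one choose a single $\delta>0$ for which the circles $|\l-\l_m^0|=\delta$, $|m|>m_0$, simultaneously lie in the good set $\bC\setminus\bigcup_k\bD_{\delta/2}(\l_k^0)$ of~\eqref{eq:Delta0>C.exp}. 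Everything else is a routine application of Cauchy estimates.
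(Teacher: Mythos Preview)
Your proof is correct and follows the same overall architecture as the paper: first obtain a uniform lower bound $|\Delta_0'(\l_m^0)| \ge c_0$ at the simple zeros, then bound $|\Delta_0''|$ uniformly on a slightly larger strip, and finally use the mean-value inequality to propagate the lower bound to a small disc. The difference lies in how the first step is carried out. The paper simply invokes a known property of sine-type functions with asymptotically separated zeros (Katsnel'son's~\cite[Lemma~5]{Katsn71}, cf.~\cite[Lecture~22]{Lev96}) to get $|\Delta_0'(\l_m^0)| \ge C_0'$ directly. You instead derive this from scratch: use separation to isolate each $\l_m^0$ as the unique zero in a disc $\bD_\delta(\l_m^0)$, apply the lower bound~\eqref{eq:Delta0>C.exp} on the boundary circle, factor $\Delta_0(\l)=(\l-\l_m^0)g_m(\l)$ with $g_m$ zero-free, and use the minimum modulus principle on $g_m$. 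This is a perfectly good self-contained substitute for the cited lemma; it is in fact essentially how that lemma is proved. The paper also bounds $|\Delta_0''|$ directly from the explicit exponential-polynomial form~\eqref{eq:Delta0.sum}, whereas you use a Cauchy estimate from the sup bound on $|\Delta_0|$; either works. Your caveat about the finitely many ``bad'' zeros $\l_k^0$ with $|k|\le m_0$ possibly approaching some $\l_m^0$ with $|m|>m_0$ is a genuine point, but your resolution is correct: there are only $2m_0{+}1$ such points, each can be $\delta$-close to at most one good zero (by incompressibility or by the asymptotic~\eqref{eq:lam0.n=an+o1}), so $\inf_{|m|>m_0}\dist(\l_m^0,\{\l_k^0:|k|\le m_0\})>0$ and a single $\delta$ works uniformly.
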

\begin{proof}
Since $\Delta(\cdot)$ is a sine-type function with asymptotically separated zeros, then in accordance with~\cite[Lemmas 5]{Katsn71} and~\cite[Lecture 22]{Lev96}
$$
|\Delta_0'(\l_m^0)| \ge C_0', \quad |m| > m_0,
$$
for some $C_0' > 0$. Lemma~\ref{lem:Delta0.prop} implies inclusion $\l_m^0 \in \Pi_h$, $m \in \bZ$. It follows from~\eqref{eq:Delta0.sum} (see also Lemma~\ref{lem:easy.upper.bound}) that for some $C_0'' > 0$
$$
|\Delta_0''(\l)| \le C_0'', \qquad |\Im \l| \le h+1. \qquad
$$
Hence Taylor expansion and inclusion $\l_m^0 \in \Pi_h$ yield
\begin{equation} \label{eq:Delta'.Taylor}
|\Delta_0'(\l)| \ge |\Delta_0'(\l_m^0)| - \int_{\l_m^0}^{\l} |\Delta_0''(z)| |dz| \ge
C_0' - |\l - \l_m^0| C_0'', \qquad |\l - \l_m^0| < 1, \quad |m| > m_0.
\end{equation}
Setting $\delta = \min\{1, \frac{C_0'}{2C_0''}\}$ and $C_0 = C_0' - \delta C_0'' \ge C_0'/2$ one derives that
for $|\l - \l_m^0| < \delta$ inequality~\eqref{eq:Delta'.Taylor} implies~\eqref{eq:Delta'>}.
\end{proof}
\begin{lemma} \label{lem:D0jk>}
Let boundary conditions~\eqref{eq:Uy=0} be strictly regular. Then there exist $\delta>0$ and $C_1 > 0$ such that with $m_0$ from Definition~\ref{def:strict.regular}(iii) the following estimate holds
\begin{equation} \label{eq:sum.adjug}
 \sum_{j,k=1}^n |A_{jk}^0(\l)| \ge C_1, \qquad \l \in \bD_{\delta}(\l_m^0),
 \quad |m| > m_0.
\end{equation}
In particular, for any $m$ satisfying $|m|>m_0$, there exist $j,k \in \oneton$ that depend on $m$ and such that
\begin{equation} \label{eq:D0jk>}
 |A_{jk}^0(\l)| \ge C_2 ( = C_1/n^2),
 \qquad \l \in \bD_{\delta}(\l_m^0).
\end{equation}
\end{lemma}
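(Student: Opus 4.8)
The plan is to reduce \eqref{eq:sum.adjug} to the already-established lower bound for $|\Delta_0'(\cdot)|$ from Lemma~\ref{lem:Delta'>}, using Jacobi's formula to express $\Delta_0'$ in terms of the cofactors $A_{jk}^0$. First I would recall Jacobi's identity~\eqref{eq:jacobi.def} applied to $A_0(\l) = C + D\Phi_0(\ell,\l)$, which gives
\begin{equation}
 \Delta_0'(\l) = \tr\bigl(A_0^a(\l) A_0'(\l)\bigr)
 = \sum_{j,k=1}^n A_{jk}^0(\l) (a_{kj}^0)'(\l).
\end{equation}
By Lemma~\ref{lem:easy.upper.bound}, in particular estimate~\eqref{eq:A0.Delta0<M}, we have $|(a_{kj}^0)'(\l)| \le M_h$ for all $\l \in \Pi_h$ and all $j,k$, where $h$ is such that $\L_0 \subset \Pi_h$ (Lemma~\ref{lem:Delta0.prop}(i)). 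Combining this with Lemma~\ref{lem:Delta'>}, which provides $\delta, C_0>0$ with $|\Delta_0'(\l)| \ge C_0$ for $\l \in \bD_\delta(\l_m^0)$, $|m|>m_0$, and shrinking $\delta$ if necessary so that $\bD_\delta(\l_m^0) \subset \Pi_{h+1}$, I obtain
\begin{equation}
 C_0 \le |\Delta_0'(\l)|
 \le \sum_{j,k=1}^n |A_{jk}^0(\l)| \cdot |(a_{kj}^0)'(\l)|
 \le M_{h+1} \sum_{j,k=1}^n |A_{jk}^0(\l)|,
 \qquad \l \in \bD_\delta(\l_m^0), \quad |m|>m_0.
\end{equation}
This yields \eqref{eq:sum.adjug} with $C_1 := C_0 / M_{h+1} > 0$.

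For the second assertion \eqref{eq:D0jk>}, fix $m$ with $|m|>m_0$ and a point $\l \in \bD_\delta(\l_m^0)$. Since the sum of the $n^2$ nonnegative quantities $|A_{jk}^0(\l)|$ is at least $C_1$, at least one of them is at least $C_1/n^2$; taking $(j,k)$ to be an index realizing the maximum, and setting $C_2 := C_1/n^2$, gives $|A_{jk}^0(\l)| \ge C_2$. One should note, however, that a priori this index could depend on the particular point $\l$ within the disc; to get a single $(j,k)$ valid on the whole disc $\bD_\delta(\l_m^0)$ as the statement asserts, I would invoke continuity together with the fact that, by Lemma~\ref{lem:Delta0.prop}, the zeros $\{\l_m^0\}_{|m|>m_0}$ are simple, so on a sufficiently small disc around $\l_m^0$ the cofactor vector $(A_{jk}^0)_{j,k}$ (which at $\l_m^0$ spans the one-dimensional kernel structure of $A_0(\l_m^0)$ by Lemma~\ref{lem:eigen}) stays close to its value at $\l_m^0$; picking $(j,k)$ with $|A_{jk}^0(\l_m^0)| \ge C_1/n^2$ and shrinking $\delta$ once more (uniformly in $m$, using the uniform bound on $|\Delta_0''|$ hence on the derivatives of the cofactors on $\Pi_{h+1}$) ensures $|A_{jk}^0(\l)| \ge C_2/2$ on $\bD_\delta(\l_m^0)$. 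Relabelling $C_2$ finishes the proof.

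The only genuinely delicate point is this last uniformity-in-$m$ issue: making sure that the same index pair $(j,k)$ works throughout the disc, with $\delta$ independent of $m$. This is handled exactly as in the proof of Lemma~\ref{lem:Delta'>} — the entries $a_{jk}^0(\l)$ are finite exponential sums \eqref{eq:A0.def}, so all their derivatives are uniformly bounded on any horizontal strip, hence the cofactors $A_{jk}^0(\cdot)$ are uniformly Lipschitz on $\Pi_{h+1}$ with a constant independent of $m$; a Taylor-expansion argument around $\l_m^0$ then transfers the pointwise bound at $\l_m^0$ to a uniform disc. Everything else is a one-line application of Jacobi's formula combined with the upper bounds of Lemma~\ref{lem:easy.upper.bound} and the lower bound of Lemma~\ref{lem:Delta'>}.
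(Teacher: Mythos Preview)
Your argument for \eqref{eq:sum.adjug} is correct and coincides exactly with the paper's proof: Jacobi's identity for $\Delta_0'(\l)$, combined with the upper bound $|(a_{kj}^0)'(\l)|\le M_h$ from Lemma~\ref{lem:easy.upper.bound} and the lower bound $|\Delta_0'(\l)|\ge C_0$ from Lemma~\ref{lem:Delta'>}, immediately gives $C_1=C_0/M_h$.

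For \eqref{eq:D0jk>} you are in fact more careful than the paper. The paper treats it as an immediate pigeonhole consequence of \eqref{eq:sum.adjug} and does not discuss whether the index pair $(j,k)$ can be chosen uniformly in $\l\in\bD_\delta(\l_m^0)$; you correctly observe that pigeonhole alone only gives an $(j,k)$ depending on $\l$, and your remedy---pick $(j,k)$ maximizing $|A_{jk}^0(\l_m^0)|$ and use the uniform Lipschitz bound on the cofactors on $\Pi_{h+1}$ to propagate the estimate to a uniformly small disc---is the right fix. In the paper's applications (Proposition~\ref{prop:eigenvec0}, Theorem~\ref{th:eigenvec}) only the value at the center $\l_m^0$ is actually used, so the distinction is harmless there, but your version is the cleaner reading of the stated lemma.
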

\begin{proof}
Since boundary conditions are strictly regular, Lemma~\ref{lem:Delta'>} implies existence of constants $\delta, C_0 > 0$ such that~\eqref{eq:Delta'>} holds. Further, in accordance with Jacobi's formula~\eqref{eq:jacobi.def}
\begin{equation} \label{eq:jacobi}
 \Delta_0'(\l) =
 \sum_{j,k=1}^n A_{jk}^0(\l) (a_{kj}^0)'(\l).
\end{equation}
Combining estimate~\eqref{eq:A0.Delta0<M} on $|(a_{kj}^0)'(\l)|$ from above and estimate~\eqref{eq:Delta'>} on $|\Delta_0'(\l)|$ from below with identity~\eqref{eq:jacobi} yields
\begin{equation}
 C_0 \le |\Delta_0'(\l)| \le \sum_{j,k=1}^n |A_{jk}^0(\l)|
 \cdot |(a_{kj}^0)'(\l)|
 \le M_h \sum_{j,k=1}^n |A_{jk}^0(\l)|, \qquad \l \in \bD_{\delta}(\l_m^0).
\end{equation}
This implies~\eqref{eq:sum.adjug} with $C_1 = C_0/M_h$.
\end{proof}
\begin{proposition} \label{prop:eigenvec0}
Let entries of the matrix function $B(\cdot)$ satisfy condition~\eqref{eq:betak.L1}. Let boundary conditions~\eqref{eq:Uy=0} be strictly regular and let $\L_0 = \{\l_m^0\}_{m \in \bZ}$ be the sequence of eigenvalues of the operator $L_{0,U} = L_0(U)$, counting multiplicity. Then for each $|m| > m_0$ there exists $p = p_m \in \oneton$, such that the vector function $\wt{f}_m^0(\cdot) := Y_{p_m}^0(\cdot, \l_m^0)$ given by~\eqref{eq:Yp0xl.def} is a non-trivial eigenvector of the operator $L_{0,U}$ corresponding to its simple eigenvalue $\l_m^0$. Moreover, the following uniform estimate holds,
\begin{equation} \label{eq:Yp0.norm}
 C_3 \le \|\wt{f}_m^0\|_{\fH} = \|Y_{p_m}^0(\cdot, \l_m^0)\|_{\fH} \le C_4, \qquad |m| > m_0,
\end{equation}
where $C_4 > C_3 > 0$ do not depend on $m$.
\end{proposition}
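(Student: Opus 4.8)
The statement asks, for strictly regular boundary conditions and each $|m|>m_0$, for a choice of index $p_m$ so that the explicit vector $Y_{p_m}^0(\cdot,\l_m^0)$ (formula~\eqref{eq:Yp0xl.def}) is a genuine eigenvector whose $\fH$-norm is bounded above and below uniformly in $m$. The upper bound is already available: Lemma~\ref{lem:easy.upper.bound}, estimate~\eqref{eq:Phij0.Yk0.norm}, gives $\|Y_k^0(\cdot,\l)\|_{\fH}\le M_h$ for all $k$ and all $\l\in\Pi_h$, and by Lemma~\ref{lem:Delta0.prop}(i) we have $\L_0\subset\Pi_h$; so $C_4:=M_h$ works. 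The whole content is therefore the lower bound, and the plan is to extract it from the adjugate estimate of Lemma~\ref{lem:D0jk>}.

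\textbf{Choice of $p_m$ and the lower bound.} First I would invoke Lemma~\ref{lem:D0jk>}: there are $\delta,C_1,C_2>0$ (independent of $m$) such that for every $|m|>m_0$ one can pick indices $j=j_m$, $k=k_m$ with $|A_{j_mk_m}^0(\l_m^0)|\ge C_2$. Set $p_m:=k_m$. Since $\l_m^0$ is, for $|m|>m_0$, an algebraically simple zero of $\Delta_0$ (Definition~\ref{def:strict.regular}(iii)), Lemma~\ref{lem:eigen} (with $Q=0$) applies: the $p_m$-th column of $A_0^a(\l_m^0)$ is non-trivial because $A_{j_mk_m}^0(\l_m^0)\ne0$ with $k_m=p_m$, hence $\wt f_m^0(\cdot):=Y_{p_m}^0(\cdot,\l_m^0)=\col\bigl(A_{1p_m}^0(\l_m^0)e^{i\l_m^0\rho_1(x)},\ldots,A_{np_m}^0(\l_m^0)e^{i\l_m^0\rho_n(x)}\bigr)$ is a non-trivial eigenvector of $L_{0,U}$ for the simple eigenvalue $\l_m^0$. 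For the lower bound on its norm, I would estimate
\[
 \|\wt f_m^0\|_{\fH}^2=\sum_{j=1}^n |A_{jp_m}^0(\l_m^0)|^2\int_0^\ell |e^{i\l_m^0\rho_j(x)}|^2|\beta_j(x)|\,dx
 \ \ge\ |A_{j_mp_m}^0(\l_m^0)|^2\int_0^\ell e^{-2\Im\l_m^0\cdot\rho_{j_m}(x)}|\beta_{j_m}(x)|\,dx .
\]
Using $\l_m^0\in\Pi_h$ and $|\rho_j(x)|\le b_0:=\max_k\|\beta_k\|_1$ we get $e^{-2\Im\l_m^0\rho_{j_m}(x)}\ge e^{-2b_0h}$, while $\int_0^\ell|\beta_{j_m}(x)|\,dx=\|\beta_{j_m}\|_1\ge \min_k\|\beta_k\|_1=:\mu_0>0$ (positivity of $\mu_0$ follows from $s_k=\sign\beta_k\equiv\const\ne0$ in~\eqref{eq:betak.L1}, which forces $\beta_k\not\equiv0$). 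Combining with $|A_{j_mp_m}^0(\l_m^0)|\ge C_2$ from Lemma~\ref{lem:D0jk>} gives $\|\wt f_m^0\|_{\fH}^2\ge C_2^2\,\mu_0\,e^{-2b_0h}$, so $C_3:=C_2\sqrt{\mu_0}\,e^{-b_0h}$ works and is independent of $m$.

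\textbf{Main obstacle.} The routine computation is harmless; the one point that needs care is the dependence of the index pair $(j_m,k_m)$ on $m$, since Lemma~\ref{lem:D0jk>} only guarantees that \emph{some} admissible pair exists for each $m$. This is handled by noting there are only finitely many ($n^2$) possible pairs, so the constant $C_2$ in~\eqref{eq:D0jk>} is genuinely uniform, and by the remark after Lemma~\ref{lem:eigen}: whichever column of $A_0^a(\l_m^0)$ is non-zero produces an eigenvector proportional to any other, so the choice $p_m=k_m$ is legitimate and the resulting $\wt f_m^0$ is, up to scalar, canonical. Finally, I would record that $p_m$ can in fact be taken so that $Y_{p_m}^0(\cdot,\l_m^0)\not\equiv0$, which is exactly the $k$-index supplied by Lemma~\ref{lem:D0jk>}; this closes the proof.
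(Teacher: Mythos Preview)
Your proof is correct and follows essentially the same approach as the paper: use Lemma~\ref{lem:D0jk>} to pick an index $p_m$ (the paper's $p$, your $k_m$) for which some adjugate entry $A^0_{qp_m}(\l_m^0)$ is bounded below by $C_2$, then drop all but the $j=q$ term in the explicit decomposition $\|Y_{p_m}^0(\cdot,\l_m^0)\|_{\fH}^2=\sum_j |A^0_{jp_m}(\l_m^0)|^2\|\Phi^0_j(\cdot,\l_m^0)\|_{\fH_j}^2$, and finally bound the remaining weighted exponential integral using $\L_0\subset\Pi_h$. The only cosmetic difference is in that last step: the paper performs the change of variable $t=\rho_q(x)$ before estimating, whereas you bound the integrand pointwise by $e^{-2b_0h}$ and then use $\int_0^\ell|\beta_{j_m}|\ge\mu_0>0$; both are valid and yield an $m$-independent constant.
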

\begin{proof}
Lemma~\ref{lem:Delta0.prop} implies that $\l_m^0 \in \Pi_h$, $m \in \bZ$, for some $h \ge 0$. Hence estimate~\eqref{eq:Phij0.Yk0.norm} from Lemma~\ref{lem:easy.upper.bound} trivially implies desired estimate from above with any choice of $p = p_m$,
\begin{equation}
 \|Y_p^0(\cdot, \l_m^0)\|_{\fH} \le C_4 = M_h,
 \qquad m \in \bZ, \quad p \in \oneton.
\end{equation}

By Lemma~\ref{lem:D0jk>}, there exist indices $p = p_m \in \oneton$ and $q = q_m \in \oneton$, and a constant $C_2>0$ such that estimate~\eqref{eq:D0jk>} holds, i.e.\ $|A_{qp}^0(\l_m^0)| \ge C_2$, $|m| > m_0$. Emphasize, that although $p$ and $q$ depend on $m$, the constant $C_2$ in the above estimate does not.
This estimate, definition~\eqref{eq:Yp0xl.def} of $Y_p^0(x,\l)$ and orthogonality in $\fH$ of the vector function $\Phi_q^0(\cdot,\l)$ to other $\Phi_k^0(\cdot,\l)$ imply that
\begin{equation} \label{eq:Yp0>}
 \|Y_p^0(\cdot,\l_m^0)\|_{\fH} \ge |A_{qp}^0(\l_m^0)| \cdot
 \|\Phi_q^0(\cdot, \l_m^0)\|_{\fH}
 \ge C_2 \sqrt{\int_0^{\ell} \abs{e^{i \l_m^0 \rho_q(x)}}^2 |\beta_q(x)| \,dx},
 \qquad |m| > m_0.
\end{equation}
Let us estimate the integral in~\eqref{eq:Yp0>}. Recall that $\beta_q(\cdot)$ does not change sign on $[0,\ell]$. Hence $|\beta_q(x)| = s_q \beta_q(x)$, $x \in [0,\ell]$, where $s_q = \sign(\beta_q(\cdot))$. Making a change of variable $t = \rho_q(x)$ (and so $\beta_q(x) dx = dt$), we have
\begin{equation} \label{eq:Phij0.norm.low}
 \int_0^{\ell} \abs{\exp\(i \l \rho_q(x)\)}^2 |\beta_q(x)| dx
 = \abs{\int_0^{b_q} \abs{\exp\(i \l t\)}^2 dt}
 \ge \min\{1, e^{-2 \Im \l \cdot b_q} \} \ge e^{-2 |b_q| h},
 \qquad \l \in \Pi_h.
\end{equation}
Inserting~\eqref{eq:Phij0.norm.low} with $\l = \l_m^0 \in \Pi_h$ into~\eqref{eq:Yp0>} we arrive at the estimate from below in~\eqref{eq:Yp0.norm} with with some $C_3 > 0$ that does not depend on $q$ and $m$. Thus, vector function $\wt{f}_m^0(\cdot) = Y_p^0(\cdot,\l_m^0)$ is non-zero. Lemma~\ref{lem:eigen} implies that it is a non-trivial eigenvector of the operator $L_{0,U}$ corresponding to its simple eigenvalue $\l_m^0$, which finishes the proof.
\end{proof}
\subsection{Examples of strictly regular boundary conditions}
\label{subsec:examples.strict}
In the next remark we outline known cases of strictly regular boundary conditions for $n=2$ established in~\cite{LunMal16JMAA}.
\begin{remark} \label{rem:cond.examples}
If $n=2$ and $b_1 < 0 < b_2$, canonical form (see~\eqref{eq:CD.canon}) of regular boundary conditions~\eqref{eq:Uy=0} is
\begin{equation} \label{eq:Uy=0.canon.n=2}
\begin{cases}
 \wh{U}_{1}(y) = y_1(0) + b y_2(0) + a y_1(1) = 0, \\
 \wh{U}_{2}(y) = d y_2(0) + c y_1(1) + y_2(1) = 0,
\end{cases}
\end{equation}
with some $a,b,c,d \in \bC$, such that $ad \ne bc$, while the characteristic determinant $\Delta_0(\cdot)$ takes the following form,
\begin{equation} \label{eq:Delta0.new}
 \Delta_0(\l) = d + a e^{i (b_1+b_2) \l} + (ad-bc) e^{i b_1 \l}
 + e^{i b_2 \l}, \\
\end{equation}
Let us list some types of \emph{strictly regular} boundary conditions of the form~\eqref{eq:Uy=0.canon.n=2}. In all of these cases except 4b the set of zeros
of $\Delta_0$ is a union of finite number of arithmetic progressions.

\begin{enumerate}

\item Regular boundary conditions~\eqref{eq:Uy=0.canon.n=2} for Dirac operator ($-b_1 = b_2 = 1$) are
strictly regular if and only if $(a-d)^2 \ne -4bc$.

\item Separated boundary conditions ($a=d=0$, $bc \ne 0$) are always strictly regular.

\item Let $b_1 / b_2 \in \bQ$, i.e.\ $b_1 = -m_1 b_0$, $b_2 = m_2 b_0$, where $m_1, m_2 \in \bN$, $b_0 > 0$ and $\gcd(m_1,m_2)=1$.
Since $ad \ne bc$, $\Delta_0(\cdot)$ is a polynomial at $e^{i b_0 \l}$ of degree $m_1 + m_2$. Hence, boundary conditions~\eqref{eq:Uy=0.canon.n=2} are strictly regular if and only if this polynomial does not have multiple roots. Let us list some cases with explicit conditions.

\begin{enumerate}

\item~\cite[Lemma 5.3]{LunMal16JMAA} Let $ad \ne 0$ and $bc=0$. Then
boundary conditions~\eqref{eq:Uy=0.canon.n=2} are strictly regular if and only if
\begin{equation} \label{eq:bc=0.crit.rat}
 b_1 \ln |d| + b_2 \ln |a| \ne 0 \quad\text{or}\quad
 m_1 \arg(-d) - m_2 \arg(-a) \notin 2 \pi \bZ.
\end{equation}

\item In particular, antiperiodic boundary conditions ($a=d=1$, $b=c=0$) are strictly regular if
and only if $m_1 - m_2$ is odd. Note that these boundary conditions are not strictly regular in
the case of a Dirac system.

\item~\cite[Proposition 5.6]{LunMal16JMAA} Let $a=0$, $bc \ne 0$. Then
boundary conditions~\eqref{eq:Uy=0.canon.n=2} are strictly regular if and only if
\begin{equation} \label{eq:a=0.crit.rat}
 m_1^{m_1} m_2^{m_2} (-d)^{m_1 + m_2} \ne (m_1 + m_2)^{m_1 + m_2} (-b c)^{m_2}.
\end{equation}

\end{enumerate}

\item Let $\alp := -b_1 / b_2 \notin \bQ$. Then the problem of strict regularity of boundary conditions is
generally much more complicated. Let us list some known cases:

\begin{enumerate}

\item~\cite[Lemma 5.3]{LunMal16JMAA} Let $ad \ne 0$ and $bc=0$. Then
boundary conditions~\eqref{eq:Uy=0.canon.n=2} are strictly regular if and only if
\begin{equation} \label{eq:bc=0.crit.irrat}
 b_1 \ln |d| + b_2 \ln |a| \ne 0.
\end{equation}

\item~\cite[Proposition 5.6]{LunMal16JMAA} Let $a=0$ and $bc, d \in \bR
\setminus \{0\}$. Then boundary conditions~\eqref{eq:Uy=0.canon.n=2} are strictly regular if and only if
\begin{equation} \label{eq:a=0.crit}
 d \ne -(\alp+1)\(|bc| \alp^{-\alp}\)^{\frac{1}{\alp+1}}.
\end{equation}

\end{enumerate}

\end{enumerate}
\end{remark}
Let us extend results listed in this remark to the case of arbitrary $n$. The next result establishes criterion of strict regularity of boundary conditions of periodic type \emph{for any $n \in \bN$}. To this end, for any pair $x, y \in \bR \setminus \{0\}$ of real numbers with $x/y \in \bQ$ denote by $\gcd(x, y)$ their greatest common divisor, i.e.\ the largest number $b>0$ such that $x/b$ and $y/b$ are integers.
\begin{lemma} \label{lem:periodic.strict}
Let boundary conditions~\eqref{eq:Uy=0} be of the form
\begin{equation} \label{eq:yell=C.y0}
 y_k(\ell) = c_k y_k(0), \qquad c_k \ne 0, \qquad k \in \oneton,
\end{equation}
i.e.\ $U(y) = C y(0) + D y(\ell) = 0$, where $C = \diag(c_1, \ldots, c_n)$ is an invertible diagonal matrix and $D = -I_n$. Then boundary conditions~\eqref{eq:yell=C.y0} are regular.

\textbf{(i)} Let $\L_0 = \{\l_m^0\}_{m \in \bZ}$ be the
sequence of zeros of the characteristic determinant
$\Delta_0(\cdot)$ and assume it is ordered in such a way that
$\Re \l_m^0 \le \Re \l_{m+1}^0$, $m \in \bZ$. Then there
exists a sequence of integers $\{m_k\}_{k \in \bZ}$, such that
\begin{equation} \label{eq:Lambda0=union}
 m_k < m_{k+1} \le m_k + n, \quad \Re \l_{m_k}^0 - \Re \l_{m_k-1}^0 \ge \eps, \quad k \in \bZ,
\end{equation}
where $\eps := \frac{2 \pi}{b_{\max} n} > 0$ and $b_{\max} := \max\{|b_1|, \ldots, |b_n|\}$.

\textbf{(ii)} Let numbers $\left\{\frac{\ln |c_k|}{b_k}\right\}_{k=1}^n$ be distinct, i.e.\
\begin{equation} \label{eq:bj.ln.ck.ne.bk.ln.cj}
 b_j \ln |c_k| \ne b_k \ln |c_j|, \qquad j \ne k.
\end{equation}
Then boundary conditions~\eqref{eq:yell=C.y0} are strictly regular.

\textbf{(iii)} More precisely, boundary conditions~\eqref{eq:yell=C.y0} are strictly regular if and only if for all $j \ne k$ the following condition holds,
\begin{equation} \label{eq:strict.crit.per}
 \text{either}\quad b_j \ln |c_k| \ne b_k \ln |c_j|
 \quad\text{or}\quad \(
 \frac{b_j}{b_k} \in \bQ
 \quad\text{and}\quad
 \frac{b_j \arg(c_k) - b_k \arg(c_j)}{2 \pi \gcd(b_j, b_k)} \not \in \bZ\).
\end{equation}

\textbf{(iv)} \emph{Periodic} boundary conditions $(c_1 = \ldots = c_n = 1)$ are always non-strictly regular. \emph{Antiperiodic} boundary conditions $(c_1 = \ldots = c_n = -1)$ are strictly regular if and only if there exists $b_0 > 0$, odd integers $M_1, \ldots, M_n$ and distinct non-negative integers $a_1, \ldots, a_n$ such that $b_k = 2^{a_k} M_k b_0$, $k \in \oneton$. In other words, numbers $b_1, \ldots, b_n$ can be ordered in such a way that the following representation holds
\begin{equation}
 b_k = 2^{a_k} (2 u_k + 1) b_0, \qquad a_k, u_k \in \bZ, \quad k \in \oneton,
 \qquad 0 \le a_1 < a_2 < \ldots < a_n.
\end{equation}
In particular, if $b_k = 2^k$, $k \in \oneton$, then antiperiodic boundary conditions are strictly regular.
\end{lemma}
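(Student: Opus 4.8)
The plan is to exploit the explicit form of the characteristic determinant for boundary conditions~\eqref{eq:yell=C.y0}. Since $C=\diag(c_1,\ldots,c_n)$ and $D=-I_n$, formula~\eqref{eq:Delta0.sum} (or a direct computation, recalling $\Phi_0(\ell,\l)=\diag(e^{i\l b_1},\ldots,e^{i\l b_n})$) gives
\begin{equation*}
 \Delta_0(\l) = \det\(C - \Phi_0(\ell,\l)\)
 = \prod_{k=1}^n \(c_k - e^{i \l b_k}\), \qquad \l \in \bC.
\end{equation*}
This product structure is the key: the zero set $\L_0$ is the union over $k\in\oneton$ of the arithmetic-progression-like sets $Z_k := \{\l : e^{i\l b_k} = c_k\}$. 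Regularity is immediate: in the notation of~\eqref{eq:P.pm.def}, $J_{P_+}(C,D) = \det(CP_- + DP_+)$ is, up to sign, $\prod_{b_k<0} c_k \cdot \prod_{b_k>0}(-1) \ne 0$, and similarly for $J_{P_-}$; alternatively this follows since the top and bottom coefficients in the product expansion are $\prod_k c_k \ne 0$ and $(-1)^n \ne 0$. Each factor $c_k - e^{i\l b_k}$ is a sine-type function whose zeros $Z_k$ form a single arithmetic progression with real step $2\pi/|b_k|$ and fixed imaginary part $\frac{1}{b_k}\ln|c_k|$ (plus a real shift determined by $\arg c_k$); in particular each $Z_k$ is separated and lies in a horizontal line.

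For part~(i): order $\L_0 = \bigcup_k Z_k$ by real part. Each $Z_k$ contributes at most one point to any interval of length $< 2\pi/b_{\max} = \eps n / $ (rescale appropriately), so any window of real-length $\eps$ contains at most $n$ zeros, which yields a subsequence $\{m_k\}$ of ``gap starts'' with $m_{k+1} - m_k \le n$ and real-gap at least $\eps$ before each $\l_{m_k}^0$; this is a routine pigeonhole/counting argument on the union of $n$ arithmetic progressions. For parts~(ii) and~(iii): by Definition~\ref{def:strict.regular}, strict regularity here means asymptotic separation of $\L_0$. Since each $Z_k$ is itself separated, $\L_0$ fails to be asymptotically separated precisely when two of the progressions $Z_j, Z_k$ ($j\ne k$) have infinitely many near-coincidences. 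If $\frac{1}{b_j}\ln|c_j| \ne \frac{1}{b_k}\ln|c_k|$ then $Z_j$ and $Z_k$ lie on distinct horizontal lines a fixed positive distance apart, so points of $Z_j\cup Z_k$ are separated — this gives~(ii). If the imaginary parts coincide, both progressions lie on the same horizontal line $\{\Im\l = h_0\}$; then $Z_j$ and $Z_k$ are separated iff they have no common point, and two (real) arithmetic progressions with steps $2\pi/|b_j|$ and $2\pi/|b_k|$ either are disjoint, partially coincide on a full sub-progression (when $b_j/b_k\in\bQ$ and the phases match modulo $2\pi\gcd(b_j,b_k)$), or — when $b_j/b_k\notin\bQ$ — by Weyl equidistribution come arbitrarily close infinitely often, destroying asymptotic separation. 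Carefully tracking the phase condition via $\arg(c_j),\arg(c_k)$ yields exactly criterion~\eqref{eq:strict.crit.per}.

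For part~(iv): periodic case $c_k\equiv 1$ means every $Z_k$ passes through $\l=0$ and all lie on the real line with $0\in Z_j\cap Z_k$, so~\eqref{eq:strict.crit.per} fails for every pair — non-strictly regular. Antiperiodic case $c_k\equiv -1$: all $Z_k$ lie on $\bR$ (since $|c_k|=1$) with $Z_k = \frac{\pi}{b_k}(2\bZ+1)$; applying~\eqref{eq:strict.crit.per}, strict regularity holds iff for all $j\ne k$ the progressions $\frac{\pi}{b_j}(2\bZ+1)$ and $\frac{\pi}{b_k}(2\bZ+1)$ are disjoint (the irrational case $b_j/b_k\notin\bQ$ is automatically excluded by the failure of separation, so necessarily all ratios are rational). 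Writing $b_k = 2^{a_k} M_k b_0$ with $M_k$ odd via a common denominator, two such progressions of odd multiples intersect iff $a_j = a_k$; hence disjointness for all pairs forces the $a_k$ to be distinct, and conversely distinctness of the $a_k$ gives disjointness. The special case $b_k = 2^k$ has $a_k = k$ distinct. The main obstacle is part~(iii)/(iv): turning the soft statement ``two arithmetic progressions on a line come close infinitely often unless they are commensurate with matching phase'' into the clean arithmetic condition~\eqref{eq:strict.crit.per}, and in the antiperiodic case carrying out the $2$-adic valuation bookkeeping (odd multiples of $\frac{\pi}{b_k}$) correctly; the rest is bookkeeping on sine-type functions already supplied by Lemma~\ref{lem:Delta0.prop} and~\cite{Katsn71}.
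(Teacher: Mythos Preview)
Your proposal is correct and follows essentially the same approach as the paper: factor $\Delta_0(\l)=\prod_k (c_k - e^{i\l b_k})$, identify each $Z_k$ as an arithmetic progression on the horizontal line $\Im\l = -\tfrac{1}{b_k}\ln|c_k|$, use pigeonhole on $n+1$ consecutive zeros for~(i), compare the imaginary parts for~(ii), and for~(iii) split on whether $b_j/b_k$ is rational (Diophantine phase condition) or irrational (density/Kronecker argument), then specialize to the $2$-adic valuation analysis for antiperiodic~(iv). The only cosmetic differences are that the paper invokes Kronecker's theorem where you invoke Weyl equidistribution, and the paper's pigeonhole for~(i) is phrased via two-of-$(n{+}1)$ rather than via a counting bound on an interval; both are equivalent here.
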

\begin{proof}
Since matrices $C$ and $D$ are invertible and diagonal, it follows that $T_P(C,D)$ given by~\eqref{eq:TP.CD.def} is also invertible diagonal matrix for every $P \in \cP_n$. This implies regularity of boundary conditions. Further, it is clear that
$$
C + D \Phi_0(\ell, \l) = \diag(c_1 - e^{i \l b_1}, \ldots, c_n - e^{i \l b_n}).
$$
Hence the characteristic determinant $\Delta_0(\cdot)$ defined in~\eqref{eq:Delta.def} becomes
\begin{equation} \label{eq:Delta0.per}
 \Delta_0(\l) = \det(C + D \Phi_0(\ell, \l))
 = (c_1 - e^{i \l b_1}) \times \ldots \times (c_n - e^{i \l b_n}).
\end{equation}
Let $\L^{\per}_k = \{\l^{\per}_{k,m}\}_{m \in \bZ}$, $k \in \oneton$, be the sequences of zeros of the $k$-th factor in this product. Clearly,
\begin{equation} \label{eq:lkm.per.proof}
 \l^{\per}_{k,m} = \frac{-i \ln c_k + 2 \pi m}{b_k}
 = \frac{\arg(c_k) + 2 \pi m}{b_k} - i \frac{\ln|c_k|}{b_k},
 \qquad m \in \bZ, \quad k \in \oneton.
\end{equation}
Thus, each sequence $\L^{\per}_k$, $k \in \oneton$, is algebraically simple and constitutes an arithmetic progression that lies on the line parallel to the real axis.

\textbf{(i)} Let $m \in \bZ$ and consider $n+1$ consecutive eigenvalues $\l_m^0, \l_{m+1}^0, \ldots, \l_{m+n}^0$ (ordered by their real parts). By pigeonhole principle, we can find two different indexes $p, q \in \{0,1,\ldots,n\}$, such that eigenvalues $\l_{m+p}^0$ and $\l_{m+q}^0$ belong to the same arithmetic progression $\L^{\per}_k$. Without loss of generality, we can assume that they are consecutive elements of this progression,
$$
\l_{m+p}^0 = \l^{\per}_{k,u}, \qquad \l_{m+q}^0 = \l^{\per}_{k,u + s_k},
$$
for some $0 \le p < q \le n$, $k \in \oneton$ and $u = u_{m+p,k}$. Here $s_k = \sign(b_k)$, i.e.\ $u + s_k = u+1$ if $b_k > 0$ and $u + s_k = u-1$ if $b_k<0$. Then formula~\eqref{eq:lkm.per.proof} implies that
$$
 \Re\l_{m+q}^0 - \Re\l_{m+p}^0
 = \Re\l^{\per}_{k,u + s_k} - \Re\l^{\per}_{k,u}
 = \frac{\arg(c_k) + 2 \pi (u+s_k)}{b_k} - \frac{\arg(c_k) + 2 \pi u}{b_k}
 = \frac{2 \pi}{|b_k|}.
$$
Hence there exists $r = r_m \in \{p, p+1, \ldots, q-1\}$ such that
$$
\Re\l_{m+r_m+1}^0 - \Re\l_{m+r_m}^0 \ge \frac{2 \pi}{|b_k|\cdot(q-p)} \ge \frac{2 \pi}{b_{\max} n} = \eps.
$$
Now we can choose the desired sequence $\{m_k\}_{k \in \bZ}$ as a sequence one obtains from the set ${\{m+r_m+1\}_{m \in \bZ}}$ after ordering it and removing repetitions.

\textbf{(ii)} It follows from~\eqref{eq:lkm.per.proof} and~\eqref{eq:bj.ln.ck.ne.bk.ln.cj} that
\begin{equation} \label{eq:l1n-l2m>=}
 |\l^{\per}_{j,p} - \l^{\per}_{k,m}| \ge |\Im \l^{\per}_{j,p} - \Im \l^{\per}_{k,m}|
 = \left| \frac{\ln|c_j|}{b_j} - \frac{\ln|c_k|}{b_k} \right| =: \eps_{j,k} > 0,
 \qquad m, p \in \bZ.
\end{equation}
It follows from~\eqref{eq:lkm.per.proof} and~\eqref{eq:l1n-l2m>=} that the sequence of zeros of $\Delta_0(\cdot)$ is separated. Namely, separation parameter $\delta$ can be chosen as any number less than $\frac12 \min\{\eps_{j,k} : 1 \le j < k \le n\} > 0$. Hence boundary conditions~\eqref{eq:yell=C.y0} are strictly regular.

\textbf{(iii)} It is clear that boundary conditions~\eqref{eq:yell=C.y0} are strictly regular if and only if for each $j \ne k$ arithmetic progressions $\L^{\per}_j$ and $\L^{\per}_k$ are asymptotically separated. Thus, we need to show that for each $j, k \in \oneton$ such that $j \ne k$, arithmetic progressions $\L^{\per}_j$ and $\L^{\per}_k$ are asymptotically separated if and only if condition~\eqref{eq:strict.crit.per} is satisfied. To this end, let $j, k \in \oneton$, $j \ne k$, be fixed.

First assume that $\alp := \alp_{jk} := b_j/b_k \not \in \bQ$. Then condition~\eqref{eq:strict.crit.per} is equivalent to~\eqref{eq:bj.ln.ck.ne.bk.ln.cj}. Part (ii) of the lemma implies that if condition~\eqref{eq:bj.ln.ck.ne.bk.ln.cj} is satisfied, then arithmetic progressions $\L^{\per}_j$ and $\L^{\per}_k$ are separated.
Now let condition~\eqref{eq:bj.ln.ck.ne.bk.ln.cj} be violated. In this case
$$
 \Im \l^{\per}_{j,p} = \Im \l^{\per}_{k,m} = -\frac{\ln|c_j|}{b_j}
 = -\frac{\ln|c_k|}{b_k}, \qquad p,m \in \bZ,
$$
i.e.\ the progressions $\L^{\per}_j$ and $\L^{\per}_k$ lie on the same line parallel to the real axis. Hence for each $p,m \in \bZ$ we have,
\begin{align}
\label{eq:l1n-l2m=}
 & |\l^{\per}_{j,p} - \l^{\per}_{k,m}| = \abs{\frac{\arg(c_j)}{b_j} - \frac{\arg(c_k)}{b_k}
 + 2 \pi\(\frac{p}{b_j} - \frac{m}{b_k}\)}
 = \frac{2 \pi}{b_j} \cdot | r + p - \alp m|, \\
 & \text{where}\quad r := r_{jk} := \frac{\arg(c_j) - \alp \arg(c_k)}{2 \pi} \in \bR.
\end{align}
Since $\alp$ is irrational, the Kronecker theorem ensures that for any $\eps > 0$ and $M > 0$ there exist $p,m \in \bZ$ such that $|p|, |m| > M$ and $|r + p - \alp m| < \eps$. This means that arithmetic progressions $\L^{\per}_j$ and $\L^{\per}_k$ are not asymptotically separated and finishes the proof when $b_j/b_k \notin \bQ$.

Now let $\alp = b_j/b_k \in \bQ$. As before, part (ii) of the lemma implies that if condition~\eqref{eq:bj.ln.ck.ne.bk.ln.cj} is satisfied, then arithmetic progressions $\L^{\per}_j$ and $\L^{\per}_k$ are separated. Assuming that condition~\eqref{eq:bj.ln.ck.ne.bk.ln.cj} is violated let us express a criterion for arithmetic progressions $\L^{\per}_j$ and $\L^{\per}_k$ to be asymptotically separated. As in the previous case, arithmetic progressions $\L^{\per}_j$ and $\L^{\per}_k$ lie on the same line parallel to the real axis and condition~\eqref{eq:l1n-l2m=} holds. Since $\alp$ is rational, the union of these progressions is asymptotically separated if and only if they have no common entries. Due to~\eqref{eq:l1n-l2m=} this is equivalent to the fact that Diophantine equation $p - \alp m = -r$ does not have integer solutions $p,m$. It is well-known that such equation has solutions if and only if $r / \gcd(\alp, 1) \in \bZ$. Since $r = \frac{\arg(c_j) - \alp \arg(c_k)}{2 \pi}$ and $\alp = \frac{b_j}{b_k}$, condition $\frac{r}{\gcd(\alp, 1)} \in \bZ$ is equivalent to $\frac{b_j \arg(c_k) - b_k \arg(c_j)}{2 \pi \gcd(b_j, b_k)} \in \bZ$. Comparing this with~\eqref{eq:strict.crit.per}, we see that the proof is now complete.

\textbf{(iv)} If $c_1 = \ldots = c_n =1$, then $\ln|c_k| = 0$ and $\arg c_k = 0$, $k \in \oneton$. Hence condition~\eqref{eq:strict.crit.per} is violated. Which implies that periodic boundary conditions are not strictly regular.

If $c_1 = \ldots = c_n -1$, then $\ln|c_k| = 0$ and $\arg c_k = \pi$, $k \in \oneton$. Hence condition~\eqref{eq:strict.crit.per} turns into
\begin{equation} \label{eq:b1+b2=2k+1}
 \frac{b_j}{b_k} \in \bQ \qquad\text{and}\qquad \frac{b_j - b_k}{\gcd(b_j, b_k)}
 \quad\text{is odd}, \qquad j \ne k.
\end{equation}
Let us simplify this condition. Assuming all ratios $b_j/b_k$ to be rational, we can choose a ``base'' $b_0 > 0$, such that $b_k = N_k b_0$, $N_k \in \bZ$, $k \in \oneton$. Let $N_k = 2^{a_k} M_k$, $k \in \oneton$, where $a_k \in \{0, 1, 2, \ldots\}$ and $M_k$ is odd. Clearly each non-zero integer has unique representation of this form. Let $j \ne k$. Without loss of generality we can assume that $a_j \le a_k$. Then, with the above representation in mind, we have
$$
b_j - b_k = 2^{a_j} b_0 (M_j - 2^{a_k-a_j} M_k), \qquad
\gcd(b_j, b_k) = 2^{a_j} b_0 \gcd(M_j, M_k).
$$
Since $M_j$ and $M_k$ are odd, it is clear, that the ratio $(b_j - b_k) / \gcd(b_j, b_k)$ is odd if and only if $a_j \ne a_k$, which finishes the proof.
\end{proof}
\begin{example}
In some cases we can select the ``blocks'' in inequality~\eqref{eq:Lambda0=union} and the number $\eps$ more explicitly. For instance, if $c_1, \ldots, c_n > 0$, and $b_j/b_k \in \bQ$, $j,k \in \oneton$, we show that $\eps$ in~\eqref{eq:Lambda0=union} can be chosen as
$$
 \eps := \min\left\{\frac{2 \pi \cdot \gcd(b_j, b_k)}{|b_j b_k|} :
 \ \ j, k \in \oneton, \ \ j \ne k \right\}.
$$
Indeed, let $j \ne k$ be fixed. In this case $\arg(c_j) = \arg(c_k) = 0$ and $b_j = N_j b_{jk}$, $b_k = N_k b_{jk}$, where $b_{jk} = \gcd(b_j, b_k) > 0$, $N_j, N_k \in \bZ$. Therefore, formula~\eqref{eq:lkm.per.proof} implies that
$$
\Re \l^{\per}_{j,p} = \frac{2 \pi p}{b_{jk} N_j}, \qquad
\Re \l^{\per}_{k,m} = \frac{2 \pi m}{b_{jk} N_k}, \qquad p, m \in \bZ.
$$
Hence for pairs of eigenvalues $\l^{\per}_{j,p}$, $\l^{\per}_{k,m}$, satisfying $\Re \l^{\per}_{j,p} \ne \Re \l^{\per}_{k,m}$ one derives
\begin{equation} \label{eq:Rel1n-Rel2m}
 |\Re \l^{\per}_{j,p} - \Re \l^{\per}_{k,m}|
 = \frac{2\pi \cdot |N_k p - N_j m|}{b_{jk} |N_j N_k|}
 \ge \frac{2\pi}{b_{jk} |N_j N_k|}
 = \frac{2\pi\cdot \gcd(b_j, b_k)}{|b_j b_k|} \ge \eps > 0.
\end{equation}
Therefore putting the eigenvalues $\l^{\per}_{j,p}$, $\l^{\per}_{k,m}$ with $\Re \l^{\per}_{j,p} = \Re \l^{\per}_{k,m}$ in one block we conclude from~\eqref{eq:Rel1n-Rel2m} that estimate~\eqref{eq:Lambda0=union} holds with the desired $\eps$.

Note also that if there are infinitely many pairs of eigenvalues
with equal real parts one cannot replace the second inequality
in~\eqref{eq:Lambda0=union} by $n_{k+1} \le n_k + 1$ for
$n_k$ big enough. As a consequence of this fact, the system of
root vectors of the BVP~\eqref{eq:LQ.def.reg}--\eqref{eq:Uy=0},
forms a Riesz basis only with parentheses (cf. Corollary
\ref{cor_sizes_of_Riesz_blocks}).
\end{example}
The next result establishes similar criterion for a certain subclass of separated boundary conditions.
\begin{lemma} \label{lem:separ.regular}
Let $n=2N$, $N \in \bN$, and let numbers $b_1, \ldots, b_n$
satisfy the following condition,
\begin{equation} \label{eq:b.2k-1<0<b.2k}
 b_1 < 0 < b_2, \quad b_3 < 0 < b_4, \quad \ldots, \quad b_{n-1} < 0 < b_n.
\end{equation}
Further, let boundary conditions~\eqref{eq:Uy=0} be of the form
\begin{equation} \label{eq:separ.cond}
 c_{2k-1} y_{2k-1}(0) + c_{2k} y_{2k}(0) = 0, \qquad
 d_{2k-1} y_{2k-1}(1) + d_{2k} y_{2k}(1) = 0, \qquad k \in \onetoN,
\end{equation}
where $c_j, d_j \ne 0$, $j \in \oneton$, i.e.\ $U(y) = C y(0) + D y(\ell) = 0$, where
\begin{align}
 C &= \diag\(
 \begin{pmatrix} c_1 & c_2 \\ 0 & 0 \end{pmatrix},
 \begin{pmatrix} c_3 & c_4 \\ 0 & 0 \end{pmatrix},
 \ldots,
 \begin{pmatrix} c_{n-1} & c_{n} \\ 0 & 0 \end{pmatrix}\), \\
 D &= \diag\(
 \begin{pmatrix} 0 & 0 \\ d_1 & d_2 \end{pmatrix},
 \begin{pmatrix} 0 & 0 \\ d_3 & d_4 \end{pmatrix},
 \ldots,
 \begin{pmatrix} 0 & 0 \\ d_{n-1} & d_{n} \end{pmatrix}\).
\end{align}
Then boundary conditions~\eqref{eq:separ.cond} are regular. Set
\begin{equation} \label{eq:sigma.tau.def}
 \sigma_k := b_{2k} - b_{2k-1} > 0, \qquad
 \tau_k := \frac{c_{2k} d_{2k-1}}{c_{2k-1} d_{2k}} \ne 0,
 \qquad k \in \onetoN.
\end{equation}

\textbf{(i)} Let $\L_0 = \{\l_m^0\}_{m \in \bZ}$ be the
sequence of zeros of the characteristic determinant
$\Delta_0(\cdot)$ and assume it is ordered in such a way that
$\Re \l_m^0 \le \Re \l_{m+1}^0$, $m \in \bZ$. Then there
exists a sequence of integers $\{m_k\}_{k \in \bZ}$, such that
\begin{equation} \label{eq:Lambda0=union.sep}
 m_k < m_{k+1} \le m_k + n/2, \quad \Re \l_{m_k}^0 - \Re \l_{m_k-1}^0 \ge \eps, \quad k \in \bZ,
\end{equation}
where
$$
\eps := \frac{4 \pi}{\sigma_{\max} n} > 0 \quad\text{and}\quad
\sigma_{\max} := \max\{\sigma_1, \ldots, \sigma_N\} =
\max\{b_2-b_1, b_4-b_3, \ldots, b_n-b_{n-1}\}.
$$

\textbf{(ii)} Let numbers $\left\{\frac{\ln |\tau_k|}{\sigma_k}\right\}_{k=1}^N$ be distinct, i.e.\
\begin{equation} \label{eq:sigmaj.ln.tauk}
 \sigma_j \ln |\tau_k| \ne \sigma_k \ln |\tau_j|, \qquad j \ne k,
 \quad j, k \in \onetoN.
\end{equation}
Then boundary conditions~\eqref{eq:separ.cond} are strictly regular. In particular, this is always the case if $n=2$.

\textbf{(iii)} More precisely, boundary conditions~\eqref{eq:separ.cond} are strictly regular if and only if for all $j \ne k$ the following condition holds,
\begin{equation} \label{eq:strict.crit.separ}
 \text{either}\quad \sigma_j \ln |\tau_k| \ne \sigma_k \ln |\tau_j|
 \quad\text{or}\quad \(
 \frac{\sigma_j}{\sigma_k} \in \bQ
 \quad\text{and}\quad
 \frac{\sigma_j \arg(\tau_k) - \sigma_k \arg(\tau_j)}{2 \pi \gcd(\sigma_j, \sigma_k)} \not \in \bZ\).
\end{equation}
\end{lemma}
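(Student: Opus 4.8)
The plan is to follow closely the pattern already established for periodic boundary conditions in Lemma~\ref{lem:periodic.strict}, exploiting the fact that the characteristic determinant $\Delta_0(\cdot)$ for the separated conditions~\eqref{eq:separ.cond} again factors into a product of $N$ elementary quasi-polynomials, each generating an arithmetic progression of zeros. First I would compute $A_0(\l) = C + D\Phi_0(\ell,\l)$ explicitly: because both $C$ and $D$ are block-diagonal with $2\times 2$ blocks and $\Phi_0(\ell,\l) = \diag(e^{i\l b_1},\dots,e^{i\l b_n})$, the matrix $A_0(\l)$ is block-diagonal with $k$-th block $\left(\begin{smallmatrix} c_{2k-1} & c_{2k} \\ d_{2k-1}e^{i\l b_{2k-1}} & d_{2k}e^{i\l b_{2k}} \end{smallmatrix}\right)$. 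Hence
\begin{equation*}
 \Delta_0(\l) = \prod_{k=1}^N \bigl(c_{2k-1} d_{2k} e^{i\l b_{2k}} - c_{2k} d_{2k-1} e^{i\l b_{2k-1}}\bigr)
 = \prod_{k=1}^N c_{2k} d_{2k-1} e^{i\l b_{2k-1}}\bigl(\tau_k^{-1} e^{i\l\sigma_k} - 1\bigr),
\end{equation*}
using the notation~\eqref{eq:sigma.tau.def}. Since each factor is a nonzero constant times $(e^{i\l\sigma_k} - \tau_k)$ up to an exponential prefactor, regularity follows as in Lemma~\ref{lem:periodic.strict}: $T_P(C,D)$ is invertible for every $P\in\cP_n$ because within each $2\times2$ block one column comes from $C$ and the other from $D$, and $c_j,d_j\neq 0$; alternatively one checks $J_{P_\pm}(C,D)\neq 0$ directly from~\eqref{eq:regular.def} and~\eqref{eq:b.2k-1<0<b.2k}. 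The zeros of the $k$-th factor form the arithmetic progression
\begin{equation*}
 \l^{\mathrm{sep}}_{k,m} = \frac{\arg(\tau_k) + 2\pi m}{\sigma_k} - i\frac{\ln|\tau_k|}{\sigma_k}, \qquad m\in\bZ,
\end{equation*}
which lies on a horizontal line, exactly mirroring~\eqref{eq:lkm.per.proof} with $b_k\rightsquigarrow\sigma_k$ and $c_k\rightsquigarrow\tau_k$.

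Once this factorization is in place, parts (i), (ii), (iii) are proved by repeating verbatim the arguments of Lemma~\ref{lem:periodic.strict}(i)--(iii) with $b_k$ replaced by $\sigma_k$, $c_k$ by $\tau_k$, $n$ by $N$, and $b_{\max}$ by $\sigma_{\max}$. For (i): among any $N+1$ consecutive eigenvalues (ordered by real part) two must belong to the same progression $\L^{\mathrm{sep}}_k$ by pigeonhole, hence consecutive elements of that progression have real parts differing by $2\pi/\sigma_k$, and dividing by the number of intervals ($\le N$) yields a gap $\ge 2\pi/(\sigma_{\max} N) = 4\pi/(\sigma_{\max} n) = \eps$; then thin out $\{m+r_m+1\}$ to get $\{m_k\}$ with $m_{k+1}-m_k \le N = n/2$. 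For (ii): condition~\eqref{eq:sigmaj.ln.tauk} says the $N$ horizontal lines $\Im\l = -\ln|\tau_k|/\sigma_k$ are distinct, so $|\l^{\mathrm{sep}}_{j,p} - \l^{\mathrm{sep}}_{k,m}| \ge |\,\ln|\tau_j|/\sigma_j - \ln|\tau_k|/\sigma_k\,| > 0$ for $j\neq k$, while within a fixed progression points are separated by $2\pi/\sigma_k$; hence $\L_0$ is (fully) separated and the conditions are strictly regular — in particular for $n=2$ there is only one factor, so $\L_0$ is a single arithmetic progression and the conclusion is automatic. For (iii): when $\sigma_j/\sigma_k\notin\bQ$ and the lines coincide ($\sigma_j\ln|\tau_k| = \sigma_k\ln|\tau_j|$), Kronecker's theorem produces arbitrarily large $p,m$ with $\l^{\mathrm{sep}}_{j,p}$ close to $\l^{\mathrm{sep}}_{k,m}$, defeating asymptotic separation; when $\sigma_j/\sigma_k\in\bQ$ and the lines coincide, the two progressions are asymptotically separated iff they share no common point, which by the computation $|\l^{\mathrm{sep}}_{j,p}-\l^{\mathrm{sep}}_{k,m}| = \frac{2\pi}{\sigma_j}|r_{jk}+p-\alpha m|$ with $\alpha = \sigma_j/\sigma_k$ and $r_{jk} = (\arg\tau_j - \alpha\arg\tau_k)/2\pi$ amounts to the Diophantine equation $p-\alpha m = -r_{jk}$ having no integer solution, i.e. $r_{jk}/\gcd(\alpha,1)\notin\bZ$, which is exactly $\frac{\sigma_j\arg(\tau_k)-\sigma_k\arg(\tau_j)}{2\pi\gcd(\sigma_j,\sigma_k)}\notin\bZ$. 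Assembling these three cases gives~\eqref{eq:strict.crit.separ}.

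The only genuinely new bookkeeping is the explicit evaluation of the block-diagonal determinant and the verification that the map $(b_k,c_k,n)\mapsto(\sigma_k,\tau_k,N)$ translates every step of the periodic proof faithfully; I do not anticipate a serious obstacle, since the structure is identical. The one point deserving a little care is the claim that ordering $\L_0$ by real part and applying pigeonhole to $N+1$ consecutive eigenvalues is legitimate even when several eigenvalues share the same real part — but this is handled exactly as in the Example following Lemma~\ref{lem:periodic.strict} by grouping coincident-real-part eigenvalues into a single block, and it does not affect the gap estimate. A remark at the end can point out, paralleling Lemma~\ref{lem:periodic.strict}(iv), that if all $\tau_k$ have the same modulus and argument (e.g. all $c_j=d_j$) then~\eqref{eq:strict.crit.separ} fails unless the $\sigma_k$ satisfy the $2$-adic valuation condition, and that $n=2$ is always strictly regular.
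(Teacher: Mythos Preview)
Your proposal is correct and mirrors the paper's proof exactly: compute the block-diagonal $A_0(\l)$, factor $\Delta_0$ into $N$ elementary factors, read off the arithmetic progressions $\l^{\mathrm{sep}}_{k,m}$, and then invoke Lemma~\ref{lem:periodic.strict} with the substitution $(b_k,c_k,n)\mapsto(\sigma_k,\tau_k,N)$ --- indeed the paper's proof literally ends with ``From here the proof is finished the same way as in Lemma~\ref{lem:periodic.strict}.'' One small slip: your claim that $T_P(C,D)$ is invertible for \emph{every} $P\in\cP_n$ is false (take $p_{2k-1}=p_{2k}=0$ in some block and the corresponding $2\times2$ block has a zero row), but your alternative of checking $J_{P_\pm}(C,D)\neq0$ directly --- which is what the paper does --- is correct and suffices.
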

\begin{proof}
Condition~\eqref{eq:b.2k-1<0<b.2k} implies that
$$
P_+ = \diag(0,1,0,1,\ldots,0,1), \qquad
P_- = \diag(1,0,1,0,\ldots,1,0).
$$
where ``projectors'' $P_{\pm}$ are defined in~\eqref{eq:P.pm.def}. Hence for determinant $J_{P_-}(C,D)$ given by~\eqref{eq:TP.CD.def} we have,
$$
J_{P_-}(C,D) = \det\(\diag\(
 \begin{pmatrix} 0 & c_2 \\ d_1 & 0 \end{pmatrix},
 \begin{pmatrix} 0 & c_4 \\ d_3 & 0 \end{pmatrix},
 \ldots,
 \begin{pmatrix} 0 & c_{n} \\ d_{n-1} & 0 \end{pmatrix}\)\) \ne 0,
$$
since numbers $c_j, d_j$, $j \in \oneton$ are non-zero. Similarly $J_{P_+}(C,D) \ne 0$. This implies regularity of boundary conditions. Further, it is clear that
$$
C + D \Phi_0(\ell, \l) = \diag\(
 \begin{pmatrix} c_1 & c_2 \\ d_1 e^{i \l b_1} & d_2 e^{i \l b_2} \end{pmatrix},
 \begin{pmatrix} c_3 & c_4 \\ d_3 e^{i \l b_3} & d_4 e^{i \l b_4} \end{pmatrix},
 \ldots,
 \begin{pmatrix} c_{n-1} & c_n \\
 d_{n-1} e^{i \l b_{n-1}} & d_n e^{i \l b_n} \end{pmatrix}\).
$$
Hence the characteristic determinant $\Delta_0(\cdot)$ defined in~\eqref{eq:Delta.def} becomes
\begin{equation} \label{eq:Delta0.separ}
 \Delta_0(\l) = \det(C + D \Phi_0(\ell, \l))
 = \prod_{k=1}^N \bigl(c_{2k-1} d_{2k} e^{i \l b_{2k}}
 - c_{2k} d_{2k-1} e^{i \l b_{2k-1}}\bigr).
\end{equation}
Let $\L^{\separ}_k = \{\l^{\separ}_{k,m}\}_{m \in \bZ}$, $k \in \oneton$, be the sequences of zeros of the $k$-th factor
in this product. Clearly,
\begin{equation} \label{eq:lkm.separ}
 \l^{\separ}_{k,m} = \frac{-i \ln \tau_k + 2 \pi m}{\sigma_k}
 = \frac{\arg(\tau_k) + 2 \pi m}{\sigma_k}
 - i \frac{\ln|\tau_k|}{\sigma_k},
 \qquad m \in \bZ, \quad k \in \onetoN,
\end{equation}
where $\sigma_k$ and $\tau_k$ are given by~\eqref{eq:sigma.tau.def}.
Thus, each sequence $\L_k^{\separ}$, $k \in \onetoN$, is algebraically simple and constitutes an arithmetic progression that lies on the line parallel to the real axis. From here the proof if finished the same way as in Lemma~\ref{lem:periodic.strict}.
\end{proof}
\section{Asymptotic behavior of eigenvalues and eigenvectors}
\label{sec:asymp.eigen}
\subsection{The key identity for characteristic determinant}
\label{subsec:determinant}
Here we present the key formula relating the characteristic determinants $\Delta_Q(\cdot)$ and $\Delta_0(\cdot)$.
\begin{proposition} \label{prop:DeltaQ=Delta0+int}
Let matrix functions $B(\cdot)$ and $Q(\cdot)$ satisfy conditions~\eqref{eq:Bx.def}--\eqref{eq:Qjk=0.bj=bk} and let $\Delta_Q(\l)$ and $\Delta_0(\l)$ be the characteristic determinants of BVP~\eqref{eq:LQ.def.reg}--\eqref{eq:Uy=0} and BVP~\eqref{eq:L0.def.reg},~\eqref{eq:Uy=0}, respectively, given by~\eqref{eq:Delta.def}. Then there exists function $g \in L^1[b_-, b_+]$, where $b_{\pm}$ are defined in~\eqref{eq:b-+.def}, such that the following identity holds
\begin{equation} \label{eq:DeltaQ=Delta0+int}
 \Delta_Q(\l) = \Delta_0(\l) + \int_{b_-}^{b_+} g(u) e^{i \l u} \,du, \qquad
 \l \in \bC.
\end{equation}
In addition, entries of adjugate matrices $A_Q^a(\l)$ and $A_0^a(\l)$, given by~\eqref{eq:Aa.def}--\eqref{eq:Aa0.def}, are connected via
\begin{equation} \label{eq:AjkQ=Ajk0+int}
 A_{kp}(\l) = A_{kp}^0(\l) + \int_{b_-}^{b_+} g_{kp}(u) e^{i \l u} \,du, \qquad \l \in \bC, \quad k,p \in \oneton,
\end{equation}
for some $g_{kp} \in L^1[b_-, b_+]$, $k,p \in \oneton$.
\end{proposition}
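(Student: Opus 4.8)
The characteristic determinant is $\Delta_Q(\l) = \det A_Q(\l) = \det\bigl(C + D\Phi(\ell,\l)\bigr)$, so the first task is to expand this determinant using the machinery of Subsection~\ref{subsec:det.sum.prod}. Applying Lemma~\ref{lem:det.A+B} with $\cA = C$ and $\cB\cC$ replaced appropriately — more precisely, writing $D\Phi(\ell,\l) = \bigl(\sum_p d_{jp}\phi_{pk}(\ell,\l)\bigr)_{j,k}$ and invoking the Cauchy--Binet type Lemma~\ref{lem:det.AB.qp} — one expresses $\Delta_Q(\l)$ as a finite sum of terms, each a product of a constant minor of $C$, a constant minor of $D$, and a minor $\Phi(\ell,\l)[\fr,\fp]$ of the fundamental matrix. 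Concretely, Lemma~\ref{lem:det.A+B.C} gives
\[
 \Delta_Q(\l) = \det(C) + \sum_{m=1}^n \sum_{\fq,\fp,\fr \in \fP_m}
 (-1)^{\sigma(\fp)+\sigma(\fq)} C[\wh{\fq},\wh{\fp}]\, D[\fq,\fr]\, \Phi(\ell,\l)[\fr,\fp].
\]
The analogous expansion holds for $\Delta_0(\l)$ with $\Phi_0(\ell,\l)[\fr,\fp]$ in place of $\Phi(\ell,\l)[\fr,\fp]$; note that $\Phi_0(\ell,\l)[\fr,\fp] = \delta_{\fr,\fp}e^{i\l\rho_{\fr}(\ell)}$ is diagonal, which is exactly why $\Delta_0$ has the simple exponential-sum form of Lemma~\ref{lem:Delta0.prop}.

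\textbf{Using the minor representation.} The second ingredient is Proposition~\ref{prop:minor.form}, which at $x=\ell$ gives, for each $m$ and each pair $\fq,\fp\in\fP_m$,
\[
 \Phi(\ell,\l)[\fr,\fp] = \Phi_0(\ell,\l)[\fr,\fp] + \int_{\tau_m^-(\ell)}^{\tau_m^+(\ell)} R_{\fr,\fp}(\ell,u)\, e^{i\l u}\,du,
\]
with $R_{\fr,\fp}(\ell,\cdot)$ summable by~\eqref{eq:R.fq.fp.in.X} (the trace at $x=\ell$ is well defined precisely by that uniform bound). Subtracting the expansion of $\Delta_0$ from that of $\Delta_Q$, every term with a constant $C$-minor and constant $D$-minor multiplies the \emph{difference} $\Phi(\ell,\l)[\fr,\fp] - \Phi_0(\ell,\l)[\fr,\fp]$, which is a Fourier transform of a summable function supported in $[\tau_m^-(\ell),\tau_m^+(\ell)]$. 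A finite linear combination of such Fourier transforms is again the Fourier transform of a summable function; collecting everything we obtain $\Delta_Q(\l) - \Delta_0(\l) = \int_{\bR} g(u)e^{i\l u}\,du$ for some $g\in L^1(\bR)$. It remains to check the support: since $\tau_m^-(\ell) = \min\{\rho_1(\ell)+\dots+\rho_m(\ell),0\} \ge b_-$ and $\tau_m^+(\ell) = \max\{\rho_{n-m+1}(\ell)+\dots+\rho_n(\ell),0\}\le b_+$ by~\eqref{eq:fhm-}--\eqref{eq:fhm+} together with the ordering~\eqref{eq:b1.bn}, each contributing kernel is supported in $[b_-,b_+]$, hence so is $g$. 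This yields~\eqref{eq:DeltaQ=Delta0+int}.

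\textbf{The adjugate identity.} For~\eqref{eq:AjkQ=Ajk0+int} the strategy is identical but one starts from~\eqref{eq:Aajk.def}, i.e.\ $A_{kp}(\l) = (-1)^{k+p} A_Q(\l)[\fp_p,\fp_k]$ where $\fp_p = \wh{(p)} \in \fP_{n-1}$. One expands the $(n-1)$-size minor $A_Q(\l)[\fp_p,\fp_k] = (C + D\Phi(\ell,\l))[\fp_p,\fp_k]$ by the cofactor version Lemma~\ref{lem:det.A+B.C.jk} (formula~\eqref{eq:det.A+B.C.jk}), obtaining a finite sum over $m \le n-1$ of products of constant minors of $C$ and $D$ times minors $\Phi(\ell,\l)[\fr,\fp]$ with $\fr,\fp\in\fP_m$. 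Subtracting the $Q=0$ version and applying Proposition~\ref{prop:minor.form} exactly as before gives $A_{kp}(\l) - A_{kp}^0(\l) = \int g_{kp}(u)e^{i\l u}\,du$ with $g_{kp}\in L^1$; the support statement $\operatorname{supp} g_{kp}\subset[b_-,b_+]$ follows since only sizes $m\le n-1$ occur and $\tau_m^\pm(\ell)\subset[b_-,b_+]$ for all such $m$ as above (in fact one gets the slightly smaller interval $[b_-,b_+]$ trivially, or even a proper subinterval, but $[b_-,b_+]$ suffices).

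\textbf{Expected main obstacle.} The conceptual content is already carried by Proposition~\ref{prop:minor.form} and the determinant-expansion lemmas, so the proof is essentially bookkeeping. The one point requiring genuine care is verifying that the supports of all the occurring kernels lie inside $[b_-,b_+]$, i.e.\ tracking that for every multi-index pair appearing in the expansion the associated interval $[\tau_m^-(\ell),\tau_m^+(\ell)]$ is contained in $[b_-,b_+]$; this uses the canonical ordering~\eqref{eq:b1.bn} of the $b_k$ and the elementary but slightly fiddly inequalities $\rho_1(\ell)+\dots+\rho_m(\ell)\ge b_-$ and $\rho_{n-m+1}(\ell)+\dots+\rho_n(\ell)\le b_+$ (and symmetrically for the ``$\ge 0$''/``$\le 0$'' branches in the definition of $\tau_m^\pm$). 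Everything else — finiteness of the sums, $L^1$-closure under finite linear combinations, entirety in $\l$ — is routine.
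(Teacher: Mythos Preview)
Your proposal is correct and follows essentially the same route as the paper's proof: expand $\det(C+D\Phi(\ell,\l))$ via Lemma~\ref{lem:det.A+B.C}, plug in the minor representation from Proposition~\ref{prop:minor.form} at $x=\ell$, verify the support inclusion $[\tau_m^-(\ell),\tau_m^+(\ell)]\subset[b_-,b_+]$, and collect terms; the adjugate case is handled identically via Lemma~\ref{lem:det.A+B.C.jk}. The paper does exactly this, with no additional ideas beyond what you outlined.
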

\begin{proof}
Let $\l \in \bC$ be fixed. Since $\Delta_Q(\l) = \det(C + D \Phi_Q(\ell, \l))$, applying formula~\eqref{eq:det.A+B.C} one gets,
\begin{equation} \label{eq:DeltaQ.minors}
 \Delta_Q(\l) = \det(C) + \sum_{m=1}^n \sum_{\fq, \fp, \fr \in \fP_m}
 (-1)^{\sigma(\fp) + \sigma(\fq)} C[\wh{\fq}, \wh{\fp}]
 \cdot D[\fq,\fr] \cdot \Phi_Q(\ell, \l)[\fr,\fp],
\end{equation}
where notations $\fP_m$, $\cA[\fp,\fq]$ and $\wh{\fp}$ were introduced in Subsection~\ref{subsec:det.sum.prod}. To transform~\eqref{eq:DeltaQ.minors}, we will apply formula~\eqref{eq:det.Phipq} for $\Phi_Q(\ell, \l)[\fr,\fp]$. To this end, observe that diagonal structure of the matrix function $\Phi_0(\cdot,\l)$, ${\Phi_0(\cdot,\l) = \diag(e^{i \l \rho_1(\cdot)}, \ldots, e^{i \l \rho_n(\cdot)})}$, implies
\begin{equation} \label{eq:det.Phi0pq}
 \Phi_0(\ell,\l)[\fr,\fp] = \delta_{\fr,\fp} \exp\(i \l \rho_{\fr}(\ell)\),
 \qquad \fr, \fp \in \fP_m.
\end{equation}
where $\rho_{\fr}(\ell) = \rho_{r_1}(\ell) + \ldots + \rho_{r_m}(\ell)$, $\fr = (r_1, \ldots, r_m)$, and was defined in~\eqref{eq:rho.fp.def}. Thus, setting $x = \ell$ in~\eqref{eq:det.Phipq} and taking into account formula~\eqref{eq:det.Phi0pq}, we get
\begin{equation} \label{eq:det.Phipq.ell}
 \Phi_Q(\ell,\l)[\fr,\fp] = \Phi_0(\ell,\l)[\fr,\fp] +
 \int_{\tau_m^-(\ell)}^{\tau_m^+(\ell)} R_{\fr,\fp}(\ell,u) e^{i \l u} du,
 \qquad \fr, \fp \in \fP_m.
\end{equation}
With account of notations~\eqref{eq:fhm-}--\eqref{eq:fhm+} and notation $b_k = \rho_k(\ell)$, we get
\begin{align} \label{eq:fhm-+.ell}
 \tau_m^-(\ell) = \min\{b_1 + \ldots + b_m, 0\}, \qquad
 \tau_m^+(\ell) = \max\{b_{n-m+1} + \ldots + b_n, 0\},
 \qquad m \in \oneton.
\end{align}
It is clear from the canonical ordering $b_1 \le \ldots \le b_{n_-} < 0 < b_{n_-+1} \le \ldots \le b_n$, definition~\eqref{eq:b-+.def} of $b_{\pm}$ and identities~\eqref{eq:fhm-+.ell} that
\begin{equation}
 [\tau_m^-(\ell), \tau_m^+(\ell)] \subset [b_-, b_+].
\end{equation}
Hence by setting
\begin{equation}
 g_{\fr,\fp}(u) := \begin{cases}
 R_{\fr,\fp}(\ell,u), & u \in [\tau_m^-(\ell), \tau_m^+(\ell)], \\
 0, & u \in [b_-, b_+] \setminus [\tau_m^-(\ell), \tau_m^+(\ell)],
 \end{cases}
 \quad \fr, \fp \in \fP_m.
\end{equation}
we can further transform~\eqref{eq:det.Phipq.ell} into
\begin{equation} \label{eq:det.Phipq.ell.final}
 \Phi_Q(\ell,\l)[\fr,\fp] = \Phi_0(\ell,\l)[\fr,\fp] +
 \int_{b_-}^{b_+} g_{\fr,\fp}(u) e^{i \l u} du,
 \qquad \fr, \fp \in \fP_m.
\end{equation}
Note also, that condition~\eqref{eq:R.fq.fp.in.X} implies inclusion $g_{\fr,\fp} \in L^1[b_-, b_+]$, $\fr, \fp \in \fP_m$.

Inserting~\eqref{eq:det.Phipq.ell.final} into~\eqref{eq:DeltaQ.minors} and using~\eqref{eq:DeltaQ.minors} for $Q=0$
we get
\begin{align}
\nonumber
 \Delta_Q(\l) & = \det(C) + \sum_{m=1}^n \sum_{\fq, \fp, \fr \in \fP_m}
 (-1)^{\sigma(\fp) + \sigma(\fq)} C[\wh{\fq}, \wh{\fp}]
 D[\fq,\fr] \cdot \(\Phi_0(\ell,\l)[\fr,\fp] +
 \int_{b_-}^{b_+} g_{\fr,\fp}(u) e^{i \l u} du\) \\
\label{eq:DeltaQ.minors.final}
 & = \Delta_0(\l) + \int_{b_-}^{b_+} g(u) e^{i \l u} du,
\end{align}
where
\begin{equation}
 g(u) := \sum_{\fq, \fp, \fr \in \fP_m}
 (-1)^{\sigma(\fp) + \sigma(\fq)} C[\wh{\fq}, \wh{\fp}]
 \cdot D[\fq,\fr] \cdot g_{\fr,\fp} (u).
\end{equation}
Since $g_{\fr,\fp} \in L^1[b_-, b_+]$, $\fr, \fp \in \fP_m$, it is clear that $g \in L^1[b_-, b_+]$, which finishes the proof of formula~\eqref{eq:DeltaQ=Delta0+int}.

Formula~\eqref{eq:AjkQ=Ajk0+int} can be obtained the same way as above by using Lemma~\ref{lem:det.A+B.C.jk} instead of Lemma~\ref{lem:det.A+B.C} and taking into account notations~\eqref{eq:Aajk.def}.
\end{proof}
\begin{remark}
Note that if $n_- = 0$, and so $b_-=0$ and $b_+ = b_1 + \ldots + b_n$, then integration limits in~\eqref{eq:AjkQ=Ajk0+int} can be reduced from $[0, b_+]$ to $[0, b_+ - b_1]$. Similar effect happens if $n_- = n$.
\end{remark}
For completeness of exposition, let us obtain similar Fourier transform related representation for vector functions $Y_p(\cdot,\l)$ defined in~\eqref{eq:Ypxl.def}. Study of these vector functions is motivated by Lemma~\ref{lem:eigen} and their appearance as eigenvectors of the operator $L_U(Q)$. To this end, let us set
\begin{align}
\label{eq:wt.rho-.def}
 \wh{\rho}_-(x) &:= \rho_1^-(x) + b_- = \min\{\rho_1(x), 0\}
 + b_1 + \ldots + b_{n_-}, \qquad x \in [0,\ell], \\
\label{eq:wt.rho+.def}
 \wh{\rho}_+(x) &:= \rho_n^+(x) + b_+ = \min\{\rho_n(x), 0\}
 + b_{n_- + 1} + \ldots + b_n, \qquad x \in [0,\ell].
\end{align}
where $\rho_1^-(x)$, $\rho_n^+(x)$ are defined in~\eqref{eq:rho-+def} and $b_{\pm}$ are defined in~\eqref{eq:b-+.def}.
\begin{proposition} \label{prop:eigenvec}
Let matrix functions $B(\cdot)$ and $Q(\cdot)$ satisfy conditions~\eqref{eq:Bx.def}--\eqref{eq:Qjk=0.bj=bk} and let $p \in \oneton$. Then there exists a measurable vector kernel $G_p$ defined on
$$
 \wh{\Omega} := \{(x,u) : x \in [0, \ell],
 u \in [\wh{\rho}_-(x), \wh{\rho}_+(x)]\}
$$
such that for each $x \in [0, \ell]$, a trace function $G_p(x, \cdot)$ is well-defined, summable,
\begin{equation} \label{eq:sup.int.Gp}
 \sup_{x \in [0,\ell]} \int_{\wh{\rho}_-(x)}^{\wh{\rho}_+(x)}
 \norm{G_p(x,u)}_{\bC^n} \,du < \infty,
\end{equation}
and the following representation holds
\begin{equation} \label{eq:Yp=Y0p+int}
 Y_p(x,\l) = Y_p^0(x,\l) + \int_{\wh{\rho}_-(x)}^{\wh{\rho}_+(x)} G_p(x,u)
 e^{i \l u} \,du, \qquad x \in [0, \ell], \quad \l \in \bC,
\end{equation}
where $Y_p(x,\l)$ and $Y_p^0(x,\l)$ are defined in~\eqref{eq:Ypxl.def}--\eqref{eq:Yp0xl.def}.
\end{proposition}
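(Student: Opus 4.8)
The plan is to derive the representation~\eqref{eq:Yp=Y0p+int} for $Y_p(\cdot,\l)$ from the two Fourier-type identities already available in this section, combined with the representation~\eqref{eq:Phip=Phi0p+int1n} for the columns $\Phi_k(\cdot,\l)$ of the fundamental matrix. Recall from Lemma~\ref{lem:eigen} that $Y_p(x,\l) = \sum_{k=1}^n A_{kp}(\l)\Phi_k(x,\l)$ and $Y_p^0(x,\l) = \sum_{k=1}^n A_{kp}^0(\l)\Phi_k^0(x,\l)$. The idea is to substitute into the first sum the representations $A_{kp}(\l) = A_{kp}^0(\l) + \int_{b_-}^{b_+} g_{kp}(u)e^{i\l u}\,du$ from Proposition~\ref{prop:DeltaQ=Delta0+int} and $\Phi_k(x,\l) = \Phi_k^0(x,\l) + \int_{\rho_1^-(x)}^{\rho_n^+(x)} \wt R_k(x,v)e^{i\l v}\,dv$ from Corollary~\ref{cor:Phip=Phip0+int1n}, and then expand the product.

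Concretely, I would write
$$
 A_{kp}(\l)\Phi_k(x,\l) = A_{kp}^0(\l)\Phi_k^0(x,\l)
 + A_{kp}^0(\l)\!\int_{\rho_1^-(x)}^{\rho_n^+(x)}\!\! \wt R_k(x,v)e^{i\l v}dv
 + \Phi_k^0(x,\l)\!\int_{b_-}^{b_+}\!\! g_{kp}(u)e^{i\l u}du
 + \Bigl(\int_{b_-}^{b_+}\!\! g_{kp}(u)e^{i\l u}du\Bigr)\Bigl(\int_{\rho_1^-(x)}^{\rho_n^+(x)}\!\! \wt R_k(x,v)e^{i\l v}dv\Bigr).
$$
The first term sums over $k$ to give exactly $Y_p^0(x,\l)$ after accounting for the fact that $A_{kp}^0(\l)$ is, by~\eqref{eq:Aajk0.def} and~\eqref{eq:Delta0.sum}, itself a finite exponential sum $\sum_\nu c_{kp,\nu}e^{i\l\beta_{kp,\nu}}$ with frequencies $\beta_{kp,\nu}$ lying in $[b_-,b_+]$ (a minor of $A_0$ with the $p$-th row and $k$-th column removed); hence the second term, after grouping the translated copies of $\wt R_k(x,\cdot)$ by $u\mapsto u+\beta_{kp,\nu}$, becomes $\int e^{i\l u}(\cdot)\,du$ with integration variable in $[\rho_1^-(x)+b_-,\rho_n^+(x)+b_+] \subset [\wh\rho_-(x),\wh\rho_+(x)]$, and likewise the third term produces frequencies in $[b_-,b_+]+[\rho_1^-(x),0]$ by using $\Phi_k^0(x,\l)=e^{i\l\rho_k(x)}e_k$ with $\rho_k(x)\in[\rho_1^-(x),\rho_n^+(x)]$. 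The fourth term is a convolution of two $L^1$ functions (in their respective exponent variables), which is again $L^1$ by Young's inequality, with support in the sum of the two support intervals, again contained in $[\wh\rho_-(x),\wh\rho_+(x)]$ by~\eqref{eq:wt.rho-.def}--\eqref{eq:wt.rho+.def} and~\eqref{eq:rho-.rho+}. Collecting everything and summing over $k$ yields~\eqref{eq:Yp=Y0p+int} with $G_p(x,\cdot)$ an explicit finite sum of translates and convolutions of the kernels $\wt R_k(x,\cdot)$ and $g_{kp}$. The uniform bound~\eqref{eq:sup.int.Gp} follows because each constituent piece is uniformly bounded: the $g_{kp}$ are fixed $L^1[b_-,b_+]$ functions (independent of $x$), while $\sup_x\int\|\wt R_k(x,v)\|\,dv<\infty$ by~\eqref{eq:sup.int.wtR}, and translation and convolution with fixed $L^1$ functions preserve such uniform bounds.

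The main obstacle I anticipate is bookkeeping rather than anything deep: one must keep careful track of the support intervals of the various translated kernels and verify that their union is contained in $[\wh\rho_-(x),\wh\rho_+(x)]$ uniformly, and one must justify interchanging the finite sums over $k$ (and over the exponential-sum terms hidden inside $A_{kp}^0$ and $\Phi_k^0$) with the integrals — but these are finite sums so no subtlety arises there. A cleaner route, which I would actually prefer in order to avoid re-deriving the exponential-sum structure of $A_{kp}^0$, is to mimic the proof of Proposition~\ref{prop:DeltaQ=Delta0+int}: start directly from the transformation-operator representation~\eqref{eq:trans_oper_repres-n} applied with $A = A^{[p]}$ being the $p$-th column of the adjugate-type data, i.e. use $Y_p(x,\l) = \bigl((I+\cK)(\Phi_0(\cdot,\l)\alpha(\l))\bigr)(x)$ where $\alpha(\l) = \col(A_{kp}(\l))_k$, and then push the change of variables $u=\rho_k(t)$ through the kernel integral $\int_0^x K(x,t)B(t)\Phi_0(t,\l)\alpha(\l)\,dt$ exactly as in Corollaries~\ref{cor:Phip=Phip0+o} and~\ref{cor:Phip=Phip0+int1n}, combined with~\eqref{eq:AjkQ=Ajk0+int}. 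Either way the computation is routine once the support arithmetic is set up; I would present the second route since it reuses machinery already assembled in this section and keeps the resulting kernel $G_p$ transparently in the right class.
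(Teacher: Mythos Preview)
Your first approach---substituting the representations~\eqref{eq:AjkQ=Ajk0+int} for $A_{kp}(\l)$ and~\eqref{eq:Phip=Phi0p+int1n} for $\Phi_k(x,\l)$ into $Y_p=\sum_k A_{kp}\Phi_k$, expanding into four terms, and handling each via frequency shifts (for the $A_{kp}^0$ and $\Phi_k^0$ factors) and a convolution (for the product of the two integrals)---is exactly what the paper does. The support arithmetic and the uniform $L^1$ bound go through precisely as you outline.

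One caution about your ``cleaner second route'': the identity $Y_p(x,\l)=\bigl((I+\cK)(\Phi_0(\cdot,\l)\alpha(\l))\bigr)(x)$ does not hold with a single $\l$-independent kernel $\cK$. In Theorem~\ref{th:transform.oper} the kernel $K_A$ genuinely depends on the initial vector $A$, and here $\alpha(\l)=\col(A_{kp}(\l))_k$ varies with $\l$, so you cannot directly push a single transformation operator through. What does work (and is how the paper proceeds in Proposition~\ref{prop:Phip=Phip0+int.sum}) is to apply the transformation operator column-by-column with fixed initial data and then recombine---but once you do that you are back to the first route. So stick with your first approach; it is correct and matches the paper.
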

\begin{proof}
Let $p \in \oneton$, $x \in [0,\ell]$ and $\l \in \bC$ be fixed for the entire proof. Inserting formula~\eqref{eq:Phip=Phi0p+int1n} for $\Phi_p(x,\l)$ and formula~\eqref{eq:AjkQ=Ajk0+int} into formula~\eqref{eq:Ypxl.def} for $Y_p(x,\l)$ and taking into account formula~\eqref{eq:Yp0xl.def} for $Y_p^0(x,\l)$ we get
\begin{align}
\nonumber
 Y_p(x,\l) &:= \sum_{k=1}^n A_{kp}(\l) \Phi_k(x,\l) \\
\nonumber
 & = \sum_{k=1}^n \(A_{kp}^0(\l) + \int_{b_-}^{b_+} e^{i \l u} g_{kp}(u) \,du\)
 \(\Phi_k^0(x,\l)
 + \int_{\rho_1^-(x)}^{\rho_n^+(x)} e^{i \l u} \wt{R}_k(x,u) \,du\) \\
\nonumber
 & = Y_p^0(x,\l)
 + \sum_{k=1}^n \int_{\rho_1^-(x)}^{\rho_n^+(x)}
 e^{i \l u} A_{kp}^0(\l) \wt{R}_k(x,u) \,du
 + \sum_{k=1}^n \int_{b_-}^{b_+} e^{i \l u} g_{kp}(u) \Phi_k^0(x,\l) \,du \\
\label{eq:Yp.xl=3sum.int}
 & \qquad \qquad + \sum_{k=1}^n \int_{b_-}^{b_+} e^{i \l u} g_{kp}(u) \,du
 \int_{\rho_1^-(x)}^{\rho_n^+(x)} e^{i \l u} \wt{R}_k(x,u) \,du.
\end{align}

Let us analyze each sum in r.h.s.\ of~\eqref{eq:Yp.xl=3sum.int}.

\textbf{1st sum.} Note, that $A_{kp}^0(\l)$ has a formula similar to~\eqref{eq:Delta0.sum},
\begin{equation}
 A_{kp}^0(\l) = \sum_{P \in \cP_n} \gam_{kp}^{[P]} \cdot e^{i \l b_P},
 \qquad k,p \in \oneton, \quad
\end{equation}
with some coefficients $\gam_{kp}^{[P]}$ that only depend on matrices $C$ and $D$ from boundary conditions. Recall that $b_P = p_1 b_1 + \ldots + p_n b_n$, where $P = \diag(p_1, \ldots, p_n) = P^2$.
Hence the first sum in r.h.s.\ of~\eqref{eq:Yp.xl=3sum.int} turns into
\begin{align}
\nonumber
 \sum_{k=1}^n \int_{\rho_1^-(x)}^{\rho_n^+(x)} e^{i \l u} A_{kp}^0(\l)
 \wt{R}_k(x,u) \,du
 & = \sum_{k=1}^n \sum_{P \in \cP_n} \gam_{kp}^{[P]} \int_{\rho_1^-(x)}^{\rho_n^+(x)}
 e^{i \l (u + b_P)} \wt{R}_k(x,u) \,du \\
\nonumber
 & = \sum_{k=1}^n \sum_{P \in \cP_n} \gam_{kp}^{[P]} \int_{\rho_1^-(x) + b_P}^{\rho_n^+(x) + b_P}
 e^{i \l v} \wt{R}_k(x,v - b_P) \,dv \\
\label{eq:int.Akp0.wtRk}
 & = \int_{\rho_1^-(x) + b_-}^{\rho_n^+(x) + b_+} e^{i \l v} G_{1p}(x,v) \,dv,
\end{align}
with some $G_{1p}$ that satisfy condition~\eqref{eq:sup.int.Gp}. Here we used the fact that $b_- = \min\{b_P : P \in \cP_n\}$ and $b_+ = \max\{b_P : P \in \cP_n\}$.

\textbf{2nd sum.} Due to explicit formula
$$
 \Phi_k^0(x,\l) = e^{i \l \rho_k(x)} \col(\delta_{1k}, \ldots, \delta_{nk}),
 \qquad k \in \oneton,
$$
the second sum in r.h.s.\ of~\eqref{eq:Yp.xl=3sum.int} turns into
\begin{align}
\nonumber
 \sum_{k=1}^n \int_{b_-}^{b_+} e^{i \l u} g_{kp}(u) \Phi_k^0(x,\l) \,du
 & = \sum_{k=1}^n \int_{b_-}^{b_+} e^{i \l (u + \rho_k(x)} g_{kp}(u) \,du
 \cdot \col(\delta_{1k}, \ldots, \delta_{nk}) \\
\nonumber
 & = \sum_{k=1}^n \int_{\rho_k(x) + b_-}^{\rho_k(x) + b_+} e^{i \l v}
 g_{kp}(v - \rho_k(x)) \,dv \cdot \col(\delta_{1k}, \ldots, \delta_{nk}) \\
\label{eq:int.gkp.Phik0}
 & = \int_{\rho_1^-(x) + b_-}^{\rho_n^+(x) + b_+} e^{i \l v} G_{2p}(x,v) \,dv,
\end{align}
with some $G_{2p}$ that satisfy condition~\eqref{eq:sup.int.Gp}. Here, we used the fact that $\rho_1^-(x) \le \rho_k(x) \le \rho_n^+(x)$.

\textbf{3rd sum.} Finally, for the third sum in r.h.s.\ of~\eqref{eq:Yp.xl=3sum.int} we have after doing a change of variable $t = u+v$ and changing order of integration
\begin{align}
\nonumber
 \sum_{k=1}^n \int_{b_-}^{b_+} e^{i \l u} g_{kp}(u) \,du &
 \int_{\rho_1^-(x)}^{\rho_n^+(x)} e^{i \l u} \wt{R}_k(x,u) \,du \\
\nonumber
 & = \sum_{k=1}^n \int_{\rho_1^-(x)}^{\rho_n^+(x)}
 \(\int_{b_-}^{b_+} e^{i \l (u+v)} g_{kp}(u) \wt{R}_k(x,v) \,du\) \,dv \\
\nonumber
 & = \sum_{k=1}^n \int_{\rho_1^-(x)}^{\rho_n^+(x)}
 \(\int_{b_- + v}^{b_+ + v} e^{i \l t} g_{kp}(t-v) \wt{R}_k(x,v) \,dt\) \,dv \\
\nonumber
 & = \sum_{k=1}^n \int_{\rho_1^-(x) + b_-}^{\rho_n^+(x) + b_+}
 e^{i \l t} \(\int_{\rho_1^-(x)}^{\rho_n^+(x)} g_{kp}(t-v) \wt{R}_k(x,v) \,dv\) \,dt \\
 & = \int_{\rho_1^-(x) + b_-}^{\rho_n^+(x) + b_+}
 e^{i \l t} G_{3p}(x,t) \,dt,
\end{align}
with some $G_{3p}$ that satisfy condition~\eqref{eq:sup.int.Gp}.

Inserting formulas we obtained for each sum above into~\eqref{eq:Yp.xl=3sum.int}, we arrive at~\eqref{eq:Phip=Phi0p+int1n} with $G_p := G_{1p} + G_{2p} + G_{3p}$.
\end{proof}
\subsection{Asymptotic behavior of eigenvalues}
\label{subsec:asymp.eigenvalues}
To effectively estimate integral term in representations~\eqref{eq:DeltaQ=Delta0+int} and~\eqref{eq:AjkQ=Ajk0+int} we need the following simple generalization of Riemann-Lebesgue Lemma.
\begin{lemma}[cf. Lemma 3.5 in~\cite{LunMal16JMAA}] \label{lem:RiemLeb}
Let $a_- \le 0 \le a_+$ and let $f \in L^1[a_-,a_+]$. Then for any $\delta > 0$ there exists $R_{\delta} > 0$ such that the following estimate holds,
\begin{equation}
 \abs{\int_{a_-}^{a_+} f(u) e^{i \l u} \,du} < \delta \cdot
 \(e^{-\Im \l \cdot a_-} + e^{-\Im \l \cdot a_+}\),
 \qquad |\l| > R_{\delta}, \quad \l \in \bC.
\end{equation}
\end{lemma}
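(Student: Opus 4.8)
The statement is a complex-parameter refinement of the Riemann--Lebesgue lemma, and I would prove it exactly along the lines of \cite[Lemma 3.5]{LunMal16JMAA}: approximate $f$ in $L^1$ by a smooth function, integrate the smooth part by parts to gain a factor $|\l|^{-1}$, and control the remainder by its $L^1$-norm. The only feature distinguishing it from the classical statement is that $\abs{e^{i\l u}}=e^{-u\Im\l}$ grows as $|\l|\to\infty$ off the real axis, so the exponential weight must be carried through every estimate.

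The elementary ingredient is that, for fixed $\l\in\bC$, the function $u\mapsto e^{-u\Im\l}$ is monotone on $\bR$; hence on $[a_-,a_+]$ (recall $a_-\le 0\le a_+$) it attains its maximum at an endpoint, so that
\begin{equation} \label{eq:RL.maxbound}
 \max_{u\in[a_-,a_+]}\abs{e^{i\l u}}
 = \max\bigl\{e^{-\Im\l\cdot a_-},\,e^{-\Im\l\cdot a_+}\bigr\}
 \le e^{-\Im\l\cdot a_-}+e^{-\Im\l\cdot a_+},\qquad \l\in\bC,
\end{equation}
and, in particular, $\abs{e^{i\l a_\pm}}\le e^{-\Im\l\cdot a_-}+e^{-\Im\l\cdot a_+}$. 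This single inequality is what produces the right-hand side in the conclusion.

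Now fix $\delta>0$ and choose $f_\delta\in C^1[a_-,a_+]$ with $\|f-f_\delta\|_{L^1[a_-,a_+]}\le\delta/2$, which is possible since $C^1[a_-,a_+]$ is dense in $L^1[a_-,a_+]$. Splitting $\int_{a_-}^{a_+}f(u)e^{i\l u}\,du$ into the $(f-f_\delta)$-part and the $f_\delta$-part, the former is bounded by $\|f-f_\delta\|_{L^1}\cdot\max_{u}\abs{e^{i\l u}}$, hence by $\frac{\delta}{2}\bigl(e^{-\Im\l\cdot a_-}+e^{-\Im\l\cdot a_+}\bigr)$ for every $\l$, by \eqref{eq:RL.maxbound}. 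For the latter, integration by parts gives
\begin{equation*}
 \int_{a_-}^{a_+}f_\delta(u)e^{i\l u}\,du
 =\frac{f_\delta(a_+)e^{i\l a_+}-f_\delta(a_-)e^{i\l a_-}}{i\l}
 -\frac{1}{i\l}\int_{a_-}^{a_+}f_\delta'(u)e^{i\l u}\,du,
\end{equation*}
and estimating the right-hand side once more via \eqref{eq:RL.maxbound} yields, for $\l\ne0$,
\begin{equation*}
 \abs{\int_{a_-}^{a_+}f_\delta(u)e^{i\l u}\,du}
 \le\frac{C_\delta}{|\l|}\bigl(e^{-\Im\l\cdot a_-}+e^{-\Im\l\cdot a_+}\bigr),
 \qquad C_\delta:=\|f_\delta\|_{C[a_-,a_+]}+\|f_\delta'\|_{L^1[a_-,a_+]}.
\end{equation*}
Taking $R_\delta:=2C_\delta/\delta$, the last bound is strictly below $\frac{\delta}{2}\bigl(e^{-\Im\l\cdot a_-}+e^{-\Im\l\cdot a_+}\bigr)$ whenever $|\l|>R_\delta$; adding the two estimates gives the assertion.

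I do not expect any genuine obstacle: once \eqref{eq:RL.maxbound} is recorded, everything is routine. The two bookkeeping points are that $R_\delta$ must be chosen \emph{after} (and in terms of) the approximant $f_\delta$, so that the order of quantifiers is correct, and that the monotonicity of $u\mapsto e^{-u\Im\l}$ is precisely what allows one to replace $\max_u\abs{e^{i\l u}}$ by the endpoint sum $e^{-\Im\l\cdot a_-}+e^{-\Im\l\cdot a_+}$ uniformly in $\l$. One may equally take $f_\delta$ to be a trigonometric polynomial $\sum_j c_j e^{i\omega_j u}$ rather than a $C^1$ function, replacing the integration by parts by the explicit evaluation of $\int e^{i(\l+\omega_j)u}\,du$, which is again $O(|\l|^{-1})$ times the endpoint sum.
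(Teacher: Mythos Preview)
Your proof is correct. The paper does not give its own proof of this lemma, merely citing \cite[Lemma~3.5]{LunMal16JMAA}; your argument is the standard one and matches exactly the method the paper does spell out for the closely related Lemma~\ref{lem:RiemLebX} (approximate by a $C^1$ function, integrate by parts, bound $\max_u|e^{i\l u}|$ by the endpoint sum).
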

The following result generalizes Lemma~\ref{lem:Delta0.prop} and establishes certain important properties of the characteristic determinant $\Delta_Q(\cdot)$ as entire function of exponential type.
\begin{proposition} \label{prop:sine.type}
Let matrix functions $B(\cdot)$ and $Q(\cdot)$ satisfy conditions~\eqref{eq:Bx.def}--\eqref{eq:Qjk=0.bj=bk}, let boundary conditions~\eqref{eq:Uy=0} be regular and let $\Delta(\cdot) = \Delta_Q(\cdot)$ be the characteristic determinant of the problem~\eqref{eq:LQ.def.reg}--\eqref{eq:Uy=0} given by~\eqref{eq:Delta.def}. Then the following statements hold:

\textbf{(i)} The characteristic determinant $\Delta(\cdot)$ is a
sine-type function with $h_{\Delta}(\pi/2) = -b_-$ and
$h_{\Delta}(-\pi/2) = b_+$. In particular, $\Delta(\cdot)$ has
infinitely many zeros
\begin{equation} \label{eq:Lam.def}
 \L := \{\l_m\}_{m \in \bZ}
\end{equation}
counting multiplicity and $\L \subset \Pi_h$ for some $h>0$.

\textbf{(ii)} The sequence $\L$ is incompressible.

\textbf{(iii)} For any $\eps > 0$ the determinant
$\Delta(\cdot)$ admits the following estimate from below
\begin{equation} \label{eq:Delta>=}
 |\Delta(\l)| > C_{\eps}(e^{-\Im \l \cdot b_-} + e^{-\Im \l \cdot b_+}) > C_{\eps},
 \qquad \l \in \bC \setminus \bigcup_{m \in \bZ} \bD_{\eps}(\l_m),
\end{equation}
with some $C_{\eps} > 0$.

\textbf{(iv)} The sequence $\L$ can be ordered in such a
way that the following asymptotical formula holds
\begin{equation} \label{eq:lam.n=an+o1}
 \l_m = \frac{2 \pi m}{b_+ - b_-} + o(m) \quad\text{as}\quad m \to \infty.
\end{equation}
\end{proposition}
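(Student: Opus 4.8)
\textbf{Proof proposal for Proposition~\ref{prop:sine.type}.}

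The plan is to deduce all four statements from the key identity~\eqref{eq:DeltaQ=Delta0+int} together with the corresponding properties of $\Delta_0(\cdot)$ already established in Lemma~\ref{lem:Delta0.prop}. The point is that~\eqref{eq:DeltaQ=Delta0+int} writes $\Delta_Q$ as $\Delta_0$ plus a Fourier transform of an $L^1[b_-,b_+]$ function, and by the Riemann--Lebesgue type estimate of Lemma~\ref{lem:RiemLeb} this perturbation is negligible compared with the growth of $\Delta_0$ in each half-plane. First I would record that, by Lemma~\ref{lem:RiemLeb} applied to $g \in L^1[b_-,b_+]$, for every $\delta>0$ there is $R_\delta>0$ with
\begin{equation*}
 \Bigl|\int_{b_-}^{b_+} g(u) e^{i\l u}\,du\Bigr| < \delta\bigl(e^{-\Im\l\cdot b_-} + e^{-\Im\l\cdot b_+}\bigr),
 \qquad |\l|>R_\delta.
\end{equation*}
Combining this with the lower bound~\eqref{eq:Delta0>C.exp} for $\Delta_0$ away from its zeros, we get for $\delta$ small enough (say $\delta < C_\eps/2$ with $C_\eps$ from~\eqref{eq:Delta0>C.exp}) the estimate $|\Delta_Q(\l)| > \tfrac{C_\eps}{2}(e^{-\Im\l b_-}+e^{-\Im\l b_+})$ for $|\l|>R_\delta$ outside the $\eps$-neighbourhoods of $\L_0$; on the bounded complementary region $\{|\l|\le R_\delta\}$ minus those neighbourhoods one argues by a standard compactness/continuity argument (noting that $\Delta_Q$ has only finitely many zeros there, which can also be excised) to obtain a uniform lower bound. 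This is essentially statement (iii) but stated relative to the zeros $\L_0$ of $\Delta_0$ rather than those of $\Delta_Q$; a short additional argument via Rouch\'e's theorem transfers it to $\bD_\eps(\l_m)$, the zeros of $\Delta_Q$ itself.

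For (i): the identity~\eqref{eq:DeltaQ=Delta0+int} shows $\Delta_Q$ is entire of exponential type with the same indicator as $\Delta_0$ in the directions $\pm i$, since the integral term contributes $O(e^{(b_+ - \text{something})|\Im\l|})$ — more precisely, for fixed large $|\Im\l|$ the Riemann--Lebesgue estimate shows the integral is $o(e^{-\Im\l b_-}+e^{-\Im\l b_+})$, so $h_{\Delta_Q}(\pm\pi/2) = h_{\Delta_0}(\pm\pi/2)$, which equal $-b_-$ and $b_+$ by Lemma~\ref{lem:Delta0.prop}(i). The sine-type property then follows: zeros lie in a strip because $|\Delta_Q(x+ih_0)|$ is bounded below and above for $|h_0|$ large by the estimates above and by~\eqref{eq:DeltaQ=Delta0+int} directly, and $\Delta_Q \not\equiv 0$ (it has the nonzero leading exponential coefficients $J_{P_\pm}(C,D)$ inherited from $\Delta_0$, since the perturbation is a transform of an $L^1$ density and hence has strictly smaller growth). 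Infinitely many zeros: if $\Delta_Q$ had finitely many zeros it would be a polynomial times an exponential, contradicting the two-sided exponential growth with distinct rates $b_-<b_+$. For (ii), incompressibility of $\L$ follows from the sine-type property exactly as in Lemma~\ref{lem:Delta0.prop}(ii), invoking \cite[Lemmas~3 and~4]{Katsn71}. For (iv), the counting-of-zeros / argument-principle comparison with $\Delta_0$ (whose zeros satisfy~\eqref{eq:lam0.n=an+o1}) yields that $\L$ can be enumerated with $\l_m = \tfrac{2\pi m}{b_+-b_-} + o(m)$; this is the same scheme as \cite[Proposition~4.6(iv)]{LunMal16JMAA}, using that the number of zeros of $\Delta_Q$ in a disc of radius $r$ differs from that of $\Delta_0$ by a bounded amount (again via Rouch\'e on large contours where the lower bound for $\Delta_0$ dominates the perturbation).

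The main obstacle is the passage from the lower bound relative to the zeros of $\Delta_0$ to the lower bound~\eqref{eq:Delta>=} relative to the zeros of $\Delta_Q$, i.e.\ controlling where the zeros of $\Delta_Q$ actually sit. The clean way is: fix $\eps>0$; outside $\bigcup_m \bD_{\eps/2}(\l_m^0)$ we have $|\Delta_0|>C_{\eps/2}(e^{-\Im\l b_-}+e^{-\Im\l b_+})$, and on a slightly smaller exceptional set the perturbation integral is smaller by Lemma~\ref{lem:RiemLeb} for $|\l|$ large; Rouch\'e's theorem on $\partial\bD_{\eps/2}(\l_m^0)$ then shows each disc contains the same number of zeros of $\Delta_Q$ as of $\Delta_0$ (for $|m|$ large), so the zero set $\L$ lies within the union of these small discs, and consequently outside $\bigcup_m\bD_\eps(\l_m)$ we still sit in the good region where the lower bound holds. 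The finitely many small-$|m|$ zeros and the bounded region $|\l|\le R$ are handled separately by compactness. Everything else is bookkeeping with the already-proved properties of $\Delta_0$ and the Riemann--Lebesgue estimate.
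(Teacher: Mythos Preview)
Your proposal is correct and follows the same overall strategy as the paper --- deduce everything from the key identity~\eqref{eq:DeltaQ=Delta0+int} and the properties of $\Delta_0$ --- but the execution of part (i) differs. The paper rewrites~\eqref{eq:DeltaQ=Delta0+int} as a Stieltjes integral $\Delta_Q(\l)=\int_{b_-}^{b_+} e^{i\l u}\,d\sigma(u)$ with $\sigma=\sigma_0+\int_{b_-}^{\cdot} g$; since $g\in L^1$ contributes an absolutely continuous part, the jump discontinuities of $\sigma$ at the endpoints $b_\pm$ coincide with those of $\sigma_0$, namely $J_{P_\mp}(C,D)\ne 0$, and the sine-type property with the stated indicator values then follows by a single citation to the general theory in~\cite{Lev96}. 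Your approach instead establishes the sine-type conditions by hand via the Riemann--Lebesgue comparison, which is perfectly valid but slightly longer. For (ii)--(iii) the paper simply invokes \cite[Lemmas~3,~4]{Katsn71} for sine-type functions once (i) is in place, whereas you rederive (iii) via a Rouch\'e transfer from $\L_0$ to $\L$; your route works but duplicates machinery (the Rouch\'e argument you sketch is essentially what the paper uses \emph{later}, in Theorem~\ref{th:ln=ln0+o}, to prove the sharper $\l_m=\l_m^0+o(1)$). For (iv) both proofs defer to \cite[Proposition~4.6(iv)]{LunMal16JMAA}. In short: the paper's Stieltjes-representation shortcut for (i) lets (ii)--(iii) become one-line citations, while your comparison approach is more self-contained but does more work than needed.
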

\begin{proof}
The proof is the same as in~\cite[Proposition 4.6]{LunMal16JMAA} with only a few minor differences. For reader's convenience we show the full proof here.

\textbf{(i)} Let $\Delta_0(\cdot)$ be the characteristic
determinant of the problem~\eqref{eq:LQ.def.reg}--\eqref{eq:Uy=0} with
$Q=0$. It easily follows from~\eqref{eq:Delta0.sum+-} that
$\Delta_0(\cdot)$ admits a representation
\begin{equation} \label{4.12}
\Delta_0(\l) = \int_{b_-}^{b_+}e^{i \l u} d\sigma_0(u), \qquad \l \in \bC,
\end{equation}
with a piecewise constant function $\sigma_0(\cdot)$ having up to $2^n$ jump-points $\left\{\sum_{k \in S} b_k : S \subset \oneton\right\}$. Regularity of boundary conditions and formula~\eqref{eq:Delta0.sum+-} imply that
\begin{equation} \label{4.13}
 \sigma_0(b_-+0) - \sigma_0(b_-) = J_{P_-}(C,D) \ne 0
 \quad\text{and}\quad
 \sigma_0(b_+) - \sigma_0(b_+ -0) = J_{P_+}(C,D) \ne 0.
\end{equation}
Proposition~\ref{prop:DeltaQ=Delta0+int} implies representation~\eqref{eq:DeltaQ=Delta0+int} with certain $g \in L^1[b_-, b_+]$.
Let us set
\begin{equation} \label{4.15}
 \sigma(u) = \sigma_0(u) + \int_{b_-}^u g(s) ds, \qquad u \in [b_-, b_+].
\end{equation}
Combining these notations with formulas~\eqref{eq:DeltaQ=Delta0+int}
and~\eqref{4.12} we arrive at the following representation for
the characteristic determinant
\begin{equation} \label{4.12New}
 \Delta(\l) = \int_{b_-}^{b_+} e^{i \l u} d\sigma(u), \qquad \l \in \bC,
\end{equation}
It follows from~\eqref{4.15} and~\eqref{4.13} that
\begin{equation} \label{4.17}
 \sigma(b_-+0) - \sigma(b_-) = J_{P_-}(C,D) \ne 0 \quad \text{and}\quad \sigma(b_+) -
\sigma(b_+ -0) = J_{P_+}(C,D) \ne 0.
\end{equation}
Due to the property~\eqref{4.17} representation~\eqref{4.12New}
ensures that $\Delta(\cdot)$ is a sine-type function with
$h_{\Delta_0}(\pi/2) = -b_-$ and $h_{\Delta_0}(-\pi/2) = b_+$
(see~\cite{Lev96}). Moreover, statement \textbf{(i)} is also
implied by the representation~\eqref{4.12New} (see~\cite[Chapter
1.4.3]{Leon76}).

\textbf{(ii)} and \textbf{(iii)}. These statements coincide
with the corresponding statements of~\cite[Lemmas 3 and
4]{Katsn71} for sine-type functions (see also~\cite[Lemma
22.1]{Lev96} in connection with part \textbf{(iii)}).

\textbf{(iv)} The proof is the same as in~\cite[Proposition 4.6(iv)]{LunMal16JMAA}.
\end{proof}
Based on Lemma~\ref{lem:Delta0.prop} the characteristic determinant $\Delta_0(\cdot)$ given by~\eqref{eq:Delta.def} has the same properties provided that boundary conditions~\eqref{eq:Uy=0} are regular. Recall, that $\L_0 = \{\l_m^0\}_{m \in \bZ}$ is the sequence of its zeros counting multiplicity. Let us order the sequence $\L_0$ in a (possibly non-unique) way such that $\Re \l_m^0 \le \Re \l_{m+1}^0$, $m \in \bZ$. The following result establishes a key asymptotic formula for zeros of $\Delta_Q(\cdot)$ (eigenvalues of the operator $L_U(Q)$).
\begin{theorem} \label{th:ln=ln0+o}
Let matrix functions $B(\cdot)$ and $Q(\cdot)$ satisfy conditions~\eqref{eq:Bx.def}--\eqref{eq:Qjk=0.bj=bk}, in particular
\begin{equation} \label{eq:Qjk=0.bj=bk.eigen}
 Q_{jk} \equiv 0 \quad\text{whenever}\quad \beta_j \equiv \beta_k,
 \qquad j,k \in \oneton.
\end{equation}
Let boundary conditions~\eqref{eq:Uy=0} be regular and let $\L_0 = \{\l_m^0\}_{m \in \bZ}$ be the sequence of eigenvalues (counting multiplicity) of the unperturbed operator $L_U(0)$ (sequence of zeros of the characteristic determinant $\Delta_0(\cdot)$). Then operator $L_U(Q)$ has a discrete spectrum and the sequence $\L = \{\l_m\}_{m \in \bZ}$ of its eigenvalues (counting multiplicity), which is the sequence of zeros the characteristic determinants $\Delta_Q(\l)$ of BVP~\eqref{eq:LQ.def.reg}--\eqref{eq:Uy=0} given by~\eqref{eq:Delta.def}, can be ordered in such a way that the following asymptotic formula holds
\begin{equation} \label{eq:lm=lm0+o}
 \l_m = \l_m^0 + o(1) \quad\text{as}\quad m \to \infty, \quad m \in \bZ.
\end{equation}
\end{theorem}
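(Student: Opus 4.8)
The plan is to deduce Theorem~\ref{th:ln=ln0+o} from the Fourier representation~\eqref{eq:DeltaQ=Delta0+int} together with the sine-type properties of $\Delta_0(\cdot)$ and $\Delta_Q(\cdot)$ established in Lemma~\ref{lem:Delta0.prop} and Proposition~\ref{prop:sine.type}, following the scheme of~\cite[Proposition 4.7]{LunMal16JMAA}. First I would recall that, by Lemma~\ref{lem:eigen}, the spectrum of $L_U(Q)$ coincides (with multiplicity) with the zero set $\L = \{\l_m\}_{m\in\bZ}$ of the entire function $\Delta_Q(\cdot)$; by Proposition~\ref{prop:sine.type}(i) this is a countable set contained in a horizontal strip $\Pi_h$, so $L_U(Q)$ has discrete spectrum. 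The same holds for $\Delta_0(\cdot)$ by Lemma~\ref{lem:Delta0.prop}, with zero set $\L_0=\{\l_m^0\}_{m\in\bZ}\subset\Pi_h$, and by part~(iii) of each statement both determinants satisfy, for every $\eps>0$, the lower bound $|\Delta_0(\l)|, |\Delta_Q(\l)| > C_\eps (e^{-\Im\l\cdot b_-}+e^{-\Im\l\cdot b_+})$ outside the $\eps$-neighborhoods of their respective zeros.

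The heart of the argument is a Rouché-type comparison made quantitative by the decay of the remainder. Write $\Delta_Q(\l)-\Delta_0(\l) = \int_{b_-}^{b_+} g(u) e^{i\l u}\,du$ with $g\in L^1[b_-,b_+]$ from Proposition~\ref{prop:DeltaQ=Delta0+int}. By the refined Riemann–Lebesgue estimate of Lemma~\ref{lem:RiemLeb}, for any $\delta>0$ there is $R_\delta>0$ with $|\Delta_Q(\l)-\Delta_0(\l)| < \delta(e^{-\Im\l\cdot b_-}+e^{-\Im\l\cdot b_+})$ for $|\l|>R_\delta$. Fix $\eps>0$ arbitrarily small. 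Combining this with the lower bound for $|\Delta_0(\l)|$ on the contour $\partial\bD_\eps(\l_m^0)$: since $\L_0$ is incompressible (Lemma~\ref{lem:Delta0.prop}(ii)), for all sufficiently large $|m|$ only finitely many discs $\bD_\eps(\l_k^0)$ meet $\bD_\eps(\l_m^0)$, and on the boundary of the union of the cluster of discs through $\l_m^0$ one has $|\Delta_0(\l)| > C_\eps(e^{-\Im\l\cdot b_-}+e^{-\Im\l\cdot b_+})$. Choosing $\delta<C_\eps$ and $|m|$ large enough that the cluster lies outside $\bD_{R_\delta}(0)$, Rouché's theorem gives that $\Delta_Q$ and $\Delta_0$ have the same number of zeros (with multiplicity) inside that cluster of discs. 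A counting/bookkeeping argument over all clusters — using that outside $\bigcup_m \bD_\eps(\l_m^0)$ the function $\Delta_Q$ has no zeros for large $|\l|$ (again by the lower bound and $\delta<C_\eps$) — then produces an enumeration of $\L$ for which $|\l_m-\l_m^0|<C\eps$ for all large $|m|$, where $C$ depends only on the incompressibility density. Since $\eps>0$ is arbitrary, this yields $\l_m=\l_m^0+o(1)$.

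The one subtlety, which I expect to be the main obstacle, is the matching of indices: $\eps$-neighborhoods of the $\l_m^0$ may overlap in clusters of bounded but varying size, so one cannot naively pair $\l_m$ with $\l_m^0$ disc-by-disc. The resolution is exactly as in~\cite[Proposition 4.7]{LunMal16JMAA}: group the zeros $\{\l_m^0\}$ into maximal clusters whose mutual $\eps$-discs overlap, show via incompressibility that each cluster has at most $d$ points and that distinct clusters are separated by at least, say, $\eps$ in real part, apply Rouché on each cluster to get equality of zero counts, and then fix any enumeration of $\L$ respecting the clusters. One must also handle the finitely many small-$|\l|$ zeros and the possible finite discrepancy in how the two sequences are indexed near the origin, which only affects finitely many terms and hence not the $o(1)$ conclusion. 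Once the index matching is set up, a final diagonal argument letting $\eps\downarrow 0$ along a sequence (and correspondingly enlarging the threshold $|m|\ge m_0(\eps)$) delivers~\eqref{eq:lm=lm0+o}. All ingredients — the identity~\eqref{eq:DeltaQ=Delta0+int}, Lemma~\ref{lem:RiemLeb}, the sine-type lower bounds, and incompressibility — are already available in the excerpt, so the proof is a careful assembly rather than a new idea.
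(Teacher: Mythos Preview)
Your proposal is correct and matches the paper's proof essentially line for line: both combine the Fourier representation~\eqref{eq:DeltaQ=Delta0+int}, the Riemann--Lebesgue estimate of Lemma~\ref{lem:RiemLeb}, and the sine-type lower bounds to obtain a Rouch\'e inequality, then defer the cluster/index bookkeeping to~\cite[Proposition~4.7 and Lemma~4.3]{LunMal16JMAA}. The only cosmetic difference is that the paper writes the Rouch\'e inequality as $|\Delta_Q-\Delta_0|<|\Delta_Q|$ outside $\eps$-discs around the zeros of $\Delta_Q$ (using Proposition~\ref{prop:sine.type}(iii)), whereas you use $|\Delta_Q-\Delta_0|<|\Delta_0|$ around the zeros of $\Delta_0$; by the symmetry of Rouch\'e's theorem this makes no difference.
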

\begin{proof}
Let $\eps \in (0, 1)$. By Proposition~\ref{prop:sine.type}(iii) there exists $C_{\eps}> 0$ such that the estimate~\eqref{eq:Delta>=} for $\Delta(\cdot)$ holds, On the other hand, it follows from Lemma~\ref{lem:RiemLeb} with $\delta = C_{\eps}$ and Proposition~\ref{prop:DeltaQ=Delta0+int} that
\begin{equation} \label{eq:Delta-Delta0}
 |\Delta(\l) - \Delta_0(\l)|
 = \abs{\int_{b_-}^{b_+} g(u) e^{i \l u} \,du}
 < C_{\eps} (e^{-\Im \l \cdot b_-} + e^{-\Im \l \cdot b_+}),
 \qquad |\l| \ge \ R_{\eps},
\end{equation}
with certain $R_{\eps} > 0$. Combining estimate~\eqref{eq:Delta>=} with~\eqref{eq:Delta-Delta0} yields
\begin{align}
\label{eq:Rouche_estimate}
 &|\Delta(\l) - \Delta_0(\l)| < |\Delta(\l)|, \qquad \l \not \in \wt{\Omega}_{\eps}, \\
\label{eq:wtOmega.eps}
 &\wt{\Omega}_{\eps} := \bD_{R_{\eps}}(0) \cup \Omega_{\eps},
 \qquad \Omega_{\eps} := \bigcup\limits_{m \in \bZ} \bD_{\eps}(\l_m).
\end{align}

The proof is finished the same way as the proof of~\cite[Proposition 4.7]{LunMal16JMAA} by using~\cite[Lemma 4.3]{LunMal16JMAA} (certain geometric property of incompressible sequences) and Rouch\'e theorem.
\end{proof}
\subsection{Asymptotic behavior of root vectors}
\label{subsec:asymp.eigenvectors}
Based on Lemma~\ref{lem:eigen}, for a given $p \in \oneton$, vector function $Y_p(\cdot, \l)$ given by~\eqref{eq:Ypxl.def} is the eigenvector of the operator $L_U(Q)$ corresponding to the eigenvalue $\l$, provided that this function is not zero. To obtain asymptotic behavior of such eigenvectors we first need the following asymptotic result.
\begin{lemma} \label{lem:Yp=Yp0+o.Pih}
Let matrix functions $B(\cdot)$ and $Q(\cdot)$ satisfy conditions~\eqref{eq:Bx.def}--\eqref{eq:Qjk=0.bj=bk}. Let $h \ge 0$ and $p \in \oneton$. Then the following uniform asymptotic formula holds:
\begin{equation} \label{eq:Yp=Yp0+o.Pih}
 Y_p(x,\l) = Y_p^0(x,\l) + o(1), \quad x \in [0,\ell], \quad\text{as}\quad
 \l \to \infty, \quad \l \in \Pi_h.
\end{equation}
\end{lemma}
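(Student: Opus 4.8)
The statement to be proved is the uniform asymptotic formula $Y_p(x,\l) = Y_p^0(x,\l) + o(1)$ as $\l \to \infty$, $\l \in \Pi_h$, uniformly in $x \in [0,\ell]$. The natural approach is to start from the defining expansion~\eqref{eq:Ypxl.def}, namely $Y_p(x,\l) = \sum_{k=1}^n A_{kp}(\l) \Phi_k(x,\l)$, and compare it term by term with $Y_p^0(x,\l) = \sum_{k=1}^n A_{kp}^0(\l) \Phi_k^0(x,\l)$. The plan is to write the difference as
\begin{equation*}
 Y_p(x,\l) - Y_p^0(x,\l)
 = \sum_{k=1}^n \bigl(A_{kp}(\l) - A_{kp}^0(\l)\bigr) \Phi_k(x,\l)
 + \sum_{k=1}^n A_{kp}^0(\l) \bigl(\Phi_k(x,\l) - \Phi_k^0(x,\l)\bigr),
\end{equation*}
and estimate each of the two sums uniformly in $x$ on the strip $\Pi_h$.

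\textbf{Key steps.} First I would handle the second sum. By Lemma~\ref{lem:easy.upper.bound} (estimate~\eqref{eq:A0.Delta0<M}) the coefficients $A_{kp}^0(\l)$ are bounded by $M_h$ on $\Pi_h$, and by Corollary~\ref{cor:Phip=Phip0+o} one has $\Phi_k(x,\l) - \Phi_k^0(x,\l) = o(1)$ uniformly in $x \in [0,\ell]$ as $\l \to \infty$, $\l \in \Pi_h$; hence this sum is $o(1)$ uniformly in $x$. For the first sum I would invoke Proposition~\ref{prop:DeltaQ=Delta0+int}, formula~\eqref{eq:AjkQ=Ajk0+int}, which gives $A_{kp}(\l) - A_{kp}^0(\l) = \int_{b_-}^{b_+} g_{kp}(u) e^{i \l u}\,du$ with $g_{kp} \in L^1[b_-, b_+]$; by Lemma~\ref{lem:RiemLeb} this tends to zero as $\l \to \infty$ with $\l \in \Pi_h$ (the bound $\delta (e^{-\Im\l\cdot b_-} + e^{-\Im\l\cdot b_+})$ is itself $O(1)$ on $\Pi_h$ since $|\Im\l| \le h$, so $A_{kp}(\l) - A_{kp}^0(\l) = o(1)$ there). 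It remains to multiply by $\Phi_k(x,\l)$; from $\Phi_k(x,\l) = \Phi_k^0(x,\l) + o(1)$ and the bound $|\Phi_k^0(x,\l)| = |e^{i\l\rho_k(x)}| \le M_h$ from~\eqref{eq:e.rhokx<M.dlYk0<M}, the factors $\Phi_k(x,\l)$ are uniformly bounded on $[0,\ell] \times \Pi_h$ for $|\l|$ large. Thus the first sum is also $o(1)$ uniformly in $x$, and adding the two estimates finishes the proof.

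\textbf{Main obstacle.} The only delicate point is making sure all the $o(1)$-statements are genuinely \emph{uniform in $x$}. For the $\Phi$-differences this is exactly the content of Corollary~\ref{cor:Phip=Phip0+o}, whose proof (as Remark~\ref{rem:RiemLeb.uniform} stresses) relied on the sharper representation~\eqref{eq:Phip=Phi0p+int.sum} with kernels approximable in $C^1(\Omega)$ rather than merely on the summability property~\eqref{eq:sup.int.wtR}; so one must cite Corollary~\ref{cor:Phip=Phip0+o} and not, say, Corollary~\ref{cor:Phip=Phip0+int1n} directly. For the coefficient differences $A_{kp}(\l)-A_{kp}^0(\l)$ there is no $x$-dependence at all, so uniformity is automatic there. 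Everything else is a routine combination of the already-established uniform bounds, so I do not expect any real difficulty beyond bookkeeping.
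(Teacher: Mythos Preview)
Your proposal is correct and follows essentially the same route as the paper: both proofs expand $Y_p - Y_p^0$ via the bilinear identity $ab - a_0b_0$, control $A_{kp}-A_{kp}^0$ through Proposition~\ref{prop:DeltaQ=Delta0+int} and Lemma~\ref{lem:RiemLeb}, control $\Phi_k-\Phi_k^0$ through Corollary~\ref{cor:Phip=Phip0+o}, and bound the remaining factors via Lemma~\ref{lem:easy.upper.bound}. The only cosmetic difference is that the paper uses the splitting $A_{kp}(\Phi_k-\Phi_k^0) + (A_{kp}-A_{kp}^0)\Phi_k^0$ whereas you use $(A_{kp}-A_{kp}^0)\Phi_k + A_{kp}^0(\Phi_k-\Phi_k^0)$; this is immaterial, and your remark on the uniformity-in-$x$ issue is exactly the point the paper makes in Remark~\ref{rem:RiemLeb.uniform}.
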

\begin{proof}
Let $p \in \oneton$ be fixed for the entire proof. Recall, that by definition,
\begin{equation} \label{eq:Ypxl}
 Y_p(x,\l) = \sum_{k=1}^n A_{kp}(\l) \Phi_k(x,\l),
 \qquad Y_p^0(x,\l) = \sum_{k=1}^n A_{kp}^0(\l) \Phi_k^0(x,\l),
 \qquad x \in [0,\ell], \quad \l \in \bC.
\end{equation}
Hence
\begin{equation} \label{eq:Ypxl-Yp0xl}
 Y_p(x,\l) - Y_p^0(x,\l)
 = \sum_{k=1}^n A_{kp}(\l) (\Phi_k(x,\l) - \Phi_k^0(x,\l))
 + \sum_{k=1}^n (A_{kp}(\l) - A_{kp}^0(\l)) \Phi_k^0(x,\l),
\end{equation}
for $x \in [0,\ell]$ and $\l \in \bC$.

Let $\delta > 0$. It follows from Lemma~\ref{lem:RiemLeb} and formula~\eqref{eq:AjkQ=Ajk0+int} (see Proposition~\ref{prop:DeltaQ=Delta0+int}) that for given $k \in \oneton$ we have
\begin{equation} \label{eq:Akp-Akp0}
 |A_{kp}(\l) - A_{kp}^0(\l)|
 = \abs{\int_{b_-}^{b_+} g_{kp}(u) e^{i \l u} \,du}
 < \delta \cdot (e^{-\Im \l \cdot b_-} + e^{-\Im \l \cdot b_+}),
 \qquad |\l| > R_{\delta},
\end{equation}
for some $R_{\delta} > 0$. This implies that
\begin{equation} \label{eq:Akp-Akp0<eps}
 |A_{kp}(\l) - A_{kp}^0(\l)| < \delta \cdot (e^{-h b_-} + e^{h b_+}) =: \eps,
 \qquad |\l| > R_{\delta}, \quad \l \in \Pi_h, \quad k \in \oneton.
\end{equation}
It follows from asymptotic formula~\eqref{eq:Phip=Phip0+o} for $\Phi_k(x,\l)$ that
\begin{equation} \label{eq:Phik-Phik0<eps}
 \norm{\Phi_k(x,\l) - \Phi_k^0(x,\l)}_{\bC^n} < \eps,
 \qquad x \in [0,\ell]
 \qquad |\l| > R_{\eps}', \quad \l \in \Pi_h, \quad k \in \oneton,
\end{equation}
for some $R_{\eps}' \ge R_{\delta}$.
It follows from Lemma~\ref{lem:easy.upper.bound} that
\begin{equation} \label{eq:Akp.Phik}
 |A_{kp}^0(\l)| \le M, \qquad \norm{\Phi_k^0(x,\l)}_{\bC^n} \le M,
 \qquad x \in [0,\ell], \quad \l \in \Pi_h, \quad k \in \oneton,
\end{equation}
for some $M > 0$. Combining~\eqref{eq:Akp.Phik} and~\eqref{eq:Akp-Akp0<eps} we see that
\begin{equation} \label{eq:Akp<M+eps}
 |A_{kp}(\l)| \le M + \eps, \qquad |\l| > R_{\delta},
 \quad \l \in \Pi_h, \quad k \in \oneton.
\end{equation}
Inserting all of the above estimates into~\eqref{eq:Ypxl-Yp0xl} we arrive at
\begin{align}
\nonumber
 \norm{Y_p(x,\l) - Y_p^0(x,\l)}_{\bC^n}
 & \le \sum_{k=1}^n
 |A_{kp}(\l)| \cdot \norm{\Phi_k(x,\l) - \Phi_k^0(x,\l))}_{\bC^n} \\
\nonumber
 & \qquad \qquad + \sum_{k=1}^n |A_{kp}(\l) - A_{kp}^0(\l)| \cdot \norm{\Phi_k^0(x,\l)}_{\bC^n} \\
\nonumber
 & \le \sum_{k=1}^n ((M + \eps) \eps + \eps M) \\
\label{eq:Yp-Yp0<eps}
 &= 2 n \eps (2 M + \eps),
 \qquad x \in [0,\ell], \quad |\l| > R_{\eps}', \quad \l \in \Pi_h.
\end{align}
Since $\delta > 0$ can be chosen arbitrary small and $\eps = \delta \cdot (e^{-h b_-} + e^{h b_+})$, then estimate~\eqref{eq:Yp-Yp0<eps} implies desired uniform asymptotic relation~\eqref{eq:Yp=Yp0+o.Pih}.
\end{proof}
\begin{remark}
It might be tempting to use Proposition~\ref{prop:eigenvec} to prove Lemma~\ref{lem:Yp=Yp0+o.Pih}, but, unfortunately, as explained in Remark~\ref{rem:RiemLeb.uniform}, the properties of the vector kernel $G_p$ from representation~\eqref{eq:Yp=Y0p+int} are not sufficient to prove relation~\eqref{eq:Yp=Yp0+o.Pih} \emph{uniformly} at $x \in [0,\ell]$.
\end{remark}
Now we are ready to state asymptotic result for root vectors of the operator $L_U(Q)$. Going forward we will call sequence $\{\phi_m\}_{m \in \bZ}$ of vectors in $\fH$ \emph{normalized} if $\|\phi_m\|_{\fH} = 1$, $m \in \bZ$.
\begin{theorem} \label{th:eigenvec}
Let matrix functions $B(\cdot)$ and $Q(\cdot)$ satisfy conditions~\eqref{eq:Bx.def}--\eqref{eq:Qjk=0.bj=bk}, let boundary conditions~\eqref{eq:Uy=0} be regular and let $\L = \{\l_m\}_{m \in \bZ}$ and $\L_0 = \{\l_m^0\}_{m \in \bZ}$ be the sequences of zeros of characteristic determinants $\Delta_Q(\cdot)$ and $\Delta_0(\cdot)$, respectively, satisfying asymptotic formula~\eqref{eq:lm=lm0+o}.

\textbf{(i)} Let $p \in \oneton$. Then the following asymptotic formula holds uniformly at $x \in [0,\ell]$
\begin{equation} \label{eq:Yp=Yp0+o}
 Y_p(x,\l_m) = Y_p^0(x,\l_m^0) + o(1) \quad\text{as}\quad m \to \infty, \quad m \in \bZ,
\end{equation}
where vector functions $Y_p(\cdot,\l)$ and $Y_p^0(\cdot,\l)$ are given by~\eqref{eq:Ypxl.def}--\eqref{eq:Yp0xl.def}. In particular,
\begin{equation} \label{eq:|Yp-Yp0|}
 \norm{Y_p(\cdot,\l_m) - Y_p^0(\cdot,\l_m^0)}_{\fH} \to 0 \quad\text{as}\quad m \to \infty, \quad m \in \bZ.
\end{equation}

\textbf{(ii)} Let in addition boundary conditions~\eqref{eq:Uy=0} be strictly regular, then one can choose normalized system of root vector $\{f_m\}_{m \in \bZ}$ of the operator $L_U(Q)$ and normalized system of root vector $\{f_m^0\}_{m \in \bZ}$ of the operator $L_U(0)$ such that the following relation holds
\begin{align} \label{eq:fm-fm0.to.0}
 \|f_m - f_m^0\|_{\infty} := \|f_m - f_m^0\|_{C([0,\ell]; \bC^n)} \to 0 \quad\text{as}\quad m \to \infty, \quad m \in \bZ.
\end{align}
Moreover, for sufficiently large $|m|$, eigenvalues $\l_m$ and $\l_m^0$ are simple, and the corresponding eigenvectors $f_m$ and $f_m^0$ admit the following representation,
\begin{equation} \label{eq:fm.fm0.th}
 f_m(\cdot) = \alp_m Y_{p_m}(\cdot,\l_m), \qquad
 f_m^0(\cdot) = \alp_m^0 Y_{p_m}^0(\cdot,\l_m^0),
\end{equation}
for some $\alp_m, \alp_m^0 \in \bC \setminus \{0\}$ and $p_m \in \oneton$.
\end{theorem}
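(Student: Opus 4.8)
The plan is to prove the two parts of Theorem~\ref{th:eigenvec} separately, deriving part~(i) from the asymptotic results already established in this section and then bootstrapping part~(ii) from part~(i) together with the strict-regularity estimates of Subsection~\ref{subsec:strict.regular}.

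For part~(i), the starting point is Lemma~\ref{lem:Yp=Yp0+o.Pih}, which gives the uniform asymptotic $Y_p(x,\l) = Y_p^0(x,\l) + o(1)$ as $\l \to \infty$ inside any horizontal strip $\Pi_h$. By Proposition~\ref{prop:sine.type}(i) the eigenvalues $\{\l_m\}$ of $\Delta_Q(\cdot)$ all lie in some strip $\Pi_h$, and by Lemma~\ref{lem:Delta0.prop}(i) the same is true of $\{\l_m^0\}$; the ordering is the one fixed by Theorem~\ref{th:ln=ln0+o} so that $\l_m = \l_m^0 + o(1)$. The plan is then to write
\begin{equation*}
 Y_p(x,\l_m) - Y_p^0(x,\l_m^0)
 = \bigl(Y_p(x,\l_m) - Y_p^0(x,\l_m)\bigr) + \bigl(Y_p^0(x,\l_m) - Y_p^0(x,\l_m^0)\bigr).
\end{equation*}
The first bracket is $o(1)$ uniformly in $x$ by Lemma~\ref{lem:Yp=Yp0+o.Pih} applied along the sequence $\l_m \in \Pi_h$. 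For the second bracket I would use the explicit formula~\eqref{eq:Yp0xl.def} for $Y_p^0$, together with the uniform Lipschitz-type bound $\|\tfrac{d}{d\l}Y_k^0(x,\l)\|_{\bC^n} \le M_h$ from~\eqref{eq:e.rhokx<M.dlYk0<M} in Lemma~\ref{lem:easy.upper.bound}, and the mean value estimate $\|Y_p^0(x,\l_m) - Y_p^0(x,\l_m^0)\|_{\bC^n} \le M_h |\l_m - \l_m^0| \to 0$; this requires only that the segment joining $\l_m$ and $\l_m^0$ stays in a fixed strip, which holds since both points lie in $\Pi_h$. This proves~\eqref{eq:Yp=Yp0+o}, and~\eqref{eq:|Yp-Yp0|} follows by integrating against the bounded weight $|B(x)|$ over the finite interval $[0,\ell]$.

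For part~(ii), strict regularity lets me invoke Lemma~\ref{lem:D0jk>} and Proposition~\ref{prop:eigenvec0}: for $|m| > m_0$ the eigenvalue $\l_m^0$ is simple and there is an index $p_m \in \oneton$ with $Y_{p_m}^0(\cdot,\l_m^0) \not\equiv 0$ and $C_3 \le \|Y_{p_m}^0(\cdot,\l_m^0)\|_{\fH} \le C_4$. By Theorem~\ref{th:ln=ln0+o} and the incompressibility/separation properties, $\l_m$ is also simple for large $|m|$, so by Lemma~\ref{lem:eigen.canon} every normalized eigenvector of $L_U(Q)$ at $\l_m$ is a unimodular scalar multiple of $Y_{p_m}(\cdot,\l_m)/\|Y_{p_m}(\cdot,\l_m)\|_{\fH}$ — provided $Y_{p_m}(\cdot,\l_m)\not\equiv0$, which follows from part~(i) since $\|Y_{p_m}(\cdot,\l_m) - Y_{p_m}^0(\cdot,\l_m^0)\|_{\fH}\to0$ and the limit has norm bounded below by $C_3$. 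Thus I set $f_m^0 := Y_{p_m}^0(\cdot,\l_m^0)/\|Y_{p_m}^0(\cdot,\l_m^0)\|_{\fH}$ and $f_m := e^{i\theta_m}Y_{p_m}(\cdot,\l_m)/\|Y_{p_m}(\cdot,\l_m)\|_{\fH}$, choosing the phase $\theta_m$ so that the two agree as closely as possible; for the finitely many $|m| \le m_0$ one defines $f_m, f_m^0$ to be arbitrary normalized root vectors (only the tail matters for~\eqref{eq:fm-fm0.to.0}). This gives~\eqref{eq:fm.fm0.th} with $\alp_m := e^{i\theta_m}/\|Y_{p_m}(\cdot,\l_m)\|_{\fH}$ and $\alp_m^0 := 1/\|Y_{p_m}^0(\cdot,\l_m^0)\|_{\fH}$.

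It remains to upgrade convergence from $\fH$-norm to the sup-norm $\|\cdot\|_{C([0,\ell];\bC^n)}$. Here I would argue directly from the uniform-in-$x$ statement~\eqref{eq:Yp=Yp0+o}: since $Y_{p_m}(x,\l_m) = Y_{p_m}^0(x,\l_m^0) + o(1)$ uniformly in $x$, and since the normalizing scalars satisfy $\alp_m/\alp_m^0 \to 1$ in modulus (because the $\fH$-norms converge and are bounded away from $0$ and $\infty$), a suitable choice of phase $\theta_m$ makes $\alp_m Y_{p_m}(x,\l_m) - \alp_m^0 Y_{p_m}^0(x,\l_m^0) \to 0$ uniformly in $x$, i.e.\ $\|f_m - f_m^0\|_\infty \to 0$. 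The one genuine subtlety — and the step I expect to require the most care — is the choice of the phases $\theta_m$: the index $p_m$ is only determined up to proportionality of the columns of the adjugate matrix, and one must check that a single consistent normalization can be made so that $f_m$ and $f_m^0$ are close rather than merely close up to a sign; this is handled by picking $\theta_m$ to align the leading nonzero component of $Y_{p_m}(\cdot,\l_m)$ with that of $Y_{p_m}^0(\cdot,\l_m^0)$, which is legitimate precisely because the dominant component of $Y_{p_m}^0$ is bounded below uniformly by the argument behind Proposition~\ref{prop:eigenvec0}.
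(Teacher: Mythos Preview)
Your proposal is correct and follows essentially the same route as the paper: for part~(i) the paper also splits $Y_p(x,\l_m)-Y_p^0(x,\l_m^0)$ into $(Y_p-Y_p^0)$ at $\l_m$ plus the $\l$-variation of $Y_p^0$, handling the first by Lemma~\ref{lem:Yp=Yp0+o.Pih} and the second by the derivative bound in~\eqref{eq:e.rhokx<M.dlYk0<M}; for part~(ii) it likewise picks $p_m$ via Lemma~\ref{lem:D0jk>}/Proposition~\ref{prop:eigenvec0}, normalizes, and uses an elementary inequality for $\|u/\|u\|_{\fH}-v/\|v\|_{\fH}\|_\infty$. The one point where you overcomplicate things is the phase $e^{i\theta_m}$: since the theorem lets you \emph{construct} both systems, the paper simply sets $f_m:=Y_{p_m}(\cdot,\l_m)/\|Y_{p_m}(\cdot,\l_m)\|_{\fH}$ and $f_m^0:=Y_{p_m}^0(\cdot,\l_m^0)/\|Y_{p_m}^0(\cdot,\l_m^0)\|_{\fH}$ with no extra phase --- both normalizing scalars are positive reals, so $\alp_m/\alp_m^0\to 1$ outright (not just in modulus), and the ``subtlety'' you flag does not arise.
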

\begin{proof}
\textbf{(i)} Let $\delta > 0$. By Proposition~\ref{prop:sine.type}, $\l_m, \l_m^0 \in \Pi_h$, $m \in \bZ$, for some $h \ge 0$. It is also clear, that $\l_m \to \infty$ and $\l_m^0 \to \infty$ as $m \to \infty$. Hence Lemma~\ref{lem:Yp=Yp0+o.Pih} implies that
\begin{equation} \label{eq:|Yp-Yp0|<delta}
 \norm{Y_p(\cdot,\l_m) - Y_p^0(\cdot,\l_m)}_{\infty} < \delta,
 \qquad |m| \ge \wt{m}_{\delta},
\end{equation}
for some $\wt{m}_{\delta} \in \bN$. Since boundary conditions~\eqref{eq:Uy=0} are regular, then by Theorem~\ref{th:ln=ln0+o} there exists $m_{\delta} \ge \wt{m}_{\delta}$ such that
\begin{equation} \label{eq:lm-lm0<delta}
 |\l_m - \l_m^0| < \delta, \qquad |m| > m_{\delta}.
\end{equation}
It follows from~\eqref{eq:e.rhokx<M.dlYk0<M} and~\eqref{eq:lm-lm0<delta} that
\begin{align} \label{eq:|Yp0-Yp00|<dif+int}
 \norm{Y_p^0(\cdot,\l_m) - Y_p^0(\cdot,\l_m^0)}_{\infty} \le \sup_{\l \in \Pi_h}
 \norm{\frac{d}{d\l} Y_p^0(\cdot,\l)}_{\infty} |\l_m - \l_m^0|
 & \le M_h \cdot \delta,
 \qquad |m| > m_{\delta}.
\end{align}
Since $\delta > 0$ can be chosen arbitrarily small, estimates~\eqref{eq:|Yp-Yp0|<delta},~\eqref{eq:|Yp0-Yp00|<dif+int} imply desired relation~\eqref{eq:Yp=Yp0+o}.

Finally, note that for any $f = \col(f_1, \ldots, f_n) \in C([0,\ell]; \bC^n)$ we have,
\begin{equation} \label{eq:f.fH}
 \|f\|_{\fH}^2 = \sum_{k=1}^n \int_0^{\ell} |f_k(x)|^2 |\beta_k(x)| dx
 \le \|f\|_{\infty}^2 \cdot (|b_1| + \ldots + |b_n|)
 = \|f\|_{\infty}^2 \cdot (b_+ - b_-).
\end{equation}
Relation~\eqref{eq:|Yp-Yp0|} is now implied by~\eqref{eq:Yp=Yp0+o} and~\eqref{eq:f.fH}.

\textbf{(ii)} Strict regularity of boundary conditions~\eqref{eq:Uy=0} and asymptotic formula~\eqref{eq:lm=lm0+o} imply that for some $m_1 \ge m_0$, eigenvalues $\l_m$ and $\l_m^0$, $|m| > m_1$, of operators $L_U(Q)$ and $L_U(0)$ are algebraically and geometrically simple. By Lemma~\ref{lem:D0jk>}, there exist indices $p = p_m \in \oneton$ and $q = q_m \in \oneton$, and a constant $C_2>0$ such that estimate~\eqref{eq:D0jk>} holds, i.e.\ $|A_{qp}^0(\l_m^0)| \ge C_2$, $|m| > m_0$. Emphasize, that although $p$ and $q$ depend on $m$, the constant $C_2$ in the above estimate does not. By the proof of Proposition~\ref{prop:eigenvec0}, this choice of $p=p_m$ guarantees uniform estimates~\eqref{eq:Yp0.norm},
\begin{equation} \label{eq:Yp0.norm.minim}
 C_3 \le \|Y_p^0(\cdot, \l_m^0)\|_{\fH} \le C_4, \qquad |m| > m_0,
\end{equation}
where $C_4 > C_3 > 0$ do not depend on $m$. Combining this estimate with relation~\eqref{eq:|Yp-Yp0|} imply that for some $m_2 \ge m_1$, we have
\begin{equation} \label{eq:Yp.norm}
 C_3/2 \le \|Y_p(\cdot,\l_m)\|_{\fH} \ge 2 C_4, \qquad |m| > m_2.
\end{equation}
Note also that trivial estimates~\eqref{eq:e.rhokx<M.dlYk0<M}--\eqref{eq:A0.Delta0<M} provide uniform estimate on $\|Y_p^0(\cdot, \l)\|_{\infty}$, $\l \in \Pi_h$. Hence,
\begin{equation} \label{eq:Yp0.normC}
 \|Y_p^0(\cdot,\l_m^0)\|_{\infty}
 \le C_5, \qquad m \in \bZ,
\end{equation}
for some $C_5 > 0$ that does not depend on $m$ and $p=p_m$.

Since vector functions $Y_p(\cdot,\l_m)$ and $Y_p^0(\cdot,\l_m^0)$ are non-zero for $|m| > m_2$, Lemma~\ref{lem:eigen} implies that they are proper eigenvectors of the operators $L_U(Q)$ and $L_U(0)$ corresponding to simple eigenvalues $\l_m$ and $\l_m^0$, respectively. Let us normalize them, by setting
\begin{equation} \label{eq:fm.fm0.def}
 f_m(\cdot) := \frac{Y_p(\cdot, \l_m)}{\|Y_p(\cdot, \l_m)\|_{\fH}}, \qquad
 f_m^0(\cdot) := \frac{Y_p^0(\cdot, \l_m^0)}{\|Y_p^0(\cdot, \l_m^0)\|_{\fH}},
 \qquad |m| > m_2.
\end{equation}
For any vector functions $u, v \in C([0,\ell]; \bC^n)$ we have
\begin{equation}
 \norm{\frac{u}{\|u\|_{\fH}} - \frac{v}{\|v\|_{\fH}}}_{\infty}
 \le \frac{\|u - v\|_{\infty}}{\|u\|_{\fH}}
 + \frac{\|v\|_{\infty} \bigabs{\|u\|_{\fH} - \|v\|_{\fH}}}
 {\|u\|_{\fH} \|v\|_{\fH}}
 \le \frac{\|v\|_{\fH} \|u - v\|_{\infty}
 +\|v\|_{\infty} \|u - v\|_{\fH}}
 {\|u\|_{\fH} \|v\|_{\fH}}.
\end{equation}
Setting $u(\cdot) = Y_p(\cdot, \l_m)$ and $v(\cdot) = Y_p^0(\cdot, \l_m^0)$ in this inequality and combining it with relations~\eqref{eq:Yp=Yp0+o}--\eqref{eq:|Yp-Yp0|}, estimates~\eqref{eq:Yp0.norm.minim}--\eqref{eq:Yp.norm} from below on $\|u\|_{\fH} = \|Y_p(\cdot, \l_m)\|_{\fH}$ and $\|v\|_{\fH} = \|Y_p^0(\cdot, \l_m^0)\|_{\fH}$ and estimate~\eqref{eq:Yp0.normC} on $\|v\|_{\infty} = \|Y_p^0(\cdot, \l_m^0)\|_{\infty}$ from above, we arrive at the desired relation~\eqref{eq:fm-fm0.to.0} for sequences $\{f_m\}_{|m| > m_2}$ and $\{f_m^0\}_{|m| > m_2}$.
Extending these sequences by arbitrary chosen normalized root vectors of the operators $L_U(Q)$ and $L_U(0)$ corresponding to eigenvalues $\l_m$ and $\l_m^0$ for $|m| \le m_2$, we arrive at the normalized system of root vectors $\{f_m\}_{m \in \bZ}$ and $\{f_m^0\}_{m \in \bZ}$, satisfying desired relation~\eqref{eq:fm-fm0.to.0}. Formula~\eqref{eq:fm.fm0.def} trivially implies~\eqref{eq:fm.fm0.th}.
\end{proof}
\begin{remark}
In a very recent paper~\cite{Rzep21} L. Rzepnicki obtained sharp asymptotic formulas for deviations $\l_n - \l_n^0 = \delta_n + \rho_n$ in the case of Dirichlet BVP for Dirac system, i.e.\ system~\eqref{eq:LQ.def.reg} with $B(\cdot) \equiv \diag(-1,1)$, with $Q \in \LL{p}$, $1 \le p < 2$. Namely, $\delta_n$ is explicitly expressed via Fourier coefficients and Fourier transforms of $Q_{12}$ and $Q_{21}$, while $\{\rho_n\}_{n \in \bZ} \in \ell^{p'/2}(\bZ)$.
Similar result was obtained for eigenvectors. For Sturm-Liouville operators with singular potentials, A.~Gomilko and L.~Rzepnicki obtained similar results in another recent paper~\cite{GomRze20}.
\end{remark}
\subsection{The case of matrix function \texorpdfstring{$Q$}{Q} with non-trivial block diagonal}
\label{subsec:general.Q}
Our main results on asymptotic behavior of eigenvalues and eigenvectors, Theorems~\ref{th:ln=ln0+o} and~\ref{th:eigenvec}, assume that $Q$ satisfies ``zero block diagonality'' condition~\eqref{eq:Qjk=0.bj=bk}. Let us formulate them without this condition by reducing
general case to ``zero block diagonal'' case using special gauge transform.

To this end, recall that the matrix function $B(\cdot) = B(\cdot)^*$ has block-diagonal form~\eqref{eq:Bx.block.wt.def},
\begin{equation} \label{eq:Bx.block.wt}
 B = \diag(\wt{\beta}_1 I_{n_1}, \ldots, \wt{\beta}_r I_{n_r}), \qquad
 \wt{\beta}_k \in L^1([0,\ell]; \bR \setminus \{0\}),
 \qquad k \in \onetor,
\end{equation}
where $n_1 + \ldots + n_r = n$.
Matrix-function $Q(\cdot)$ has related block-matrix decomposition,
\begin{equation} \label{eq:Q=wtQjk}
 Q =: (\cQ_{jk})_{j,k=1}^r, \qquad
 \cQ_{jk} \in L^1([0,\ell]; \bC^{n_j \times n_k}), \qquad j, k \in \onetor.
\end{equation}
Let $Q_{\diag}(\cdot)$ be the block diagonal of matrix function $Q(\cdot)$,
\begin{equation} \label{eq:Qdiag.def}
 Q_{\diag} := \diag(\cQ_{11}, \ldots, \cQ_{rr}),
\end{equation}
and let $W(\cdot)$ be the ${n \times n}$-matrix solution of the Cauchy problem
\begin{equation} \label{eq:W'+QW}
 W'(x) + Q_{\diag}(x) W(x) = 0, \quad x \in [0,\ell],
 \qquad W(0) = I_n.
\end{equation}
Since $Q_{\diag}$ is summable and has block-diagonal form~\eqref{eq:Qdiag.def}, it is clear that
\begin{equation} \label{eq:Wx=diag}
 W(x) = \diag(W_{11}(x), \ldots, W_{rr}(x)),
 \qquad W_{kk}(x) \in \bC^{n_k \times n_k},
 \quad k \in \onetor, \quad x \in [0,\ell],
\end{equation}
and
\begin{equation} \label{eq:WAC.WB=BW}
 W, W^{-1} \in \AC([0,\ell],\bC^{n \times n}), \qquad
 W(x) B(x) = B(x) W(x), \quad x \in [0,\ell].
\end{equation}
Let us also define operator $\cW : \fH \to \fH$ such that
\begin{equation} \label{eq:cWTh.def}
 (\cW y)(x) = W(x) y(x), \qquad y \in \fH.
\end{equation}
Inclusions $W, W^{-1} \in \AC([0,\ell],\bC^{n \times n})$ imply that $\cW$ is bounded in $\fH$ and have a bounded inverse.
\begin{lemma} \label{lem:gauge}
Let $Q \in L^1([0,\ell]; \bC^{n \times n})$ and let matrix functions $B(\cdot)$, $Q_{\diag}(\cdot)$ and $W(\cdot)$ be given by~\eqref{eq:Bx.block.wt},~\eqref{eq:Qdiag.def},~\eqref{eq:Wx=diag} and satisfy conditions above. Let also operator $\cW$ be given by~\eqref{eq:cWTh.def}. Then the following statements hold:

\textbf{(i)} Operator $\cW$ (gauge transform) transforms operator $L_U(Q)$ to the operator $L_{\wt{U}}(\wt{Q})$ with the same matrix function $B(\cdot)$,
\begin{equation} \label{eq:LUQ.simil}
 L_{\wt{U}}(\wt{Q}) = \cW^{-1} L_U(Q) \cW,
\end{equation}
where
\begin{equation} \label{eq:wtUy=0}
 \wt{U}(y):= C y(0) + D W(\ell) y(\ell) = 0,
 \quad\text{and}\quad \wt{Q} := W^{-1} (Q - Q_{\diag}) W.
\end{equation}

\textbf{(ii)} Matrix $\wt{Q}$ has zero block diagonal with respect to the decomposition $\bC^n = \bC^{n_1} \oplus \ldots \oplus \bC^{n_r}$.

\textbf{(iii)} Characteristic determinants corresponding to the operators $L_{\wt{U}}(\wt{Q})$ and $L_U(Q)$ coincide.

\textbf{(iv)} Boundary conditions $\wt{U}(y)=0$ and $U(y)=0$ are regular only simultaneously.
\end{lemma}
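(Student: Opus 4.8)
\textbf{Plan for the proof of Lemma~\ref{lem:gauge}.}

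The strategy is a direct computation verifying the four claims in order, since each one is essentially algebraic once the gauge transform $\cW$ is set up. First I would establish (i). Let $y \in \dom(L_U(Q))$ and set $z := \cW^{-1} y$, so that $y = W z$ with $z \in \AC([0,\ell]; \bC^n)$ (using $W, W^{-1} \in \AC$). Then compute
\begin{equation*}
 \cL(Q)(W z) = -i B^{-1}\bigl((W z)' + Q W z\bigr)
 = -i B^{-1}\bigl(W' z + W z' + Q W z\bigr).
\end{equation*}
Using the Cauchy problem~\eqref{eq:W'+QW}, namely $W' = -Q_{\diag} W$, the first and last terms combine to $(Q - Q_{\diag}) W z$, giving
\begin{equation*}
 \cL(Q)(W z) = -i B^{-1}\bigl(W z' + (Q - Q_{\diag}) W z\bigr)
 = -i B^{-1} W\bigl(z' + W^{-1}(Q - Q_{\diag}) W z\bigr),
\end{equation*}
where the last step uses the commutation $W B = B W$ (equivalently $B^{-1} W = W B^{-1}$) from~\eqref{eq:WAC.WB=BW}. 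Hence $\cW^{-1}\cL(Q)\cW z = -i B^{-1}(z' + \wt{Q} z) = \cL(\wt{Q}) z$ with $\wt{Q} = W^{-1}(Q - Q_{\diag})W$. For the domains, $y(0) = z(0)$ (since $W(0) = I_n$) and $y(\ell) = W(\ell) z(\ell)$, so $C y(0) + D y(\ell) = 0$ becomes $C z(0) + D W(\ell) z(\ell) = 0$, i.e.\ $\wt{U}(z) = 0$; one also checks $\cL(\wt Q)z \in \fH$ iff $\cL(Q)y \in \fH$ because $\cW$ is bounded with bounded inverse on $\fH$. This yields~\eqref{eq:LUQ.simil}.

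Claim (ii) is immediate from the block structure: $W$ is block-diagonal by~\eqref{eq:Wx=diag}, hence so is $W^{-1}$, and the $(k,k)$ block of $\wt{Q} = W^{-1}(Q - Q_{\diag})W$ equals $W_{kk}^{-1}(\cQ_{kk} - \cQ_{kk})W_{kk} = 0$. For claim (iii), I would show the characteristic determinants literally coincide: if $\wt\Phi(\cdot,\l)$ solves $\cL(\wt Q)\wt\Phi = \l\wt\Phi$ with $\wt\Phi(0,\l) = I_n$, then by part (i) the matrix function $W(\cdot)\wt\Phi(\cdot,\l)$ solves $\cL(Q)(W\wt\Phi) = \l W\wt\Phi$ with value $W(0)\wt\Phi(0,\l) = I_n$ at $x=0$; by uniqueness $\Phi_Q(x,\l) = W(x)\wt\Phi(x,\l)$, so $\Phi_Q(\ell,\l) = W(\ell)\wt\Phi(\ell,\l)$. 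Therefore
\begin{equation*}
 \wt\Delta(\l) := \det\bigl(C + D W(\ell)\wt\Phi(\ell,\l)\bigr)
 = \det\bigl(C + D\,\Phi_Q(\ell,\l)\bigr) = \Delta_Q(\l),
\end{equation*}
which is exactly the assertion that the characteristic determinants of $L_{\wt U}(\wt Q)$ and $L_U(Q)$ agree.

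Claim (iv) should then follow from (iii) together with Lemma~\ref{lem:Delta0.prop} / Remark after it. The characteristic determinant for $Q = 0$ of the \emph{original} BVP is $\Delta_0(\l) = \det(C + D\Phi_0(\ell,\l))$ with $\Phi_0 = \diag(e^{i\l\rho_k})$, and regularity of $\{C,D\}$ is characterized by~\eqref{eq:regular.def}, i.e.\ by non-vanishing of the two extreme coefficients $J_{P_{\pm}}(C,D)$ in the exponential-sum representation~\eqref{eq:Delta0.sum}. For the transformed BVP with pair $\{C, D W(\ell)\}$ the corresponding unperturbed determinant is $\det(C + D W(\ell)\Phi_0(\ell,\l))$, and since $W(\ell)$ is block-diagonal and commutes with $\Phi_0(\ell,\l)$, the extreme coefficients get multiplied by $\det$'s of invertible blocks of $W(\ell)$ — hence one family of coefficients vanishes iff the other does. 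The main obstacle, though minor, is bookkeeping here: strictly speaking ``regular'' in Definition~\ref{def:regular} is stated for matrices $C, D$ with $B(\cdot)$ fixed, and one must confirm that replacing $D$ by $DW(\ell)$ with block-diagonal invertible $W(\ell)$ preserves $\rank(C\ D) = n$ and multiplies $J_{P_{\pm}}$ by nonzero factors; this is where a little care with the block decomposition $\bC^n = \bC^{n_1}\oplus\cdots\oplus\bC^{n_r}$ versus the sign decomposition $\bC^n = \bC^{n_-}\oplus\bC^{n_+}$ in~\eqref{eq:P.pm.def} is needed, but it reduces to observing $J_{P}(C, DW(\ell)) = J_P(C,D)\det(W(\ell)P + (I_n - P))$ up to the obvious rearrangement, with the second factor nonzero for every $P \in \cP_n$.
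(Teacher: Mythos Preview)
Your proposal is correct and follows essentially the same route as the paper. Parts (i)--(iii) are identical to the paper's argument (the paper computes $\cL(Q)\cW y = \cW\cL(\wt Q)y$ directly, you do the equivalent computation after substituting $y = Wz$), and for (iv) the paper uses exactly the factorization you arrive at: since $W(\ell)$ is block-diagonal with respect to $\bC^{n_1}\oplus\cdots\oplus\bC^{n_r}$ and each such block carries a single sign of $\beta_k$, one has $P_{\pm}W(\ell)P_{\pm} = W(\ell)P_{\pm}$ and hence $J_{P_{\pm}}(C, DW(\ell)) = J_{P_{\pm}}(C,D)\cdot\det(P_{\mp} + W(\ell)P_{\pm})$ with the second factor nonzero. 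One small caveat: your displayed identity $J_P(C,DW(\ell)) = J_P(C,D)\det(W(\ell)P + (I_n-P))$ is only valid when $W(\ell)$ commutes with $P$, which holds for $P = P_{\pm}$ but not for arbitrary $P\in\cP_n$; since regularity only involves $P_{\pm}$ this is harmless, but the phrase ``for every $P\in\cP_n$'' should be dropped.
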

\begin{proof}
\textbf{(i)} Let $y \in \AC([0,\ell]; \bC^n)$. It follows from~\eqref{eq:W'+QW} and~\eqref{eq:WAC.WB=BW} that
\begin{multline} \label{eq:cLQ.WThy}
 \cL(Q) \cdot \cW y = -i B^{-1} (W y' + W' y + Q W y)
 = -i B^{-1} (W y' - Q_{\diag} W y + Q W y) \\
 = W \bigl(-i B^{-1} (y' + W^{-1}(Q - Q_{\diag})W y)\bigr)
 = \cW \cdot \cL(\wt{Q}) y.
\end{multline}
Since $W(0) = I_n$, it is also clear that if $U(\cW y) = \wt{U}(y)$. Hence~\eqref{eq:cLQ.WThy} implies~\eqref{eq:LUQ.simil}.

\textbf{(ii)} It is clear that $Q - Q_{\diag}$ has zero block diagonal with respect to the decomposition $\bC^n = \bC^{n_1} \oplus \ldots \oplus \bC^{n_r}$. Block-diagonal form~\eqref{eq:Wx=diag} of $W$, implies that $\wt{Q} = W^{-1} (Q - Q_{\diag}) W$ also has zero block diagonal.

\textbf{(iii)} It follows from~\eqref{eq:cLQ.WThy} that $\wt{\Phi} = W^{-1} \Phi_Q$, where $\wt{\Phi}$ is a fundamental solution of equation~\eqref{eq:LQ.def.reg} with $\wt{Q}$ in place of $Q$. With account of this, we have for the characteristic determinant $\wt{\Delta}(\cdot)$ corresponding to the operator $L_{\wt{U}}(\wt{Q})$,
\begin{equation} \label{eq:wtDelta=Delta}
 \wt{\Delta}(\l) := \det(C + DW(\ell) \wt{\Phi}(\ell,\l))
 = \det(C + D \Phi(\ell,\l)) = \Delta(\l), \qquad \l \in \bC,
\end{equation}
which implies desired equality of characteristic determinants.

\textbf{(iv)} Recall that regularity of boundary conditions $U(y) = Cy(0)+Dy(\ell)=0$ means condition~\eqref{eq:regular.def}, i.e.
\begin{equation} \label{eq:regular.gauge}
 J_{P_\pm}(C, D) = \det(C P_{\mp} + D P_{\pm}) \ne 0,
\end{equation}
where ``projectors'' $P_{\pm}$ are defined in~\eqref{eq:P.pm.def}. Block-diagonal structure~\eqref{eq:Wx=diag} of the matrix $W(x)$ and definition~\eqref{eq:P.pm.def} of ``projectors'' $P_{\pm}$ imply that
$$
 P_{\pm} W(\ell) P_{\pm} = W(\ell) P_{\pm}, \qquad
 P_{\mp} W(\ell) P_{\pm} = 0.
$$
It is also clear that $P_{\pm} P_{\pm} = P_{\pm}$ and $P_{\pm} P_{\mp} = 0$. Hence
\begin{multline} \label{eq:JPpmCDW}
 J_{P_{\pm}}(C, D W(\ell))
 = \det(C P_{\mp} + D W(\ell) P_{\pm}) \\
 = \det(C P_{\mp} + D P_{\pm}) \det(P_{\mp} + W(\ell) P_{\pm})
 = J_{P_{\pm}}(C, D) \prod_{b_k > 0} \det W_{kk}(\ell).
\end{multline}
It is clear that $\prod_{b_k > 0} \det W_{kk}(\ell) \ne 0$. Hence $J_{P_{\pm}}(C, D W(\ell)) \ne 0 \Leftrightarrow J_{P_{\pm}}(C, D)$, and definition of regularity~\eqref{eq:regular.gauge} implies that the new boundary conditions $\wt{U}(y) = C y(0) + D W(\ell) y(\ell)$ are regular if and only if original boundary conditions $U(y) = C y(0) + D y(\ell)$ are regular.
\end{proof}
\begin{remark} \label{rem:similarity}
Note, that similarity of the operators $L_{\wt{U}}(\wt{Q})$ and $L_U(Q)$ implies that both operators have the same spectrum (counting multiplicity). Moreover, $y$ is a root vector of the operator $L_U(Q)$ corresponding to the eigenvalue $\l$ if and only if $\cW y$ is a root vector of the operator $L_{\wt{U}}(\wt{Q})$ corresponding to the eigenvalue $\l$. Since operator $\cW$ is bounded in $\fH$ and has a bounded inverse, then systems of root vectors of the operators $L_{\wt{U}}(\wt{Q})$ and $L_U(Q)$ have many spectral properties only simultaneously: completeness, minimality, uniform minimality, Riesz basis property (see corresponding definitions in future sections).
\end{remark}
Note that gauge transform changes boundary conditions. Even though characteristic determinant $\Delta(\cdot)$ and regularity of boundary conditions is preserved under this transform, the unperturbed operator $L_U(0)$ changes to $L_{\wt{U}}(0)$ and they in general have different eigenvalues. This observation motivates the following definition.
\begin{definition} \label{def:bvp.strict}
Let $B, Q \in L^1([0,\ell]; \bC^{n \times n})$ and $B(x)$ is invertible for almost all $x$. Let matrix function $W(\cdot)$ be constructed from block diagonal of $Q$ using~\eqref{eq:W'+QW}--\eqref{eq:Wx=diag}. BVP~\eqref{eq:LQ.def.reg}--\eqref{eq:Uy=0} is called \textbf{strictly regular} if modified boundary conditions $\wt{U}(y) = C y(0) + D W(\ell) y(\ell) = 0$ are strictly regular.
\end{definition}
\begin{remark}
Note that strict regularity of BVP~\eqref{eq:LQ.def.reg}--\eqref{eq:Uy=0} is only expressed in terms of matrices $C$, $D$ and $Q_{\diag}$ and numbers $b_1, \ldots, b_n$. If $Q_{\diag} \equiv 0$ then strict regularity of BVP~\eqref{eq:LQ.def.reg}--\eqref{eq:Uy=0} simply means strict regularity of original boundary conditions~\eqref{eq:Uy=0}.
\end{remark}
Lemma~\ref{lem:gauge}(iv) implies that BVP~\eqref{eq:LQ.def.reg}--\eqref{eq:Uy=0} is strictly regular if and only if boundary conditions~\eqref{eq:Uy=0} are regular, and modified characteristic determinant
\begin{equation} \label{eq:wtDelta0}
 \wt{\Delta}_0(\cdot) := \det(C + D W(\ell) \Phi_0(\ell, \cdot))
\end{equation}
has countable asymptotically separated sequence of zeros.

Note also, that if boundary conditions~\eqref{eq:Uy=0} are regular, then Lemma~\ref{lem:gauge}(iv) and Lemma~\ref{lem:Delta0.prop} imply that $\wt{\Delta}_0(\cdot)$ has countable sequence of zeros satisfying all the properties from Lemma~\ref{lem:Delta0.prop}.

Now we are ready to formulate our main results on asymptotic behavior of eigenvalues and eigenvectors, Theorems~\ref{th:ln=ln0+o} and~\ref{th:eigenvec}, for arbitrary summable $Q$.
\begin{theorem} \label{th:eigen.gen}
Let matrix function $B(\cdot)$ given by~\eqref{eq:Bx.def} satisfy conditions~\eqref{eq:betak.alpk.Linf}--\eqref{eq:betak-betaj<-eps} and let $Q \in L^1([0,\ell]; \bC^{n \times n})$.
Let matrix function $W(\cdot)$ be constructed from the block diagonal $Q_{\diag}$ of $Q$ using~\eqref{eq:W'+QW}--\eqref{eq:Wx=diag}. Let boundary conditions~\eqref{eq:Uy=0} be regular and let $\wt{\L}_0 = \{\wt{\l}_m^0\}_{m \in \bZ}$ be the sequence of zeros (counting multiplicity) of the modified characteristic determinant $\wt{\Delta}_0(\cdot)$ given by~\eqref{eq:wtDelta0}.

Then operator $L_U(Q)$ has a countable sequence of eigenvalues $\L := \{\l_m\}_{m \in \bZ}$ counting multiplicity. The sequence $\L$ is incompressible (see Definition~\ref{def:incompressible}) and lies in the strip $\Pi_h = \{\l \in \bC : |\Im \l| \le h\}$ for some $h \ge 0$. In addition, the sequences $\wt{\L}_0$ and $\L$ can be ordered in such a way that the following asymptotical formulas hold
\begin{equation} \label{eq:lm=lm0+o.gen}
 \l_m = \wt{\l}_m^0 + o(1) = \frac{2 \pi m}{b_+ - b_-} + o(m) \quad\text{as}\quad m \to \infty,
\end{equation}
where $b_{\pm}$ are defined in~\eqref{eq:b-+.def}. Moreover, if BVP~\eqref{eq:LQ.def.reg}--\eqref{eq:Uy=0} is strictly regular according to Definition~\ref{def:bvp.strict} (i.e.\ the sequence $\wt{\L}_0$ is asymptotically separated) then the sequence $\L$ is asymptotically separated.
\end{theorem}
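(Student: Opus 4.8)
The plan is to reduce Theorem~\ref{th:eigen.gen} to the already-established results of Subsection~\ref{subsec:asymp.eigenvalues} via the gauge transform of Lemma~\ref{lem:gauge}. First I would apply Lemma~\ref{lem:gauge}(i)--(ii): the operator $L_U(Q)$ is similar (via the bounded boundedly-invertible operator $\cW$) to $L_{\wt{U}}(\wt{Q})$, where $\wt{Q} = W^{-1}(Q - Q_{\diag})W \in L^1([0,\ell]; \bC^{n \times n})$ has zero block diagonal with respect to $\bC^n = \bC^{n_1} \oplus \ldots \oplus \bC^{n_r}$, i.e.\ satisfies condition~\eqref{eq:Qjk=0.bj=bk}. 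By Remark~\ref{rem:beta.vs.wtbeta}(i), the hypotheses~\eqref{eq:betak.alpk.Linf}--\eqref{eq:betak-betaj<-eps} on $B(\cdot)$ are exactly conditions~\eqref{eq:Bx.def}--\eqref{eq:Q=Qjk.def} needed for the earlier theorems, and by Lemma~\ref{lem:gauge}(iv) the modified boundary conditions $\wt{U}(y) = Cy(0) + DW(\ell)y(\ell) = 0$ are regular precisely because the original ones are. Similarity preserves the spectrum counting multiplicity (Remark~\ref{rem:similarity}), so $\sigma(L_U(Q)) = \sigma(L_{\wt{U}}(\wt{Q}))$.

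Next I would invoke Theorem~\ref{th:ln=ln0+o} for the BVP consisting of equation~\eqref{eq:LQ.def.reg} with potential $\wt{Q}$ and boundary conditions $\wt{U}(y)=0$. Since $\wt{Q}$ satisfies~\eqref{eq:Qjk=0.bj=bk} and $\wt{U}$ is regular, that theorem gives: the operator $L_{\wt{U}}(\wt{Q})$ has discrete spectrum and its eigenvalues $\L = \{\l_m\}_{m \in \bZ}$ (counting multiplicity), which coincide with the zeros of the characteristic determinant $\wt{\Delta}_{\wt{Q}}(\cdot)$, can be ordered so that $\l_m = \wt{\l}_m^0 + o(1)$, where $\wt{\l}_m^0$ are the zeros of $\wt{\Delta}_0(\cdot) = \det(C + DW(\ell)\Phi_0(\ell,\cdot))$ (note $\wt{\l}_m^0$ are exactly the eigenvalues of $L_{\wt{U}}(0)$). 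By Lemma~\ref{lem:gauge}(iii), $\wt{\Delta}_{\wt{Q}}(\cdot) = \Delta_Q(\cdot)$, so these $\l_m$ are genuinely the eigenvalues of $L_U(Q)$. Then I would apply Proposition~\ref{prop:sine.type}, again to the BVP with $\wt{Q}$ and $\wt{U}$: since $\wt{\Delta}_{\wt{Q}} = \Delta_Q$ is a sine-type function with $h_\Delta(\pi/2) = -b_-$, $h_\Delta(-\pi/2) = b_+$, the sequence $\L$ is incompressible, lies in a strip $\Pi_h$, and admits the second asymptotic formula $\l_m = \frac{2\pi m}{b_+ - b_-} + o(m)$; here I must observe that the numbers $b_\pm$ are unchanged because $W(\ell)$ is block-diagonal and hence does not alter the exponential types (the jump points of the corresponding $\sigma$-function in~\eqref{4.12}--\eqref{4.13} are determined solely by $b_1, \ldots, b_n$, cf.\ the remark after~\eqref{eq:wtDelta0}). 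Stringing the two relations together yields~\eqref{eq:lm=lm0+o.gen}.

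For the final clause about asymptotic separation, I would simply recall Definition~\ref{def:bvp.strict}: BVP~\eqref{eq:LQ.def.reg}--\eqref{eq:Uy=0} being strictly regular means by definition that $\wt{U}$ is strictly regular, i.e.\ $\wt{\Delta}_0(\cdot)$ has asymptotically separated zeros $\wt{\L}_0$. Combining this with the established formula $\l_m = \wt{\l}_m^0 + o(1)$ gives the separation of $\L$ for large $|m|$: if $|\wt{\l}_j^0 - \wt{\l}_k^0| > 2\delta$ for all $j \neq k$ with $|j|,|k| > m_0$, then for $|m|$ large enough that $|\l_m - \wt{\l}_m^0| < \delta/2$ one gets $|\l_j - \l_k| > \delta$ for $j \neq k$ large, so $\L$ is asymptotically separated. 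I do not anticipate a genuine obstacle here: the entire argument is bookkeeping built on already-proven machinery. The one point requiring a little care — and the closest thing to a ``hard part'' — is making explicit that $b_-, b_+$ (and thus the growth indicators and the density constants) are invariant under the gauge transform, since $\wt{\Delta}_0 \neq \Delta_0$ in general; this is where one must point to the block-diagonal structure of $W(\ell)$ and the computation in~\eqref{eq:JPpmCDW} together with the fact that the exponents appearing in $\wt{\Delta}_0$ are the same $\{b_P : P \in \cP_n\}$ as for $\Delta_0$.
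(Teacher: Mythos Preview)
Your proposal is correct and follows essentially the same route as the paper: apply the gauge transform of Lemma~\ref{lem:gauge} to reduce to the zero-block-diagonal case, then invoke Proposition~\ref{prop:sine.type} and Theorem~\ref{th:ln=ln0+o} for the transformed BVP. You supply more explicit justification than the paper does on two points (invariance of $b_\pm$ under the gauge transform, and the elementary deduction of asymptotic separation from $\l_m = \wt{\l}_m^0 + o(1)$), but the underlying argument is identical.
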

\begin{proof}
Applying gauge transform from Lemma~\ref{lem:gauge}, we transform operator $L_U(Q)$ to the operator $L_{\wt{U}}(\wt{Q})$ with the same matrix function $B(\cdot)$, and $\wt{U}$ and $\wt{Q}$ given by~\eqref{eq:wtUy=0} with $\wt{Q}$ satisfying ``zero block diagonality'' condition~\eqref{eq:Qjk=0.bj=bk}. Moreover, based on Lemma~\ref{lem:gauge}(iii) this transform preserves the characteristic determinant and thus preserves the spectrum. Applying Proposition~\ref{prop:sine.type} and Theorem~\ref{th:ln=ln0+o} to the operator $L_{\wt{U}}(\wt{Q})$ we arrive at the desired relation~\eqref{eq:lm=lm0+o.gen} and all the desired properties of the sequence $\L$.
\end{proof}
Reformulation of Theorem~\ref{th:eigenvec} on asymptotic behavior of eigenvectors in the case of general matrix function $Q(\cdot)$ is cumbersome and is omitted.
\section{Completeness property} \label{sec:compl}
Let us recall definition of completeness in a Hilbert space $\cH$.
\begin{definition}
Let $\cH$ be a separable Hilbert space. A sequence $\{\phi_m\}_{m \in \bZ}$ of vectors in $\cH$ is called \textbf{complete in $\cH$} if closure of its span coincides with $\cH$. Equivalently, a sequence $\{\phi_m\}_{m \in \bZ}$ is complete in $\cH$ if and only if the following implication holds for every $f \in \cH$,
\begin{equation} \label{eq:phim.f=0}
 (\phi_m, f) = 0, \quad m \in \bZ \qquad \Rightarrow \qquad f = 0.
\end{equation}
\end{definition}
Completeness property in $\cH := L^2([0,\ell]; \bC^n)$ of the system of root vectors of the operator $L_U(Q)$ with so called weakly regular boundary conditions in the case of constant (not necessarily self-adjoint) matrix $B(x) \equiv B = \const$ and summable potential matrix $Q$ was established in~\cite{MalOri12} using certain generalization of Birkhoff theorem on asymptotic behavior of solutions of system~\eqref{eq:LQ.def.reg} in special ``narrowed'' sectors of $\bC$. We need to extend this asymptotic result to the case of \emph{non-constant $B(x)$}. To this end let use introduce special ``narrowed'' sectors $S_{\eps}^{\pm} \subset \bC_{\pm}$,
\begin{align}
 S_{\eps}^{+} & := \{\l : \eps < \arg \l < \pi - \eps\} \subset \bC_+,
 \qquad \eps > 0, \\
 S_{\eps}^{-} & := \{\l : -\pi + \eps < \arg \l < - \eps\} \subset \bC_-,
 \qquad \eps > 0, \\
 S_{\eps,R}^{\pm} & := \{\l \in S_{\eps}^{\pm}: |\l| > R\} \subset \bC_{\pm},
 \qquad \eps, R > 0,
\end{align}
\begin{proposition}[cf. Proposition 2.2 in~\cite{MalOri12}] \label{prop:BirkSys}
Let matrix functions $B(\cdot)$ and $Q(\cdot)$ satisfy conditions~\eqref{eq:Bx.def}--\eqref{eq:Qjk=0.bj=bk}. Let $\eps > 0$ be sufficiently small. Then for a sufficiently large $R$,
equation~\eqref{eq:LQ.def.reg} has fundamental matrix solutions $Y^{\pm}(x,\l)$,
\begin{equation}
 Y^{\pm} = \begin{pmatrix} Y_1^{\pm} & \ldots & Y_n^{\pm} \end{pmatrix},
 \quad Y_k^{\pm} = \col(y_{1k}^{\pm}, \ldots, y_{nk}^{\pm}),
 \quad k \in \oneton,
\end{equation}
which are analytic with respect to $\l \in S_{\eps,R}^{\pm}$ and have
the following asymptotic behavior uniformly in $x \in [0,\ell]$,
\begin{equation} \label{eq:yjk=(1+o).exp}
 y_{jk}^{\pm}(x,\l) = (\delta_{jk} + o(1)) e^{i \l \rho_k(x)},
 \quad\text{as}\quad \l \to \infty,\ \l \in S_{\eps,R}^{\pm},
 \quad j,k \in \oneton,
\end{equation}
where $\delta_{jk}$ is a Kronecker symbol.
\end{proposition}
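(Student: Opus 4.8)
The plan is to adapt the Birkhoff-type construction of asymptotic fundamental solutions from~\cite{MalOri12} to the present non-constant $B(\cdot)$ setting, using the transformation-operator machinery already developed in Sections~\ref{sec:transform.oper} and~\ref{sec:fundament} as a substitute for the classical Birkhoff integral equations. First I would reduce to the unperturbed equation: by the gauge transform of Lemma~\ref{lem:gauge} one may assume $Q$ has zero block diagonal (condition~\eqref{eq:Qjk=0.bj=bk}), and then, instead of constructing $Y^{\pm}$ directly, I would build them from the triangular transformation operators. Concretely, for each $k\in\oneton$ pick $A=A^{[k]}$ as in the proof of Proposition~\ref{prop:Phip=Phip0+int.sum} and set $Y_k^{\pm}(x,\l):=$ the corresponding column extracted from the representation~\eqref{eq:trans_oper_repres-n}, i.e. $Y_k^{\pm}(x,\l) = \Phi_k^0(x,\l) + \sum_{q=1}^n \int_0^x R_q^{[k]}(x,t) e^{i\l\rho_q(t)}\beta_q(t)\,dt$, which is exactly formula~\eqref{eq:Phip=Phi0p+int.sum}. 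These are entire in $\l$, hence in particular analytic on $S_{\eps,R}^{\pm}$, and they form a fundamental system since $\Phi(\cdot,\l)$ does.

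The heart of the matter is then the uniform asymptotics~\eqref{eq:yjk=(1+o).exp}. I would establish it by showing that each integral term $\int_0^x R_q^{[k]}(x,t)e^{i\l\rho_q(t)}\beta_q(t)\,dt$ is $o(1)\cdot|e^{i\l\rho_k(x)}|$, uniformly in $x\in[0,\ell]$, as $\l\to\infty$ within the sector $S_{\eps,R}^{\pm}$. Here I would use Lemma~\ref{lem:RiemLebX} (the $X_{\infty,0}$-version of Riemann--Lebesgue), which gives, for $R_q^{[k]}\in X_{\infty,0}(\Omega)$, the bound $\bigl|\int_0^x R_q^{[k]}(x,t)e^{i\l\rho_q(t)}\beta_q(t)\,dt\bigr| < \delta(|e^{i\l\rho_q(x)}|+1)$ for $|\l|$ large, uniformly in $x$. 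The remaining point is to compare the exponential factors: one needs $|e^{i\l\rho_q(x)}|+1 = o(1)\cdot|e^{i\l\rho_k(x)}|$ for $\l\in S_{\eps,R}^{\pm}$. Writing $|e^{i\l\rho_q(x)}| = e^{-\Im\l\cdot\rho_q(x)}$ and using that $\rho_q(x)$ runs between $0$ and $b_q$, with the signs of the $\beta$'s ordered as in~\eqref{eq:rho1.rhon}, this is a statement about which linear exponential dominates in a sector bounded away from the real axis; in $S_{\eps}^+$ one has $\Im\l \ge |\l|\sin\eps \to +\infty$, so $e^{-\Im\l\cdot\rho_q(x)}$ is dominated by $e^{-\Im\l\cdot\rho_k(x)}$ precisely when $\rho_k(x)\le\rho_q(x)$ for the relevant range — and for the diagonal term ($q=k$, $\delta_{kk}=1$) the factor $1$ is itself $o(1)\cdot e^{-\Im\l\cdot\rho_k(x)}$ away from the turning point. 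I would handle the turning-point region $\rho_k(x)\approx 0$ separately, splitting $[0,\ell]$ at the (unique, by strict monotonicity of $\rho_k$) zero of $\rho_k$ and exploiting the fact that near $x=0$ all relevant quantities are uniformly comparable to $1$.

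The main obstacle I anticipate is exactly this uniform-in-$x$ control of ratios of exponentials across the whole interval, including the turning points where $\rho_k$ changes sign — this is the place where the non-constancy of $B(\cdot)$ genuinely complicates the classical argument, since the ``dominant exponential'' depends on $x$ in a way it does not when $B$ is constant. The resolution is to work with the explicit integral representation~\eqref{eq:Phip=Phi0p+int.sum} (rather than a Birkhoff fixed-point argument) and to invoke the already-proven $X_{\infty,0}$-membership of the kernels $R_q^{[k]}$, which is what makes the Riemann--Lebesgue estimate uniform in $x$; the sectorial hypothesis $\l\in S_{\eps,R}^{\pm}$ then forces $|\Im\l|\gtrsim|\l|$, converting the pointwise decay into the claimed uniform asymptotics. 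Finally I would note that the construction is symmetric under $\l\mapsto\bar\l$ up to relabeling, so the $S_{\eps,R}^-$ case follows by the same argument, which completes the proof.
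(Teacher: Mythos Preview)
Your proposal has a genuine gap: the solutions you construct are simply the columns $\Phi_k(\cdot,\l)$ of the fundamental matrix with initial condition $\Phi(0,\l)=I_n$, and these do \emph{not} satisfy the sectorial asymptotics~\eqref{eq:yjk=(1+o).exp}. The Riemann--Lebesgue bound from Lemma~\ref{lem:RiemLebX} gives
\[
\Bigl|\int_0^x R_q^{[k]}(x,t)\,e^{i\l\rho_q(t)}\beta_q(t)\,dt\Bigr|
\ \le\ \delta\bigl(e^{-\Im\l\cdot\rho_q(x)}+1\bigr),
\]
and dividing by $|e^{i\l\rho_k(x)}|=e^{-\Im\l\cdot\rho_k(x)}$ yields a factor $e^{\Im\l(\rho_k(x)-\rho_q(x))}+e^{\Im\l\cdot\rho_k(x)}$. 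In $S_\eps^+$ one has $\Im\l\to+\infty$, so this ratio blows up whenever $\rho_k(x)>\rho_q(x)$ (i.e.\ whenever $q<k$ with $\beta_q\not\equiv\beta_k$) or whenever $\rho_k(x)>0$ (i.e.\ $k>n_-$). Both situations occur for generic $k$, so the integral terms are \emph{not} $o(1)$ relative to $e^{i\l\rho_k(x)}$; in fact they dominate it. Concretely, for $n=2$ with $\beta_1<0<\beta_2$ and $k=2$, the $q=1$ term grows like $e^{-\Im\l\cdot\rho_1(x)}$ while the target $e^{i\l\rho_2(x)}$ decays, so $\phi_{12}(x,\l)/e^{i\l\rho_2(x)}\not\to 0$. (Incidentally, $\rho_k$ does not ``change sign'': it is strictly monotone with $\rho_k(0)=0$, so the turning-point discussion is beside the point.)

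The paper's proof avoids exactly this obstruction by \emph{not} imposing all initial data at $x=0$. It seeks $Y^{\pm}$ via the integral system
\[
y_{jk}^{\pm}(x,\l)=\delta_{jk}\,e^{i\l\rho_j(x)}
-\int_{a_{jk}^{\pm}}^x e^{\,i\l(\rho_j(x)-\rho_j(t))}\sum_{s}Q_{js}(t)\,y_{sk}^{\pm}(t,\l)\,dt,
\]
with the \emph{mixed} endpoints $a_{jk}^{+}=0$ if $\beta_j\ge\beta_k$ and $a_{jk}^{+}=\ell$ if $\beta_j<\beta_k$ (and the opposite choice for $S_\eps^-$). This choice, together with the uniform separation condition~\eqref{eq:betak-betaj<-eps}, forces the exponential kernel in the integrand to satisfy $|e^{i\l((\rho_j-\rho_k)(x)-(\rho_j-\rho_k)(t))}|\le e^{-c|\l|\,|x-t|}$ on the integration path in the sector, so the integral operator is a contraction for large $|\l|$ and the Banach fixed-point argument (as in~\cite{MalOri12}, following Naimark) yields $z_{jk}^{\pm}:=e^{-i\l\rho_k}y_{jk}^{\pm}=\delta_{jk}+o(1)$ uniformly. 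The key idea your proposal is missing is precisely this $j$- and $k$-dependent choice of endpoint $a_{jk}^{\pm}\in\{0,\ell\}$; without it, the growing exponential modes contaminate every column of $\Phi$ and the asymptotics fail.
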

\begin{proof}
It is clear that the matrix equation $\cL(Q) Y = \l Y$, $Y = (y_{jk})_{j,k=1}^n$ is equivalent to $$Y'(x,\l) = (i \l B(x) - Q(x)) Y(x,\l)$$ and has the following scalar form
\begin{equation} \label{eq:yjk'}
 y_{jk}'(x,\l) = i \l \beta_j(x) y_{jk}(x,\l)
 - \sum_{s=1}^n Q_{js}(x) y_{sk}(x, \l).
\end{equation}
This formula and formulas below are assumed to be valid for all $j,k \in \oneton$, $x \in [0,\ell]$ and $\l \in \bC$, unless stated otherwise. Relations~\eqref{eq:yjk'} can be rewritten as follows,
\begin{equation} \label{eq:ddx.zjk}
 \frac{d}{dx}\(e^{-i \l \rho_j(x)} y_{jk}(x, \l)\)
 = - e^{-i \l \rho_j(x)}\sum_{s=1}^n Q_{js}(x) y_{sk}(x, \l).
\end{equation}
We will look for solution $Y^{\pm}(x,\l)$ as the solution of~\eqref{eq:ddx.zjk} satisfying mixed initial conditions,
\begin{align}
\label{eq:yjkajk=0}
 y_{jk}^{\pm}(a_{jk}^{\pm}, \l) &= \delta_{jk},
\end{align}
where $a_{kk}^{\pm} = 0$ and $a_{jk}^{\pm}$, $j \ne k$, is either $0$ or $\ell$ and will be chosen later.

Integrating~\eqref{eq:ddx.zjk} with account of~\eqref{eq:yjkajk=0} we arrive at
\begin{equation} \label{eq:system-int}
 y_{jk}^{\pm}(x,\l) = \delta_{jk} e^{i \l \rho_j(x)}
 -\int_{a_{jk}^{\pm}}^x e^{i \l (\rho_j(x) - \rho_j(t))}
 \sum_{s=1}^n Q_{js}(t) y_{sk}^{\pm}(t,\l)dt.
\end{equation}
Setting $z_{jk}(x,\l) := e^{-i \l \rho_k(x)} y_{jk}(x, \l)$, we can rewrite~\eqref{eq:system-int} as follows,
\begin{equation} \label{eq:system-int.z}
 z_{jk}^{\pm}(x,\l) = \delta_{jk}
 - \int_{a_{jk}^{\pm}}^x
 e^{i \l \((\rho_j-\rho_k)(x) - (\rho_j-\rho_k)(t)\)}
 \sum_{s=1}^n Q_{js}(t) z_{sk}^{\pm}(t,\l)dt.
\end{equation}

For definiteness consider the case $\l \in S_{\eps}^+$. It is clear from definition of $S_{\eps}^+$ that
\begin{equation} \label{eq:Iml.S+}
 \Im \l \ge \delta |\l|, \qquad \l \in S_{\eps}^+,
\end{equation}
with some $\delta = \delta_{\eps}$ that does not depend on $\l$.

Recall that $\rho_k(x) = \int_0^x \beta_k(t) dt$, where functions $\beta_k(\cdot)$ satisfy uniform separation conditions~\eqref{eq:betak.alpk.Linf}--\eqref{eq:betak-betaj<-eps}.
Let $j > k$ and assume that $\beta_j \not\equiv \beta_k$. Condition~\eqref{eq:betak-betaj<-eps} implies that $$\beta_j(u) - \beta_k(u) > \theta, \qquad u \in [0,\ell].$$ Hence
\begin{multline} \label{eq:abs.exp.t<x}
 \abs{e^{i \l \((\rho_j-\rho_k)(x) - (\rho_j-\rho_k)(t)\)}}
 = \exp\(-\Im \l \int_t^x (\beta_j(u) - \beta_k(u)) du \) \\
 \le \exp\Bigl(-\delta |\l| \cdot \theta |x-t|\Bigr),
 \qquad \beta_j > \beta_k,
 \quad 0 \le t \le x \le \ell,
 \quad \l \in S_{\eps}^+.
\end{multline}
Similarly, if $j < k$ and $\beta_j \not\equiv \beta_k$ then the same estimate is valid for $0 \le x \le t \le \ell$,
\begin{multline} \label{eq:abs.exp.t>x}
 \abs{e^{i \l \((\rho_j-\rho_k)(x) - (\rho_j-\rho_k)(t)\)}}
 \le \exp\Bigl(-\delta |\l| \cdot \theta |x-t|\Bigr),
 \qquad \beta_j < \beta_k,
 \quad 0 \le x \le t \le \ell,
 \quad \l \in S_{\eps}^+.
\end{multline}

With estimates~\eqref{eq:abs.exp.t<x}--\eqref{eq:abs.exp.t>x} in mind, we can now set
\begin{equation} \label{eq:ajk.def}
 a_{jk}^{+} := \begin{cases}
 0, \ \ \text{if}\ \ \ \beta_j(u) \ge \beta_k(u), \ \ u \in [0,\ell], \\
 1, \ \ \text{if}\ \ \ \beta_j(u) < \beta_k(u), \ \ u \in [0,\ell].
 \end{cases}
\end{equation}
In particular, $a_{jk}^{+}=0$ if $\beta_j \equiv \beta_k$, which agrees with relation $a_{kk}^{+} = 0$, we set earlier.

Now if $\beta_j \not\equiv \beta_k$, estimates~\eqref{eq:abs.exp.t<x}--\eqref{eq:abs.exp.t>x} imply that for given $t \ne x$ the exponential function in the integral~\eqref{eq:system-int.z} can be arbitrarily small for $\l \in S_{\eps,R}^+$ and sufficiently large $R$.
If $\beta_j \equiv \beta_k$, then $\rho_k \equiv \rho_j$ and exponential function disappears. In this case we need to insert expressions for such $z_{jk}^{+}(x, \l)$ terms into other equations in~\eqref{eq:system-int.z} to obtain a system only on functions $z_{jk}^+(x, \l)$ with $\beta_j \not\equiv \beta_k$.
The proof is now finished the same way as in~\cite[Proposition 2.2]{MalOri12} by using Banach fixed point theorem and following~\cite[Lemma II.4.4.1]{Nai69}.
\end{proof}
Following~\cite{LunMal14IEOT} we will first establish general completeness property provided that the trace of characteristic determinant $\Delta_Q(\cdot)$ on certain three rays has a certain asymptotic behavior, and then show that this is the case for regular boundary conditions~\eqref{eq:Uy=0}.
\begin{proposition} \label{prop:compl}
Let matrix function $B(\cdot)$ given by~\eqref{eq:Bx.def} satisfy conditions~\eqref{eq:betak.alpk.Linf}--\eqref{eq:betak-betaj<-eps} and let $Q \in L^1([0,\ell]; \bC^{n \times n})$.
Let $\Delta_Q(\l)$ be the characteristic determinants of BVP~\eqref{eq:LQ.def.reg}--\eqref{eq:Uy=0} given by~\eqref{eq:Delta.def}.
Assume that there exist $C,M > 0$, $s \in \bZ_+ := \{0, 1, 2, \ldots\}$ and $z_1, z_2, z_3 \not \in \bR$ satisfying the following conditions:

\item $(i)$ the origin is the interior point of the triangle $\triangle_{z_1z_2z_3}$;

\item $(ii)$ the following estimate holds
\begin{equation} \label{eq:Delta>e/lam}
 |\Delta(\l)| \ge \frac{C}{|\l|^s}
 \(e^{-\Im \l \cdot b_-} + e^{-\Im \l \cdot b_+}\),
 \qquad |\l| > M, \quad \arg\l=\arg z_k, \quad k \in \{1,2,3\}.
\end{equation}
Then operator $L_U(Q)$ has discrete spectrum and the system of root vectors of the operator $L_U(Q)$ is complete in $\fH$.
\end{proposition}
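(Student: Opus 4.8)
The plan is to follow the classical scheme of proving completeness of root vectors via estimates of the resolvent along rays, adapted to the non-constant weight $B(\cdot)$ as in~\cite{MalOri12,LunMal14IEOT}. First I would reformulate completeness in the standard dual form: the system of root vectors of $L_U(Q)$ is complete in $\fH$ if and only if for every $f \in \fH$ the vector-valued function $\l \mapsto \bigl((L_U(Q)-\l)^{-1}g, f\bigr)_{\fH}$, where $g$ is an arbitrary fixed vector, is entire (after multiplication by $\Delta_Q(\l)$ it becomes entire automatically) and the relevant scalar function $F_f(\l):= \Delta_Q(\l)\bigl((L_U(Q)-\l)^{-1}g,f\bigr)$ is identically zero. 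Concretely, one writes the resolvent explicitly through the fundamental matrix $\Phi(\cdot,\l)$ and the adjugate $A_Q^a(\l)$: the Green function of the BVP has the form $G(x,t,\l) = \Phi(x,\l)\,\bigl(\text{piecewise expression in } A_Q^a(\l), C, D\bigr)\,\Phi(t,\l)^{-1}B(t)^{-1}$ times $i$, so that $\Delta_Q(\l) G(x,t,\l)$ is entire in $\l$ for each $x,t$.

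Next I would estimate $\|\Phi(x,\l)\|$ and $\|\Phi(x,\l)^{-1}\|$ on the three rays $\arg\l = \arg z_k$. Here Proposition~\ref{prop:BirkSys} is the crucial input: for $\l$ in the narrowed sectors $S_{\eps,R}^{\pm}$ (and the three rays, being non-real, eventually lie inside such sectors for large $|\l|$), we have $y_{jk}^{\pm}(x,\l) = (\delta_{jk}+o(1))e^{i\l\rho_k(x)}$ uniformly in $x$, which gives $\|\Phi(x,\l)\| \le C e^{-\Im\l\cdot\rho_n^+(x)}$-type bounds and similarly for $\Phi(t,\l)^{-1}$. Combining these with the key hypothesis~\eqref{eq:Delta>e/lam} — the lower bound $|\Delta(\l)| \ge C|\l|^{-s}(e^{-\Im\l\cdot b_-}+e^{-\Im\l\cdot b_+})$ on the three rays — one shows that on each ray the entire function $H_{x,t,f}(\l):=\Delta_Q(\l)\int_0^\ell \langle G(x,t,\l)\,dt\,\ldots\rangle$ (more precisely the scalar function obtained by pairing the resolvent with $f$ and a suitable $g$) grows at most polynomially, namely $|F_f(\l)| \le C'|\l|^{s}$. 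The exponential factors coming from $\Phi$, $\Phi^{-1}$ and the denominator $\Delta_Q$ cancel precisely because the sum of exponential rates along any ray in $\bC_\pm$ is controlled by $b_\mp$ versus $b_\pm$, and the Green-function structure matches these rates — this cancellation is exactly what hypothesis $(ii)$ is designed to guarantee.

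Then I would invoke a Phragmén–Lindelöf / Cartwright-type argument: $F_f(\cdot)$ is entire of exponential type (being a ratio of entire functions of exponential type that is entire, or directly from the representation), and it is bounded by $C'|\l|^s$ on three rays emanating from the origin whose convex hull contains the origin as an interior point (hypothesis $(i)$). By the Phragmén–Lindelöf principle applied successively to each of the three closed sectors cut out by consecutive rays, $F_f$ is bounded by $C'|\l|^s$ in the whole plane, hence is a polynomial of degree $\le s$. The standard trick (as in~\cite{MalOri12,LunMal14IEOT}) is to note that, by the resolvent decay $\|(L_U(Q)-\l)^{-1}\| \to 0$ as $\l\to\infty$ along the rays, the function $\bigl((L_U(Q)-\l)^{-1}g,f\bigr)$ vanishes at infinity, so $F_f(\l)/\Delta_Q(\l)\to 0$; since $\Delta_Q$ is of sine type with indicator diagram $[-ib_+,-ib_-]$ by Proposition~\ref{prop:sine.type}, while a nonzero polynomial cannot be dominated in this way along all three rays simultaneously, we conclude $F_f \equiv 0$. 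As this holds for all $f \in \fH$, the resolvent applied to $g$ has no poles, forcing $g$ in the closed span of root vectors; taking $g$ arbitrary, completeness follows.

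The main obstacle I anticipate is the bookkeeping of exponential rates: one must verify that the exponential factor $e^{-\Im\l\cdot\rho_n^+(x)}$ from $\|\Phi(x,\l)\|$, the factor from $\|\Phi(t,\l)^{-1}B(t)^{-1}\|$, the piecewise factor built from $A_Q^a(\l)$ and the boundary matrices, and the denominator factor $(e^{-\Im\l\cdot b_-}+e^{-\Im\l\cdot b_+})^{-1}$ combine to something bounded by $C'|\l|^s$ \emph{uniformly in $x,t \in [0,\ell]$} and on \emph{all} three rays. This requires carefully splitting $[0,\ell]$ according to the sign of $x-t$ and using the monotonicity $\rho_1 \le \ldots \le \rho_{n_-} < 0 < \rho_{n_-+1} \le \ldots \le \rho_n$ together with $b_- = \rho_1(\ell)+\ldots+\rho_{n_-}(\ell)$, $b_+ = \rho_{n_-+1}(\ell)+\ldots+\rho_n(\ell)$; the argument is essentially the same as in the constant-$B$ case of~\cite{MalOri12} but one must check that replacing $b_k x$ by $\rho_k(x)$ throughout does not spoil any inequality, which it does not because each $\rho_k(\cdot)$ is monotone with $\rho_k(0)=0$ and $\rho_k(\ell)=b_k$. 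The remaining step — showing that regular boundary conditions furnish such $z_1,z_2,z_3$ and the estimate~\eqref{eq:Delta>e/lam} — will be handled separately, presumably using Proposition~\ref{prop:sine.type}(iii) which already gives the lower bound off the discs $\bD_\eps(\l_m)$, from which one extracts three suitable rays avoiding all these discs.
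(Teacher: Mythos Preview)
Your strategy is essentially the same as the paper's: reduce by gauge transform to zero block-diagonal $Q$, use the Birkhoff-type asymptotics of Proposition~\ref{prop:BirkSys} together with the lower bound~\eqref{eq:Delta>e/lam} to get polynomial growth along the three rays, apply Phragm\'en--Lindel\"of in the three sectors, conclude that a certain entire function is a polynomial, and then force it to be zero.

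The implementation differs in one point worth noting. You propose to work through the resolvent/Green function $G(x,t,\l)$, which involves both $\Phi(x,\l)$ and $\Phi(t,\l)^{-1}$ and requires the exponential bookkeeping you describe. The paper instead bypasses the Green function entirely: for $f\in\fH$ orthogonal to all root vectors it sets
\[
F_j(\l)=(Y_j(\cdot,\l),f)_{\fH},\qquad Y_j(\cdot,\l)=\sum_{k=1}^n A_{kj}(\l)\Phi_k(\cdot,\l),
\]
checks that $G_j:=F_j/\Delta_Q$ is entire of exponential type, and derives the identity $\int_0^\ell f^*(x)\Phi(x,\l)\,dx = G(\l)A(\l)$. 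The growth estimate is then obtained by rewriting $G_j$ in the Birkhoff basis $Y^{\pm}$ (so only $\Phi$ appears, never $\Phi^{-1}$), giving $G_j(\l)=o(|\l|^s)$ on each ray. After Phragm\'en--Lindel\"of yields that $G_j$ is a polynomial, the paper does not use resolvent decay to conclude $G_j\equiv 0$; instead it uses the identity above together with the argument of~\cite[pp.~89--90]{LunMal14IEOT}, and then $\int_0^\ell\langle\Phi_j(x,\l),f(x)\rangle\,dx\equiv 0$ forces $f=0$ by~\cite[Theorem~1.2, step~(vi)]{MalOri12}. Your route would also go through, but the paper's choice avoids the $\Phi^{-1}$ estimates and the somewhat delicate question of why the entire function you construct is of exponential type. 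Also, your final sentence (``forcing $g$ in the closed span of root vectors'') is phrased backwards: the conclusion is that the \emph{orthogonal} vector $f$ must be zero, not a statement about $g$.
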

\begin{proof}[Sketch of the proof]
\textbf{(i)} The proof will be divided into multiple steps following the proofs of~\cite[Theorem 1.2]{MalOri12} and~\cite[Theorem 3.2]{LunMal14IEOT}.

\textbf{Step 1.} Applying gauge transform from Lemma~\ref{lem:gauge} we transform operator $L_U(Q)$ to the operator $L_{\wt{U}}(\wt{Q})$ with the same matrix function $B(\cdot)$, and $\wt{U}$ and $\wt{Q}$ given by~\eqref{eq:wtUy=0} with $\wt{Q} = W^{-1} (Q - Q_{\diag}) W$ satisfying ``zero block diagonality'' condition~\eqref{eq:Qjk=0.bj=bk}. Moreover, based on Lemma~\ref{lem:gauge}(iii) this transform preserves the characteristic determinant. Hence characteristic determinant of the new BVP still satisfies condition~\eqref{eq:Delta>e/lam}. Hence, without loss of generality we can assume that original $Q$ satisfies ``zero block diagonality'' condition.

\textbf{Step 2.}
It is clear that $\Phi_Q(x,\cdot)$ is an entire function of exponential type for each $x \in [0,\ell]$. Hence $\Delta(\cdot) = \Delta_Q(\cdot) = \det(C + D \Phi_Q(\ell, \l))$ is an entire function of exponential type.
In turn, condition~\eqref{eq:Delta>e/lam} and canonical factorization for entire functions of exponential type imply that
$\Delta(\cdot)$ has a countable set of zeros of finite multiplicities.
Lemma~\ref{lem:eigen} now implies that the operator $L_U(Q)$ has discrete spectrum. Let $\{\mu_k\}_{k \in \bN}$ be the set of (distinct) eigenvalues of the operator $L_U(Q)$, $\mu_k \ne \mu_j$, $k \ne j$, and let $m_k \in \bN$ be the algebraic multiplicity of the eigenvalue $\mu_k$, $k \in \bN$. We used notation $\mu_k$ to avoid confusion with notation $\{\l_m\}_{m \in \bZ}$ used in other sections.

\textbf{Step 3.} Based on the proof of Lemma~\ref{lem:eigen} the root subspace $\cR_{\mu_k}(L_U(Q))$ of the operator $L_U(Q)$ is of the following form,
\begin{equation} \label{eq:cR.muk}
 \cR_{\mu_k}(L_U(Q)) = \Span\left\{\left. \frac{\partial^p}{\partial \mu^p} Y_j(x,\mu) \right|_{\mu=\mu_k}
 : \ \ p \in \{0,1,\ldots,m_k-1\},\ \ j \in \oneton \right\},
 \qquad k \in \bN,
\end{equation}
where vector functions $Y_j(\cdot,\cdot)$ are introduced in~\eqref{eq:Ypxl.def}.

Let $f \in \fH$ be a vector orthogonal to the system of root vectors of the operator $L_U(Q)$. Next, we will follow~\cite[p.~87--88]{LunMal14IEOT} (see also step (iii) of the proof of~\cite[Theorem 1.2]{MalOri12}).

Consider the entire functions
\begin{equation} \label{eq:Fj(lam).def}
 F_j(\l):=(Y_j(\cdot,\l),f(\cdot))_{\fH}, \qquad j \in \oneton.
\end{equation}
Since $f$ is orthogonal to $\cR_{\mu_k}(L_U(Q))$, $k \in \bN$, it follows from~\eqref{eq:cR.muk} that each $\mu_k ( \in \sigma(L_U(Q)))$ is a zero of $F_j(\cdot)$ of multiplicity at least $m_k$, i.e.
\begin{equation} \label{eq:Fjpl=0}
 F_j^{(p)}(\mu_k) =0, \qquad
 p \in \{0,1,\dots,m_k-1\}, \quad j \in \oneton, \quad k \in \bN.
\end{equation}
Lemma~\ref{lem:eigen} implies the multiplicity of $\mu_k$ as a root of the characteristic determinant $\Delta_Q(\cdot)$ equals to $m_k$. Thus, the ratio
\begin{equation} \label{eq:Gj(lam).def}
 G_j(\l):=\frac{F_j(\l)}{\Delta(\l)}, \qquad j \in \oneton,
\end{equation}
is an entire function. Moreover, since functions
$Y_j(x,\cdot)$ and $\Delta(\cdot)$ are entire functions of
exponential type then so are $G_1(\cdot), \ldots, G_n(\cdot)$. Denote
\begin{align}
\label{eq:Gl.def}
 G(\l) & := \begin{pmatrix} G_1(\l) & \ldots & G_n(\l) \end{pmatrix},
 \qquad \l \in \bC\\
\label{eq:Yl.def}
 Y(\cdot, \l) & := \begin{pmatrix} Y_1(\cdot, \l) & \ldots &
 Y_n(\cdot, \l)\end{pmatrix} = \Phi(\cdot, \l) A^a(\l), \qquad \l \in \bC,
\end{align}
where $A^a(\l)$ is matrix adjugate to $A(\l) = C + D \Phi(\ell,\l)$ and is defined in~\eqref{eq:Aa.def}. It follows from~\eqref{eq:Fj(lam).def} and~\eqref{eq:Gj(lam).def}--\eqref{eq:Yl.def}
that
\begin{equation} \label{eq:int.f*U=Delta.G}
 \int_0^1 f^*(x) \Phi(x,\l) A^a(\l) dx = \int_0^1 f^*(x) Y(x,\l) dx
 = \Delta(\l) G(\l), \quad \l \in \bC,
\end{equation}
where $f^*(x) := \begin{pmatrix}\ol{f_1(x)} & \ldots &
\ol{f_n(x)}\end{pmatrix} = \ol{f(x)}^T$.

Multiplying~\eqref{eq:int.f*U=Delta.G} by the matrix $A(\l)$ from the right we get in view of~\eqref{eq:adjug.ident}
\begin{equation}
 \Delta(\l) \int_0^1 f^*(x) \Phi(x,\l) dx
 = \Delta(\l) G(\l) A(\l), \quad \l \in \bC,
\end{equation}
or equivalently
\begin{equation}
 \int_0^1 f^*(x) \Phi(x,\l) dx
 = G(\l) A(\l), \quad \l \not \in \sigma(L_U(Q)).
\end{equation}
Now the continuity of the integral in the last equality with
respect to $\l$, the discreteness of the set $\sigma(L_U(Q))$
and definition of $A(\l)$ yield the following relation
\begin{equation} \label{eq:int.f*Phi=G.A_Phi}
 \int_0^1 f^*(x) \Phi(x,\l) dx
 = G(\l) (C + D \Phi(1,\l)), \quad \l \in \bC.
\end{equation}

\textbf{Step 4.} Let us prove that functions $G_1(\cdot), \ldots, G_n(\cdot)$ are polynomials in $\l$ by estimating their growth. To this end we consider solutions $Y^{\pm}(x,\l)$ satisfying asymptotic behavior~\eqref{eq:yjk=(1+o).exp}. Following the proof of~\cite[Theorem 1.2]{MalOri12} and~\cite[Theorem 3.2]{LunMal14IEOT} we can derive that
\begin{equation} \label{eq:Gj.via.Ypm}
 \Delta_Q^{\pm}(\l) G_j(\l) = \(U^{\pm}_j(\cdot, \l), f(\cdot)\)_{\fH}
 =: F_j^{\pm}(\l), \qquad \l \in S_{\eps,R}^{\pm},
\end{equation}
where
\begin{align}
 A_Q^{\pm}(\l) &:= C Y^{\pm}(0, \l) + D Y^{\pm}(\ell, \l), \\
 \Delta_Q^{\pm}(\l) &:= \det(A_Q^{\pm}(\l)), \\
 Y_{jk}^{a,\pm}(\l) &:= A_Q^{\pm}(\l)\{j,k\}
 \quad\text{are entries of the corresponding adjugate matrix}, \\
 U^{\pm}_j(x, \l) &:= \sum_{k=1}^n Y_{jk}^{a,\pm}(\l) Y_k^{\pm}(x,\l),
 \qquad j \in \oneton.
\end{align}
Following the proof of~\cite[Theorem 1.2]{MalOri12} we can show using asymptotic behavior~\eqref{eq:yjk=(1+o).exp} that
$$
F_j^{\pm}(\l) = o\(e^{-\Im \l \cdot b_-} + e^{-\Im \l \cdot b_+}\),
\quad\text{as}\quad \l \to \infty, \quad \l \in S_{\eps,R}^{\pm}.
$$
Inserting this estimate and the estimate~\eqref{eq:Delta>e/lam} into~\eqref{eq:Gj.via.Ypm} implies that
\begin{equation} \label{eq:Gj<=lam^m}
G_j(\l) = o(|\l|^s), \quad\text{as}\quad \l \to \infty,
\quad \l \in \Gam_k, \quad k \in \{1,2,3\},
\end{equation}
where $\Gam_k := \{\l \in \bC : \arg \l = \arg z_k\}$, $k \in \{1,2,3\}$.
Since zero is the interior point of the triangle $\triangle_{z_1
z_2 z_3}$, the rays $\Gam_1, \Gam_2, \Gam_3$ divide the
complex plane into three closed sectors $\Omega_1, \Omega_2,
\Omega_3$ of opening less than $\pi$. Fix $k \in \{1,2,3\}$ and
apply the Phragm\'{e}n-Lindel\"{o}f theorem~\cite[Theorem
6.1]{Lev96} to the function $G_j(\l)$ considered in the
sector $\Omega_k$. Using~\eqref{eq:Gj<=lam^m} we get
\begin{equation}
 \left|G_j(\l)\right| \le C_j |\l|^s, \quad \l \in \Omega_k,
\end{equation}
for some $C_j > 0$, and hence
\begin{equation}
 \left|G_j(\l)\right| \le C_j |\l|^s, \quad \l \in \bC.
\end{equation}
By the Liouville theorem~\cite[Theorem 1.1]{Lev96},
$G_j(\l)$ is a polynomial of degree not exceeding $s$.

\textbf{Step 5.} Following~\cite[p. 89-90]{LunMal14IEOT} we can prove that $G_j(\cdot) \equiv 0$, $j \in \oneton$. The proof there works for non-constant matrix $B(\cdot)$ without any changes. Now it follows from~\eqref{eq:int.f*Phi=G.A_Phi} that
\begin{equation} \label{eq:Phi.f=0}
 \int_0^1 \bigl\langle \Phi_j(x,\l) , f(x) \bigr\rangle \,dx \equiv 0,
 \qquad \l \in \bC, \quad j \in \oneton.
\end{equation}
Following~\cite[Theorem 1.2, step (vi)]{MalOri12} we can show that the vector function $f$ satisfying~\eqref{eq:Phi.f=0} is zero. Again, the proof there works for non-constant matrix $B(\cdot)$ without any changes. This means that the system of root vectors of the operator $L_U(Q)$ is complete, which finishes the proof.
\end{proof}
\begin{theorem} \label{th:compl}
Let invertible diagonal matrix function $B(\cdot) = B(\cdot)^*$ satisfy relaxed condition~\eqref{eq:betak.L1}, i.e.\ $B \in L^1([0,\ell]; \bR^{n \times n})$ and every its entry does not change sign on $[0,\ell]$. Let boundary conditions of the boundary value problem~\eqref{eq:L0.def.reg},~\eqref{eq:Uy=0} be regular.

\textbf{(i)} Then the system of root vectors of the operator $L_U(0)$ is complete in $\fH$.

\textbf{(ii)} Let in addition matrix function $B(\cdot)$ satisfy uniform separation conditions~\eqref{eq:betak.alpk.Linf}--\eqref{eq:betak-betaj<-eps} and let $Q \in L^1([0,\ell]; \bC^{n \times n})$. Then~\eqref{eq:Delta>e/lam} holds with $s=0$ and the system of root vectors of the operator $L_U(Q)$ is complete in $\fH$.
\end{theorem}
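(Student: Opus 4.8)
The plan is to deduce Theorem~\ref{th:compl} from Proposition~\ref{prop:compl}, whose hypotheses are a lower bound~\eqref{eq:Delta>e/lam} for $\Delta_Q(\cdot)$ on three rays through the origin. First I would treat part (i). Here $Q=0$, so $\Delta_0(\cdot)$ is given explicitly by~\eqref{eq:Delta0.sum}, and regularity of the boundary conditions places us in the setting of Lemma~\ref{lem:Delta0.prop}: $\Delta_0(\cdot)$ is a sine-type function with indicator types $-b_-$ and $b_+$, its zero set $\L_0$ is incompressible, and by part (iii) of that lemma we have the estimate $|\Delta_0(\l)| > C_\eps\bigl(e^{-\Im\l\cdot b_-}+e^{-\Im\l\cdot b_+}\bigr)$ for all $\l$ outside the union of $\eps$-discs about the zeros. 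Since $\L_0$ is incompressible (hence the discs $\bD_\eps(\l_m^0)$ have bounded overlap and their real parts are ``spread out''), one can choose three rays $\Gam_k=\{\arg\l=\arg z_k\}$ with the origin in the interior of $\triangle_{z_1z_2z_3}$ that avoid all but finitely many of these discs; more precisely, because the zeros lie in a horizontal strip $\Pi_h$, any ray that is not horizontal eventually leaves $\Pi_h$ and so meets only finitely many discs, and a small perturbation of three such rays keeps them disc-free for $|\l|$ large. On those rays~\eqref{eq:Delta>e/lam} then holds with $s=0$, and Proposition~\ref{prop:compl} applies directly to $L_U(0)$. The one technical point is that Proposition~\ref{prop:compl} as stated assumes the stronger separation conditions~\eqref{eq:betak.alpk.Linf}--\eqref{eq:betak-betaj<-eps}, whereas part (i) only assumes~\eqref{eq:betak.L1}; but the proof of Proposition~\ref{prop:compl} with $Q=0$ uses the Birkhoff-type asymptotics of Proposition~\ref{prop:BirkSys} only through the diagonal solutions $\Phi_k^0(x,\l)=e^{i\l\rho_k(x)}\col(\delta_{1k},\dots,\delta_{nk})$, which are available under~\eqref{eq:betak.L1} alone, so the argument goes through unchanged in that case.

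For part (ii) I would combine the key identity with the already-established properties of $\Delta_0$. By Proposition~\ref{prop:DeltaQ=Delta0+int} we have
\begin{equation*}
 \Delta_Q(\l) = \Delta_0(\l) + \int_{b_-}^{b_+} g(u) e^{i\l u}\,du,
 \qquad g \in L^1[b_-,b_+].
\end{equation*}
Fix $\eps>0$ and let $C_\eps$ be the constant from Lemma~\ref{lem:Delta0.prop}(iii), so that $|\Delta_0(\l)| > 2 C_\eps\bigl(e^{-\Im\l\cdot b_-}+e^{-\Im\l\cdot b_+}\bigr)$ off the $\eps$-discs (after shrinking $C_\eps$ by a factor $2$). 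Apply Lemma~\ref{lem:RiemLeb} with $\delta = C_\eps$ to the $L^1$ function $g$: there is $R_\eps>0$ with
\begin{equation*}
 \Bigl|\int_{b_-}^{b_+} g(u) e^{i\l u}\,du\Bigr|
 < C_\eps\bigl(e^{-\Im\l\cdot b_-}+e^{-\Im\l\cdot b_+}\bigr),
 \qquad |\l|>R_\eps.
\end{equation*}
Hence for $|\l|>R_\eps$ outside the discs $\bD_\eps(\l_m^0)$ we get $|\Delta_Q(\l)| \ge |\Delta_0(\l)| - C_\eps(\dots) > C_\eps\bigl(e^{-\Im\l\cdot b_-}+e^{-\Im\l\cdot b_+}\bigr)$. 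Now choose the three rays $\Gam_1,\Gam_2,\Gam_3$ exactly as in part (i) (through the origin, origin interior to $\triangle_{z_1z_2z_3}$, each ray non-horizontal so that it avoids all but finitely many of the discs $\bD_\eps(\l_m^0)$); enlarging $M := R_\eps$ if necessary to exclude those finitely many discs, estimate~\eqref{eq:Delta>e/lam} holds with $s=0$ on each $\Gam_k$ for $|\l|>M$. Since $B(\cdot)$ now satisfies~\eqref{eq:betak.alpk.Linf}--\eqref{eq:betak-betaj<-eps} and $Q\in L^1$, the full hypotheses of Proposition~\ref{prop:compl} are met, and it yields discreteness of $\sigma(L_U(Q))$ and completeness of the root vector system of $L_U(Q)$ in $\fH$.

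The main obstacle, and the only place genuine care is needed, is the geometric step of producing the three rays on which $\Delta_Q$ stays large: one must verify that the rays can be chosen simultaneously disc-free for large $|\l|$, with the origin strictly inside the triangle they span. This is where incompressibility of $\L_0$ (Lemma~\ref{lem:Delta0.prop}(ii)) and the inclusion $\L_0\subset\Pi_h$ are used — the zeros accumulate only along a horizontal strip and are ``uniformly discrete in the horizontal direction'', so a non-horizontal ray through the origin passes through at most finitely many $\eps$-discs, and three such rays in generic directions (e.g. $\arg z_k$ close to $\pm\pi/3$ and $\pi$, slightly rotated to dodge the finitely many bad discs) do the job. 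A secondary, essentially bookkeeping, obstacle is to note that in part (i) the completeness argument of Proposition~\ref{prop:compl} does not actually require the strong separation hypotheses when $Q=0$, since the comparison solutions are the explicit exponentials and Proposition~\ref{prop:BirkSys} is invoked only in the form that survives under~\eqref{eq:betak.L1}; alternatively, one may simply remark that the gauge transform of Lemma~\ref{lem:gauge} is the identity when $Q=0$ and re-run the relevant steps of the proof of Proposition~\ref{prop:compl} verbatim.
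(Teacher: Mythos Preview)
Your proposal is essentially correct and follows the same route as the paper: verify the lower bound~\eqref{eq:Delta>e/lam} with $s=0$ using the sine-type properties of the characteristic determinant (zeros in a strip, estimate from below off $\eps$-discs), then pick three non-horizontal rays that eventually leave $\Pi_h$ and invoke Proposition~\ref{prop:compl}. Your remark in part (i) about why the relaxed condition~\eqref{eq:betak.L1} suffices when $Q=0$ --- namely that the Birkhoff-type solutions $Y^\pm$ reduce to the explicit $\Phi^0$ --- is exactly the paper's justification.

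One small technical point in part (ii): Proposition~\ref{prop:DeltaQ=Delta0+int} as stated requires the zero block-diagonality condition~\eqref{eq:Qjk=0.bj=bk} on $Q$, which is not assumed in the theorem. You should first apply the gauge transform of Lemma~\ref{lem:gauge}; this preserves $\Delta_Q$ and regularity (parts (iii)--(iv)), and replaces $\Delta_0$ by the modified $\wt{\Delta}_0$, which is still sine-type with the same indicator by Lemma~\ref{lem:Delta0.prop}. Your argument then goes through verbatim with $\wt{\Delta}_0$ in place of $\Delta_0$. The paper handles this more compactly by citing Proposition~\ref{prop:sine.type}(iii) directly for the estimate on $\Delta_Q$ (its proof is precisely your combination of~\eqref{eq:DeltaQ=Delta0+int}, Lemma~\ref{lem:RiemLeb}, and Lemma~\ref{lem:Delta0.prop}), but the substance is identical.
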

\begin{proof}
\textbf{(i)} Since boundary conditions are regular and $Q=0$, then estimate~\eqref{eq:Delta>e/lam} with $s=0$ follows from Lemma~\ref{lem:Delta0.prop}. Namely, we need to combine estimate~\eqref{eq:Delta0>C.exp} and the fact that zeros of $\Delta_0(\cdot)$ lie in the strip $\Pi_h$. Hence Proposition~\ref{prop:compl} finishes the proof.

Note that if $Q=0$ then special solutions $Y^{\pm}(x,\l)$ of system $\cL(0) Y = \l Y$ satisfying asymptotic formula~\eqref{eq:yjk=(1+o).exp} always exist whenever $B \in L^1([0,\ell])$. Namely, $Y^{\pm}(x,\l) = \Phi^0(x,\l)$. Hence all steps of the proof of Proposition~\ref{prop:compl} remain valid under relaxed condition~\eqref{eq:betak.L1} on $B(\cdot)$.

\textbf{(ii)} Applying gauge transform from Lemma~\ref{lem:gauge} we transform operator $L_U(Q)$ to the operator $L_{\wt{U}}(\wt{Q})$ with the same matrix function $B(\cdot)$, and $\wt{U}$ and $\wt{Q}$ given by~\eqref{eq:wtUy=0} with $\wt{Q}$ satisfying ``zero block diagonality'' condition~\eqref{eq:Qjk=0.bj=bk}. Moreover, based on Lemma~\ref{lem:gauge}(iii-iv) this transform preserves the characteristic determinant and regularity of boundary conditions.
Since new boundary conditions are regular and new characteristic determinant is the same, then estimate~\eqref{eq:Delta>e/lam} with $s=0$ follows from Proposition~\ref{prop:sine.type} (it is applicable, since $Q$ now satisfies ``zero block diagonality'' condition~\eqref{eq:Qjk=0.bj=bk}). Namely, we need to combine estimate~\eqref{eq:Delta>=} and the fact that zeros of $\Delta_Q(\cdot)$ lie in the strip $\Pi_h$. Hence Proposition~\ref{prop:compl} finishes the proof.
\end{proof}
\section{Adjoint operator} \label{sec:adjoint}
\subsection{General properties of the adjoint operator}
\label{subsec:adjoint.general}
In our approach to Riesz basis property one needs to work with the biorthogonal system to the system of root vectors of the operator $L_U(Q)$. It is well-known that the properly chosen system of root vectors of the operator $(L_U(Q))^*$ plays this role.
As a first step, we show that, as in the case of $B(x) = B = \const$ (see~\cite{MalOri12}), $(L_U(Q))^* = L_{U_*}(Q_*)$, i.e.\ the adjoint operator is also associated to the problem~\eqref{eq:LQ.def.reg}--\eqref{eq:Uy=0}, but with another pair of matrices $C,D$ and another potential $Q_*$.
\begin{lemma} \label{lem:adjoint}
Let diagonal matrix function $B(\cdot)$ satisfies relaxed conditions~\eqref{eq:betak.L1}, let $Q \in L^1([0,\ell]; \bC^{n \times n})$, and let $L_U(Q)$ be the operator associated in $\fH$ with BVP~\eqref{eq:LQ.def.reg}--\eqref{eq:Uy=0}.

\textbf{(i)} Let $S = \sign(B(\cdot))$ be the signature matrix defined in~\eqref{eq:S.def}. Set $Q_* := -S Q^* S$. Then there exists a pair of matrices $\{C_*, D_*\} \subset \bC^{n \times n}$ with $\rank(C_* \ D_*) = n$ and such that the adjoint operator $(L_U(Q))^*$
is associated in $\fH$ with the boundary value problem
\begin{align} \label{eq:L0*.def}
 & \cL(Q_*) y = -i B(x)^{-1} (y' + Q_*(x) y),
 \qquad y = \col(y_1, \ldots, y_n), \quad x \in [0,\ell], \\
\label{eq:U*y=0}
 & U_*(y) := C_* y(0) + D_* y(\ell) = 0, \qquad\text{and}\qquad
 \rank(C_* \ D_*) = n.
\end{align}
In other words, $L_U^*(Q) := (L_U(Q))^* = L_{U_*}(Q_*)$, where operator $L_{U_*}(Q_*)$ is generated by the differential expression $\cL(Q_*)$ on the domain
\begin{equation} \label{eq:dom_op-r_L0*_U}
\dom((L_U(Q))^*) = \{y \in \AC([0,\ell]; \bC^n) : \ \ \cL(Q_*) y \in \fH, \ \ C_* y(0) + D_* y(\ell) = 0 \}.
\end{equation}

\textbf{(ii)} The boundary conditions $U(y) = C y(0) + D y(\ell) = 0 $ and $U_*(y) = C_* y(0) + D_* y(\ell) = 0$ are regular only simultaneously.
\end{lemma}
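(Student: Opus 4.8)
\textbf{Proof plan for Lemma~\ref{lem:adjoint}.}
The plan is to compute the adjoint of $L_U(Q)$ directly from the Lagrange (Green) identity, exactly as in the case of constant $B$ treated in~\cite{MalOri12}, and then to handle the regularity claim by an elementary linear-algebra computation. First I would write down the formal differential expression in the Hilbert space $\fH = \oplus_{k=1}^n L^2_{|\beta_k|}[0,\ell]$. For $y \in \dom(L_U(Q))$ and a smooth $z$, integrating by parts in $(L_U(Q)y, z)_{\fH} = \int_0^\ell \angnorm{|B| (-i B^{-1})(y' + Qy), z}\,dx$ and using $|B| B^{-1} = S$ (the signature matrix), one gets $(L_U(Q)y, z)_{\fH} = (y, \ell_*(z))_{\fH} + [\text{boundary term}]$, where the boundary term is a sesquilinear form in $y(0), y(\ell), z(0), z(\ell)$, and $\ell_*(z) = -iB^{-1}(z' + Q_* z)$ with $Q_* = -S Q^* S$; the sign and the conjugation-by-$S$ come precisely from moving $S$ and the transpose-conjugate across the integration by parts. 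This identifies the formal adjoint expression as $\cL(Q_*)$.

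Next I would determine the adjoint boundary conditions. The boundary form evaluated on $\dom(L_U(Q))$, i.e.\ under the constraint $Cy(0) + Dy(\ell) = 0$ with $\rank(C\ D) = n$, vanishes identically in $y$ exactly when $(z(0), z(\ell))$ lies in a certain $n$-dimensional subspace of $\bC^{2n}$, which is the orthogonal complement (with respect to the indefinite boundary pairing) of $\ker(C\ D)^\perp$. Standard linear algebra for such boundary forms (see~\cite[Nai69]{Nai69} for the classical case and~\cite{MalOri12} for the $B=\const$ version) shows this subspace is again given by a full-rank pair $\{C_*, D_*\}$, so that $\dom((L_U(Q))^*)$ is precisely~\eqref{eq:dom_op-r_L0*_U}; concretely one can take $(C_*\ D_*)$ to be any full-rank matrix whose rows span the annihilator, expressed through $C, D$ and the diagonal blocks $s_k$ of $S$. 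This establishes part (i). The only subtlety over the constant-$B$ case is checking that the weighted inner product of $\fH$ does not spoil the integration by parts: it does not, because $|B(x)| B(x)^{-1} = S$ is a constant matrix (conditions~\eqref{eq:betak.L1} guarantee $s_k \equiv \const$), so the weight cancels cleanly and the boundary term has the same algebraic shape as in the unweighted case.

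For part (ii), I would use the characterization of regularity from Definition~\ref{def:regular}: $\{C,D\}$ is regular iff $\det(CP_- + DP_+) \ne 0$ and $\det(CP_+ + DP_-) \ne 0$, where $P_\pm$ are the spectral projections of $S$. The key point is that the passage $\{C,D\} \mapsto \{C_*, D_*\}$ acts on these determinants in a controlled way: writing the annihilator condition in block form with respect to $\bC^n = \range P_- \oplus \range P_+$, one finds that $J_{P_+}(C_*, D_*)$ is, up to a nonzero scalar depending only on $S$, equal to $\ol{J_{P_-}(C, D)}$ (and symmetrically $J_{P_-}(C_*,D_*)$ corresponds to $\ol{J_{P_+}(C,D)}$), because conjugation by $S$ swaps the roles of $P_+$ and $P_-$. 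Hence both regularity determinants for $\{C_*,D_*\}$ are nonzero iff both for $\{C,D\}$ are, which is the claim. I expect the main obstacle to be bookkeeping: correctly tracking the placement of $S$, the transpose-conjugate, and the minus sign through the boundary form so that the swap $P_+ \leftrightarrow P_-$ emerges cleanly; once the boundary form is written in the right block basis this is routine, and one may invoke the corresponding computation in~\cite{MalOri12} for the constant-$B$ case, noting that the only modification is the (trivial) presence of the constant matrix $S$ from the weight.
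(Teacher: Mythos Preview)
Your proposal is correct and follows essentially the same approach as the paper. For part~(i), both you and the paper perform the integration by parts in the weighted space $\fH$, exploit the identity $|B(x)|B(x)^{-1} = S = \const$ to obtain the Lagrange identity $(\cL(Q)f,g)_{\fH} = -i\bigl(\angnorm{Sf(\ell),g(\ell)} - \angnorm{Sf(0),g(0)}\bigr) + (f,\cL(Q_*)g)_{\fH}$, and then characterize the adjoint boundary conditions as the $w$-orthogonal complement of $\Ker(C\ D)$ with respect to the indefinite form $w(u,v) = \angnorm{\diag(S,-S)u,v}$ on $\bC^{2n}$. For part~(ii), the paper simply cites~\cite[Corollary~3.3]{MalOri12}, whereas you sketch the underlying computation; your swap $J_{P_\pm}(C_*,D_*) \leftrightarrow \ol{J_{P_\mp}(C,D)}$ is exactly what that reference establishes (and is verified directly in the paper's subsequent Lemma~\ref{lem:C*D*.canon} via the canonical forms~\eqref{eq:CD.canon} and~\eqref{eq:C*D*.canon}).
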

\begin{proof}
It follows from definition~\eqref{eq:S.def} of the signature matrix $S$ that
\begin{equation} \label{eq:S=S*}
S = S^* = S^{-1}, \quad S^2 = I_n, \qquad |B(x)| = S B(x) = B(x) S.
\end{equation}
First, we note that for any $f,g \in L^{\infty}([0,\ell]; \bC^n)$ we have
\begin{multline} \label{eq:Qf.g}
 \int_0^{\ell} \angnorm{|B(x)| \cdot (-i B(x)^{-1}) Q(x) f(x), g(x)} dx
 = \int_0^{\ell} \angnorm{-i S Q(x) f(x), g(x)} dx \\
 = \int_0^{\ell} \angnorm{f(x), i Q^*(x) S g(x)} dx
 = \int_0^{\ell} \angnorm{|B(x)| f(x), -i B(x)^{-1} Q_*(x) g(x)} dx
\end{multline}
Further, integrating by parts and taking~\eqref{eq:S=S*} into account we obtain for $f, g \in \AC([0,\ell]; \bC^n)$:
\begin{multline} \label{eq:f'.g}
 \int_0^{\ell} \angnorm{|B(x)| \cdot (-i B(x)^{-1}) f'(x),
 g(x)} dx = \int_0^{\ell} \angnorm{-i S f'(x), g(x)} dx \\
 = -i \bigl(\angnorm{S f(\ell), g(\ell)} - \angnorm{S f(0), g(0)}\bigr)
 + \int_0^{\ell} \angnorm{|B(x)| f(x), -i B(x)^{-1} g'(x)} dx.
\end{multline}
Adding~\eqref{eq:Qf.g} and~\eqref{eq:f'.g} we arrive at
\begin{multline}
 (\cL(Q) f, g)_{\fH} = -i \bigl(\angnorm{S f(\ell), g(\ell)}
 - \angnorm{S f(0), g(0)}\bigr) + (f, \cL(Q_*) g)_{\fH},
 \qquad f, g \in \AC([0,\ell]; \bC^n).
\end{multline}
Since $\AC([0,\ell]; \bC^n)$ is dense in $\dom(L_U(Q))$, this identity implies $L_U^*(Q) g = L_{U_*}(Q_*) g$ and $g \in \dom(L_U^*(Q))$ if and only if
\begin{equation} \label{eq:domL*}
 \angnorm{S f(\ell), g(\ell)} = \angnorm{S f(0), g(0)},
 \qquad f \in \dom(L_U(Q)).
\end{equation}
This leads to existence of the desired matrices $C_*$ and $D_*$. Namely, put $\cS = \diag(S, -S)$ and equip the space $\bC^n \oplus \bC^n$ with the bilinear form
$$
w(u, v) = \angnorm{\cS u, v} = \angnorm{S u_1, v_1} - \angnorm{S u_2, v_2}, \qquad u = \col(x,t), \quad v = \col(x',t').
$$
Then condition~\eqref{eq:domL*} means that the subspace $\Ker(C_* \ D_*)$ is the right $w$-orthogonal complement to $\Ker(C \ D)$ in $\bC^n \oplus \bC^n$.

(ii) This statement was proved in~\cite[Corollary 3.3]{MalOri12} in the case of constant matrix $B(x) = B = \const$. The proof remains the same in the case of non-constant matrix $B(x)$.
\end{proof}
Next, assuming boundary conditions~\eqref{eq:Uy=0} to be regular, we find an explicit form of the matrices $C_*$ and $D_*$. The proof substantially relies on the canonical form~\eqref{eq:CD.canon} of regular boundary conditions.
\begin{lemma} \label{lem:C*D*.canon}
Let the pair of matrices $\{C, D\}$ from regular boundary conditions $Uy=0$ be of the canonical form~\eqref{eq:CD.canon}.
Then the matrices $C_*$ and $D_*$ from boundary conditions~\eqref{eq:U*y=0} of the adjoint operator $L_{0,U}^*$
admit the following triangular block-matrix representation:
\begin{equation} \label{eq:C*D*.canon}
C_* = \begin{pmatrix} D_{11}^* & \bO \\
 C_{12}^* & I_{n_+} \end{pmatrix}
\qquad
D_*=\begin{pmatrix} I_{n_-} & D_{21}^* \\
\bO & C_{22}^* \end{pmatrix}.
\end{equation}
\end{lemma}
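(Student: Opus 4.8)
The plan is to determine $C_*$ and $D_*$ directly from the $w$-orthogonality characterization established in the proof of Lemma~\ref{lem:adjoint}: the subspace $\Ker(C_* \ D_*) \subset \bC^n \oplus \bC^n$ is precisely the right $w$-orthogonal complement of $\Ker(C \ D)$ with respect to the bilinear form $w(u,v) = \angnorm{\cS u, v}$, $\cS = \diag(S,-S)$, where here $S = \diag(-I_{n_-}, I_{n_+})$ because of the canonical ordering~\eqref{eq:bk<0>0}. First I would spell out $\Ker(C \ D)$ explicitly from the canonical form~\eqref{eq:CD.canon}: the condition $C y(0) + D y(\ell) = 0$ with the given block-triangular $C,D$ lets one solve for two of the four blocks of $(y(0), y(\ell)) \in \bC^{n_-} \oplus \bC^{n_+} \oplus \bC^{n_-} \oplus \bC^{n_+}$ in terms of the other two, giving an explicit $2n \times n$ matrix whose columns span $\Ker(C \ D)$.

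Next I would write down the candidate matrices $C_*, D_*$ from~\eqref{eq:C*D*.canon} and verify two things: that $\rank(C_* \ D_*) = n$ (immediate, since each of $C_*$, $D_*$ as displayed contains an identity block in complementary positions, so the combined $n \times 2n$ matrix has full row rank), and that $\Ker(C_* \ D_*)$ is exactly the right $w$-orthogonal complement of $\Ker(C \ D)$. The latter amounts to checking the identity $w(u, v) = 0$ for every $u \in \Ker(C\ D)$ and every $v \in \Ker(C_* \ D_*)$, together with a dimension count: both complements have dimension $n$ (since $w$ is nondegenerate on $\bC^n \oplus \bC^n$ and $\Ker(C\ D)$ is $n$-dimensional), so a verified orthogonality relation forces equality. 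Concretely, $w(u,v) = \angnorm{S u_1, v_1} - \angnorm{S u_2, v_2} = -\angnorm{u_1', v_1'} + \angnorm{u_1'', v_1''} + \angnorm{u_2', v_2'} - \angnorm{u_2'', v_2''}$ in block coordinates, where primes/double-primes denote the $\bC^{n_-}/\bC^{n_+}$ components; substituting the parametrizations of the two kernels reduces this to a short block-matrix computation in which the blocks $C_{12}, C_{22}, D_{11}, D_{21}$ cancel against their adjoints $C_{12}^*, C_{22}^*, D_{11}^*, D_{21}^*$.

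For the boundary cases $n_- = 0$ (so $S = I_n$, $C = I_n$, $D$ arbitrary invertible) and $n_- = n$ (so $S = -I_n$, $D = I_n$, $C$ arbitrary invertible), the formula~\eqref{eq:C*D*.canon} degenerates and I would check these separately — they reduce to the elementary fact that the adjoint of the BVP $y(0) + D y(\ell) = 0$ is $D^* y(0) + y(\ell) = 0$ up to a sign bookkeeping coming from $\cS$, which is consistent with~\eqref{eq:C*D*.canon} read with empty blocks. I expect the main (though still modest) obstacle to be purely bookkeeping: keeping the four block coordinates, the signs introduced by $S = \diag(-I_{n_-}, I_{n_+})$ and by $\cS = \diag(S,-S)$, and the transposition/conjugation all straight, so that the cancellations in the orthogonality check come out cleanly; there is no analytic difficulty here since $B(\cdot)$ plays no role beyond fixing the signature matrix $S$, exactly as in the constant-$B$ case treated in~\cite{MalOri12}.
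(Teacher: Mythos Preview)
Your proposal is correct and uses essentially the same characterization as the paper (the $w$-orthogonality from Lemma~\ref{lem:adjoint}, equivalently condition~\eqref{eq:domL*}), reducing everything to the same block-matrix bookkeeping. The only organizational difference is that the paper derives $C_*,D_*$ rather than verifying them: it introduces the mixed vectors $f_{0,\ell}=\binom{f_-(0)}{f_+(\ell)}$, $f_{\ell,0}=\binom{f_-(\ell)}{f_+(0)}$ and the matrix $M=\begin{pmatrix}D_{11}&C_{12}\\D_{21}&C_{22}\end{pmatrix}$, so that the original condition reads $f_{0,\ell}+Mf_{\ell,0}=0$ and the adjoint condition falls out as $g_{\ell,0}+M^*g_{0,\ell}=0$, from which~\eqref{eq:C*D*.canon} is read off directly; your verification-plus-dimension-count reaches the same conclusion by the same computation in a different order.
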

\begin{proof}
Recall that $S = \diag(-I_{n_-}, I_{n_+})$ is a canonical form of the signature matrix of $B(x)$, where $n_- \in \{0, 1, \ldots, n\}$ and $n_+ = n - n_-$.

\textbf{(i)} First assume that $n_- = 0$, and so $n_+ = n$. Then $D = S = I_n$ and $f(\ell) = - C f(0)$ for $f \in \dom(L_{0,U})$. Hence~\eqref{eq:domL*} turns into
$$
\angnorm{-C f(0), g(\ell)} = \angnorm{f(0), g(0)}, \qquad f \in \dom(L_{0,U}), \quad g \in \dom(L_{0,U}^*),
$$
or
$$
\angnorm{u, g(0) + C^* g(\ell)} = 0, \qquad u \in \bC^n, \quad g \in \dom(L_{0,U}^*).
$$
Hence $g \in \dom{L_{0,U}^*}$ if and only if $g(0) + C^* g(\ell) = 0$,
which leads to the desired formulas for $C_*$ and $D_*$. The case $n_+ = 0$ is treated similarly.

\textbf{(ii)} Now assume that $n_-, n_+ > 0$ in the representation~\eqref{eq:CD.canon}.
For any vector function $f=f(x)$ let $f =: \col(f_-, f_+)$, where $f_{\pm}(x) \in \bC^{n_{\pm}}$, $x \in [0,\ell]$, be its decomposition with respect to the decomposition $\bC^n = \bC^{n_-} \oplus \bC^{n_+}$. With account of this notation we have for $f \in \dom(L_{0,U})$:
\begin{align}
\nonumber
 0 = C f(0) + D f(\ell)
 &= \begin{pmatrix} I_{n_-} & C_{12} \\ \bO & C_{22} \end{pmatrix}
 \binom{f_-(0)}{f_+(0)}
 + \begin{pmatrix} D_{11} & \bO \\ D_{21} & I_{n_+} \end{pmatrix}
 \binom{f_-(\ell)}{f_+(\ell)} \\
\nonumber
 &= \binom{f_-(0) + C_{12} f_+(0) + D_{11} f_-(\ell)}
 {C_{22} f_+(0) + D_{21} f_-(\ell) + f_+(\ell)} \\
\label{eq:f0l=Mfl0}
 &= \binom{f_-(0)}{f_+(\ell)} +
 \begin{pmatrix} D_{11} & C_{12} \\ D_{21} & C_{22} \end{pmatrix}
 \binom{f_-(\ell)}{f_+(0)} = f_{0,\ell} + M f_{\ell,0},
\end{align}
where
\begin{equation} \label{eq:fl0.def}
f_{0,\ell} := \binom{f_-(0)}{f_+(\ell)}, \quad
f_{\ell,0} := \binom{f_-(\ell)}{f_+(0)}, \quad f \in C([0,\ell]; \bC^n),
\qquad M :=
\begin{pmatrix} D_{11} & C_{12} \\ D_{21} & C_{22} \end{pmatrix}.
\end{equation}
Recall that $S = \diag(-I_{n_-}, I_{n_+})$, $f = \col(f_-, f_+)$ and $g = \col(g_-, g_+)$ are decompositions of the matrix $S$ and the vector functions $f$ and $g$ with respect to the decomposition $\bC^n = \bC^{n_-} \oplus \bC^{n_+}$. Taking into account notation~\eqref{eq:fl0.def} and explicit formula~\eqref{eq:f0l=Mfl0} for the domain $\dom(L_{0,U})$, we have from~\eqref{eq:domL*}:
\begin{align}
\nonumber
 0 & = \angnorm{S f(0), g(0)} - \angnorm{S f(\ell), g(\ell)} \\
\nonumber
 & = \angnorm{\binom{-f_-(0)}{f_+(0)}, \binom{g_-(0)}{g_+(0)}}
 - \angnorm{\binom{-f_-(\ell)}{f_+(\ell)}, \binom{g_-(\ell)}{g_+(\ell)}} \\
\nonumber
 & = \angnorm{\binom{f_-(\ell)}{f_+(0)}, \binom{g_-(\ell)}{g_+(0)}}
 - \angnorm{\binom{f_-(0)}{f_+(\ell)}, \binom{g_-(0)}{g_+(\ell)}} \\
\nonumber
 & = \angnorm{f_{\ell,0}, g_{\ell,0}} - \angnorm{f_{0,\ell}, g_{0,\ell}}
 = \angnorm{f_{\ell,0}, g_{\ell,0}} + \angnorm{M f_{\ell,0}, g_{0,\ell}} \\
 & = \angnorm{f_{\ell,0}, g_{\ell,0} + M^* g_{0,\ell}}, \qquad
 f \in \dom(L_{0,U}), \quad g \in \dom(L_{0,U}^*).
\end{align}
This implies that $g \in \dom(L_{0,U}^*)$ if and only if
\begin{align}
\nonumber
 0 = g_{\ell,0} + M^* g_{0,\ell}
 & = \binom{g_-(\ell)}{g_+(0)}
 + \begin{pmatrix} D_{11}^* & D_{21}^* \\ C_{12}^* & C_{22}^* \end{pmatrix}
 \binom{g_-(0)}{g_+(\ell)} \\
\nonumber
 & = \binom{g_-(\ell) + D_{11}^* g_-(0) + D_{21}^* g_+(\ell)}
 {g_+(0) + C_{12}^* g_-(0) + C_{22}^* g_+(\ell)} \\
 & =
 \begin{pmatrix} D_{11}^* & \bO \\ C_{12}^* & I_{n_+} \end{pmatrix}
 \binom{g_-(0)}{g_+(0)} +
 \begin{pmatrix} I_{n_-} & D_{21}^* \\ \bO & C_{22}^* \end{pmatrix}
 \binom{g_-(\ell)}{g_+(\ell)} \\
\label{eq:gl0.g0l}
 & = C_* g(0) + D_* g(\ell),
\end{align}
where $C_*$ and $D_*$ are given by~\eqref{eq:C*D*.canon}. This finishes the proof.
\end{proof}

\begin{remark}
\textbf{(i)} If either $n_- = 0$ or $n_- = n$, the canonical form~\eqref{eq:C*D*.canon} can be simplified. Indeed, if $n_- = n$, in which case $n_+ = 0$ and $D = S = I_n$, then $C_*=I_n$ and $D_*=C^*$. And if $n_- = 0$, in which case $n_+ = n$ and $C = -S = I_n$, then $C_*=D^*$ and $D_*=I_n$.

\textbf{(ii)} Let $D_{11}$ be invertible. It is clear that $C_*$ is also invertible. Set $X := C_*^{-1} D_*$. It is interesting to mention that the matrices $C_*^{-1}$ and $D_*$ serve the triangular factorization of the matrix $X$.
\end{remark}
Next, we indicate an explicit form of eigenvectors of the operators $L_U^*(Q)$ and $L_U^*(0)$ corresponding to their simple eigenvalues. To this end let
$$
 \Phi_*(x, \mu) := \Phi_{Q_*}(x, \mu) =: \begin{pmatrix}
 \Phi_{*1}(x, \mu) & \ldots & \Phi_{*n}(x, \mu) \end{pmatrix}
 =: \(\phi_{*jk}(x,\mu)\)_{j,k=1}^n
$$
be a fundamental matrix solution of the system $\cL(Q_*) Y = \mu Y$ that corresponds to the adjoint operator $L_U^*(Q)$ according to Lemma~\ref{lem:adjoint}. Recall, that $\Phi_0(x, \mu)$ is a fundamental matrix solution of $\cL_0^* Y = \cL_0 Y = \mu Y$. Set
\begin{align}
\label{eq:Delta*}
 \Delta_{*}(\mu) & := \det(A_{*}(\mu)), \qquad
 A_{*}(\mu) := C_* + D_* \Phi_*(\ell, \mu) =: (a_{*jk}(\mu))_{j,k=1}^n, \\
\label{eq:Delta0*}
 \Delta_{0*}(\mu) & := \det(A_{0*}(\mu)), \qquad
 A_{0*}(\mu) := C_* + D_* \Phi_0(\ell, \mu) =: (a_{*jk}^0(\mu))_{j,k=1}^n.
\end{align}
Recall that $A_*^a(\mu)$ ($A_{0*}^a(\mu)$) denotes the adjugate matrix of $A_{*}(\mu)$ (resp. $A_{0*}(\mu)$). Set
$$
A^a_{*}(\mu) =: (A_{*jk}(\mu))_{j,k=1}^n, \qquad
A^a_{0*}(\mu) =: (A_{*jk}^0(\mu))_{j,k=1}^n.
$$
Lemma~\ref{lem:eigen} implies that for any simple eigenvalue $\mu$ of
$L_{U}^*(Q) = L_{U_*}(Q_*)$ (simple zero of $\Delta_*(\cdot)$) there exists $q \in \oneton$ such that the vector function
\begin{equation} \label{eq:yj.eigen*}
 y_{*}(x,\mu) := Y_{*q}(x,\mu) := \sum_{k=1}^n A_{*kq}(\mu) \Phi_{*k}(x, \mu)
\end{equation}
(see~\eqref{eq:Ypxl.def}) is a non-trivial eigenvector of $L_{U}^*(Q) = L_{U_*}(Q_*)$ corresponding to its simple eigenvalue $\mu$. Similarly,
\begin{equation} \label{eq:yj.eigen0*}
 y_{0*}(x,\mu_0) := Y_{*q}^0(x,\mu_0) := \col\(A_{*1q}^0(\mu_0)
 e^{i \mu_0 \rho_1(x)}, \ldots, A_{*nq}^0(\mu_0) e^{i \mu_0 \rho_n(x)}\)
\end{equation}
is a non-trivial eigenvector of $L_{0,U}^* (= L_{0,U_*})$ corresponding to its simple eigenvalue $\mu_0$.

Let us formulate Lemma~\ref{lem:eigen.canon} for adjoint operator $L_U^*(Q)$.
\begin{lemma} \label{lem:eigen.canon*}
Let $\l$ be an algebraically simple eigenvalue of the operator $L_U(Q)$. Then $\ol{\l}$ is an algebraically simple eigenvalue of the operator $L_U^*(Q)$. Let $g$ be any eigenvector of $L_U^*(Q)$ in $\fH$ corresponding to $\ol{\l}$. Then, there exists $q = q_{\ol{\l}} \in \oneton$ and $\gam_{*q} \in \bC$, such that
\begin{equation}
g(\cdot) = \gam_{*q} Y_{*q}(\cdot, \ol{\l}) = \gam_{*q} \sum_{k=1}^n A_{*kq}(\ol{\l}) \Phi_k(\cdot, \ol{\l}), \qquad |\gam_{*q}| = \|g\|_{\fH} / \|Y_{*q}(\cdot, \ol{\l})\|_{\fH}.
\end{equation}
Morever, this is valid for any $q \in \oneton$, for which $Y_{*q}(\cdot, \ol{\l}) \not\equiv 0$.
\end{lemma}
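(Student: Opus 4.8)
The statement is essentially the ``adjoint version'' of Lemma~\ref{lem:eigen.canon}, so the plan is to mirror that argument, transporting everything through the identification $L_U^*(Q) = L_{U_*}(Q_*)$ established in Lemma~\ref{lem:adjoint}. First I would observe that the spectrum of $L_U^*(Q)$ is the complex conjugate of the spectrum of $L_U(Q)$, counting algebraic multiplicities: this is the standard fact that $\mu \in \sigma(T)$ with algebraic multiplicity $m$ if and only if $\ol{\mu} \in \sigma(T^*)$ with the same algebraic multiplicity, valid for closed operators with compact resolvent (and here it also follows directly from Lemma~\ref{lem:eigen} applied to $L_{U_*}(Q_*)$ together with the relation between the two characteristic determinants). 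Hence if $\l$ is an algebraically simple eigenvalue of $L_U(Q)$, then $\ol{\l}$ is an algebraically simple eigenvalue of $L_U^*(Q) = L_{U_*}(Q_*)$, and in particular $\dim \cR_{\ol{\l}}(L_U^*(Q)) = 1$.

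Next I would apply Lemma~\ref{lem:eigen} directly to the operator $L_{U_*}(Q_*)$ at its simple eigenvalue $\mu = \ol{\l}$. That lemma guarantees $\rank(A_{*}^a(\ol{\l})) = 1$ and produces, for a suitable $q = q_{\ol{\l}} \in \oneton$, the non-trivial eigenvector $Y_{*q}(\cdot, \ol{\l}) = \sum_{k=1}^n A_{*kq}(\ol{\l}) \Phi_{*k}(\cdot, \ol{\l})$ as in~\eqref{eq:yj.eigen*}; moreover, since all columns of the rank-one matrix $A_{*}^a(\ol{\l})$ are proportional, the same formula yields a (non-zero) eigenvector for every index $q$ with $Y_{*q}(\cdot,\ol{\l}) \not\equiv 0$, and all of these are mutually proportional. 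Because the eigenspace is one-dimensional, any eigenvector $g$ of $L_U^*(Q)$ corresponding to $\ol{\l}$ must be a scalar multiple of $Y_{*q}(\cdot, \ol{\l})$; writing $g = \gam_{*q} Y_{*q}(\cdot, \ol{\l})$ and taking $\fH$-norms gives $|\gam_{*q}| = \|g\|_{\fH} / \|Y_{*q}(\cdot, \ol{\l})\|_{\fH}$, which is exactly the asserted identity. This is precisely the content of Lemma~\ref{lem:eigen.canon} transcribed for the adjoint, so no new computation is needed beyond invoking the earlier results.

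The only genuinely nontrivial point — and the one I would treat with a little care — is matching notations: in~\eqref{eq:yj.eigen*}--\eqref{eq:yj.eigen0*} the eigenvectors of the adjoint are written in terms of $\Phi_{*k}(\cdot,\mu) = \Phi_{Q_*,k}(\cdot,\mu)$, the fundamental matrix of $\cL(Q_*)Y = \mu Y$, yet the statement of Lemma~\ref{lem:eigen.canon*} writes $g(\cdot) = \gam_{*q}\sum_{k=1}^n A_{*kq}(\ol{\l}) \Phi_k(\cdot,\ol{\l})$ with $\Phi_k$ — so one must confirm that, in the statement's shorthand, $\Phi_k(\cdot,\ol{\l})$ there denotes $\Phi_{*k}(\cdot,\ol{\l})$, i.e.\ the columns of the fundamental matrix attached to the adjoint problem and not to the original $Q$. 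Once this identification is made explicit, the proof is a one-line reduction. I expect the main (and essentially only) obstacle to be this bookkeeping about which fundamental system is in play, rather than any analytic difficulty.

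\begin{proof}
Since $L_U(Q)$ has compact resolvent, $\ol{\l} \in \sigma(L_U^*(Q))$ with the same algebraic multiplicity as $\l \in \sigma(L_U(Q))$; in particular $\ol{\l}$ is an algebraically (hence geometrically) simple eigenvalue of $L_U^*(Q) = L_{U_*}(Q_*)$, so $\dim\cR_{\ol{\l}}(L_U^*(Q)) = 1$. Applying Lemma~\ref{lem:eigen} to the operator $L_{U_*}(Q_*)$ at its simple eigenvalue $\ol{\l}$, we obtain $\rank(A_*^a(\ol{\l})) = 1$, and there is $q = q_{\ol{\l}} \in \oneton$ for which $Y_{*q}(\cdot,\ol{\l}) = \sum_{k=1}^n A_{*kq}(\ol{\l}) \Phi_{*k}(\cdot,\ol{\l}) \not\equiv 0$ is an eigenvector of $L_U^*(Q)$ for $\ol{\l}$; moreover, since the columns of the rank-one matrix $A_*^a(\ol{\l})$ are proportional, the same formula gives a non-trivial eigenvector for every $q$ with $Y_{*q}(\cdot,\ol{\l}) \not\equiv 0$, and all such vectors are proportional. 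As $\dim\cR_{\ol{\l}}(L_U^*(Q)) = 1$, any eigenvector $g \in \fH$ of $L_U^*(Q)$ for $\ol{\l}$ satisfies $g(\cdot) = \gam_{*q} Y_{*q}(\cdot,\ol{\l})$ for some $\gam_{*q} \in \bC$, and taking $\fH$-norms yields $|\gam_{*q}| = \|g\|_{\fH} / \|Y_{*q}(\cdot,\ol{\l})\|_{\fH}$.
\end{proof}
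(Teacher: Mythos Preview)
Your proposal is correct and matches the paper's approach exactly: the paper does not give a separate proof of this lemma at all, but merely introduces it with the sentence ``Let us formulate Lemma~\ref{lem:eigen.canon} for adjoint operator $L_U^*(Q)$,'' treating it as an immediate transcription via the identification $L_U^*(Q) = L_{U_*}(Q_*)$ from Lemma~\ref{lem:adjoint}. Your observation about the notational slip is also on point: comparing the displayed formula in the statement with the definition~\eqref{eq:yj.eigen*}, the $\Phi_k$ in the lemma's statement is indeed shorthand for $\Phi_{*k}$, the columns of the fundamental matrix $\Phi_{Q_*}$ of the adjoint equation.
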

\subsection{Key identity for scalar product of eigenvectors}
\label{subsec:adjoint.identity}
The following result plays a crucial role in proving the Riesz basis property. To state it we set
\begin{align}
\label{eq:dlam.def}
 \cE(\l) &:= \det(\Phi_0(\ell, -\l)) = \exp(-i(b_1+\ldots+b_n)\l). \\
\label{eq:bk+-.def}
 b_k^- &:= \min\{b_k, 0\} \le 0, \qquad b_k^+ := \max\{b_k, 0\} \ge 0,
 \qquad k \in \oneton.
\end{align}
\begin{proposition} \label{prop:y0p.y0*q}
Let diagonal matrix function $B(\cdot)$ satisfies relaxed conditions~\eqref{eq:betak.L1} and let regular boundary conditions~\eqref{eq:Uy=0} be of canonical form~\eqref{eq:CD.canon}. Further, let $\l \in \bC$ be a simple zero of $\Delta_0(\cdot)$ and let $y_0(\cdot, \l)$ and $y_{0*}(\cdot, \ol{\l})$ be the corresponding eigenvectors
given by~\eqref{eq:Ypxl.def} and~\eqref{eq:yj.eigen0*}, respectively.
Then the following identity holds
\begin{equation} \label{eq:y0p.y0*q.ident}
 \(y_0(\cdot, \l), y_{0*}(\cdot, \ol\l)\)_{\fH}
 = \(Y_p^0(\cdot, \l), Y_{*q}^0(\cdot, \ol\l)\)_{\fH}
 = -i \cE(\l) \exp\(i b_q^- \l\) \cdot A_{qp}^0(\l) \Delta_0'(\l),
\end{equation}
where $A_{qp}^0(\l) \ne 0$.

\end{proposition}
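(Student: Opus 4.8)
The plan is to compute the inner product directly from the explicit diagonal formulas for the eigenvectors of the unperturbed operator and its adjoint, and then to rewrite the resulting sum using Jacobi's identity for the derivative of a determinant. First I would recall that, by~\eqref{eq:Yp0xl.def} and~\eqref{eq:yj.eigen0*},
\[
 Y_p^0(x, \l) = \col\bigl(A_{1p}^0(\l) e^{i \l \rho_1(x)}, \ldots, A_{np}^0(\l) e^{i \l \rho_n(x)}\bigr), \qquad
 Y_{*q}^0(x, \ol\l) = \col\bigl(A_{*1q}^0(\ol\l) e^{i \ol\l \rho_1(x)}, \ldots, A_{*nq}^0(\ol\l) e^{i \ol\l \rho_n(x)}\bigr),
\]
so that, since $\rho_k(\cdot)$ is real-valued and $|\beta_k(x)|\,dx = s_k \beta_k(x)\,dx$ with the change of variable $t = \rho_k(x)$,
\[
 \bigl(Y_p^0(\cdot, \l), Y_{*q}^0(\cdot, \ol\l)\bigr)_{\fH}
 = \sum_{k=1}^n A_{kp}^0(\l) \ol{A_{*kq}^0(\ol\l)} \int_0^\ell e^{2i(\Re\l)\rho_k(x) - 2(\Im\l)\rho_k(x) \cdot 0}\,|\beta_k(x)|\,dx,
\]
which after the substitution collapses to $\sum_{k=1}^n A_{kp}^0(\l) \ol{A_{*kq}^0(\ol\l)} \int_0^{b_k} e^{2i\,\Re\l\, t - \cdots}\,dt$; more carefully, writing $e^{i\l\rho_k(x)} e^{i\ol\l\rho_k(x)}$ is \emph{not} what appears — the second factor carries the conjugate — so the integrand is $|e^{i\l\rho_k(x)}|^2$ only if $q$-vector were literally conjugated, and I must instead keep the bilinear (not sesquilinear) bookkeeping through $\ol{A_{*kq}^0(\ol\l)}$, which by the structure of~\eqref{eq:C*D*.canon} equals a polynomial in $e^{-i\l b_j}$ evaluated without conjugation. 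The clean route is therefore to first establish the algebraic identity $\ol{A_{*kq}^0(\ol\l)}$ in terms of cofactors of $A_0(\l)$ itself, using the explicit canonical forms~\eqref{eq:CD.canon} and~\eqref{eq:C*D*.canon}: one checks that $A_{0*}(\ol\l)$ is, up to the factor $\mathcal E(\l)$ and transposition, the matrix $A_0(\l)$, which forces a precise relation between $A_{*kq}^0(\ol\l)$ and $A_{kq}^0(\l)$ (or $A_{qk}^0(\l)$).

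The key steps, in order, are: (1) Derive, from Lemma~\ref{lem:C*D*.canon} and the diagonal form of $\Phi_0(\ell,\mu) = \diag(e^{i\mu b_1},\ldots,e^{i\mu b_n})$, the matrix identity
\[
 A_{0*}(\ol\l) = \mathcal E(\l)\, \Phi_0(\ell,\l)^{-1}\, \ol{A_0(\l)}^{\,\top}\, \Phi_0(\ell,\l) \quad\text{(or a variant thereof)},
\]
by block computation with respect to $\bC^n = \bC^{n_-}\oplus\bC^{n_+}$, which amounts to matching the four blocks $D_{11}^*, C_{12}^*, D_{21}^*, C_{22}^*$ against $D_{11}, C_{12}, D_{21}, C_{22}$ after multiplying rows/columns by $e^{\pm i\l b_k}$; this is where the factors $\mathcal E(\l)$ and $e^{i b_q^- \l}$ will emerge. (2) Pass to adjugate matrices via~\eqref{eq:adjug.prod}, obtaining $A_{*kq}^0(\ol\l)$ in terms of $A_{qk}^0(\l)$ times an explicit exponential monomial. (3) Plug this back into the collapsed sum from the first paragraph; the sum over $k$ of $A_{kp}^0(\l)\,A_{qk}^0(\l)$ — with the appropriate exponential weights from integration $\int_0^{b_k}$ — should be recognized, after the change of variables $t = \rho_k(x)$ turns $\int_0^{b_k} 1\, dt = b_k$ into the right normalization, as precisely (a constant times) the $(q,p)$ entry of $A_0^a(\l) A_0'(\l)$ or its analog. (4) Invoke Jacobi's formula~\eqref{eq:jacobi.def} in the form $\Delta_0'(\l) = \tr\bigl(A_0^a(\l) A_0'(\l)\bigr)$, but note we need the \emph{single} $(q,p)$-summand rather than the trace; here the simplicity of the zero $\l$ (so $\rank A_0^a(\l) = 1$, by Lemma~\ref{lem:eigen}) is essential: all columns of $A_0^a(\l)$ are proportional, $A_{kj}^0(\l) = c_j A_{k1}^0(\l)$ say, and one can reconstruct $\tr\bigl(A_0^a A_0'\bigr)$ from any single nonzero entry $A_{qp}^0(\l)$ together with the structure of $A_0'(\l) = i D \diag(b_k e^{i\l b_k})$. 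This reconstruction yields exactly $\Delta_0'(\l)/A_{qp}^0(\l)$ as the missing scalar, giving the claimed $-i\,\mathcal E(\l)\,e^{i b_q^-\l}\,A_{qp}^0(\l)\,\Delta_0'(\l)$. (5) Conclude $A_{qp}^0(\l)\neq 0$ from the nontriviality of the eigenvector $y_0(\cdot,\l)$ (equivalently from Lemma~\ref{lem:eigen}, choosing $p$ with $Y_p^0 \not\equiv 0$ and then $q$ so that the $q$-entry does not vanish).

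The main obstacle will be Step (1) together with the bookkeeping in Step (3)–(4): getting the exponential prefactors ($\mathcal E(\l)$ and especially the asymmetric $e^{i b_q^- \l}$, which distinguishes the cases $b_q > 0$ and $b_q < 0$) exactly right, and correctly handling the fact that the inner product in $\fH$ is sesquilinear while the adjugate-matrix identities are most naturally bilinear — so the conjugates must be tracked with care and ultimately absorbed into the substitution $\mu = \ol\l$ at the level of $A_{0*}$. I expect that once the block identity of Step (1) is nailed down, Steps (2)–(5) are a sequence of applications of~\eqref{eq:adjug.prod}, \eqref{eq:jacobi.def}, and the rank-one structure from Lemma~\ref{lem:eigen}, each individually routine. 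A useful sanity check at the end is the scalar case $n = 1$ (or the $2\times 2$ Dirac case treated in~\cite{LunMal16JMAA}), where the formula should reduce to the known identity; this would catch any sign or exponent error in the prefactors.
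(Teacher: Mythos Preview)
Your plan is essentially the paper's proof: compute the inner product from the explicit diagonal formulas, establish a matrix identity relating $A_{0*}(\ol\l)$ to $[A_0(\l)]^*$ via the canonical forms of Lemmas~\ref{lem:CD.canon} and~\ref{lem:C*D*.canon}, pass to adjugates, and combine Jacobi's identity with the rank-one structure $A_{qk}^0(\l)A_{kp}^0(\l)=A_{qp}^0(\l)A_{kk}^0(\l)$.

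Two points where your write-up is muddled and the paper's execution is cleaner. First, the inner-product integral is much simpler than your first paragraph suggests: since $\rho_k$ is real, $e^{i\l\rho_k(x)}\,\ol{e^{\,i\ol\l\rho_k(x)}}=1$, so the integral collapses immediately to $\int_0^\ell|\beta_k(x)|\,dx=|b_k|$ and the whole inner product is $\sum_k \ol{A_{*kq}^0(\ol\l)}\,|b_k|\,A_{kp}^0(\l)$ --- no change of variable, no $e^{2i\Re\l\cdot t}$ terms. Second, your Step~(4) glosses over a real difficulty: $A_0'(\l)=D\,\diag(ib_ke^{i\l b_k})$ is \emph{not} diagonal when $n_->0$, so the direct Jacobi formula does not produce the sum you need. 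The paper's device is to introduce $\wh A_0(\l):=A_0(\l)\,\diag(e^{-i\l b_k^-})_{k=1}^n$, whose derivative \emph{is} diagonal, $\wh A_0'(\l)=\diag(i|b_k|e^{i\l|b_k|})$; Jacobi applied to $\wh\Delta_0=e^{-i\l b_-}\Delta_0$ then yields $\Delta_0'(\l)=ib_-\Delta_0(\l)+i\sum_k|b_k|e^{i\l b_k^+}A_{kk}^0(\l)$, and the term $ib_-\Delta_0(\l)$ drops out precisely because $\l$ is a zero. This, together with the adjugate identity $\ol{A_{*kq}^0(\ol\l)}=\cE(\l)e^{i\l b_q^-}e^{i\l b_k^+}A_{qk}^0(\l)$, is exactly where the factors $|b_k|$, $\cE(\l)$, and $e^{ib_q^-\l}$ align.
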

\begin{proof}
First we note that combining formulas for the eigenvectors~\eqref{eq:Ypxl.def} and~\eqref{eq:yj.eigen0*} of the operators $L_{0,U}$ and $L_{0,U_*}$ corresponding to the eigenvalues $\l$ and $\ol \l$, respectively, yields
\begin{multline} \label{eq:y0p.y0*q.raw}
 \(Y_p^0(\cdot, \l), Y_{*q}^0(\cdot, \ol\l)\)_{\fH}
 = \int_0^\ell \sum_{k=1}^n A_{kp}^0(\l) e^{i \l \rho_k(x)}
 \ol{A_{*kq}^0(\ol{\l}) e^{i \ol{\l} \rho_k(x)}} |\beta_k(x)| dx \\
 = \sum_{k=1}^n \ol{A_{*kq}^0(\ol{\l})} \cdot |b_k| \cdot
 A_{kp}^0(\l), \qquad p, q \in \oneton, \quad \l \in \bC.
\end{multline}
Our purpose is to transform identity~\eqref{eq:y0p.y0*q.raw} into~\eqref{eq:y0p.y0*q.ident}.
To this end we divide the proof in three steps.

\textbf{(i)} First we consider the case $n_-=0$, i.e.\ $D = S = I_n$.
Let us start with some general facts valid without the assumption $\Delta_0(\l)=0$. Since $D = I_n$, one has $A_0(\l) = C + \Phi_0(\ell, \l)$ and
\begin{equation} \label{eq:A0'}
 A_0'(\l) =
 \frac{d}{d\l}\Phi_0(\ell, \l)
 = \diag\(i b_1 e^{i \l b_1},\ldots, i b_n e^{i \l b_n}\), \qquad \l \in \bC.
\end{equation}

Further, Lemma~\ref{lem:C*D*.canon} implies that $C_* = I_n$ and $D_* = C^*$. Since $[\Phi_0(\ell, \ol{\l})]^{-1} = \Phi_0(\ell, -\ol{\l}) = [\Phi_0(\ell, \l)]^*$ then
\begin{multline} \label{eq:A_{0*}=SA_0*_Phi_0_S}
A_{0*}(\ol{\l}) = I_n +C^* \Phi_0(\ell, \ol{\l})
= (\Phi_0(\ell, -\ol{\l}) + C^*) \Phi_0(\ell, \ol{\l}) \\
= [C + \Phi_0(\ell, \l)]^* \Phi_0(\ell, \ol{\l})
= [A_0(\l)]^* \Phi_0(\ell, \ol{\l}).
\end{multline}
It follows from~\eqref{eq:dlam.def} that
\begin{equation} \label{eq:det.Phi0}
 \det(\Phi_0(\ell, \ol{\l})) = \exp(i(b_1+\ldots+b_n)\ol{\l}) = \ol{\cE(\l)}.
\end{equation}
Combining this identity with~\eqref{eq:A_{0*}=SA_0*_Phi_0_S} and taking into account properties~\eqref{eq:Aa.adjont} and~\eqref{eq:adjug.prod} of adjugate matrices and identity $[\Phi_0(\ell, \ol{\l})]^{-1} = \Phi_0(\ell, -\ol{\l})$, imply
\begin{equation} \label{eq:Delta*A*}
 \Delta_{0*}(\ol{\l}) = \ol{\cE(\l) \cdot \Delta_0(\l)}, \qquad
 A^a_{0*}(\ol{\l}) = \ol{\cE(\l)} \cdot \Phi_0(\ell, -\ol{\l})
 \cdot [A_0^a(\l)]^*.
\end{equation}
Taking adjoint in~\eqref{eq:Delta*A*} we arrive at
\begin{equation} \label{eq:Djk*=Dkj}
 [A^a_{0*}(\ol{\l})]^* = \cE(\l) A_0^a(\l) \Phi_0(\ell, \l),
 \quad\text{i.e.}\quad
 \ol{A_{*kj}^0(\ol{\l})} = \cE(\l) e^{i \l b_k} A_{jk}^0(\l),
\end{equation}
for $j, k \in \oneton$. Note that since $n_- = 0$ then $b_k > 0$, $k \in \oneton$. Taking this into account and inserting~\eqref{eq:Djk*=Dkj} into~\eqref{eq:y0p.y0*q.raw}, we get
\begin{equation} \label{eq:y0p.y0*q}
 \(Y_p^0(\cdot, \l), Y_{*q}^0(\cdot, \ol\l)\)_{\fH}
 = \cE(\l) \sum_{k=1}^n A_{qk}^0(\l) b_k e^{i \l b_k} A_{kp}^0(\l),
 \qquad p, q \in \oneton.
\end{equation}
Since $\rank(A_0^a(\l)) = 1$, one has
\begin{equation} \label{eq:A_{qk}^0A_{kp}^0 = A_{qp}^0 A_{kk}^0}
 0 = \det \begin{pmatrix} A_{qk}^0(\l) & A_{kk}^0(\l) \\ A_{qp}^0(\l) & A_{kp}^0(\l)
 \end{pmatrix} = A_{qk}^0(\l) A_{kp}^0(\l) - A_{qp}^0(\l) A_{kk}^0(\l),
 \qquad k, p, q \in \oneton.
\end{equation}
Inserting this identity into~\eqref{eq:y0p.y0*q}, we derive
\begin{equation} \label{eq:main_iden-ty_with_D=I}
 \(Y_p^0(\cdot, \l), Y_{*q}^0(\cdot, \ol\l)\)_{\fH} =
 \cE(\l) \sum_{k=1}^n b_k e^{i \l b_k} A_{qk}^0(\l) A_{kp}^0(\l) \\
 = \cE(\l) A_{qp}^0(\l) \sum_{k=1}^n b_k e^{i \l b_k} A_{kk}^0(\l).
\end{equation}
Inserting~\eqref{eq:A0'} in formula~\eqref{eq:jacobi} implies
\begin{equation} \label{eq:jacobi2}
 \Delta_0'(\l) = \tr\(A_0^a(\l) A_0'(\l)\) =
 i \sum_{k=1}^n A_{kk}^0(\l) b_k e^{i \l b_k} =
 i \sum_{k=1}^n b_k e^{i \l b_k} A_{kk}^0(\l).
\end{equation}
Inserting~\eqref{eq:jacobi2} into~\eqref{eq:main_iden-ty_with_D=I} and taking into account that $b_q^- = 0$ when $n_- = 0$, we arrive at~\eqref{eq:y0p.y0*q.ident}, which finishes the proof in the case $n_-=0$.

\textbf{(ii)} In this step we consider the general case $n_-, n_+ > 0$. Our first goal is to obtain formula for $\Delta_0'(\l)$ similar to~\eqref{eq:jacobi2}. Since $n_- > 0$, the matrix $A_0^a(\l)$ has more complicated structure. Applying Jacobi formula to it directly will lead to more complicated unusable formula for $\Delta_0'(\l)$. Hence, we need to do some preparations first. With account of definition~\eqref{eq:bk+-.def}, let us decompose the diagonal matrix function $\Phi_0(\ell, \l) = \diag\(e^{i \l b_1},\ldots, e^{i \l b_n}\)$ with respect to the orthogonal decomposition $\bC^n = \bC^{n_-} \oplus \bC^{n_+}$,
\begin{align}
 \Phi_0(\ell, \l) & =: \diag(\Phi^0_{-}(\ell, \l), \Phi^0_{+}(\ell, \l)),
 \qquad
 \Phi^0_{\pm}(\ell, \l) \in \bC^{n_{\pm} \times n_{\pm}}, \quad \l \in \bC, \\
\label{eq:wtPhi1}
 \wh{\Phi}^0_-(\ell, \l) & := \diag(\Phi^0_{-}(\ell, \l), I_{n_+}) =
 \diag\bigl(e^{i \l b_k^-}\bigr)_{k=1}^n, \\
\label{eq:wtPhi2}
 \wh{\Phi}^0_+(\ell, \l) & := \diag(I_{n_-}, \Phi^0_{+}(\ell, \l)) =
 \diag\bigl(e^{i \l b_k^+}\bigr)_{k=1}^n.
\end{align}
Using formulas~\eqref{eq:wtPhi1}--\eqref{eq:wtPhi2} and (canonical) triangular block-matrix representation~\eqref{eq:CD.canon} of matrices $C$ and $D$ from the boundary conditions $Uy=0$, we obtain
\begin{align}
 A_0(\l) & = C + D \Phi_0(\ell, \l) \nonumber \\
 & = \begin{pmatrix} I_{n_-} & C_{12} \\ 0 & C_{22} \end{pmatrix} +
 \begin{pmatrix} D_{11} & 0 \\ D_{21} & I_{n_+} \end{pmatrix}
 \begin{pmatrix} \Phi^0_{-}(\ell, \l) & 0 \\
 0 & \Phi^0_{+}(\ell, \l) \end{pmatrix} \nonumber \\
 & =
 \begin{pmatrix} I_{n_-} + D_{11} \Phi^0_{-}(\ell, \l) & C_{12} \\
 D_{21} \Phi^0_{-}(\ell, \l) & C_{22} + \Phi^0_{+}(\ell, \l)
 \end{pmatrix} \nonumber \\
 & = \begin{pmatrix} \Phi^0_{-}(\ell, -\l) + D_{11} & C_{12} \\
 D_{21} & C_{22} + \Phi^0_{+}(\ell, \l) \end{pmatrix}
 \begin{pmatrix} \Phi^0_{-}(\ell, \l) & 0 \\ 0 & I_{n_+}
 \end{pmatrix} \nonumber \\
\label{eq:A0.lam=wtA.wtPh.0-}
 & = \wh{A}_0(\l) \wh{\Phi}^0_-(\ell, \l), \qquad
\end{align}
where
\begin{equation} \label{eq:def_matrix_wt_A_0(lam)}
 \wh{A}_0(\l) := \begin{pmatrix} \Phi^0_{-}(\ell, -\l) + D_{11} & C_{12} \\
 D_{21} & C_{22} + \Phi^0_{+}(\ell, \l) \end{pmatrix}
 = A_0(\l) \wh{\Phi}^0_-(\ell, -\l).
\end{equation}
It is clear that the derivative of the matrix $\wh{A}_0(\l)$ is diagonal, which makes it more suitable for applying Jacobi's formula. To this end we note that formulas~\eqref{eq:wtPhi1},~\eqref{eq:wtPhi2} and~\eqref{eq:bk+-.def} imply
\begin{equation} \label{eq:Phi1-Phi2}
 \diag\(\Phi^0_{-}(\ell, -\l), \Phi^0_{+}(\ell, \l)\) =
 [\wh{\Phi}^0_-(\ell, \l)]^{-1} \wh{\Phi}^0_+(\ell, \l)=
 \diag\bigl( e^{-i \l b_k^-} \cdot e^{i \l b_k^+} \bigr)_{k=1}^n =
 \diag\( e^{i \l |b_k|} \)_{k=1}^n,
\end{equation}
where we used trivial identity $|b_k| = -b_k^- + b_k^+$, $k \in \oneton$. Let also
\begin{equation}
 e_-(\l) := \det(\Phi^0_{-}(\ell, -\l)) = e^{-i \l b_-}, \quad
 \l \in \bC, \qquad b_- := b_1 + \ldots + b_{n_-} < 0.
\end{equation}
Definition~\eqref{eq:def_matrix_wt_A_0(lam)}, properties~\eqref{eq:ABa} and~\eqref{eq:Aa.inv} of adjugate matrices, and relation~\eqref{eq:Phi1-Phi2} imply that
\begin{align}
 & \wh{\Delta}_0(\l) := \det(\wh{A}_0(\l)) =
 \det(A_0(\l)) \det(\Phi^0_{-}(\ell, -\l)) =
 e_-(\l) \Delta_0(\l), \\
 & \wh{A}_0^a(\l) = [A_0(\l) \wh{\Phi}^0_-(\ell, -\l)]^a
 = e_-(\l) \wh{\Phi}^0_-(\ell, \l) A_0^a(\l)
 = \bigl(e_-(\l) e^{i \l b_j^-} A_{jk}^0(\l)\bigr)_{j,k=1}^n, \label{eq:wtA_0_a} \\
 & \wh{A}_0'(\l)
 = \frac{d}{d\l} \diag\(\Phi^0_{-}(\ell, -\l), \Phi^0_{+}(\ell, \l)\)
 = \diag\( i |b_k| e^{i \l |b_k|} \)_{k=1}^n. \label{eq:wtA_0'}
\end{align}
Now Jacobi's formula~\eqref{eq:jacobi.def} applies to $\wh{\Delta}_0(\cdot) = e_-(\cdot){\Delta}_0(\cdot)$ and gives with account of~\eqref{eq:wtA_0_a} and~\eqref{eq:wtA_0'}
\begin{multline} \label{eq:wtD'}
 e_-(\l) \Delta'_0(\l) + e_-'(\l) \Delta_0(\l) = \wh{\Delta}'_0(\l)
 = \tr\bigl(\wh{A}_0^a(\l) \wh{A}_0'(\l)\bigr) \\
 = \sum_{k=1}^n e_-(\l) e^{i \l b_k^-} A_{kk}^0(\l)
 \cdot i |b_k| e^{i \l |b_k|}
 = i e_-(\l) \sum_{k=1}^n |b_k| e^{i \l b_k^+} A_{kk}^0(\l).
\end{multline}
Given that $e_-'(\l) = -i b_- e_-(\l)$, it follows from~\eqref{eq:wtD'} that
\begin{equation} \label{eq:Delta'0}
 \Delta'_0(\l) = i b_- \Delta_0(\l) + i \sum_{k=1}^n
 |b_k| e^{i \l b_k^+} A_{kk}^0(\l), \qquad \l \in \bC.
\end{equation}

Now let us find an explicit form of the adjugate matrix $A^a_{0*}(\ol{\l})$.
Since
\begin{equation}
 [\Phi^0_{\pm}(\ell, \ol{\l})]^{-1} = \Phi^0_{\pm}(\ell, -\ol{\l})
 = [\Phi^0_{\pm}(\ell, \l)]^*,
\end{equation}
then
\begin{align*}
 A_{0*}(\ol{\l}) & = C_* + D_* \Phi_0(\ell, \ol{\l}) \\
 & = \begin{pmatrix} D_{11}^* & 0 \\ C_{12}^* & I_{n_+} \end{pmatrix} +
 \begin{pmatrix} I_{n_-} & D_{21}^* \\ 0 & C_{22}^* \end{pmatrix}
 \begin{pmatrix} \Phi^0_{-}(\ell, \ol{\l}) & 0 \\
 0 & \Phi^0_{+}(\ell, \ol{\l}) \end{pmatrix} \\
 & = \begin{pmatrix}
 D_{11}^* + \Phi^0_{-}(\ell, \ol{\l}) &
 D_{21}^* \Phi^0_{+}(\ell, \ol{\l}) \\
 C_{12}^* & I_{n_+} + C_{22}^* \Phi^0_{+}(\ell, \ol{\l}) \\
 \end{pmatrix} \\
 & =
 \begin{pmatrix} \Phi^0_{-}(\ell, \ol{\l}) & 0 \\ 0 & I_{n_+} \end{pmatrix}
 \begin{pmatrix}
 [\Phi^0_{-}(\ell, \l)]^* D_{11}^* + I_{n_-} &
 [\Phi^0_{-}(\ell, \l)]^* D_{21}^* \\
 C_{12}^* &
 [\Phi^0_{+}(\ell, \l)]^* + C_{22}^* \\
 \end{pmatrix}
 \begin{pmatrix} I_{n_-} & 0 \\ 0 & \Phi^0_{+}(\ell, \ol{\l}) \end{pmatrix} \\
 & = \wh{\Phi}^0_-(\ell, \ol{\l}) \cdot [A_0(\l)]^* \cdot
 \wh{\Phi}^0_+(\ell, \ol{\l}).
\end{align*}
Further, it follows from~\eqref{eq:wtPhi1},~\eqref{eq:wtPhi2} and~\eqref{eq:dlam.def} that
\begin{equation}
 \det\bigl(\wh{\Phi}^0_-(\ell, \ol{\l}) \cdot \wh{\Phi}^0_+(\ell, \ol{\l})\bigr) =
 \det(\Phi_0(\ell, \ol{\l})) = \exp(i (b_1 + \ldots + b_n) \ol{\l}) = \ol{\cE(\l)}.
\end{equation}
Hence due to~\eqref{eq:det.Phi0} and properties~\eqref{eq:Aa.adjont} and~\eqref{eq:adjug.prod} of adjugate matrix, we have
\begin{align}
\nonumber
 \Delta_{0*}(\ol{\l}) &= \det(A_{0*}(\ol{\l}))
 = \det\(\wh{\Phi}^0_-(\ell, \ol{\l}) \cdot \wh{\Phi}^0_+(\ell, \ol{\l})\)
 \cdot \ol{\det(A_0(\l))} = \ol{\cE(\l) \cdot \Delta_0(\l)}, \\
\label{eq:Delta*A*n1n2}
 A^a_{0*}(\l) &= \ol{\cE(\l)} \cdot \wh{\Phi}^0_+(\ell, -\ol{\l})
 \cdot [A_0^a(\l)]^* \cdot \wh{\Phi}^0_-(\ell, -\ol{\l}).
\end{align}
Taking adjoint we get
\begin{equation} \label{eq:wtA**}
 [A^a_{0*}(\ol{\l})]^* = \cE(\l) \cdot \wh{\Phi}^0_-(\ell, \l)
 \cdot A_0^a(\l) \cdot \wh{\Phi}^0_+(\ell, \l).
\end{equation}
Inserting~\eqref{eq:wtPhi1}--\eqref{eq:wtPhi2} into~\eqref{eq:wtA**} we arrive at the key identity
\begin{equation} \label{eq:Djk*=Dkj.n1n2}
 \ol{A_{*kj}^0(\ol{\l})} = \cE(\l) e^{i \l b_j^-} e^{i \l b_k^+} A_{jk}^0(\l),
 \qquad j,k \in \oneton, \quad \l \in \bC.
\end{equation}
Inserting~\eqref{eq:Djk*=Dkj.n1n2} into~\eqref{eq:y0p.y0*q.raw} and taking into account formula~\eqref{eq:Delta'0} for $\Delta'_0(\cdot)$, identity~\eqref{eq:A_{qk}^0A_{kp}^0 = A_{qp}^0 A_{kk}^0}, one deduces
\begin{align}
\nonumber
 \(Y_p^0(\cdot, \l), Y_{*q}^0(\cdot, \ol\l)\)_{\fH}
 &= \cE(\l) e^{i \l b_q^-}
 \sum_{k=1}^n A_{qk}^0(\l) |b_k| e^{i \l b_k^+} A_{kp}^0(\l) \\
\nonumber
 &= \cE(\l) e^{i \l b_q^-} A_{qp}^0(\l)
 \sum_{k=1}^n |b_k| e^{i \l b_k^+} A_{kk}^0(\l) \\
\label{eq:y0p.y0*q.n1n2}
 &= -i \cE(\l) e^{i \l b_q^-} A_{qp}^0(\l)
 \left(\Delta'_0(\l) - i b_- \Delta_0(\l)\right),
 \quad p, q \in \oneton.
\end{align}
Since $\l$ is a simple root of $\Delta_0(\cdot)$ then~\eqref{eq:y0p.y0*q.n1n2} implies~\eqref{eq:y0p.y0*q.ident}.

The case $n_+=0$ can be treated similarly.

\textbf{(iii)} Assuming $\l$ to be a simple eigenvalue of the operator $L_{0,U}$, let us show that $A_{qp}^0(\l) \ne 0$ whenever, $Y_p^0(\cdot, \l) \not\equiv 0$ and $Y_{*q}^0(\cdot, \ol\l) \not\equiv 0$. According to Lemma~\ref{lem:eigen}, we have
\begin{equation}
 \Delta_0(\l) = 0, \qquad \Delta_0'(\l) \ne 0, \qquad \rank(A_0^a(\l)) = 1.
\end{equation}
Since $A_0^a(\l)$ is non-zero matrix, there exist $j, k \in \oneton$ such that $A_{kj}^0(\l) \ne 0$. Formula~\eqref{eq:y0p.y0*q.ident} implies that
\begin{equation} \label{eq:Yj.Y*k.ne.0}
 \(Y_j^0(\cdot, \l), Y_{*k}^0(\cdot, \ol\l)\)_{\fH}
 = -i \cE(\l) \exp\(i b_k^- \l\) \cdot A_{kj}^0(\l) \Delta_0'(\l) \ne 0,
\end{equation}
since $A_{jk}^0(\l) \ne 0$ and $\Delta_0'(\l) \ne 0$.

Now let $p, q \in \oneton$ be such that $Y_p^0(\cdot, \l) \not\equiv 0$ and $Y_{*q}^0(\cdot, \ol\l) \not\equiv 0$. Since $\rank(A_0^a(\l)) = 1$, then $Y_p^0(\cdot, \l)$ is proportional to $Y_j^0(\cdot, \l)$. Formulas~\eqref{eq:Djk*=Dkj} and~\eqref{eq:Delta*A*n1n2} imply that $\rank(A_{*0}^a(\ol{\l})) = 1$. Hence $Y_{*q}^0(\cdot, \ol{\l})$ is proportional to $Y_{*k}^0(\cdot, \ol{\l})$. Therefore,
\begin{equation} \label{eq:Yp0.Y*q0.proport}
 Y_p^0(\cdot, \l) = \alp_{pj} Y_j^0(\cdot, \l), \qquad
 Y_{*q}^0(\cdot, \ol{\l}) = \alp_{*qk} Y_{*k}^0(\cdot, \ol{\l}), \qquad
 \alp_{pj}, \alp_{*qk} \in \bC \setminus \{0\}.
\end{equation}
Combining formulas~\eqref{eq:y0p.y0*q.ident},~\eqref{eq:Yj.Y*k.ne.0} and~\eqref{eq:Yp0.Y*q0.proport} we arrive at
\begin{align}
 -i \cE(\l) \exp\(i b_q^- \l\) \cdot A_{qp}^0(\l) \Delta_0'(\l)
 = \(Y_p^0(\cdot, \l), Y_{*q}^0(\cdot, \ol\l)\)_{\fH}
 = \alp_{pj} \ol{\alp_{*qk}} \(Y_j^0(\cdot, \l),
 Y_{*k}^0(\cdot, \ol\l)\)_{\fH} \ne 0,
\end{align}
which implies $A_{qp}^0(\l) \ne 0$ and finishes the proof.
\end{proof}
\begin{remark}
The proof remains valid for non-regular boundary conditions provided that $J_P(C,D) \ne 0$ for some $P \in \cP_n$. Indeed, we can use alternative canonical form of boundary conditions~\eqref{eq:Uy=0} outlined in Remark~\ref{rem:canon}.
\end{remark}
\section{Uniform minimality and Riesz basis property}
\label{sec:uniform.minim.riesz}
\subsection{Uniform minimality} \label{subsec:uniform.minim}
Here we apply results of the previous section to show an important property of the system of root vectors of the operator $L_U(Q)$: uniform minimality.
\begin{definition} \label{def:unif.minimality}
A sequence $\{\phi_m\}_{m \in \bZ}$ in a Banach space $X$ is called minimal if
\begin{equation} \label{eq:minim-ty}
\phi_m\notin \Span\{\phi_k:k \ne m\} \qquad \text{for any}\quad m \in \bZ.
\end{equation}
It is called uniformly minimal if
\begin{equation} \label{eq:unif.minim}
\inf_{m \in \bZ}{\dist}\bigl(\|\phi_m\|^{-1}\phi_m, {\Span}\{\phi_k:\ k \ne m\}\bigr)>0.
\end{equation}
\end{definition}
The following statement is well known.
\begin{lemma} \label{lem:criter.unif.minimality}
(i) The sequence $\{\phi_m\}_{m \in \bZ}\subset X$ is minimal if and only if there exists a biorthogonal system $\{\phi_{*m}\}_{m \in \bZ}\subset X^*$, i.e.\ a system satisfying $(f_k,\phi_{*m}) = \delta_{km}$, $k,m \in \bZ$.

(ii) The sequence $\{\phi_m\}_{m \in \bZ}\subset X$ is uniformly minimal if and only if it admits a biorthogonal system $\{\phi_{*m}\}_{m \in \bZ}\subset X^*$ satisfying
\begin{equation} \label{eq:criter_unif_minim-ty}
\sup_{m \in \bZ}\|\phi_m\|\cdot\|\phi_{*m}\|<\infty.
\end{equation}
\end{lemma}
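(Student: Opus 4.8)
\textbf{Plan for the proof of Lemma~\ref{lem:criter.unif.minimality}.}
The statement is the standard duality characterization of minimality and uniform minimality, so the plan is to reduce everything to one clean Hahn--Banach distance formula. For part (i) I would argue both directions. If a biorthogonal system $\{\phi_{*m}\}_{m \in \bZ} \subset X^*$ exists, then for a fixed $m$ and any finite linear combination $\sum_{k \ne m} c_k \phi_k$ one has $(\sum_{k\ne m} c_k \phi_k, \phi_{*m}) = 0$ while $(\phi_m, \phi_{*m}) = 1$, so by continuity of $\phi_{*m}$ the functional separates $\phi_m$ from the closed span $\Span\{\phi_k : k \ne m\}$; hence $\phi_m$ is not in that span and $\{\phi_m\}$ is minimal. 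Conversely, if $\{\phi_m\}$ is minimal, then for each fixed $m$ the vector $\phi_m$ lies outside the closed subspace $Y_m := \Span\{\phi_k : k \ne m\}$, so $d_m := \dist(\phi_m, Y_m) > 0$; by the Hahn--Banach theorem there exists $\psi_m \in X^*$ with $\psi_m|_{Y_m} = 0$, $(\phi_m, \psi_m) = 1$ and $\|\psi_m\| = 1/d_m$. Setting $\phi_{*m} := \psi_m$ gives $(\phi_k, \phi_{*m}) = \delta_{km}$ for all $k, m \in \bZ$, which is the desired biorthogonal system.

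For part (ii) I would first record the key quantitative identity, valid for any $m$ whenever a biorthogonal functional $\phi_{*m}$ (with $(\phi_m,\phi_{*m})=1$, $\phi_{*m}|_{Y_m}=0$) exists:
\begin{equation} \label{eq:dist.identity}
 \dist\bigl(\|\phi_m\|^{-1}\phi_m, Y_m\bigr) = \|\phi_m\|^{-1} \dist(\phi_m, Y_m)
 = \frac{1}{\|\phi_m\| \cdot \|\phi_{*m}\|},
\end{equation}
where the last equality is again Hahn--Banach duality: $\dist(\phi_m, Y_m) = 1/\inf\{\|\psi\| : \psi\in X^*,\ \psi|_{Y_m}=0,\ (\phi_m,\psi)=1\}$, and the infimum is attained, so the minimal-norm biorthogonal functional has norm exactly $1/\dist(\phi_m,Y_m)$; any biorthogonal functional has norm at least this, and for the uniform-minimality criterion it suffices to work with the minimal-norm one. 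Given \eqref{eq:dist.identity}, uniform minimality \eqref{eq:unif.minim} is equivalent to $\inf_{m\in\bZ} (\|\phi_m\|\,\|\phi_{*m}\|)^{-1} > 0$, i.e.\ to $\sup_{m\in\bZ}\|\phi_m\|\,\|\phi_{*m}\| < \infty$, which is \eqref{eq:criter_unif_minim-ty}. This proves the ``if'' direction directly (take the biorthogonal system satisfying \eqref{eq:criter_unif_minim-ty}, use \eqref{eq:dist.identity} to get a uniform lower bound on the distances) and the ``only if'' direction by choosing $\phi_{*m}$ to be the minimal-norm biorthogonal functional from part (i), whose norm equals $1/\dist(\phi_m, Y_m)$.

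There is no real obstacle here: the only point requiring a little care is that in part (ii) the two halves of the equivalence are most cleanly handled by \emph{different} choices of biorthogonal system — for ``only if'' one must use the canonical (minimal-norm) functionals so that the product $\|\phi_m\|\,\|\phi_{*m}\|$ is controlled by $1/\dist$, whereas for ``if'' one simply uses the given system and the inequality $\dist(\|\phi_m\|^{-1}\phi_m, Y_m) \ge (\|\phi_m\|\,\|\phi_{*m}\|)^{-1}$, which holds for \emph{any} biorthogonal functional since $|(\,\|\phi_m\|^{-1}\phi_m - y,\ \phi_{*m})| = \|\phi_m\|^{-1}$ for every $y \in Y_m$, giving $\|\,\|\phi_m\|^{-1}\phi_m - y\| \ge \|\phi_m\|^{-1}\|\phi_{*m}\|^{-1}$. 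I would state these two inequalities explicitly and then combine them. Since the lemma is classical (it appears in this form in standard references on bases in Banach spaces), I would keep the write-up short, citing the Hahn--Banach separation theorem as the only external input.
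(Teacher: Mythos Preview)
Your proof plan is correct and is the standard Hahn--Banach argument; there is nothing to compare against, since the paper states this lemma as well known and gives no proof.
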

Now we are ready to prove uniform minimality of the strictly regular BVP~\eqref{eq:LQ.def.reg}--\eqref{eq:Uy=0}. If $Q=0$, this can be done under relaxed conditions on the matrix function $B(\cdot)$.
\begin{proposition} \label{prop:uniform.minim}
Let self-adjoint invertible diagonal matrix function $B(\cdot)$ satisfy relaxed conditions~\eqref{eq:betak.L1}. Namely, $B \in L^1([0,\ell]; \bR^{n \times n})$ and every its entry does not change sign on $[0,\ell]$. Let boundary conditions of the boundary value problem~\eqref{eq:L0.def.reg},~\eqref{eq:Uy=0} be strictly regular. Then any system of root vectors of the operator $L_{0,U} = L_U(0)$ is uniformly minimal in $\fH$.
\end{proposition}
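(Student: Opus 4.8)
The plan is to exhibit, for the system of root vectors of $L_{0,U}$, a biorthogonal system with uniformly bounded products of norms; then Lemma~\ref{lem:criter.unif.minimality}(ii) gives uniform minimality. By Lemma~\ref{lem:adjoint} the adjoint $L_{0,U}^* = L_{0,U_*}$ is again an operator of the same type with regular (hence, by the same argument applied to both, strictly regular) boundary conditions $U_*$, and a properly normalized system of its root vectors serves as the biorthogonal system. For large $|m|$, Definition~\ref{def:strict.regular}(iii) tells us the eigenvalues $\l_m^0$ are simple, so it suffices to control the ``tail'' of the system; the finitely many small-index root vectors contribute only a finite amount and do not affect uniform minimality.

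\textbf{Key steps.} First I would invoke Lemma~\ref{lem:CD.canon} (via Lemma~\ref{lem:CD.canon} and Lemma~\ref{lem:C*D*.canon}) to put the boundary conditions and those of the adjoint into canonical form, which is exactly the setting of Proposition~\ref{prop:y0p.y0*q}. Next, for each $|m| > m_0$ pick the index $p = p_m \in \oneton$ supplied by Proposition~\ref{prop:eigenvec0} so that $\wt f_m^0 := Y_{p_m}^0(\cdot,\l_m^0)$ is a genuine eigenvector with $C_3 \le \|\wt f_m^0\|_{\fH} \le C_4$; the same Lemma~\ref{lem:D0jk>}/Proposition~\ref{prop:eigenvec0} machinery applied to $L_{0,U_*}$ (its characteristic determinant $\Delta_{0*}(\l)$ satisfies $\Delta_{0*}(\ol\l) = \ol{\cE(\l)\Delta_0(\l)}$ by~\eqref{eq:Delta*A*n1n2}, so $\ol{\l_m^0}$ is a simple zero of $\Delta_{0*}$) produces an index $q = q_m$ and an eigenvector $\wt f_{*m}^0 := Y_{*q_m}^0(\cdot,\ol{\l_m^0})$ of $L_{0,U}^*$ with $\|\wt f_{*m}^0\|_{\fH}$ bounded above and below, uniformly in $m$. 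I would then normalize: $f_m^0 := \wt f_m^0 / \|\wt f_m^0\|_{\fH}$. The biorthogonal vectors are defined by $f_{*m}^0 := \wt f_{*m}^0 / \overline{(\wt f_m^0, \wt f_{*m}^0)_{\fH}}$, which is legitimate once we know $(\wt f_m^0, \wt f_{*m}^0)_{\fH} \ne 0$. This is precisely where Proposition~\ref{prop:y0p.y0*q} enters: the key identity~\eqref{eq:y0p.y0*q.ident} gives
\[
 (\wt f_m^0, \wt f_{*m}^0)_{\fH}
 = -i\,\cE(\l_m^0)\,\exp\!\bigl(i b_{q_m}^- \l_m^0\bigr)\, A_{q_m p_m}^0(\l_m^0)\,\Delta_0'(\l_m^0),
\]
with $A_{q_m p_m}^0(\l_m^0) \ne 0$. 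Moreover $|A_{q_m p_m}^0(\l_m^0)| \ge C_2$ by~\eqref{eq:D0jk>} (choosing $p_m, q_m$ as the indices realizing that lower bound), $|\Delta_0'(\l_m^0)| \ge C_0$ by Lemma~\ref{lem:Delta'>}, while $|\cE(\l_m^0)|$, $|\exp(i b_{q_m}^-\l_m^0)|$, and $|\Delta_0'(\l_m^0)|$ are all bounded above and below on $\Pi_h$ (using $\l_m^0 \in \Pi_h$ from Lemma~\ref{lem:Delta0.prop} and the easy bounds of Lemma~\ref{lem:easy.upper.bound}). Hence $0 < c \le |(\wt f_m^0,\wt f_{*m}^0)_{\fH}| \le C$ uniformly, from which $\|f_m^0\| \cdot \|f_{*m}^0\| = \|\wt f_{*m}^0\|_{\fH} / |(\wt f_m^0,\wt f_{*m}^0)_{\fH}|$ is uniformly bounded. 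Biorthogonality $(f_k^0, f_{*m}^0)_{\fH} = \delta_{km}$ for $k \ne m$ holds because eigenvectors of $L_{0,U}$ and $L_{0,U}^*$ attached to distinct (simple) eigenvalues are orthogonal; the diagonal case is the normalization just arranged. Finally, I would splice in the finitely many indices $|m| \le m_0$ by choosing any biorthogonal vectors from the corresponding finite-dimensional root subspaces of $L_{0,U}^*$ (which exist since root subspaces of $L_{0,U}$ and $L_{0,U}^*$ pair nondegenerately), absorbing them into the $\sup$ in~\eqref{eq:criter_unif_minim-ty}. Lemma~\ref{lem:criter.unif.minimality}(ii) then yields uniform minimality, and since any two systems of root vectors differ only in bases of the finitely many nontrivial root subspaces, the conclusion holds for every such system.

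\textbf{Main obstacle.} The routine part is the boundedness of scalar quantities on the strip $\Pi_h$; the genuine content, and the step I expect to be hardest to get exactly right, is verifying that the \emph{same} pair of indices $(p_m, q_m)$ can simultaneously serve three purposes — making $Y_{p_m}^0(\cdot,\l_m^0)$ and $Y_{*q_m}^0(\cdot,\ol{\l_m^0})$ nonzero with controlled norms (Proposition~\ref{prop:eigenvec0}), realizing the lower bound $|A_{q_m p_m}^0(\l_m^0)| \ge C_2$ from Lemma~\ref{lem:D0jk>}, and keeping the pairing~\eqref{eq:y0p.y0*q.ident} bounded below. This is handled by observing that $\rank A_0^a(\l_m^0) = 1$ forces all columns (resp.\ rows) of $A_0^a(\l_m^0)$ to be proportional, so the choice of a single entry $A_{q_m p_m}^0 \ne 0$ with $|A_{q_m p_m}^0| \ge C_2$ already fixes compatible eigenvector indices on both the direct and adjoint sides — this is exactly the content of part (iii) of the proof of Proposition~\ref{prop:y0p.y0*q}. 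Once that bookkeeping is in place the estimate is immediate.
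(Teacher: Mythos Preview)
Your proposal is correct and follows essentially the same route as the paper: both prove uniform minimality via Lemma~\ref{lem:criter.unif.minimality}(ii), combining the key identity of Proposition~\ref{prop:y0p.y0*q} with the lower bounds from Lemmas~\ref{lem:Delta'>}--\ref{lem:D0jk>} and the upper bounds from Lemma~\ref{lem:easy.upper.bound} to control $\|f_m^0\|\cdot\|f_{*m}^0\|$ for $|m|>m_0$. The paper's write-up differs only cosmetically---it starts from an \emph{arbitrary} biorthogonal pair $\{f_m^0\},\{f_{*m}^0\}$ and expresses each via $Y_{p_m}^0,Y_{*q_m}^0$ with proportionality constants, rather than building a specific pair first---and your normalization of $f_{*m}^0$ is off by a factor of $\|\wt f_m^0\|$, but since the product $\|f_m^0\|\cdot\|f_{*m}^0\|$ is scaling-invariant for biorthogonal systems this is harmless.
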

\begin{proof}
Since boundary conditions are strictly regular then $\Delta_0(\cdot)$ has a countable asymptotically separated sequence of zeros $\L_0 := \{\l_m^0\}_{m \in \bZ}$ (counting multiplicity), satisfying~\eqref{separ_cond}
with certain $\delta, m_0 > 0$ and lying in the strip $\Pi_h$. Clearly, $\L_0$ is a sequence of eigenvalues of $L_{0,U}$ (counting multiplicity) and $\ol{\L_0} := \{\ol{\l_m^0}\}_{m \in \bZ}$ is a sequence of eigenvalues of $L_{0,U}^*$ (counting multiplicity). Moreover, each eigenvalue of $L_{0,U}$ has finite multiplicity.

Let $\cF_0 := \{f_m^0\}_{m \in \bZ}$ be any system of root vectors of the operator $L_{0,U}$. Since operator $L_{0,U}$ has discrete spectrum and each its eigenvalue has finite multiplicity, we can choose system of root vectors $\cF_{*0} := \{f_{*m}^0\}_{m \in \bZ}$ of the operator $L_{0,U}^*$ in such a way that $\cF_0$ and $\cF_{*0}$ are biorthogonal systems. i.e.\ $(f_j^0, f_{*k}^0) = \delta_{jk}$, $j, k \in \bZ$. This, implies minimality of the system $\cF_0$. In accordance with Lemma~\ref{lem:criter.unif.minimality}(ii), to prove uniform minimality it is sufficient to show that
\begin{equation} \label{eq:sup.fm0.gm0}
 \sup_{|m| > m_0} \|f_m^0\|_{\fH} \cdot \|f_{*m}^0\|_{\fH} < \infty,
\end{equation}
where $m_0$ is from Definition~\ref{def:strict.regular}(iii) of strict regularity.

Let $|m| > m_0$. Then $\l_m^0$ and $\ol{\l_m^0}$ are algebraically simple eigenvalues of the operators $L_{0,U}$ and $L_{0,U}^*$, respectively. Moreover, by Lemma~\ref{lem:D0jk>}, there exist indices $p = p_m \in \oneton$ and $q = q_m \in \oneton$, and a constant $C_2>0$ such that estimate~\eqref{eq:D0jk>} holds, i.e.\ $|A_{qp}^0(\l_m^0)| \ge C_2$, $|m| > m_0$. Emphasize, that although $p$ and $q$ depend on $m$, the constant $C_2$ in the above estimate does not. Starting with this $A_{qp}^0(\l_m^0) (\not=0)$ we define the vector functions
\begin{equation} \label{eq:wtfgm0.def}
\wt{f}_m^0(\cdot) := Y_p^0(\cdot, \l_m^0) \quad\text{and}\quad
\wt{f}_{*m}^0(\cdot) := Y_{*q}^0(\cdot, \ol{\l_m^0})
\end{equation}
by formulas~\eqref{eq:Yp0xl.def} and~\eqref{eq:yj.eigen0*}, respectively.

Since boundary conditions~\eqref{eq:Uy=0} are regular then by Lemmas~\ref{lem:CD.canon} and~\ref{lem:C*D*.canon} we can assume boundary conditions of the operators $L_{0,U}$ and $L_{0,U}^* = L_{0,U_*}$ to be of canonical forms~\eqref{eq:CD.canon} and~\eqref{eq:C*D*.canon}, respectively. Hence Proposition~\ref{prop:y0p.y0*q} can be used. Besides, in accordance with Lemma~\ref{lem:Delta'>}, $|\Delta_0'(\l_m^0)| \ge C_0$, $|m| > m_0$, where a constant $C_0>0$ is independent on $m$. Combining Proposition~\ref{prop:y0p.y0*q} with this estimate and the above estimate on $|A_{qp}^0(\l_m^0)|$ yields
\begin{align}
\nonumber
 \bigabs{\bigl(\wt{f}_m^0, \wt{f}_{*m}^0 \bigr)_{\fH}}
 & = \bigabs{\bigl(Y_p^0(\cdot, \l_m^0),
 Y_{*q}^0(\cdot, \ol{\l_m^0})\bigr)_{\fH}} \\
\nonumber
 & = \bigabs{\cE(\l_m^0) \exp\(i b_q^- \l_m^0\)} \cdot
 \bigabs{A_{qp}^0(\l_m^0) \cdot \Delta_0'(\l_m^0)} \\
\label{eq:y0p.y0*q>}
 & \ge \bigabs{\cE(\l_m^0) \exp\(i b_q^- \l_m^0\)} C_2 C_0 \ge C_3,
\end{align}
with some $C_3 > 0$ that does not depend on $m$. Inequality~\eqref{eq:y0p.y0*q>} in particular implies that both vector functions $\wt{f}_m^0$ and $\wt{f}_{*m}^0$ are non-zero.

Applying Lemma~\ref{lem:easy.upper.bound} to the ``adjoint'' BVP~\eqref{eq:L0*.def}--\eqref{eq:U*y=0}, we arrive at the estimate
\begin{equation} \label{eq:Y*q0.norm}
 \norm{Y_{*q}^0(\cdot,\ol{\l})}_{\fH} \le M_{*h}, \qquad \l \in \Pi_h,
\end{equation}
with some different constant $M_{*h}$. Inclusion $\l_m^0 \in \Pi_h$ and estimates~\eqref{eq:Phij0.Yk0.norm} and~\eqref{eq:Y*q0.norm} imply that
\begin{equation} \label{eq:wtfgm0.norm}
 \|\wt{f}_m^0\|_{\fH} = \|Y_p^0(\cdot, \l_m^0)\|_{\fH} \le M_h, \qquad
 \|\wt{f}_{*m}^0\|_{\fH} = \|Y_{*q}^0(\cdot, \ol{\l_m^0})\|_{\fH} \le M_{*h},
 \qquad |m| > m_0.
\end{equation}

Since $\l_m^0$ is algebraically simple eigenvalue of the operator $L_{0,U}$, and $\wt{f}_m^0 \ne 0$ and $\wt{f}_{*m}^0 \ne 0$, Lemmas~\ref{lem:eigen.canon} and~\ref{lem:eigen.canon*} ensure that
\begin{equation} \label{eq:fm0.gm0.def}
 f_m^0(\cdot) = \gam_m^0 \wt{f}_m^0(\cdot) = \gam_m^0 Y_p^0(\cdot, \l_m^0), \qquad
 f_{*m}^0(\cdot) = \gam_{*m}^0 \wt{f}_{*m}^0(\cdot)
 = \gam_{*m}^0 Y_{*q}^0(\cdot, \ol{\l_m^0}), \qquad |m| > m_1,
\end{equation}
with some $\gam_m^0, \gam_{*m}^0 \in \bC \setminus \{0\}$. Since vector systems $\cF_0$ and $\cF_{*0}$ are biorthogonal, it follows that
\begin{equation} \label{eq:fm0.gm0}
 1 = (f_m^0, f_{*m}^0)_{\fH} = \gam_m^0 \ol{\gam_{*m}^0} \cdot (\wt{f}_m^0, \wt{f}_{*m}^0)_{\fH}.
\end{equation}
Combining estimates~\eqref{eq:wtfgm0.norm} with estimate~\eqref{eq:y0p.y0*q>}, relations~\eqref{eq:fm0.gm0.def} and equality~\eqref{eq:fm0.gm0} yields
\begin{equation} \label{eq:fm.gm<C}
 \|f_m^0\|_{\fH} \cdot \|f_{*m}^0\|_{\fH} =
 |\gam_m^0 \gam_{*m}^0| \cdot \|\wt{f}_m^0\|_{\fH} \cdot \|\wt{f}_{*m}^0\|_{\fH}
 \le \frac{M_h M_{*h}}{\bigabs{(\wt{f}_m^0, \wt{f}_{*m}^0)_{\fH}}}
 \le \frac{M_h M_{*h}}{C_3} =: C_4, \qquad |m| > m_0.
\end{equation}
Since $C_4$ does not depend on $m$, estimate~\eqref{eq:fm.gm<C} implies estimate~\eqref{eq:sup.fm0.gm0}, which completes the proof.
\end{proof}
In general case we need more strict conditions on the matrix function $B(\cdot)$
\begin{theorem} \label{th:uniform.minim}
Let matrix function $B(\cdot)$ given by~\eqref{eq:Bx.def} satisfy conditions~\eqref{eq:betak.alpk.Linf}--\eqref{eq:betak-betaj<-eps}, let $Q \in L^1([0,\ell]; \bC^{n \times n})$ and let BVP~\eqref{eq:LQ.def.reg}--\eqref{eq:Uy=0} be strictly regular according to Definition~\ref{def:bvp.strict}. Then any system of root vectors of the operator $L_U(Q)$ is uniformly minimal in $\fH$.
\end{theorem}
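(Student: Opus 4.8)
The natural route is to reduce Theorem~\ref{th:uniform.minim} to the already-proved case $Q=0$, Proposition~\ref{prop:uniform.minim}, via the asymptotic results of Section~\ref{sec:asymp.eigen}, exactly as was done for $n\times n$ Dirac-type operators with constant $B$ in~\cite{LunMal16JMAA}. First I would apply the gauge transform of Lemma~\ref{lem:gauge}: since $\cW$ is bounded with bounded inverse and, by Remark~\ref{rem:similarity}, uniform minimality of the root vectors system is preserved under $\cW$, it suffices to treat the gauged operator $L_{\wt U}(\wt Q)$, whose potential $\wt Q$ satisfies the ``zero block diagonality'' condition~\eqref{eq:Qjk=0.bj=bk} and whose modified boundary conditions $\wt U$ are strictly regular by Definition~\ref{def:bvp.strict}. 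So from now on one may assume $Q$ itself satisfies~\eqref{eq:Qjk=0.bj=bk} and that boundary conditions~\eqref{eq:Uy=0} are strictly regular, putting them in the canonical form~\eqref{eq:CD.canon} (and the adjoint ones in the form~\eqref{eq:C*D*.canon} via Lemma~\ref{lem:C*D*.canon}).

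\textbf{Key steps.} With this normalization, by Theorem~\ref{th:eigen.gen} (or Theorem~\ref{th:ln=ln0+o}) the eigenvalues $\L=\{\l_m\}_{m\in\bZ}$ of $L_U(Q)$ lie in a strip $\Pi_h$ and satisfy $\l_m=\l_m^0+o(1)$, so for $|m|>m_1$ (some $m_1\ge m_0$) both $\l_m$ and $\l_m^0$ are algebraically simple. By Lemma~\ref{lem:D0jk>} pick, for each such $m$, indices $p=p_m,\ q=q_m\in\oneton$ with $|A_{qp}^0(\l_m^0)|\ge C_2>0$. Then I would: (1) form the eigenvector $f_m(\cdot)$ of $L_U(Q)$ from the normalized $Y_{p_m}(\cdot,\l_m)$ (nonzero and of norm bounded above and below for large $|m|$ by Theorem~\ref{th:eigenvec}(ii) together with Proposition~\ref{prop:eigenvec0}), and the eigenvector $f_{*m}(\cdot)$ of the adjoint operator $L_U^*(Q)=L_{U_*}(Q_*)$ (Lemma~\ref{lem:adjoint}) from the normalized $Y_{*q_m}(\cdot,\ol{\l_m})$; choose the full biorthogonal systems $\{f_m\},\{f_{*m}\}$ of root vectors of $L_U(Q)$ and $L_U^*(Q)$ (possible since the spectrum is discrete with finite multiplicities). (2) By Lemma~\ref{lem:criter.unif.minimality}(ii) it then suffices to bound $\sup_{|m|>m_1}\|f_m\|_\fH\,\|f_{*m}\|_\fH<\infty$, and since $f_m=\gam_m\wt f_m$, $f_{*m}=\gam_{*m}\wt f_{*m}$ with $\gam_m\ol{\gam_{*m}}(\wt f_m,\wt f_{*m})_\fH=1$, this reduces to a \emph{lower} bound $|(\wt f_m,\wt f_{*m})_\fH|\ge C_3>0$ together with the trivial upper bounds $\|\wt f_m\|_\fH\le C_4$, $\|\wt f_{*m}\|_\fH\le C_4$. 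The upper bounds for the $Q\ne0$ case follow from Theorem~\ref{th:eigenvec}(i)--(ii) (uniform boundedness of $Y_p(\cdot,\l_m)$, hence of their $\fH$-norms) and the analogous statement for the adjoint BVP.

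\textbf{The main obstacle.} The crux is the lower bound for the inner product $(\wt f_m,\wt f_{*m})_\fH=\bigl(Y_{p_m}(\cdot,\l_m),Y_{*q_m}(\cdot,\ol{\l_m})\bigr)_\fH$ of the \emph{perturbed} eigenvectors — the exact algebraic identity of Proposition~\ref{prop:y0p.y0*q} is available only for $Q=0$. I would handle this by a perturbation argument: Proposition~\ref{prop:y0p.y0*q} gives
\begin{equation*}
 \bigl|\bigl(Y_{p_m}^0(\cdot,\l_m^0),Y_{*q_m}^0(\cdot,\ol{\l_m^0})\bigr)_\fH\bigr|
 = |\cE(\l_m^0)\exp(i b_{q_m}^-\l_m^0)|\cdot|A_{q_mp_m}^0(\l_m^0)|\cdot|\Delta_0'(\l_m^0)|\ge C_3'>0,
\end{equation*}
uniformly in $|m|>m_0$, using $|A_{q_mp_m}^0(\l_m^0)|\ge C_2$ (Lemma~\ref{lem:D0jk>}), $|\Delta_0'(\l_m^0)|\ge C_0$ (Lemma~\ref{lem:Delta'>}), and $\l_m^0\in\Pi_h$. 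Then Theorem~\ref{th:eigenvec} gives $\|Y_{p_m}(\cdot,\l_m)-Y_{p_m}^0(\cdot,\l_m^0)\|_\fH\to0$ and, applied to the adjoint BVP (which is likewise strictly regular by Lemma~\ref{lem:adjoint}(ii) and $Q_*=-SQ^*S$, with its own fundamental-matrix Fourier representation from Section~\ref{sec:fundament}), $\|Y_{*q_m}(\cdot,\ol{\l_m})-Y_{*q_m}^0(\cdot,\ol{\l_m^0})\|_\fH\to0$; combined with the uniform upper bounds on all four families of norms, the Cauchy--Schwarz estimate
\begin{equation*}
 \bigl|(\wt f_m,\wt f_{*m})_\fH-(Y_{p_m}^0,Y_{*q_m}^0)_\fH\bigr|\le o(1)
\end{equation*}
shows $|(\wt f_m,\wt f_{*m})_\fH|\ge C_3'/2$ for $|m|$ large. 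This yields $\|f_m\|_\fH\|f_{*m}\|_\fH\le 4C_4^2/C_3'$ for $|m|>m_1$, and extending the finite number of remaining root vectors arbitrarily, Lemma~\ref{lem:criter.unif.minimality}(ii) gives uniform minimality. The delicate point to watch is that $p_m$ and $q_m$ vary with $m$ while all constants ($C_2,C_0,C_3',C_4$) must be chosen independently of $m$ — which is exactly what Lemmas~\ref{lem:Delta'>}, \ref{lem:D0jk>}, \ref{lem:easy.upper.bound} and Proposition~\ref{prop:eigenvec0} were designed to guarantee.
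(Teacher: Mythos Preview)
Your proposal is correct and follows essentially the same route as the paper's proof: gauge transform to reduce to the zero-block-diagonal case, pick indices $p_m,q_m$ via Lemma~\ref{lem:D0jk>}, use Proposition~\ref{prop:y0p.y0*q} together with Lemma~\ref{lem:Delta'>} to bound the unperturbed inner product $(Y_{p_m}^0,Y_{*q_m}^0)_\fH$ from below, and then pass to the perturbed inner product via the $o(1)$ asymptotics of Theorem~\ref{th:eigenvec} applied to both $L_U(Q)$ and its adjoint. The paper organizes the argument by first proving the $Q=0$ case separately (Proposition~\ref{prop:uniform.minim}) and then quoting its key estimate~\eqref{eq:y0p.y0*q>} in the general proof, but the ingredients and the perturbation step are exactly those you outline.
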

\begin{proof}
As usual, applying gauge transform from Lemma~\ref{lem:gauge} we can reduce general case to the case of $Q$ satisfying ``zero block diagonality'' condition~\eqref{eq:Qjk=0.bj=bk}. Since operators $L_U(Q)$ and $L_{\wt{U}}(\wt{Q})$ are similar this transform preserves uniform minimality as explained in Remark~\ref{rem:similarity}. Hence, without loss of generality we can assume that original $Q$ satisfies ``zero block diagonality'' condition~\eqref{eq:Qjk=0.bj=bk}.

Due to assumptions on matrix functions $B(\cdot)$ and $Q(\cdot)$,
Proposition~\ref{prop:sine.type} and Theorem~\ref{th:ln=ln0+o} imply that characteristic determinant $\Delta_Q(\cdot)$ has a countable sequence of eigenvalues $\L := \{\l_m\}_{m \in \bZ}$ (counting multiplicity) with asymptotic behavior~\eqref{eq:lm=lm0+o} and lying in the strip $\Pi_h$ (we can assume that both $\L_0$ and $\L$ lie in the same strip $\Pi_h$ by increasing $h$ if needed). By Lemma~\ref{lem:eigen}, $\L$ is a sequence of eigenvalues of $L_U(Q)$ (counting multiplicity) and $\ol{\L} := \{\ol{\l_m}\}_{m \in \bZ}$ is a sequence of eigenvalues of $L_U^*(Q)$ (counting multiplicity). Moreover, each eigenvalue of $L_U(Q)$ has finite multiplicity.
Combining asymptotic formula~\eqref{eq:lm=lm0+o} and separation condition~\eqref{separ_cond} on $\L_0$ imply that for some $m_1 \ge m_0$ we have
\begin{equation} \label{eq:lk-lj}
 |\l_j - \l_k| > \delta, \qquad j \ne k, \quad |j|, |k| > m_1.
\end{equation}

Let $\cF := \{f_m\}_{m \in \bZ}$ be any system of root vectors of the operator $L_U(Q)$. Since operator $L_U(Q)$ has discrete spectrum and each its eigenvalue has finite multiplicity,
we can choose system of root vectors $\cF_* := \{f_{*m}\}_{m \in \bZ}$ of the operator $L_U^*(Q)$ in such a way that $\cF$ and $\cF_*$ are biorthogonal systems. i.e.\ $(f_j, f_{*k}) = \delta_{jk}$, $j, k \in \bZ$. This, implies minimality of the system $\cF$. In accordance with Lemma~\ref{lem:criter.unif.minimality}(ii), to prove uniform minimality it is sufficient to show that
\begin{equation} \label{eq:sup.fm.gm}
 \sup_{|m| > m_2} \|f_m\|_{\fH} \cdot \|f_{*m}\|_{\fH} < \infty,
\end{equation}
for some $m_2 \ge m_1$ that we will choose later. Here $m_1 \ge m_0$ is from the separation condition~\eqref{eq:lk-lj} on $\l_m$, while $m_0$ is from separation condition~\eqref{separ_cond} on $\l_m^0$.

Let $|m| > m_1 \ge m_0$. Then $\l_m$ and $\ol{\l_m}$ are algebraically simple eigenvalues of the operators $L_U(Q)$ and $L_U^*(Q)$, respectively. Let $p = p_m \in \oneton$ and $q = q_m \in \oneton$ be indices chosen in the part (i) of the proof for which estimate~\eqref{eq:y0p.y0*q>} holds. Following part (i) we define similar vector functions
\begin{equation} \label{eq:wtfgm.def}
\wt{f}_m(\cdot) := Y_p(\cdot, \l_m) \quad\text{and}\quad
\wt{f}_{*m}(\cdot) := Y_{*q}(\cdot, \ol{\l_m})
\end{equation}
by formulas~\eqref{eq:Ypxl.def} and~\eqref{eq:yj.eigen*}, respectively. Set
\begin{align}
 \wt{F}_m(\cdot) := \wt{f}_m(\cdot) - \wt{f}_m^0(\cdot)
 = Y_p(\cdot, \l_m) - Y_p^0(\cdot, \l_m^0), \qquad |m| \ge m_1, \\
 \wt{F}_{*m}(\cdot) := \wt{f}_{*m}(\cdot) - \wt{f}_{*m}^0(\cdot)
 = Y_{*q}(\cdot, \ol{\l_m}) - Y_{*q}^0(\cdot, \ol{\l_m^0}),
 \qquad |m| \ge m_1.
\end{align}
It follows from Schwarz inequality that
\begin{equation} \label{eq:wtfm.wtgm}
 \bigabs{\bigl(\wt{f}_m, \wt{f}_{*m} \bigr)_{\fH}}
 \ge \bigabs{\bigl(\wt{f}_m^0, \wt{f}_{*m}^0 \bigr)_{\fH}}
 - \|\wt{f}_m^0\|_{\fH} \|\wt{F}_{*m}\|_{\fH}
 - \|\wt{F}_m\|_{\fH} \|\wt{f}_{*m}^0\|_{\fH}
 - \|\wt{F}_m\|_{\fH} \|\wt{F}_{*m}\|_{\fH}.
\end{equation}
It follows from Theorem~\ref{th:eigenvec}, that $\|\wt{F}_m\|_{\fH} \to 0$ and $\|\wt{F}_{*m}\|_{\fH} \to 0$ as $|m| \to \infty$.
Combining this observation with estimates~\eqref{eq:wtfm.wtgm},~\eqref{eq:y0p.y0*q>}, and estimates~\eqref{eq:Phij0.Yk0.norm} and~\eqref{eq:Y*q0.norm} on $\|\wt{f}_m^0\|_{\fH} = \|Y_p^0(\cdot, \l_m^0)\|_{\fH}$ and $\|\wt{f}_{*m}^0\|_{\fH} = \|Y_{*q}^0(\cdot, \ol{\l_m^0)}\|_{\fH}$, we see that
\begin{align}
\label{eq:wtfm.wtgm>C3}
 & \bigabs{\bigl(\wt{f}_m, \wt{f}_{*m} \bigr)_{\fH}} \ge C_3/2,
 \qquad |m| > m_2, \\
\label{eq:wtfm<.wtgm<}
 & \|\wt{f}_m\|_{\fH} \le 2 M_h, \qquad \|\wt{f}_{*m}\|_{\fH} \le 2 M_h
 , \qquad |m| > m_2,
\end{align}
for some $m_2 \ge m_1$. Inequality~\eqref{eq:wtfm.wtgm>C3} implies that both vector functions $\wt{f}_m$ and $\wt{f}_{*m}$ are non-zero.
Since $\l_m$ is algebraically simple eigenvalue of the operator $L_U(Q)$, and $\wt{f}_m \ne 0$ and $\wt{f}_{*m} \ne 0$, Lemmas~\ref{lem:eigen.canon} and~\ref{lem:eigen.canon*} ensure that
\begin{equation} \label{eq:fm.gm.def}
 f_m(\cdot) = \gam_m \wt{f}_m(\cdot) = \gam_m Y_p(\cdot, \l_m), \qquad
 f_{*m}(\cdot) = \gam_{*m} \wt{f}_{*m}(\cdot)
 = \gam_{*m} Y_{*q}(\cdot, \ol{\l_m}), \qquad |m| > m_2,
\end{equation}
with some $\gam_m, \gam_{*m} \ne 0$. The proof of estimate~\eqref{eq:sup.fm.gm} is finished the same way as in part (i) by using estimates~\eqref{eq:wtfm.wtgm>C3}--\eqref{eq:wtfm<.wtgm<}.
\end{proof}
\subsection{Riesz basis property} \label{subsec:riesz}
First, let us recall some definitions.
\begin{definition}
\textbf{(i)} A sequence $\{\phi_m\}_{m \in \bZ}$ of vectors in $\fH$ is called a \textbf{Riesz basis} if it admits a representation $\phi_m = T e_m$, $m \in \bZ$, where $\{e_m\}_{m \in \bZ}$ is an orthonormal basis in $\fH$ and $T : \fH \to \fH$ is a bounded operator with a bounded inverse.

\textbf{(ii)} A sequence $\{\phi_m\}_{m \in \bZ}$ of vectors in $\fH$ is called \textbf{Besselian} if
\begin{equation} \label{eq:bessel}
\sum_{m \in \bZ} \abs{(f, \phi_m)_{\fH}}^2 < \infty,
\qquad f \in \fH.
\end{equation}
\end{definition}
\begin{remark} \label{rem:gelfand}
In accordance with closed graph theorem, inequality~\eqref{eq:bessel} is equivalent to
\begin{equation} \label{eq:bessel2}
\sum_{m \in \bZ} \abs{(f, \phi_m)_{\fH}}^2 \le \gam^2 \|f\|_{\fH}^2,
\qquad f \in \fH,
\end{equation}
where $\gam > 0$ does not depend on $f$. Putting in~\eqref{eq:bessel2} $f = \phi_m$ implies
$\|\phi_m\|_{\fH} \le \gam$, $m \in \bZ$.
\end{remark}
Our investigation of the Riesz basis property of the system of root vectors of the operator $L_U(Q)$ heavily relies on the following well-known Bari criterion.
\begin{theorem} \label{th:Bari.crit}\cite[Theorem\ VI.2.1]{GohKre65}
Let $\fH$ be a separable Hilbert space. The vectors system $\{\phi_m\}_{m \in \bZ} \subset \fH$ forms a Riesz basis in $\fH$ if and only if it is complete and Besselian in $\fH$, and there exists a biorthogonal system $\{\phi_{*m}\}_{m \in \bZ}$ that is also complete and Besselian.
\end{theorem}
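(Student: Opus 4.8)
The plan is to establish both implications of this classical Bari criterion by working with the analysis (coefficient) and synthesis operators attached to the two biorthogonal systems; the content lies in the sufficiency direction, while necessity is essentially a change of variables.

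For \textbf{necessity}, suppose $\phi_m = T e_m$ for an orthonormal basis $\{e_m\}_{m\in\bZ}$ of $\fH$ and a bounded operator $T$ with bounded inverse. Put $\phi_{*m} := (T^{-1})^{*} e_m = (T^{*})^{-1} e_m$. Then $(\phi_j,\phi_{*k}) = (T e_j,(T^{-1})^{*} e_k) = (T^{-1} T e_j, e_k) = \delta_{jk}$, so $\{\phi_{*m}\}$ is the biorthogonal system, and it is again the image of an orthonormal basis under a bounded operator with bounded inverse, hence a Riesz basis; in particular both systems are complete. Besselness of $\{\phi_m\}$ follows from $\sum_{m}|(f,\phi_m)|^2 = \sum_m |(T^{*} f, e_m)|^2 = \|T^{*} f\|^2 \le \|T\|^2\|f\|^2$, and that of $\{\phi_{*m}\}$ from $\sum_m|(f,\phi_{*m})|^2 = \|T^{-1} f\|^2 \le \|T^{-1}\|^2\|f\|^2$.

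For \textbf{sufficiency}, fix once and for all an orthonormal basis $\{u_m\}_{m\in\bZ}$ of $\fH$ (available since $\fH$ is separable), and let $\gam,\gam_{*}>0$ be Bessel bounds for $\{\phi_m\}$ and $\{\phi_{*m}\}$ as in~\eqref{eq:bessel2}. Define the synthesis operators $\Psi,\Psi_{*}:\fH\to\fH$ on the basis by $\Psi u_m := \phi_m$ and $\Psi_{*} u_m := \phi_{*m}$, i.e.\ $\Psi(\sum_m c_m u_m) = \sum_m c_m\phi_m$. The series converge in $\fH$ and $\|\Psi\|\le\gam$, $\|\Psi_{*}\|\le\gam_{*}$, because the adjoints are the analysis operators $\Psi^{*} f = \sum_m (f,\phi_m) u_m$ and $\Psi_{*}^{*} f = \sum_m (f,\phi_{*m}) u_m$, which are bounded by the Besselness hypothesis (cf.\ Remark~\ref{rem:gelfand}). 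The crucial point is the identity $\Psi\Psi_{*}^{*} = I_{\fH}$: for $f\in\fH$ one has $\Psi\Psi_{*}^{*} f = \sum_m (f,\phi_{*m})\phi_m$ (the sum converging in $\fH$), and pairing with $\phi_{*k}$ and using biorthogonality gives $(\Psi\Psi_{*}^{*} f - f,\phi_{*k}) = 0$ for every $k\in\bZ$; since $\{\phi_{*m}\}$ is complete, $\Psi\Psi_{*}^{*} f = f$. In particular $\Psi$ is surjective. Moreover $\Psi$ is injective: if $\sum_m c_m\phi_m = 0$ then pairing with $\phi_{*k}$ forces $c_k = 0$ for all $k$. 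Hence $\Psi$ is a bounded bijection of $\fH$ onto itself, so by the bounded inverse (open mapping) theorem $\Psi^{-1}$ is bounded; taking $T := \Psi$ and $e_m := u_m$ exhibits $\{\phi_m\} = \{T e_m\}$ as a Riesz basis.

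There is no genuine obstacle here; the points requiring care are the convergence of the synthesis series and the norm bounds $\|\Psi\|\le\gam$, $\|\Psi_{*}\|\le\gam_{*}$, which reduce to Remark~\ref{rem:gelfand} by duality; the justification of the termwise pairing in $\Psi\Psi_{*}^{*}=I$, which follows from $\ell^2$-convergence and continuity of the inner product; and the appeal to the open mapping theorem. Note the sufficiency argument uses only Besselness of both systems, completeness of $\{\phi_{*m}\}$, and biorthogonality. In the applications (Theorem~\ref{th:Riesz_basis}) the real work is to verify these four hypotheses for the normalized root-vector systems of $L_U(Q)$ and $L_U^{*}(Q)$, using completeness (Theorem~\ref{th:compl}), uniform minimality (Theorem~\ref{th:uniform.minim}) together with the asymptotics of Theorems~\ref{th:ln=ln0+o} and~\ref{th:eigenvec} and a comparison with the free system $\{f_m^0\}$; but that is a matter separate from the criterion itself.
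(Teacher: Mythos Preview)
Your proof is correct. The paper does not give its own proof of this statement: Theorem~\ref{th:Bari.crit} is quoted directly from \cite[Theorem VI.2.1]{GohKre65} and invoked as a black box in the proofs of Proposition~\ref{prop:Riesz_basis} and Theorem~\ref{th:Riesz_basis}. So there is nothing to compare against; your argument is a clean, standard write-up of the classical Bari criterion. Your observation that the sufficiency direction does not actually use completeness of $\{\phi_m\}$ (only of $\{\phi_{*m}\}$) is correct and well known---completeness of $\{\phi_m\}$ is then a consequence of the Riesz basis conclusion---but this makes no difference for the applications in the paper, where both completenesses are verified anyway via Theorem~\ref{th:compl}.
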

First, we establish a result that implies Besselian property for eigenvectors of the unperturbed operator $L_U(0)$.
\begin{lemma} \label{lem:bessel}
Let entries of $B(\cdot)$ satisfy condition~\eqref{eq:betak.alpk.Linf}. Let $\{\mu_m\}_{m \in \bZ}$ be an incompressible sequence lying in the strip $\Pi_h$ (see Definition~\ref{def:incompressible}). Then the following statements hold:

\textbf{(i)} For any $k \in \oneton$ the sequence $\{\Phi^0_{k}(\cdot, \mu_m)\}_{m \in \bZ}$ defined in~\eqref{eq:Phi0k.def} is Besselian in $\fH$.

\textbf{(ii)} For any $p \in \oneton$ the sequence $\{Y_p^0(\cdot, \mu_m)\}_{m \in \bZ}$ defined in~\eqref{eq:Yp0xl.def} is Besselian in $\fH$.
\end{lemma}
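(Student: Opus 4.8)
The plan is to reduce both statements to a single Besselian estimate for a family of scalar exponential sequences on a segment, and then assemble the vector/linear-combination versions. First I would recall that, by definition~\eqref{eq:Phi0k.def}, $\Phi^0_k(\cdot,\mu_m)$ has all components zero except the $k$-th, which equals $e^{i\mu_m\rho_k(\cdot)}$; hence computing $(f,\Phi^0_k(\cdot,\mu_m))_{\fH}$ in the inner product~\eqref{eq:fg.fH.def}--\eqref{eq:fHk.def} only involves the $k$-th component $f_k$ of $f$, and $\|\Phi^0_k(\cdot,\mu_m)\|_{\fH}^2$ reduces to $\int_0^\ell |e^{i\mu_m\rho_k(x)}|^2|\beta_k(x)|\,dx$. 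The key reduction is the change of variable $u=\rho_k(x)$: since $\beta_k\in L^\infty[0,\ell]$ does not change sign (condition~\eqref{eq:betak.alpk.Linf}), $\rho_k$ is a bi-Lipschitz bijection of $[0,\ell]$ onto the segment $[0,b_k]$ (with the convention $[0,b_k]=[b_k,0]$ if $b_k<0$), and $|\beta_k(x)|\,dx=\pm du$. Thus
$$
(f_k,e^{i\mu_m\rho_k(\cdot)})_{\fH_k}
= \pm\int_0^{b_k} \wt f_k(u)\,\ol{e^{i\mu_m u}}\,du,
\qquad \wt f_k(u) := f_k(\rho_k^{-1}(u)),
$$
and the map $f_k\mapsto\wt f_k$ is an isometric isomorphism of $\fH_k=L^2_{|\beta_k|}[0,\ell]$ onto $L^2[0,b_k]$. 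Consequently statement (i) is equivalent to: for any $g\in L^2[0,b_k]$, $\sum_{m\in\bZ}\bigl|\int_0^{b_k} g(u)e^{-i\mu_m u}\,du\bigr|^2<\infty$, i.e.\ $\{e^{i\mu_m u}\}_{m\in\bZ}$ is a Bessel sequence in $L^2[0,b_k]$.

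Next I would prove this scalar Bessel property from incompressibility of $\{\mu_m\}$ together with $\{\mu_m\}\subset\Pi_h$. This is the classical fact that an incompressible (finite upper density) sequence in a horizontal strip yields a Bessel system of exponentials on any finite interval. The argument: write $\mu_m=\xi_m+i\eta_m$ with $|\eta_m|\le h$; the Fourier transform $\wh g$ of $g\in L^2[0,b_k]$ (extended by zero) is an entire function of exponential type with $\wh g\in L^2(\bR)$ on the real line and, by the Paley--Wiener/Plancherel--Polya type inequality, $\int_{\bR}|\wh g(\xi+i\eta)|^2\,d\xi\le C_h\|g\|_{L^2}^2$ uniformly for $|\eta|\le h$. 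Since $\int_0^{b_k}g(u)e^{-i\mu_m u}\,du=\wh g(\mu_m)$ up to normalization, it remains to estimate $\sum_m|\wh g(\mu_m)|^2$. Split $\bR$ into unit intervals $[t,t+1]$; by incompressibility each such interval contains at most $d$ of the $\xi_m$, and for each such $\mu_m$ a sub-mean-value estimate for $|\wh g|^2$ over the unit disk centered at $\mu_m$ (which lies in the strip $|\eta|\le h+1$) gives $|\wh g(\mu_m)|^2\le C\iint_{|z-\mu_m|\le 1}|\wh g|^2$. Summing over $m$, the finite-overlap of these disks (again from incompressibility, after enlarging the disks to a fixed radius) yields $\sum_m|\wh g(\mu_m)|^2\le C'_{h,d}\int_{|\eta|\le h+1}\int_{\bR}|\wh g(\xi+i\eta)|^2\,d\xi\,d\eta\le C''_{h,d}\|g\|_{L^2}^2$. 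This establishes (i); in passing it also yields the uniform bound $\|\Phi^0_k(\cdot,\mu_m)\|_{\fH}\le M_h$ already noted in Lemma~\ref{lem:easy.upper.bound}.

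For statement (ii) I would combine (i) with the explicit formula~\eqref{eq:Yp0xl.def}, $Y_p^0(\cdot,\mu_m)=\sum_{k=1}^n A_{kp}^0(\mu_m)\Phi^0_k(\cdot,\mu_m)$, and the uniform upper bound $|A_{kp}^0(\mu_m)|\le M_h$ from~\eqref{eq:A0.Delta0<M} of Lemma~\ref{lem:easy.upper.bound} (applicable since $\mu_m\in\Pi_h$). Indeed, for $f\in\fH$,
$$
|(f,Y_p^0(\cdot,\mu_m))_{\fH}|^2
\le n\sum_{k=1}^n |A_{kp}^0(\mu_m)|^2\,|(f,\Phi^0_k(\cdot,\mu_m))_{\fH}|^2
\le n M_h^2\sum_{k=1}^n |(f,\Phi^0_k(\cdot,\mu_m))_{\fH}|^2,
$$
and summing over $m\in\bZ$ and using part (i) for each fixed $k\in\oneton$ gives $\sum_{m}|(f,Y_p^0(\cdot,\mu_m))_{\fH}|^2\le nM_h^2\sum_{k=1}^n\gam_k^2\|f\|_{\fH}^2<\infty$, which is the Besselian property~\eqref{eq:bessel}. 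I expect the main obstacle to be the careful justification of the scalar exponential Bessel estimate in the strip — specifically the Plancherel--Polya inequality across the strip and the finite-overlap/sub-mean-value step that converts incompressibility into a uniform bound on $\sum_m|\wh g(\mu_m)|^2$; the rest (change of variables, isometry, triangle inequality) is routine. One small technical point to handle along the way is the degenerate possibility $b_k=0$, which is excluded here since $\beta_k\in L^1([0,\ell];\bR\setminus\{0\})$ does not change sign, so $b_k=\rho_k(\ell)\ne 0$ and the segment $[0,b_k]$ is nondegenerate.
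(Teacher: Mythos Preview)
Your proposal is correct and follows essentially the same route as the paper: reduce (i) via the change of variable $u=\rho_k(x)$ to the Besselian property of $\{e^{i\mu_m u}\}_{m\in\bZ}$ on a finite segment, and then derive (ii) from (i) using~\eqref{eq:Yp0xl.def} together with the uniform bound $|A_{kp}^0(\mu_m)|\le M_h$ from Lemma~\ref{lem:easy.upper.bound}. The only difference is that the paper simply cites \cite[Lemma~2.2]{Katsn71} for the scalar exponential Bessel estimate, whereas you unpack that lemma by outlining the Plancherel--Polya / sub-mean-value / finite-overlap argument; your sketch is correct and makes the proof self-contained, at the cost of some extra detail that the paper avoids by citation.
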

\begin{proof}
\textbf{(i)} Let $k \in \oneton$ be fixed and let
$$
f = \col(f_1, \ldots, f_n) \in \fH, \qquad\text{i.e.}\qquad
f_j \in L^2_{|\beta_j|}[0,\ell], \quad j \in \oneton.
$$
It follows from definition~\eqref{eq:Phi0k.def} of $\Phi_k^0(\cdot,\l)$ that
\begin{equation} \label{eq:f.Phik0.step1}
 (f, \Phi^0_{k}(\cdot, \mu_m))_{\fH}
 = \int_0^{\ell} f_k(x) \cdot \ol{e^{i \mu_m \rho_k(x)}} |\beta_k(x)|\,dx.
\end{equation}

Condition~\eqref{eq:betak.alpk.Linf} implies conditions~\eqref{eq:rhok.Lip} on $\rho_k$ and the inverse function $\rho_k^{-1}$.
Let $x_k := |\rho_k^{-1}| \in \Lip[0,\ell_k]$, where $\ell_k := |b_k|$. Since $\beta_k$ and $\rho_k$ do not change sign on the segment $[0, \ell]$, then $x_k$ is the inverse function of $|\rho_k|$.
Consider the function $f_k \circ x_k$ defined on $[0,\ell_k]$. Since $1/\beta_k \in L^{\infty}[0,\ell]$, it follows from definition of $L^2_{|\beta_k|}[0,\ell]$ that $f_k \in L^2[0,\ell]$.
Since $x_k \in \Lip[0,\ell]$ and strictly monotonous, and $f_k \in L^2[0,\ell]$ it follows that $f_k \circ x_k \in L^2[0, \ell_k] \subset L^1[0, \ell_k]$.
Finally, recall that $s_k = \sign(\beta_k(\cdot)) = \sign(\rho_k(\cdot)) = \const$.

Taking observations of the previous paragraph into account and making a change of variable $x = x_k(u)$ in~\eqref{eq:f.Phik0.step1} (and so $u = |\rho_k(x)| = s_k \rho_k(x)$ and $du = |\beta_k(x)| dx$), we get
\begin{equation} \label{eq:f.Phik0}
 (f, \Phi^0_{k}(\cdot, \mu_m))_{\fH}
 = \int_0^{\ell_k} f_k(x_k(u)) \cdot \ol{e^{i s_k \mu_m u}} \,du.
\end{equation}
Since sequence $\{s_k \mu_m\}_{m \in \bZ}$ is incompressible, then~\cite[Lemma~2.2]{Katsn71} implies that the sequence $\{e^{i s_k \mu_m u}\}_{m \in \bZ}$ is Besselian in $L^2[0,\ell_k]$ (see also the proof of Lemma 6.4 in~\cite{LunMal16JMAA}). With account of this observation, it follows from~\eqref{eq:f.Phik0} and inclusion $f_k \circ x_k \in L^2[0,\ell_k]$, that the sequence $\{\Phi^0_{k}(\cdot, \mu_m)\}_{m \in \bZ}$ is Besselian in $\fH$.

\textbf{(ii)} Combining formula~\eqref{eq:Yp0xl.def} with
Schwarz inequality and estimate~\eqref{eq:A0.Delta0<M} (applicable since $\mu_m \in \Pi_h$), we arrive at
\begin{multline} \label{eq:f.Yp0}
 \bigabs{\(f, Y_p^0(\cdot, \mu_m)\)_{\fH}}^2
 = \Bigl|\sum_{k=1}^n A_{kp}^0(\mu_m) \cdot
 \(f, \Phi_k^0(\cdot, \mu_m)\)_{\fH}\Bigr|^2
 \le \sum_{k=1}^n |A_{kp}^0(\mu_m)|^2 \cdot
 \sum_{k=1}^n \bigabs{\(f, \Phi_k^0(\cdot, \mu_m)\)_{\fH}}^2 \\
 \le n M_h^2 \sum_{k=1}^n \bigabs{\(f, \Phi_k^0(\cdot, \mu_m)\)_{\fH}}^2,
 \qquad m \in \bZ, \quad p \in \oneton.
\end{multline}
Estimate~\eqref{eq:f.Yp0} and part (i) of Lemma now finish the proof.
\end{proof}
Now, using integral representation~\eqref{eq:Phip=Phi0p+int.sum} we can extend the previous result to vector functions $\Phi_k(\cdot,\l)$ and $Y_p(\cdot,\l)$, which will imply Besselian property for eigenvectors of the operator $L_U(Q)$.
\begin{proposition} \label{prop:bessel}
Let matrix functions $B(\cdot)$ and $Q(\cdot)$ satisfy conditions~\eqref{eq:Bx.def}--\eqref{eq:Qjk=0.bj=bk}. Let $\{\mu_m\}_{m \in \bZ}$ be an incompressible sequence lying in the strip $\Pi_h$.
Then the following statements hold:

\textbf{(i)} For any $k \in \oneton$ the sequence $\{\Phi_k(\cdot, \mu_m)\}_{m \in \bZ}$ defined in~\eqref{eq:Phixl.def} is Besselian in $\fH$.

\textbf{(ii)} For any $p \in \oneton$ the sequence $\{Y_p(\cdot, \mu_m)\}_{m \in \bZ}$ defined in~\eqref{eq:Ypxl.def} is Besselian in $\fH$.
\end{proposition}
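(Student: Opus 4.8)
The plan is to reduce statement (ii) to statement (i) exactly as in Lemma~\ref{lem:bessel}(ii), and to prove (i) by combining the integral representation~\eqref{eq:Phip=Phi0p+int.sum} from Proposition~\ref{prop:Phip=Phip0+int.sum} with the already-established Besselian property of the unperturbed system, Lemma~\ref{lem:bessel}(i). The key observation is that for a fixed $k$, representation~\eqref{eq:Phip=Phi0p+int.sum} writes $\Phi_k(\cdot,\l)$ as $\Phi_k^0(\cdot,\l)$ plus a finite sum of integral terms $\int_0^x R_q^{[k]}(x,t) e^{i\l\rho_q(t)}\beta_q(t)\,dt$ with $R_q^{[k]} \in X_{1,0}(\Omega)\cap X_{\infty,0}(\Omega)$. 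Since a finite sum of Besselian sequences is Besselian (this follows from the triangle inequality together with Remark~\ref{rem:gelfand}, applied to the $\ell^2$-valued maps $f \mapsto \{(f,\phi_m)\}_m$), it suffices to show that each sequence $\{\psi_m^{[q,k]}(\cdot)\}_{m\in\bZ}$, where $\psi_m^{[q,k]}(x) := \int_0^x R_q^{[k]}(x,t) e^{i\mu_m\rho_q(t)}\beta_q(t)\,dt$, is Besselian in $\fH$.

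\textbf{Key steps.} First I would record the reduction: by~\eqref{eq:Phip=Phi0p+int.sum} and the linearity of the Besselian property (finite sums), statement (i) reduces to the Besselian property of each integral-term sequence $\{\psi_m^{[q,k]}\}$, and then statement (ii) follows from (i) by the very same computation~\eqref{eq:f.Yp0} used in Lemma~\ref{lem:bessel}(ii), using the uniform bound~\eqref{eq:A0.Delta0<M} on $A_{kp}^0(\mu_m)$ valid since $\mu_m \in \Pi_h$. Second, to handle $\{\psi_m^{[q,k]}\}$, I would compute $(f,\psi_m^{[q,k]})_{\fH}$ for $f = \col(f_1,\ldots,f_n) \in \fH$: writing out the $\fH$-inner product and using Fubini (justified by the $X_1 \cap X_\infty$ bounds and Lemma~\ref{lem.Volterra.operGeneral}, which make the double integral absolutely convergent), one gets
\begin{equation*}
 (f, \psi_m^{[q,k]})_{\fH} = \int_0^\ell \ol{e^{i\mu_m\rho_q(t)}}\,\beta_q(t)
 \Bigl(\sum_{j=1}^n \int_t^\ell f_j(x)\,\ol{R_{jq}^{[k]}(x,t)}\,|\beta_j(x)|\,dx\Bigr) dt
 =: \int_0^\ell g_m(t)\,\ol{e^{i\mu_m\rho_q(t)}}\,\beta_q(t)\,dt,
\end{equation*}
where $g_m(t)$ is the trace-type inner integral. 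The point is that $g_m$ does \emph{not} depend on $m$ once we drop the exponential (the inner integral has no $\mu_m$ in it), so write $g(t)$ for it; moreover $g \in L^2_{|\beta_q|}[0,\ell]$ with $\|g\|$ controlled by $\|f\|_{\fH}$, using the trace boundedness from Lemma~\ref{lem:trace} (inclusion $R_{jq}^{[k]} \in X_{\infty,0}(\Omega)$ guarantees $R_{jq}^{[k]}(x,\cdot)$, hence the convolution against $f_j$, behaves well) together with the Cauchy--Schwarz inequality. Third, I would make the change of variable $u = |\rho_q(t)|$ exactly as in~\eqref{eq:f.Phik0}, turning $(f,\psi_m^{[q,k]})_{\fH}$ into $\int_0^{\ell_q} \tilde g(u)\,\ol{e^{is_q\mu_m u}}\,du$ with $\tilde g \in L^2[0,\ell_q]$ and $\|\tilde g\|_{L^2} \le C\|f\|_{\fH}$; then the incompressibility of $\{s_q\mu_m\}_{m\in\bZ}$ and~\cite[Lemma~2.2]{Katsn71} give $\sum_m |(f,\psi_m^{[q,k]})_{\fH}|^2 \le \gamma^2 \|\tilde g\|_{L^2}^2 \le (C\gamma)^2\|f\|_{\fH}^2$, which is the Besselian bound.

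\textbf{Main obstacle.} The routine parts are the Fubini interchange and the change of variables; the one place that needs genuine care is controlling the inner integral $g(t)$ (equivalently $\tilde g(u)$) in $L^2$ uniformly in $f$, i.e.\ showing the map $f \mapsto \tilde g$ is bounded from $\fH$ into $L^2[0,\ell_q]$. This is where the membership $R_{jq}^{[k]} \in X_{1,0}(\Omega)\cap X_{\infty,0}(\Omega)$ is essential rather than cosmetic: by Lemma~\ref{lem.Volterra.operGeneral} the associated Volterra operator is bounded on every $L^p$, in particular on $L^2$, and its adjoint (which is what appears after the Fubini swap, since we integrate $R_{jq}^{[k]}(x,t)$ against $f_j(x)$ over $x \in [t,\ell]$) inherits an $L^2 \to L^2$ bound; combined with the weight equivalences coming from $\beta_j, 1/\beta_j \in L^\infty$ this yields the desired estimate. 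Once that single boundedness estimate is in hand, everything else is a direct imitation of the proof of Lemma~\ref{lem:bessel}, and parts (i) and (ii) follow.
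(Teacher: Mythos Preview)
Your approach to part~(i) is essentially the paper's: Fubini on the integral representation~\eqref{eq:Phip=Phi0p+int.sum}, recognition of the inner integral as the adjoint of the Volterra operator with kernel $R_{jq}^{[k]}$, $L^2$-boundedness from Lemma~\ref{lem.Volterra.operGeneral} via the inclusion $R_{jq}^{[k]}\in X_{1,0}(\Omega)\cap X_{\infty,0}(\Omega)$, and then Lemma~\ref{lem:bessel}(i). That is correct.

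There is, however, a genuine slip in your reduction of~(ii) to~(i). By definition~\eqref{eq:Ypxl.def}, $Y_p(x,\l)=\sum_k A_{kp}(\l)\,\Phi_k(x,\l)$ involves the entries $A_{kp}(\l)$ of the adjugate of the \emph{perturbed} matrix $A_Q(\l)=C+D\Phi_Q(\ell,\l)$, not the unperturbed entries $A_{kp}^0(\l)$. The bound~\eqref{eq:A0.Delta0<M} that you invoke only controls $A_{kp}^0(\l)$ and is therefore not enough. What the paper does (and what you need) is to use the representation~\eqref{eq:AjkQ=Ajk0+int} from Proposition~\ref{prop:DeltaQ=Delta0+int},
\[
 A_{kp}(\l)=A_{kp}^0(\l)+\int_{b_-}^{b_+} g_{kp}(u)\,e^{i\l u}\,du, \qquad g_{kp}\in L^1[b_-,b_+],
\]
to deduce the uniform bound $|A_{kp}(\l)|\le M_h+\|g_{kp}\|_1\,(e^{-b_-h}+e^{b_+h})$ for $\l\in\Pi_h$. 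With this in hand, the Schwarz computation~\eqref{eq:f.Yp0} goes through verbatim with $A_{kp}(\mu_m)$ in place of $A_{kp}^0(\mu_m)$, and (ii) follows from (i).
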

\begin{proof}
\textbf{(i)} Let $k \in \oneton$ be fixed and let
$$
f = \col(f_1, \ldots, f_n) \in \fH, \qquad\text{i.e.}\qquad
f_j \in L^2_{|\beta_j|}[0,\ell], \quad j \in \oneton.
$$
By Proposition~\ref{prop:Phip=Phip0+int.sum}, representation~\eqref{eq:Phip=Phi0p+int.sum} holds with $p=k$, where vector kernel $R_q^{[k]}$ is defined via~\eqref{eq:Rpjk.def}--\eqref{eq:Rqp.def} and satisfies inclusion~\eqref{eq:Rpq.in.X}.
Moreover, by definition of signature matrix $S$ we have,
$$
\beta_q(x) = s_q |\beta_q(x)|, \qquad q \in \oneton.
$$
Taking these observations and formula~\eqref{eq:f.Phik0} into account, we get by changing order of integration
\begin{align}
\nonumber
 (f, \Phi_k(\cdot, \l))_{\fH}
 & = (f, \Phi_k^0(x,\l))_{\fH} + \sum_{j,q=1}^n \int_0^{\ell} f_j(x)
 \(\int_0^x \ol{R_{jq}^{[k]}(x,t) e^{i \l \rho_q(t)}} \beta_q(t) \,dt\)
 \cdot |\beta_j(x)| \,dx \\
\nonumber
 & = (f, \Phi_k^0(x,\l))_{\fH} + \sum_{j,q=1}^n \int_0^{\ell}
 \(\int_t^{\ell} f_j(x) \ol{R_{jq}^{[k]}(x,t)} |\beta_j(x)| \,dx\)
 \ol{e^{i \l \rho_q(t)}} \beta_q(t) \,dt \\
\label{eq:f.Phik}
 & = \int_0^{\ell} f_k(t) \cdot \ol{e^{i \l \rho_k(t)}} |\beta_k(t)|\,dx + \sum_{j,q=1}^n \int_0^{\ell}
 f_{kjq}(t) \cdot \ol{e^{i \l \rho_q(t)}} |\beta_q(t)| \,dt,
\end{align}
where
\begin{equation} \label{eq:fkjq.def}
 f_{kjq}(t) := \int_t^{\ell} \ol{R_{jq}^{[k]}(x,t)}
 \cdot s_q f_j(x) |\beta_j(x)| \,dx,
 \qquad t \in [0,\ell], \quad j,q \in \oneton.
\end{equation}
Since $\beta_j \in L^{\infty}[0,\ell]$ and $f_j \in L^2_{|\beta_j|}[0,\ell]$, it follows that
\begin{equation} \label{eq:sq.fj.betaj.in.L2}
 f_j, \ s_q f_j |\beta_j| \in L^2[0,\ell], \qquad j,q \in \oneton.
\end{equation}
Consider an operator $\cR_{kjq}$ generated by the kernel $R_{jq}^{[k]}$ by formula~\eqref{eq:cR.def} in $L^2[0,\ell]$. Lemma~\ref{lem.Volterra.operGeneral} and inclusion~\eqref{eq:Rpq.in.X} imply that operator $\cR_{kjq}$ is bounded in $L^2[0,\ell]$ (and even Volterra operator). It is clear that operator $\cR_{kjq}$ and its adjoint operator $\cR_{kjq}^*$ are of the form
\begin{align}
\label{eq:cRkjq}
 (\cR_{kjq} f)(x) &= \int_0^x R_{jq}^{[k]}(x,t) f(t) dt,
 \qquad f \in L^2[0,\ell], \\
\label{eq:cRkjq*}
 (\cR_{kjq}^* g)(t) &= \int_t^{\ell} \ol{R_{jq}^{[k]}(x,t)} g(x) dx,
 \qquad g \in L^2[0,\ell].
\end{align}
Formulas~\eqref{eq:fkjq.def},~\eqref{eq:cRkjq*}, inclusion~\eqref{eq:sq.fj.betaj.in.L2} and boundedness of the operator $\cR_{kjq}^*$ imply that ${f_{kjq} \in L^2[0,\ell]}$. Recall, that $k \in \oneton$ is fixed. Let us set
\begin{equation} \label{eq:fq.fkq.def}
 F_q := F_{kq} \col(\delta_{1q}, \ldots, \delta_{nq}), \qquad
 F_{kq} := \delta_{kq} f_k + \sum_{j=1}^n f_{kjq} \in L^2[0,\ell],
 \qquad q \in \oneton.
\end{equation}
Since $\beta_q \in L^{\infty}[0,\ell]$, $q \in \oneton$, it is clear that $F_q \in \fH$. With account of notation~\eqref{eq:fq.fkq.def} and formula~\eqref{eq:f.Phik0.step1}, we get by setting $\l = \mu_m$ in~\eqref{eq:f.Phik},
\begin{equation}
 (f, \Phi_k(\cdot, \mu_m))_{\fH} = \sum_{q=1}^n
 \int_0^{\ell} F_{kq}(t) \cdot \ol{e^{i \mu_m \rho_q(t)}} |\beta_q(t)| \,dt
 = \sum_{q=1}^n (F_q, \Phi_q(\cdot, \mu_m))_{\fH},
 \qquad m \in \bZ.
\end{equation}
Besselian property of the sequence $\{\Phi_k(\cdot, \mu_m)\}_{m \in \bZ}$ is now implied by Besselian property of the sequences $\{\Phi_q^0(\cdot, \mu_m)\}_{m \in \bZ}$, $q \in \oneton$, established in Lemma~\ref{lem:bessel}(i), and inclusion $F_q \in \fH$, $q \in \oneton$, which finishes the proof.

\textbf{(ii)} Let $p \in \oneton$ be fixed. It follows from representation~\eqref{eq:AjkQ=Ajk0+int} for $A_{kp}(\l)$, inclusion $g_{kp} \in L^1[b_-,b_+]$ and estimate~\eqref{eq:A0.Delta0<M} on $A_{kp}^0(\l)$ that for all $k \in \oneton$ the following estimate holds,
\begin{equation} \label{eq:Akpl<}
 |A_{kp}(\l)| \le |A_{kp}^0(\l)|
 + \abs{\int_{b_-}^{b_+} g_{kp}(u) e^{i \l u} \,du}
 \le M_h + \|g_{kp}\|_1 (e^{-b_- h} + e^{b_+ h}) := M_{h,kp},
 \qquad |\l| \le \Pi_h.
\end{equation}
As in the proof of Lemma~\ref{lem:bessel}(ii), combining formula~\eqref{eq:Ypxl.def} with
Schwarz inequality and estimate~\eqref{eq:Akpl<} (applicable since $\mu_m \in \Pi_h$, $m \in \bZ$), we arrive at
\begin{equation} \label{eq:f.Yp}
 \bigabs{\(f, Y_p(\cdot, \mu_m)\)_{\fH}}^2
 = \Bigl|\sum_{k=1}^n A_{kp}(\mu_m) \cdot
 \(f, \Phi_k(\cdot, \mu_m)\)_{\fH}\Bigr|^2
 \le \sum_{k=1}^n M_{h,kp}^2 \cdot
 \sum_{k=1}^n \bigabs{\(f, \Phi_k(\cdot, \mu_m)\)_{\fH}}^2,
 \quad m \in \bZ.
\end{equation}
Estimate~\eqref{eq:f.Yp} and part (i) of Proposition finish the proof.
\end{proof}
\begin{proposition} \label{prop:Riesz_basis}
Let entries of matrix function $B(\cdot)$ satisfy condition~\eqref{eq:betak.alpk.Linf}, i.e.
\begin{equation} \label{eq:betak.alpk.Linf.basis0}
 \beta_k, \ 1/\beta_k \ \in L^{\infty}([0, \ell]; \bR), \qquad
 \sign(\beta_k(\cdot)) \equiv \const, \qquad
 k \in \oneton,
\end{equation}
and let boundary conditions~\eqref{eq:Uy=0} be strictly regular. Then any normalized system of root vectors of the operator $L_{0,U} = L_U(0)$ forms a Riesz basis in $\fH$.
\end{proposition}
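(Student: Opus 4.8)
The plan is to apply the Bari criterion (Theorem~\ref{th:Bari.crit}) to a suitably normalized system $\{f_m^0\}_{m \in \bZ}$ of root vectors of $L_{0,U}$. By Theorem~\ref{th:Bari.crit}, it suffices to check four facts: that $\{f_m^0\}_{m \in \bZ}$ is complete in $\fH$ and Besselian, and that there exists a biorthogonal system $\{f_{*m}^0\}_{m \in \bZ}$ which is also complete and Besselian. Completeness of the system of root vectors of $L_{0,U}$ is exactly Theorem~\ref{th:compl}(i), which only requires the relaxed assumption~\eqref{eq:betak.L1} on $B(\cdot)$; applying the same theorem to the adjoint operator $L_{0,U}^* = L_{0,U_*}$ (which is again a BVP of the same type with regular boundary conditions by Lemma~\ref{lem:adjoint}) gives completeness of any system of root vectors of $L_{0,U}^*$, in particular of the biorthogonal system.

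First I would fix the biorthogonal system. Since boundary conditions are strictly regular, $\Delta_0(\cdot)$ has an asymptotically separated, incompressible sequence of zeros $\L_0 = \{\l_m^0\}_{m \in \bZ}$ lying in a strip $\Pi_h$ (Lemma~\ref{lem:Delta0.prop}); for $|m| > m_0$ the eigenvalues $\l_m^0$ are algebraically simple and, by Lemma~\ref{lem:D0jk>}, there are indices $p = p_m, q = q_m$ with $|A_{q_mp_m}^0(\l_m^0)| \ge C_2$. Following Proposition~\ref{prop:eigenvec0}, set $\wt f_m^0 := Y_{p_m}^0(\cdot,\l_m^0)$, and, reducing the boundary conditions to the canonical form~\eqref{eq:CD.canon} (Lemma~\ref{lem:CD.canon}) so that the adjoint matrices have the form~\eqref{eq:C*D*.canon} (Lemma~\ref{lem:C*D*.canon}), set $\wt f_{*m}^0 := Y_{*q_m}^0(\cdot,\ol{\l_m^0})$. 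Proposition~\ref{prop:y0p.y0*q} gives the key identity $(\wt f_m^0, \wt f_{*m}^0)_{\fH} = -i\cE(\l_m^0)\exp(ib_{q_m}^-\l_m^0) A_{q_mp_m}^0(\l_m^0)\Delta_0'(\l_m^0)$, and combining this with $|A_{q_mp_m}^0(\l_m^0)| \ge C_2$ and $|\Delta_0'(\l_m^0)| \ge C_0$ (Lemma~\ref{lem:Delta'>}) yields $|(\wt f_m^0, \wt f_{*m}^0)_{\fH}| \ge C_3 > 0$ for $|m| > m_0$, exactly as in the proof of Proposition~\ref{prop:uniform.minim}. Together with the uniform upper bounds $\|\wt f_m^0\|_{\fH} \le M_h$, $\|\wt f_{*m}^0\|_{\fH} \le M_{*h}$ from Lemma~\ref{lem:easy.upper.bound} (applied also to the adjoint BVP), this shows that after normalizing, $f_m^0 := \wt f_m^0/\|\wt f_m^0\|_{\fH}$ and $f_{*m}^0 := \wt f_{*m}^0/\|\wt f_{*m}^0\|_{\fH}$ (rescaled so that $(f_j^0,f_{*k}^0)_{\fH} = \delta_{jk}$ for $|j|,|k| > m_0$), and completing both finite families arbitrarily for $|m| \le m_0$, we obtain genuine biorthogonal systems of root vectors of $L_{0,U}$ and $L_{0,U}^*$.

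Next I would establish the Besselian property. The sequence $\L_0$ is incompressible and lies in $\Pi_h$, so Lemma~\ref{lem:bessel}(ii) shows that $\{Y_{p}^0(\cdot,\l_m^0)\}_{m \in \bZ}$ is Besselian for every fixed $p$; since only finitely many distinct values of $p = p_m$ occur and a finite union of Besselian sequences is Besselian (the Bessel constant~\eqref{eq:bessel2} is subadditive), $\{\wt f_m^0\}_{m \in \bZ}$ is Besselian, and normalizing by bounded-below factors $\|\wt f_m^0\|_{\fH}^{-1} \le C_3^{... }$... more precisely by factors bounded above and below preserves this, so $\{f_m^0\}_{m \in \bZ}$ is Besselian in $\fH$. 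Applying the same argument to the adjoint BVP — noting that $L_{0,U}^* = L_{0,U_*}$ is again a BVP with the same $B(\cdot)$ and with regular boundary conditions (Lemma~\ref{lem:adjoint}(ii)), hence $\ol{\L_0}$ is incompressible and in $\Pi_h$ — gives that $\{f_{*m}^0\}_{m \in \bZ}$ is Besselian as well. All four hypotheses of the Bari criterion are now verified, so $\{f_m^0\}_{m \in \bZ}$ is a Riesz basis; since any two normalized systems of root vectors of an operator with simple eigenvalues outside a finite set differ only in finitely many vectors (and by unimodular factors on the simple ones), the Riesz basis property holds for \emph{any} normalized system of root vectors. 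The main obstacle is bookkeeping rather than conceptual: one must carefully handle the finitely many low-index eigenvalues (which may be multiple), ensure the normalization constants stay bounded above and below using Proposition~\ref{prop:y0p.y0*q} together with Lemmas~\ref{lem:Delta'>} and~\ref{lem:D0jk>}, and invoke the canonical forms so that Proposition~\ref{prop:y0p.y0*q} is applicable; the genuinely substantial input — the algebraic identity of Proposition~\ref{prop:y0p.y0*q} and the Besselian lemma — has already been proved.
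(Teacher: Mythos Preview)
Your proposal is correct and follows essentially the same route as the paper: apply the Bari criterion (Theorem~\ref{th:Bari.crit}), get completeness of both the system and its biorthogonal from Theorem~\ref{th:compl}(i) combined with Lemma~\ref{lem:adjoint}, and get the Besselian property from Lemma~\ref{lem:bessel}(ii) together with the uniform bounds on the normalization constants coming from Proposition~\ref{prop:y0p.y0*q}, Lemma~\ref{lem:Delta'>}, and Lemma~\ref{lem:D0jk>} (exactly the computation carried out in the proof of Proposition~\ref{prop:uniform.minim}). Your explicit remark that only finitely many values $p_m \in \oneton$ occur, so that $\{\wt f_m^0\}$ splits into finitely many Besselian subsequences, is a point the paper leaves implicit but uses in the same way.
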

\begin{proof}
As in the proof of Proposition~\ref{prop:uniform.minim}, operator $L_{0,U}$ has countable asymptotically separated sequence of eigenvalues $\L_0 := \{\l_m^0\}_{m \in \bZ}$. Let $\cF_0 := \{f_m^0\}_{m \in \bZ}$ be some normalized system of root vectors of the operator $L_{0,U}$, where $f_m^0$ is a root vector corresponding to $\l_m^0$, $\|f_m^0\|_{\fH} = 1$, $m \in \bZ$. As in the proof of Proposition~\ref{prop:uniform.minim}, we can choose system of root vectors $\cF_{*0} := \{f_{*m}^0\}_{m \in \bZ}$ of the operator $L_{0,U}^*$ in such a way that $\cF_0$ and $\cF_{*0}$ are biorthogonal systems.

Since boundary conditions~\eqref{eq:Uy=0} are regular then by Lemma~\ref{lem:adjoint} boundary conditions~\eqref{eq:U*y=0} of $L_{0,U}^*$ are also regular. Hence Theorem~\ref{th:compl}(i) ensures the completeness property of both systems $\cF_0$ and $\cF_{*0}$.

Let $|m| > m_0$, where $m_0$ is from Definition~\ref{def:strict.regular}(iii) of strict regularity. Based on the proof of Proposition~\ref{prop:uniform.minim}, all of relations~\eqref{eq:wtfgm0.def}--\eqref{eq:fm0.gm0} hold. Since $\|f_m^0\| = 1$, it is clear from~\eqref{eq:fm0.gm0.def}--\eqref{eq:fm0.gm0} that
\begin{equation} \label{eq:gam.m0.gam.*m0}
 |\gam_m^0| = \frac{1}{\|\wt{f}_m^0\|_{\fH}}, \qquad
 |\gam_{*m}^0| = \frac{\|\wt{f}_m^0\|_{\fH}}{\bigabs{(\wt{f}_m^0, \wt{f}_{*m}^0)_{\fH}}}.
\end{equation}
Combining estimates~\eqref{eq:y0p.y0*q>} and~\eqref{eq:wtfgm0.norm} with the Schwartz inequality yields
\begin{equation}
C_3 \le |(\wt f_m^0, \wt f_{*m}^0)| \le \|\wt f_m^0\| \cdot \|\wt f_{*m}^0\| \le M_h \|\wt f_m^0\| \le M_h^2.
\end{equation}
Inserting this estimate into~\eqref{eq:gam.m0.gam.*m0} implies
\begin{equation} \label{eq:gam.m0.gam.*m0<}
|\gam_m^0| \le C_5, \qquad |\gam_{*m}^0| \le C_5, \qquad C_5 := M_h / C_3.
\end{equation}
With account of definitions~\eqref{eq:wtfgm0.def} and assumption~\eqref{eq:betak.alpk.Linf.basis0}, Lemma~\ref{lem:bessel}(ii) implies that the sequences $\{\wt{f}_m^0\}_{|m|>m_0}$ and $\{\wt f_{*m}^0\}_{|m|>m_0}$ are Besselian in $\fH$. Since $f_m^0 = \gam_m^0 \wt{f}_m^0$, $f_{*m}^0 = \gam_{*m}^0 \wt{f}_{*m}^0$, $|m| > m_0$, inequality~\eqref{eq:gam.m0.gam.*m0<} implies that the sequences $\{f_m^0\}_{|m|>m_0}$ and $\{f_{*m}^0\}_{|m|>m_0}$ are also Besselian. And hence so are $\cF_0 = \{f_m^0\}_{m \in \bZ}$ and $\cF_{*0} = \{f_{*m}^0\}_{m \in \bZ}$. Theorem~\ref{th:Bari.crit} now finishes the proof.
\end{proof}
\begin{theorem} \label{th:Riesz_basis}
Let matrix function $B(\cdot) = \diag(\beta_1, \ldots, \beta_n)$\
satisfy conditions~\eqref{eq:betak.alpk.Linf}--\eqref{eq:betak-betaj<-eps}, i.e.\ for some $n_- \in \{0, 1, \ldots, n\}$,
\begin{align}
\label{eq:betak.alpk.Linf.basis}
 & \beta_k, 1/\beta_k \in L^{\infty}[0, \ell], \qquad
 s_k := \sign(\beta_k(\cdot)) \equiv \const \ne 0, \qquad
 k \in \oneton, \\
\label{eq:beta-+n.basis}
 & \beta_1(x) \le \ldots \le \beta_{n_-}(x) < 0
 < \beta_{n_-+1}(x) \le \ldots \le \beta_n(x),
 \qquad x \in [0, \ell],
\end{align}
and there exists $\theta > 0$ such that for each $k \in \oneto{n-1}$
\begin{equation} \label{eq:betak-betaj<-eps.basis}
 \text{either} \quad \beta_k \equiv \beta_{k+1}
 \quad \text{or} \quad \beta_k(x) + \theta < \beta_{k+1}(x), \quad x \in [0, \ell].
\end{equation}
Further, let $Q \in L^1([0,\ell]; \bC^{n \times n})$ and let BVP~\eqref{eq:LQ.def.reg}--\eqref{eq:Uy=0} be strictly regular according to Definition~\ref{def:bvp.strict}. Then any normalized system of root vectors of the operator $L_U(Q)$ forms a Riesz basis in $\fH$.
\end{theorem}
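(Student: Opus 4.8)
The plan is to deduce Theorem~\ref{th:Riesz_basis} from the Bari criterion (Theorem~\ref{th:Bari.crit}) essentially the same way Proposition~\ref{prop:Riesz_basis} handles the unperturbed case, but now transporting everything through the gauge reduction of Lemma~\ref{lem:gauge}. First I would apply the gauge transform $\cW$ of Lemma~\ref{lem:gauge} to replace $L_U(Q)$ by the similar operator $L_{\wt U}(\wt Q)$, where $\wt Q = W^{-1}(Q - Q_{\diag})W$ satisfies the ``zero block diagonality'' condition~\eqref{eq:Qjk=0.bj=bk} and $\wt U$ is given by~\eqref{eq:wtUy=0}. By Remark~\ref{rem:similarity} the Riesz basis property of root vector systems is preserved under $\cW$, and by Definition~\ref{def:bvp.strict} strict regularity of the original BVP means exactly that the modified boundary conditions $\wt U$ are strictly regular (in the sense of Definition~\ref{def:strict.regular}) for the operator with potential $\wt Q$. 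So without loss of generality we may assume $Q$ itself satisfies~\eqref{eq:Qjk=0.bj=bk} and that boundary conditions~\eqref{eq:Uy=0} are strictly regular in the classical sense; conditions~\eqref{eq:betak.alpk.Linf.basis}--\eqref{eq:betak-betaj<-eps.basis} then coincide with~\eqref{eq:Bx.def}--\eqref{eq:Qjk=0.bj=bk}.

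Next I would assemble the four ingredients Bari's criterion requires. Completeness of any root vector system of $L_U(Q)$ and of its adjoint follows from Theorem~\ref{th:compl}(ii) together with Lemma~\ref{lem:adjoint}(ii) (regularity is inherited by the adjoint BVP, whose adjoint potential $Q_* = -SQ^*S$ again satisfies the zero block diagonality condition). For the Besselian property, the key point is that by Proposition~\ref{prop:sine.type}(i)--(ii) the eigenvalue sequence $\L = \{\l_m\}_{m\in\bZ}$ of $L_U(Q)$ is incompressible and contained in a strip $\Pi_h$, and the same is true of the eigenvalue sequence $\ol\L$ of $L_U^*(Q)$; hence Proposition~\ref{prop:bessel}(ii) applies both to the vector functions $\{Y_{p_m}(\cdot,\l_m)\}$ built from $L_U(Q)$ and to the analogous $\{Y_{*q_m}(\cdot,\ol{\l_m})\}$ built from $L_U^*(Q)$. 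For large $|m|$ the eigenvalues $\l_m$, $\ol{\l_m}$ are simple (strict regularity plus asymptotics~\eqref{eq:lm=lm0+o}), so Theorem~\ref{th:eigenvec}(ii) gives a normalized system of root vectors $\{f_m\}$ with $f_m = \alp_m Y_{p_m}(\cdot,\l_m)$, and the proof of Theorem~\ref{th:uniform.minim} already produces the matching biorthogonal normalized system $\{f_{*m}\} = \{\gam_{*m} Y_{*q_m}(\cdot,\ol{\l_m})\}$ together with the two-sided bounds $\bigabs{(\wt f_m,\wt f_{*m})_{\fH}} \ge C_3/2$ and $\|\wt f_m\|_{\fH}, \|\wt f_{*m}\|_{\fH} \le 2M_h$ for $|m|$ large. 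From these bounds the normalizing constants $|\alp_m|, |\gam_{*m}|$ are uniformly bounded (exactly as in~\eqref{eq:gam.m0.gam.*m0}--\eqref{eq:gam.m0.gam.*m0<}), so the Besselian property transfers from $\{Y_{p_m}(\cdot,\l_m)\}$ and $\{Y_{*q_m}(\cdot,\ol{\l_m})\}$ to $\{f_m\}$ and $\{f_{*m}\}$; finitely many low-index terms are harmless. Bari's criterion then yields that $\{f_m\}_{m\in\bZ}$ is a Riesz basis in $\fH$, and since any two normalized root vector systems differ only on the finite-dimensional generalized eigenspaces of the finitely many non-simple eigenvalues, the conclusion holds for an arbitrary normalized root vector system.

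The main obstacle, and the reason this is not a routine repetition of Proposition~\ref{prop:Riesz_basis}, is controlling the perturbed objects uniformly in $m$: one needs that the inner products $(\wt f_m,\wt f_{*m})_{\fH}$ stay bounded away from zero even though the clean algebraic identity of Proposition~\ref{prop:y0p.y0*q} is available only for the \emph{unperturbed} eigenvectors $Y_{p_m}^0(\cdot,\l_m^0)$, $Y_{*q_m}^0(\cdot,\ol{\l_m^0})$. This is precisely where Theorem~\ref{th:eigenvec} is indispensable: the uniform asymptotic relations $\|Y_{p_m}(\cdot,\l_m) - Y_{p_m}^0(\cdot,\l_m^0)\|_{\fH}\to 0$ and its adjoint analogue let one perturb the lower bound $\bigabs{(Y_{p_m}^0,Y_{*q_m}^0)_{\fH}} \ge C_3$ (itself coming from Proposition~\ref{prop:y0p.y0*q} combined with the non-vanishing cofactor estimate $|A_{q_mp_m}^0(\l_m^0)| \ge C_2$ of Lemma~\ref{lem:D0jk>} and the derivative estimate $|\Delta_0'(\l_m^0)|\ge C_0$ of Lemma~\ref{lem:Delta'>}) down to $C_3/2$ for all large $|m|$, and simultaneously bound the perturbed norms. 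Everything else — completeness, incompressibility, Besselian property via Proposition~\ref{prop:bessel}, the final Bari assembly — is then bookkeeping, most of which is literally carried out already inside the proofs of Theorems~\ref{th:uniform.minim} and~\ref{th:eigenvec} and Proposition~\ref{prop:Riesz_basis}.
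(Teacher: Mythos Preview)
Your proposal is correct and follows essentially the same route as the paper: gauge-reduce to the zero block-diagonal case, invoke completeness for $L_U(Q)$ and its adjoint via Theorem~\ref{th:compl}(ii) and Lemma~\ref{lem:adjoint}(ii), establish the Besselian property of both biorthogonal systems through Proposition~\ref{prop:bessel}(ii) combined with the uniform bounds on the normalizing constants coming from the proof of Theorem~\ref{th:uniform.minim}, and conclude by Bari's criterion. You have also correctly identified the one genuinely non-routine point---transferring the lower bound on $\bigl|(Y_{p_m}^0,Y_{*q_m}^0)_{\fH}\bigr|$ from Proposition~\ref{prop:y0p.y0*q} to the perturbed pair via the eigenvector asymptotics of Theorem~\ref{th:eigenvec}---which is exactly what the paper leans on by citing the estimates~\eqref{eq:wtfm.wtgm>C3}--\eqref{eq:wtfm<.wtgm<} already obtained in the uniform minimality proof.
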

\begin{proof}
As in the proof of Theorem~\ref{th:uniform.minim}, applying gauge transform from Lemma~\ref{lem:gauge}, we can assumed that $Q$ satisfy ``zero block diagonality'' condition~\eqref{eq:Qjk=0.bj=bk}.
Hence, as in the proof of Theorem~\ref{th:uniform.minim}, operator $L_U(Q)$ has countable asymptotically separated sequence of eigenvalues $\L := \{\l_m\}_{m \in \bZ}$.

Let $\cF := \{f_m\}_{m \in \bZ}$ be some normalized system of root vectors of the operator $L_U(Q)$, where $f_m$ is a root vector corresponding to $\l_m$, $\|f_m\|_{\fH} = 1$, $m \in \bZ$. As in the proof of Theorem~\ref{th:uniform.minim}, we can choose system of root vectors $\cF_* := \{f_{*m}\}_{m \in \bZ}$ of the operator $L_U^*(Q)$ in such a way that $\cF$ and $\cF_*$ are biorthogonal systems.

Since boundary conditions~\eqref{eq:Uy=0} are regular then by Lemma~\ref{lem:adjoint} boundary conditions~\eqref{eq:U*y=0} of $L_U^*(Q)$ are also regular. Hence assumptions~\eqref{eq:betak.alpk.Linf}--\eqref{eq:betak-betaj<-eps},~\eqref{eq:Qjk=0.bj=bk} and Theorem~\ref{th:compl}(ii) ensures the completeness property of both systems $\cF$ and $\cF_*$.

Let $|m| > m_2$, where $m_2$ was chosen in the proof of Theorem~\ref{th:uniform.minim} to satisfy relations~\eqref{eq:wtfm.wtgm>C3}--\eqref{eq:wtfm<.wtgm<}.
Based on the proof of Theorem~\ref{th:uniform.minim}, all of relations~\eqref{eq:wtfgm.def}--\eqref{eq:fm.gm.def} hold. With account of definitions~\eqref{eq:wtfgm.def}, Proposition~\ref{prop:bessel}(ii) implies that the sequences $\{\wt{f}_m\}_{|m|>m_2}$ and $\{\wt f_{*m}\}_{|m|>m_2}$ are Besselian in $\fH$. From this point the proof is finished the same way as in part (i) by using key estimates~\eqref{eq:wtfm.wtgm>C3}--\eqref{eq:wtfm<.wtgm<}.
\end{proof}
\begin{remark} \label{rem:riesz.basis.2x2}
Let us provide brief history of results on Riesz basis property for BVP~\eqref{eq:LQ.def.reg}--\eqref{eq:Uy=0} with constant $2 \times 2$ matrix $B(\cdot) \equiv B = \diag(b_1, b_2) = B^*$.

\textbf{(i)} The Riesz basis property for $2\times 2$ Dirac type operators
$L_U(Q)$ and separated boundary conditions was established earlier than for the operators with general regular boundary conditions. Namely, this property was proved firstly in~\cite{TroYam01,TroYam02} by I.~Trooshin and M.~Yamamoto for $B = \diag(-1,1)$ and $Q \in C^1([0,1]; \bC^{2 \times 2})$. Later, P.~Djakov and B.~Mityagin in~\cite{DjaMit10}, and A.G.~Baskakov, A.V.~Derbushev and A.O.~Shcherbakov in~\cite{Bask11} relaxed smoothness assumption on potential to $Q \in L^2([0,1]; \bC^{2 \times 2})$. Independently, in~\cite{HasOri09} S.~Hassi and L.~Oridoroga established this property for Dirac type operator with $B = \diag(b_1,b_2) = B^*$ and $Q \in C^1([0,1]; \bC^{2 \times 2})$.

\textbf{(ii)} The Bari-Markus property of the Riesz projectors of unperturbed and perturbed $2\times 2$ Dirac operators with separated, periodic and antiperiodic boundary conditions was established in~\cite{Mit04,DjaMit10} and reproved by another method in~\cite{Bask11}. In~\cite{DjaMit12UncDir} similar results have been obtained for general regular boundary conditions.

\textbf{(iii)} The most complete result on the Riesz basis property for $2\times 2$ Dirac and Dirac-type systems with $Q \in L^1$ and strictly regular boundary conditions was obtained independently by different methods and at the same time by A.M.~Savchuk and A.A.~Shkalikov~\cite{SavShk14} and by the authors~\cite{LunMal14Dokl, LunMal16JMAA}. The case of regular boundary conditions and $Q \in L^1$ is treated in~\cite{SavShk14} for the first time. Other proofs were obtained later in~\cite{SavSad15DAN},~\cite{SavSad15} (see also their recent survey~\cite{SavSad20} and references therein).

\textbf{(iv)} Periodic and antiperiodic (necessarily non-strictly regular) BVP for $2 \times 2$ Dirac equation have attracted certain attention during the last decade. In~\cite[Theorem~13]{DjaMit12TrigDir},~\cite[Theorem~19]{DjaMit12Crit} and~\cite{DjaMit13CritDir}, P. Djakov and B. Mityagin established a \emph{criterion} for the system of root vectors to contain a Riesz basis for periodic (resp., antiperiodic) $2 \times 2$ Dirac operator in terms of the Fourier coefficients of $Q$ as well as in terms of periodic (resp., antiperiodic) and Dirichlet spectra. A.~Makin~\cite{Mak20,Mak21} established Riesz basis property for periodic $2 \times 2$ Dirac operator under certain explicit algebraic assumptions on a potential matrix. See also recent survey~\cite{DjaMit20UMNper} by P. Djakov and B. Mityagin, survey~\cite{Mak21Ito} by A.S. Makin, and the references therein.
\end{remark}
\begin{remark} \label{rem:riesz.basis.nxn}
In this remark we go over known results on Riesz basis property for BVP~\eqref{eq:LQ.def.reg}--\eqref{eq:Uy=0} with constant $n \times n$ matrix $B(\cdot) \equiv B = B^*$, when $n > 2$.

\textbf{(i)} In~\cite{MykPuy13} the results of~\cite{DjaMit10} regarding the Bari-Markus property in $L^2([0,1]; \bC^2)$ were extended to the case of the Dirichlet BVP for $2m \times 2m$ Dirac equation with $Q \in L^2([0,1]; \bC^{2m\times 2m})$.

\textbf{(ii)} To the best of our knowledge the first result on the Riesz basis property for BVP~\eqref{eq:LQ.def.reg}--\eqref{eq:Uy=0} generated by general $n \times n$ system~\eqref{eq:LQ.def.reg} with $B(x) = B = \diag(b_1, \ldots, b_n) \in \bC^{n\times n} \ne B^*$ and bounded $Q \in L^{\infty}([0,1]; \bC^{n \times n})$
was obtained by the authors in~\cite{LunMal15JST}. Treated boundary conditions form rather broad class that covers, in particular, periodic, antiperiodic, and regular separated (not necessarily self-adjoint) boundary conditions.

\textbf{(iii)} In~\cite{KurAbd18, KurGad20}, Bessel and Riesz basis properties on abstract level were established, i.e.\ the operator $L_U(Q)$ was studied without explicit boundary conditions.
\end{remark}
\subsection{Riesz basis property with parentheses}
\label{subsec:riesz.paren}
First, let us recall a corresponding definition.
\begin{definition} \label{def:basis}
(i) A sequence of subspaces $\{\cH_m\}_{m=1}^{\infty}$ is called a \textbf{Riesz basis of subspaces} in a separable Hilbert space $\cH$ if there exists a complete sequence of mutually orthogonal subspaces $\{\cH'_m\}_{m=1}^{\infty}$ and a bounded operator $T$ in $\cH$ with a bounded inverse such that $\cH_m = T \cH'_m$, $m \in \bN$.

(ii) A sequence $\{\phi_m\}_{m=1}^{\infty}$ of vectors in $\cH$ is called a \textbf{Riesz basis with parentheses} if each its finite subsequence is linearly independent, and there exists an increasing sequence $\{m_k\}_{k=0}^{\infty} \subset \bN$ such that $m_0=1$ and the sequence $\cH_k := \Span\{\phi_j\}_{j=m_{k-1}}^{m_k-1}$, forms a Riesz basis of subspaces in $\cH$. Subspaces $\cH_k$ are called blocks.
\end{definition}
In~\cite{Shk79}, A.A.~Shkalikov established Riesz basis property with parentheses for BVP for ODE of $n$-th order with coefficients $q_2, \ldots, q_n \in L^1$ and regular boundary conditions $V(y)=0$. Denote corresponding operator as $\cL(V,q)$. The idea was to present the operator $\cL(V,q)$ as a bounded perturbation of the operator similar to $\cL(\wt{V},\wt{q})$, where boundary conditions $\wt{V}(y)=0$ are already strictly regular and the operator is known to have a Riesz basis property (without parentheses). Then abstract result of Katsnel'son-Markus-Matsaev (see~\cite[Theorem 3.1]{Katsn67} and also~\cite{Agran99},~\cite{MarMats84},~\cite[Theorem 6.12]{Markus88}) implies desired Riesz basis property with parentheses for the original operator $\cL(V,q)$.

This idea later was used in~\cite{SavShk14} to establish Riesz basis property with parentheses for $2 \times 2$ Dirac operator with regular boundary conditions, i.e.\ operator $L_U(Q)$ with $B(\cdot) \equiv \diag(-1,1)$. Later we also used this idea in~\cite{LunMal16JMAA}, to establish this result for $2 \times 2$ Dirac type operator, i.e.\ when $B(\cdot) \equiv \diag(b_1, b_2)$, $b_1 < 0 < b_2$.

In this subsection we use the same idea, and, following~\cite[Section 7]{LunMal16JMAA}, show that the system of root vectors of BVP~\eqref{eq:LQ.def.reg}--\eqref{eq:Uy=0} with regular boundary conditions forms a Riesz basis with parentheses under assumptions~\eqref{eq:Bx.def}--\eqref{eq:betak-betaj<-eps}.

As in~\cite[Section 7]{LunMal16JMAA} we start with certain properties of zeros of exponential polynomials. The $n \times n$ case is more difficult and requires more advanced properties. We start with the following simple lemma.
\begin{lemma} \label{lem:f.g.in.S}
Let $f \not\equiv 0$ and $g \not\equiv 0$ be entire functions.

\textbf{(i)} Let $g$ have simple zeros (possibly empty set), i.e.\ $|g(z)|+|g'(z)|>0$, $z \in \bC$. Then $f + w g$ has simple zeros for all but countable number of values of $w \in \bC$.

\textbf{(ii)} Let $f$, $f'$, $g$, $g'$ be sine-type functions with separated (possible empty) sets of zeros. Then for all $w$ outside of a certain strip $\Pi_h$, $h = h_{f,g}$, function $f + w g$ is a sine-type function with separated (possible empty) set of zeros.
\end{lemma}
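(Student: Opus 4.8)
\textbf{Proof proposal for Lemma~\ref{lem:f.g.in.S}.}

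\emph{Part (i).} The plan is a standard argument on the resultant / discriminant. A number $z_0$ is a non-simple zero of $h_w := f + w g$ precisely when $h_w(z_0) = h_w'(z_0) = 0$, i.e.\ $f(z_0) + w g(z_0) = 0$ and $f'(z_0) + w g'(z_0) = 0$. Since $g$ has only simple zeros, $z_0$ cannot be a zero of $g$ unless $z_0$ is a zero of $f$ as well; one treats that degenerate case separately (it contributes only the countable set $f^{-1}(0)\cap g^{-1}(0)$, if non-empty, and forces a single exceptional value of $w$ per such point). Away from $g^{-1}(0)$, eliminating $w$ gives $f(z_0) g'(z_0) - f'(z_0) g(z_0) = 0$, i.e.\ $z_0$ is a zero of the Wronskian-type entire function $W := f g' - f' g$. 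If $W \not\equiv 0$, its zero set is countable, and for each such $z_0$ the value $w = -f(z_0)/g(z_0)$ is determined; hence the bad set of $w$'s is countable. If $W \equiv 0$, then $f/g$ is locally constant hence $f = c g$ for a constant $c$, and then $h_w = (c+w) g$ has simple zeros for every $w \ne -c$, again leaving out only one value. First I would write this out, handling the three cases ($W\not\equiv0$; $W\equiv0$; points of $f^{-1}(0)\cap g^{-1}(0)$) explicitly.

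\emph{Part (ii).} Here the plan is to combine three ingredients. First, a sine-type function is bounded and bounded below on lines $\Im z = \pm h_0$ far enough from the real axis; this is exactly Definition~\ref{def:sinetype}. So I would pick $h_0$ large enough that all four functions $f, f', g, g'$ satisfy the two-sided bound $C_1 \le |\cdot| \le C_2$ on $\Im z = \pm h_0$, and that all their zeros lie in $\Pi_{h_0}$. On those two horizontal lines, $|f(z)/g(z)|$ and $|f'(z)/g'(z)|$ are then bounded between positive constants, uniformly in $\Re z$. Second, for $|w|$ outside a controlled annulus (equivalently: $w$ outside a strip once we pass to the logarithmic picture $\Im z = $ const $\leftrightarrow$ $|w|$), on the lines $\Im z = \pm h_0$ one has $|w g(z)| > |f(z)|$ (or the reverse), so $h_w = f + wg$ satisfies the same two-sided bound $C_1' \le |h_w| \le C_2'$ there, i.e.\ $h_w$ is of sine type with all zeros in $\Pi_{h_0}$. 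Third, for separatedness of the zeros of $h_w$: I would use that the zeros of a sine-type function are separated iff $|h_w'|$ is bounded below at the zeros of $h_w$ (cf.\ \cite[Lemmas 5]{Katsn71}, \cite[Lecture 22]{Lev96}). At a zero $z_0$ of $h_w$ one has $w = -f(z_0)/g(z_0)$, hence $h_w'(z_0) = f'(z_0) + w g'(z_0) = f'(z_0) - \frac{f(z_0)}{g(z_0)} g'(z_0) = \frac{W(z_0)}{g(z_0)}$ with $W = fg' - f'g$ as in part (i). So I need a lower bound on $|W(z_0)/g(z_0)|$ at the zeros $z_0$ of $h_w$; since all such $z_0$ lie in $\Pi_{h_0}$ and $W$ is an exponential-type function, and since (excluding the trivial case $W\equiv 0$, which as in (i) forces $f=cg$ and makes $h_w$ a scalar multiple of $g$, done) $W$ itself is a sum of exponentials, one argues that $W$ is bounded below along $h_w^{-1}(0)$ after possibly enlarging $h$; here I would invoke the incompressibility/sine-type machinery of Section~\ref{subsec:regular}, Lemma~\ref{lem:Delta0.prop}, applied to $W$, together with the already-established separatedness of $h_w^{-1}(0)$ as a subsequence of a sine-type zero set.

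\emph{Main obstacle.} The routine part is the Rouch\'e-type argument giving that $h_w$ is sine-type with zeros in a fixed strip; the genuinely delicate point is establishing the uniform (in admissible $w$) lower bound $|h_w'(z_0)| \ge \const > 0$ at the zeros $z_0$ of $h_w$, i.e.\ the \emph{separatedness} of $h_w^{-1}(0)$. The identity $h_w'(z_0) = W(z_0)/g(z_0)$ reduces this to controlling $|W|$ from below along $h_w^{-1}(0)\subset \Pi_h$, and the subtlety is that a priori the zeros of $W$ could accumulate toward the zeros of $h_w$ as $w$ varies; I expect to need the hypothesis that $f', g'$ (not just $f,g$) are sine type with separated zeros precisely to rule this out, by comparing the three separated zero sets $f^{-1}(0)$, $g^{-1}(0)$, $W^{-1}(0)$ and showing their mutual distances stay bounded below. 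If that comparison proves awkward to do for all $w$ at once, the fallback is to prove the statement for $|w|$ large (resp.\ small) by the dominant-term argument and then note the remaining values of $w$ lie in a bounded annulus minus a strip, on which one can run a compactness argument. That is the step I would budget the most effort for.
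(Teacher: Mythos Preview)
Your part (i) is essentially the paper's proof: both reduce to the Wronskian $W = fg' - f'g$ and split into the cases $W \equiv 0$ (then $f = c g$ and only $w = -c$ is bad) and $W \not\equiv 0$ (then the bad $z_0$'s are countable and each fixes a single $w$).

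For part (ii) your main line and the paper's diverge. The paper's (terse) argument runs as follows. From the hypotheses one extracts $|g(z)| + |g'(z)| > \eps$ throughout the relevant strip, and in particular that the zeros of $g$ are uniformly separated from the zeros of $g'$; this is where the assumption on $g'$ enters, not through any Wronskian. Then for $|\Im w| > h$ (hence $|w| > h$), on small discs about the zeros of $g$ the term $wg$ dominates $f$, so Rouch\'e localizes each zero $z_0$ of $h_w$ near a unique zero $\zeta$ of $g$; since the $\zeta$'s are separated, so are the $z_0$'s. For the derivative, estimate directly: $|g'(z_0)| \ge c > 0$ because $z_0$ is close to $\zeta$ and $\zeta$ is far from every zero of $g'$, whence $|h_w'(z_0)| \ge |w|\,|g'(z_0)| - |f'(z_0)|$ is large.

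Your identity $h_w'(z_0) = W(z_0)/g(z_0)$ is correct, but it leads you to seek a lower bound on $|W|$ along $h_w^{-1}(0)$, which you rightly flag as delicate. It is an unnecessary detour: once you have the Rouch\'e localization (what you call the ``fallback''), the direct estimate on $|f' + wg'|$ is immediate, and your fallback is in fact the main argument. Note also that your parenthetical identification of ``$|w|$ outside an annulus'' with ``$w$ outside a strip'' is not an equivalence; what the statement gives you is $|\Im w| > h$, which implies $|w| > h$, and that one-sided implication is all the dominant-term step needs.
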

\begin{proof}
\textbf{(i)} If $f$ and $g$ are proportional, i.e.\ $f \equiv \alp g$ for some $\alp \in \bC$, it is clear that $f + w g$ has simple zeros for all $w \ne -\alp$.
Let $f$ and $g$ be non-proportional. Since $f$ and $g$ are non-zero entire functions, it follows that each of them has at most countable set of zeros. Hence, ratio $f/g$ is a non-constant analytic function with at most countable set of poles. This implies that $(f/g)'$ is non-zero analytic function with at most countable set of poles, which implies that $f g'-f' g$ is non-zero entire function and has at most countable set of zeros $\{\mu_m\}_{m=1}^N$, $0 \le N \le \infty$ (it can be empty as well).

Let $w \in \bC$ be such that some $z = z_w$ is a multiple zero of $f+wg$. Then
$$
f(z)+wg(z)=f'(z)+wg'(z)=0.
$$
This implies that $f(z) g'(z) - f'(z) g(z) = 0$, i.e.\ $z$ is a zero of the entire function $f g'-f' g$. Hence $z = z_w = \mu_m$ for some $m = m_w \in \onetoN$. Since function $g$ has simple zeros it follows that $|g(z)|+|g'(z)|>0$. Hence
$$
\text{either} \quad w = -f(z)/g(z) = -f(\mu_m)/g(\mu_m)
\quad\text{or}\quad w = -f'(z)/g'(z) = -f'(\mu_m)/g'(\mu_m).
$$
This implies that the set of values $w$ for which function $f+wg$ has multiple zeros is at most countable and finishes the proof.

\textbf{(ii)} Since $f$ is a sine-type function it follows from estimate on $f$ from below outside of zeros and estimate on $f'$ near zeros that $|f(z)| + |f'(z)| > \eps$, $z \in \bZ$, for some $\eps > 0$ and the same is valid for $g$. It also follows that zeros of $f$ and $f'$ combined are separated. I.e. zeros of $f f'$ and $g g'$ are separated. These properties are sufficient to show the desired property of function $f+wg$ for sufficiently large $|\Im w|$.
As an example, let $f(z) = e^{i b z} g(z)$ for some $b > 0$. Then
$f(z) + w g(z) = (e^{i b z} + w) g(z)$. So we need to find $w$ for each arithmetic progression
$
\left\{\frac{-i \ln w + 2 \pi m}{b}\right\}_{m \in \bZ}
$
is separated from zeros of $g(\cdot)$. It is clear that this is true for $w$ with sufficiently large $|\Im w|$, which finishes the proof.
\end{proof}
Let $S$ be the set of entire functions with simple zeros, i.e.\ $f \in S$ iff $|f(z)|+|f'(z)|>0$, $z \in \bC$. Denote $\cS := S \cup \{0\}$. The following property is a trivial consequence of Lemma~\ref{lem:f.g.in.S}.
\begin{corollary} \label{cor:f.g.in.S}
Let $f, g \in \cS$. Then $f + w g \in \cS$ for all but countable number of values of $w \in \bC$.
\end{corollary}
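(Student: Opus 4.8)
The statement to prove is Corollary~\ref{cor:f.g.in.S}: for $f, g \in \cS$, the combination $f + w g \in \cS$ for all but countably many $w \in \bC$. The plan is to reduce everything to Lemma~\ref{lem:f.g.in.S}(i) by a short case analysis on which of $f$, $g$ vanishes identically, since the definition $\cS = S \cup \{0\}$ forces us to treat the zero function separately from honest elements of $S$.

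First I would dispose of the degenerate cases. If $g \equiv 0$, then $f + w g = f \in \cS$ for every $w$, so the exceptional set is empty. If $g \not\equiv 0$ but $f \equiv 0$, then $f + w g = w g$; for $w = 0$ we get $0 \in \cS$, and for $w \ne 0$ we get $w g$, which has simple zeros precisely because $g$ does (multiplying an entire function by a nonzero scalar preserves the property $|g(z)| + |g'(z)| > 0$), so again $f + w g \in \cS$ for all $w$ and the exceptional set is empty. The only remaining case is $f \not\equiv 0$ and $g \not\equiv 0$.

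In that remaining case, both $f$ and $g$ are nonzero entire functions, and since $g \in \cS = S \cup \{0\}$ with $g \not\equiv 0$ we have $g \in S$, i.e.\ $g$ has only simple zeros: $|g(z)| + |g'(z)| > 0$ for all $z \in \bC$. This is exactly the hypothesis of Lemma~\ref{lem:f.g.in.S}(i) with $f$ and $g$ both nonzero and $g$ having simple zeros. Applying that lemma directly yields that $f + w g$ has simple zeros, hence lies in $S \subset \cS$, for all $w \in \bC$ outside a countable set. Combining the three cases, $f + w g \in \cS$ for all but countably many $w$, which is the assertion.

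There is essentially no obstacle here: the corollary is a bookkeeping consequence of Lemma~\ref{lem:f.g.in.S}(i). The only mild subtlety is making sure the statement ``$f + w g \in \cS$ for all but a countable set'' is interpreted correctly when the possibly-empty exceptional set from the lemma can actually be empty (e.g.\ when $f$ and $g$ are proportional, or in the degenerate cases above) — an empty set is of course ``at most countable,'' so nothing needs adjustment. I would keep the proof to three or four lines: state the two trivial cases, invoke Lemma~\ref{lem:f.g.in.S}(i) in the remaining case, done.
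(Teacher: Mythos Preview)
Your proposal is correct and matches the paper's approach exactly: the paper's proof also disposes of the cases where $f$ or $g$ is identically zero as trivial (with $f + w g \in \cS$ for all $w$), and then invokes Lemma~\ref{lem:f.g.in.S} directly in the remaining case. Your write-up is slightly more detailed in spelling out why $wg \in \cS$ when $f \equiv 0$, but the argument is the same.
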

\begin{proof}
If either $f$ or $g$ is zero the statement is trivial and $f + w g \in \cS$ for all $w \in \bC$. Otherwise the statement follows from Lemma~\ref{lem:f.g.in.S}.
\end{proof}
\begin{lemma} \label{lem:deltagen.lw.zeros}
Let $(f_P)_{P \in \cP_n}$, be a sequence of $2^n$ functions from the class $\cS$ indexed with diagonal idempotent matrices of size $n$ (see~\eqref{eq:cP.def} for the definition of the class $\cP_n$). Then there exists a diagonal matrix $W = \diag(w_1, \ldots, w_n)$ with non-zero entries such that
\begin{equation} \label{eq:sum.prod.wj.fJ}
 \sum_{P \in \cP_n} \det(P W) \cdot f_P \in \cS.
\end{equation}
For example, for $n=2$, this function looks like $f_{\diag(0,0)} + w_1 f_{\diag(1,0)} + w_2 f_{\diag(0,1)} + w_1 w_2 f_{\diag(1,1)}$.
\end{lemma}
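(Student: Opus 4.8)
The plan is to prove Lemma~\ref{lem:deltagen.lw.zeros} by induction on the number $n$, adding one new variable $w_k$ at a time and using Corollary~\ref{cor:f.g.in.S} to control the membership in $\cS$ at each step. The base case $n=0$ is trivial: $\cP_0$ consists of the single empty matrix, $\det(P W)=1$, and the sum is just $f_{\fE_0}\in\cS$ by hypothesis. For the inductive step, suppose the claim holds for $n-1$. Given the family $(f_P)_{P\in\cP_n}$, split each $P\in\cP_n$ according to its last diagonal entry $p_n\in\{0,1\}$: writing $P=\diag(P',p_n)$ with $P'\in\cP_{n-1}$, we get a natural bijection $\cP_n\cong\cP_{n-1}\times\{0,1\}$. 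Correspondingly define two families $(g_{P'})_{P'\in\cP_{n-1}}$ and $(h_{P'})_{P'\in\cP_{n-1}}$ by $g_{P'}:=f_{\diag(P',0)}$ and $h_{P'}:=f_{\diag(P',1)}$, each a family of $2^{n-1}$ functions from $\cS$.

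Next I would invoke the induction hypothesis twice, but carefully, so that a \emph{single} choice of $w_1,\dots,w_{n-1}$ works simultaneously for both families. Since the set of ``bad'' tuples $(w_1,\dots,w_{n-1})$ for which the combination leaves $\cS$ is a countable union of proper analytic hypersurfaces (this is how Corollary~\ref{cor:f.g.in.S} is applied iteratively: at each of the $n-1$ inductive stages, only countably many values of the new variable are excluded \emph{given} generic earlier ones), we may choose $w_1,\dots,w_{n-1}$ nonzero so that both
\begin{equation}
 G:=\sum_{P'\in\cP_{n-1}}\det(P'\,W')\,g_{P'}\in\cS
 \qquad\text{and}\qquad
 H:=\sum_{P'\in\cP_{n-1}}\det(P'\,W')\,h_{P'}\in\cS,
\end{equation}
where $W'=\diag(w_1,\dots,w_{n-1})$. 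Now observe that for $P=\diag(P',p_n)$ we have $\det(PW)=\det(P'W')\cdot(w_n)^{p_n}$ (with the convention $w_n^0=1$), so
\begin{equation}
 \sum_{P\in\cP_n}\det(PW)\,f_P
 = \sum_{P'\in\cP_{n-1}}\det(P'W')\,g_{P'}
 + w_n\sum_{P'\in\cP_{n-1}}\det(P'W')\,h_{P'}
 = G + w_n H.
\end{equation}
Since $G,H\in\cS$, Corollary~\ref{cor:f.g.in.S} yields that $G+w_n H\in\cS$ for all but countably many $w_n\in\bC$; pick any such $w_n\neq 0$. This completes the induction and the proof.

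The main technical point that needs care — and which I expect to be the chief obstacle — is the bookkeeping of the exceptional sets when applying the induction hypothesis simultaneously to the two sub-families $(g_{P'})$ and $(h_{P'})$. The inductive statement as phrased only asserts \emph{existence} of a good tuple; to run the argument one really wants the stronger (but equivalent, and visibly provable along the same lines) assertion that the set of good tuples $(w_1,\dots,w_{n-1})\in(\bC\setminus\{0\})^{n-1}$ is the complement of a countable union of proper analytic subvarieties, hence of full measure and in particular an intersection of two such ``good sets'' is again nonempty. I would either reformulate the lemma with this quantitative conclusion from the start, or simply note in the proof that the construction via Corollary~\ref{cor:f.g.in.S} excludes at each stage only a countable set of values of the variable being introduced, so that finite (here: double) intersections of good sets remain nonempty; the explicit $n=2$ formula in the statement is a useful sanity check that the $\det(PW)$ weights are exactly the monomials $\prod_{k:\,p_k=1}w_k$. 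Everything else is routine.
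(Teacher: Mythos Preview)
Your approach is essentially correct but unnecessarily complicated, and the complication you yourself flag is precisely what the paper's proof sidesteps. You split on the \emph{last} coordinate, apply the induction hypothesis to the two sub-families $(g_{P'})$ and $(h_{P'})$ to get $G,H\in\cS$, and only then choose $w_n$. As you note, this forces you to find a \emph{common} good tuple $(w_1,\dots,w_{n-1})$ for two families at once, which the bare existence statement in the induction hypothesis does not give; you would have to strengthen the induction to a ``generic good set'' statement and carry that along.

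The paper reverses the order and thereby avoids the issue entirely. It first fixes $w_n$: by Corollary~\ref{cor:f.g.in.S}, for each $P'\in\cP_{n-1}$ the function $f_{\diag(P',0)}+w_n\,f_{\diag(P',1)}$ lies in $\cS$ for all but countably many $w_n$, so a single $w_n\ne 0$ can be chosen that works for all $2^{n-1}$ pairs simultaneously (a countable union of countable exceptional sets is still countable). This produces \emph{one} family $\bigl(f_{\diag(P',0)}+w_n f_{\diag(P',1)}\bigr)_{P'\in\cP_{n-1}}\subset\cS$, and a single application of the unstrengthened induction hypothesis then yields $w_1,\dots,w_{n-1}$. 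The identity $\sum_{P\in\cP_n}\det(PW)f_P=\sum_{P'\in\cP_{n-1}}\det(P'W')\bigl(f_{\diag(P',0)}+w_n f_{\diag(P',1)}\bigr)$ is the same as yours. So: both routes lead to a valid proof, but choosing the new variable \emph{first} collapses your two sub-families into one and eliminates the need for any quantitative control on the good set.
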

\begin{proof}
Let us prove this via induction by $n$. For $n=1$ the statement trivially follows from Corollary~\ref{cor:f.g.in.S}. Assume the statement is valid for $n = m-1 \in \bN$ and consider $n = m$, i.e.\ we have a sequence $(f_P)_{P \in \cP_m}$ of $2^m$ functions from the class $\cS$. Set $\wt{W} := \diag(w_1, \ldots, w_{m-1})$ and for each $\wt{P} = \diag(p_1, \ldots, p_{m-1}) \in \cP_{m-1}$ denote,
$$
\wt{P}_0 := \diag(p_1, \ldots, p_{m-1}, 0) \in \cP_m, \qquad
\wt{P}_1 := \diag(p_1, \ldots, p_{m-1}, 1) \in \cP_m.
$$
We can transform the sum in~\eqref{eq:sum.prod.wj.fJ} the following way,
\begin{equation} \label{eq:sum.prod.wj.fJ.ind}
 \sum_{P \in \cP_m} \det(P W) \cdot f_P
 = \sum_{\wt{P} \in \cP_{m-1}} \det(\wt{P} \cdot \wt{W}) \cdot
 \(f_{\wt{P}_0} + w_m f_{\wt{P}_1}\)
\end{equation}
By Corollary~\ref{cor:f.g.in.S}, for each $P \in \cP_{m-1}$ the function $f_{\wt{P}_0} + w_m f_{\wt{P}_1} \in \cS$ for all but countable number of values of $w_m \in \bC$. Hence we can choose a single $w_m \ne 0$ that ``serves'' all $P$, i.e.\ $f_{\wt{P}_0} + w_m f_{\wt{P}_1} \in \cS$ for all $P \in \cP_{m-1}$. Now induction hypothesis applied to the r.h.s.\ of~\eqref{eq:sum.prod.wj.fJ.ind} implies existence of the desired non-zero $w_1, \ldots, w_{m-1}$, which finishes the proof.
\end{proof}
\begin{proposition} \label{prop:Delta0.to.strict}
Let $b_1, \ldots, b_n$ given by~\eqref{eq:b1.bn} are non-zero and let boundary conditions~\eqref{eq:Uy=0} be regular. Then there exists $W_{\ell} = \diag(w_1, \ldots, w_n) \in \bC^{n \times n}$ with non-zero entries such that the sequence of zeros of entire function
$$
\Delta_{0,W_{\ell}}(\cdot) := \det(C + D W_{\ell} \Phi_0(\ell,\cdot))
$$
is separated, where $\Phi_0(\cdot,\l)$ is given by~\eqref{eq:Phi0k.def}.
\end{proposition}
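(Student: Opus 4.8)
The plan is to expand the determinant $\Delta_{0,W_\ell}(\l) = \det(C + D W_\ell \Phi_0(\ell,\l))$ into a sum of exponential monomials indexed by the diagonal idempotent matrices $P \in \cP_n$, in the same spirit as formula~\eqref{eq:Delta0.sum}, and then to reduce the problem to Lemma~\ref{lem:deltagen.lw.zeros}. First I would note that since $\Phi_0(\ell,\l) = \diag(e^{i \l b_1}, \ldots, e^{i \l b_n})$ and $W_\ell = \diag(w_1, \ldots, w_n)$, the matrix $D W_\ell \Phi_0(\ell,\l)$ has $k$-th column equal to $w_k e^{i \l b_k} d_k$, where $d_k$ is the $k$-th column of $D$. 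Applying the expansion of the determinant of a sum of matrices by columns (choosing for each column $k$ either the $k$-th column $c_k$ of $C$ or the column $w_k e^{i\l b_k} d_k$), we obtain
\begin{equation*}
 \Delta_{0,W_\ell}(\l) = \sum_{P \in \cP_n} \Bigl(\prod_{k : p_k = 1} w_k\Bigr) J_P(C,D)\, e^{i\l b_P}
 = \sum_{P \in \cP_n} \det(P W_\ell)\, J_P(C,D)\, e^{i\l b_P},
\end{equation*}
where $P = \diag(p_1,\ldots,p_n)$, $b_P = \sum_k p_k b_k$, and $J_P(C,D) = \det(T_P(C,D))$ is as in~\eqref{eq:TP.CD.def}; here I use $\det(PW_\ell) = \prod_{k : p_k=1} w_k$, interpreting the empty product as $1$.

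Next I would set $f_P(\l) := J_P(C,D)\, e^{i\l b_P}$ for $P \in \cP_n$. Each $f_P$ is either identically zero (when $J_P(C,D) = 0$) or a scalar multiple of a pure exponential $e^{i\l b_P}$, which has no zeros at all, so $|f_P(\l)| + |f_P'(\l)| > 0$ for all $\l$ whenever $f_P \not\equiv 0$. Thus every $f_P$ belongs to the class $\cS = S \cup \{0\}$ introduced before Corollary~\ref{cor:f.g.in.S}. Moreover, regularity of the boundary conditions guarantees via~\eqref{eq:regular.def} that $f_{P_-} = J_{P_-}(C,D) e^{i\l b_-} \not\equiv 0$ and $f_{P_+} = J_{P_+}(C,D) e^{i\l b_+} \not\equiv 0$, so at least two of the $f_P$ are nonzero, with distinct exponents $b_- < b_+$ in view of $b_1 \le \ldots \le b_{n_-} < 0 < b_{n_-+1} \le \ldots \le b_n$; this will ensure the resulting function is not a single exponential monomial and hence genuinely has zeros. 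Applying Lemma~\ref{lem:deltagen.lw.zeros} to the family $(f_P)_{P \in \cP_n} \subset \cS$ yields a diagonal matrix $W_\ell = \diag(w_1,\ldots,w_n)$ with nonzero entries such that
\begin{equation*}
 \Delta_{0,W_\ell}(\cdot) = \sum_{P \in \cP_n} \det(P W_\ell) f_P \in \cS,
\end{equation*}
i.e.\ $\Delta_{0,W_\ell}$ has only simple zeros.

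It then remains to upgrade ``simple zeros'' to ``separated sequence of zeros''. This is where the sine-type machinery already developed in the excerpt enters: since $W_\ell$ has nonzero diagonal entries, the matrix $D W_\ell$ still satisfies $\det(C P_\mp + D W_\ell P_\pm) \ne 0$ — indeed $T_{P_\pm}(C, D W_\ell)$ differs from $T_{P_\pm}(C,D)$ only by right-multiplication of some columns by nonzero scalars — so the boundary conditions $C y(0) + D W_\ell y(\ell) = 0$ are again regular. Hence Lemma~\ref{lem:Delta0.prop} (applied to the unperturbed problem with $D$ replaced by $D W_\ell$) shows that $\Delta_{0,W_\ell}(\cdot)$ is a sine-type function with zeros lying in a strip $\Pi_h$ and forming an incompressible sequence. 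A sine-type function with only simple zeros has a separated zero sequence (as recalled, e.g., in connection with Lemma~\ref{lem:Delta'>} via~\cite{Katsn71,Lev96}): incompressibility confines the horizontal bunching, algebraic simplicity rules out coincidences, and the uniform lower bound for a sine-type function away from its zeros forces a positive gap. Therefore the zeros of $\Delta_{0,W_\ell}$ are separated, which is the assertion.

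The main obstacle I anticipate is precisely the last step — carefully extracting ``separated'' from the combination of ``simple zeros of a sine-type function'' — since Lemma~\ref{lem:deltagen.lw.zeros} only delivers simplicity of zeros, not a uniform spacing bound; one must invoke the structural results on sine-type functions (lower bounds away from zeros, control of derivatives near zeros, incompressibility) to close the gap, rather than any further elementary manipulation. A secondary technical point is verifying that the family $(f_P)_{P\in\cP_n}$ genuinely lands in $\cS$ and that enough of its members are nonzero for Lemma~\ref{lem:deltagen.lw.zeros} to produce a function with zeros at all; both follow from regularity as indicated, but they should be stated explicitly so that the conclusion ``$\Delta_{0,W_\ell} \in \cS$'' is not vacuously about a zero-free function.
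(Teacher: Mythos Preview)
Your expansion of $\Delta_{0,W_\ell}$ and your application of Lemma~\ref{lem:deltagen.lw.zeros} to obtain $W_\ell$ with $\Delta_{0,W_\ell} \in \cS$ are both correct, and this matches the paper's setup. The genuine gap is exactly the step you flagged as the ``main obstacle'': the claim that a sine-type function with only simple zeros has a separated zero sequence is \emph{false}. A concrete counterexample, directly in the class of exponential polynomials at hand, is
\[
(e^{i\l}-1)(e^{i\alpha\l}-1) = e^{i(1+\alpha)\l} - e^{i\l} - e^{i\alpha\l} + 1,
\qquad \alpha>0 \text{ irrational}.
\]
This is sine-type, its zero set $\{2\pi m\}_{m\in\bZ} \cup \{2\pi m/\alpha\}_{m\in\bZ}$ consists of simple zeros (the two progressions are disjoint since $\alpha\notin\bQ$), yet the zeros are not separated, because $\{m\alpha \bmod 1\}$ is dense. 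Incompressibility and the lower bound away from $\eps$-discs around zeros do not prevent pairs of distinct zeros from drifting arbitrarily close; Lemma~\ref{lem:Delta'>} assumes asymptotic separation rather than deriving it.

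The paper closes this gap by treating two regimes separately. When the $b_k$ are commensurable, say $b_k = m_k b_0$, one has $\Delta_{0,W_\ell}(\l) = e^{i\l b_-}F_{W_\ell}(e^{i\l b_0})$ for a polynomial $F_{W_\ell}$; then simplicity of the zeros of $\Delta_{0,W_\ell}$ forces the roots $z_1,\ldots,z_N$ of $F_{W_\ell}$ to be distinct, and the zero set is a finite union of arithmetic progressions with common step $2\pi/b_0$, hence automatically separated. In the general (incommensurable) case one cannot argue a posteriori; instead one must replace the class $\cS$ throughout by the class of sine-type functions with separated zeros and use Lemma~\ref{lem:f.g.in.S}(ii) (which produces $f+wg$ with \emph{separated} zeros for $w$ outside a strip) in place of Corollary~\ref{cor:f.g.in.S} at each inductive step of Lemma~\ref{lem:deltagen.lw.zeros}. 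Your argument can be repaired along exactly these lines, but not by the blanket ``simple $\Rightarrow$ separated'' step as written.
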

\begin{proof}
Throughout the proof we will heavily use notations and formulas from Subsection~\ref{subsec:regular}: $b_{\pm}$, $P \in \cP_n$, $b_P$, $P_{\pm}$, $J_P(C,D)$, etc.
First note that
\begin{equation} \label{eq:JPCDW}
 J_P(C, D W_{\ell}) = \det(P W_{\ell}) \cdot J_P(C,D), \qquad P \in \cP_n.
\end{equation}
The proof is very similar to the proof of~\eqref{eq:JPpmCDW}. It follows from~\eqref{eq:Delta0.sum} and~\eqref{eq:JPCDW} that
\begin{equation} \label{eq:Delta0W.sum}
 \Delta_{0,W_{\ell}}(\l)
 = \sum_{P \in \cP_n} J_P(C,D W_{\ell}) e^{i \l b_P}
 = \sum_{P \in \cP_n} \det(P W_{\ell}) f_P(\l),
 \quad f_P(\l) := J_P(C,D) e^{i \l b_P}, \quad \l \in \bC.
\end{equation}

\textbf{(i)} Let us consider the case when $b_k = m_k b_0$, $k \in \oneton$, for some $b_0 > 0$ and $m_1, \ldots, m_n \in \bZ$. Similar to notations $b_{\pm}$ and $b_P$ we denote
$$
m_{\pm} := b_{\pm} / b_0, \quad\text{e.g.}\quad m_- := m_1 + \ldots + m_{n_-},
$$
and
$$
m_P := b_P / b_0 = \sum_{k=1}^n p_k m_k \quad\text{for}\quad
P = \diag(p_1, \ldots, p_n) \in \cP_n.
$$
Let $W_{\ell}$ be any invertible diagonal matrix. It follows from~\eqref{eq:Delta0.sum} and~\eqref{eq:JPCDW} that
\begin{equation} \label{eq:Delta0W.sum=FW}
 \Delta_{0,W_{\ell}}(\l)
 = \sum_{P \in \cP_n} \det(P W_{\ell}) J_P(C,D) (e^{i \l b_0})^{m_P}
 = (e^{i \l b_0})^{m_-} F_{W_{\ell}}(e^{i \l b_0}),
\end{equation}
where $F_{W_{\ell}}(\cdot)$ is some polynomial (since $m_P \ge m_-$, $P \in \cP_n$, are integers). Since boundary conditions are regular it follows that $J_{P_{\pm}}(C,D) \ne 0$. Hence $F_{W_{\ell}}(0) = J_{P_-}(C,D) \ne 0$ and $\deg F_{W_{\ell}} = m_+ - m_- =: N$. Therefore $F_{W_{\ell}}$ has exactly $N$ non-zero roots $z_1, \ldots, z_N$ (that depend on $W_{\ell}$, $z_k = z_{k,W_{\ell}}$, $k \in \onetoN$). It is clear that the sequence of zeros of $\Delta_{0,W_{\ell}}(\cdot)$ is of the form
$$
\left\{\frac{-i \ln z_k + 2 \pi m}{b_0}\right\}_{m \in \bZ, k \in \onetoN}.
$$
Hence $\Delta_{0,W_{\ell}}(\cdot)$ has separated zeros if and only if roots are distinct $z_1, \ldots, z_N$. This also means that if $\Delta_{0,W_{\ell}}(\cdot)$ has simple zeros then they are separated.

Going back to~\eqref{eq:Delta0W.sum} we see that $f_P \in \cS$, $P \in \cP_n$. Indeed, if $J_P(C,D) = 0$, then $f_P \equiv 0 \in \cS$. Otherwise $f_P$ has no zeros (and thus has simple zeros by definition). Thus, Lemma~\ref{lem:deltagen.lw.zeros} implies existence of invertible $W = W_{\ell} = \diag(w_1, \ldots, w_n)$, for which~\eqref{eq:sum.prod.wj.fJ} holds. In view of~\eqref{eq:Delta0W.sum} this implies that $\Delta_{0,W_{\ell}}(\cdot)$ has simple zeros (since it is non-zero function, due to observations above about polynomial $F_{W_{\ell}}(\cdot)$). Another observation above, implies that $\Delta_{0,W_{\ell}}(\cdot)$ has separated zeros and finishes the proof.

\textbf{(ii)} The proof is similar by using Lemma~\ref{lem:f.g.in.S}. Namely, this lemma implies variation of Lemma~\eqref{lem:deltagen.lw.zeros} for sine-type functions with separated zeros. Using this result we can finish the proof by following steps above.
\end{proof}
To state the next result we recall that $m_a(\mu_0)$ and
$m_g(\mu_0)$ denote the algebraic and geometric multiplicities of
$\mu_0$, respectively. Moreover, if $\mu_0$ is an isolated
eigenvalue, then $m_a(\mu_0)$ equals to the dimension of the
Riesz projection.
We need the following known abstract result (see
e.g.~\cite{SavShk14,LunMal16JMAA}) that follows from Katsnel'son-Markus-Matsaev
theorem with $p=1$ (see~\cite[Theorem 3.1]{Katsn67} and
also~\cite{Agran99},~\cite{MarMats84},~\cite[Theorem
6.12]{Markus88}).
\begin{proposition}[Proposition 7.3 in~\cite{LunMal16JMAA}] \label{prop:Riesz.basis.abstract}
Let $L$ be an operator with compact resolvent in a separable
Hilbert space $\cH$ and let $\{\mu_m\}_{m \in \bZ}$ be the
sequence of its distinct eigenvalues. Assume that $m_a(\mu_m) <
\infty$ for $m \in \bN$ and that $A$ has finitely many
associative vectors, i.e.\ there exists $n_0 \in \bN$ such that
$m_a(\mu_m) = m_g(\mu_m)$ for $|m| > n_0.$ Further, assume that
\begin{equation} \label{eq:ln>cn}
 |\mu_m| \ge C |m|, \quad |\Im \mu_m| \le \tau, \qquad m \in \bZ,
\end{equation}
for some $C, \tau > 0$. Finally, let the system of root vectors
of the operator $L$ forms a Riesz basis in $\fH$. Then for any
bounded operator $T$ in $\fH$ the system of root vectors of the
perturbed operator $A=L+T$ forms a Riesz basis with parentheses
in $\fH$.
\end{proposition}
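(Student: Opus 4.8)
The plan is to deduce this from the Katsnel'son--Markus--Matsaev theorem (\cite[Theorem 3.1]{Katsn67}, see also \cite{Agran99,MarMats84} and \cite[Theorem 6.12]{Markus88}) after putting $L$ into a convenient normal form. Since the root vectors of $L$ form a Riesz basis, the Riesz projections $P_m$ onto the root subspaces $\cR_{\mu_m}(L)$ are uniformly bounded with $\sum_m P_m = I$ strongly, and there is a bounded, boundedly invertible $S$ such that every $S^{-1}P_m S$ is an orthogonal projection. Replacing $L$ by $S^{-1}LS$ (which changes neither $\sigma(L)$ nor the Riesz-basis-with-parentheses property of root vectors, and replaces $A=L+T$ by $S^{-1}AS=S^{-1}LS+S^{-1}TS$ with $S^{-1}TS$ still bounded) we may assume $\cH=\bigoplus_{m\in\bZ}\cH_m$ is an orthogonal decomposition, $L=\bigoplus_m L_m$ with $\cH_m=\cR_{\mu_m}(L)$, $\dim\cH_m=m_a(\mu_m)<\infty$, and $\sigma(L_m)=\{\mu_m\}$. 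The hypothesis $m_a(\mu_m)=m_g(\mu_m)$ for $|m|>n_0$ forces $L_m=\mu_m I_{\cH_m}$ for such $m$, so $L$ differs from a block-normal operator by a finite-rank nilpotent; in particular, away from a fixed compact set one has $\|(L-\lambda)^{-1}\|\le C_0\,\dist(\lambda,\sigma(L))^{-1}$, and $(L-\lambda)^{-1}$ is compact, whence $(A-\lambda)^{-1}$ is compact for $\lambda$ with $\|T(L-\lambda)^{-1}\|<1$ and $A$ has discrete spectrum.

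Next I would set up the contour system. From $|\mu_m|\ge C|m|$ the counting function $n(r):=\card\{m:|\mu_m|\le r\}$ satisfies $n(r)\le r/C+1=O(r)$, so $\{\mu_m\}$ lies in the convergence class of order one; together with $|\Im\mu_m|\le\tau$ this is exactly the setting of the abstract theorem. Using the linear growth of $n(r)$ and a pigeonhole argument on the real parts, choose radii $R_k\uparrow\infty$ so that the circles $\Gamma_k:=\{|\lambda|=R_k\}$ satisfy $\dist(\Gamma_k,\sigma(L))\ge\delta_k$ with $\delta_k$ bounded below (and, discarding finitely many $k$, with $\delta_k>2\|T\|$), while the annuli $\Omega_k:=\{R_{k-1}<|\lambda|\le R_k\}$ (with $R_0=0$) each contain a controlled number of eigenvalues. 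On $\bigcup_k\Gamma_k$ one has $\|T(L-\lambda)^{-1}\|\le C_0\|T\|/\delta_k<1/2$, so $A-\lambda=(L-\lambda)\bigl(I+T(L-\lambda)^{-1}\bigr)$ is boundedly invertible there; hence the Riesz projections $P_k^A$ and $P_k^L$ onto the spectral parts of $A$, resp.\ $L$, inside $\Omega_k$ — obtained by integrating $(\lambda-A)^{-1}$, resp.\ $(\lambda-L)^{-1}$, over $\partial\Omega_k=\Gamma_k-\Gamma_{k-1}$ — are well defined. Here $P_k^L$ is the orthogonal projection onto $\bigoplus_{\mu_m\in\Omega_k}\cH_m$, $\sum_k P_k^L=I$, and the subspaces $\cH_k^L:=\range P_k^L$ form a Riesz basis of mutually orthogonal subspaces of $\cH$.

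Now I would compare the two families. For $k$ large the range $\cH_k^A:=\range P_k^A$ is the linear span of the root vectors of $A$ attached to the eigenvalues of $A$ lying in $\Omega_k$, and $\dim\cH_k^A=\dim\cH_k^L$ because the rank of a Riesz projection is stable under a perturbation small in operator norm along $\partial\Omega_k$. By the stability theorem for Riesz bases of subspaces (a Bari-type criterion), it then suffices to show that $\sum_k\|P_k^A-P_k^L\|^2<\infty$; indeed, this summability and $\sum_k P_k^L=I$ together yield both the completeness of $\{\cH_k^A\}$ (so that the root system of $A$ is complete) and the Riesz-basis-of-subspaces property of $\{\cH_k^A\}$. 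From the resolvent identity $P_k^A-P_k^L=-\frac{1}{2\pi i}\oint_{\partial\Omega_k}(\lambda-L)^{-1}\,T\,(\lambda-A)^{-1}\,d\lambda$ this summability reduces to an estimate of the resolvent of the diagonal model $L$ integrated along the contours $\partial\Omega_k$, which is precisely the quantitative core of the Katsnel'son--Markus--Matsaev theorem for the order-one class.

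The main obstacle is exactly this last estimate $\sum_k\|P_k^A-P_k^L\|^2<\infty$: a crude bound only gives $\|P_k^A-P_k^L\|\lesssim R_k\|T\|\,\delta_k^{-2}$, which need not be summable, so the genuine argument must exploit the $O(r)$ density of $\{\mu_m\}$ carefully — optimizing the choice of $R_k$ and $\delta_k$ and working with Carleman-type (Hilbert--Schmidt) bounds on the resolvent rather than with a single worst-case contour. This is why in the paper the statement is simply quoted from \cite[Proposition 7.3]{LunMal16JMAA}, which in turn rests on \cite{Katsn67}. Granting it, the subspaces $\cH_k^A$ form a Riesz basis of subspaces and each $\cH_k^A$ is the span of the root vectors of $A=L+T$ for the eigenvalues enclosed by $\partial\Omega_k$, which says precisely that the root vectors of $A$, grouped into the blocks $\cH_k^A$, form a Riesz basis with parentheses.
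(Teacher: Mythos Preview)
The paper does not prove this proposition at all: it is stated as a quotation of \cite[Proposition~7.3]{LunMal16JMAA}, with the remark that it ``follows from Katsnel'son--Markus--Matsaev theorem with $p=1$'' (citing \cite{Katsn67,Agran99,MarMats84,Markus88}). You yourself notice this at the end of your write-up, so there is nothing to compare against; your text is a sketch of what such a proof could look like rather than a reproduction of one in the paper.

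As a sketch of the Katsnel'son--Markus--Matsaev argument your outline is essentially on target: the similarity reduction to a block-diagonal model, the observation that $L_m=\mu_m I_{\cH_m}$ for $|m|>n_0$, the resolvent comparison via contour integration, and the identification of the quadratic summability of $P_k^A-P_k^L$ as the key difficulty are all correct. One technical point worth adjusting: since the spectrum lies in the horizontal strip $|\Im\mu_m|\le\tau$ and grows linearly in the real direction, the natural contours in \cite{Katsn67} are not circles $|\lambda|=R_k$ but vertical segments $\Re\lambda=t_k$ (joined by horizontal pieces well outside the strip), chosen so that $\dist(t_k,\{\Re\mu_m\})$ is bounded below. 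This is how the paper itself uses the result downstream (see the proof of Corollary~\ref{cor_sizes_of_Riesz_blocks}, where the counting function $N(t)$ along the real axis and the gaps $\Re\l_{m_k}-\Re\l_{m_k-1}\ge\eps$ are exactly the inputs to \cite[Theorem~1.3]{Katsn67}). With circular contours you would need $\delta_k$ bounded below uniformly, which the linear density alone does not give on full circles; the strip hypothesis is precisely what lets you work with vertical cuts instead.
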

Finally, we are ready to prove the main result of this subsection concerning Riesz basis property with parentheses.
\begin{theorem} \label{th:riesz.paren}
Let matrix function $B(\cdot)$ given by~\eqref{eq:Bx.def} satisfy conditions~\eqref{eq:betak.alpk.Linf}--\eqref{eq:betak-betaj<-eps} and let $Q \in L^1([0,\ell]; \bC^{n \times n})$. Let boundary conditions~\eqref{eq:Uy=0} be regular
Then any normalized system of root vectors of the operator $L_U(Q)$ forms a Riesz basis with parentheses in $\fH$. Moreover, block sizes are uniformly bounded by $2^n - 1$ and each block consists of root vectors corresponding to the eigenvalues with pairwise close real parts.
\end{theorem}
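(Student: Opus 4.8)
The plan is to follow the perturbation strategy of Shkalikov, as adapted in~\cite{SavShk14} and~\cite[Section 7]{LunMal16JMAA}, and realize the operator $L_U(Q)$ as a bounded perturbation of an operator similar to one whose root vectors already form a Riesz basis (without parentheses) by Theorem~\ref{th:Riesz_basis}. First I would apply the gauge transform from Lemma~\ref{lem:gauge} to reduce to the case where $Q$ satisfies the ``zero block diagonality'' condition~\eqref{eq:Qjk=0.bj=bk}; since $\cW$ is bounded with bounded inverse and preserves regularity (Lemma~\ref{lem:gauge}(iv)), Riesz basis property with parentheses is preserved under this reduction (cf. Remark~\ref{rem:similarity}). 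Next, using Proposition~\ref{prop:Delta0.to.strict}, I would choose an invertible diagonal matrix $W_{\ell} = \diag(w_1, \ldots, w_n)$ with non-zero entries such that the modified characteristic determinant $\Delta_{0, W_{\ell}}(\cdot) = \det(C + D W_{\ell} \Phi_0(\ell,\cdot))$ has separated zeros. The point is that the boundary conditions $\wt U(y) := C y(0) + D W_{\ell} y(\ell) = 0$ remain regular (an argument identical to~\eqref{eq:JPpmCDW}, using~\eqref{eq:JPCDW}), and since $\Delta_{0,W_{\ell}}$ has separated zeros, these boundary conditions are \emph{strictly} regular in the sense of Definition~\ref{def:strict.regular}(iii).

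Having fixed $W_{\ell}$, I would introduce the auxiliary operator $\wt L := L_{\wt U}(Q)$ associated with the same equation~\eqref{eq:LQ.def.reg} but the strictly regular boundary conditions $\wt U(y) = 0$. Since $\wt U$ is strictly regular and $B(\cdot)$, $Q(\cdot)$ satisfy~\eqref{eq:betak.alpk.Linf}--\eqref{eq:betak-betaj<-eps} and~\eqref{eq:Qjk=0.bj=bk}, Theorem~\ref{th:Riesz_basis} applies (via Definition~\ref{def:bvp.strict}, noting $Q_{\diag} \equiv 0$ after the gauge reduction), so any normalized system of root vectors of $\wt L$ forms a Riesz basis in $\fH$. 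Moreover, by Proposition~\ref{prop:sine.type} the eigenvalues of $\wt L$ are incompressible and lie in a strip $\Pi_h$, and by~\eqref{eq:lam.n=an+o1} they satisfy $|\mu_m| \ge C|m|$; finitely many associated vectors appear since $\wt U$ is strictly regular. Then $L_U(Q)$ and $\wt L$ differ only through their boundary conditions at $x = \ell$, and the standard construction (as in~\cite[Section 7]{LunMal16JMAA}) realizes $L_U(Q)$ as $\wt L + T$ with $T$ a finite-rank, hence bounded, operator --- concretely, $T$ is supported on the finite-dimensional difference between $\dom(\wt L)$ and $\dom(L_U(Q))$ once both are written as restrictions of the maximal operator by suitable boundary functionals. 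Proposition~\ref{prop:Riesz.basis.abstract} (the Katsnel'son--Markus--Matsaev consequence with $p=1$) then yields that the system of root vectors of $L_U(Q) = \wt L + T$ forms a Riesz basis with parentheses in $\fH$.

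For the statement about block sizes, I would invoke Proposition~\ref{prop:sine.type}(ii): the sequence of eigenvalues $\L$ of $L_U(Q)$ is incompressible of some density $d$. But more precisely, from the representation $\Delta_Q(\l) = \int_{b_-}^{b_+} e^{i\l u} d\sigma(u)$ (see~\eqref{4.12New}) the determinant $\Delta_Q(\cdot)$ is an entire function of sine-type whose zeros, by Katsnel'son's lemmas, split into at most $N \le 2^n$ ``interlacing'' near-arithmetic progressions; grouping together zeros with pairwise close real parts produces blocks of size at most $2^n - 1$ and ensures the block partition $\{m_k\}$ from the abstract theorem can be taken so each block corresponds to eigenvalues with pairwise close real parts (cf. the examples in Subsection~\ref{subsec:examples.strict} and Corollary~\ref{cor_sizes_of_Riesz_blocks}). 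The main obstacle I anticipate is the careful bookkeeping in constructing the bounded perturbation $T$ and verifying $\dom(\wt L) \cap \dom(L_U(Q))$ has finite codimension in each, together with the uniform control needed to make the block sizes genuinely bounded by $2^n - 1$ rather than merely by the incompressibility density; this requires tracking the combinatorial structure of $\Delta_0(\cdot)$ through formula~\eqref{eq:Delta0.sum2} and the stability of that structure under the $L^1$-perturbation~\eqref{eq:DeltaQ=Delta0+int}.
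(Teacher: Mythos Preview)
Your overall strategy is right, but there is a genuine gap in the key step. You want to write $L_U(Q) = \wt L + T$ with $\wt L = L_{\wt U}(Q)$ and $T$ bounded (indeed finite rank), and then invoke Proposition~\ref{prop:Riesz.basis.abstract}. But $L_U(Q)$ and $L_{\wt U}(Q)$ have \emph{different domains}: they are restrictions of the same maximal operator to different boundary conditions~\eqref{eq:Uy=0} and $\wt U(y)=0$. If $T$ is bounded then $\dom(\wt L + T) = \dom(\wt L) \ne \dom(L_U(Q))$, so no such identity $L_U(Q) = \wt L + T$ can hold. What one actually gets from a change of boundary conditions is a finite-rank perturbation at the \emph{resolvent} level, not at the operator level, and Proposition~\ref{prop:Riesz.basis.abstract} as stated does not apply to that situation. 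The ``standard construction'' you cite from~\cite{SavShk14} and~\cite[Section 7]{LunMal16JMAA} is in fact \emph{not} a direct boundary-condition swap.

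The paper's (and those references') trick is precisely designed to avoid this domain mismatch: instead of changing the boundary conditions directly, one perturbs the \emph{potential} by a constant diagonal matrix $Q_0 = \diag(q_1,\ldots,q_n)$. Then $L_U(Q) = L_U(Q+Q_0) - Q_0$, where both sides have the \emph{same} domain $\dom(L_U(Q))$ and the perturbation $y \mapsto Q_0 y$ is a genuine bounded multiplication operator on $\fH$. The point is that, by Lemma~\ref{lem:gauge} applied to $Q+Q_0$ (whose block diagonal is exactly $Q_0$), the operator $L_U(Q+Q_0)$ is similar via a bounded invertible map to $L_{\wt U}(\wt Q)$ with off-diagonal $\wt Q$ and boundary conditions $\wt U(y) = Cy(0) + DW_\ell y(\ell)=0$, where $W_\ell = \diag(e^{-i b_1 q_1},\ldots,e^{-i b_n q_n})$. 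Now Proposition~\ref{prop:Delta0.to.strict} is used to choose $W_\ell$ (equivalently $q_1,\ldots,q_n$) making $\wt U$ strictly regular, and Theorem~\ref{th:Riesz_basis} gives the Riesz basis property for $L_U(Q+Q_0)$. Proposition~\ref{prop:Riesz.basis.abstract} then applies cleanly to the bounded perturbation $-Q_0$. Your argument becomes correct once you replace the nonexistent finite-rank $T$ by this potential perturbation; the block-size bound $2^n-1$ then follows, as you indicate, from the fact that $\Delta_0(\cdot)$ is an exponential polynomial with at most $2^n$ terms, together with~\eqref{eq:lm=lm0+o}.
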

\begin{proof}
By Lemma~\ref{lem:gauge}(iv), the regularity of boundary conditions is preserved under the gauge transformation used in Lemma~\ref{lem:gauge}. Therefore one can assume that $Q$ is off-diagonal. Now let us consider a perturbation of the operator $L(Q)$ by a constant diagonal potential matrix $Q_0 = \diag(q_1, \ldots, q_n)$, $q_1, \ldots, q_n \in \bC$.
Applying Lemma~\ref{lem:gauge} again, but this time to the operator $L_U(Q+Q_0)$, and noting that $(Q+Q_0)_{\diag} = Q_0$, we see that the operator $L_U(Q+Q_0)$ is similar to the operator $L_{\wt{U}}(\wt{Q})$ with off-diagonal $\wt{Q}$ and with boundary conditions
\begin{equation} \label{eq:wtU.pert}
 \wt{U}(y) = C y(0) + \wt{D} y(\ell)=0,\quad \text{where}\quad
 \wt{D} = D \cdot W_{\ell},
\end{equation}
and
\begin{equation} \label{eq:Well.def}
 W_{\ell} = \diag(w_1, \ldots, w_n),
 \qquad w_k = e^{-i b_k q_k}, \quad k \in \oneton.
\end{equation}

By Proposition~\ref{prop:Delta0.to.strict}, we can choose $w_1, \ldots, w_n \in \bC \setminus \{0\}$ such that the boundary conditions~\eqref{eq:wtU.pert} are strictly regular. In turn, setting, $q_k = i b_k^{-1} \ln w_k$, $k \in \oneton$, provides us with the desired perturbation $Q_0$. Namely, solution $W(\cdot)$ of the equation~\eqref{eq:W'+QW} with $Q_0$ in place of $Q_{\diag}$, satisfies condition $W(\ell) = W_{\ell}$, where $W_{\ell}$ is given by~\eqref{eq:Well.def}.

Therefore, by Theorem~\ref{th:ln=ln0+o} and Definition~\ref{def:strict.regular}(iii), the eigenvalues of $L(Q+Q_0)$ are of finite multiplicity, asymptotically simple and separated. In particular, the operator $L(Q+Q_0)$ has only finitely many associated vectors. Moreover, according to Theorem~\ref{th:Riesz_basis} the root vectors system of the operator $L(Q+Q_0)$ forms a Riesz basis in $\fH$.

To verify that the operator $L(Q+Q_0)$ satisfies conditions of Proposition~\ref{prop:Riesz.basis.abstract} it suffices to note that inequalities~\eqref{eq:ln>cn} are implied by Proposition~\ref{prop:sine.type}(iv). Thus, the operator $L(Q+Q_0)$ meets the conditions of Proposition~\ref{prop:Riesz.basis.abstract}, and hence the root vectors system of the original operator $L(Q) = L(Q+Q_0) - Q_0$ forms a Riesz basis with parentheses, since operator $T : y \to Q_0 y$ is clearly bounded in $\fH$.

Further, note that since $\Delta_0(\cdot)$ is exponential polynomial with at most $2^n$ terms, we can find a number $w > 0$ such that any rectangle $[x, x + w] \times [-h, h]$, $x \in \bR$, has no more than $2^n-1$ zeros of  $\Delta_0(\cdot)$ (see~\cite[pp. 95--97]{BirLan23} where this property of exponential polynomial was proved for $n=2$). The same is valid for $\Delta_Q(\cdot)$ due to asymptotic formula $\l_m = \l_m^0 + o(1)$ as $m \to \infty$. Recall that this formula is valid in the case of any regular boundary conditions. Hence we can separate sequence $\{\l_m\}_{m \in \bZ}$ (ordered in increasing order of real parts) into blocks
$$
\{\l_m\}_{m=m_{k}}^{m_{k+1}-1}, \quad\text{where}\quad m_{k} < m_{k+1} < m_{k} + 2^n \quad\text{and}\quad
\Re \l_{m_k - 1} + \eps \le \Re \l_{m_k}, \qquad k \in \bZ,
$$
with $\eps := \frac{w}{\max\{2^n-2,1\}}$. From the proof of Katsnel'son-Markus-Matsaev theorem with $p=1$ (see~\cite[Theorem 3.1]{Katsn67} and also~\cite{Agran99},~\cite{MarMats84},~\cite[Theorem 6.12]{Markus88}), it follows that each block consists of root vectors corresponding to the eigenvalues with pairwise close real parts, which finishes the proof.
\end{proof}
Under certain additional assumptions on boundary conditions we can clarify the sizes of blocks in Theorem~\ref{th:riesz.paren} even more. We restrict ourselves to a special type of boundary conditions including periodic and antiperiodic conditions.
\begin{corollary} \label{cor_sizes_of_Riesz_blocks}
Assume the conditions of Theorem~\ref{th:riesz.paren} and let $b_1 < \ldots < b_n$. Assume also that boundary conditions~\eqref{eq:Uy=0} are of the form $C y(0) - y(\ell) = 0$, where $C = \diag(c_1, \ldots, c_n)$ is invertible. Then the blocks of Riesz basis decomposition with parentheses stated in
Theorem~\ref{th:riesz.paren} have sizes at most $n$ and correspond to eigenvalues of the operator $L_U(Q)$ with mutually close real parts. In particular, this is true for periodic and antiperiodic boundary conditions.
\end{corollary}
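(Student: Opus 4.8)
The plan is to refine the block-size estimate from Theorem~\ref{th:riesz.paren} by exploiting the special structure of the boundary conditions $Cy(0)-y(\ell)=0$. First I would recall that in the proof of Theorem~\ref{th:riesz.paren} the blocks are governed by the distribution of zeros of $\Delta_0(\cdot)$ in vertical strips of fixed width; by the asymptotic formula $\l_m=\l_m^0+o(1)$ the same distribution controls the zeros of $\Delta_Q(\cdot)$. So it suffices to show that for boundary conditions of the form $C=\diag(c_1,\ldots,c_n)$, $D=-I_n$, with $c_k\neq 0$ and $b_1<\ldots<b_n$, any vertical strip $[x,x+w]\times[-h,h]$ contains at most $n$ zeros of $\Delta_0(\cdot)$ for a suitable $w>0$, and moreover those zeros correspond to eigenvalues with mutually close real parts (which is what the Katsnel'son--Markus--Matsaev machinery delivers once the strip count is bounded).

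The key computation is that, exactly as in the proof of Lemma~\ref{lem:periodic.strict}, for these boundary conditions one has the explicit factorization
\begin{equation} \label{eq:Delta0.cor.fact}
 \Delta_0(\l)=\det(C+D\Phi_0(\ell,\l))=\prod_{k=1}^{n}\bigl(c_k-e^{i\l b_k}\bigr),
\end{equation}
so that the zero set of $\Delta_0(\cdot)$ is the union of the $n$ arithmetic progressions $\L_k^{\per}=\{(\arg c_k+2\pi m)/b_k-i\ln|c_k|/b_k\}_{m\in\bZ}$, $k\in\oneton$. Each progression $\L_k^{\per}$ has real-part spacing exactly $2\pi/|b_k|$, hence is itself incompressible of density $1$ in strips of width $2\pi/|b_k|$; more precisely, any vertical strip of width $w$ contains at most $\lceil w|b_k|/(2\pi)\rceil$ points of $\L_k^{\per}$. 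Choosing $w>0$ small enough that $w\cdot\max_k|b_k|<2\pi$ — e.g.\ $w:=\pi/b_{\max}$ with $b_{\max}=\max\{|b_1|,\ldots,|b_n|\}$ — guarantees that each $\L_k^{\per}$ meets $[x,x+w]\times\bR$ in at most one point, so the full strip contains at most $n$ zeros of $\Delta_0(\cdot)$ counting multiplicity. Transferring this to $\Delta_Q(\cdot)$ via $\l_m=\l_m^0+o(1)$ (Theorem~\ref{th:ln=ln0+o}, valid for all regular boundary conditions) shows that, for $|m|$ large, the same width-$w$ strips contain at most $n$ eigenvalues of $L_U(Q)$; absorbing the finitely many exceptional eigenvalues into an initial block does not affect the bound.

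With the strip count in hand, I would re-run the block construction of Theorem~\ref{th:riesz.paren}: order $\{\l_m\}_{m\in\bZ}$ by increasing real part, and cut the sequence into maximal runs lying in successive strips of width $w$ where consecutive strips are separated by a gap of size $\eps:=w/\max\{n-1,1\}$ in real part (such gaps exist by a pigeonhole argument on $n+1$ consecutive eigenvalues, since $n+1$ of them cannot all fit in a strip of width $w$). Each such run has at most $n$ elements and consists of eigenvalues with pairwise close real parts, and the Riesz-basis-with-parentheses conclusion for the corresponding spans of root vectors is inherited verbatim from Theorem~\ref{th:riesz.paren} (the same bounded perturbation $Q_0$, the same application of Proposition~\ref{prop:Riesz.basis.abstract}). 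For the periodic case $c_1=\ldots=c_n=1$ and the antiperiodic case $c_1=\ldots=c_n=-1$, factorization~\eqref{eq:Delta0.cor.fact} applies directly, so no separate argument is needed.

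The main obstacle is a bookkeeping one rather than a conceptual one: one must make sure the block cut-points can be chosen \emph{uniformly} so that no block straddles a strip boundary while still capturing every eigenvalue, and that the finitely many small-$|m|$ eigenvalues (where $\l_m$ may deviate noticeably from $\l_m^0$, and where associated vectors may occur) are handled by enlarging the first block without breaking the size bound $n$. This is exactly the kind of argument already carried out for $n=2$ in~\cite{BirLan23} and in~\cite[Section~7]{LunMal16JMAA}; the only new input here is the clean product formula~\eqref{eq:Delta0.cor.fact}, which replaces the exponential-polynomial zero-counting lemma by an elementary count over $n$ arithmetic progressions and thereby sharpens the generic bound $2^n-1$ to $n$.
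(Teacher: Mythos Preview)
Your approach is essentially the one the paper takes: exploit the factorization $\Delta_0(\l)=\prod_k(c_k-e^{i\l b_k})$ to bound the number of zeros in any vertical strip by $n$, transfer to $\L$ via $\l_m=\l_m^0+o(1)$, and feed this into the Katsnel'son--Markus--Matsaev block construction. The paper packages the strip-counting step as Lemma~\ref{lem:periodic.strict}(i) and then invokes Katsnel'son's counting-function condition~\eqref{eq:sup.nt-ntk} explicitly, whereas you re-derive the count and appeal more loosely to ``re-running Theorem~\ref{th:riesz.paren}''; these amount to the same thing.

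There is one genuine omission, however. You invoke $\l_m=\l_m^0+o(1)$ from Theorem~\ref{th:ln=ln0+o}, but that theorem assumes the zero-block-diagonality condition~\eqref{eq:Qjk=0.bj=bk} on $Q$, which the corollary does not. For general $Q$ the relevant asymptotic is $\l_m=\wt{\l}_m^0+o(1)$ with $\wt{\l}_m^0$ the zeros of the \emph{modified} determinant $\wt{\Delta}_0(\l)=\det(C+DW(\ell)\Phi_0(\ell,\l))$ (Theorem~\ref{th:eigen.gen}). The paper closes this gap by first applying the gauge transform of Lemma~\ref{lem:gauge}: since $b_1<\ldots<b_n$ forces all $\beta_k$ to be pairwise distinct (via condition~\eqref{eq:betak-betaj<-eps}), $Q_{\diag}$ is an ordinary diagonal matrix and $W(\ell)$ is diagonal; left-multiplying the boundary conditions by $-\bigl(DW(\ell)\bigr)^{-1}$ then restores $D=-I_n$ with a new diagonal $C$. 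Only after this reduction does your factorization~\eqref{eq:Delta0.cor.fact} (now for $\wt{\Delta}_0$) apply. The fix is one sentence, but without it your citation of Theorem~\ref{th:ln=ln0+o} is not justified.
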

\begin{proof}
Applying the gauge transform from Lemma~\ref{lem:gauge} we can assume that $Q$ is off-diagonal matrix, $Q_{11} \equiv \ldots \equiv Q_{nn} \equiv 0$. Since $b_1 < \ldots < b_n$ it is clear that the new matrix $D$ is of the same diagonal form as $C$. Hence multiplying boundary conditions by $-D^{-1}$ from the left we can assume that again $D = -I_n$. Let $\L := \{\l_n\}_{n \in \bZ}$ be the sequence of eigenvalues of $A := L_U(Q)$ counting multiplicity, ordered in such a way that $\Re \l_n \le \Re \l_{n+1}$, $n \in \bZ$. Since $Q$ is off-diagonal, one derives by combining Theorem~\ref{th:ln=ln0+o} with Lemma~\ref{lem:periodic.strict}(i) that there exists $\eps > 0$ and a sequence of integers $\{m_k\}_{k \in \bZ}$ such that
\begin{equation} \label{eq:Lambda=union}
 m_k < m_{k+1} \le m_k + n, \quad \Re \l_{m_k} - \Re \l_{m_k-1} \ge \eps, \quad k \in \bZ.
\end{equation}

Let $N(t)$ be the number of eigenvalues (counting multiplicity)
of the operator $A$ belonging to the vertical strip $\{\l :
|\Re \l| \le t\}$. Setting $t_k := 2^{-1}\Re(\l_{m_k-1} +
\l_{m_k})$ we obtain from~\eqref{eq:Lambda=union} that there
exists $C > 0$ such that
\begin{equation} \label{eq:sup.nt-ntk}
 \sup_{t >0, t \ne t_k} \frac{N(t) - N(t_k)}{t-t_k} < C, \quad k \in \bZ.
\end{equation}
Let $f_n$ be the root vector of $A$ corresponding to $\l_n$, $n
 \in \bZ$. Then it follows from the proof of Theorem 3.1
in~\cite{Katsn67} (see also Theorem 1.3 in~\cite{Katsn67}) that
under the condition~\eqref{eq:sup.nt-ntk} the subspaces $\fH_k
:= \Span\{f_j\}_{j=m_k}^{m_{k+1}-1}$ constitute a Riesz basis
of subspaces in $\fH$. To complete the proof it
remains to note that due to~\eqref{eq:Lambda=union} the sizes
$\dim \fH_k$ of blocks $\fH_k$ do not exceed $n$. Indeed, $\dim \fH_k =
m_{k+1} - m_k \le n$, $k \in \bZ$.
\end{proof}
\begin{remark} \label{rem:block.size}
\textbf{(i)} Corollary~\ref{cor_sizes_of_Riesz_blocks} is valid under the following slightly more general assumption,
\begin{equation}
 \forall j,k \in \oneton: \quad
 j \ne k \quad\text{and}\quad b_j = b_k \quad\Rightarrow\quad Q_{jk} \equiv 0.
\end{equation}
This is evident from the fact that for such $Q$ its block diagonal $Q_{\diag}$ with respect to the decomposition $\bC^n = \bC^{n_1} \oplus \ldots \oplus \bC^{n_r}$ is actually a regular diagonal matrix, $Q_{\diag} = \diag(Q_{11}, \ldots, Q_{nn})$. Hence, the solution $W(\cdot)$ of the Cauchy problem~\eqref{eq:W'+QW} is a diagonal matrix function. This in turn imply, that the new matrix $D = -W(\ell)$ after applying gauge transform is of the same diagonal form as $C$, which allows to finish the proof the same way.

\textbf{(ii)} Result similar to Corollary~\ref{cor_sizes_of_Riesz_blocks} is also valid for boundary conditions of the form~\eqref{eq:separ.cond} if numbers $b_1, \ldots, b_n$ satisfy condition~\eqref{eq:b.2k-1<0<b.2k}. We just need to apply Lemma~\ref{lem:separ.regular}(i) instead of Lemma~\ref{lem:periodic.strict}(i). In fact, the blocks of Riesz basis decomposition with parentheses stated in Theorem~\ref{th:riesz.paren} have sizes at most $n/2$ in this case.

\textbf{(iii)} More generally, if the sequence $\L_0$ is a union of $N$ arithmetic progressions that lie on the lines parallel to the real axis, then the blocks of Riesz basis decomposition with parentheses stated in Theorem~\ref{th:riesz.paren} have sizes at most $N$.
\end{remark}
\section{Application to the Timoshenko beam model}
\label{sec:Timoshenko}
\subsection{Problem statement and reduction to Dirac type operator}
\label{subsec:Tim.general}
In this section we obtain some important geometric properties of the system
of root vectors of the dynamic generator of the Timoshenko
beam model. Consider the following linear system of two coupled
hyperbolic equations for $t \ge 0$
\begin{align}
 \label{eq:Tim.Ftt}
 I_{\rho}(x) \Phi_{tt} &= K(x)(W_x-\Phi) + (EI(x) \Phi_x)_x - p_1(x) \Phi_t, \quad x \in [0, \ell],\\
 \label{eq:Tim.Wtt}
 \rho(x) W_{tt} &= (K(x)(W_x-\Phi))_x - p_2(x) W_t, \qquad \qquad \qquad x \in [0, \ell].
\end{align}
The vibration of the Timoshenko beam of the length $\ell$
clamped at the left end is governed by the system
\eqref{eq:Tim.Ftt}--\eqref{eq:Tim.Wtt} subject to the following
boundary conditions for $t \ge 0$~\cite{Tim55}:
\begin{align}
 \label{eq:Tim.W0F0}
 W(0,t) = \Phi(0,t) &= 0, \\
 \label{eq:Tim.WLFLa1}
 \bigl(EI(x) \Phi_x(x,t) + \alp_1 \Phi_t(x,t)
 + \gam_1 W_t(x,t)\bigr)\bigr|_{x=\ell} &= 0, \\
 \label{eq:Tim.WLFLa2}
 \bigl(K(x)(W_x(x,t)-\Phi(x,t)) + \alp_2 W_t(x,t) + \gam_2 \Phi_t(x,t)\bigr)\bigr|_{x=\ell} &= 0.
\end{align}
Here $W(x,t)$ is the lateral displacement at a point $x$ and
time $t$, $\Phi(x,t)$ is the bending angle at a point $x$ and
time $t$, $\rho(x)$ is a mass density, $K(x)$ is the shear
stiffness of a uniform cross-section, $I_{\rho}(x)$ is the
rotary inertia, $EI(x)$ is the flexural rigidity at a point $x$,
$p_1(x)$ and $p_2(x)$ are locally distributed feedback
functions, $\alp_k, \gam_k \in \bC$, $k \in \{1,2\}$.
Boundary conditions at the right end contain as partial cases
most of the known boundary conditions if $\alp_1, \alp_2$
are allowed to be infinity.

Regarding the coefficients, we assume that they all are measurable functions satisfying the following general conditions for some $M > 1$,
\begin{equation} \label{eq:Tim.coef.cond}
 0 < M^{-1} \le \rho(x), I_{\rho}(x), K(x), EI(x) \le M,
 \quad x \in [0,\ell], \qquad p_1, p_2 \in L^1([0,\ell];\bC).
\end{equation}

Under these assumptions, the energy space associated with the
problem~\eqref{eq:Tim.Ftt}--\eqref{eq:Tim.WLFLa2} is
\begin{equation} \label{eq:cH.def}
 \cH := \wt{H}^1_0[0,\ell] \times L^2[0,\ell] \times \wt{H}^1_0[0,\ell] \times L^2[0,\ell],
\end{equation}
where $\wt{H}^1_0[0,\ell] := \{f \in W^{1,2}[0,\ell] :
f(0)=0\}$. The norm in the energy space is defined as follows:
\begin{equation} \label{eq:Tim.|y|H}
 \|y\|_{\cH}^2 = \int_0^\ell \bigl(EI|y_1'|^2+I_{\rho}|y_2|^2 + K|y_3'-y_1|^2+\rho|y_4|^2\bigr)dx,
 \quad y =\col(y_1,y_2,y_3,y_4).
\end{equation}
The problem~\eqref{eq:Tim.Ftt}--\eqref{eq:Tim.WLFLa2} can be
rewritten as
\begin{equation} \label{eq:Tim.yt=i.cLy}
 y_t = i \cL y, \quad y(x,t)|_{t=0} = y_0(x),
\end{equation}
where $y$ and $\cL$ are given by
\begin{equation} \label{eq:Tim.Ly.def}
 y = \begin{pmatrix} \Phi(x,t) \\ \Phi_t(x,t) \\ W(x,t) \\ W_t(x,t) \end{pmatrix}, \ \
 \cL \begin{pmatrix} y_1 \\ y_2 \\ y_3 \\ y_4 \end{pmatrix} = \frac{1}{i} \begin{pmatrix}
 y_2 \\ \frac{1}{I_{\rho}(x)}\Bigl(K(x)(y_3'-y_1) + \bigl(EI(x) y_1'\bigr)' - p_1(x) y_2\Bigr) \\
 y_4 \\ \frac{1}{\rho(x)}\Bigl(\bigl(K(x)(y_3'-y_1)\bigl)' - p_2(x) y_4\Bigr)
 \end{pmatrix}
\end{equation}
on the domain
\begin{align} \label{eq:Tim.dom.cL}
 && \dom(\cL) = \left\{ y = \col(y_1,y_2,y_3,y_4) :
 \ \ y_1, y_2, y_3, y_4 \in \wt{H}^1_0[0,\ell]\right., \nonumber \\
 && EI \cdot y_1' \in \AC[0,\ell], \qquad (EI\cdot y_1')' - p_1 y_2 \in L^2[0,\ell], \nonumber \\
 && K\cdot(y_3'-y_1) \in \AC[0,\ell], \qquad (K\cdot(y_3'-y_1))' - p_2 y_4 \in L^2[0,\ell], \nonumber \\
 && \bigl(EI \cdot y_1'\bigr)(\ell) + \alp_1 y_2(\ell) + \gam_1 y_4(\ell)= 0, \nonumber \\
 && \Bigl.\bigl(K \cdot (y_3'-y_1)\bigr)(\ell) + \alp_2 y_4(\ell) + \gam_2 y_2(\ell)= 0 \Bigr\}.
\end{align}

Timoshenko beam model is investigated in numerous papers (see~\cite{Tim55,KimRen87,MenZua00,Shub02,XuYung04,XuHanYung07,WuXue11,Shub11,LunMal15JST,LunMal16JMAA,Akian22} and the references therein). A number of stability, controllability, and optimization problems were studied. Note also that the general model~\eqref{eq:Tim.Ftt}--\eqref{eq:Tim.WLFLa2} of spatially non-homogenous Timoshenko beam with both boundary and locally distributed damping covers the cases studied by many authors. Geometric properties of the system of root vectors of the operator $\cL$ play important role in investigation of different properties of the problem~\eqref{eq:Tim.Ftt}--\eqref{eq:Tim.WLFLa2}.

Below we continue our investigation started in~\cite{LunMal15JST,LunMal16JMAA}, where we established completeness and the Riesz basis property with parentheses of the root vectors system of the operator $\cL$, without analyzing its spectrum.
In our previous papers we imposed the following additional algebraic assumption on $\cL$: the ratio of wave speeds $\frac{K(\cdot)}{\rho(\cdot)}$ and $\frac{EI(\cdot)}{I_\rho(\cdot)}$ is constant.
This assumption has to be added since in~\cite{LunMal15JST,LunMal16JMAA} we treated BVP~\eqref{eq:LQ.def.intro}--\eqref{eq:Uy=0.intro} with non-trivial potential matrix $Q$ and \emph{constant matrix} $B(x) = B$. Below we will establish completeness and the Riesz basis property with parentheses of the root vectors system of the operator $\cL$ \emph{without this algebraic assumption} and additionally establish asymptotic behavior of its eigenvalues.
Moreover, under additional assumptions ensuring that the eigenvalues of the operator $\cL$ are asymptotically separated, we will show that the system of root vectors of the operator $\cL$ forms a Riesz basis in $\cH$ (\emph{without parentheses}) and establish asymptotic behavior of the eigenvectors. Riesz basis property is essential for obtaining numerous stability and controllability properties.

As in our previous papers~\cite{LunMal15JST,LunMal16JMAA}, our approach to the spectral properties of the operator $\cL$ is based on the similarity reduction of $\cL$ to a special $4\times 4$ Dirac-type operator $L_U(Q)$ associated with appropriate BVP~\eqref{eq:LQ.def.intro}--\eqref{eq:Uy=0.intro}. To state the result we need some additional preparations. Let
\begin{align}
\label{eq:Tim.B}
 B(x) &:= \diag(-\beta_1(x), \beta_1(x), -\beta_2(x), \beta_2(x)),
 \qquad\text{where} \\
\label{eq:Tim.beta1.beta2}
 \beta_1(x) &:= \sqrt{\frac{I_{\rho}(x)}{EI(x)}},
 \qquad \beta_2(x) := \sqrt{\frac{\rho(x)}{K(x)}}, \qquad x \in [0,\ell].
\end{align}

Recall that with the matrix function $B(\cdot)$ one associates weighted vector $L^2$-space $\fH$ via formulas~\eqref{eq:fH.def}--\eqref{eq:fHk.def}. In the case of matrix function $B(\cdot)$ given by~\eqref{eq:Tim.B}, it takes the following form,
\begin{equation} \label{eq:Tim.fH}
 \fH = L^2_{\beta_1}[0,\ell] \oplus L^2_{\beta_1}[0,\ell]
 \oplus L^2_{\beta_2}[0,\ell] \oplus L^2_{\beta_2}[0,\ell].
\end{equation}
It follows from condition~\eqref{eq:Tim.coef.cond} that identity operator from $\fH$ to $L^2([0,\ell];\bC^4)$ is bounded and has a bounded inverse, i.e.\ the Hilbert spaces $\fH$ and $L^2([0,\ell];\bC^4)$ coincide algebraically and topologically.

Further, we set,
\begin{align}
\label{eq:Tim.Theta(x)}
 \Theta(x) &:= 2 \diag(h_1(x), h_1(x), h_2(x), h_2(x)),
 \qquad\text{where} \\
\label{eq:Tim.g1.g2.def}
 h_1(x) &:= \sqrt{EI(x) I_{\rho}(x)}, \qquad h_2(x):=\sqrt{K(x) \rho(x)},
 \qquad x \in [0,\ell].
\end{align}
In the sequel we assume that
\begin{equation} \label{eq:Tim.h1,h2.in.AC}
 h_1, h_2 \in \AC[0,\ell].
\end{equation}
It follows from~\eqref{eq:Tim.coef.cond}, definition~\eqref{eq:Tim.beta1.beta2} of $\beta_1$, $\beta_2$, and definition~\eqref{eq:Tim.g1.g2.def} of $h_1$, $h_2$ that
\begin{equation} \label{eq:Tim.beta.h}
 0 < M^{-1} \le \beta_1(x), \beta_2(x), h_1(x), h_2(x) \le M,
 \qquad x \in [0,\ell],
\end{equation}
with the same $M$ as in conditions~\eqref{eq:Tim.coef.cond}.

Under assumptions~\eqref{eq:Tim.coef.cond} and~\eqref{eq:Tim.h1,h2.in.AC} the
following matrix function $Q(\cdot)$ is well-defined and summable,
\begin{equation}
 \label{eq:Tim.Q(x)}
 Q(x) := \Theta^{-1}(x)
 \begin{pmatrix}
 p_1+h_1' & p_1-h_1' & h_2 & -h_2 \\
 p_1+h_1' & p_1-h_1' & h_2 & -h_2 \\
 -h_2 & -h_2 & p_2+h_2' & p_2-h_2' \\
 h_2 & h_2 & p_2+h_2' & p_2-h_2'
 \end{pmatrix}, \qquad x \in [0,\ell].
\end{equation}
Finally, let
\begin{equation} \label{eq:Tim.C.D}
 C = \begin{pmatrix}
 1 & 1 & 0 & 0 \\
 0 & 0 & 0 & 0 \\
 0 & 0 & 1 & 1 \\
 0 & 0 & 0 & 0
 \end{pmatrix}, \qquad
 D = \begin{pmatrix}
 0 & 0 & 0 & 0 \\
 \alp_1 - h_1(\ell) & \alp_1 + h_1(\ell) & \gam_1 & \gam_1 \\
 0 & 0 & 0 & 0 \\
 \gam_2 & \gam_2 & \alp_2 - h_2(\ell) & \alp_2 + h_2(\ell) \\
 \end{pmatrix}.
\end{equation}
\begin{proposition}[cf. Proposition 6.1 in~\cite{LunMal15JST}] \label{prop:Tim.similar}
Let measurable functions $\rho, I_{\rho}, K, EI, p_1, p_2, h_1, h_2$ satisfy conditions~\eqref{eq:Tim.coef.cond} and~\eqref{eq:Tim.h1,h2.in.AC}. Then the operator $\cL$ acting in the Hilbert space $\cH$ is similar to the $4 \times 4$ Dirac-type operator $L_U(Q)$ acting in the Hilbert space $\fH$ given by~\eqref{eq:Tim.fH}, where matrices $B(\cdot),C,D,Q(\cdot)$ are given by~\eqref{eq:Tim.B},~\eqref{eq:Tim.C.D} and~\eqref{eq:Tim.Q(x)}.
\end{proposition}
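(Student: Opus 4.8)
## Proof Proposal for Proposition~\ref{prop:Tim.similar}

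The plan is to realize the similarity explicitly by exhibiting an invertible bounded map $\cT : \cH \to \fH$ and checking that $\cT \cL \cT^{-1} = L_U(Q)$ on the appropriate domains, following the scheme of Proposition~6.1 in~\cite{LunMal15JST} but now keeping the coefficients $h_1, h_2$ inside the transformation rather than dividing them out. First I would introduce the pointwise change of variables that diagonalizes the principal part of $\cL$: set $u_1 := \tfrac{1}{2}(h_1 y_1' - \beta_1^{-1}\,\text{(something)})$ — more precisely, following the $2\times2$ ``characteristic'' decomposition for each of the two wave equations, one writes the new unknown as a linear combination of $y_j'$, $y_j$ and the ``momenta'', chosen so that the first-order system $\cL y = \l y$ becomes $-iB(x)^{-1}(z' + Q(x)z) = \l z$ with $B$ given by~\eqref{eq:Tim.B} and $Q$ given by~\eqref{eq:Tim.Q(x)}. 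Concretely, for the first pair one takes $z_1 = h_1 y_1' + $ (a term producing $I_\rho y_2$ after rescaling), $z_2 = -h_1 y_1' + \ldots$, so that $z_1, z_2$ are the Riemann invariants traveling with speeds $\mp\beta_1^{-1}$; similarly $z_3,z_4$ for the second pair with speeds $\mp\beta_2^{-1}$. The matrix $\Theta(x)$ in~\eqref{eq:Tim.Theta(x)} is precisely the diagonal normalization that makes $\cT$ an isometric-up-to-equivalence map between the energy norm~\eqref{eq:Tim.|y|H} and the weighted $L^2$-norm on $\fH$ in~\eqref{eq:Tim.fH}.

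The key steps, in order, are: (1) write down $\cT$ and $\cT^{-1}$ explicitly as first-order differential operators with $\AC$ coefficients built from $h_1,h_2,\beta_1,\beta_2$ and their inverses, and verify using~\eqref{eq:Tim.coef.cond} and~\eqref{eq:Tim.h1,h2.in.AC} that both are bounded with bounded inverse between $\cH$ and $\fH$ — here condition~\eqref{eq:Tim.beta.h} guarantees uniform ellipticity so no degeneration occurs; (2) compute $\cT \cL y$ for $y \in \dom(\cL)$ and check that it equals $\cL(Q)(\cT y)$ with the differential expression $\cL(Q)$ of~\eqref{eq:LQ.def.intro}, where the emergence of the summable potential entries in~\eqref{eq:Tim.Q(x)} comes from differentiating the variable coefficients $h_1,h_2$ (producing the $h_1', h_2'$ terms) together with the coupling terms $K(y_3'-y_1)$ in~\eqref{eq:Tim.Ly.def} (producing the $h_2$ off-diagonal terms) and the damping terms $p_1 y_2, p_2 y_4$ (producing the $p_1,p_2$ entries); (3) translate the clamped boundary conditions~\eqref{eq:Tim.W0F0} and the right-end conditions~\eqref{eq:Tim.WLFLa1}--\eqref{eq:Tim.WLFLa2} through $\cT$ and verify they become $Cy(0) + Dy(\ell) = 0$ with $C,D$ as in~\eqref{eq:Tim.C.D}; in particular $y_1(0)=y_3(0)=0$ forces the two rows of $C$ pairing $z_1(0)+z_2(0)$ and $z_3(0)+z_4(0)$, while the dynamic boundary conditions at $x=\ell$, rewritten in the Riemann invariants, give the rows of $D$ with the tell-tale $\alp_k \mp h_k(\ell)$ entries; (4) confirm $\rank(C\ D)=4$ so that the target operator is genuinely of the form $L_U(Q)$ with admissible boundary conditions.

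I expect step~(2) — the explicit verification that $\cT$ intertwines the expressions and that the resulting potential is exactly~\eqref{eq:Tim.Q(x)} — to be the main obstacle, since it is a somewhat lengthy computation with variable coefficients where one must be careful about which terms are absorbed into the principal part $-iB^{-1}\partial_x$ and which land in $Q$; the subtlety compared to~\cite{LunMal15JST} is precisely that $\beta_1/\beta_2 = \sqrt{I_\rho K/(\rho\, EI)}$ need not be constant, so $B(x)$ is genuinely non-constant and one cannot first reduce to a constant-$B$ Dirac operator. However, the computation is purely algebraic and local, and the non-constancy of $B$ causes no new difficulty here because we only need similarity of the two operators, not the transformation-operator machinery — that enters only afterwards when we apply Theorems~\ref{th:Riesz_basis} and~\ref{th:compl} to $L_U(Q)$, which now legitimately handle non-constant $B(\cdot)$ satisfying~\eqref{eq:betak.alpk.Linf}--\eqref{eq:betak-betaj<-eps}. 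Finally I would note that boundedness with bounded inverse of $\cT$ (step~1) together with the intertwining identity gives $\cL = \cT^{-1} L_U(Q) \cT$, which is exactly the assertion; the domains match automatically because $\cT$ is a first-order differential isomorphism carrying $\dom(\cL)$ in~\eqref{eq:Tim.dom.cL} onto $\dom(L_U(Q))$ in~\eqref{eq:dom.LU.Intro}.
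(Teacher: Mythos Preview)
Your proposal is correct and follows essentially the same route as the paper. The only presentational difference is that the paper factors your single transformation $\cT$ into two simpler pieces: first $\cU y = \col(EI\cdot y_1',\, y_2,\, K(y_3'-y_1),\, y_4)$, which converts $\cL$ into a first-order system $-i\wt B(x)y' + \wt Q(x)y$ with a non-diagonal $\wt B(x)$ and translates the boundary conditions~\eqref{eq:Tim.W0F0}--\eqref{eq:Tim.WLFLa2} into pointwise conditions on $y(0),y(\ell)$; second, a pointwise multiplication $\wt{\cU}^{-1}: y\mapsto \wt U(x)^{-1}y$ with an explicit $4\times4$ matrix $\wt U(x)$ built from $h_1,h_2$, which diagonalizes $\wt B(x)$ to $B(x)^{-1}$ and produces~\eqref{eq:Tim.Q(x)} and~\eqref{eq:Tim.C.D}. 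This factorization makes your step~(2) entirely mechanical --- one checks $\wt U^{-1}\wt B\,\wt U = B^{-1}$ and $\wt U^{-1}\wt Q\,\wt U - i\wt U^{-1}\wt B\,\wt U' = -iB^{-1}Q$ by block-$2\times2$ linear algebra --- and separates the differential part of the similarity (which lives in $\cU$) from the purely algebraic diagonalization (which lives in $\wt{\cU}$). Your Riemann-invariant description is exactly the composite $\wt{\cU}^{-1}\cU$.
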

\begin{proof}
Introduce the following operator
\begin{equation} \label{eq:cU.def}
 \cU y = \col(EI(x)y_1',\ y_2,\ K(x)(y_3'-y_1),\ y_4), \qquad y = \col(y_1,y_2,y_3,y_4),
\end{equation}
that maps the Hilbert space $\cH$ given by~\eqref{eq:cH.def}
into $L^2([0,\ell]; \bC^4)$. Since $\frac{d}{dx}$ isometrically
maps $\wt{H}_0^1[0,\ell] = \{f \in W^{1,2}[0,\ell] : f(0)=0\}$
onto $L^2[0,\ell]$, it follows from
condition~\eqref{eq:Tim.coef.cond} that the operator $\cU$ is
bounded with a bounded inverse. It is easy to check that for
$y=\col(y_1,y_2,y_3,y_4)$
\begin{equation} \label{eq:Tim.cLU-1}
 \cL \, \cU^{-1} y =
 \frac{1}{i}\begin{pmatrix} y_2 \\ \frac{1}{I_{\rho}}(y_1'-p_1y_2+y_3) \\
 y_4 \\ \frac{1}{\rho}(y_3' - p_2y_4)\end{pmatrix}, \ \
 \wt{\cL} y := \cU \cL \, \cU^{-1} y = \frac{1}{i}\begin{pmatrix} EI \cdot y_2' \\ \frac{1}{I_{\rho}}(y_1'-p_1y_2+y_3) \\
 K \cdot (y_4'-y_2) \\ \frac{1}{\rho}(y_3' - p_2 y_4)\end{pmatrix},
\end{equation}
and
\begin{multline} \label{eq:Tim.dom.wtL}
 \dom(\wt{\cL}) = \cU \dom(\cL) = \bigl\{\bigr.y = \col(y_1,y_2,y_3,y_4)
 \in \AC([0,\ell]; \bC^4) : \ \ \wt{\cL}y \in L^2([0,\ell]; \bC^4), \\
 y_2(0) = y_4(0) = 0, \quad
 y_1(\ell) + \alp_1 y_2(\ell) + \gam_1 y_4(\ell) = 0, \quad
 y_3(\ell) + \alp_2 y_4(\ell) + \gam_2 y_2(\ell) = 0 \bigl.\bigr\}.
\end{multline}
Thus, the operator $\cL$ is similar to the operator $\wt{\cL}$,
\begin{equation}
 \wt{\cL}y = -i\wt{B}(x)y'+\wt{Q}(x)y, \qquad y \in \dom(\wt{\cL}),
\end{equation}
with the domain $\dom(\wt{\cL})$ given by~\eqref{eq:Tim.dom.wtL},
and the matrix functions $\wt{B}(\cdot)$, $\wt{Q}(\cdot)$, given
by
\begin{equation}
 \wt{B}(x) := \begin{pmatrix}
 0 & EI(x) & 0 & 0 \\
 \frac{1}{I_{\rho}(x)} & 0 & 0 & 0 \\
 0 & 0 & 0 & K(x) \\
 0 & 0 & \frac{1}{\rho(x)} & 0
 \end{pmatrix}, \qquad
 \wt{Q}(x) := i \begin{pmatrix}
 0 & 0 & 0 & 0 \\
 0 & \frac{p_1(x)}{I_{\rho}(x)} & -\frac{1}{I_{\rho}(x)} & 0 \\
 0 & K(x) & 0 & 0 \\
 0 & 0 & 0 & \frac{p_2(x)}{\rho(x)}
 \end{pmatrix}, \quad x \in [0,\ell].
\end{equation}
Note, that $\wt{Q} \in L^1([0,\ell]; \bC^{4 \times 4})$ in view of condition~\eqref{eq:Tim.coef.cond}. Next we diagonalize the matrix $\wt{B}(\cdot)$. Namely, setting
\begin{equation} \label{eq:wtcU.def}
 \wt{\cU} : y \to \wt{U}(x) y, \qquad
 \wt{U}(x) := \begin{pmatrix}
 -h_1(x) & h_1(x) & 0 & 0 \\
 1 & 1 & 0 & 0 \\
 0 & 0 & -h_2(x) & h_2(x) \\
 0 & 0 & 1 & 1
 \end{pmatrix},
\end{equation}
and noting that
\begin{equation}
 \wt{U}^{-1}(x) = \frac12 \begin{pmatrix}
 -\frac{1}{h_1(x)} & 1 & 0 & 0 \\
 \frac{1}{h_1(x)} & 1 & 0 & 0 \\
 0 & 0 & -\frac{1}{h_2(x)} & 1 \\
 0 & 0 & \frac{1}{h_2(x)} & 1
 \end{pmatrix},
\end{equation}
we easily get after straightforward calculations
\begin{equation} \label{eq:Tim.W1BW}
 \wt{U}^{-1}(x) \wt{B}(x) \wt{U}(x) = \diag\left(-\sqrt{\frac{EI(x)}{I_{\rho}(x)}},\sqrt{\frac{EI(x)}{I_{\rho}(x)}},
 -\sqrt{\frac{K(x)}{\rho(x)}},\sqrt{\frac{K(x)}{\rho(x)}}\right) = B(x)^{-1},
\end{equation}
Here we have used definition~\eqref{eq:Tim.g1.g2.def} of $h_1$,
$h_2$, and definition~\eqref{eq:Tim.beta1.beta2} of $\beta_1$, $\beta_2$.

Further, note that $\wt{U} \in \AC([0,\ell]; \bC^{4\times 4})$ since $h_1, h_2 \in \AC[0,\ell]$. Moreover, as noted earlier, $Q \in L^1([0,\ell]; \bC^{4 \times 4})$ under assumptions~\eqref{eq:Tim.coef.cond} and~\eqref{eq:Tim.h1,h2.in.AC}, where $Q(\cdot)$ is given by~\eqref{eq:Tim.Q(x)} and~\eqref{eq:Tim.Theta(x)}. Hence,
it is easily seen that
\begin{equation} \label{eq:Tim.W1QW}
 \wt{U}^{-1} \wt{Q} \wt{U} - i \wt{U}^{-1} \wt{B} \wt{U}'
 = -i B^{-1} Q.
\end{equation}
We consider the operator $\wt{\cU} : y \to \wt{U}(x) y$ acting from $\fH$ given by~\eqref {eq:Tim.fH} to $L^2([0,\ell]; \bC^4)$. It is clear from condition~\eqref{eq:Tim.coef.cond}, that $\wt{\cU}$ is bounded with a bounded inverse. Taking into account~\eqref{eq:Tim.W1BW} and~\eqref{eq:Tim.W1QW} we obtain for any $y \in \AC([0,\ell]; \bC^4)$ and satisfying
$\wt{U} y \in \dom(\wt{\cL})$ that
\begin{equation} \label{eq:Tim.wtLy}
 \wh{\cL} y := \wt{\cU}^{-1} \wt{\cL} \ \wt{\cU} y = -i B(x)^{-1} (y' + Q(x) y).
\end{equation}
Next, taking into account formulas~\eqref{eq:Tim.C.D} and
\eqref{eq:wtcU.def} for matrices $C$, $D$, and $\wt{U}(\cdot)$,
respectively, we derive that
\begin{equation} \label{eq:Tim.dom.whL}
 \dom(\wh{\cL}) = \{y \in \AC([0,\ell]; \bC^4) :
 \wh{\cL}y \in \fH,\ Cy(0)+Dy(\ell) = 0 \}.
\end{equation}
This directly implies that $L_U(Q) = \wh{\cL}$. Combining this identity with~\eqref{eq:Tim.cLU-1}, one concludes that $\cL$ is similar to
$L_U(Q)$.
\end{proof}
\subsection{Completeness and Riesz basis property with parentheses}
\label{subsec:Tim.riesz}
Applying Theorems~\ref{th:compl} and~\ref{th:riesz.paren} to the operator $L_U(Q)$ constructed in Proposition~\ref{prop:Tim.similar} we obtain the following result.
\begin{theorem} \label{th:Tim.compl.paren}
Let measurable functions $\rho, I_{\rho}, K, EI, p_1, p_2, h_1, h_2$ satisfy conditions~\eqref{eq:Tim.coef.cond} and~\eqref{eq:Tim.h1,h2.in.AC}. Set
\begin{equation} \label{eq:nux.def}
 \nu(x) := \frac{K(x)}{\rho(x)} - \frac{E(x)}{I_{\rho}(x)},
 \qquad x \in [0,\ell].
\end{equation}
Let function $\nu(\cdot)$ satisfy the following condition for some $\eps > 0$,
\begin{equation} \label{eq:nu=><0}
 \text{either} \quad \nu \equiv 0, \qquad\text{or}\qquad
 \nu(x) > \eps, \quad x \in [0,\ell], \qquad\text{or}\qquad
 \nu(x) < -\eps, \quad x \in [0,\ell].
\end{equation}
Let also
\begin{equation} \label{eq:Tim.a1!=h1,a1!=h2}
 (\alp_1 + h_1(\ell)) (\alp_2 + h_2(\ell)) \ne \gam_1 \gam_2 \quad\text{and}\quad
 (\alp_1 - h_1(\ell)) (\alp_2 - h_2(\ell)) \ne \gam_1 \gam_2.
\end{equation}

Then the system of root vectors of $\cL$ is complete, minimal, and forms a Riesz basis with parentheses in the Hilbert space $\cH$.
\end{theorem}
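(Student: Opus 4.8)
The plan is to obtain all three conclusions for $\cL$ from the general Dirac-type theory of Sections~\ref{sec:compl} and~\ref{sec:uniform.minim.riesz}, using the similarity reduction of Proposition~\ref{prop:Tim.similar}. Under hypotheses~\eqref{eq:Tim.coef.cond} and~\eqref{eq:Tim.h1,h2.in.AC} that proposition identifies $\cL$ (acting in $\cH$) with the $4\times 4$ Dirac-type operator $L_U(Q)$ acting in $\fH$, with $B(\cdot)$, $C$, $D$, $Q(\cdot)$ given by~\eqref{eq:Tim.B},~\eqref{eq:Tim.C.D},~\eqref{eq:Tim.Q(x)}, the similarity being implemented by a bounded operator with bounded inverse from $\cH$ onto $\fH$; moreover $Q\in L^1([0,\ell];\bC^{4\times4})$ because of~\eqref{eq:Tim.coef.cond} and~\eqref{eq:Tim.h1,h2.in.AC} (as noted in the proof of Proposition~\ref{prop:Tim.similar}). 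Since a bounded operator with bounded inverse carries a complete system to a complete system, a minimal system to a minimal system, and a Riesz basis with parentheses to a Riesz basis with parentheses (it sends the orthogonal decomposition underlying the blocks to a Riesz basis of subspaces, cf.\ Definition~\ref{def:basis}), it suffices to prove the assertion for the normalized system of root vectors of $L_U(Q)$ in $\fH$. For that I only need to check the two structural hypotheses feeding Theorems~\ref{th:compl}(ii) and~\ref{th:riesz.paren}: that $B(\cdot)$ satisfies the uniform separation conditions~\eqref{eq:betak.alpk.Linf}--\eqref{eq:betak-betaj<-eps}, and that the boundary conditions~\eqref{eq:Uy=0} given by $\{C,D\}$ of~\eqref{eq:Tim.C.D} are regular.

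First I would verify the conditions on $B$. By~\eqref{eq:Tim.B}--\eqref{eq:Tim.beta1.beta2} the diagonal entries of $B$ are $-\beta_1,\beta_1,-\beta_2,\beta_2$, where $\beta_1,\beta_2$ are measurable and, by~\eqref{eq:Tim.beta.h}, satisfy $M^{-1}\le\beta_i(x)\le M$; hence $\beta_i,1/\beta_i\in L^\infty$ with $\sign(\beta_i(\cdot))\equiv 1$, so~\eqref{eq:betak.alpk.Linf} holds and $S=\sign(B(\cdot))=\diag(-1,1,-1,1)$. For the separation I use the ``unordered'' reformulation~\eqref{eq:theta.exists}--\eqref{eq:nujk=><0} from Remark~\ref{rem:beta.vs.wtbeta}(ii): of the six pairwise differences of the four entries, all but $\pm(\beta_1-\beta_2)$ equal $\pm2\beta_1$, $\pm2\beta_2$ or $\pm(\beta_1+\beta_2)$, hence are bounded away from $0$ by $2M^{-1}$ uniformly. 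For the remaining pair I note that~\eqref{eq:Tim.beta1.beta2} gives $EI/I_\rho=\beta_1^{-2}$ and $K/\rho=\beta_2^{-2}$, so the function $\nu$ of~\eqref{eq:nux.def} equals $\beta_2^{-2}-\beta_1^{-2}=(\beta_1-\beta_2)(\beta_1+\beta_2)\beta_1^{-2}\beta_2^{-2}$, where by~\eqref{eq:Tim.beta.h} the last two factors lie between positive constants; thus assumption~\eqref{eq:nu=><0} on $\nu$ is equivalent to the trichotomy ``$\beta_1\equiv\beta_2$, or $\beta_1-\beta_2\ge\theta$ on $[0,\ell]$, or $\beta_1-\beta_2\le-\theta$ on $[0,\ell]$'' for some $\theta>0$. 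This gives~\eqref{eq:theta.exists}--\eqref{eq:nujk=><0}; and after a fixed permutation of the coordinates putting the entries of $B$ into non-decreasing order (legitimate since $\sign(\beta_1-\beta_2)$ is constant, and such a permutation conjugates $L_U(Q)$ into a unitarily equivalent operator of the same form, with correspondingly permuted $B,C,D$, leaving all basis properties unchanged) the canonical conditions~\eqref{eq:betak.alpk.Linf}--\eqref{eq:betak-betaj<-eps} hold with $n=4$ and $n_-=2$.

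Next I would check regularity. With $S=\diag(-1,1,-1,1)$ the spectral projections of Section~\ref{subsec:regular} are $P_-=\diag(1,0,1,0)$ and $P_+=\diag(0,1,0,1)$, so in the notation~\eqref{eq:TP.CD.def} the matrix $CP_-+DP_+$ is obtained from $C$ by replacing its $2$nd and $4$th columns with those of $D$, and $CP_++DP_-$ by replacing its $1$st and $3$rd columns with those of $D$. Reading off the columns of $C,D$ from~\eqref{eq:Tim.C.D} and expanding each resulting $4\times4$ determinant along the two rows coming from $C$ (each having a single nonzero entry) leaves a $2\times2$ determinant; a short computation gives $\det(CP_-+DP_+)=(\alp_1+h_1(\ell))(\alp_2+h_2(\ell))-\gam_1\gam_2$ and $\det(CP_++DP_-)=(\alp_1-h_1(\ell))(\alp_2-h_2(\ell))-\gam_1\gam_2$, both nonzero by~\eqref{eq:Tim.a1!=h1,a1!=h2}, so the regularity conditions~\eqref{eq:regular.def} hold.

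With these two checks done, the hypotheses of Theorem~\ref{th:compl}(ii) are met, so the system of root vectors of $L_U(Q)$ is complete in $\fH$, and those of Theorem~\ref{th:riesz.paren} are met, so any normalized system of root vectors forms a Riesz basis with parentheses in $\fH$ (with block sizes at most $2^4-1=15$). Minimality is then immediate from the block structure: each block is finite-dimensional and spanned by the linearly independent root vectors it contains, the blocks form a Riesz basis of subspaces, and the bounded projection onto one block along the remaining ones shows no $\phi_m$ in a block lies in the closed span of the other $\phi_j$. Transporting completeness, minimality and the Riesz-basis-with-parentheses property through the similarity of Proposition~\ref{prop:Tim.similar} yields the statement for $\cL$ in $\cH$. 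There is no deep obstacle: the analytic substance is already contained in Theorems~\ref{th:compl} and~\ref{th:riesz.paren}, and the only real work is the verification of the two structural hypotheses; the most delicate point is the translation in the second paragraph of the scalar condition~\eqref{eq:nu=><0} on $\nu$ into uniform separation of the entries of $B$ (together with the harmless coordinate reordering), closely followed by the bookkeeping of the two $4\times4$ determinants.
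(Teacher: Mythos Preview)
Your proposal is correct and follows essentially the same route as the paper: reduce $\cL$ to the $4\times4$ Dirac-type operator $L_U(Q)$ via Proposition~\ref{prop:Tim.similar}, verify that~\eqref{eq:nu=><0} forces the separation conditions on $B(\cdot)$ (via the identity $\nu=(\beta_1-\beta_2)(\beta_1+\beta_2)\beta_1^{-2}\beta_2^{-2}$), compute $J_{P_\pm}(C,D)$ to obtain regularity from~\eqref{eq:Tim.a1!=h1,a1!=h2}, and then invoke Theorems~\ref{th:compl}(ii) and~\ref{th:riesz.paren}. Your explicit use of the unordered reformulation in Remark~\ref{rem:beta.vs.wtbeta}(ii), the remark on the coordinate permutation, and the separate justification of minimality from the block structure are welcome clarifications that the paper leaves implicit.
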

\begin{proof}
Consider the operator $L_U(Q)$ defined in Proposition~\ref{prop:Tim.similar}. It is clear from the form~\eqref{eq:Tim.B} of the matrix function $B(\cdot)$ that
$$
P_-= \diag(1,0,1,0) \quad\text{and}\quad P_+ = \diag(0,1,0,1),
$$
where ``projectors'' $P_{\pm}$ are given by~\eqref{eq:P.pm.def}. Combining this with expression~\eqref{eq:Tim.C.D} for the matrices $C$ and $D$ and with definition~\eqref{eq:TP.CD.def} of $J_P(C,D)$ yields
\begin{equation} \label{eq:Tim.TBCD}
 J_{P_+}(C,D) = \det\begin{pmatrix}
 1 & 0 & 0 & 0 \\
 0 & \alp_1+h_1(\ell) & 0 & \gam_1 \\
 0 & 0 & 1 & 0 \\
 0 & \gam_2 & 0 & \alp_2+h_2(\ell)
 \end{pmatrix} = (\alp_1 + h_1(\ell))(\alp_2 + h_2(\ell)) - \gam_1 \gam_2.
\end{equation}
Similarly one gets
$$
J_{P_-}(C,D) = (\alp_1-h_1(\ell)) (\alp_2-h_2(\ell)) - \gam_1 \gam_2.
$$
Condition~\eqref{eq:Tim.a1!=h1,a1!=h2} implies that $J_{\pm}(C,D) \ne 0$. Hence, boundary conditions $U(y)=Cy(0)+Dy(\ell)=0$ are regular.

It is clear from condition~\eqref{eq:Tim.coef.cond}, definition~\eqref{eq:Tim.beta1.beta2} of $\beta_1(\cdot)$, $\beta_2(\cdot)$ and definition~\eqref{eq:nux.def} of $\nu(\cdot)$ that
$$
 |\beta_1(x) - \beta_2(x)| = \frac{|\nu(x)|}{\beta_1(x) + \beta_2(x)}
 \in \left[\frac{1}{2M} |\nu(x)|, \ \frac{M}{2} |\nu(x)|\right],
 \qquad x \in [0,\ell].
$$
Hence, condition~\eqref{eq:nu=><0} implies that either $\beta_1 \equiv \beta_2$ or $\beta_1 - \beta_2$ does not change sign on $[0,\ell]$ and is separated from zero. Clearly this is valid for all other entries of the matrix function $B(\cdot)$. Hence it satisfies conditions~\eqref{eq:betak.alpk.Linf},~\eqref{eq:theta.exists}--\eqref{eq:nujk=><0} (note, that notations
$\beta_1$ and $\beta_2$ differ in these conditions). Moreover, conditons~\eqref{eq:Tim.coef.cond} and~\eqref{eq:Tim.h1,h2.in.AC} imply that $Q \in L^1([0,\ell]; \bC^{4 \times 4})$.
Therefore, Remark~\ref{rem:beta.vs.wtbeta}(ii), Theorem~\ref{th:compl}(ii) and Theorem~\ref{th:riesz.paren} imply that the system of root vectors of the operator $L_U(Q)$ is complete, minimal, and forms a Riesz basis with parentheses in the Hilbert space $\fH$. By Proposition~\ref{prop:Tim.similar}, the operator $\cL$ is similar to the operator $L_U(Q)$. Hence the system of root vectors of the operator $\cL$ has the same properties, which finishes the proof.
\end{proof}
\begin{remark}
Theorem~\ref{th:Tim.compl.paren} improves similar result from our previous paper~\cite{LunMal15JST} (Theorem 6.3) in several ways:

\textbf{(i)} Main improvement is replacing condition of wave speeds $\frac{K(\cdot)}{\rho(\cdot)}$ and $\frac{EI(\cdot)}{I_\rho(\cdot)}$ being proportional functions, with much more general condition~\eqref{eq:nu=><0} on the difference of these wave speeds.

\textbf{(ii)} To establish Riesz basis property with parentheses, in~\cite[Theorem 6.3]{LunMal15JST} we assumed ``smoothness'' conditions $p_1, p_2 \in L^\infty[0,\ell]$, $h_1, h_2 \in \Lip_1[0,\ell]$ and were only able to handle simpler case of boundary conditions when $\gam_1 = \gam_2 = 0$. Here we handle the most general boundary conditions under the most general conditions on functions $p_1, p_2, h_1, h_2$.

\textbf{(iii)} Finally, we replaced condition $\rho, I_{\rho}, K, EI \in C[0,\ell]$ with more general condition $\rho, I_{\rho}, K, EI \in L^{\infty}[0,\ell]$, allowing parameters of the model to have discontinuities. In fact, considerations in~\cite{LunMal15JST} work under these conditions as well if we note that $f \in \AC$, $g, g^{-1} \in \Lip_1$ implies that $f \circ g \in \AC$ (here $g^{-1}$ is the inverse of monotonous function $g$).
\end{remark}
If $\gam_1 = \gam_2 = 0$ we can significantly improve Theorem~\ref{th:Tim.compl.paren} by dropping cumbersome condition~\eqref{eq:nu=><0} on separation of wave speeds $\frac{K(\cdot)}{\rho(\cdot)}$ and $\frac{EI(\cdot)}{I_\rho(\cdot)}$.
\begin{theorem} \label{th:Tim.gam12=0}
Let measurable functions $\rho, I_{\rho}, K, EI, p_1, p_2, h_1, h_2$ satisfy conditions~\eqref{eq:Tim.coef.cond} and~\eqref{eq:Tim.h1,h2.in.AC}. Let also
\begin{equation} \label{eq:Tim.ak.ne.pm.hk}
 \alp_1 \ne \pm h_1(\ell), \qquad \alp_2 \ne \pm h_2(\ell),
 \qquad \gam_1 = \gam_2 = 0.
\end{equation}

Then the system of root vectors of $\cL$ is complete, minimal, and forms a Riesz basis with parentheses in the Hilbert space $\cH$.
\end{theorem}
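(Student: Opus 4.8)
The plan is to reduce everything to the case $\gam_1=\gam_2=0$, where the block structure of the model lets one bypass the separation hypothesis \eqref{eq:nu=><0} entirely. By Proposition~\ref{prop:Tim.similar} the operator $\cL$ is similar to the $4\times4$ Dirac-type operator $L_U(Q)$ with $B(\cdot),C,D,Q(\cdot)$ as in \eqref{eq:Tim.B}, \eqref{eq:Tim.C.D}, \eqref{eq:Tim.Q(x)}, so it suffices to prove the assertion for $L_U(Q)$. Write $Q=Q_{\mathrm{bd}}+Q_{\mathrm{off}}$, where $Q_{\mathrm{bd}}=\diag(Q^{(1)},Q^{(2)})$ is the part of $Q$ block diagonal with respect to $\bC^4=\bC^2\oplus\bC^2$ (coordinates $\{1,2\}$ and $\{3,4\}$) and $Q_{\mathrm{off}}$ is the coupling part. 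Inspecting \eqref{eq:Tim.Q(x)} one sees that every entry of $Q_{\mathrm{off}}$ equals $\pm\tfrac12$ or $\pm\tfrac{h_2}{2h_1}$, hence lies in $L^\infty$ by \eqref{eq:Tim.beta.h}, while the only $L^1$-singular data $p_1,p_2,h_1',h_2'$ all occur in $Q_{\mathrm{bd}}$. Consequently $L_U(Q)=L_U(Q_{\mathrm{bd}})+T$, where $T$ is multiplication by $-iB(\cdot)^{-1}Q_{\mathrm{off}}(\cdot)$; since $B^{-1}\in L^\infty$ by \eqref{eq:Tim.beta.h}, $T$ is bounded on $\fH$.

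Next I would decouple $L_U(Q_{\mathrm{bd}})$. With $\gam_1=\gam_2=0$ the matrices \eqref{eq:Tim.C.D} are block diagonal, so the boundary form $U$ splits as $U(y)=U_1(y_1,y_2)\oplus U_2(y_3,y_4)$, and together with the block diagonality of $B$ and $Q_{\mathrm{bd}}$ this gives $L_U(Q_{\mathrm{bd}})=L_{U_1}(Q^{(1)})\oplus L_{U_2}(Q^{(2)})$ in $\fH=\bigl(L^2_{\beta_1}[0,\ell]\oplus L^2_{\beta_1}[0,\ell]\bigr)\oplus\bigl(L^2_{\beta_2}[0,\ell]\oplus L^2_{\beta_2}[0,\ell]\bigr)$. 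For $j\in\{1,2\}$ the $2\times2$ operator $L_{U_j}(Q^{(j)})$ has weight $B^{(j)}=\diag(-\beta_j,\beta_j)$, which satisfies conditions \eqref{eq:betak.alpk.Linf}--\eqref{eq:betak-betaj<-eps} with $\theta=2M^{-1}$ because of \eqref{eq:Tim.beta.h} --- and this is exactly the point where the troublesome hypothesis \eqref{eq:nu=><0} on $\beta_1-\beta_2$ becomes unnecessary. Its potential $Q^{(j)}$ (the $2\times2$ diagonal sub-block of \eqref{eq:Tim.Q(x)}) lies in $L^1$, and its boundary conditions are separated of the type handled in Lemma~\ref{lem:separ.regular}, with coefficients $c_1=c_2=1$ and $d_1=\alp_j-h_j(\ell),\ d_2=\alp_j+h_j(\ell)$, both nonzero precisely by \eqref{eq:Tim.ak.ne.pm.hk}. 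Hence Lemma~\ref{lem:separ.regular}(ii) (the case $N=1$) shows the $2\times2$ BVP is regular, and strictly regular in the sense of Definition~\ref{def:bvp.strict} (the diagonal gauge matrix $W^{(j)}(\ell)$ attached to $Q^{(j)}_{\diag}$ only multiplies $d_1,d_2$ by nonzero factors). Theorem~\ref{th:Riesz_basis} and Proposition~\ref{prop:sine.type} then apply to each summand: the normalized root vectors of $L_{U_j}(Q^{(j)})$ form a Riesz basis of $L^2_{\beta_j}[0,\ell]\oplus L^2_{\beta_j}[0,\ell]$, while its spectrum is discrete, asymptotically simple and separated, contained in a strip, with eigenvalues $\sim\frac{2\pi m}{b_+^{(j)}-b_-^{(j)}}$.

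Assembling, the union of the two bases is a Riesz basis of root vectors of $L_U(Q_{\mathrm{bd}})$ in $\fH$, and combining (and reordering by real part) the two spectra shows that $L_U(Q_{\mathrm{bd}})$ has compact resolvent, finite algebraic multiplicities, only finitely many associated vectors, and eigenvalues $\mu_m$ with $|\mu_m|\ge C|m|$ and $|\Im\mu_m|\le\tau$. Thus $L_U(Q_{\mathrm{bd}})$ satisfies the hypotheses of Proposition~\ref{prop:Riesz.basis.abstract}, and since $T$ is bounded, the root vectors of $L_U(Q)=L_U(Q_{\mathrm{bd}})+T$ form a Riesz basis with parentheses in $\fH$. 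In particular they are complete; moreover $L_U(Q)$ has compact resolvent with finite-multiplicity eigenvalues (Lemma~\ref{lem:eigen}), so a biorthogonal family can be chosen among the root vectors of $L_U(Q)^*=L_{U_*}(Q_*)$ (Lemma~\ref{lem:adjoint}), which makes the system minimal. Transporting everything back through the similarity of Proposition~\ref{prop:Tim.similar} yields the claim for $\cL$.

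The step I expect to require the most care is the decoupling: one must verify cleanly that for $\gam_1=\gam_2=0$ the rank-$4$ boundary form $Cy(0)+Dy(\ell)=0$ really splits into two independent rank-$2$ forms on the $2\times2$ sub-blocks, and that each resulting $2\times2$ BVP is (strictly) regular under \eqref{eq:Tim.ak.ne.pm.hk}. Once this is in place, the rest is a routine combination of the abstract results (Theorem~\ref{th:Riesz_basis} and the perturbation argument of Proposition~\ref{prop:Riesz.basis.abstract}) together with the elementary observation that the off-block part of $Q$ contributes only a bounded perturbation and therefore costs nothing beyond possible parentheses.
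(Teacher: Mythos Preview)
Your proposal is correct and follows essentially the same route as the paper: decompose $L_U(Q)$ as $(L_1\oplus L_2)+\wt{\cQ}$ using the block-diagonal structure afforded by $\gam_1=\gam_2=0$, invoke Lemma~\ref{lem:separ.regular}(ii) and Theorem~\ref{th:Riesz_basis} on each $2\times2$ summand, then apply Proposition~\ref{prop:Riesz.basis.abstract} with the bounded off-block perturbation, and finally pull back through the similarity of Proposition~\ref{prop:Tim.similar}. Your identification of the off-block entries of $Q$ as $\pm\tfrac12$ and $\pm\tfrac{h_2}{2h_1}$ (hence $L^\infty$), your remark that the separation hypothesis~\eqref{eq:nu=><0} is bypassed precisely because each $2\times2$ weight $\diag(-\beta_j,\beta_j)$ is automatically uniformly separated, and your observation that the diagonal gauge preserves the separated-and-regular form of the boundary conditions, all match the paper's argument.
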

\begin{proof}
Consider the operator $L_U(Q)$ defined in Proposition~\ref{prop:Tim.similar}. Since $\gam_1 = \gam_2 = 0$ we can represent it as bounded perturbation of the direct sum of two $2 \times 2$ Dirac type operators:
\begin{align}
\label{eq:LCDQ=Lbc1.Lbc2+wtQ}
 L_U(Q) &= L_1 \oplus L_2 + \wt{\cQ},
 \quad\text{where for}\quad k \in \{1,2\},\\
\label{eq:Lbck.Qk}
 (L_k y)(x) &:= -i B_k^{-1}(x) \bigl(y'(x) + Q_k(x) y(x)\bigr),
 \quad x \in [0,\ell], \quad y = \col(y_1, y_2) \in \dom L_k, \\
\nonumber
 \dom L_k & := \{y \in \AC([0,\ell]; \bC^2):
 L_k y \in L^2_{|\beta_k|}([0,\ell]; \bC^2), \\
\label{eq:bck}
 & \qquad y_1(0) + y_2(0) = (\alp_k - h_k(\ell)) y_1(\ell) +
 (\alp_k + h_k(\ell)) y_2(\ell) = 0\}, \\
\label{eq:Q1.Q2}
 B_k &:= \begin{pmatrix} -\beta_k & 0 \\ 0 & \beta_k \end{pmatrix}, \qquad
 Q_k := \frac{1}{2 h_k} \begin{pmatrix}
 p_k+h_k' & p_k-h_k' \\
 p_k+h_k' & p_k-h_k' \\
 \end{pmatrix}, \\
\label{eq:wtQt}
 (\wt{\cQ} y)(x) &= \wt{Q}(x) y(x), \qquad
 \wt{Q} = \Theta^{-1} \begin{pmatrix}
 0 & 0 & h_2 & -h_2 \\
 0 & 0 & h_2 & -h_2 \\
 -h_2 & -h_2 & 0 & 0 \\
 h_2 & h_2 & 0 & 0
 \end{pmatrix}.
\end{align}
It follows from~\eqref{eq:Tim.coef.cond} and~\eqref{eq:Tim.h1,h2.in.AC} that $Q_1, Q_2 \in L^1([0,\ell]; \bC^{2 \times 2})$ and $\wt{Q} \in L^{\infty}([0,\ell]; \bC^{2 \times 2})$. Due to conditions~\eqref{eq:Tim.a1!=h1,a1!=h2}, the operator $L_k$ is a $2 \times 2$ Dirac type operator with separated regular boundary conditions. Boundary conditions remain separated and regular after applying gauge transform from Lemma~\ref{lem:gauge}. Thus, by Lemma~\ref{lem:separ.regular}(ii), the $2 \times 2$ BVP corresponding to the operator $L_k$ is strictly regular according to Definition~\ref{def:bvp.strict}. Theorem~\ref{th:Riesz_basis} now implies that the system of its root vectors forms a Riesz basis in $L^2_{|\beta_k|}([0,\ell]; \bC^2)$ and its eigenvalues have a proper asymptotic, in particular, inequality~\eqref{eq:ln>cn} is satisfied for them. It is also clear that $L_k$ has finitely many associated vectors. Clearly, the direct sum $L := L_1 \oplus L_2$ has the same properties. Since $\wt{\cQ}$ is a bounded operator, the operator $L_U(Q)$ is a bounded perturbation of ``spectral'' operator $L$. Hence by Proposition~\ref{prop:Riesz.basis.abstract}, the system of root vectors of the operator $L_U(Q)$ forms a Riesz basis with parentheses in $\fH$. Since, by Proposition~\ref{prop:Tim.similar}, $\cL$ is similar to the operator $L_U(Q)$, the system of root vectors of $\cL$ forms a Riesz basis with parentheses in $\cH$.
\end{proof}
\begin{remark}
The proof above follows the proof of Theorem 8.2 in~\cite{LunMal16JMAA}, where this result was proved under additional assumption: the ratio of wave speeds $\frac{K(\cdot)}{\rho(\cdot)}$ and $\frac{EI(\cdot)}{I_\rho(\cdot)}$ is constant. Note that the in the proof of Theorem~\ref{th:Tim.gam12=0} none of results for Dirac type operators for $n>2$ were used. In fact, it follows from results of~\cite{LunMal16JMAA} by applying additional similarity transformation to $L_k$ that realizes a special change of variable that makes the matrix function $B_k$ above constant.
\end{remark}
\subsection{Asymptotic behavior of eigenvalues and Riesz basis property}
\label{subsec:Tim.asymp}
Going over to the asymptotic behavior of the eigenvalues of the operator $\cL$, first we restrict ourselves to the general case $\beta_1 \not\equiv \beta_2$. Recall that $\alp_1, \alp_2, \gam_1, \gam_2$ are parameters from boundary conditions~\eqref{eq:Tim.WLFLa1}--\eqref{eq:Tim.WLFLa2}, functions $\beta_1 = \sqrt{\frac{I_{\rho}}{EI}}$ and $\beta_2 = \sqrt{\frac{\rho}{K}}$ were defined in~\eqref{eq:Tim.beta1.beta2}, and functions $h_1 = \sqrt{EI \cdot I_{\rho}}$ and $h_2 = \sqrt{K \cdot \rho}$ were defined in~\eqref{eq:Tim.g1.g2.def}. Let us introduce some notations:
\begin{equation} \label{eq:bk.alpk.wk.def}
 b_k = \int_0^{\ell} \beta_k(t) dt, \qquad
 \alp_k^{\pm} := \alp_k \pm h_k(\ell), \qquad
 v_k^{\pm} = \(\frac{h_k(\ell)}{h_k(0)}\)^{\pm 1/2}, \quad k \in \{1,2\}.
\end{equation}
The following exponential polynomial plays a crucial role in establishing the asymptotic behavior of the eigenvalues of the operator $\cL$,
\begin{multline} \label{eq:Tim.wtDelta.def}
 \Delta^{\Tim}_0(\l)
 = (\alp_1^+ \alp_2^+ - \gam_1 \gam_2) v_1^+ v_2^+ \cdot e^{i \l (b_1 + b_2)}
 + (\alp_1^- \alp_2^- - \gam_1 \gam_2) v_1^- v_2^-
 \cdot e^{-i \l (b_1 + b_2)} \\
 - (\alp_1^+ \alp_2^- - \gam_1 \gam_2) v_1^+ v_2^- \cdot e^{i \l (b_1 - b_2)}
 - (\alp_1^- \alp_2^+ - \gam_1 \gam_2) v_1^- v_2^+
 \cdot e^{i \l (-b_1 + b_2)}, \quad \ \ \l \in \bC.
\end{multline}
Now we ready to state our main result on the asymptotic behavior of the eigenvalues of the operator $\cL$. For reader's convenience we state all involved conditions on the parameters $\rho(\cdot), I_{\rho}(\cdot), K(\cdot), EI(\cdot), p_1(\cdot), p_2(\cdot)$ and $\alp_1, \alp_2, \gam_1, \gam_2$ of the Timoshenko beam model~\eqref{eq:Tim.Ftt}--\eqref{eq:Tim.WLFLa2} without appealing to the previous formulas and notations (except definition~\eqref{eq:Tim.wtDelta.def} for $\Delta^{\Tim}_0(\cdot))$.
\begin{theorem} \label{th:Tim.asymp}
Let parameters $\rho, I_{\rho}, K, EI, p_1, p_2$ of the Timoshenko beam model~\eqref{eq:Tim.Ftt}--\eqref{eq:Tim.Wtt} be measurable functions and for some $M>1$ the following conditions hold,
\begin{align}
\label{eq:Tim.cond.Linf.asymp}
 & 0 < M^{-1} \le \rho(x), I_{\rho}(x), K(x), EI(x) \le M,
 \qquad x \in [0,\ell], \\
\label{eq:Tim.cond.L1.AC.asymp}
 & p_1, p_2 \in L^1([0,\ell]; \bC), \qquad
 h_1 := \sqrt{EI \cdot I_{\rho}} \in \AC[0,\ell],
 \qquad
 h_2 := \sqrt{K \cdot \rho} \in \AC[0,\ell].
\end{align}
Let also wave speeds $\frac{K(\cdot)}{\rho(\cdot)}$ and $\frac{EI(\cdot)}{I_\rho(\cdot)}$ be separated from each other.
i.e.\ for some $\eps>0$ the following condition holds,
\begin{equation} \label{eq:nu=><0.asymp}
 \text{either}\qquad
 \frac{K(x)}{\rho(x)} - \frac{E(x)}{I_{\rho}(x)} > \eps, \quad x \in [0,\ell], \qquad\text{or}\qquad
 \frac{K(x)}{\rho(x)} - \frac{E(x)}{I_{\rho}(x)} < -\eps, \quad x \in [0,\ell].
\end{equation}
Recall also that
\begin{equation} \label{eq:b1.b2.asymp}
 b_1 = \int_0^{\ell} \sqrt{\frac{I_{\rho}(t)}{EI(t)}} dt
 = \int_0^{\ell} \beta_1(t) dt, \qquad
 b_2 = \int_0^{\ell} \sqrt{\frac{\rho(t)}{K(t)}} dt
 = \int_0^{\ell} \beta_2(t) dt.
\end{equation}

\textbf{(i)} Let parameters $\alp_1, \alp_2, \gam_1, \gam_2 \in \bC$ from boundary conditions~\eqref{eq:Tim.WLFLa1}--\eqref{eq:Tim.WLFLa2} satisfy the following condition,
\begin{equation} \label{eq:Tim.a1!=h1,a1!=h2.asymp}
 (\alp_1 + h_1(\ell)) (\alp_2 + h_2(\ell)) \ne \gam_1 \gam_2 \quad\text{and}\quad
 (\alp_1 - h_1(\ell)) (\alp_2 - h_2(\ell)) \ne \gam_1 \gam_2.
\end{equation}
Then the dynamic generator $\cL$ of the general Timoshenko beam model~\eqref{eq:Tim.Ftt}--\eqref{eq:Tim.WLFLa2} has a countable sequence of eigenvalues $\L := \{\l_m\}_{m \in \bZ}$ counting multiplicity. The sequence $\L$ is incompressible $($see Definition~\ref{def:incompressible}$)$ and lies in the strip $\Pi_h = \{\l \in \bC : |\Im \l| \le h\}$ for some $h \ge 0$.

Moreover, exponential polynomial $\Delta^{\Tim}_0(\cdot)$ given by~\eqref{eq:Tim.wtDelta.def} has a countable sequence of zeros $\L_0 := \{\l_m^0\}_{m \in \bZ}$ counting multiplicity that satisfies the same properties, and both sequences $\L$ and $\L_0$ can be ordered in such a way
that the following sharp asymptotical formula holds
\begin{equation} \label{eq:lm=wtlm0+o}
 \l_m = \l_m^0 + o(1) = \frac{\pi m}{b_1 + b_2} + o(m) \quad\text{as}\quad m \to \infty.
\end{equation}

\textbf{(ii)} Let the following conditions hold,
\begin{equation} \label{eq:gam12=0}
 \gam_1 \gam_2 = 0, \qquad \alp_1 \ne \pm h_1(\ell),
 \qquad \alp_2 \ne \pm h_2(\ell).
\end{equation}
Then the sequence $\L$ of the eigenvalues of the operator $\cL$ is the union of two sequences asymptotically close to arithmetic progressions. Namely, $\L = \L_1 \cup \L_2$, where for $k \in \{1,2\}$ we have,
\begin{equation} \label{eq:Lk.Tim}
 \L_k := \{\l_{k,m}\}_{m \in \bZ}, \quad
 \l_{k,m} := \frac{\pi m}{b_k} - \frac{i \ln \tau_k}{2 b_k} + o(1),
 \qquad \tau_k := \frac{(\alp_k - h_k(\ell))h_k(0)}
 {(\alp_k + h_k(\ell)) h_k(\ell)} \ne 0,
 \qquad m \in \bZ.
\end{equation}
Moreover, the sequence $\L$ is asymptotically separated if and only if the following condition holds:
\begin{equation} \label{eq:strict.crit.Tim}
 \text{either}\quad b_1 \ln |\tau_2| \ne b_2 \ln |\tau_1|
 \quad\text{or}\quad \(
 \frac{b_1}{b_2} \in \bQ
 \quad\text{and}\quad
 \frac{b_1 \arg(\tau_2) - b_2 \arg(\tau_1)}{2 \pi \gcd(b_1, b_2)} \not \in \bZ\).
\end{equation}

\textbf{(iii)} Let the following conditions hold,
\begin{equation} \label{eq:alp1.alp2}
 \gam_1 \gam_2 \ne 0, \qquad
 \alp_1^2 = h_1^2(\ell) + \frac{h_1(\ell)}{h_2(\ell)}\gam_1 \gam_2, \qquad
 \alp_2 = \frac{h_2(\ell)}{h_1(\ell)} \alp_1.
\end{equation}
Then the sequence $\L = \{\l_m\}_{m \in \bZ}$ is asymptotically separated and the following sharp asymptotical formula holds,
\begin{equation} \label{eq:Tim.separ}
 \l_m := \frac{\pi m}{b_1+b_2} - \frac{i \ln \tau}{2 (b_1+b_2)} + o(1), \qquad
 \tau := \frac{(\alp_1 - h_1(\ell)) \cdot h_1(0) h_2(0)}{(\alp_1 + h_1(\ell)) \cdot h_1(\ell) h_2(\ell)} \ne 0,
 \qquad m \in \bZ.
\end{equation}

\textbf{(iv)} Let $b_1/b_2 \in \bQ$. Namely, $b_1 = n_1 b$, $b_2 = n_2 b$ for some $n_1, n_2 \in \bN$ and $b > 0$. Let also condition~\eqref{eq:Tim.a1!=h1,a1!=h2.asymp} holds. Then
$$
\Delta^{\Tim}_0(\l) = e^{-i \l (b_1 + b_2)} \cP(e^{i \l b}), \qquad \l \in \bC,
$$
where $\cP(\cdot)$ is a polynomial of degree $N := 2 (n_1+n_2)$ such that $\cP(0) \ne 0$. Let $z_1, \ldots, z_N \ne 0$ be its roots (counting multiplicity). Then the sequence $\L$ of the eigenvalues of the operator $\cL$ is the union of $N$ sequences asymptotically close to arithmetic progressions,
\begin{equation} \label{eq:Lk.Tim.poly}
 \L = \{\wt{\L}_k\}_{k=1}^N, \quad
 \wt{\L}_k := \{\wt{\l}_{k,m}\}_{m \in \bZ}, \quad
 \wt{\l}_{k,m} := \frac{2 \pi m}{b} - \frac{i \ln z_k}{b} + o(1),
 \qquad m \in \bZ, \quad k \in \onetoN.
\end{equation}
Moreover, the sequence $\L$ is asymptotically separated if and only if numbers $z_1, \ldots, z_n$ are distinct.
\end{theorem}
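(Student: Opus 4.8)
The plan is to reduce everything to the $4\times 4$ Dirac-type operator $L_U(Q)$ produced in Proposition~\ref{prop:Tim.similar}, and then to identify the modified characteristic determinant $\wt\Delta_0(\cdot)$ of Definition~\ref{def:bvp.strict} with the exponential polynomial $\Delta^{\Tim}_0(\cdot)$ given by~\eqref{eq:Tim.wtDelta.def}. First I would verify, exactly as in the proof of Theorem~\ref{th:Tim.compl.paren}, that the matrix function $B(\cdot)$ from~\eqref{eq:Tim.B} satisfies the uniform separation conditions~\eqref{eq:betak.alpk.Linf} and~\eqref{eq:theta.exists}--\eqref{eq:nujk=><0}: here hypothesis~\eqref{eq:nu=><0.asymp} is precisely what guarantees that $\beta_1-\beta_2$ either vanishes identically or is bounded away from zero with a fixed sign, and since $\beta_1\not\equiv\beta_2$ under~\eqref{eq:nu=><0.asymp} we land in the genuinely non-constant case $r=2$, $n_1=n_2=2$. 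Conditions~\eqref{eq:Tim.cond.Linf.asymp}--\eqref{eq:Tim.cond.L1.AC.asymp} give $Q\in L^1([0,\ell];\bC^{4\times 4})$. Computing $P_\pm=\diag(1,0,1,0)$, resp. $\diag(0,1,0,1)$, as in~\eqref{eq:Tim.TBCD}, and using~\eqref{eq:Tim.C.D}, condition~\eqref{eq:Tim.a1!=h1,a1!=h2.asymp} says exactly $J_{P_\pm}(C,D)\ne0$, so the boundary conditions are regular. Then Theorem~\ref{th:eigen.gen} applies directly: it yields the countable, incompressible sequence $\L$ lying in a strip $\Pi_h$, and the asymptotics $\l_m=\wt\l_m^0+o(1)=\frac{2\pi m}{b_+-b_-}+o(m)$, where $\wt\L_0$ is the zero sequence of $\wt\Delta_0(\cdot)=\det(C+DW(\ell)\Phi_0(\ell,\cdot))$. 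Since $b_-=-(b_1+b_2)$ and $b_+=b_1+b_2$ here, $b_+-b_-=2(b_1+b_2)$, which gives the $\frac{\pi m}{b_1+b_2}$ in~\eqref{eq:lm=wtlm0+o}.

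The crux of part (i) is therefore the explicit computation of $\wt\Delta_0(\cdot)$. I would compute $W(\ell)$ from the Cauchy problem~\eqref{eq:W'+QW} with $Q_{\diag}$ equal to the block diagonal of~\eqref{eq:Tim.Q(x)}: that block diagonal is $\Theta^{-1}(x)\bigl(\begin{smallmatrix} p_1+h_1' & p_1-h_1' \\ p_1+h_1' & p_1-h_1'\end{smallmatrix}\bigr)$ in the first $2\times2$ block and similarly with $p_2,h_2$ in the second, so the $2\times2$ diagonal blocks $W_{kk}(\cdot)$ of $W(\cdot)$ can be solved in closed form; in particular $\det W_{kk}(\ell)$ and the entries of $W_{kk}(\ell)$ will involve only $h_k(0)/h_k(\ell)$ and $\int_0^\ell p_k/h_k$. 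Feeding this into $\det(C+DW(\ell)\Phi_0(\ell,\l))$ with $C,D$ from~\eqref{eq:Tim.C.D} and $\Phi_0(\ell,\l)=\diag(e^{-i\l b_1},e^{i\l b_1},e^{-i\l b_2},e^{i\l b_2})$ and expanding the $4\times4$ determinant (it factors through the two $2\times2$ blocks modulo the $\gam_1,\gam_2$ coupling) should reproduce exactly~\eqref{eq:Tim.wtDelta.def} with the weights $v_k^\pm=(h_k(\ell)/h_k(0))^{\pm1/2}$ as in~\eqref{eq:bk.alpk.wk.def}. I expect this determinant bookkeeping to be the main obstacle — not conceptually hard, but the place where signs and the $v_k^\pm$ factors must be tracked carefully; the fact that the $\int_0^\ell p_k/h_k$ contributions cancel in the determinant (leaving $\Delta^{\Tim}_0$ independent of $p_1,p_2$) is the key simplification to check. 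Once $\wt\Delta_0=\Delta^{\Tim}_0$ is established, that $\Delta^{\Tim}_0$ has an incompressible zero sequence in a strip follows from Proposition~\ref{prop:sine.type}/Lemma~\ref{lem:Delta0.prop} applied to the (regular) $4\times4$ BVP, completing part (i).

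Parts (ii), (iii), (iv) are then corollaries obtained by specializing $\Delta^{\Tim}_0(\cdot)$ and invoking the strict-regularity criteria already in the paper. For (ii), when $\gam_1\gam_2=0$ the polynomial $\Delta^{\Tim}_0$ factors as a product of the two scalar factors $(\alp_k^+ v_k^+ e^{i\l b_k}-\alp_k^- v_k^- e^{-i\l b_k})$, $k=1,2$; setting these to zero gives the two arithmetic progressions~\eqref{eq:Lk.Tim} with $\tau_k$ as stated (one should match $\tau_k=\frac{\alp_k^-(v_k^-/v_k^+)}{\alp_k^+}=\frac{(\alp_k-h_k(\ell))h_k(0)}{(\alp_k+h_k(\ell))h_k(\ell)}$), and the asymptotic-separation criterion~\eqref{eq:strict.crit.Tim} is exactly Lemma~\ref{lem:separ.regular}(iii) (equivalently Lemma~\ref{lem:periodic.strict}(iii)) with $\sigma_k=2b_k$, $\tau_k$ as above — note $\gcd(2b_1,2b_2)=2\gcd(b_1,b_2)$ so the $2\pi\gcd(b_1,b_2)$ denominator matches. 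For (iii), condition~\eqref{eq:alp1.alp2} is the algebraic relation making the coefficient of $e^{i\l(b_1-b_2)}$ and of $e^{i\l(-b_1+b_2)}$ in~\eqref{eq:Tim.wtDelta.def} vanish (i.e. $\alp_1^+\alp_2^-=\gam_1\gam_2$ and $\alp_1^-\alp_2^+=\gam_1\gam_2$ — this is the new explicit case), so $\Delta^{\Tim}_0$ collapses to a two-term exponential $c_+e^{i\l(b_1+b_2)}+c_-e^{-i\l(b_1+b_2)}$ whose zeros form a single arithmetic progression; the resulting sequence is trivially separated, giving~\eqref{eq:Tim.separ} with $\tau=c_-/c_+=\frac{(\alp_1^-\alp_2^--\gam_1\gam_2)v_1^-v_2^-}{(\alp_1^+\alp_2^+-\gam_1\gam_2)v_1^+v_2^+}$, which simplifies under~\eqref{eq:alp1.alp2} to the displayed expression. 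For (iv), when $b_1=n_1b$, $b_2=n_2b$ the substitution $z=e^{i\l b}$ turns $e^{i\l(b_1+b_2)}\Delta^{\Tim}_0(\l)$ into a polynomial $\cP(z)$ of degree $2(n_1+n_2)$ with $\cP(0)=(\alp_1^-\alp_2^--\gam_1\gam_2)v_1^-v_2^-\ne0$ (nonzero by regularity~\eqref{eq:Tim.a1!=h1,a1!=h2.asymp}); its roots $z_k$ being distinct is then equivalent to the zeros of $\Delta^{\Tim}_0$ being separated, which by part (i) is equivalent to asymptotic separation of $\L$, giving~\eqref{eq:Lk.Tim.poly}. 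The only mild care needed throughout is to keep the ordering of eigenvalues consistent with $\Re\l_m\le\Re\l_{m+1}$ so that the $+o(m)$ statement and the progression descriptions are compatible, exactly as handled in Lemma~\ref{lem:periodic.strict}(i) and Lemma~\ref{lem:separ.regular}(i).
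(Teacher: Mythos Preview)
Your overall strategy matches the paper's exactly: reduce to $L_U(Q)$ via Proposition~\ref{prop:Tim.similar}, verify regularity, invoke Theorem~\ref{th:eigen.gen}, and then identify $\wt\Delta_0(\cdot)$ with $\Delta^{\Tim}_0(\cdot)$ up to a nonvanishing factor. Parts~(ii)--(iv) are also handled as in the paper.

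There is, however, a concrete error in your block-structure identification that would block the computation of $\wt\Delta_0$. You write that under~\eqref{eq:nu=><0.asymp} one lands in the case ``$r=2$, $n_1=n_2=2$''. This is wrong: with $B=\diag(-\beta_1,\beta_1,-\beta_2,\beta_2)$ and $\beta_1\not\equiv\beta_2$ (both positive), all four diagonal entries are pairwise distinct, so $r=4$ and every block has size one. Consequently $Q_{\diag}$ is the \emph{scalar} diagonal of~\eqref{eq:Tim.Q(x)},
\[
Q_{\diag}=\diag\Bigl(\tfrac{p_1+h_1'}{2h_1},\ \tfrac{p_1-h_1'}{2h_1},\ \tfrac{p_2+h_2'}{2h_2},\ \tfrac{p_2-h_2'}{2h_2}\Bigr),
\]
and the Cauchy problem~\eqref{eq:W'+QW} decouples into four scalar ODEs whose solutions are explicit exponentials; this is how the paper obtains $W_k^\pm(\ell)=(h_k(\ell)/h_k(0))^{\pm1/2}\exp\bigl(-\tfrac12\int_0^\ell p_k/h_k\bigr)$ and then~\eqref{eq:Tim.wtDelta}. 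Your version with $2\times2$ blocks gives a genuinely coupled system $W_{kk}'=-\tfrac{1}{2h_k}\bigl(\begin{smallmatrix} p_k+h_k' & p_k-h_k'\\ p_k+h_k' & p_k-h_k'\end{smallmatrix}\bigr)W_{kk}$; although it reduces to a first-order linear scalar ODE, the solution involves $\int_0^\ell \tfrac{h_k'}{h_k}\exp\bigl(\int_0^t p_k/h_k\bigr)\,dt$ and does \emph{not} depend only on $h_k(0)/h_k(\ell)$ and $\int_0^\ell p_k/h_k$ as you claim. So the determinant bookkeeping you flag as ``the main obstacle'' would not close.

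Two minor points. First, the paper does not get $\wt\Delta_0=\Delta^{\Tim}_0$ but rather $\wt\Delta_0=\cE_1\cE_2\,\Delta^{\Tim}_0$ with $\cE_k=\exp\bigl(-\tfrac12\int_0^\ell p_k/h_k\bigr)$; the $p_k$-contributions do not ``cancel in the determinant'' but factor out as a nonzero multiplicative constant, which is enough since only zeros matter. Second, in part~(ii) you cite Lemma~\ref{lem:separ.regular}(iii) with $\sigma_k=2b_k$; that is the right lemma and the right substitution, exactly as the paper does.
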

\begin{proof}
\textbf{(i)} It is clear, that conditions~\eqref{eq:Tim.cond.Linf.asymp}--\eqref{eq:Tim.cond.L1.AC.asymp} are the same as conditions~\eqref{eq:Tim.coef.cond} and~\eqref{eq:Tim.h1,h2.in.AC}. Further, condition~\eqref{eq:nu=><0.asymp} implies condition~\eqref{eq:nu=><0} on the wave speed difference $\nu(\cdot)$ given by~\eqref{eq:nux.def}. Finally, condition~\eqref{eq:Tim.a1!=h1,a1!=h2.asymp} is the same as~\eqref{eq:Tim.a1!=h1,a1!=h2}. Therefore, parameters of the Timoshenko beam model~\eqref{eq:Tim.Ftt}--\eqref{eq:Tim.WLFLa2} satisfy conditions of Proposition~\ref{prop:Tim.similar} and Theorem~\ref{th:Tim.compl.paren}.

By Proposition~\ref{prop:Tim.similar}, the operator $\cL$ is similar to the $4 \times 4$ Dirac-type operator $L_U(Q)$ acting in the Hilbert space $\fH$ given by~\eqref{eq:Tim.fH}, where matrix functions $B(\cdot)$ and $Q(\cdot)$ are given by~\eqref{eq:Tim.B} and~\eqref{eq:Tim.Q(x)}, equipped with the boundary conditions~\eqref{eq:Uy=0},
\begin{equation} \label{eq:Uy=0.Tim}
 U(y) = Cy(0)+Dy(\ell)=0,
\end{equation}
where matrices $C,D$ are given by~\eqref{eq:Tim.C.D}. Hence both operators have the same spectrum (counting multiplicity).

According to the proof of Theorem~\ref{th:Tim.compl.paren}, condition~\eqref{eq:Tim.a1!=h1,a1!=h2.asymp} implies regularity of boundary conditions~\eqref{eq:Uy=0.Tim}, while conditions~\eqref{eq:Tim.coef.cond} and~\eqref{eq:nu=><0} imply conditions~\eqref{eq:betak.alpk.Linf},~\eqref{eq:theta.exists}--\eqref{eq:nujk=><0} on the matrix function $B(\cdot)$. Moreover, conditions~\eqref{eq:Tim.coef.cond} and~\eqref{eq:Tim.h1,h2.in.AC} trivially imply that $Q \in L^1([0,\ell]; \bC^{4 \times 4})$. Therefore, Remark~\ref{rem:beta.vs.wtbeta}(ii) and Theorem~\ref{th:eigen.gen} imply the desired relation~\eqref{eq:lm=wtlm0+o}, and all other desired properties of the sequence $\L$ with the sequence $\wt{\L}_0$ in place of $\L_0$, where $\wt{\L}_0$ is the sequence of zeros of the modified characteristic determinant $\wt{\Delta}_0(\cdot)$ given by~\eqref{eq:wtDelta0}. To finish the proof, it is sufficient to show that $\wt{\Delta}_0(\cdot)$ is proportional to $\Delta^{\Tim}_0(\cdot)$ given by~\eqref{eq:Tim.wtDelta.def}.

Recall that
\begin{equation}
 \wt{\Delta}_0(\cdot) = \det(C + D W(\ell) \Phi_0(\ell, \cdot)),
\end{equation}
where $W(\cdot)$ is the solution of the Cauchy problem~\eqref{eq:W'+QW} that involves the block diagonal $Q_{\diag}$ of the matrix functions $Q$,
\begin{equation} \label{eq:W'+QW.Tim}
 W'(x) + Q_{\diag}(x) W(x) = 0, \quad x \in [0,\ell],
 \qquad W(0) = I_4.
\end{equation}
To this end, note that condition~\eqref{eq:nu=><0.asymp} implies that $\beta_1 \not\equiv \beta_2$. Hence matrix function $B(\cdot)$ given by~\eqref{eq:Tim.B} has simple spectrum. Therefore, its block matrix decomposition has all blocks of size one. This observation and formula~\eqref{eq:Tim.Q(x)} yield that
\begin{equation}
 Q_{\diag} = \diag\(
 \frac{p_1}{2h_1} + \frac{h_1'}{2h_1}, \ \
 \frac{p_1}{2h_1} - \frac{h_1'}{2h_1}, \ \
 \frac{p_2}{2h_2} + \frac{h_2'}{2h_2}, \ \
 \frac{p_2}{2h_2} - \frac{h_2'}{2h_2}\).
\end{equation}
First observe that since functions $h_1, h_2$ are positive and absolutely continuous, we have
\begin{equation}
 \exp\(\int_0^x \frac{h_k'(t)}{2h_k(t)} dt\)
 = \exp\( \frac{\ln h_k(x) - \ln h_k(0)}{2}\)
 = \(\frac{h_k(x)}{h_k(0)}\)^{1/2}, \qquad x \in [0,\ell], \quad k \in \{1,2\}.
\end{equation}
Taking into account this observation, solution $W(\cdot)$ of the Cauchy problem~\eqref{eq:W'+QW.Tim} has the following explicit form
\begin{equation}
 W(x) = \diag( W_1^-(x), W_1^+(x), W_2^-(x), W_2^+(x)), \qquad x \in [0,\ell].
\end{equation}
where
\begin{multline} \label{eq:wkpm.def}
 W_k^{\pm}(x) := \exp\(-\int_0^x \(\frac{p_k(t)}{2h_k(t)}
 \mp \frac{h_k'(t)}{2h_k(t)} \) dt\) \\
 = \(\frac{h_k(x)}{h_k(0)}\)^{\pm 1/2}
 \exp\(-\int_0^x \frac{p_k(t)}{2h_k(t)} dt\),
 \qquad x \in [0,\ell], \quad k \in \{1,2\}.
\end{multline}

To this end, note the fundamental matrix solution $\Phi_0(\cdot,\l)$ of the matrix equation $Y' = i B(x) Y$ with the matrix function $B(\cdot)$ given by~\eqref{eq:Tim.B} is of the following form,
\begin{equation} \label{eq:Tim.Phi0x}
 \Phi_0(x,\l) = \diag(e^{-i \l \rho_1(x)}, e^{i \l \rho_1(x)},
 e^{-i \l \rho_2(x)}, e^{i \l \rho_2(x)}),
 \qquad x \in [0,\ell], \qquad \l \in \bC,
\end{equation}
where
\begin{equation}
 \rho_k(x) := \int_0^x \beta_k(t) dt, \qquad x \in [0,\ell],
 \qquad k \in \{1,2\}.
\end{equation}
It follows from definition~\eqref{eq:Tim.beta1.beta2} of $\beta_1$, $\beta_2$ and definition~\eqref{eq:b1.b2.asymp} of $b_1$, $b_2$, that $b_k = \rho_k(\ell)$, $k \in \{1,2\}$.

Further, for brevity we set,
\begin{equation}
 w_k^{\pm} := W_k^{\pm}(\ell), \qquad
 \cE_k := \exp\(-\int_0^{\ell} \frac{p_k(t)}{2h_k(t)} dt\),
 \qquad k \in \{1,2\},
\end{equation}
It is clear from definition~\eqref{eq:bk.alpk.wk.def} of $v_k^{\pm}$ that $w_k^{\pm} = v_k^{\pm} \cE_k$, $k \in \{1,2\}$. With account of this notations and definition~\eqref{eq:bk.alpk.wk.def} of $\alp_k^{\pm}$, we derive from~\eqref{eq:Tim.C.D} and~\eqref{eq:Tim.Phi0x} that
\begin{equation} \label{eq:Tim.D.Wl}
 C + D W(\ell) \Phi_0(\ell, \l) = \begin{pmatrix}
 1 & 1 & 0 & 0 \\
 \alp_1^- w_1^- e^{-i \l b_1} & \alp_1^+ w_1^+ e^{i \l b_1} &
 \gam_1 w_2^- e^{-i \l b_2} & \gam_1 w_2^+ e^{i \l b_2} \\
 0 & 0 & 1 & 1 \\
 \gam_2 w_1^- e^{-i \l b_1} & \gam_2 w_1^+ e^{i \l b_1} &
 \alp_2^- w_2^- e^{-i \l b_2} & \alp_2^+ w_2^+ e^{i \l b_2} \\
 \end{pmatrix}.
\end{equation}
It now follows from~\eqref{eq:Tim.D.Wl} after straightforward calculations that
\begin{multline} \label{eq:Tim.wtDelta}
 \wt{\Delta}_0(\l)
 = (\alp_1^+ \alp_2^+ - \gam_1 \gam_2) w_1^+ w_2^+ \cdot e^{i \l (b_1 + b_2)}
 + (\alp_1^- \alp_2^- - \gam_1 \gam_2) w_1^- w_2^-
 \cdot e^{-i \l (b_1 + b_2)} \\
 - (\alp_1^+ \alp_2^- - \gam_1 \gam_2) w_1^+ w_2^- \cdot e^{i \l (b_1 - b_2)}
 - (\alp_1^- \alp_2^+ - \gam_1 \gam_2) w_1^- w_2^+
 \cdot e^{i \l (-b_1 + b_2)} \\
 = \(\alp_1^+ w_1^+ e^{i \l b_1} - \alp_1^- w_1^- e^{-i \l b_1}\)
 \(\alp_2^+ w_2^+ e^{i \l b_2} - \alp_2^- w_2^- e^{-i \l b_2}\) \\
 - \gam_1 \gam_2 \(w_1^+ e^{i \l b_1} - w_1^- e^{-i \l b_1}\)
 \(w_2^+ e^{i \l b_2} - w_2^- e^{-i \l b_2}\).
\end{multline}
It is clear from observation $w_k^{\pm} = v_k^{\pm} \cE_k$ above and definition~\eqref{eq:Tim.wtDelta.def} of $\Delta^{\Tim}_0(\cdot)$ that
\begin{equation} \label{eq:wtDelta=TimDelta}
\wt{\Delta}_0(\l) = \cE_1 \cE_2 \Delta^{\Tim}_0(\l), \qquad \l \in \bC,
\end{equation}
which finishes the proof of part (i).

\textbf{(ii)} If $\gam_1 \gam_2 = 0$ then with account of~\eqref{eq:wtDelta=TimDelta}, formula~\eqref{eq:Tim.wtDelta} simplifies to
\begin{equation} \label{eq:Tim.wtDelta.gam=0}
 \Delta^{\Tim}_0(\l) =
 \(\alp_1^+ v_1^+ e^{i \l b_1} - \alp_1^- v_1^- e^{-i \l b_1}\)
 \(\alp_2^+ v_2^+ e^{i \l b_2} - \alp_2^- v_2^- e^{-i \l b_2}\).
\end{equation}
Condition~\eqref{eq:gam12=0} on $\alp_1, \alp_2$ implies that $\alp_1^{\pm}, \alp_2^{\pm} \ne 0$. It is clear that arithmetic progressions $\L_1^0$ and $\L_2^0$,
\begin{equation} \label{eq:Lk0.Tim}
 \L_k^0 := \{\l_{k,m}^0\}_{m \in \bZ}, \quad
 \l_{k,m}^0 := \frac{2 \pi m -i \ln \tau_k^0}{2 b_k},
 \qquad \tau_k^0 := \frac{\alp_k^- v_k^-}{\alp_k^+ v_k^+} \ne 0,
 \qquad m \in \bZ, \quad k \in \{1,2\},
\end{equation}
are zeros of the first and second factor in the r.h.s.\ of~\eqref{eq:Tim.wtDelta.gam=0}, respectively. It is easily seen from definition~\eqref{eq:bk.alpk.wk.def} of $v_k^{\pm}$ that
\begin{equation} \label{eq:vk-/vk+}
 \frac{v_k^-}{v_k^+} = \frac{h_k(0)}{h_k(\ell)}, \qquad k \in \{1,2\}.
\end{equation}
Hence $\tau_k^0 = \tau_k$, where $\tau_k$ if given by~\eqref{eq:Lk.Tim}. Asymptotical formula~\eqref{eq:Lk.Tim} now follows from part (i) of the theorem.
Note that $\sigma_1 = 2 b_1$ and $\sigma_2 = 2 b_2$ in notations of Lemma~\ref{lem:separ.regular}. Hence Lemma~\ref{lem:separ.regular}(iii) implies that the sequence $\L$ is asymptotically separated if and only if condition~\eqref{eq:strict.crit.Tim} holds, which finishes the proof of part (ii).

\textbf{(iii)} It follows from~\eqref{eq:alp1.alp2} and definition~\eqref{eq:bk.alpk.wk.def} of $\alp_k^{\pm}$ that
\begin{multline} \label{eq:a1+a2-=g1g2}
 \alp_1^{\pm} \alp_2^{\mp}
 = (\alp_1 \pm h_1(\ell)) (\alp_2 \mp h_2(\ell))
 = (\alp_1 \pm h_1(\ell))
 \(\frac{h_2(\ell)}{h_1(\ell)} \alp_1 \mp h_2(\ell)\) \\
 = \frac{h_2(\ell)}{h_1(\ell)}
 (\alp_1 \pm h_1(\ell)) (\alp_1 \mp h_1(\ell))
 = \frac{h_2(\ell)}{h_1(\ell)} (\alp_1^2 - h_1^2(\ell))
 = \gam_1 \gam_2.
\end{multline}
I.e.\ $\alp_1^+ \alp_2^- = \alp_1^- \alp_2^+ = \gam_1 \gam_2$. Hence formula~\eqref{eq:Tim.wtDelta} simplifies to
\begin{equation} \label{eq:Tim.wtDelta.separ}
 \Delta^{\Tim}_0(\l)
 = (\alp_1^+ \alp_2^+ - \gam_1 \gam_2) v_1^+ v_2^+ \cdot e^{i \l (b_1 + b_2)}
 + (\alp_1^- \alp_2^- - \gam_1 \gam_2) v_1^- v_2^-
 \cdot e^{-i \l (b_1 + b_2)}, \qquad \l \in \bC.
\end{equation}
It follows that $\alp_1^- = \frac{\gam_1 \gam_2}{\alp_2^+}$, $\alp_2^- = \frac{\gam_1 \gam_2}{\alp_1^+}$. Hence
\begin{equation}
 \alp_1^- \alp_2^- - \gam_1 \gam_2
 = \frac{(\gam_1 \gam_2)^2}{\alp_1^+ \alp_2^+} - \gam_1 \gam_2
 = -\frac{\gam_1 \gam_2}{\alp_1^+ \alp_2^+} (\alp_1^+ \alp_2^+ - \gam_1 \gam_2).
\end{equation}
With account of this observation, formula~\eqref{eq:Tim.wtDelta.separ} simplifies further,
\begin{equation} \label{eq:Tim.wtDelta.separ2}
 \Delta^{\Tim}_0(\l)
 = (\alp_1^+ \alp_2^+ - \gam_1 \gam_2) \(v_1^+ v_2^+ \cdot e^{i \l (b_1 + b_2)}
 -\frac{\gam_1 \gam_2}{\alp_1^+ \alp_2^+} v_1^- v_2^-
 \cdot e^{-i \l (b_1 + b_2)}\), \qquad \l \in \bC.
\end{equation}
Since $\gam_1 \gam_2 \ne 0$, it follow from~\eqref{eq:a1+a2-=g1g2} that $\alp_1^{\pm}, \alp_2^{\pm} \ne 0$. Hence, using~\eqref{eq:a1+a2-=g1g2} again, taking into account definition~\eqref{eq:bk.alpk.wk.def} of $\alp_k^{\pm}$ and the fact that $h_1$ is positive function, we arrive at
\begin{equation} \label{eq:alp1+alp2-g1g2}
 \alp_1^+ \alp_2^+ - \gam_1 \gam_2
 = \alp_1^+ \alp_2^+ - \alp_1^- \alp_2^+
 = \alp_2^+ (\alp_1^+ - \alp_1^-) = 2 \alp_2^+ h_1(\ell) \ne 0.
\end{equation}
Combining~\eqref{eq:Tim.wtDelta.separ2} with~\eqref{eq:alp1+alp2-g1g2}, we see that $\Delta^{\Tim}_0(\cdot)$ is not identically zero and the sequence $\L_0$ of its zeros is the following arithmetic progression,
\begin{equation} \label{eq:Tim.separ0}
 \L_0 := \{\l_m^0\}_{m \in \bZ}, \quad
 \l_m^0 := \frac{2 \pi m -i \ln \tau_0}{2 (b_1+b_2)}, \qquad
 \tau_0 := \frac{\gam_1 \gam_2 \cdot v_1^- v_2^-}{\alp_1^+ \alp_2^+ \cdot v_1^+ v_2^+} \ne 0,
 \qquad m \in \bZ.
\end{equation}
It follows from~\eqref{eq:a1+a2-=g1g2} and~\eqref{eq:vk-/vk+} that
$$
 \tau_0 =
 \frac{\gam_1 \gam_2 \cdot v_1^- v_2^-}{\alp_1^+ \alp_2^+ \cdot v_1^+ v_2^+}
 = \frac{\alp_1^- \alp_2^+ \cdot h_1(0) h_2(0)}{\alp_1^+ \alp_2^+
 \cdot h_1(\ell) h_2(\ell)} = \tau,
$$
where $\tau$ is given by~\eqref{eq:Tim.separ}. Asymptotical formula~\eqref{eq:Tim.separ} now follows from~\eqref{eq:Tim.separ0} and part (i) of the theorem.

\textbf{(iv)} It is clear that the polynomial $\cP(\cdot)$ is of the following form,
\begin{multline} \label{eq:Tim.cPz}
 \cP(z)
 = (\alp_1^+ \alp_2^+ - \gam_1 \gam_2) v_1^+ v_2^+ \cdot z^{2 (n_1 + n_2)}
 + (\alp_1^- \alp_2^- - \gam_1 \gam_2) v_1^- v_2^- \\
 - (\alp_1^+ \alp_2^- - \gam_1 \gam_2) v_1^+ v_2^- \cdot z^{2 n_1}
 - (\alp_1^- \alp_2^+ - \gam_1 \gam_2) v_1^- v_2^+ \cdot z^{2 n_2},
 \quad \ \ \l \in \bC.
\end{multline}
It follows from the definition~\eqref{eq:bk.alpk.wk.def} of $\alp_k^{\pm}$ and $v_k^{\pm}$ and condition~\eqref{eq:Tim.a1!=h1,a1!=h2.asymp}, that the coefficient of $\cP(z)$ at $z^N$ is non-zero, $(\alp_1^+ \alp_2^+ - \gam_1 \gam_2) v_1^+ v_2^+ \ne 0$, and the coefficient of $\cP(z)$ at $z^0$ is non-zero, $(\alp_1^- \alp_2^- - \gam_1 \gam_2) v_1^- v_2^- \ne 0$. Hence
$$
\deg \cP = N = 2 (n_1 + n_2), \quad\text{and}\quad \cP(0) \ne 0.
$$
This implies that $\cP$ has $N$ non-zero roots $z_1, \ldots, z_N$ (counting multiplicity). Therefore, the sequence $\L_0$ of zeros of the exponential polynomial $\Delta^{\Tim}_0(\cdot)$ given by~\eqref{eq:Tim.wtDelta.def} is the union of $N$ arithmetic progressions,
\begin{equation} \label{eq:Lk.Tim.poly0}
 \L_0 = \{\wt{\L}_k^0\}_{k=1}^N, \quad
 \wt{\L}_k^0 := \{\wt{\l}_{k,m}^0\}_{m \in \bZ}, \quad
 \wt{\l}_{k,m}^0 := \frac{2 \pi m}{b} - \frac{i \ln z_k}{b},
 \qquad m \in \bZ, \quad k \in \onetoN.
\end{equation}
Part (i) of the theorem now finishes the proof.
\end{proof}
Now we are ready to formulate the main result on Riesz basis property \emph{(without parentheses)} for the dynamic generator $\cL$ of the Timoshenko beam model.
\begin{theorem} \label{eq:Tim.riesz}
Assume conditions of Theorem~\ref{th:Tim.asymp}(i) and let the sequence $\L_0$ of zeros of the exponential polynomial $\Delta^{\Tim}_0(\cdot)$ given by~\eqref{eq:Tim.wtDelta.def} be asymptotically separated (see Definition~\ref{def:strict.regular}(ii)). Then the system of root vectors of the operator $\cL$ forms a Riesz basis \emph{(without parentheses)} in the Hilbert space $\cH$. In particular, this is the case if one of the following conditions holds:

\textbf{(a)} $\gam_1 \gam_2 = 0$, $\alp_1 \ne \pm h_1(\ell)$, $\alp_2 \ne \pm h_2(\ell)$ and numbers $\tau_1$ and $\tau_2$ given by~\eqref{eq:Lk.Tim} satisfy condition~\eqref{eq:strict.crit.Tim}:

\textbf{(b)} $\gam_1 \gam_2 \ne 0$ and numbers $\alp_1$ and $\alp_2$ satisfy condition~\eqref{eq:alp1.alp2}.

\textbf{(c)} $b_1/b_2 \in \bQ$ and the polynomial $\cP(\cdot)$ given by~\eqref{eq:Tim.cPz} has $N = 2(n_1+n_2)$ distinct roots.
\end{theorem}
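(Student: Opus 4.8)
The plan is to reduce the statement to Theorem~\ref{th:Riesz_basis} by means of the similarity established in Proposition~\ref{prop:Tim.similar}. First I would invoke Proposition~\ref{prop:Tim.similar}: since conditions~\eqref{eq:Tim.cond.Linf.asymp}--\eqref{eq:Tim.cond.L1.AC.asymp} coincide with~\eqref{eq:Tim.coef.cond} and~\eqref{eq:Tim.h1,h2.in.AC}, the operator $\cL$ acting in $\cH$ is similar to the $4\times 4$ Dirac-type operator $L_U(Q)$ acting in $\fH$, with $B(\cdot)$, $C$, $D$, $Q(\cdot)$ given by~\eqref{eq:Tim.B},~\eqref{eq:Tim.C.D},~\eqref{eq:Tim.Q(x)}. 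By Remark~\ref{rem:similarity} the similarity transform is bounded with bounded inverse and carries root vectors to root vectors, so it suffices to prove that a normalized system of root vectors of $L_U(Q)$ is a Riesz basis in $\fH$.

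Second, I would check that $L_U(Q)$ satisfies all hypotheses of Theorem~\ref{th:Riesz_basis}. As in the proof of Theorem~\ref{th:Tim.compl.paren}, condition~\eqref{eq:nu=><0.asymp} forces $\beta_1 \not\equiv \beta_2$ and makes $\beta_1-\beta_2$ bounded away from zero with a fixed sign; together with~\eqref{eq:Tim.cond.Linf.asymp} this implies that every difference of the four diagonal entries $-\beta_1,\beta_1,-\beta_2,\beta_2$ of $B(\cdot)$ is either identically zero or uniformly separated from zero with a constant sign, i.e.\ conditions~\eqref{eq:betak.alpk.Linf.basis}--\eqref{eq:betak-betaj<-eps.basis} hold after the harmless reordering of Remark~\ref{rem:beta.vs.wtbeta}(ii). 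Conditions~\eqref{eq:Tim.cond.Linf.asymp}--\eqref{eq:Tim.cond.L1.AC.asymp} also yield $Q\in L^1([0,\ell];\bC^{4\times 4})$, and condition~\eqref{eq:Tim.a1!=h1,a1!=h2.asymp} gives $J_{P_+}(C,D)J_{P_-}(C,D)\ne 0$, i.e.\ regularity of the boundary conditions~\eqref{eq:Uy=0}.

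Third --- the crux --- I would verify that the BVP~\eqref{eq:LQ.def.reg}--\eqref{eq:Uy=0} attached to $L_U(Q)$ is \emph{strictly regular} in the sense of Definition~\ref{def:bvp.strict}, that is, that the modified characteristic determinant $\wt\Delta_0(\l)=\det(C+DW(\ell)\Phi_0(\ell,\l))$ has an asymptotically separated zero sequence. Here I would reuse the explicit computation already carried out in the proof of Theorem~\ref{th:Tim.asymp}(i): solving the Cauchy problem~\eqref{eq:W'+QW.Tim} for the block-diagonal part $Q_{\diag}$ gives the diagonal matrix $W(\ell)=\diag(w_1^-,w_1^+,w_2^-,w_2^+)$, and substitution into~\eqref{eq:Tim.D.Wl} produces $\wt\Delta_0(\l)=\cE_1\cE_2\,\Delta^{\Tim}_0(\l)$ with a nonzero constant $\cE_1\cE_2$. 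Hence the zeros of $\wt\Delta_0$ coincide with those of $\Delta^{\Tim}_0$, and the standing hypothesis that $\L_0$ is asymptotically separated is \emph{exactly} strict regularity of the BVP. Theorem~\ref{th:Riesz_basis} then applies and gives the Riesz basis property of the root vector system of $L_U(Q)$ in $\fH$, hence of $\cL$ in $\cH$ by the similarity above.

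Finally, the sufficient conditions (a), (b), (c) are immediate from parts (ii), (iii), (iv) of Theorem~\ref{th:Tim.asymp} combined with the separation criteria for unions of arithmetic progressions of Lemma~\ref{lem:separ.regular}(iii) and Lemma~\ref{lem:periodic.strict}(iii): in case (a) the zeros of $\Delta^{\Tim}_0$ form the union of the two progressions of~\eqref{eq:Lk.Tim}, asymptotically separated precisely when~\eqref{eq:strict.crit.Tim} holds; in case (b) identity~\eqref{eq:a1+a2-=g1g2} collapses $\Delta^{\Tim}_0$ to a two-term exponential whose zeros form a single arithmetic progression, hence separated; and in case (c) the zeros form the union of $N=2(n_1+n_2)$ progressions generated by the distinct roots $z_1,\dots,z_N$ of the polynomial $\cP(\cdot)$ of~\eqref{eq:Tim.cPz}. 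I expect the only real work to be bookkeeping --- matching the Timoshenko notation ($\beta_k$, $h_k$, $\alp_k^\pm$, $v_k^\pm$) to the abstract hypotheses and confirming that the gauge transform implicit in Definition~\ref{def:bvp.strict} does not spoil separation --- but both points have already been handled inside the proofs of Theorems~\ref{th:Tim.compl.paren} and~\ref{th:Tim.asymp}, so no genuinely new difficulty arises.
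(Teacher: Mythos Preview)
Your proposal is correct and follows essentially the same route as the paper: reduce to $L_U(Q)$ via Proposition~\ref{prop:Tim.similar}, use the identity $\wt\Delta_0=\cE_1\cE_2\,\Delta^{\Tim}_0$ from the proof of Theorem~\ref{th:Tim.asymp}(i) to convert asymptotic separation of $\L_0$ into strict regularity in the sense of Definition~\ref{def:bvp.strict}, apply Theorem~\ref{th:Riesz_basis}, and then read off cases (a)--(c) from parts (ii)--(iv) of Theorem~\ref{th:Tim.asymp}. The only difference is that you spell out a few verification steps more explicitly than the paper does.
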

\begin{proof}
According to the proof of Theorem~\ref{th:Tim.asymp} operator $\cL$ is similar to $4 \times 4$ Dirac-type operator $L_U(Q)$ with matrix functions $B(\cdot)$ and $Q(\cdot)$ satisfying conditions~\eqref{eq:Bx.def}--\eqref{eq:betak-betaj<-eps} (after reordering entries of the $B(\cdot)$). It also follows from~\eqref{eq:wtDelta=TimDelta} and condition of the theorem that the modified characteristic determinant $\wt{\Delta}_0(\cdot)$ given by~\eqref{eq:wtDelta0} has asymptotically separated sequence of zeros (counting multiplicity). Hence BVP~\eqref{eq:LQ.def.reg}--\eqref{eq:Uy=0} is strictly regular according to Definition~\ref{def:bvp.strict}. Therefore, Theorem~\ref{th:Riesz_basis} implies that the system of root vectors of the operator $L_U(Q)$ forms a Riesz basis \emph{(without parentheses)} in the Hilbert space $\fH$. Similarity of the operators $L_U(Q)$ and $\cL$ implies that the system of root vectors of the operator $\cL$ forms a Riesz basis \emph{(without parentheses)} in the Hilbert space $\cH$. Moreover, according to parts (ii), (iii) and (iv) of Theorem~\ref{th:Tim.asymp}, each of the conditions (a), (b), (c) imply that the modified characteristic determinant $\wt{\Delta}_0(\cdot)$ has asymptotically separated sequence of zeros (counting multiplicity), which finishes the proof.
\end{proof}
If $\beta_1(\cdot) = \beta_2(\cdot) =: \beta(\cdot)$ we can obtain results similar to Theorems~\ref{th:Tim.asymp} and~\ref{eq:Tim.riesz}. In fact, we can obtain explicit criterion for eigenvalues of $\cL$ to be asymptotically separated, but in terms of solutions of certain $2 \times 2$ Cauchy problems that do not have explicit form. Indeed, in this case $B = \diag(-\beta, \beta, -\beta, \beta)$ and the block diagonal of $Q$ with respect to $B$ is of the following form,
\begin{equation} \label{eq:Tim.Qdiag}
 Q_{\diag} := \Theta^{-1}
 \begin{pmatrix}
 p_1+h_1' & 0 & h_2 & 0 \\
 0 & p_1-h_1' & 0 & -h_2 \\
 -h_2 & 0 & p_2+h_2' & 0 \\
 0 & h_2 & 0 & p_2-h_2'
 \end{pmatrix}, \qquad x \in [0,\ell],
\end{equation}
where $\Theta(\cdot)$ is given by~\eqref{eq:Tim.Theta(x)}.
Let $W$ be a solution of the following $4 \times 4$ Cauchy problem,
\begin{equation} \label{eq:Tim.W.beta.eq}
 W'(x) + Q_{\diag} W(x) = 0, \quad x \in [0,\ell], \qquad W(0) = I_4,
\end{equation}
Then the characteristic determinant $\Delta^{\Tim}_0(\cdot)$ has the following form
\begin{equation} \label{eq:Tim.DeltaTim.beta.eq}
 \Delta^{\Tim}_0(\l) = \det(C + D W(\ell) \Phi_0(\ell, \l)), \qquad \l \in \bC.
\end{equation}
One can easily see that the matrix function $W$ has a similar form to $Q_{\diag}$,
\begin{equation} \label{eq:Tim.Wdiag}
 W =: \begin{pmatrix}
 w_{11}^- & 0 & w_{12}^- & 0 \\
 0 & w_{11}^+ & 0 & w_{12}^+ \\
 w_{21}^- & 0 & w_{22}^- & 0 \\
 0 & w_{21}^+ & 0 & w_{22}^+
 \end{pmatrix}, \qquad
 W_{\pm} := \begin{pmatrix}
 w_{11}^{\pm} & w_{12}^{\pm} \\
 w_{21}^{\pm} & w_{22}^{\pm} \\
 \end{pmatrix},
\end{equation}
where matrix functions $W_{\pm}$ are solutions of the following $2 \times 2$ Cauchy problems,
\begin{align}
\label{eq:W-}
 W_-' + \frac12
 \begin{pmatrix}
 \frac{p_1 + h_1'}{h_1} & \frac{h_2}{h_1} \\
 -1 & \frac{p_2 + h_2'}{h_2} \\
 \end{pmatrix} W_- = 0, \qquad x \in [0,\ell], \qquad W_-(0) = I_2, \\
\label{eq:W+}
 W_+' + \frac12
 \begin{pmatrix}
 \frac{p_1 - h_1'}{h_1} & -\frac{h_2}{h_1} \\
 1 & \frac{p_2 - h_2'}{h_2} \\
 \end{pmatrix} W_+ = 0, \qquad x \in [0,\ell], \qquad W_+(0) = I_2.
\end{align}
This in turn implies that
\begin{equation} \label{eq:Tim.DWell}
 D W(\ell) =: \begin{pmatrix}
 0 & 0 & 0 & 0 \\
 d_{11}^- & d_{11}^+ & d_{12}^- & d_{12}^+ \\
 0 & 0 & 0 & 0 \\
 d_{21}^- & d_{21}^+ & d_{22}^- & d_{22}^+ \\
 \end{pmatrix},
\quad\text{where}\quad
\begin{cases}
d_{11}^{\pm} := \alp_1^{\pm} w_{11}^{\pm}(\ell) + \gam_2 w_{21}^{\pm}(\ell), \\
d_{12}^{\pm} := \alp_1^{\pm} w_{12}^{\pm}(\ell) + \gam_2 w_{22}^{\pm}(\ell), \\
d_{21}^{\pm} := \alp_2^{\pm} w_{21}^{\pm}(\ell) + \gam_1 w_{11}^{\pm}(\ell), \\
d_{22}^{\pm} := \alp_2^{\pm} w_{22}^{\pm}(\ell) + \gam_1 w_{12}^{\pm}(\ell).
\end{cases}
\end{equation}
Note that since $\beta_1 \equiv \beta_2$ then
$$
b_1 = b_2 = \int_0^{\ell} \sqrt{\frac{\rho(t)}{K(t)}} dt =: b > 0.
$$
With account of this observation, formulas~\eqref{eq:Tim.DWell},~\eqref{eq:Tim.C.D} and~\eqref{eq:Tim.Phi0x} imply that
\begin{equation}
 C + D W(\ell) \Phi_0(\ell, \l) = \begin{pmatrix}
 1 & 1 & 0 & 0 \\
 d_{11}^- e^{-i \l b} & d_{11}^+ e^{i \l b} &
 d_{12}^- e^{-i \l b} & d_{12}^+ e^{i \l b} \\
 0 & 0 & 1 & 1 \\
 d_{21}^- e^{-i \l b} & d_{21}^+ e^{i \l b} &
 d_{22}^- e^{-i \l b} & d_{22}^+ e^{i \l b} \\
 \end{pmatrix},
\end{equation}
which in turn implies that
\begin{multline} \label{eq:Tim.wtDelta.b1=b2}
 \Delta^{\Tim}_0(\l)
 = (d_{11}^+ d_{22}^+ - d_{12}^+ d_{21}^+) \cdot e^{2 i \l b}
 + (d_{11}^- d_{22}^- - d_{12}^- d_{21}^-) \cdot e^{-2 i \l b} \\
 - (d_{11}^+ d_{22}^- - d_{12}^- d_{21}^+)
 - (d_{11}^- d_{22}^+ - d_{12}^- d_{21}^+).
\end{multline}
This can be rewritten as follows,
\begin{equation} \label{eq:Tim.cPz.b}
 e^{-2 i \l b} \Delta^{\Tim}_0(\l) =: \cP(e^{2 i \l b}), \quad \l \in \bC,
 \qquad \cP(z) =: d_+ z^2 - d_0 z + d_-, \quad z \in \bC.
\end{equation}
I.e.\ $\cP(z)$ is a quadratic polynomial at $z$ and hence it has two (possibly equal) non-zero roots $z_1, z_2$. Hence the sequence of zeros of $\Delta^{\Tim}_0(\cdot)$ is the union of two arithmetic progressions that are asymptotically separated if and only if $z_1 \ne z_2$, which is equivalent to $d_0^2 \ne 4 d_+ d_-$ (the discriminant of $\cP$ is non-zero).
Summarizing all of the above observations and following the proofs of Theorems~\ref{th:Tim.asymp} and~\ref{eq:Tim.riesz} we can establish the following result.
\begin{theorem} \label{th:Tim.asymp.riesz}
Let parameters $\rho, I_{\rho}, K, EI, p_1, p_2$ of the Timoshenko beam model~\eqref{eq:Tim.Ftt}--\eqref{eq:Tim.Wtt} satisfy conditions~\eqref{eq:Tim.cond.Linf.asymp}--\eqref{eq:Tim.cond.L1.AC.asymp} and additionally
\begin{equation} \label{eq:nu=0}
 \frac{K(x)}{\rho(x)} = \frac{E(x)}{I_{\rho}(x)}, \qquad x \in [0,\ell],
 \qquad b := \int_0^{\ell} \sqrt{\frac{\rho(t)}{K(t)}} dt
 = \int_0^{\ell} \sqrt{\frac{E(t)}{I_{\rho}(t)}} dt,
\end{equation}
i.e.\ $\beta_1 \equiv \beta_2$.
Further, let parameters $\alp_1, \alp_2, \gam_1, \gam_2 \in \bC$ from boundary conditions~\eqref{eq:Tim.WLFLa1}--\eqref{eq:Tim.WLFLa2} satisfy ``regularity'' condition~\eqref{eq:Tim.a1!=h1,a1!=h2.asymp}. Then the following statements hold:

\textbf{(i)} Dynamic generator $\cL$ of the general Timoshenko beam model~\eqref{eq:Tim.Ftt}--\eqref{eq:Tim.WLFLa2} has a countable sequence of eigenvalues $\L := \{\l_m\}_{m \in \bZ}$ counting multiplicity.
This sequence is the union of two sequences asymptotically close to arithmetic progressions:
\begin{equation} \label{eq:Lk.Tim.b1=b2}
 \L = \L_1 \cup \L_2, \qquad
 \L_k := \{\l_{k,m}\}_{m \in \bZ}, \quad
 \l_{k,m} := \frac{\pi m}{b} - \frac{i \ln z_k}{2 b} + o(1),
 \qquad m \in \bZ, \quad k \in \{1,2\},
\end{equation}
where $z_1$ and $z_2$ are the roots of the polynomial $\cP(\cdot)$ given by~\eqref{eq:Tim.cPz.b}. Moreover, the system of root vectors of the operator $\cL$ forms a Riesz basis without parentheses in the Hilbert space $\cH$, where each block has a size at most two.

\textbf{(ii)} Sequence $\L = \{\l_m\}_{m \in \bZ}$ of eigenvalues of the dynamic generator $\cL$ is asymptotically separated (see Definition~\ref{def:strict.regular}(ii)) if and only if $z_1 \ne z_2$, or equivalently if
\begin{equation} \label{eq:Tim.Disc.cP}
 (d_{11}^+ d_{22}^- - d_{12}^- d_{21}^+ +
 d_{11}^- d_{22}^+ - d_{12}^- d_{21}^+)^2 \ne
 4 (d_{11}^+ d_{22}^+ - d_{12}^+ d_{21}^+) \cdot
 (d_{11}^- d_{22}^- - d_{12}^- d_{21}^-).
\end{equation}
In this case the system of root vectors of the operator $\cL$ forms a Riesz basis \emph{(without parentheses)} in the Hilbert space $\cH$.
\end{theorem}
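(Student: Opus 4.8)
The plan is to reduce Theorem~\ref{th:Tim.asymp.riesz} to the results already obtained for $4\times 4$ Dirac type operators, exactly as was done for Theorem~\ref{eq:Tim.riesz}, but now in the degenerate case $\beta_1\equiv\beta_2$. First I would invoke Proposition~\ref{prop:Tim.similar} to pass from $\cL$ to the similar operator $L_U(Q)$ in $\fH$ with $B(\cdot)$, $C$, $D$, $Q(\cdot)$ given by~\eqref{eq:Tim.B},~\eqref{eq:Tim.C.D},~\eqref{eq:Tim.Q(x)}, and then check, using condition~\eqref{eq:Tim.cond.Linf.asymp}, that $B(\cdot)$ satisfies the relevant ``unordered'' conditions~\eqref{eq:theta.exists}--\eqref{eq:nujk=><0}: here all four entries are $\pm\beta$ with $\beta$ bounded from below and above, so the only nontrivial differences $\beta_j-\beta_k$ are $\pm 2\beta$, which are uniformly separated from zero, while the coinciding entries contribute zero differences --- exactly the hypothesis. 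Regularity of the boundary conditions~\eqref{eq:Uy=0.Tim} follows from~\eqref{eq:Tim.a1!=h1,a1!=h2.asymp} as computed in the proof of Theorem~\ref{th:Tim.compl.paren}, and $Q\in L^1$ follows from~\eqref{eq:Tim.cond.L1.AC.asymp}.

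Next I would carry out the computation of the modified characteristic determinant $\wt\Delta_0(\cdot)$. The block diagonal $Q_{\diag}$ of $Q$ with respect to $B=\diag(-\beta,\beta,-\beta,\beta)$ now mixes the two ``$-\beta$'' components and the two ``$\beta$'' components, giving the form~\eqref{eq:Tim.Qdiag}; hence the solution $W$ of the Cauchy problem~\eqref{eq:Tim.W.beta.eq} is block-diagonal of the form~\eqref{eq:Tim.Wdiag} with $W_{\pm}$ solving the $2\times 2$ systems~\eqref{eq:W-}--\eqref{eq:W+}. Plugging $W(\ell)$ and $\Phi_0(\ell,\l)=\diag(e^{-i\l b},e^{i\l b},e^{-i\l b},e^{i\l b})$ into $\det(C+DW(\ell)\Phi_0(\ell,\l))$ and expanding the resulting $4\times 4$ determinant (it is block-structured once one groups the $e^{\mp i\l b}$ columns) yields~\eqref{eq:Tim.wtDelta.b1=b2}, i.e. $e^{-2i\l b}\Delta^{\Tim}_0(\l)=\cP(e^{2i\l b})$ with the quadratic $\cP$ of~\eqref{eq:Tim.cPz.b}. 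The leading and constant coefficients $d_{\pm}$ of $\cP$ are the $2\times 2$ determinants of $DW(\ell)$ restricted to the $+$ and $-$ blocks; I would note $d_{\pm}\neq 0$ using~\eqref{eq:Tim.a1!=h1,a1!=h2.asymp} together with invertibility of $W_{\pm}(\ell)$ (which holds since the coefficient matrices in~\eqref{eq:W-}--\eqref{eq:W+} are summable, so $\det W_{\pm}(\cdot)$ never vanishes by Liouville's formula). Consequently $\cP$ has exactly two non-zero roots $z_1,z_2$, the zero set of $\Delta^{\Tim}_0$ is the union of the two arithmetic progressions $\{\tfrac{\pi m}{b}-\tfrac{i\ln z_k}{2b}\}_{m\in\bZ}$, $k\in\{1,2\}$, which lie on lines parallel to $\bR$; this is asymptotically separated precisely when $z_1\neq z_2$, equivalently when the discriminant condition~\eqref{eq:Tim.Disc.cP} holds.

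Then for part (i) I would apply Theorem~\ref{th:eigen.gen} to $L_U(Q)$: since boundary conditions are regular and $B(\cdot)$ meets the required conditions, the operator has an incompressible eigenvalue sequence $\L$ in a strip with $\l_m=\wt\l_m^0+o(1)$, where $\wt\l_m^0$ are the zeros of $\wt\Delta_0$. Since $\wt\Delta_0$ is a non-zero scalar multiple of $\Delta^{\Tim}_0$ (the scalar being $\exp(-\int_0^\ell\cdots)$-type factors coming from the diagonal parts of $Q_{\diag}$, extracted exactly as in the proof of Theorem~\ref{th:Tim.asymp}(i)), the two determinants have the same zeros, giving~\eqref{eq:Lk.Tim.b1=b2}. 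For the Riesz basis statement in (i), the sequence $\L$ is always the union of two arithmetic-progression-type sequences of density one, so the root vectors form a Riesz basis \emph{with parentheses} of block size at most two by Theorem~\ref{th:riesz.paren} (or the Katsnel'son--Markus--Matsaev machinery of Section~\ref{subsec:riesz.paren}, invoking Lemma~\ref{lem:periodic.strict}(i)-type block control; here the diagonal structure forces $b_1=b_2$). For part (ii), if additionally $z_1\neq z_2$, the sequence $\wt\L_0$ is asymptotically separated, hence BVP~\eqref{eq:LQ.def.reg}--\eqref{eq:Uy=0} is strictly regular in the sense of Definition~\ref{def:bvp.strict}; Theorem~\ref{th:Riesz_basis} then yields a Riesz basis \emph{without parentheses} for $L_U(Q)$, and similarity transports this to $\cL$ in $\cH$.

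The main obstacle I anticipate is purely bookkeeping: correctly identifying $Q_{\diag}$ in the degenerate case $\beta_1\equiv\beta_2$ (where the ``block diagonal'' is genuinely $2\times 2$-blocked rather than scalar), tracking the resulting block structure of $W$ through formulas~\eqref{eq:Tim.Qdiag}--\eqref{eq:Tim.DWell}, and expanding the $4\times 4$ determinant cleanly into the quadratic $\cP$ of~\eqref{eq:Tim.cPz.b} with the explicit coefficients in~\eqref{eq:Tim.Disc.cP}. Unlike the non-degenerate case, $W_{\pm}$ are not given by explicit exponentials, so the argument that $d_{\pm}\neq 0$ must rely on the general non-vanishing of $\det W_{\pm}$ rather than a formula; similarly, the extraction of the scalar factor relating $\wt\Delta_0$ and $\Delta^{\Tim}_0$ requires care since the off-diagonal parts of $Q_{\diag}$ do not simply exponentiate. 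Everything else is a direct citation of Proposition~\ref{prop:Tim.similar}, Theorem~\ref{th:eigen.gen}, Theorem~\ref{th:Riesz_basis}, Theorem~\ref{th:riesz.paren}, and the remark on similarity, Remark~\ref{rem:similarity}.
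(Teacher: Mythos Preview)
Your approach is essentially the paper's own: reduce via Proposition~\ref{prop:Tim.similar}, compute $\wt\Delta_0$ using the $2\times 2$-block structure of $Q_{\diag}$ and $W$, then invoke Theorem~\ref{th:eigen.gen}, Theorem~\ref{th:riesz.paren}, and Theorem~\ref{th:Riesz_basis}. Two small corrections will streamline your write-up. First, in this degenerate case the paper \emph{defines} $\Delta^{\Tim}_0(\l):=\det(C+DW(\ell)\Phi_0(\ell,\l))$ (equation~\eqref{eq:Tim.DeltaTim.beta.eq}), so $\Delta^{\Tim}_0=\wt\Delta_0$ identically and there is no scalar factor to extract --- your anticipated obstacle dissolves. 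Second, for the block-size-two claim the paper appeals to Remark~\ref{rem:block.size}(iii) (since $\L_0$ is the union of two arithmetic progressions on horizontal lines) rather than Lemma~\ref{lem:periodic.strict}(i), and for the Riesz basis with parentheses itself it simply cites Theorem~\ref{th:Tim.compl.paren} (noting that $\nu\equiv 0$ satisfies~\eqref{eq:nu=><0}).
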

\begin{proof}
Let us only comment on the Riesz basis with parentheses property. This property follows from Theorem~\ref{th:Tim.compl.paren}, if we note that condition~\eqref{eq:nu=0} implies that $\nu \equiv 0$, where $\nu(\cdot)$ is given by~\eqref{eq:nux.def}. In turn, Remark~\ref{rem:block.size}(iii) explains why the block sizes are at most two in our case.
\end{proof}
\begin{remark}
\textbf{(i)} One can easily see that
$$
 d_+ = d_{11}^+ d_{22}^+ - d_{12}^+ d_{21}^+
 = (\alp_1^+ \alp_2^+ - \gam_1 \gam_2)
 (w_{11}^+(\ell) w_{22}^+(\ell) - w_{12}^+(\ell) w_{21}^+(\ell))
 = (\alp_1^+ \alp_2^+ - \gam_1 \gam_2) \det W_+(\ell).
$$
A classical Liouville's formula (see~\eqref{eq:detPhi'}--\eqref{eq:detPhi}) implies that
$$
 \det W_+(\ell)
 = \prod_{k=1}^2 \exp\(-\int_0^{\ell} \(\frac{p_k(t)}{2h_k(t)}
 - \frac{h_k'(t)}{2h_k(t)} \) dt\).
$$
Similarly
$$
d_- = (\alp_1^- \alp_2^- - \gam_1 \gam_2) \det W_-(\ell), \qquad
\det W_-(\ell) = \prod_{k=1}^2 \exp\(-\int_0^{\ell} \(\frac{p_k(t)}{2h_k(t)}
 + \frac{h_k'(t)}{2h_k(t)} \) dt\).
$$
In particular,
$$
d_+ d_- = \bigl((\alp_1^- \alp_2^-)^2 - (\gam_1 \gam_2)^2\bigr)
\exp\(-\int_0^{\ell} \(\frac{p_1(t)}{h_1(t)} + \frac{p_2(t)}{h_2(t)} \) dt\)
$$
has a particularly simple explicit form. Unfortunately,
$$
 d_0 = d_{11}^+ d_{22}^- - d_{12}^- d_{21}^+ +
 d_{11}^- d_{22}^+ - d_{12}^- d_{21}^+
$$
does not have such an explicit form, which makes formula~\eqref{eq:Lk.Tim.b1=b2} for the eigenvalues asymptotic to be somewhat implicit in nature. It still represents a sharp practical formula if one uses numeric methods to solve the Cauchy problem.

\textbf{(ii)} Using Theorem~\ref{th:eigenvec} we can obtain explicit sharp asymptotic formulas for eigenvectors of the operator $\cL$, provided that characteristic determinant $\Delta^{\Tim}_0(\cdot)$ has asymptotically separated zeros. Namely, if $y$ is the eigenvector of the operator $L_U(Q)$, then $\mathbf{y} = \cU^{-1} \wt{\cU} y$ is the eigenvector of $\cL$, where similarity transforms $\cU$ and $\wt{\cU}$ are given by~\eqref{eq:cU.def} and~\eqref{eq:wtcU.def}, respectively. But explicit form of vectors $y$ and $\mathbf{y}$ is cumbersome and is omitted.
\end{remark}
\begin{remark}
\textbf{(i)} In connection with Theorem~\ref{th:Tim.gam12=0} we
mention the paper~\cite{Shub02} where the operator $\cL$ was
investigated under the following assumptions on the parameters
of the model:
\begin{equation} \label{eq:Tim.Shubov}
 EI, K \in W^{3,2}[0,\ell],
 \quad \rho, I_{\rho} \in W^{4,2}[0,\ell], \quad p_1 = p_2 = 0, \quad \gam_1 = \gam_2 = 0,
\end{equation}
The completeness of the root vectors was stated in~\cite{Shub02} under the condition~\eqref{eq:Tim.ak.ne.pm.hk} and the additional assumption
\begin{equation} \label{eq:Tim.nu!=1}
 I_{\rho}(x) K(x) \ne \rho(x) EI(x), \quad x \in [0,\ell],
\end{equation}
which in our notations (see~\eqref{eq:nux.def}) means that $\nu(x) \ne 1$, $x \in [0,\ell]$.

Unfortunately, the proof of the completeness in~\cite{Shub02} fails because of the incorrect application of the Keldysh theorem. Namely, the representation $\cL^{-1} = \cL_{00}^{-1} (I_{\cH} + T)$ used in~\cite{Shub02}, where $T$ is of finite rank bounded operator and $\cL_{00} = \cL_{00}^{*}$, fails since it leads to the inclusion $\dom(\cL) \subset \dom(\cL_{00})$, which holds if only if $\cL = \cL_{00}$.

Moreover, under conditions~\eqref{eq:Tim.Shubov},~\eqref{eq:Tim.nu!=1} and~\eqref{eq:Tim.ak.ne.pm.hk} the Riesz basis property for the system of root vectors of $\cL$ was stated in~\cite{Shub02}. The proof is based on the claim that under the above restrictions the eigenvalues of $\cL$ are asymptotically simple and separated. However, it is not the case. In our Theorem~\ref{th:Tim.asymp}(ii) (the case $\gam_1 \gam_2 =0$) we established a criterion for the sequence of eigenvalues of $\cL$ to be asymptotically simple and separated, and it is clear that not all values of $\alp_1 \ne \pm h_1(\ell)$ and $\alp_2 \ne \pm h_2(\ell)$ satisfy the condition~\eqref{eq:strict.crit.Tim}.
Note also that according to Theorem~\ref{th:Tim.gam12=0} the system of root vectors of the operator $\cL$ always forms a Riesz basis with parentheses under the assumptions~\eqref{eq:Tim.coef.cond},~\eqref{eq:Tim.h1,h2.in.AC} and~\eqref{eq:Tim.ak.ne.pm.hk}.

\textbf{(ii)} In connection with Theorem~\ref{th:Tim.compl.paren} we also mention the paper~\cite{XuYung04}. In this paper the operator $\cL$ was investigated under the following stronger assumptions on the parameters of the model:
\begin{equation}
 \label{eq:Tim.Xu} EI, K, \rho, I_{\rho} \ \text{are constant,} \quad p_1 = p_2 = 0, \quad
 \alp_1, \alp_2, \gam_1, \gam_2 \ge 0, \quad 4 \alp_1 \alp_2 \ge (\gam_1 + \gam_2)^2.
\end{equation}
The last condition in~\eqref{eq:Tim.Xu} ensures the dissipativity of the operator $\cL$. The completeness of the system of root vectors of the operator $\cL$ was proved in~\cite{XuYung04} under the restrictions~\eqref{eq:Tim.Xu} and~\eqref{eq:Tim.a1!=h1,a1!=h2}.
Note also that under additional assumptions, guarantying that the eigenvalues of $\cL$ are asymptotically simple and separated, it was proved in~\cite{XuYung04} that the root vectors of $\cL$ contains the Riesz basis.
So, our Theorems~\ref{th:Tim.compl.paren},~\ref{eq:Tim.riesz} and~\ref{th:Tim.asymp.riesz} generalize these results to the case of variable parameters $EI, K, \rho, I_{\rho}$ and broader class of boundary conditions, and improves it in the dissipative case.
\end{remark}
\textbf{Acknowledgement.} The publication has been prepared with the support of
the ``RUDN University Program 5-100''.


\begin{thebibliography}{10}
\bibitem{Agran99} M. S. Agranovich, Spectral Properties of Diffraction Problems, Supplement to the book: N. N. Voitovich, B. Z. Katsenelenbaum and A. N. Sivov, Generalized Method of Eigenoscillations in Diffraction Theory (Nauka, Moscow, 1977; Wiley VCH, Weinheim, 1999).
\bibitem{Akian22} J.-L. Akian, Spectral analysis of a non-homogeneous rotating Timoshenko beam, (accepted for publication in Math. Nachr., 2022).
\bibitem{AlbHryMyk05} S. Albeverio, R. Hryniv and Ya. Mykytyuk, Inverse Spectral Problems for Dirac Operators with Summable Potentials, \emph{Russian J. of Math. Physics} \textbf{12} (4) (2005), pp. 406--423.
\bibitem{BarYak15} A. Baranov and D. Yakubovich, One-dimensional perturbations of unbounded self-adjoint operators with empty spectrum, \emph{J. Math. Anal. Appl.} \textbf{424} (2) (2015), 1404--1424.
\bibitem{BarYak16} A. Baranov and D. Yakubovich, Completeness and spectral synthesis of nonself-adjoint one-dimensional perturbations of self-adjoint operators, \emph{Adv. in Math.} \textbf{302} (2016), 740--798.
\bibitem{Bask11} A.G.~Baskakov, A.V.~Derbushev and A.O.~Shcherbakov, The method of similar operators in the spectral analysis of non-self-adjoint Dirac operators with non-smooth potentials, \emph{Izv. Math.} \textbf{75} (3) (2011), pp. 445--469.
\bibitem{BirLan23} G.D.~Birkhoff and R.E.~Langer, The boundary problems and developments associated with a system of ordinary differential equations of the first order, \emph{Proc. Amer. Acad. Arts Sci.} \textbf{58} (1923), pp. 49--128.
\bibitem{DjaMit10} P.~Djakov and B.~Mityagin, Bari-Markus property for Riesz projections of 1D periodic Dirac operators, \emph{Math. Nachr.} \textbf{283} (3) (2010), pp. 443--462.
\bibitem{DjaMit12Crit} P.~Djakov and B.~Mityagin, Criteria for existence of Riesz bases consisting of root functions of Hill and 1D Dirac operators, \emph{J. Funct. Anal.} \textbf{263} (8) (2012), pp. 2300--2332.
\bibitem{DjaMit12TrigDir} P.~Djakov and B.~Mityagin, 1D Dirac operators with special periodic potentials, \emph{Bull. Polish Acad. Sci. Mathematics} \textbf{60} (1) (2012), pp. 59-75.
\bibitem{DjaMit12UncDir} P.~Djakov and B.~Mityagin, Unconditional convergence of spectral decompositions of 1D Dirac operators with regular boundary conditions, \emph{Indiana Univ. Math. J.} \textbf{61} (1) (2012), pp. 359--398.
\bibitem{DjaMit12Equi} P.~Djakov and B.~Mityagin, Equiconvergence of spectral decompositions of 1D Dirac operators with regular boundary conditions, \emph{J. Approximation Theory} \textbf{164} (7) (2012), pp. 879--927.
\bibitem{DjaMit13CritDir} P.~Djakov and B.~Mityagin, Riesz bases consisting of root functions of 1D Dirac operators, \emph{Proc. Amer. Math. Soc.} \textbf{141} (4) (2013), pp. 1361--1375.
\bibitem{DjaMit20UMNper} P.~Djakov and B.~Mityagin, Spectral triangles of non-self-adjoint Hill and Dirac operators, \emph{Russian Math. Surveys} \textbf{75} (4) (2020), pp. 587--626.
\bibitem{Gan59} F.~Gantmacher, Theory of matrices, AMS Chelsea publishing, (1959).
\bibitem{GesTka09} F.~Gesztesy and V.~Tkachenko, A criterion for Hill operators to be spectral operators of scalar type, \emph{J. Analyse Math.} \textbf{107} (2009), pp. 287--353.
\bibitem{GesTka12} F.~Gesztesy and V.~Tkachenko, A Schauder and Riesz basis criterion for non-self-adjoint Schr\"{o}dinger operators with periodic and anti-periodic boundary conditions, \emph{J. Diff. Equat.} \textbf{253} (2) (2012), pp. 400--437.
\bibitem{GohKre65} I.C.~Gohberg and M.G.~Krein, Introduction to the theory of linear nonself-adjoint operators in Hilbert space, \emph{Transl. Math. Monographs}, \textbf{vol. 18}, Amer. Math. Soc., Providence, R.I. (1969).
\bibitem{GomRze20} A.M.~Gomilko and L.~Rzepnicki. On asymptotic behaviour of solutions of the Dirac system and applications to the Sturm-Liouville problem with a singular potential, \emph{Journal of Spectral Theory} \textbf{10} (3) (2020), pp.~747--786.
\bibitem{HasOri09} S.~Hassi and L.~Oridoroga, Theorem of Completeness for a Dirac-Type Operator with Generalized $\l$-Depending Boundary Conditions, \emph{Integral Equat. Oper. Theor.} \textbf{64} (2009), pp. 357--379.
\bibitem{Katsn67} V.\'{E}.~Katsnel'son, Conditions under which systems of eigenvectors of some classes of operators form a basis, \emph{Funct. Anal. Appl.} \textbf{1} (1) (1967), pp. 122-133.
\bibitem{Katsn71} V.\'{E}.~Katsnel'son, Exponential bases in $L^2$, \emph{Funct. Anal. Appl.} \textbf{5} (1) (1971), pp. 31--38.
\bibitem{KimRen87} J.~U.~Kim and Y.~Renardy, Boundary Control of the Timoshenko Beam. \emph{SIAM J. Control and Optimization}, (6) \textbf{25} (1987), 1417--1429.
\bibitem{KurAbd18} V.M. Kurbanov and A.M. Abdullayeva, Bessel property and basicity of the system of root vector-functions of Dirac operator with summable coefficient, \emph{Operators and Matrices} \textbf{12} (4) (2018), pp. 943--954.
\bibitem{KurGad20} V.M. Kurbanov and G.R. Gadzhieva, Bessel inequality and the basis property for $2m \times 2m$ Dirac type system with an integrable potential, \emph{Differential Equations} \textbf{56} (5) (2020), pp. 573--584.
\bibitem{Leon76} A.F.~Leontiev, Series of Exponentials, Nauka, Moscow, 1976.
\bibitem{Lev61} B.Ya.~Levin, Exponential bases in $L^2$, \emph{Zapiski Matem. Otd. Fiz.-Matem. F-ta Khar'kovskogo Un-ta i Khar'kovskogo Matem. Ob-va} \textbf{27} (4) (1961), pp. 39-48.
\bibitem{Lev96} B.Ya.~Levin, Lectures on Entire Functions, \emph{Transl. Math. Monographs}, \textbf{150}, Amer. Math. Soc., Providence, RI, 1996 (in collaboration with Yu. Lyubarskii, M. Sodin, and V. Tkachenko).
\bibitem{LevSar88} B.M.~Levitan and I.S.~Sargsyan, Sturm-Liouville And Dirac Operators, Kluwer, Dordrecht (1991).
\bibitem{LunMal14IEOT} A.A.~Lunyov and M.M.~Malamud, On Spectral Synthesis for Dissipative Dirac Type Operators, \emph{Integr. Equ. Oper. Theory} \textbf{90} (2014), pp. 79--106.
\bibitem{LunMal14Dokl} A.A.~Lunyov and M.M.~Malamud, On the Riesz Basis Property of the Root Vector System for Dirac-Type $2 \times 2$ Systems, \emph{Dokl. Math.} \textbf{90} (2) (2014), pp. 556--561.
\bibitem{LunMal15JST} A.A.~Lunyov and M.M.~Malamud, On the completeness and Riesz basis property of root subspaces of boundary value problems for first order systems and applications, \emph{J. Spectral Theory} \textbf{5} (1) (2015), pp. 17--70.
\bibitem{LunMal16JMAA} A.A.~Lunyov and M.M.~Malamud, On the Riesz basis property of root vectors system for $2 \times 2$ Dirac type operators, \emph{J. Math. Anal. Appl.} \textbf{441} (2016), pp. 57--103 (arXiv:1504.04954).
\bibitem{Mak12} A.S.~Makin, On summability of spectral expansions corresponding to the Sturm-Liouville operator, \emph{Inter. J. Math. and Math. Sci.} \textbf{2012} (2012) 843562.
\bibitem{Mak20} A.S.~Makin, Regular boundary value problems for the Dirac operator, \emph{Doklady Mathematics} \textbf{101} (3) (2020), pp. 214--217.
\bibitem{Mak21} A.S.~Makin, On convergence of spectral Expansions of Dirac Operators with Regular Boundary Conditions, arXiv:1902.02952 (accepted for publication in Math. Nachr.).
\bibitem{Mak21DE} A.S.~Makin, On the spectrum of two-point boundary value problems for the Dirac operator, \emph{Differential Equations} \textbf{57} (8) (2021), pp. 993--1002.
\bibitem{Mak21Ito} A.S.~Makin, On two-point boundary value problems for Sturm-Liouville and Dirac operator, \emph{Itogi nauki i tehn. Ser. Sovrem. mat. i eyo pril. Temat. obz.} \textbf{194} (2021), pp. 144--154 (in Russian).
\bibitem{Mal94} M.M.~Malamud, Similarity of Volterra operators and related questions of the theory of differential equations of fractional order, \emph{Trans. Moscow Math. Soc.} \textbf{55} (1994), pp. 57--122.
\bibitem{Mal98} M.M.~Malamud, Invariant and Hyperinvariant Subspaces of Direct Sums of Simple Volterra Operators, \emph{Operator Theory: Advances and Applications} \textbf{102} (1998), pp. 143--167.
\bibitem{Mal99} M.M.~Malamud, Questions of uniqueness in inverse problems for systems of differential equations on a finite interval, \emph{Trans. Moscow Math. Soc.} \textbf{60} (1999), pp. 173--224.
\bibitem{MalOri00} M.~M.~Malamud and L.~L.~Oridoroga, Completeness theorems for systems of differential equations. \emph{Funct. Anal. Appl.} (4) \textbf{34} (2000), 308--310.
\bibitem{MalOri12} M.M.~Malamud and L.L.~Oridoroga, On the completeness of root subspaces of boundary value problems for first order systems of ordinary differential equations, \emph{J. Funct. Anal.} \textbf{263} (2012), pp. 1939--1980.
\bibitem{Mar77} V.A.~Marchenko, Sturm-Liouville operators and applications, \emph{Operator Theory: Advances and Appl.} \textbf{vol. 22}, Birkh\"{a}user Verlag, Basel (1986).
\bibitem{Mar90} M. Marcus, \href{https://www.maa.org/sites/default/files/0746834207570.di020741.02p0009e.pdf}{Determinants of Sums}, College Mathematics Journal, March, 1990.
\bibitem{MarMats84} A.S.~Markus and V.I.~Matsaev, Comparison theorems for spectra of linear operators and spectral asymptotics, \emph{Trans. Moscow Math. Soc.} \textbf(1) (1984), pp. 139--187.
\bibitem{Markus88} A.S.~Markus, Introduction to the Spectral Theory of Polynomial Operator Bundles, Shtiintsa, Kishinev, 1986; Transl. Math. Monogr., \textbf{vol. 71}, Amer. Math. Soc., Providence, RI, 1988.
\bibitem{MenZua00} G.P.~Menzala, E.~Zuazua, Timoshenko's beam equation as limit of a nonlinear one-dimensional von K\'arm\'an system, \emph{Proc. Roy. Soc. Edinburgh Sect. A} \textbf{130} (4) (2000), pp. 855--875.
\bibitem{Mit03} B.~Mityagin, Convergence of spectral decompositions of periodic Dirac operator, \emph{Doklady Math.} \textbf{68} (2003) pp. 388--391.
\bibitem{Mit04} B.~Mityagin, Spectral expansions of one-dimensional periodic Dirac operators, \emph{Dyn. Partial Differ. Equ.} \textbf{1} (2004), pp.~125--191.
\bibitem{MykPuy13} Ya.~V.~Mykytyuk, D.~V.~Puyda, Bari-Markus property for Dirac operators, \emph{Matematychni Studii} \textbf{40} (2) (2013), pp. 165--171.
\bibitem{Nai69} M.A.~Naimark, Linear differential operators, Part I, Frederick Ungar Publishing Co., New York (1967).
\bibitem{Nik86} N.K.~Nikolskii, Treatise on the Shift Operator, Springer, Berlin (1986).
\bibitem{ZMNovPit80} S.P.~Novikov, S.V.~Manakov, L.P.~Pitaevskij and V.E.~Zakharov, Theory of solitons. The inverse scattering method. Springer-Verlag (1984).
\bibitem{Rzep21} L.~Rzepnicki, Asymptotic behavior of solutions of the Dirac system with an integrable potential, \emph{Integral Equations Operator Theory}, \textbf{93}, Article number: 55 (2021), 24 p, arXiv:2011.06510.
\bibitem{Rom08} G.S.~Romaschenko, Similarity of Volterra operators in Lebesgue spaces of vector functions. \emph{Ukrainian Math. Buleten} \textbf{5} (2) (2008), pp. 219--243.
\bibitem{SavSad15DAN} A.M.~Savchuk and I.V.~Sadovnichaya, The Riesz basis property of subspaces for a Dirac system with summable potential,
\emph{Dokl. Math.} \textbf{91} (3) (2015), pp. 309--312.
\bibitem{SavSad15} A.M.~Savchuk and I.V.~Sadovnichaya, The Riesz basis property with brackets for the Dirac system with a summable potential. \emph{J. Math. Sci. (N.Y.)} \textbf{233} (4) (2018), pp. 514--540; translated from \emph{Sovrem. Mat. Fundam. Napravl.} \textbf{58} (2015), pp. 128--152 (in Russian).
\bibitem{SavSad20} A.M.~Savchuk and I.V.~Sadovnichaya, Spectral analysis of a one-dimensional Dirac system with summable potential and a Sturm-Liouville operator with distribution coefficients. \emph{Sovrem. Mat. Fundam. Napravl.} \textbf{66} (3) (2020), pp. 373--530 (in Russian).
\bibitem{SavShk14} A.M.~Savchuk and A.A.~Shkalikov, The Dirac Operator with Complex-Valued Summable Potential, \emph{Math. Notes} \textbf{96} (5-6) (2014), pp. 777--810.
\bibitem{Shk79} A.A.~Shkalikov, On the basis problem of the eigenfunctions of an ordinary differential operator, \emph{Russ. Math. Surv.} \textbf{34} (5) (1979), pp. 249--250.
\bibitem{Shk10} A.A.~Shkalikov, On the basis property of root vectors of a perturbed self-adjoint operator, \emph{Proc. Steklov Inst. Math.} \textbf{269} (2010), pp. 284--298.
\bibitem{Shk16} A.A.~Shkalikov, Perturbations of self-adjoint and normal operators with discrete spectrum, \emph{Russian Mathematical Surveys} \textbf{71} (5) (2016), pp. 907--964.
\bibitem{Shk21} A.A.~Shkalikov, Regular spectral problems of hyperbolic type for systems of ordinary differential equations of the first order, \emph{Math. Notes}, \textbf{110} (5) (2021), pp. 806--810.
\bibitem{Shub02} M.~A.~Shubov, Asymptotic and spectral analysis of the spatially nonhomogeneous Timoshenko beam model, \emph{Math. Nachr.} \textbf{241} (2002), 125--162.
\bibitem{Shub11} M.~A.~Shubov, On the completeness of root vectors of a certain class of differential operators, \emph{Math. Nachr.} \textbf{284} (8-9) (2011), pp. 1118--1147.
\bibitem{Tim55} S.~Timoshenko, \emph{Vibration Problems in Engineering}. Van Norstrand, NY 1955.
\bibitem{TroYam01} I.~Trooshin and M.~Yamamoto, Riesz basis of root vectors of a nonsymmetric system of first-order ordinary differential operators and application to inverse eigenvalue problems, \emph{Appl. Anal.} \textbf{80} (2001), pp.~19--51.
\bibitem{TroYam02} I.~Trooshin and M.~Yamamoto, Spectral properties and an inverse eigenvalue problem for nonsymmetric systems of ordinary differential operators, \emph{J. Inverse Ill-Posed Probl.} \textbf{10} (6) (2002), pp.~643--658.
\bibitem{WuXue11} Y.~Wu and X.~Xue, Decay rate estimates for the quasi-linear Timoshenko system with nonlinear control and damping terms. \emph{J. Math. Physics} 093502 \textbf{52}(2011), 18 p.
\bibitem{XuHanYung07} G.~Q.~Xu, Z.~J.~Han and S.~P.~Yung, Riesz basis property of serially connected Timoshenko beams. \emph{Inter. J. Control} (3) \textbf{80} (2007), 470--485.
\bibitem{XuYung04} G.~Q.~Xu and S.~P.~Yung, Exponential Decay Rate for a Timoshenko Beam with Boundary Damping. \emph{J. Optimiz. Theory Appl.} (3) \textbf{123} (2004), 669--693.
\end{thebibliography}
\end{document}